\def\l@subsection{\@tocline{2}{0pt}{1pc}{5pc}{}} \def\l@subsection{\@tocline{2}{0pt}{2pc}{6pc}{}} \makeatother
\newtheorem{thm}{Theorem}[subsection]
\newtheorem{prop}[thm]{Proposition}
\newtheorem{lem}[thm]{Lemma}
\newtheorem{cor}[thm]{Corollary}
\newtheorem{exmp}[thm]{Example}
\newtheorem{sublem}[thm]{Sublemma}
\theoremstyle{definition}
\newtheorem{defn}[thm]{Definition}
\newtheorem{assumption}[thm]{Assumption}
\newtheorem{notat}[thm]{Notation}
\theoremstyle{remark}
\newtheorem*{remark}{Remark}
\newcommand\textcat[1]{\textnormal{\textbf{#1}}}
\newcommand\texthom[1]{\underline{\textnormal{#1}}}
\title{A Representability Theorem for Stacks in Derived Geometry Contexts}
\author{Rhiannon Savage}
\address{Rhiannon Savage, University College London}
\email{rhiannon.savage@ucl.ac.uk}
\date{}
\begin{document}
\begin{abstract}The representability theorem for stacks, due to Artin in the underived setting and Lurie in the derived setting, gives conditions under which a stack is representable by an $n$-geometric stack. In recent work of Ben-Bassat, Kelly, and Kremnizer, a new theory of derived analytic geometry has been proposed as geometry relative to the $(\infty,1)$-category of simplicial commutative Ind-Banach $R$-modules, for $R$ a Banach ring. In this paper, we prove a representability theorem which holds in a very general context, which we call a representability context, encompassing both the derived algebraic geometry context of To\"en and Vezzosi and these new derived analytic geometry contexts. The representability theorem gives natural and easily verifiable conditions for checking that derived stacks in these contexts are $n$-geometric, such as having an $n$-geometric truncation, being nilcomplete, and having an obstruction theory. Future work will explore representability of certain moduli stacks arising in derived analytic geometry, for example moduli stacks of Galois representations. 
\end{abstract}
\date{}

\maketitle
\vskip.5in

\tableofcontents

\section{Introduction}
\addtocontents{toc}{\setcounter{tocdepth}{-1}}

Representability is one of the most fundamental concepts in algebraic geometry and algebraic topology. Classically, a representability theorem aims to give conditions under which certain functors are representable, i.e. in the image of the Yoneda embedding. If a functor is representable by an object $X$ then we can study (i.e. do geometry or topology on) the functor by just studying $X$. We can interpret this idea in several different settings, and also provide higher and derived analogues.

In algebraic geometry, we want to give conditions under which functors are represented by geometric objects such as schemes and stacks. For example, the functor of points approach to algebraic geometry associates schemes with the Zariski sheaves they represent. Thanks to the work of Artin, Lurie, and To\"en and Vezzosi we now have very strong representability theorems for stacks in derived algebraic geometry. A representability theorem for a certain model of derived analytic geometry in the style of Lurie was proven by Porta and Yue-Yu \cite{porta_representability_2020}.

The aim of this paper is to unite and extend these representability theorems using a very general framework. In particular, we would like to have a representability theorem which holds in the model of derived analytic geometry recently proposed by Ben-Bassat, Kremnizer, and Kelly \cite{ben-bassat_perspective_2024}, building on previous work of Ben-Bassat and Kremnizer \cite{ben-bassat_non-archimedean_2017}\cite{ben-bassat_frechet_2023}, along with work with Bambozzi \cite{bambozzi_dagger_2016}\cite{bambozzi_stein_2018}\cite{bambozzi_sheafyness_2024} and Mukherjee \cite{kelly_analytic_2022}. This theory of derived analytic geometry is done relative, in the sense of To\"en and Vezzosi \cite{toen_homotopical_2008}, to the $(\infty,1)$-category of simplicial Ind-Banach $R$-modules for $R$ a Banach ring.

In algebraic and derived algebraic geometry, the main uses of these representability theorems are in proving that certain moduli problems are representable.  They are instrumental in, for example, constructing the derived moduli stack of elliptic curves \cite{lurie_survey_2009}. Other examples include the proof of representability of derived moduli stacks of polarised projective varieties, vector bundles, and abelian varieties \cite{pridham_derived_2012}. The representability theorem has also been used to prove representability of certain derived mapping stacks \cite{toen_homotopical_2008}, as well as representability of derived Picard stacks and derived Hilbert schemes \cite{lurie_derived_2012}.

\subsection*{The Artin-Lurie Representability Theorem}

In 1975, Artin \cite{artin_versal_1974} introduced his notion of an \textit{algebraic stack}, a generalisation of previous notions of Deligne and Mumford \cite{deligne_irreducibility_1969}. The main difference being that Deligne-Mumford algebraic stacks correspond to the case where you consider only \'etale morphisms, whereas Artin algebraic stacks consider smooth morphisms. Artin also provided a representability theorem; giving conditions for a functor valued in groupoids to be representable by an algebraic stack. These included certain conditions, such as possessing an obstruction theory, notably harder to define in an underived setting, as well as certain conditions on the diagonal morphism. 

The remaining challenge was to generalise the representability theorem to the derived setting. There are two main opposing theories for derived algebraic geometry, but they start off from a similar point of view. We start with the notion of a \textit{generalised ring} from which we can build up a theory of \textit{derived schemes}. One approach, due to Lurie \cite{lurie_higher_2017}, uses $\mathbb{E}_\infty$-rings. The second approach, of To\"en and Vezzosi \cite{toen_homotopical_2008}, uses simplicial commutative rings. These two approaches both have their own advantages depending on whether one is coming from an algebraic or topological viewpoint.

In \cite{lurie_derived_2012}, Lurie proved an analogue of Artin's theorem in the derived setting which he called the \textit{Spectral Artin Representability Theorem}. This theorem provides conditions for an $(\infty,1)$-functor $\mathcal{F}:\textcat{CAlg}^{cn}\rightarrow{\mathcal{S}}$ from the category of connective $\mathbb{E}_\infty$-algebras  to the category of spaces to be representable by a spectral Deligne-Mumford $n$-stack. These conditions include similar ones to before, as well as additional conditions endowing $\mathcal{F}$ with a `well-behaved deformation theory' \cite[p. 22]{lurie_derived_2012}. The conditions of Lurie are in general hard to verify. However, in \cite{pridham_representability_2012}, Pridham simplifies these conditions, providing several variants of the theory.

\subsection*{A Relative Approach to Derived Algebraic Geometry}

We note that, simplistically, geometry can be thought of as the combination of algebra and topology. For example, schemes can be glued together from affine schemes using the Zariski topology. In \cite{toen_au-dessous_2009}, To\"en and Vaqui\'e use this approach to define the theory of \textit{algebraic geometry relative to a category}. There is the following equivalence of categories
\begin{equation*}
    \textnormal{Aff}\simeq \textnormal{CRing}^{op}\simeq \textnormal{Comm}(\textnormal{Ab})^{op}
\end{equation*}where $\textnormal{Comm}(\textnormal{Ab})$ denotes the category of commutative monoids in $\textnormal{Ab}$. Therefore, to obtain geometry relative to some nice symmetric monoidal category $\textnormal{C}$ we can just replace $\textnormal{Ab}$ with $\textnormal{C}$, define the following category of affines
\begin{equation*}
    \textnormal{Aff}_\textnormal{C}:=\textnormal{Comm}(\textnormal{C})^{op}
\end{equation*}and then endow it with some suitable Grothendieck topology. 

In \cite{toen_homotopical_2005} and \cite{toen_homotopical_2008}, To\"en and Vezzosi generalise this idea to their theory of \textit{homotopical algebraic geometry}, geometry relative to a nice symmetric monoidal model category equipped with additional structure making it a \textit{homotopical algebraic geometry (HAG) context} \cite[Definition 1.3.2.13]{toen_homotopical_2008}. Suppose we have a fixed commutative ring $k$. Then, classical algebraic geometry can be recovered as geometry relative to the category of $k$-modules. To\"en and Vezzosi propose that homotopical algebraic geometry relative to the category of simplicial $k$-modules provides a good theory of derived algebraic geometry. Now the category of affine objects, which we will denote by $\textnormal{DAff}^{cn}_k$, is the opposite model category of the category of commutative simplicial $k$-algebras. They also provide general notions of flat, \'etale and smooth morphisms, as well as Zariski open immersions, which, in the case of derived algebraic geometry, all have explicit descriptions as `strong' versions of the standard notions.

The theory of algebraic $n$-stacks, or \textit{$n$-geometric stacks}, was developed by Simpson \cite{simpson_algebraic_1996} in 1996. He noticed that Artin's notion of an algebraic stack could be renamed an algebraic $1$-stack and that a theory of algebraic $n$-stacks could be inductively built up from there. He decided to call these stacks $n$-geometric stacks rather than algebraic $n$-stacks because he noticed they were more geometrically versatile as they looked locally like schemes.

In \cite{toen_homotopical_2008}, To\"en and Vezzosi generalise the notion of an $n$-geometric stack to the homotopy category of stacks on a model category endowed with a HAG structure. In particular, when one works relative to the category $\textnormal{Mod}_k$ of modules over a fixed commutative base ring $k$, one recovers Simpson's algebraic $n$-stacks. To\"en and Vezzosi specialised the representability theorem of Lurie to \textit{$D^-$-stacks}, objects in the homotopy category of stacks over the model category $\textnormal{DAff}_k$ endowed with the model \'etale topology. Their criteria gives conditions under which a $D^-$-stack is an $n$-geometric stack. We state it in full here. 

\begin{thm}\label{reptheoremtoen}
\cite[Theorem C.0.9]{toen_homotopical_2008} Let $\textnormal{F}$ be a $D^-$-stack. The following conditions are equivalent. \begin{enumerate}
    \item $\textnormal{F}$ is an $n$-geometric $D^-$-stack, 
    \item $\textnormal{F}$ satisfies the following three conditions:
    \begin{enumerate}
        \item The truncation $t_0(\textnormal{F})$ is an Artin $(n+1)$-stack, 
        \item $\textnormal{F}$ has an obstruction theory relative to $\textnormal{sMod}_{k,1}$, 
        \item For any $A\in \textnormal{sAlg}_k$, the natural morphism
        \begin{equation*}
            \mathbb{R}\textnormal{F}(A)\rightarrow{Holim}_n \mathbb{R}\textnormal{F}(A_{\leq n})
        \end{equation*}is an isomorphism in $\textnormal{Ho}(\textnormal{sSet})$. 
    \end{enumerate}
\end{enumerate}
\end{thm}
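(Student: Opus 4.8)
The plan is to prove the two implications separately. The direction $(1)\Rightarrow(2)$ is the formal one; the guiding principle for the converse is that the obstruction theory together with the homotopy-limit condition let one reconstruct the derived directions of $\textnormal{F}$ from its classical geometric structure, i.e.\ from the truncation. For $(1)\Rightarrow(2)$ I would first verify the three conditions for an affine $D^-$-stack $\mathbb{R}\mathrm{Spec}(A)$: (a) holds since its truncation is $\mathrm{Spec}(\pi_0 A)$, an affine scheme; (b) holds since $\mathbb{R}\mathrm{Spec}(A)$ carries the cotangent complex $\mathbb{L}_A$ with the expected finiteness; and (c) is the convergence of the Postnikov tower of $A$, valid for any simplicial commutative ring. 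I would then check that the three conditions are stable under the operations by which $n$-geometric stacks are assembled from affines --- disjoint unions, homotopy pullbacks, and quotients by smooth groupoid objects --- and conclude by induction on $n$. The one delicate point is the index shift in (a): truncating a smooth $n$-atlas of $\textnormal{F}$ produces an atlas of $t_0(\textnormal{F})$ one step higher in the Artin labelling, so ``$t_0$ of an $n$-geometric stack is an Artin $(n+1)$-stack'' must be carried through the induction along with its analogue for the fibre products occurring in the atlases.

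The content is in $(2)\Rightarrow(1)$, which I would prove by induction on $n$, the heart being the construction of a smooth atlas. Assume $\textnormal{F}$ satisfies (a), (b), (c). By (a), after a refinement step, I may choose an affine scheme $U_0$ together with a smooth surjection $U_0\to t_0(\textnormal{F})$. The goal is to produce an affine derived scheme $U=\mathbb{R}\mathrm{Spec}(A)$ with $\pi_0(A)=\mathcal{O}(U_0)$ and a smooth effective epimorphism $U\to \textnormal{F}$ restricting to $U_0\to t_0(\textnormal{F})$ on truncations. I would build $A$ stage by stage along its Postnikov tower: given the morphism $\mathbb{R}\mathrm{Spec}(\tau_{\leq m}A)\to \textnormal{F}$ constructed so far, the obstruction theory (b) controls the extension problem across the square-zero extension $\tau_{\leq m+1}A\to\tau_{\leq m}A$ in terms of the cohomology of the pullback of the cotangent complex of $\textnormal{F}$, and I would make the choices at each stage so that the obstructions vanish and the resulting morphism stays formally smooth; condition (c), applied to $A\simeq\mathrm{holim}_m\,\tau_{\leq m}A$, is exactly what promotes the resulting compatible tower of maps to an honest morphism $U\to \textnormal{F}$. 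Smoothness of the tower gives smoothness of $U\to \textnormal{F}$, and surjectivity on truncations makes it an effective epimorphism.

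It then remains to see that the \v{C}ech nerve of $U\to \textnormal{F}$ is levelwise $(n-1)$-geometric, which I would obtain by applying the inductive hypothesis to $U\times_{\textnormal{F}}U$ and its iterates; this forces a check that (a), (b), (c) descend to these homotopy fibre products --- (b) because an obstruction theory pulls back along any morphism, (c) because the homotopy-limit condition is stable under homotopy fibre products, and (a) because $t_0(U\times_{\textnormal{F}}U)=U_0\times_{t_0(\textnormal{F})}U_0$ is already an Artin $n$-stack, being a pullback of the smooth atlas $U_0\to t_0(\textnormal{F})$. Once the \v{C}ech nerve is levelwise $(n-1)$-geometric, $\textnormal{F}$ is the geometric realisation of a smooth $(n-1)$-geometric groupoid with an effective-epimorphism atlas, hence $n$-geometric. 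In this particular $D^-$-setting one could alternatively match the hypotheses to Lurie's representability theorem and invoke it, but I would prefer the direct argument since it is the version that ports to the general representability context.

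The step I expect to be the main obstacle is the Postnikov-induction atlas construction: one must simultaneously arrange existence of the stagewise extensions, preservation of formal smoothness in the limit, and effective epimorphy, while working only with the abstract obstruction-theory package of (b). Much of the real work is therefore in extracting from (b) a genuine, functorial cotangent complex for $\textnormal{F}$ --- and for the morphisms appearing in the \v{C}ech nerve --- with enough finiteness to make the smoothness argument and the induction on fibre products close up; in the relative and, later, the general representability-context settings this is precisely where the structural axioms of the ambient context must be invoked, and adapting the argument to that generality is the substance of the paper.
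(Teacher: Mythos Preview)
First, a clarification: this theorem is only \emph{cited} in the paper's introduction as To\"en--Vezzosi's result; it is not proved there. The paper's own argument is the proof of the generalisation, Theorem~\ref{representabilitytheorem}, and that is what your proposal should be measured against.

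Your strategy for $(2)\Rightarrow(1)$ has the right core --- induction on $n$, Postnikov-tower lifting of a classical atlas using the obstruction theory, and nilcompleteness to pass to the limit --- and this is precisely what the paper does (see Lemma~\ref{Usequence} and the inductive step that follows). The organisational difference is that you construct $U\to\mathcal{F}$ first and then check the \v{C}ech nerve is levelwise $(n-1)$-geometric, appealing to the groupoid-quotient characterisation of $n$-geometric stacks; the paper instead \emph{first} shows the diagonal $\mathcal{F}\to\mathcal{F}\times\mathcal{F}$ is $(n-1)$-representable (by verifying that each $\mathcal{F}\times_{\mathcal{F}\times\mathcal{F}}X$ satisfies (a)--(c) at level $n-1$ and invoking the inductive hypothesis), and only then builds the atlas. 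The paper's order buys something concrete: once the diagonal is $(n-1)$-representable, any map $U\to\mathcal{F}$ from a representable is automatically $(n-1)$-representable (Proposition~\ref{coolcorollary}), so the Postnikov construction need only arrange $[\mathbb{L}_{U/\mathcal{F},u},M]=0$ and then Assumption~(\ref{repcontextP}) immediately gives $U\to\mathcal{F}\in(n-1)\textcat{-P}$. In your order you must still explain why the face maps $U\times_{\mathcal{F}}U\rightrightarrows U$ lie in $(n-1)\textcat{-P}$ (not merely formally smooth), and you also do not separate out the base case $n=-1$, which in the paper is handled differently: one constructs $U$ with $\mathbb{L}_{U/\mathcal{F}}\simeq 0$ and shows directly that $U\simeq\mathcal{F}$ by climbing the Postnikov tower of an arbitrary test algebra, rather than producing an atlas.

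For $(1)\Rightarrow(2)$ the paper does not argue by stability under disjoint unions, pullbacks, and groupoid quotients as you suggest. It proves directly, by induction on $n$, that any $n$-representable morphism has an obstruction theory (Theorem~\ref{obstructiongeometric}) and is nilcomplete (Proposition~\ref{postnikovnilcomplete}); the induction carries along an auxiliary statement about the vanishing of $[\mathbb{L}_{U/\mathcal{F},x},M]$ for $n\textcat{-P}$-morphisms, which is what makes the step close. Your sketch is not wrong, but it is vaguer than what the paper actually does.
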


\subsection*{Derived Analytic Geometry}

It would be useful to provide a similar categorical framework for derived analytic geometry. Indeed, Lurie has an approach to derived complex analytic geometry, detailed in \cite[Section 4.4]{lurie_derived_2009-2}, built up using the pregeometry associated to the $(\infty,1)$-category of finite dimensional Stein manifolds. 

Porta and Yue Yu's formulation of derived analytic geometry is based on this theory of Lurie's but also includes an approach to derived non-archimedean analytic geometry \cite{porta_derived_2018}. Their representability theorem \cite[Theorem 7.1]{porta_representability_2020} shares many similarities with To\"en and Vezzosi's. They instead consider a notion of an $n$-geometric stack $\mathcal{F}$ with respect to the $(\infty,1)$-category $\textcat{dAfd}_k$ of derived $k$-affinoid spaces when $k$ is non-archimedean, or the $(\infty,1)$-category $\textcat{dAfd}_\mathbb{C}$ of derived Stein spaces in the case when $k=\mathbb{C}$, see \cite[Definition 2.4]{porta_representability_2020}.  

There are several limitations to the Porta and Yue Yu theory, as detailed in \cite{ben-bassat_perspective_2024}. In particular, it is hard to formulate a definition and work with quasi-coherent sheaves on their derived analytic spaces. Recent work of Ben-Bassat, Kelly, and Kremnizer circumnavigates these problems by working with a more relative approach, motivated by the work of To\"en and Vezzosi \cite{toen_homotopical_2008} and Raksit \cite{raksit_hochschild_2020}. 

In \cite{ben-bassat_perspective_2024}, they present a formulation of geometry relative to an \textit{$(\infty,1)$-algebra context}. This is essentially a locally presentable symmetric monoidal $(\infty,1)$-category $\mathcal{C}$ equipped with a graded monad $\textcat{D}$. Our algebras will then be algebras over this monad. Most of the examples we will be interested in will be \textit{derived algebraic contexts} in the sense of Raksit \cite[Definition 4.2.1]{raksit_hochschild_2020} where we use the $\textcat{LSym}$-monad. When equipped with a suitable Grothendieck topology $\bm\tau$ and a class of maps $\textcat{P}$, these contexts generalise the HAG contexts of To\"en and Vezzosi. In particular, we can define the notion of an $n$-geometric stack in these contexts. 

In choosing an appropriate category to work with for derived analytic geometry, some natural choices would include, for $R$ a Banach ring, the category $\textnormal{Ban}_R$ of Banach modules over $R$, or $\textnormal{Fr}_R$, the category of Fr\'echet modules over $R$. Indeed, we note that any affinoid algebra and any Stein algebra can be naturally considered as a Fr\'echet algebra. However, the category $\textnormal{Fr}_R$ has several limitations, as it is neither complete nor cocomplete and does not have enough projectives. The category $\textnormal{Fr}_R$ sits fully faithfully within the category $\textnormal{CBorn}_R$, the category of complete bornological spaces over $R$, see Appendix \ref{appendixbornological}. This category is an elementary closed symmetric monoidal exact, in fact quasi-abelian, category with symmetric projectives. Moreover, it sits fully faithfully within the category $\textnormal{Ind}(\textnormal{Ban}_R)$, the category of Ind-Banach spaces over $R$. Therefore, analogously to how we pass from rings to simplicial commutative rings when passing from algebraic geometry to derived algebraic geometry, we could consider derived analytic geometry as geometry relative to the categories of simplicial objects in $\textnormal{Ind}(\textnormal{Ban}_R)$ or $\textnormal{CBorn}_R$, for $R$ a Banach ring. This is the perspective taken in \cite{ben-bassat_perspective_2024}.

\subsection*{A Representability Theorem for Derived Stacks}

In Section \ref{chapter1}, we begin by fixing our notation for what we mean by an $\infty$-stack, which should be an $(\infty,1)$-presheaf valued in $\infty$-groupoids, or spaces, satisfying descent for certain hypercovers. We then prove certain useful results concerning functoriality of stacks which will be used in Section \ref{chapter5} when we are considering truncations of stacks. 

In Section \ref{hagsection}, we introduce the notion of an $(\infty,1)$-relative geometry tuple $(\mathcal{M},\bm\tau,\textcat{P},\mathcal{A})$. This bears many similarities to the homotopical algebraic geometry contexts of To\"en and Vezzosi \cite{toen_homotopical_2008}. The category $\mathcal{M}$ will be a large ambient category in which our affines of interest, $\mathcal{A}$, lie as a full subcategory. The topology $\bm\tau$ and the class of maps $\textcat{P}$ allow us to define what we mean by an $n$-geometric stack. These are inductively defined starting with the definition that a $(-1)$-geometric stack on $\mathcal{A}$ is one of the form $\textnormal{Map}_\mathcal{M}(-,X)$ for some $X\in\mathcal{M}$. We also explore conditions under which $n$-geometric stacks are preserved under adjunctions. In particular, several conditions can be relaxed if $\mathcal{A}$ is closed under $\bm\tau$-descent, which in particular implies that any stack with an $n$-atlas is $n$-geometric. 

In Section \ref{chapter3}, we state the notion of a \textit{derived algebraic context}, due to Raksit \cite{raksit_hochschild_2020}. This consists of a stable $(\infty,1)$-category $\mathcal{C}$ equipped with a $t$-structure $(\mathcal{C}_{\geq 0},\mathcal{C}_{\leq 0})$ such that $\mathcal{C}_{\geq 0}$ is generated by a certain set of compact projective generators $\mathcal{C}^0$. In particular, we note that if $\textnormal{E}$ is a nice enough exact category, see Theorem \ref{modelderivedtheorem}, endowed with a collection $\textnormal{P}^0$ of compact projective generators, then the category of simplicial objects in $\textnormal{E}$ forms a homotopical algebraic context, \cite[Definition 1.1.0.11]{toen_homotopical_2008}. Furthermore,
\begin{equation*}
    (\textcat{L}^H(\textnormal{Ch}(\textnormal{E})),\textcat{L}^H(\textnormal{Ch}_{\geq 0}(\textnormal{E})),\textcat{L}^H(\textnormal{Ch}_{\leq 0}(\textnormal{E})),\textcat{L}^H(\textnormal{P}^0))
\end{equation*}is a derived algebraic context, where $\textcat{L}^H(-)$ denotes the associated $(\infty,1)$-category. In particular, when we take $\textnormal{E}=\textnormal{Mod}_k$ for $k$ a commutative ring, then we obtain the context where the connective objects are simplicial commutative $k$-modules. The exact category $\textnormal{E}=\textnormal{Ind}(\textnormal{Ban}_R)$ also satisfies the required properties on $\textnormal{E}$. 

In these derived algebraic contexts, we can construct a monad $\textcat{LSym}_\mathcal{C}$ and hence obtain an $(\infty,1)$-category of \textit{derived commutative algebra objects}, which we denote by $\textcat{DAlg}(\mathcal{C})$. Motivated by the constructions in \cite{lurie_higher_2017} and \cite{toen_homotopical_2008}, we can define the notions of square-zero extensions, cotangent complexes, and infinitesimal extensions for the subcategory $\textcat{DAlg}^{cn}(\mathcal{C})$ of connective algebra objects. These satisfy the expected properties, and in particular allow us to define notions of formally smooth, \'etale, and perfect morphisms as in \cite{toen_homotopical_2008}, generalising the classical notions. 

In Section \ref{obstructionsection}, we combine our definitions of derived algebraic contexts and $(\infty,1)$-relative geometry tuples to obtain what we call a \textit{derived geometry context} $(\mathcal{C},\mathcal{C}_{\geq 0},\mathcal{C}_{\leq 0},\mathcal{C}^0,\bm\tau,\textcat{P},\mathcal{A},\textcat{M})$, where $\textcat{M}$ is a \textit{good system of categories of modules on $\mathcal{A}$}, in the sense of Definition \ref{goodsystemmodules}. In this setting, we can define cotangent complexes of stacks. If a stack $\mathcal{F}$ has a \textit{global cotangent complex} and preserves infinitesimal extensions, then we say that $\mathcal{F}$ has an \textit{obstruction theory}, resulting in it having nice lifting properties with respect to infinitesimal extensions. In Section \ref{obstructionsection}, we also prove one of the main results of the paper
\begin{thm}(Theorem \ref{obstructiongeometric})
    Suppose that $\bm\tau$ and $\textcat{P}$ satisfy the obstruction conditions of Definition \ref{artinsconditions} relative to $\mathcal{A}$ for a class of morphisms $\textcat{S}$. If $f:\mathcal{F}\rightarrow\mathcal{G}$ is an $n$-representable morphism of stacks in $\textcat{Stk}(\mathcal{A},\bm\tau|_\mathcal{A})$, then it has an obstruction theory. 
\end{thm}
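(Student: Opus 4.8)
The plan is to argue by induction on the geometricity level $n$, after reducing everything to an absolute statement. The relative cotangent complex of a morphism and its infinitesimal-lifting property can both be tested after base change along affine points of the target; since $f$ being $n$-representable means exactly that for every $X\in\mathcal{A}$ and every morphism $X\to\mathcal{G}$ the fibre product $\mathcal{F}_X:=\mathcal{F}\times_\mathcal{G} X$ is an $n$-geometric stack equipped with an $n$-atlas, it suffices to prove: every $n$-geometric stack $\mathcal{F}$ over an affine base $X\in\mathcal{A}$ has a global relative cotangent complex $L_{\mathcal{F}/X}$ lying in the module category attached to $\mathcal{F}$, and preserves infinitesimal extensions. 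The base case $n=-1$, where $\mathcal{F}$ is representable by some $A\in\mathcal{A}$, is precisely the deformation theory of $\textcat{DAlg}^{cn}(\mathcal{C})$ recalled in Section~\ref{chapter3}: there $L_{\mathcal{F}/X}$ is the cotangent complex of the corresponding morphism of derived commutative algebra objects, it corepresents the relative derivation functor, square-zero and hence (by a filtration argument) infinitesimal extensions are preserved, and the connectivity estimates place $L_{\mathcal{F}/X}$ in the class $\textcat{E}$.

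For the inductive step, assume the statement for $(n-1)$-representable morphisms, let $\mathcal{F}$ be $n$-geometric over the affine $X$, and fix an $n$-atlas $p\colon U=\coprod_i U_i\to\mathcal{F}$ with each $U_i\in\mathcal{A}$, with $p\in\textcat{P}$, and with $p$ an epimorphism of stacks; write $U^{\bullet}$ for its {\v C}ech nerve, so that $\mathcal{F}\simeq\operatorname{colim}U^{\bullet}$ in $\textcat{Stk}(\mathcal{A},\bm\tau|_\mathcal{A})$ and each $U^m$ is $(n-1)$-geometric over $X$. Applying the inductive hypothesis to the morphism $p$ itself, which is $(n-1)$-representable and lies in $\textcat{P}$, produces a relative cotangent complex $L_{U/\mathcal{F}}$ in $\textcat{E}$, while the base case applied to $U^{\bullet}\to X$ produces the complexes $L_{U^m/X}$. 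I would then construct $L_{\mathcal{F}/X}$ by descent along $U^{\bullet}$: the fibre $\operatorname{fib}(L_{U/X}\to L_{U/\mathcal{F}})$ of the transitivity cofibre sequence carries canonical descent data, using flatness of $\textcat{P}$-morphisms for the base-change comparisons on the higher simplices, and, because the good system of modules $\textcat{M}$ satisfies $\bm\tau$-descent, this data descends to an object $L_{\mathcal{F}/X}$ of the module category of $\mathcal{F}$ with $p^{*}L_{\mathcal{F}/X}\simeq\operatorname{fib}(L_{U/X}\to L_{U/\mathcal{F}})$. One then checks that $L_{\mathcal{F}/X}$ corepresents the relative derivation functor by lifting derivations along $p$, where the formal smoothness of $\textcat{P}$-morphisms, packaged into the obstruction conditions of Definition~\ref{artinsconditions}, is essential, and patching the resulting identifications over $U^{\bullet}$; the Tor-amplitude bounds inherited from $L_{U/X}$ and $L_{U/\mathcal{F}}$ keep $L_{\mathcal{F}/X}$ inside $\textcat{E}$.

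Preservation of infinitesimal extensions is handled in the same spirit. Given a square-zero extension $A'\to A$ in $\mathcal{A}$ by a connective module, a point of $\mathcal{F}(A)$ together with the relevant datum over the trivial extension determines, after refining along the atlas and using that a morphism in $\textcat{P}$ which is additionally an epimorphism of stacks admits the requisite local lifts, a compatible point of $U^{\bullet}(A)$ carrying matching data; at that level the lifting problem is solved termwise by the inductive hypothesis for the $U^m$, and reassembling via $\mathcal{F}\simeq\operatorname{colim}U^{\bullet}$ yields the desired point of $\mathcal{F}(A')$, unique up to the expected torsor under $\pi_0$ of the relevant mapping space. Combining the two parts shows that every $n$-geometric stack over an affine base has an obstruction theory, and the reduction at the start upgrades this to the asserted statement for $n$-representable morphisms.

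I expect the main obstacle to be the descent construction of $L_{\mathcal{F}/X}$ in the inductive step, and in particular the verification that the descended object genuinely corepresents relative derivations: this forces one to use at once the formal smoothness of $\textcat{P}$-morphisms, $\bm\tau$-descent for the module categories $\textcat{M}$, and the connectivity and Tor-amplitude bookkeeping that keeps everything inside $\textcat{E}$, all arranged naturally over the simplicial object $U^{\bullet}$. The remaining steps are essentially unwindings of the definitions of Sections~\ref{hagsection}, \ref{chapter3}, and \ref{obstructionsection}.
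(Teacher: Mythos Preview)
Your overall reduction (replace $\mathcal{G}$ by an affine, prove the statement for $n$-geometric stacks by induction) matches the paper's, but the inductive step has genuine gaps. First, you repeatedly speak of the cotangent complex ``lying in the class $\textcat{E}$'' and of ``Tor-amplitude bounds keeping $L_{\mathcal{F}/X}$ inside $\textcat{E}$''; but in Definition~\ref{artinsconditions}, $\textcat{E}$ is a class of formally \'etale morphisms in $\mathcal{A}$, not a class of modules, so these statements are ill-typed. More seriously, your descent construction of $L_{\mathcal{F}/X}$ invokes ``flatness of $\textcat{P}$-morphisms'' and ``$\bm\tau$-descent for the good system $\textcat{M}$'', neither of which is part of the obstruction conditions or of Definition~\ref{goodsystemmodules}. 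The only smoothness hypothesis available is formal $i$-smoothness (Condition~(\ref{artin0})), which is far weaker than flatness, and no descent hypothesis on $\textcat{M}$ is assumed anywhere in the theorem. So the descent-along-the-\v{C}ech-nerve construction of the cotangent complex, while standard in other settings, cannot be carried out here.

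The paper sidesteps the construction of $\mathbb{L}_{\mathcal{F}/X}$ entirely. The key observation you are missing is Corollary~\ref{corollarygeometricinf}: because an $n$-geometric stack has $(n-1)$-representable diagonal, it has an obstruction theory \emph{if and only if} it is infinitesimally cartesian. Thus one only needs to check that $\mathcal{F}(A\oplus_d\Omega M)\to\mathcal{F}(A)\times_{\mathcal{F}(A\oplus M)}\mathcal{F}(A)$ is an equivalence; the cotangent complex then comes for free from Lemma~\ref{diagonalglobalcotangent}. The paper proves this by a simultaneous induction on a second statement (Statement~(\ref{homitem})): for an $n$-$\textcat{P}$-morphism $U\to\mathcal{F}$ and any affine point $x$, after passing to an $\textcat{E}$-cover one has $\pi_0(\textnormal{Map}(\mathbb{L}_{U/\mathcal{F},x},M))=0$ for all $M\in\textcat{M}_{A',1}$. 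This auxiliary statement is what allows one to lift points from $\mathcal{F}$ to an atlas chart $U$ (Sublemma~\ref{liftsublemma}), after which the infinitesimal-cartesian property of $U\to\mathcal{F}$ (inductive hypothesis) and of $U$ itself (representable case) finish the argument. Condition~(\ref{artin5}) of the obstruction conditions is used precisely to produce the requisite $\textcat{E}$-covers of $X_d[\Omega M]$ from covers of $X$. Your proposal does not isolate this auxiliary vanishing statement, and without it the lifting-along-the-atlas step in your final paragraph cannot be justified.
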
In particular, in this setting any $n$-geometric stack has an obstruction theory. As a result of this theorem, we can prove conditions under which $n$-geometric stacks are \textit{nilcomplete}, i.e. compatible with Postnikov towers. 

In Section \ref{chapter5}, we introduce our notion of a \textit{representability context} and prove our representability theorem in these contexts. We note that there is an adjunction 
\begin{equation*}
    \iota:\textcat{DAff}^\heartsuit(\mathcal{C}):=\textcat{DAlg}^\heartsuit(\mathcal{C})^{op}\leftrightarrows\textcat{DAlg}^{cn}(\mathcal{C})^{op}=:\textcat{DAff}^{cn}(\mathcal{C}):t_0
\end{equation*}where $t_0$ is the truncation functor arising from the $t$-structure on $\mathcal{C}$. Our representability context, see Definition \ref{repcontextdefn}, consists of a derived geometry context $(\mathcal{C},\mathcal{C}_{\geq 0},\mathcal{C}_{\leq 0},\mathcal{C}^0,\bm\tau,\textcat{P},\mathcal{A},\textcat{M})$ along with a class of morphisms $\textcat{S}$ satisfying certain conditions. In particular, we have a strong relative $(\infty,1)$-geometry tuple $(\mathcal{A},\bm\tau|_\mathcal{A},\textcat{P}|_\mathcal{A},\mathcal{A}^\heartsuit)$ where $\mathcal{A}^\heartsuit$ is the subcategory of $\textcat{DAff}^\heartsuit(\mathcal{C})$ consisting of truncations of objects in $\mathcal{A}$. We also have other natural expected conditions such as a `strongness' assumption on morphisms in $\textcat{P}$, and the assumption that $\bm\tau$ and $\textcat{P}$ satisfy the obstruction conditions relative to $\mathcal{A}$ for the class $\textcat{S}$ of morphisms. 

We prove the following representability theorem by lifting an atlas of the truncated stack $t_0(\mathcal{F})$ to an atlas for $\mathcal{F}$. 
\begin{thm}(Theorem \ref{representabilitytheorem}) Suppose that $(\mathcal{C},\mathcal{C}_{\geq 0},\mathcal{C}_{\leq 0},\mathcal{C}^0,\bm\tau,\textcat{P},\mathcal{A},\textcat{M},\textcat{S})$ is a representability context and that $\mathcal{F}$ is a stack in $\textcat{Stk}(\mathcal{A},\bm{\tau}|_{\mathcal{A}})$. The following conditions are equivalent. 
\begin{enumerate}
    \item $\mathcal{F}$ is an $n$-geometric stack relative to $\mathcal{A}$,
    \item $\mathcal{F}$ satisfies the following three conditions:
    \begin{enumerate}
\item The truncation $t_0(\mathcal{F})$ is an $n$-geometric stack relative to $\mathcal{A}^\heartsuit$,
        \item $\mathcal{F}$ has an obstruction theory relative to $\mathcal{A}$,
        \item For every $X=\textnormal{Spec}(A)\in\mathcal{A}$, $\mathcal{F}(A)\simeq\varprojlim_k\mathcal{F}(A_{\leq k})$.
        \end{enumerate}
\end{enumerate}
\end{thm}

Finally, in Section \ref{topologiessection}, we give several examples of these representability contexts. In particular, when we choose our topology to be a finite homotopy monomorphism topology $\bm{hm}^{fin}$ and when we choose our class of maps $\textcat{fP}^{\bm{hm}^{fin}}$ to be maps which, locally for this topology, are formally perfect, then we can simplify the conditions for a tuple to be a representability context. In particular, in the case where $\mathcal{C}=\textcat{Ch}(\textnormal{Ind}(\textnormal{Ban}_\mathbb{C}))$, we obtain a suitable representability context to discuss complex analytic geometry. 

\subsection*{Applications}

In future work, we will be motivated primarily by studying derived moduli stacks. Indeed, we can make the following simple definitions. Suppose that we have a category of stacks $\textcat{Stk}(\mathcal{C},\bm\tau)$ where $\mathcal{C}$ is an $(\infty,1)$-category and $\bm\tau$ is a Grothendieck pre-topology on $\textnormal{Ho}(\mathcal{C})$. Suppose that $G=\textnormal{Spec}(H)$ is a group object, i.e. $H$ is a Hopf algebra.

\begin{defn}Suppose that $\mathcal{F}$ is a stack in $\textcat{Stk}(\mathcal{C},\bm\tau)$ and $G$ is a group object. Let
\begin{equation*}
    {G}\times\mathcal{F}\rightarrow{\mathcal{F}}
\end{equation*}be an action of $G$ on $\mathcal{F}$. Then, define the \textit{quotient stack}
\begin{equation*}
    [\mathcal{F}/\mathcal{G}]:=\varinjlim_{n\geq 0}G^n\times\mathcal{F}
\end{equation*}

\end{defn}
This allows us to define the stack $\textcat{BG}:=[*/G]$ and the \textit{moduli stack of $G$-bundles on a stack}
\begin{equation*}
    \textcat{Bun}_G(\mathcal{F}):=\textnormal{Map}_{\textcat{Stk}(\mathcal{C},\bm\tau)}(\mathcal{F},\textcat{BG})
\end{equation*}
Moreover, we can define the \textit{moduli stack of $n$-dimensional representations of $G$} by 
\begin{equation*}
    \textcat{Rep}_n(G):=\textnormal{Map}_{\textcat{Stk}(\mathcal{C},\bm\tau)}(\textcat{BG},\textcat{BGL}_n)
\end{equation*}
In future research, we will explore conditions under which, given such a representability context $(\mathcal{C},\mathcal{C}_{\geq 0},\mathcal{C}_{\leq 0},\mathcal{C}^0,\bm\tau,\textcat{P},\mathcal{A},\textcat{M},\textcat{S})$ and a stack $\mathcal{F}\in\textcat{Stk}(\mathcal{A},\bm\tau|_{\mathcal{A}})$, we get representable moduli stacks of $G$-bundles or moduli stacks of $G$-representations. In particular, we will look at moduli stacks of Galois representations. Other examples where we could apply our representability theorem could be in studying the moduli stack of instantons, the moduli stack of complex structures on Riemannian manifolds, or the moduli stack of solutions to non-linear elliptic Fredholm partial differential equations.

\subsection*{Connections with Condensed Mathematics}

Some differences and similarities between the theories of bornological and condensed mathematics are highlighted in \cite{ben-bassat_perspective_2024}. We note that the category of analytic rings, in the sense of Clausen and Scholze \cite{scholze_lectures_nodate} is not quite a full subcategory of commutative monoids in some symmetric monoidal category and therefore cannot be dealt with in a similar way. However, we expect that the theory of geometric stacks lines up in the analytic setting. There is yet to be a representability theorem in this setting.

\subsection*{Acknowledgements}

I am very grateful for the support and guidance of Jack Kelly and my supervisor Kobi Kremnizer throughout this whole project. Thanks also to Sof\'ia Marlasca Aparicio for several helpful discussions.

\subsection*{Funding}

This research was conducted during the author's PhD at the University of Oxford, funded by an EPSRC studentship [EP/W523781/1 - no. 2580843].

\addtocontents{toc}{\setcounter{tocdepth}{2}}

\section*{Notation}
In this paper we will use the following notation.
\begin{itemize}
    \item We will use normal font $\textnormal{C}$ and $\textnormal{Mod}_A$ to denote ordinary categories, and normal font $\textnormal{F}$ for ordinary stacks, 
    \item We will use mathcal font $\mathcal{C}$ and bold font $\textcat{Mod}_A$ to denote $(\infty,1)$-categories, and mathcal font $\mathcal{F}$ for higher stacks. 
    \item We will use the letters $A,B$ for elements of a category $\mathcal{C}$, and $X,Y$ for elements of a category $\mathcal{A}$. We tend to think of our category $\mathcal{A}$ as our relevant category of affines. 
\end{itemize}

\section{$(\infty,1)$-Categories and $\infty$-Stacks}\label{chapter1}

In this section, we will fix what we mean by an `$\infty$-stack', a notion which can mean many different things in the literature. We will then prove certain functoriality results. 

\subsection{$(\infty,1)$-Categories}

There are various models for an $(\infty,1)$-category, such as quasi-categories \cite{lurie_higher_2009}, Segal spaces \cite{segal_categories_1974}, complete Segal spaces \cite{rezk_model_2000} or simplicial categories \cite{toen_homotopical_2005}. These specific models all have different advantages depending on what context you are working in, but are equivalent to one another in the sense that they can be connected by chains of Quillen equivalent model categories \cite{bergner_survey_2006}.

In \cite{gaitsgory_study_2019}, Gaitsgory and Rozenblyum work with the philosophy that `one believes that the notion of an $(\infty,1)$-category exists, and all one needs to know is how to use the words correctly'. We will use this philosophy in this thesis and the reader should feel free to use whichever model for an $(\infty,1)$-category they feel most comfortable with. However, it will at times be clearer and more useful to fix a model for an $(\infty,1)$-category, and in this case we will take our $(\infty,1)$-categories to be simplicial categories along with the model structure described by Bergner in \cite[Theorem 1.1]{bergner_model_2006}.

We note that any \textit{relative category} $\textnormal{M}$, i.e. a category with weak equivalences, presents an $(\infty,1)$-category $\textcat{L}^H(\textnormal{M})$ known as its \textit{hammock localisation} \cite[Section 2]{dwyer_calculating_1980}. If $\textnormal{M}$ is a simplicial model category, then the hammock localisation $\textcat{L}^H(\textnormal{M})$ is connected by an equivalence of $(\infty,1)$-categories to the simplicial nerve $\textcat{N}(\textnormal{M}^\circ)$ (see \cite[Definition 1.1.5.5]{lurie_higher_2009}) of the subcategory $\textnormal{M}^\circ$ of $\textnormal{M}$ consisting of fibrant-cofibrant objects. 

There is in fact an intimate connection between combinatorial simplicial model categories and certain $(\infty,1)$-categories known as locally presentable $(\infty,1)$-categories \cite[Definition 5.5.0.1]{lurie_higher_2009}.
\begin{prop}\phantomsection\label{locallypresentablemodel}\cite[c.f. Proposition A.3.7.6]{lurie_higher_2009}\cite[c.f. Proposition 4.8] {dwyer_function_1980} Suppose that $\mathcal{M}$ is an $(\infty,1)$-category. Then, the following two conditions are equivalent, 
\begin{enumerate}
    \item The $(\infty,1)$-category $\mathcal{M}$ is locally presentable, 
    \item There exists a combinatorial simplicial model category $\textnormal{M}$ and an equivalence $\mathcal{M}\simeq \textcat{L}^H(\textnormal{M})$,
\end{enumerate}We will call $\mathcal{M}$ the underlying $(\infty,1)$-category of $\textnormal{M}$.

\end{prop}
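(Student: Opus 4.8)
The plan is to establish the two implications separately, using the presentation theorem for combinatorial model categories (due to Dugger) as the bridge between the $1$-categorical and $(\infty,1)$-categorical sides. Throughout, I would identify the underlying $(\infty,1)$-category $\textcat{L}^H(\textnormal{C})$ of a combinatorial simplicial model category $\textnormal{C}$ with the homotopy coherent nerve of the full simplicial subcategory of $\textnormal{C}$ spanned by the fibrant-cofibrant objects; that these two models agree is essentially the Dwyer--Kan comparison of function complexes \cite[c.f. Proposition 4.8]{dwyer_function_1980}, and it has the useful consequence that a Quillen equivalence of simplicial model categories induces an equivalence of the associated $(\infty,1)$-categories.

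For $(2)\Rightarrow(1)$, suppose $\textnormal{C}$ is a combinatorial simplicial model category. By the presentation theorem, $\textnormal{C}$ is Quillen equivalent to a left Bousfield localization, at a set $\textnormal{S}$ of morphisms, of the projective model structure on simplicial presheaves on a small simplicial category. The underlying $(\infty,1)$-category of this projective model structure is a presheaf $(\infty,1)$-category $\textnormal{Fun}(\mathcal{D}^{op},\mathcal{S})$ for a small $(\infty,1)$-category $\mathcal{D}$, which is locally presentable. On underlying $(\infty,1)$-categories, passing to the left Bousfield localization at $\textnormal{S}$ corresponds to the accessible reflective localization of $\textnormal{Fun}(\mathcal{D}^{op},\mathcal{S})$ at the strongly saturated class generated by the image of $\textnormal{S}$; since accessible localizations of locally presentable $(\infty,1)$-categories remain locally presentable, and the Quillen equivalence preserves underlying $(\infty,1)$-categories, $\textcat{L}^H(\textnormal{C})$ is locally presentable.

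For $(1)\Rightarrow(2)$, suppose $\mathcal{C}$ is locally presentable. Then there is an accessible reflective localization $L\colon\textnormal{Fun}(\mathcal{D}^{op},\mathcal{S})\to\mathcal{C}$ of a presheaf $(\infty,1)$-category on a small $(\infty,1)$-category $\mathcal{D}$. Choosing a small simplicial category presenting $\mathcal{D}$, the projective model structure on the corresponding category of simplicial presheaves is a combinatorial simplicial model category whose underlying $(\infty,1)$-category is $\textnormal{Fun}(\mathcal{D}^{op},\mathcal{S})$. Because $L$ is accessible, its strongly saturated class of morphisms is generated by a small set $\textnormal{S}$; the left Bousfield localization of the projective simplicial-presheaf model structure at $\textnormal{S}$ then exists, is again combinatorial, and retains its simplicial enrichment, since left Bousfield localization alters only the weak equivalences and fibrations while the pushout-product axiom for the unchanged tensoring over $\textnormal{sSet}$ continues to hold. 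Its underlying $(\infty,1)$-category is precisely the accessible localization $\mathcal{C}$, which completes the argument.

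The step I expect to be the main obstacle is the precise matching, in both directions, between an accessible reflective localization at the level of $(\infty,1)$-categories and a left Bousfield localization at a \emph{set} of morphisms at the level of model categories: the accessibility hypothesis is exactly what guarantees the localization is generated by a set rather than a proper class, and hence that the left Bousfield localization exists and stays combinatorial. The surrounding bookkeeping --- that the projective model structure on simplicial presheaves genuinely computes the $(\infty,1)$-presheaf category, that the presentation theorem applies in the combinatorial-simplicial setting, and that the simplicial structure is preserved under localization --- is standard, but each point deserves a precise reference rather than being taken for granted.
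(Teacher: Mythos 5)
The paper offers no proof of this proposition—it is stated as a citation to Lurie's Proposition A.3.7.6 and to Dwyer--Kan—and your argument is precisely the standard proof of that cited result: Dugger's presentation theorem plus the dictionary between left Bousfield localizations at a set of maps and accessible reflective localizations. Your reconstruction is correct (the only point worth making explicit is that existence of the left Bousfield localization in the $(1)\Rightarrow(2)$ direction uses left properness of the projective model structure on simplicial presheaves, which holds), so there is nothing to add.
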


We define the category of $\infty$-groupoids, $\infty\textcat{Grpd}$, to be $\textcat{L}^H(\textnormal{sSet}_Q)$ where $\textnormal{sSet}_Q$ denotes the category of simplicial sets endowed with the standard Quillen model structure.  There is a Quillen adjunction \begin{equation*}
     i:\textnormal{Set}_Q\leftrightarrows{\textnormal{sSet}_Q}:\pi_0
\end{equation*}
which induces an adjunction of $(\infty,1)$ categories
\begin{equation*}
     i:\textnormal{Set}\leftrightarrows{\infty\textcat{Grpd}}:\pi_0
\end{equation*}

We can use this definition of $\pi_0$ to define an associated ordinary category, the \textit{homotopy category}, to any $(\infty,1)$-category.

\begin{defn}\cite[c.f. Definition 1.1.3.2]{lurie_higher_2009}
Suppose that $\mathcal{M}$ is an $(\infty,1)$-category. Then, the \textit{homotopy category} $\textnormal{Ho}(\mathcal{M})$ has the same objects as $\mathcal{M}$ but, for $X,Y\in\mathcal{M}$, 
\begin{equation*}
    \textnormal{Hom}_{\textnormal{Ho}(\mathcal{M})}(X,Y)=\pi_0\textnormal{Map}_\mathcal{M}(X,Y)
\end{equation*}Composition of morphisms in $\textnormal{Ho}(\mathcal{M})$ is induced from composition of morphisms in $\mathcal{M}$ by applying $\pi_0$. 
\end{defn}

\subsection{Groupoid Objects}

Suppose that we have an $(\infty,1)$-category $\mathcal{M}$ with all $(\infty,1)$-pullbacks. Compare the following definition with that of a \textit{Segal groupoid object} in \cite[Definition 1.3.1.6]{toen_homotopical_2008}. 
\begin{defn}\cite[c.f. Proposition 6.1.2.6 (4'')]{lurie_higher_2009}
    A \textit{groupoid object in $\mathcal{M}$} is a simplicial object $X_*:\Delta^{op}\rightarrow{\mathcal{M}}$ such that, for every $n\geq 0$ and every partition $[n]=S\cup S'$ such that $S\cap S'$ consists of a single element $s$, the diagram 
    \begin{equation*}
        \begin{tikzcd}
            X_*([n])\arrow{r} \arrow{d} & X_*(S)\arrow{d}\\
            X_*(S') \arrow{r} & X_*(\{s\})
        \end{tikzcd}
    \end{equation*}is an $(\infty,1)$-pullback square in $\mathcal{M}$. 
\end{defn}

\begin{remark}
    For a simplicial object $X_*$, we denote $X_*([n])$ by $X_n$. We note that, for any groupoid object $X_*$ in $\mathcal{M}$ and any $n\geq 0$, there are equivalences $X_n\rightarrow{X_1\times_{X_0} \dots_{X_0} X_1}$ where we are considering an $n$-fold fibre product of $X_1$ over $X_0$. 
\end{remark}

\begin{defn}\cite[c.f. Proposition 6.1.2.11]{lurie_higher_2009} Suppose that we have a morphism $f:X\rightarrow{Y}$ in $\mathcal{M}$. An augmented simplicial object $\check{C}(f)_*\rightarrow{Y}$ is the \textit{\v{C}ech nerve of $f$} if it is a groupoid object of $\mathcal{M}$ and there is an $(\infty,1)$-pullback square in $\mathcal{M}$
\begin{equation*}
    \begin{tikzcd}
        \check{C}(f)_1\arrow{r} \arrow{d} & X \arrow{d}\\
        X\arrow{r} &  Y
    \end{tikzcd}
\end{equation*}
    
\end{defn}

Suppose that we have a simplicial object $X_*$. If the colimit exists, then we denote it by $|X_*|$ and call it the \textit{geometric realisation of $X_*$}. 
\begin{defn}\phantomsection\label{cecheffective}
    \cite[c.f. Corollary 6.2.3.5]{lurie_higher_2009} In any $(\infty,1)$-category $\mathcal{M}$ with $(\infty,1)$-pullbacks and all geometric realisations of simplicial objects, a morphism $f:X\rightarrow{Y}$ is an \textit{effective epimorphism} if and only if we have an equivalence $|\check{C}(f)_*|\rightarrow{Y}$. 
\end{defn}

\subsection{$\infty$-Stacks}\phantomsection\label{descentsubsection}

Suppose that $\mathcal{M}$ is an $(\infty,1)$-category. The \textit{category of $(\infty,1)$-presheaves on $\mathcal{M}$}, denoted by $\textcat{PSh}(\mathcal{M})$, is defined to be the $(\infty,1)$-functor category $\textcat{Fun}(\mathcal{M}^{op},\infty\textcat{Grpd})$. We note that there is a Yoneda embedding $h:\mathcal{M}\rightarrow{\textcat{PSh}(\mathcal{M})}$ defined on objects $X\in\mathcal{M}$ by $\textnormal{Map}_\mathcal{M}(-,X)$. 

In \cite[Definition 6.2.2.1]{lurie_higher_2009}, Lurie defines the notion of a \textit{Grothendieck topology} $\bm\tau$ on an $(\infty,1)$-category $\mathcal{M}$ using the notion of a \textit{sieve}. We note that, by \cite[Proposition 6.2.2.5]{lurie_higher_2009}, for each $X\in \mathcal{M}$, each sieve on $X$ corresponds to an equivalence class of monomorphisms $\{\mathcal{U}_i\rightarrow{h(X)}\}_{i\in I}$ in $\textcat{PSh}(\mathcal{M})$. Let $S$ denote a collection of representative monomorphisms corresponding to covering sieves in $\bm\tau$.

Classically, sheaves are presheaves whose values are determined by evaluating on covers. We can generalise this notion to the $(\infty,1)$-categorical setting. 

\begin{defn}\cite[c.f. Definition 6.2.2.6]{lurie_higher_2009} A presheaf $\mathcal{F}$ in $\textcat{PSh}(\mathcal{M})$ is \textit{$S$-local} if, for every monomorphism $\mathcal{U}\rightarrow{h(X)}$ in $S$, we have an equivalence of $\infty$-groupoids
   \begin{equation*}
       \mathcal{F}(X)\simeq \textnormal{Map}_{\textcat{PSh}(\mathcal{M})}(h(X),\mathcal{F})\rightarrow{\textnormal{Map}_{\textcat{PSh}(\mathcal{M})}(\mathcal{U},\mathcal{F})}
   \end{equation*}
The \textit{category of $(\infty,1)$-sheaves}, denoted $\textcat{Sh}(\mathcal{M},\bm{\tau})$, is the full subcategory of $\textcat{PSh}(\mathcal{M})$ spanned by $S$-local presheaves. 
\end{defn}

We note that the category $\textcat{PSh}(\mathcal{M})$ is complete and cocomplete by \cite[Corollary 5.1.2.4]{lurie_higher_2009}. The category of sheaves $\textcat{Sh}(\mathcal{M},\bm{\tau})$ is an $(\infty,1)$-topos by \cite[Section 6.5.2]{lurie_higher_2009}, so in particular it is a reflective subcategory of $\textcat{PSh}(\mathcal{M})$ and is also complete and cocomplete. Moreover, the sheafification functor $\textcat{PSh}(\mathcal{M})\rightarrow{\textcat{Sh}(\mathcal{M},\bm{\tau})}$ preserves colimits.

The definition of a Grothendieck topology on an $(\infty,1)$-category $\mathcal{M}$ lines up with the usual notion of a Grothendieck topology on its homotopy category $\textnormal{Ho}(\mathcal{M})$ by \cite[Remark 6.2.2.3]{lurie_higher_2009}. We fix the following notion of a \textit{Grothendieck pre-topology} on $\textnormal{Ho}(\mathcal{M})$. 

\begin{defn}
    A \textit{Grothendieck pre-topology}, $\bm\tau$, on $\textnormal{Ho}(\mathcal{M})$ is a collection $\bm\tau$ of families of maps $\{f_i:U_i\rightarrow{X}\}_{i\in I}$ such that 
    \begin{itemize}
        \item For any isomorphism $f_i$, we have that $\{f_i\}$ is in $\bm\tau$, 
        \item If $\{U_i\rightarrow{X}\}_{i\in I}$ is in $\bm\tau$ and $\{V_{i,j}\rightarrow{U_i}\}_{j\in J}$ is in $\bm\tau$ for each $i$, then the composition $\{V_{i,j}\rightarrow{X}\}_{(i,j)\in I\times J}$ is in $\bm\tau$, 
        \item If $\{U_i\rightarrow{X}\}_{i\in I}$ is in $\bm\tau$ and $V\rightarrow{X}$ is a morphism, then $U_i\times_X V$ exists and $\{U_i\times_X V\rightarrow{V}\}_{i\in I}$ is in $\bm\tau$.
    \end{itemize}We will call the covers in our Grothendieck pre-topology \textit{$\bm\tau$-covering families} or \textit{$\bm\tau$-covers}. We will call a category $\mathcal{M}$ equipped with a pre-topology $\bm\tau$ on $\mathcal{M}$ \textit{an $(\infty,1)$-site}.
\end{defn} 

Suppose that we have a Grothendieck pre-topology $\bm\tau$ on $\textnormal{Ho}(\mathcal{M})$. We note that $\bm\tau$ generates a Grothendieck topology on $\textnormal{Ho}(\mathcal{M})$ by defining the covering sieves to be those which contain $\bm\tau$-covering families. Conversely, when $\textnormal{Ho}(\mathcal{M})$ has pullbacks, any Grothendieck topology defines a pre-topology whose covering families are families of morphisms which generate covering sieves.

\begin{defn}\phantomsection\label{cechdescentdef}
A presheaf $\mathcal{F}$ in $\textcat{PSh}(\mathcal{M})$ satisfies \textit{(\v{C}ech-)descent for $\bm\tau$-covers} if, whenever we have a $\bm\tau$-covering family $\{U_i\rightarrow{X}\}_{i\in I}$, we have an equivalence of $\infty$-groupoids\begin{equation*}
       \mathcal{F}(X)\simeq \textnormal{Map}_{\textcat{PSh}(\mathcal{M})}(h(X),\mathcal{F})\rightarrow{\textnormal{Map}_{\textcat{PSh}(\mathcal{M})}(|\mathcal{U}_*|,\mathcal{F})}
   \end{equation*}where $\mathcal{U}_*\rightarrow{h(X)}$ is the \v{C}ech nerve of the morphism $\mathcal{U}=\coprod_{i\in I} h(U_i)\rightarrow{h(X)}$ in $\textcat{PSh}(\mathcal{M})$.
\end{defn}

\begin{prop}
    Let $(\mathcal{M},\bm\tau)$ be an $(\infty,1)$-site. The category of presheaves satisfying descent for $\bm\tau$-covers is precisely the category of sheaves $\textcat{Sh}(\mathcal{M},\bm\tau)$.
\end{prop}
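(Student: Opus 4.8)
The plan is to show that, writing $S$ for the set of covering monomorphisms of the Grothendieck topology on $\textnormal{Ho}(\mathcal{C})$ generated by $\bm\tau$, a presheaf $\mathcal{F}\in\textcat{PSh}(\mathcal{C})$ is $S$-local if and only if it satisfies descent for $\bm\tau$-covers; since both conditions cut out full subcategories of $\textcat{PSh}(\mathcal{C})$, this yields the asserted equality. The whole argument runs inside the presheaf $\infty$-topos $\textcat{PSh}(\mathcal{C})$ and hinges on one identification: for a $\bm\tau$-covering family $\{U_i\to X\}_{i\in I}$ with associated map $\mathcal{U}=\coprod_{i\in I}h(U_i)\to h(X)$, the geometric realisation $|\mathcal{U}_*|$ of its \v{C}ech nerve is, as a subobject of $h(X)$, precisely the sieve $R$ generated by $\{U_i\to X\}_{i\in I}$. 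First I would recall that in $\textcat{PSh}(\mathcal{C})$ the map $|\mathcal{U}_*|\to h(X)$ is the image of $\mathcal{U}\to h(X)$, in particular a monomorphism \cite[Section 6.2.3]{lurie_higher_2009}, and that monomorphisms into $h(X)$ correspond to sieves on $X$ \cite[Proposition 6.2.2.5]{lurie_higher_2009}, as already recalled above. That this sieve is $R$ follows by noting that $\mathcal{U}\to h(X)$ factors through $|\mathcal{U}_*|$, so each $U_i\to X$ lies in it, and conversely that if $V\to X$ factors through $|\mathcal{U}_*|$ then pulling back the effective epimorphism $\mathcal{U}\to|\mathcal{U}_*|$ along $h(V)\to|\mathcal{U}_*|$, and using $h(V)\times_{h(X)}h(U_i)\simeq h(V\times_X U_i)$ (these fibre products exist by the third pretopology axiom), exhibits $\coprod_{i}h(V\times_X U_i)\to h(V)$ as an effective epimorphism; evaluating the underlying map at $V$ forces some $V\times_X U_i\to V$ to admit a section, hence $V\to X$ to factor through $U_i\to X$.

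Granting this, one inclusion is immediate: if $\mathcal{F}$ is $S$-local and $\{U_i\to X\}$ is a $\bm\tau$-cover, then $R$ above is a covering sieve, so $\mathcal{F}(X)\simeq\textnormal{Map}_{\textcat{PSh}(\mathcal{C})}(R,\mathcal{F})\simeq\textnormal{Map}_{\textcat{PSh}(\mathcal{C})}(|\mathcal{U}_*|,\mathcal{F})$, which is exactly descent for that $\bm\tau$-cover.

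For the converse, suppose $\mathcal{F}$ has descent for every $\bm\tau$-cover and let $R\hookrightarrow h(X)$ be an arbitrary covering sieve. By construction of the generated topology, $R$ contains a $\bm\tau$-covering family $\{U_i\to X\}$, generating a subsieve $R'\subseteq R$ with $R'\simeq|\mathcal{U}_*|$ by the identification above. I would then argue using that colimits are universal in $\textcat{PSh}(\mathcal{C})$. For each $(g\colon V\to X)$ in $R$, the pulled-back family $\{U_i\times_X V\to V\}$ is again a $\bm\tau$-cover (third pretopology axiom) and generates the pullback sieve $R'_V:=R'\times_{h(X)}h(V)$ on $V$, which by the identification is the realisation of the \v{C}ech nerve of $\coprod_i h(U_i\times_X V)\to h(V)$; hence $\bm\tau$-descent gives an equivalence $\mathcal{F}(V)\xrightarrow{\ \sim\ }\textnormal{Map}_{\textcat{PSh}(\mathcal{C})}(R'_V,\mathcal{F})$. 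Writing $R\simeq\varinjlim_{(V\to X)\in R}h(V)$ and using universality of colimits, $R'\simeq R'\times_{h(X)}R\simeq\varinjlim_{(V\to X)\in R}R'_V$, so
\[
\textnormal{Map}_{\textcat{PSh}(\mathcal{C})}(R',\mathcal{F})\simeq\varprojlim_{(V\to X)\in R}\textnormal{Map}_{\textcat{PSh}(\mathcal{C})}(R'_V,\mathcal{F})\simeq\varprojlim_{(V\to X)\in R}\mathcal{F}(V)\simeq\textnormal{Map}_{\textcat{PSh}(\mathcal{C})}(R,\mathcal{F}).
\]
Combining this with $\bm\tau$-descent for the cover $\{U_i\to X\}$ of $X$ itself, which gives $\mathcal{F}(X)\simeq\textnormal{Map}_{\textcat{PSh}(\mathcal{C})}(|\mathcal{U}_*|,\mathcal{F})\simeq\textnormal{Map}_{\textcat{PSh}(\mathcal{C})}(R',\mathcal{F})$, we obtain $\mathcal{F}(X)\simeq\textnormal{Map}_{\textcat{PSh}(\mathcal{C})}(R,\mathcal{F})$ for every covering sieve $R$; that is, $\mathcal{F}$ is $S$-local.

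The main obstacle is the opening identification of $|\mathcal{U}_*|$ with the generated sieve: it requires the image factorisation in the $\infty$-topos $\textcat{PSh}(\mathcal{C})$ together with a careful translation between effective epimorphisms of presheaves (computed pointwise) and the pretopology axioms. Once that is in hand, the two inclusions are formal, the only further point requiring attention being the use of universality of colimits in $\textcat{PSh}(\mathcal{C})$ to commute the base change $(-)\times_{h(X)}R'$ past the colimit $\varinjlim_{(V\to X)\in R}h(V)$ presenting $R$.
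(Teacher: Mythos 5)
Your proof is correct, and its core coincides with the paper's: both arguments rest on identifying the geometric realisation $|\mathcal{U}_*|$ of the \v{C}ech nerve with the sieve generated by the covering family, via the fact that $|\mathcal{U}_*|\to h(X)$ is the $(-1)$-truncation (image) of $\mathcal{U}\to h(X)$ in $\textcat{PSh}(\mathcal{C})_{/h(X)}$ and hence a monomorphism, i.e.\ a sieve. Where you go beyond the paper is in two places, both to your credit. First, you actually verify \emph{which} sieve the image is, by pulling the effective epimorphism back along $h(V)\to|\mathcal{U}_*|$ and using $h(V)\times_{h(X)}h(U_i)\simeq h(V\times_X U_i)$; the paper only asserts this. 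Second, and more substantively, you supply the saturation step: since the topology generated by $\bm\tau$ declares a sieve covering when it merely \emph{contains} a $\bm\tau$-covering family, $S$-locality demands the descent equivalence for sieves strictly larger than the generated ones, and your argument — writing $R$ as a colimit of representables, using universality of colimits to get $R'\simeq\varinjlim_V R'_V$, and applying $\bm\tau$-descent to each pulled-back cover — is exactly the standard way to close this gap. The paper's one-line proof leaves this reduction implicit, so your version is the more complete of the two; the only cost is the extra bookkeeping with the category of elements of $R$ and the naturality of the comparison equivalences $\mathcal{F}(V)\xrightarrow{\sim}\textnormal{Map}(R'_V,\mathcal{F})$, which you handle correctly.
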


\begin{proof}It suffices to show that $|\mathcal{U}_*|$ corresponds to the sieve generated by the covering family $\{U_i\rightarrow{X}\}_{i\in I}$. Indeed, this follows because $\mathcal{U}_*$ computes $(-1)$-truncations in $\textcat{PSh}(\mathcal{M})_{/h(X)}$ by \cite[Proposition 6.2.3.4]{lurie_higher_2009} and therefore defines a monomorphism $|\mathcal{U}_*|\rightarrow{h(X)}$, and hence a sieve on $X$ by \cite[Proposition 6.2.2.5]{lurie_higher_2009}. 
\end{proof}

We note however that the category of $(\infty,1)$-sheaves is not hypercomplete in the sense of \cite[Section 6.5.2]{lurie_higher_2009}, and therefore doesn't satisfy Whitehead's theorem. Given an $(\infty,1)$-site $(\mathcal{M},\bm\tau)$, we will define the category $\textcat{Stk}(\mathcal{M},\bm\tau)$ of $\infty$-stacks to be the hypercompletion of $\textcat{Sh}(\mathcal{M},\bm\tau)$. Equivalently, we can describe this category as follows. 

We will say that a morphism $f:\coprod_{i\in I} h(U_i)\rightarrow{\coprod_{j\in J}h(V_j)}$ in $\textcat{PSh}(\mathcal{M})$ is a \textit{generalised $\bm\tau$-cover} if, for each $j\in J$, the family of morphisms $\{U_{k}\rightarrow{V_j}\}_{k\in K}$ corresponds to a $\bm\tau$-cover \cite[c.f. p.13]{dugger_hypercovers_2001}, where $k\in K$ if there is a map $h(U_k)\rightarrow{h(V_j)}$.
\begin{defn}
    An augmented simplicial object $\mathcal{U}_*\rightarrow{h(X)}$ in $\textcat{PSh}(\mathcal{M})$ is a \textit{pseudo-representable $\bm\tau$-hypercover of $h(X)$} if each $\mathcal{U}_n$ is a coproduct of representables and, for each $n\geq 0$, the map $\mathcal{U}_n\rightarrow{(cosk_{n-1}\mathcal{U}_*)_n}$ corresponds to a generalised $\bm\tau$-cover of $(cosk_{n-1}\mathcal{U}_*)_n$.
\end{defn}

\begin{remark}
    We note that the \v{C}ech nerve $\mathcal{U}_*$ of the morphism $\coprod_{i\in I} h(U_i)\rightarrow{h(X)}$ associated to a $\bm\tau$-cover $\{U_i\rightarrow{X}\}_{i\in I}$ is a $\bm\tau$-hypercover of height zero; the morphisms $\mathcal{U}_n\rightarrow{(cosk_{n-1}\mathcal{U}_*)_n}$ are all isomorphisms.
\end{remark}

\begin{defn}
    \begin{enumerate}
        \item A presheaf $\mathcal{F}$ satisfies \textit{descent for $\bm\tau$-hypercovers} if, whenever we have a pseudo-representable $\bm\tau$-hypercover $\mathcal{U}_*\rightarrow{h(X)}$, we have an equivalence of $\infty$-groupoids
        \begin{equation*}
            \mathcal{F}(X)\simeq \textnormal{Map}_{\textcat{PSh}(\mathcal{M})}(|\mathcal{U}_*|,\mathcal{F})
        \end{equation*}
        \item The category of \textit{$\infty$-stacks} (or \textit{hypercomplete $(\infty,1)$-sheaves}) is the full subcategory of $\textcat{PSh}(\mathcal{M})$ consisting of presheaves satisfying descent for $\bm\tau$-hypercovers.
    \end{enumerate} 
\end{defn}

\begin{remark}
    We note that, by \cite[Proposition 6.5.2.14]{lurie_higher_2009}, if $\textnormal{M}$ is a small ordinary category equipped with a Grothendieck pre-topology $\bm\tau$ and $\textnormal{A}$ is the category of simplicial presheaves on $\textnormal{M}$ endowed with the local model structure, then $\mathcal{A}=\textcat{L}^H(\textnormal{A})$ is equivalent to $\textcat{Stk}(\mathcal{M},\bm\tau)$ where $\mathcal{M}=\textcat{L}^H(\textnormal{M})$. 
\end{remark}

The category of $\infty$-stacks is an $(\infty,1)$-topos, and is moreover a reflective subcategory of $\textcat{Sh}(\mathcal{M},\bm\tau)$ \cite[c.f. Section 6.5.2]{lurie_higher_2009}. Therefore, it is complete and cocomplete. Moreover, we note that the stackification functor $\textcat{PSh}(\mathcal{M})\rightarrow{\textcat{Stk}(\mathcal{M},\bm{\tau})}$ preserves colimits and finite limits.

\subsection{Continuous and Cocontinuous Functors}\phantomsection\label{continuousfunctorsection}

Recall that there is an $(\infty,1)$-adjunction
\begin{equation*}i:\textnormal{Set}\leftrightarrows\infty\textcat{Grpd}:\pi_0
\end{equation*}Suppose that $(\mathcal{M},\bm\tau)$ is an $(\infty,1)$-site and that $\textnormal{F}:\textnormal{Ho}(\mathcal{M})^{op}\rightarrow{\textnormal{Set}}$ is an ordinary presheaf. We will denote the ordinary categories of presheaves and sheaves by $\textnormal{PSh}(\textnormal{Ho}(\mathcal{M}))$ and $\textnormal{Sh}(\textnormal{Ho}(\mathcal{M}),\bm{\tau})$ respectively. We let $i^{\bm{\tau}}(\textnormal{F})$ be the $\infty$-stackification of the $(\infty,1)$-presheaf 
\begin{equation*}\begin{aligned}
    i:\mathcal{M}^{op}&\rightarrow{\infty\textcat{Grpd}}\\
    A&\rightarrow{i(\textnormal{F}(A))}
\end{aligned}\end{equation*}on the $(\infty,1)$-site $(\mathcal{M},\bm{\tau})$.

Now, suppose that $\mathcal{F}:\mathcal{M}^{op}\rightarrow{\infty\textcat{Grpd}}$ is an $(\infty,1)$-presheaf on $\mathcal{M}$. We let $\pi_0^{\bm{\tau}}(\mathcal{F})$ be the sheaf associated to the presheaf 
\begin{equation*}
    \begin{aligned}
    \pi_0:\textnormal{Ho}(\mathcal{M})^{op}&\rightarrow{\textnormal{Set}}\\
    A&\rightarrow{\pi_0(\mathcal{F}(A))}
    \end{aligned}
\end{equation*}on the site $(\textnormal{Ho}(\mathcal{M}),\bm{\tau})$. If we denote by $\tau_{\leq 0}$ the $0$-truncation functor defined in \cite[Proposition 5.5.6.18]{lurie_higher_2009} then we can easily see that $\pi_0^{\bm{\tau}}$ defines an equivalence of ordinary categories
\begin{equation*}
    \textnormal{Ho}(\tau_{\leq 0}(\textcat{Stk}(\mathcal{M},\bm{\tau})))\simeq \textnormal{Sh}(\textnormal{Ho}(\mathcal{M}),\bm{\tau})
\end{equation*}
By \cite[Proposition 7.2.1.14]{lurie_higher_2009}, a morphism in an $(\infty,1)$-topos $\mathcal{M}$ is an effective epimorphism precisely if its $0$-truncation is an epimorphism in $\textnormal{Ho}(\tau_{\leq 0}\mathcal{M})$. This motivates the following definition.

\begin{defn}\phantomsection\label{epimorphismstacks}
A morphism of stacks $f:\mathcal{F}\rightarrow{\mathcal{G}}$ in $\textcat{Stk}(\mathcal{M},\bm{\tau})$ is an \textit{epimorphism} if the induced morphism $\pi_0^{\bm{\tau}}(f):\pi_0^{\bm{\tau}}(\mathcal{F})\rightarrow{\pi_0^{\bm{\tau}}(\mathcal{G})}$ is an epimorphism in $\textnormal{Sh}(\textnormal{Ho(C)},\bm{\tau})$.
\end{defn}

Using \cite[Corollary 6.2.3.12]{lurie_higher_2009} and \cite[Proposition 6.2.3.14]{lurie_higher_2009}, we easily see that epimorphisms of stacks are stable by compositions, equivalences, and pullbacks. 

\begin{defn}
Suppose that $(\mathcal{M},\bm{\tau})$ and $(\mathcal{N},\bm\sigma)$ are $(\infty,1)$-sites, and that we have a functor $F:\mathcal{M}\rightarrow{\mathcal{N}}$. Then, $F$ is a \textit{continuous functor of $(\infty,1)$-sites} if 

\begin{enumerate}
    \item Whenever $\{U_i\rightarrow{X}\}_{i\in I}$ is a $\bm{\tau}$-cover, then $\{F(U_i)\rightarrow{F(X)}\}_{i\in I}$ is a $\bm\sigma$-cover, 
    \item For any morphism $X\rightarrow{Y}$ in $\mathcal{M}$ and every $\bm{\tau}$-cover $\{U_i\rightarrow{Y}\}_{i\in I}$, we have an isomorphism
    \begin{equation*}
        F(X\times_Y U_i)\simeq F(X)\times_{F(Y)} F(U_i)
    \end{equation*}
\end{enumerate}
\end{defn}

Suppose that $F:\mathcal{M}\rightarrow{\mathcal{N}}$ is a functor of $(\infty,1)$-categories and consider the precomposition functor $F^*:\textcat{PSh}(\mathcal{N})\rightarrow{\textcat{PSh}(\mathcal{M})}$ . Then, by left and right $(\infty,1)$-Kan extension, we obtain a chain of adjunctions $F_!\dashv F^*\dashv F_*$ on the level of presheaves.  Now, if we have a continuous functor $F:(\mathcal{M},\bm{\tau})\rightarrow{(\mathcal{N},\bm\sigma)}$ of $(\infty,1)$-sites, then $\bm\tau$-covers get sent to $\bm\sigma$-covers, and we obtain a functor on stacks
\begin{equation*}
    F^*:\textcat{Stk}(\mathcal{N},\bm\sigma)\rightarrow{\textcat{Stk}(\mathcal{M},\bm{\tau})}
\end{equation*}with left adjoint
\begin{equation*}
    F_\#:\textcat{Stk}(\mathcal{M},\bm{\tau})\rightarrow{\textcat{Stk}(\mathcal{N},\bm\sigma)}
\end{equation*}given by composing the stackification functor with $F_!$. We remark that, for a representable stack $\mathcal{F}\simeq\textnormal{Map}_\mathcal{M}(-,X)$, $F_\#$ acts by $F_\#(\mathcal{F})= \textnormal{Map}_\mathcal{N}(-,F(X))$. We refer the reader to \cite[Section 2]{porta_higher_2016} for full proofs of these statements. 

\begin{prop}\phantomsection\label{epimorphismfunctor}Suppose that $F:(\mathcal{M},\bm{\tau})\rightarrow{(\mathcal{N},\bm\sigma)}$ is a continuous functor of $(\infty,1)$-sites. Then, $F_\#$ preserves epimorphisms of $\infty$-stacks. 
\end{prop}

\begin{proof}
Suppose that $f:\mathcal{F}\rightarrow{\mathcal{G}}$ is an epimorphism of stacks. It suffices to show that $\pi_0^{\bm\sigma}(F_\#(f)):\pi_0^{\bm\sigma}(F_\#(\mathcal{F}))\rightarrow{\pi_0^{\bm\sigma}(F_\#(\mathcal{G}))}$ is an epimorphism in the category of sheaves. We note that, if $\mathcal{H}$ is any stack over an $(\infty,1)$-site $(\mathcal{N},\bm\sigma)$, then $\pi_0^{\bm\sigma}(\tilde{\mathcal{H}})\simeq \pi_0^{\bm\sigma}(\mathcal{H})$, where $\tilde{\mathcal{H}}$ is $\mathcal{H}$, considered as a presheaf. Hence, it suffices to show that 
\begin{equation*}
    \pi_0^{\bm\sigma}(F_!(f)):\pi_0^{\bm\sigma}(F_!(\mathcal{F}))\rightarrow{\pi_0^{\bm\sigma}(F_!(\mathcal{G}))}
\end{equation*}is an epimorphism of sheaves. Since both $\pi_0:\infty\textcat{Grpd}\rightarrow{\textnormal{Set}}$ and the sheafification functor commutes with left $(\infty,1)$-Kan extensions, we have an equivalence
\begin{equation*}
    \pi_0^{\bm\sigma}(F_!(\mathcal{F}))\simeq \textnormal{Ho}(F)_{\#}(\pi_0^{\bm{\tau}}(\mathcal{F}))
\end{equation*}The result follows since $\textnormal{Ho}(F)_\#$, being a left adjoint, must preserve the epimorphism of sheaves given by $\pi_0^{\bm{\tau}}(f):\pi_0^{\bm{\tau}}(\mathcal{F})\rightarrow{\pi_0^{\bm{\tau}}(\mathcal{G})}$. 
\end{proof}

\begin{defn}
Suppose that $(\mathcal{M},\bm{\tau})$ and $(\mathcal{N},\bm\sigma)$ are $(\infty,1)$-sites, and that we have a functor $F:\mathcal{M}\rightarrow{\mathcal{N}}$. Then, $F$ is a \textit{cocontinuous functor of $(\infty,1)$-sites} if, whenever $\{Y_j\rightarrow{F(X)}\}_{j\in J}$ is a $\bm\sigma$-cover, there exists a $\bm{\tau}$-cover $\{U_i\rightarrow{X}\}_{i\in I}$ such that the family of maps $\{F(U_i)\rightarrow{F(X)}\}_{i\in I}$ refines the cover $\{Y_j\rightarrow{F(X)}\}_{j\in J}$.
\end{defn}

In the same setting as before, we see that if $F:(\mathcal{M},\bm{\tau})\rightarrow{(\mathcal{N},\bm\sigma)}$ is a cocontinuous functor of $(\infty,1)$-sites, then we obtain a functor $F_*:\textcat{Stk}(\mathcal{M},\bm{\tau})\rightarrow{\textcat{Stk}(\mathcal{N},\bm\sigma)}$ whose left adjoint is $F^*$. Similarly to Proposition \ref{epimorphismfunctor}, we have the following result. 
\begin{prop}\phantomsection\label{epimorphismcocontinuous}
    Suppose that $F:(\mathcal{M},\bm{\tau})\rightarrow{(\mathcal{N},\bm\sigma)}$ is a cocontinuous functor of sites. Then, $F^*$ preserves epimorphisms of $\infty$-stacks. 
\end{prop}

Suppose that we have an $(\infty,1)$-site $(\mathcal{M},\bm\tau)$. We will say that an object $X$ of $\mathcal{M}$ is \textit{admissible} if $h(X):=\textnormal{Map}_\mathcal{M}(-,X)$ is a stack. By an application of \cite[c.f. Corollary 5.1.5.8]{lurie_higher_2009} along with the statement that stackification preserves colimits, we see that if every $X\in\mathcal{M}$ is admissible, then the Yoneda embedding generates $\textcat{Stk}(\mathcal{M},\bm\tau)$ under small colimits.

\begin{prop}\phantomsection\label{fullyfaithfulstackfunctor}
    Suppose that $F:(\mathcal{M},\bm\tau)\rightarrow{(\mathcal{N},\bm\sigma)}$ is a fully faithful, continuous, and cocontinuous functor of sites, and that every $X\in\mathcal{M}$ is admissible. Then, the induced functor $F_{\#}:\textcat{Stk}(\mathcal{M},\bm\tau)\rightarrow{\textcat{Stk}(\mathcal{N},\bm\sigma)}$ is fully faithful.
\end{prop}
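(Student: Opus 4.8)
The plan is to deduce full faithfulness of $F_\#$ from a statement about representable stacks via the adjunction $F_\#\dashv F^*$ coming from continuity of $F$. Since $F_\#$ is a left adjoint, it is fully faithful if and only if the unit $\eta\colon\mathrm{id}_{\textcat{Stk}(\mathcal{C},\bm\tau)}\to F^*F_\#$ is an equivalence: for stacks $\mathcal{F},\mathcal{G}$ the composite $\mathrm{Map}(\mathcal{F},\mathcal{G})\to\mathrm{Map}(F_\#\mathcal{F},F_\#\mathcal{G})\simeq\mathrm{Map}(\mathcal{F},F^*F_\#\mathcal{G})$ is post-composition with $\eta_{\mathcal{G}}$, and by Yoneda it suffices that each $\eta_{\mathcal{G}}$ be an equivalence.

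First I would observe that $F^*$ preserves all small colimits of stacks. Continuity of $F$ exhibits $F^*$ as the right adjoint in $F_\#\dashv F^*$, but cocontinuity of $F$ gives the further adjunction $F^*\dashv F_*$ recorded above, exhibiting $F^*$ as a \emph{left} adjoint as well; hence $F^*$, and therefore $F^*F_\#$, preserves colimits. Now, because every object of $\mathcal{C}$ is admissible, Lemma \ref{admissiblecolimitrepresentable} lets me write an arbitrary $\mathcal{G}\in\textcat{Stk}(\mathcal{C},\bm\tau)$ as a colimit $\mathcal{G}\simeq\varinjlim_{i\in I}h(A_i)$ of representable stacks. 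Since both $\mathrm{id}$ and $F^*F_\#$ preserve this colimit and $\eta$ is natural, the map $\eta_{\mathcal{G}}$ is identified with $\varinjlim_{i}\eta_{h(A_i)}$; a colimit of equivalences being an equivalence, it remains only to treat $\mathcal{G}=h(A)$ for $A\in\mathcal{C}$.

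For such a representable, the discussion above gives $F_\#(h(A))\simeq\mathrm{Map}_{\mathcal{D}}(-,F(A))$, so that $(F^*F_\#h(A))(B)\simeq\mathrm{Map}_{\mathcal{D}}(F(B),F(A))$ for $B\in\mathcal{C}$; unwinding the unit of $F_\#\dashv F^*$ identifies $\eta_{h(A)}$ with (the image under stackification of) the natural map $\mathrm{Map}_{\mathcal{C}}(B,A)\to\mathrm{Map}_{\mathcal{D}}(F(B),F(A))$ induced by the action of $F$ on mapping spaces. This is an equivalence precisely because $F$ is fully faithful, so $\eta_{h(A)}$ is an equivalence, completing the argument.

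The step I expect to require the most care is the passage from representables to arbitrary stacks: one has to verify that $\eta$ evaluated on $\mathcal{G}\simeq\varinjlim_i h(A_i)$ genuinely decomposes as $\varinjlim_i\eta_{h(A_i)}$, which rests on both $\mathrm{id}$ and $F^*F_\#$ preserving that colimit — so that each functor is the colimit of its restriction to the indexing diagram — together with naturality of $\eta$ pinning down the map on the colimit. The remaining ingredients (colimit-preservation of $F^*$ from cocontinuity, the formula for $F_\#$ on representables, and full faithfulness of $F$) are then routine bookkeeping.
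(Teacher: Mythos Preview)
Your proof is correct and follows essentially the same approach as the paper: reduce full faithfulness of $F_\#$ to showing the unit $\mathrm{id}\to F^*F_\#$ is an equivalence, use Lemma~\ref{admissiblecolimitrepresentable} together with colimit-preservation of both $F_\#$ and $F^*$ (the latter via cocontinuity) to reduce to representables, and conclude by full faithfulness of $F$. You spell out slightly more detail than the paper does---in particular the reason $F^*$ preserves colimits and the care needed in the colimit reduction---but the argument is the same.
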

\begin{proof}
Since $F_{\#}$ has a right adjoint $F^{*}$, it suffices to show that there is an equivalence of $(\infty,1)$-functors $F^{*}\circ F_{\#}\simeq \textnormal{id}_{\textcat{Stk}(\mathcal{M},\bm{\tau})}$ by \cite[Proposition 5.2.7.4]{lurie_higher_2009}. Since $F_{\#}$ and $F^*$ commute with colimits and every stack in $\textcat{Stk}(\mathcal{M},\bm\tau)$ is a colimit of objects of the form $h(X)$ for $X\in\mathcal{M}$, it suffices to check on objects of the form $h(X)$. By fully faithfulness of $F$ we have that
    \begin{equation*}
        F^*\circ F_{\#}(h(X))=F^*(\textnormal{Map}_{\mathcal{N}}(-,F(X)))\simeq \textnormal{Map}_{\mathcal{N}}(F(-),F(X))\simeq h(X)
    \end{equation*}
\end{proof}

\section{Homotopical Algebraic Geometry}\label{hagsection}

In this section, we will provide an analogue of To\"en and Vezzosi's \textit{HAG (homotopical algebraic geometry) contexts} \cite{toen_homotopical_2008} suitable for our applications. 

\subsection{Geometries}\phantomsection\label{geometriessection}

The following definition of an $(\infty,1)$-pre-geometry triple was first stated in \cite{kelly_analytic_2022}. As a guiding example, it may help to think of the classical algebraic geometry triple with $\mathcal{M}$ as the ordinary category of affines, $\bm\tau$ the \'etale topology, and $\textcat{P}$ the class of smooth maps. 
\begin{defn}\cite[Definition 6.3]{kelly_analytic_2022}\phantomsection\label{pregeometrytriple} An \textit{$(\infty,1)$-pre-geometry triple} is a triple $(\mathcal{M},\bm{\tau},\textcat{P})$ where $\mathcal{M}$ is an $(\infty,1)$-category, $\bm{\tau}$ is a Grothendieck pre-topology on $\textnormal{Ho}(\mathcal{M})$, and $\textcat{P}$ is a class of maps in $\mathcal{M}$ such that
\begin{enumerate}
    \item\label{geometry1} If $\{U_i\rightarrow{X}\}_{i\in I}$ is a $\bm{\tau}$-covering family, then each $U_i\rightarrow{X}$ is in $\textcat{P}$, 
    \item\label{geometry2} $\textcat{P}$ is \textit{local for the topology} $\bm{\tau}$, in the sense that, whenever we have a morphism $f:Y\rightarrow{X}$ in $\mathcal{M}$ along with a $\bm{\tau}$-covering family $\{U_i\rightarrow{Y}\}_{i\in I}$ such that each induced morphism $U_i\rightarrow{X}$ is in $\textcat{P}$, then $f\in\textcat{P}$,
    \item\label{geometry3} The class $\textcat{P}$ is stable under equivalences, compositions, and pullbacks.
\end{enumerate}An $(\infty,1)$-pre-geometry triple is said to be an \textit{$(\infty,1)$-geometry triple} if every object of $\mathcal{M}$ is admissible.

\end{defn}

Often it is not clear in certain settings when a class of maps $\textcat{P}$ is local for the topology. We can let $\textcat{P}^{\bm\tau}$ be the class of maps $f:Y\rightarrow{X}$ in $\mathcal{M}$ such that there is a $\bm\tau$-cover $\{g_i:U_i\rightarrow{Y}\}_{i\in I}$ with $f\circ g_i\in \textcat{P}$. Then, we see that $\textcat{P}^{\bm\tau}$ is local and $\textcat{P}\subseteq \textcat{P}^{\bm\tau}$ with equality if and only if $\textcat{P}$ is local for the $\bm\tau$-topology. By \cite[Proposition 6.1.4]{ben-bassat_perspective_2024}, if morphisms in $\textcat{P}$ are stable by equivalences, compositions, and pushouts, then so are morphisms in $\textcat{P}^{\bm\tau}$.

A philosophy we use throughout this thesis is that our categories of interest, generally some subcategories of `affines', should be embedded in larger categories with more useful properties. This leads naturally to talking about the notion of a relative geometry tuple as described in \cite[Definition 6.4]{kelly_analytic_2022}.

\begin{defn}\phantomsection\label{relativepregeometry} 
\begin{enumerate}
    \item A \textit{relative $(\infty,1)$-pre-geometry tuple} consists of a tuple $(\mathcal{M},\bm{\tau},\textcat{P},\mathcal{A})$ with $(\mathcal{M},\bm{\tau},\textcat{P})$ an $(\infty,1)$-pre-geometry triple and $\mathcal{A}$ a full subcategory of $\mathcal{M}$ such that, if $f:Y\rightarrow{X}$ is a map in $\textcat{P}\cap\mathcal{A}$ and $Z\rightarrow{X}$ is any map with $Z$ in $\mathcal{A}$, then $Y\times_XZ$ exists and is in $\mathcal{A}$, 
    \item A relative $(\infty,1)$-pre-geometry tuple $(\mathcal{M},\bm{\tau},\textcat{P},\mathcal{A})$ is \textit{strong} if, whenever we have a cover $\{U_i\rightarrow{X}\}_{i\in I}$ in $\mathcal{M}$ and $Y\rightarrow{X}$ is a map with $Y\in\mathcal{A}$, then $\{U_i\times_XY\rightarrow{Y}\}_{i\in I}$ is a cover in $\bm{\tau}|_\mathcal{A}$, where $\bm{\tau}|_{\mathcal{A}}$ denotes the restriction of $\bm{\tau}$ to $\mathcal{A}$.
\end{enumerate}
\end{defn}

\begin{remark}We note that if we have a Grothendieck pre-topology defined on $\mathcal{A}$, then we can extend it to a Grothendieck pre-topology on $\mathcal{M}$ such that the resulting topology is strong relative to $\mathcal{A}$.
\end{remark}

\begin{defn}\cite[Definition 6.6]{kelly_analytic_2022} Let $(\mathcal{M},\bm{\tau},\textcat{P},\mathcal{A})$ be a (strong) relative $(\infty,1)$-pre-geometry tuple. 
\begin{enumerate}
    \item An object $X$ of $\mathcal{M}$ is said to be \textit{$\mathcal{A}$-admissible} if the restriction of $\textnormal{Map}_\mathcal{M}(-,X)$ to $\mathcal{A}$ is a stack in $\textcat{Stk}(\mathcal{A},\bm{\tau}|_{\mathcal{A}})$,
    \item $(\mathcal{M},\bm{\tau},\textcat{P},\mathcal{A})$ is a \textit{(strong) relative $(\infty,1)$-geometry tuple} if each $X\in\mathcal{A}$ is $\mathcal{A}$-admissible.
\end{enumerate}
\end{defn}

We note that, if $(\mathcal{M},\bm{\tau},\textcat{P},\mathcal{A})$ is a relative $(\infty,1)$-pre-geometry tuple, then if we restrict our topology $\bm\tau$ and our class of maps $\textcat{P}$, we see that $(\mathcal{A},\bm{\tau}|_\mathcal{A},\textcat{P}|_{\mathcal{A}})$ is an $(\infty,1)$-pre-geometry triple. We can also construct strong tuples using the following lemma.

\begin{lem}\phantomsection\label{makingthingsstrong}
    To any relative $(\infty,1)$-pre-geometry tuple $(\mathcal{M},\bm\tau,\textcat{P},\mathcal{A})$ we can define an associated strong relative $(\infty,1)$-pre-geometry tuple $(\mathcal{M},\bm\tau_\mathcal{A},\textcat{P}_\mathcal{A},\mathcal{A})$. 
\end{lem}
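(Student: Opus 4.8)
The plan is to build $\bm\tau_\mathcal{A}$ and $\textcat{P}_\mathcal{A}$ by first cutting $\textcat{P}$ down to the maps whose base changes against $\mathcal{A}$ stay inside $\mathcal{A}$, then cutting $\bm\tau$ down accordingly, and finally $\bm\tau_\mathcal{A}$-localising the resulting class. First I would let $\textcat{P}'_\mathcal{A}$ be the class of maps $f\colon Y\rightarrow X$ in $\textcat{P}$ such that for every map $Z\rightarrow X$ with $Z\in\mathcal{A}$ the pullback $Y\times_X Z$ exists in $\mathcal{M}$ and lies in $\mathcal{A}$. A short chase with the pasting law for pullbacks shows that $\textcat{P}'_\mathcal{A}$ contains all equivalences and is stable under composition and pullback, and by construction $\textcat{P}'_\mathcal{A}\subseteq\textcat{P}$. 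Next I would define $\bm\tau_\mathcal{A}$ to consist of those families $\{U_i\rightarrow X\}_{i\in I}\in\bm\tau$ all of whose members lie in $\textcat{P}'_\mathcal{A}$; the three pre-topology axioms for $\bm\tau_\mathcal{A}$ then follow from those for $\bm\tau$ together with the stability of $\textcat{P}'_\mathcal{A}$ (for the base-change axiom one uses in addition that $\textcat{P}'_\mathcal{A}$-maps admit pullbacks by definition). Finally I would set $\textcat{P}_\mathcal{A}:=(\textcat{P}'_\mathcal{A})^{\bm\tau_\mathcal{A}}$, the $\bm\tau_\mathcal{A}$-localised class of Definition \ref{makingitlocal}; by an argument as in Lemma \ref{smoothtaustable} (with pullbacks in $\mathcal{M}$) it is again stable under equivalences, compositions and pullbacks, it contains $\textcat{P}'_\mathcal{A}$, and using locality of $\textcat{P}$ for $\bm\tau$ one checks that $\textcat{P}_\mathcal{A}\subseteq\textcat{P}$.

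With these definitions, clause \ref{geometry1} of Definition \ref{pregeometrytriple} holds because the members of a $\bm\tau_\mathcal{A}$-cover lie in $\textcat{P}'_\mathcal{A}\subseteq\textcat{P}_\mathcal{A}$, and clause \ref{geometry3} is the stability statement above. Clause \ref{geometry2} is the only step that genuinely uses the localisation: given $f\colon Y\rightarrow X$ and a $\bm\tau_\mathcal{A}$-cover $\{U_i\rightarrow Y\}$ with every $U_i\rightarrow X$ in $\textcat{P}_\mathcal{A}=(\textcat{P}'_\mathcal{A})^{\bm\tau_\mathcal{A}}$, I would refine each $U_i\rightarrow Y$ by a $\bm\tau_\mathcal{A}$-cover $\{V_{ij}\rightarrow U_i\}$ exhibiting $U_i\rightarrow X$ as an element of $(\textcat{P}'_\mathcal{A})^{\bm\tau_\mathcal{A}}$, so that $V_{ij}\rightarrow X$ lies in $\textcat{P}'_\mathcal{A}$; then $\{V_{ij}\rightarrow Y\}$ is a $\bm\tau_\mathcal{A}$-cover witnessing $f\in(\textcat{P}'_\mathcal{A})^{\bm\tau_\mathcal{A}}=\textcat{P}_\mathcal{A}$. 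For the relative condition, if $f\colon Y\rightarrow X$ lies in $\textcat{P}_\mathcal{A}\cap\mathcal{A}$ and $Z\rightarrow X$ has $Z\in\mathcal{A}$, then since $\textcat{P}_\mathcal{A}\subseteq\textcat{P}$ the hypothesis that $(\mathcal{M},\bm\tau,\textcat{P},\mathcal{A})$ is a relative tuple already gives that $Y\times_X Z$ exists and lies in $\mathcal{A}$, so $(\mathcal{M},\bm\tau_\mathcal{A},\textcat{P}_\mathcal{A},\mathcal{A})$ is a relative $(\infty,1)$-pre-geometry tuple.

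It remains to check strongness and $\mathcal{A}$-admissibility. For strongness, take a $\bm\tau_\mathcal{A}$-cover $\{U_i\rightarrow X\}$ in $\mathcal{M}$ and a map $Y\rightarrow X$ with $Y\in\mathcal{A}$; each $U_i\rightarrow X$ lies in $\textcat{P}'_\mathcal{A}$, so each $U_i\times_X Y$ exists and lies in $\mathcal{A}$, and $\{U_i\times_X Y\rightarrow Y\}$ is a $\bm\tau_\mathcal{A}$-cover by the base-change axiom, hence a cover of $\bm\tau_\mathcal{A}|_\mathcal{A}$. For $\mathcal{A}$-admissibility, note that $\bm\tau_\mathcal{A}|_\mathcal{A}\subseteq\bm\tau|_\mathcal{A}$ as collections of covering families, so descent for $\bm\tau|_\mathcal{A}$-covers implies descent for $\bm\tau_\mathcal{A}|_\mathcal{A}$-covers; in particular each $\textnormal{Map}_\mathcal{M}(-,X)|_\mathcal{A}$ with $X\in\mathcal{A}$, being a stack in $\textcat{Stk}(\mathcal{A},\bm\tau|_\mathcal{A})$ because the original tuple is a geometry tuple, remains a stack in $\textcat{Stk}(\mathcal{A},\bm\tau_\mathcal{A}|_\mathcal{A})$. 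Hence $(\mathcal{M},\bm\tau_\mathcal{A},\textcat{P}_\mathcal{A},\mathcal{A})$ is a strong relative $(\infty,1)$-geometry tuple. The only substantive point, as opposed to bookkeeping, is clause \ref{geometry2}, which is precisely why $\textcat{P}_\mathcal{A}$ must be taken to be the $\bm\tau_\mathcal{A}$-localisation of $\textcat{P}'_\mathcal{A}$ rather than $\textcat{P}'_\mathcal{A}$ itself.
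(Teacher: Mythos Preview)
Your argument is correct, and at the level of the underlying construction it agrees with the paper: the paper (citing \cite[Remark 6.5]{kelly_analytic_2022}) takes $\textcat{P}_\mathcal{A}$ to be exactly your class $\textcat{P}'_\mathcal{A}$, and defines $\bm\tau_\mathcal{A}$ the same way you do. The genuine difference is your extra $\bm\tau_\mathcal{A}$-localisation step $\textcat{P}_\mathcal{A}:=(\textcat{P}'_\mathcal{A})^{\bm\tau_\mathcal{A}}$, which the paper's sketch does not include.

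This extra step is well motivated. You correctly identify that condition~\ref{geometry2} of Definition~\ref{pregeometrytriple} is the nontrivial point: given $f:Y\to X$ and a $\bm\tau_\mathcal{A}$-cover $\{U_i\to Y\}$ with each $U_i\to X$ in $\textcat{P}'_\mathcal{A}$, one certainly gets $f\in\textcat{P}$ by locality of $\textcat{P}$ for $\bm\tau$, but for an arbitrary $Z\in\mathcal{A}$ over $X$ one only learns that $Y\times_X Z$ is $\bm\tau_\mathcal{A}$-covered by objects of $\mathcal{A}$, which in the absence of any descent hypothesis on $\mathcal{A}$ does not force $Y\times_X Z\in\mathcal{A}$. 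Passing to the localised class $(\textcat{P}'_\mathcal{A})^{\bm\tau_\mathcal{A}}$ sidesteps this, and your refinement argument for clause~\ref{geometry2} is clean. The cost is harmless: as you observe, one still has $(\textcat{P}'_\mathcal{A})^{\bm\tau_\mathcal{A}}\subseteq\textcat{P}$ by locality of $\textcat{P}$ for $\bm\tau$, so the relative-tuple condition, strongness, and $\mathcal{A}$-admissibility go through unchanged. In short, your version is a more careful execution of the same idea, and is arguably the safer construction for the stated generality.
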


\begin{proof}
    This follows by taking $\textcat{P}_\mathcal{A}\subseteq\textcat{P}$ to be the class of maps $f:Y\rightarrow{X}$ in $\textcat{P}$ such that, whenever $Z\rightarrow{X}$ is a map with $Z\in\mathcal{A}$, then $Y\times_XZ$ is in $\mathcal{A}$. Our topology $\bm\tau_\mathcal{A}$ will be the class of covers $\{U_i\rightarrow{X}\}_{i\in I}$ in $\bm\tau$ such that, whenever there exists some map $Y\rightarrow{X}$ in $\mathcal{M}$, then $U_i\times_XY$ is in $\mathcal{A}$ if and only if $Y$ is in $\mathcal{A}$. 
\end{proof} 

\begin{exmp}
    In the model category theoretic version of To\"en and Vezzosi \cite{toen_homotopical_2008}, a homotopical algebraic geometry (HAG) context is a tuple $(\textnormal{C},\textnormal{C}_0,\textnormal{A},\bm{\tau},\textcat{P})$. Here, $\textnormal{A}$ is taken to be a full subcategory of $\textnormal{Comm}(\textnormal{C})$, the category of commutative monoids in $\textnormal{C}$. When we take $\textnormal{C}_0=\textnormal{C}$ and consider the associated $(\infty,1)$-categories, then $(\textcat{Aff}_{\textcat{L}^H(\textnormal{C})},\bm{\tau},\textcat{P},\textcat{L}^H(\textnormal{A})^{op})$ is a relative $(\infty,1)$-geometry tuple, where \begin{equation*}
\textcat{Aff}_{\textcat{L}^H(\textnormal{C})}:=\textcat{L}^H(\textnormal{Comm}(\textnormal{C}))^{op}
    \end{equation*} 
\end{exmp}

Consider the inclusion functor $i:\textcat{PSh}(\mathcal{A})\rightarrow{\textcat{PSh}(\mathcal{M}})$, which induces a functor on the level of stacks $i_{\#}:\textcat{Stk}(\mathcal{A},\bm\tau|_\mathcal{A})\rightarrow{\textcat{Stk}(\mathcal{M},\bm\tau)}$. The following result shows that, in the case where the geometry tuple is strong, we can consider stacks on $\mathcal{A}$ by realising them as stacks on $\mathcal{M}$. 

\begin{prop}\phantomsection\label{stronggeometryfullyfaithful}\cite[Proposition 6.7]{kelly_analytic_2022} If $(\mathcal{M},\bm\tau,\textcat{P},\mathcal{A})$ is a strong relative $(\infty,1)$-geometry tuple, then $i_{\#}$ is fully faithful. 
     
\end{prop}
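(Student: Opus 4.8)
The plan is to deduce this from Proposition~\ref{fullyfaithfulstackfunctor} by checking that the inclusion $j\colon\mathcal{A}\hookrightarrow\mathcal{M}$, regarded as a functor of $(\infty,1)$-sites $(\mathcal{A},\bm\tau|_\mathcal{A})\to(\mathcal{M},\bm\tau)$, is fully faithful, continuous, cocontinuous, and has the property that every object of $\mathcal{A}$ is admissible in $\textcat{Stk}(\mathcal{A},\bm\tau|_\mathcal{A})$. Since the functor $i$ of the statement is the left Kan extension $j_!$ of $j$, the induced functor $i^\#$ coincides with $j_\#$ in the notation of Section~\ref{continuousfunctorsection}, so full faithfulness of $j_\#$ is exactly the assertion to be proved.

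Three of the four hypotheses are straightforward. Full faithfulness of $j$ holds because $\mathcal{A}$ is a full subcategory of $\mathcal{M}$. That every $X\in\mathcal{A}$ is admissible, i.e. that $\textnormal{Map}_\mathcal{A}(-,X)$ is a stack in $\textcat{Stk}(\mathcal{A},\bm\tau|_\mathcal{A})$, is precisely the condition that $X$ is $\mathcal{A}$-admissible, which is part of the hypothesis that $(\mathcal{M},\bm\tau,\textcat{P},\mathcal{A})$ is a relative $(\infty,1)$-geometry tuple rather than merely a pre-geometry tuple. For continuity, the first condition --- that $\bm\tau|_\mathcal{A}$-covers map to $\bm\tau$-covers --- holds by the definition of $\bm\tau|_\mathcal{A}$ as the restriction of $\bm\tau$; for the second, given a morphism $Z\to X$ in $\mathcal{A}$ and a $\bm\tau|_\mathcal{A}$-cover $\{X_i\to X\}_{i\in I}$, each covering morphism $X_i\to X$ lies in $\textcat{P}$ by condition~(\ref{geometry1}) of Definition~\ref{pregeometrytriple}, and being a morphism of $\mathcal{A}$ it lies in $\textcat{P}|_\mathcal{A}$; the defining property of a relative $(\infty,1)$-pre-geometry tuple in Definition~\ref{relativepregeometry} then shows that the pullback $Z\times_XX_i$ formed in $\mathcal{M}$ exists and lands in $\mathcal{A}$, and since a limit computed in $\mathcal{M}$ that lies in the full subcategory $\mathcal{A}$ also computes the limit in $\mathcal{A}$, we get $j(Z\times_XX_i)\simeq j(Z)\times_{j(X)}j(X_i)$.

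The crux is cocontinuity, and this is exactly where the \emph{strong} hypothesis is used. Let $X\in\mathcal{A}$ and let $\{B_j\to X\}_{j\in J}$ be an arbitrary $\bm\tau$-cover of $X$ in $\mathcal{M}$. Applying the strongness condition of Definition~\ref{relativepregeometry} to this cover together with the identity $\mathrm{id}_X\colon X\to X$ --- a map with source in $\mathcal{A}$ --- shows that $\{B_j\times_XX\to X\}_{j\in J}=\{B_j\to X\}_{j\in J}$ is a cover in $\bm\tau|_\mathcal{A}$; in particular each $B_j$ already lies in $\mathcal{A}$. Hence this family, which trivially refines itself, witnesses cocontinuity of $j$. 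With all four hypotheses verified, Proposition~\ref{fullyfaithfulstackfunctor} (whose proof in turn relies on Lemma~\ref{admissiblecolimitrepresentable}, that every stack is a colimit of representables) applies to $j$ and yields that $i^\#=j_\#$ is fully faithful.

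The only genuine obstacle is the cocontinuity step: one should check that the instance of the strongness axiom taken against the identity morphism is legitimate and, more importantly, that the restricted topology $\bm\tau|_\mathcal{A}$ is being interpreted consistently throughout --- so that a cover of an object of $\mathcal{A}$ in $\bm\tau|_\mathcal{A}$ is by definition a family of morphisms between objects of $\mathcal{A}$. Modulo these bookkeeping points the argument is formal, reducing entirely to the already-established Proposition~\ref{fullyfaithfulstackfunctor}.
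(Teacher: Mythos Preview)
Your proof is correct. The paper itself does not give a proof of this proposition---it simply cites \cite[Proposition 6.7]{kelly_analytic_2022}---so there is no in-paper argument to compare against directly. That said, your approach is precisely the one the paper uses for the parallel statement later on (the lemma that the extension functor $i:\textcat{Stk}(\mathcal{A}^\heartsuit,\bm\tau^\heartsuit)\to\textcat{Stk}(\mathcal{A},\bm\tau|_\mathcal{A})$ is fully faithful), namely verifying the hypotheses of Proposition~\ref{fullyfaithfulstackfunctor}. Your identification of the strongness axiom, applied against the identity map, as exactly what yields cocontinuity is the right observation, and your bookkeeping for continuity (using that covering maps lie in $\textcat{P}$ and hence pull back within $\mathcal{A}$) is sound.
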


\subsection{Geometric Stacks}\phantomsection\label{geometricstacksection}

Fix a relative $(\infty,1)$-pre-geometry tuple $(\mathcal{M},\bm\tau,\textcat{P},\mathcal{A})$. We will say that $X\in\mathcal{A}$ is a \textit{representable stack} if $X$ is $\mathcal{A}$-admissible. 
\begin{defn}
\begin{enumerate}
    \item A stack $\mathcal{F}$ in $\textcat{Stk}(\mathcal{A},\bm{\tau}|_{\mathcal{A}})$ is \textit{$(-1)$-geometric} if it is of the form $\mathcal{F}\simeq \textnormal{Map}_\mathcal{M}(-,X)$ for some $\mathcal{A}$-admissible $X\in\mathcal{M}$,
    \item A morphism of stacks $f:\mathcal{F}\rightarrow{\mathcal{G}}$ in $\textcat{Stk}(\mathcal{A},\bm{\tau}|_{\mathcal{A}})$ is \textit{$(-1)$-representable} if, for any map $X\rightarrow{\mathcal{G}}$, with $X\in\mathcal{A}$ a representable stack, the pullback $\mathcal{F}\times_\mathcal{G} X$ is $(-1)$-geometric,
    \item  A morphism of stacks $f:\mathcal{F}\rightarrow{\mathcal{G}}$ in $\textcat{Stk}(\mathcal{A},\bm{\tau}|_{\mathcal{A}})$ is \textit{in $(-1)$-\textcat{P}} if it is $(-1)$-representable and, for any map $X\rightarrow{\mathcal{G}}$, with $X\in\mathcal{A}$ a representable stack, the induced map of $(-1)$-geometric stacks $\mathcal{F}\times_\mathcal{G}X\rightarrow{X}$ is represented by a morphism in $\textcat{P}$.
\end{enumerate}
\end{defn}

\begin{remark}
    We will often, when it is clear from context, denote a $(-1)$-geometric stack by the object $X\in\mathcal{M}$ that represents it. 
\end{remark}

Now, for $n\geq 0$, we can inductively build up notions of higher geometric stacks by glueing together representables as follows. 

\begin{defn}\phantomsection\label{geometricstackdefinition}

\begin{enumerate}
    \item Let $\mathcal{F}$ be a stack in $\textcat{Stk}(\mathcal{A},\bm{\tau}|_{\mathcal{A}})$. An \textit{$n$-atlas} for $\mathcal{F}$ is a set of morphisms $\{U_i\rightarrow{\mathcal{F}}\}_{i\in I}$ such that each $U_i$ is $(-1)$-geometric, each map $U_i\rightarrow{\mathcal{F}}$ is in $(n-1)$-\textcat{P}, and there is an epimorphism of stacks 
        \begin{equation*}
            \coprod_{i\in I} U_i\rightarrow{\mathcal{F}}
        \end{equation*}in $\textcat{Stk}(\mathcal{A},\bm{\tau}|_{\mathcal{A}})$, 
    \item A stack $\mathcal{F}$ in $\textcat{Stk}(\mathcal{A},\bm{\tau}|_{\mathcal{A}})$ is \textit{$n$-geometric} if the diagonal morphism $\mathcal{F}\rightarrow{\mathcal{F}\times\mathcal{F}}$ is $(n-1)$-representable and $\mathcal{F}$ admits an $n$-atlas,
    \item A morphism of stacks $f:\mathcal{F}\rightarrow{\mathcal{G}}$ in $\textcat{Stk}(\mathcal{A},\bm{\tau}|_{\mathcal{A}})$ is \textit{$n$-representable} if, for any map $X\rightarrow{\mathcal{G}}$ with $X\in\mathcal{A}$ a representable stack, the pullback $\mathcal{F}\times_\mathcal{G} X$ is $n$-geometric, 
    \item A morphism of stacks $f:\mathcal{F}\rightarrow{\mathcal{G}}$ in $\textcat{Stk}(\mathcal{A},\bm{\tau}|_{\mathcal{A}})$ is in $n$-\textcat{P} if it is $n$-representable and, for any map $X\rightarrow{\mathcal{G}}$ with $X\in\mathcal{A}$ a representable stack, there exists an $n$-atlas of the form $\{U_i\rightarrow \mathcal{F}\times_\mathcal{G} X\}_{i\in I}$ such that each map $U_i\rightarrow{X}$ is in $\textcat{P}$.
\end{enumerate}
\end{defn}

We easily see that the collection, $\textcat{Stk}_n(\mathcal{M},\bm\tau,\textcat{P},\mathcal{A})$, of $n$-geometric stacks in $\textcat{Stk}(\mathcal{A},\bm\tau|_\mathcal{A})$ is a subset of the collection of $n$-geometric stacks $\textcat{Stk}_n(\mathcal{M},\bm\tau,\textcat{P},\mathcal{M})$ in $\textcat{Stk}(\mathcal{M},\bm\tau)$ when $(\mathcal{M},\bm\tau,\textcat{P},\mathcal{M})$ is considered as a relative $(\infty,1)$-pre-geometry tuple. Moreover, if $(\mathcal{M},\bm\tau,\textcat{P})$ is an $(\infty,1)$-geometry tuple and $\mathcal{A}$ has all finite limits which are preserved by the inclusion, then there is an induced functor 
\begin{equation*}
    i_\#:\textcat{Stk}_n(\mathcal{A},\bm\tau|_\mathcal{A},\textcat{P}|_\mathcal{A},\mathcal{A})\rightarrow{\textcat{Stk}_n(\mathcal{M},\bm\tau,\textcat{P},\mathcal{A})}
\end{equation*}which is fully faithful if $(\mathcal{M},\bm\tau,\textcat{P},\mathcal{A})$ is a strong tuple \cite[Corollary 6.13]{kelly_analytic_2022}.

\begin{notat}\phantomsection\label{ngeometricnotation}
    We will want to specify when we are working with $n$-geometric stacks within the $(\infty,1)$-geometry tuple $(\mathcal{A},\bm\tau|_\mathcal{A},\textcat{P}|_\mathcal{A})$. Hence, we will abbreviate the category of $n$-geometric stacks $\textcat{Stk}_n(\mathcal{A},\bm\tau|_\mathcal{A},\textcat{P}|_\mathcal{A},\mathcal{A})$ to $\textcat{Stk}_n(\mathcal{A},\bm\tau|_\mathcal{A},\textcat{P}|_\mathcal{A})$. We will refer to $n$-representable morphisms with respect to this geometry tuple by the notation $n$-representable$|_\mathcal{A}$. The notation $n\textcat{-P}|_\mathcal{A}$ in this context should be clear. 
\end{notat}

\begin{exmp}Suppose that $k$ is a commutative ring. 
    \begin{enumerate}
        \item When $\mathcal{M}=\mathcal{A}=\textnormal{Aff}_k$, the category of affine schemes, $\bm\tau$ is the \'etale topology, and $\textcat{P}$ is the class of smooth maps, $n$-geometric stacks correspond to algebraic $n$-stacks \cite{toen_homotopical_2008}, 
        \item When $\mathcal{M}=\mathcal{A}=\textcat{DAff}^{cn}_k:=\textcat{L}^H(\textnormal{Comm}(\textnormal{sMod}_k))^{op}$, $\bm\tau$ is the \'etale  topology, and $\textcat{P}$ is the class of smooth morphisms (see Section \ref{derivedalgebraicgeosection}) we obtain the derived algebraic geometry context of To\"en and Vezzosi. Several examples of $n$-geometric stacks appear in this context such as the $1$-geometric stack of rank $n$-vector bundles $\textcat{Vect}_n$ \cite[Section 2.2.6.1]{toen_homotopical_2008}, 
        \item We will explore examples where $\mathcal{M}$ is not necessarily equal to $\mathcal{A}$ in Section \ref{topologiessection}.
    \end{enumerate}
\end{exmp}

In a similar way to \cite[Proposition 1.3.3.3]{toen_homotopical_2008}, we can prove the following statements about $n$-geometric stacks. 

\begin{prop}\phantomsection\label{conditionsgeometric}
\begin{enumerate}
    \item $\mathcal{F}$ is $n$-geometric if and only if the map $\mathcal{F}\rightarrow{*}$ is $n$-representable, 
    \item Any $(n-1)$-geometric stack is $n$-geometric,
    \item Any $(n-1)$-representable morphism is $n$-representable, 
    \item Any $(n-1)$-\textcat{P}-morphism is an $n$-\textcat{P}-morphism, 
    \item $n$-representable morphisms are stable by isomorphisms, pullbacks and compositions, 
    \item $n$-\textcat{P}-morphisms are stable by isomorphisms, pullbacks and compositions.
\end{enumerate}\end{prop}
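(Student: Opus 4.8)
The plan is to prove statements (1)--(6) simultaneously by induction on $n\ge -1$, following the pattern of \cite[Proposition 1.3.3.3]{toen_homotopical_2008}. I write $\textcat{Stk}:=\textcat{Stk}(\mathcal{A},\bm\tau|_{\mathcal{A}})$ and use freely that $\textcat{P}$ is stable under equivalences, compositions and pullbacks (Definition \ref{pregeometrytriple}), that $\mathcal{A}$ contains the pullbacks of Definition \ref{relativepregeometry} and, as in \cite{toen_homotopical_2008}, finite limits (so that the terminal stack $*$ is representable, products of representables are representable, and the diagonal of a representable is $(-1)$-representable), and that epimorphisms of stacks are stable under equivalences, compositions, pullbacks and coproducts (Proposition \ref{epimorphismspullbacK} together with the fact that $\pi_0^{\bm\tau}$ preserves coproducts). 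For $n=-1$, statements (2)--(4) are vacuous; stability of $(-1)$-representable and $(-1)$-$\textcat{P}$ morphisms under equivalences is clear, under pullbacks follows from the pasting lemma since $(\mathcal{F}\times_{\mathcal{G}}\mathcal{G}')\times_{\mathcal{G}'}X\simeq\mathcal{F}\times_{\mathcal{G}}X$ for representable $X\to\mathcal{G}'$, and under compositions by factoring such a pullback into two stages through a (representable) intermediate pullback, using closure of $\textcat{P}$ under composition; statement (1) for $n=-1$ is the assertion that $\mathcal{F}$ is representable iff $\mathcal{F}\times X$ is representable for every representable $X$ (forward direction: finite products; backward direction: the case $X=*$).

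Now fix $n\ge 0$ and assume (1)--(6) at level $n-1$. Statement (2) is immediate: the diagonal of an $(n-1)$-geometric stack is $(n-2)$-representable, hence $(n-1)$-representable by (3) at level $n-1$, and an $(n-1)$-atlas has legs in $(n-2)$-$\textcat{P}\subseteq(n-1)$-$\textcat{P}$ by (4) at level $n-1$, so it is an $n$-atlas. Statements (3) and (4) then follow, since a pullback of an $(n-1)$-representable (resp.\ $(n-1)$-$\textcat{P}$) morphism along a representable is $(n-1)$-geometric, hence $n$-geometric by (2), and the atlases witnessing the $\textcat{P}$-clause persist. For (1) at level $n$: the backward direction is the case $X=*$; for the forward direction, if $\mathcal{F}$ is $n$-geometric with $n$-atlas $\{U_i\to\mathcal{F}\}$ then $\{U_i\times X\to\mathcal{F}\times X\}$ is an $n$-atlas of $\mathcal{F}\times X$ (each leg a pullback of $U_i\to\mathcal{F}$ along a projection), and $\Delta_{\mathcal{F}\times X}$ identifies with the product $\Delta_{\mathcal{F}}\times\Delta_X$ of morphisms, a composite of pullbacks of an $(n-1)$-representable and a $(-1)$-representable map, hence $(n-1)$-representable by (5) at level $n-1$. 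Finally, stability of $n$-representable and $n$-$\textcat{P}$ morphisms under equivalences is trivial and under pullbacks is the same pasting argument as in the base case.

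The crux is composition-stability in (5) and (6), which I reduce to a \emph{base-change lemma at level $n$}: if $\mathcal{G}$ is $n$-geometric and $f:\mathcal{F}\to\mathcal{G}$ is $n$-representable, then $\mathcal{F}$ is $n$-geometric. (At level $n-1$ this lemma already follows from the inductive hypothesis: by (1), $\mathcal{G}$ being $(n-1)$-geometric means $\mathcal{G}\to*$ is $(n-1)$-representable, so $\mathcal{F}\to\mathcal{G}\to*$ is $(n-1)$-representable by composition-stability of (5), whence $\mathcal{F}$ is $(n-1)$-geometric by (1) again.) Granting it at level $n$, for $\mathcal{F}\xrightarrow{f}\mathcal{G}\xrightarrow{g}\mathcal{H}$ both $n$-representable and $X\to\mathcal{H}$ representable we have $\mathcal{F}\times_{\mathcal{H}}X\simeq\mathcal{F}\times_{\mathcal{G}}(\mathcal{G}\times_{\mathcal{H}}X)$, where $\mathcal{G}\times_{\mathcal{H}}X$ is $n$-geometric and the first projection is a pullback of $f$, hence $n$-representable; the base-change lemma then makes $\mathcal{F}\times_{\mathcal{H}}X$ $n$-geometric, giving (5). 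For (6) one refines atlases once more: with $f,g$ in $n$-$\textcat{P}$, pull the $\textcat{P}$-atlas of $\mathcal{G}\times_{\mathcal{H}}X$ back through $f$, compose with a $\textcat{P}$-atlas of each resulting piece, and use closure of $\textcat{P}$ under composition, (6) at level $n-1$ for the $(n-1)$-$\textcat{P}$ legs, and stability of epimorphisms.

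To prove the base-change lemma at level $n$, choose an $n$-atlas $\{U_i\to\mathcal{G}\}$ and put $V_i:=\mathcal{F}\times_{\mathcal{G}}U_i$, which is $n$-geometric because $f$ is $n$-representable; the map $V_i\to\mathcal{F}$ is a pullback of the $(n-1)$-$\textcat{P}$-map $U_i\to\mathcal{G}$, hence in $(n-1)$-$\textcat{P}$ by (6) at level $n-1$, and $\coprod_i V_i\to\mathcal{F}$ is a pullback of $\coprod_i U_i\to\mathcal{G}$, hence an epimorphism. Composing with an $n$-atlas of each $V_i$ (and using (6) at level $n-1$ and stability of epimorphisms) produces an $n$-atlas of $\mathcal{F}$. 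It remains to see that $\Delta_{\mathcal{F}}:\mathcal{F}\to\mathcal{F}\times\mathcal{F}$ is $(n-1)$-representable; factor it as $\mathcal{F}\xrightarrow{\Delta_f}\mathcal{F}\times_{\mathcal{G}}\mathcal{F}\to\mathcal{F}\times\mathcal{F}$, where the second map is a pullback of $\Delta_{\mathcal{G}}$ and hence $(n-1)$-representable by (5) at level $n-1$. For the relative diagonal $\Delta_f$, a representable $X\to\mathcal{F}\times_{\mathcal{G}}\mathcal{F}$ amounts to maps $a,b:X\to\mathcal{F}$ with $fa\simeq fb$; setting $\mathcal{F}_b:=\mathcal{F}\times_{f,\mathcal{G},fb}X$ (which is $n$-geometric), $a$ and $b$ induce two sections $\sigma_a,\sigma_b:X\to\mathcal{F}_b$ of its structure map, and there is an equivalence
\begin{equation*}
\mathcal{F}\times_{\mathcal{F}\times_{\mathcal{G}}\mathcal{F}}X\ \simeq\ (X\times X)\times_{\sigma_a\times\sigma_b,\,\mathcal{F}_b\times\mathcal{F}_b,\,\Delta_{\mathcal{F}_b}}\mathcal{F}_b .
\end{equation*}
Since $\Delta_{\mathcal{F}_b}$ is $(n-1)$-representable and $X\times X$ is representable, the right-hand side is $(n-1)$-geometric by (5) at level $n-1$ together with the base-change lemma at level $n-1$; hence $\Delta_f$, and therefore $\Delta_{\mathcal{F}}$ (composing via (5) at level $n-1$), is $(n-1)$-representable, completing the induction. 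The main obstacle is precisely this base-change lemma, and inside it the proof that $\Delta_f$ is $(n-1)$-representable: the atlas construction is a routine two-stage refinement, but the diagonal needs the displayed identification of $\mathcal{F}\times_{\mathcal{F}\times_{\mathcal{G}}\mathcal{F}}X$ and an appeal to the lemma one dimension lower.
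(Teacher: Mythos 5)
Your proof is correct and follows essentially the same route the paper intends: the paper gives no proof, deferring to To\"en--Vezzosi's Proposition 1.3.3.3, and your simultaneous induction with the base-change lemma (if $\mathcal{F}\to\mathcal{G}$ is $n$-representable and $\mathcal{G}$ is $n$-geometric then $\mathcal{F}$ is $n$-geometric) as the crux of composition-stability is exactly the argument the paper signals by recording that lemma as Corollary \ref{compositioncorollary}, including the factorisation of $\Delta_{\mathcal{F}}$ through $\mathcal{F}\times_{\mathcal{G}}\mathcal{F}$ and the identification of the fibres of the relative diagonal with fibres of $\Delta_{\mathcal{F}_b}$. The only thing worth keeping visible is the standing hypothesis you already flag, that $\mathcal{A}$ admits the relevant finite limits (terminal object and binary products of representables), which is needed for statement (1) but is not literally part of Definition \ref{relativepregeometry}.
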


In the course of proving that $n$-representable morphisms are stable by composition we obtain the following result.
\begin{cor}\phantomsection\label{compositioncorollary}
Suppose that $f:\mathcal{F}\rightarrow{\mathcal{G}}$ is an $n$-representable morphism of stacks. If $\mathcal{G}$ is $n$-geometric, then so is $\mathcal{F}$.
\end{cor}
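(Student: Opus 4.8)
The plan is to read the statement off Proposition~\ref{conditionsgeometric}. Since $\mathcal{G}$ is $n$-geometric, part~(1) of that proposition says the structure morphism $\mathcal{G}\to *$ is $n$-representable; composing with the $n$-representable morphism $f$ and using stability of $n$-representable morphisms under composition (Proposition~\ref{conditionsgeometric}(5)) shows $\mathcal{F}\to *$ is $n$-representable, and one more application of Proposition~\ref{conditionsgeometric}(1) shows $\mathcal{F}$ is $n$-geometric. Since the corollary is in fact extracted \emph{en route} to proving part~(5), it is worth also describing the intrinsic argument, which does not presuppose composition-stability at level $n$; this is the version I would actually write out.

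I would argue by induction on $n$ (starting from the representable case $n=-1$), assuming all of Proposition~\ref{conditionsgeometric} at levels $\le n-1$, and verify the two conditions of Definition~\ref{geometricstackdefinition}(2) for $\mathcal{F}$ directly. For the atlas: fix an $n$-atlas $\{U_i\to\mathcal{G}\}_{i\in I}$ of $\mathcal{G}$; since $f$ is $n$-representable, each $\mathcal{F}_i:=\mathcal{F}\times_\mathcal{G}U_i$ is $n$-geometric, so I may pick an $n$-atlas $\{V_{ij}\to\mathcal{F}_i\}_j$ of each $\mathcal{F}_i$. The composites $V_{ij}\to\mathcal{F}_i\to\mathcal{F}$ are $(n-1)$-\textcat{P} morphisms because $\mathcal{F}_i\to\mathcal{F}$ is a base change of the $(n-1)$-\textcat{P} morphism $U_i\to\mathcal{G}$ and $(n-1)$-\textcat{P} morphisms are stable under base change and composition (Proposition~\ref{conditionsgeometric}(6)); and $\coprod_{i,j}V_{ij}\to\mathcal{F}$ is an epimorphism because, by universality of colimits in $\textcat{Stk}(\mathcal{A},\bm\tau|_{\mathcal{A}})$, $\coprod_i\mathcal{F}_i\simeq\mathcal{F}\times_\mathcal{G}(\coprod_iU_i)$ is a base change of the epimorphism $\coprod_iU_i\to\mathcal{G}$ (Proposition~\ref{epimorphismspullbacK}), while $\coprod_{i,j}V_{ij}\to\coprod_i\mathcal{F}_i$ is a coproduct of epimorphisms and epimorphisms are closed under composition. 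Hence $\{V_{ij}\to\mathcal{F}\}$ is an $n$-atlas.

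For the diagonal, I would factor
\begin{equation*}
\mathcal{F}\xrightarrow{\ \Delta_f\ }\mathcal{F}\times_\mathcal{G}\mathcal{F}\xrightarrow{\ g\ }\mathcal{F}\times\mathcal{F},
\end{equation*}
where $g$ is a base change of $\Delta_\mathcal{G}$ along $\mathcal{F}\times\mathcal{F}\to\mathcal{G}\times\mathcal{G}$, hence $(n-1)$-representable since $\mathcal{G}$ is $n$-geometric (Proposition~\ref{conditionsgeometric}(5)); so it remains to show the relative diagonal $\Delta_f$ is $(n-1)$-representable. Given $X\to\mathcal{F}\times_\mathcal{G}\mathcal{F}$ with $X$ representable in $\mathcal{A}$, inducing $c\colon X\to\mathcal{G}$, I would base-change everything along $c$: writing $\mathcal{F}_X:=\mathcal{F}\times_\mathcal{G}X$, which is $n$-geometric since $f$ is $n$-representable, the pullback $\mathcal{F}\times_{\mathcal{F}\times_\mathcal{G}\mathcal{F}}X$ gets identified with the base change, along the section of $\mathcal{F}_X\times_X\mathcal{F}_X\to X$ determined by the two components of $X\to\mathcal{F}\times_\mathcal{G}\mathcal{F}$, of the relative diagonal $\mathcal{F}_X\to\mathcal{F}_X\times_X\mathcal{F}_X$; the latter is in turn a base change of the absolute diagonal of the $n$-geometric stack $\mathcal{F}_X$, hence $(n-1)$-representable, so its base change to the representable $X$ is $(n-1)$-geometric. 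This yields that $\Delta_f$, and then $\Delta_\mathcal{F}=g\circ\Delta_f$, is $(n-1)$-representable.

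The hard part is precisely this diagonal step: one must organise the iterated fibre products so that $\Delta_f$ becomes visibly the base change of the diagonal of the $n$-geometric stack $\mathcal{F}\times_\mathcal{G}X$; everything else is bookkeeping with universality of colimits and the stability statements of Proposition~\ref{conditionsgeometric} at level $n-1$.
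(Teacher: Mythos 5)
Your opening derivation is correct and is essentially how the paper treats this statement: Corollary \ref{compositioncorollary} is presented as a byproduct of Proposition \ref{conditionsgeometric} (whose proof is deferred to To\"en--Vezzosi), so reading it off parts (1) and (5) of that proposition is the intended argument. In your intrinsic version, the atlas construction is also fine: base change and composition of $(n-1)$-$\textcat{P}$ morphisms, plus Proposition \ref{epimorphismspullbacK} and universality of colimits, give exactly the $n$-atlas you describe.

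The gap is in the diagonal step, at the sentence claiming that the relative diagonal $\Delta_{\mathcal{F}_X/X}\colon\mathcal{F}_X\to\mathcal{F}_X\times_X\mathcal{F}_X$ "is in turn a base change of the absolute diagonal" $\Delta_{\mathcal{F}_X}\colon\mathcal{F}_X\to\mathcal{F}_X\times\mathcal{F}_X$. This is false in the present setting. Pulling $\Delta_{\mathcal{F}_X}$ back along $\mathcal{F}_X\times_X\mathcal{F}_X\to\mathcal{F}_X\times\mathcal{F}_X$ gives $\mathcal{F}_X\times_X\bigl(X\times_{X\times X}X\bigr)$, and $X\times_{X\times X}X$ is not $X$: for $X=\textnormal{Spec}(A)$ it is $\textnormal{Spec}\bigl(A\otimes^{\mathbb{L}}_{A\otimes^{\mathbb{L}}A}A\bigr)$, a Hochschild-type object, because the diagonal of a representable stack is not a monomorphism here (unlike the diagonal of a scheme, which is what makes the classical slogan work). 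Your conclusion is still reachable, but by a different mechanism: the specific pullback you need, namely $\mathcal{F}_X\times_{\mathcal{F}_X\times_X\mathcal{F}_X}X$ taken along the section $(x_1',x_2')$ determined by the two components of $X\to\mathcal{F}\times_\mathcal{G}\mathcal{F}$, is identified via the graph/equalizer identity with $X\times_{x_1',\,\mathcal{F}_X,\,x_2'}X\simeq(X\times X)\times_{\mathcal{F}_X\times\mathcal{F}_X}\mathcal{F}_X$, i.e.\ with a pullback of the $(n-1)$-representable morphism $\Delta_{\mathcal{F}_X}$ along a map out of $X\times X$ (at which point one must either know $X\times X$ is representable or insert one further base change along a representable). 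So the step is salvageable, but it requires this equalizer manipulation to be spelled out; as written, the justification given for the key assertion does not hold.
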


\begin{prop}\phantomsection\label{coolcorollary} Suppose that we have morphisms $\mathcal{F}\rightarrow{\mathcal{G}}$ and $\mathcal{H}\rightarrow{\mathcal{G}}$ with $\mathcal{F},\mathcal{H}$ $n$-geometric stacks and suppose that the diagonal morphism $\mathcal{G}\rightarrow{\mathcal{G}\times\mathcal{G}}$ is $n$-representable for some $n\geq -1$. Then, $\mathcal{F}\times_\mathcal{G}\mathcal{H}$ is an $n$-geometric stack. 
    
\end{prop}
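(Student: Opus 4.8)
The plan is to realise $\mathcal{F}\times_{\mathcal{G}}\mathcal{H}$ as a base change of the diagonal of $\mathcal{G}$ and then chain together the stability properties collected in Proposition \ref{conditionsgeometric} together with Corollary \ref{compositioncorollary}. The first step is to establish the standard pullback identity
\begin{equation*}
    \mathcal{F}\times_{\mathcal{G}}\mathcal{H}\;\simeq\;(\mathcal{F}\times\mathcal{H})\times_{\mathcal{G}\times\mathcal{G}}\mathcal{G},
\end{equation*}
where $\mathcal{F}\times\mathcal{H}\rightarrow{\mathcal{G}\times\mathcal{G}}$ is the product of the two structure maps and $\mathcal{G}\rightarrow{\mathcal{G}\times\mathcal{G}}$ is the diagonal. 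This is a formal consequence of the fact that $\textcat{Stk}(\mathcal{A},\bm\tau|_{\mathcal{A}})$ has all finite limits and of the pasting law for $(\infty,1)$-pullback squares; one checks it by pasting the square defining $\mathcal{F}\times_{\mathcal{G}}\mathcal{H}$ with the square expressing $\mathcal{F}\times\mathcal{H}$ as $\mathcal{F}\times_{\mathcal{G}}\mathcal{G}\times_{\mathcal{G}}\mathcal{H}$ suitably, or directly by comparing mapping spaces into the two sides.

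Next I would show that $\mathcal{F}\times\mathcal{H}$ is itself an $n$-geometric stack. By Proposition \ref{conditionsgeometric}(1), $\mathcal{F}\rightarrow{*}$ and $\mathcal{H}\rightarrow{*}$ are both $n$-representable. The projection $\mathcal{F}\times\mathcal{H}\rightarrow{\mathcal{H}}$ is the $(\infty,1)$-pullback of $\mathcal{F}\rightarrow{*}$ along $\mathcal{H}\rightarrow{*}$, so it is $n$-representable by Proposition \ref{conditionsgeometric}(5); composing with the $n$-representable map $\mathcal{H}\rightarrow{*}$, again via Proposition \ref{conditionsgeometric}(5), gives that $\mathcal{F}\times\mathcal{H}\rightarrow{*}$ is $n$-representable, i.e. $\mathcal{F}\times\mathcal{H}$ is $n$-geometric.

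Finally, the hypothesis that the diagonal $\mathcal{G}\rightarrow{\mathcal{G}\times\mathcal{G}}$ is $n$-representable, combined with stability of $n$-representable morphisms under $(\infty,1)$-pullback (Proposition \ref{conditionsgeometric}(5)), shows that the base-changed morphism
\begin{equation*}
    \mathcal{F}\times_{\mathcal{G}}\mathcal{H}\;\simeq\;(\mathcal{F}\times\mathcal{H})\times_{\mathcal{G}\times\mathcal{G}}\mathcal{G}\;\longrightarrow\;\mathcal{F}\times\mathcal{H}
\end{equation*}
is $n$-representable. Since $\mathcal{F}\times\mathcal{H}$ is $n$-geometric, Corollary \ref{compositioncorollary} immediately yields that $\mathcal{F}\times_{\mathcal{G}}\mathcal{H}$ is $n$-geometric, which is the claim. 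I expect the only genuinely delicate point to be the careful verification of the pullback identity in the $(\infty,1)$-categorical setting (making sure the pasting of homotopy pullback squares is valid and that the map to $\mathcal{G}\times\mathcal{G}$ is the intended one); everything after that is a direct bookkeeping argument with the already-established stability statements, and the argument is uniform in $n\geq -1$.
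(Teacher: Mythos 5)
Your proposal is correct and follows essentially the same route as the paper: the same pullback square realising $\mathcal{F}\times_{\mathcal{G}}\mathcal{H}$ as the base change of the diagonal of $\mathcal{G}$ along $\mathcal{F}\times\mathcal{H}\rightarrow\mathcal{G}\times\mathcal{G}$, the same argument that $\mathcal{F}\times\mathcal{H}$ is $n$-geometric via pulling back $\mathcal{F}\rightarrow *$ along $\mathcal{H}\rightarrow *$, and the same stability-under-pullback-and-composition bookkeeping. The only cosmetic difference is that you invoke Corollary \ref{compositioncorollary} at the end where the paper composes all the way down to $\mathcal{F}\times_{\mathcal{G}}\mathcal{H}\rightarrow *$ and cites Proposition \ref{conditionsgeometric}(1); these are the same argument.
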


\begin{proof}
We will show that the morphism $\mathcal{F}\times_\mathcal{G} \mathcal{H}\rightarrow{*}$ is $n$-representable.  We have the following pullback diagram,
\begin{equation*}
    \begin{tikzcd}
    \mathcal{F}\times_\mathcal{G}\mathcal{H} \arrow{d} \arrow{r} & \mathcal{G} \arrow{d}\\
    \mathcal{F}\times\mathcal{H} \arrow{r} & \mathcal{G}\times\mathcal{G}
    \end{tikzcd}
\end{equation*}By stability of $n$-representable maps under pullback, the map $\mathcal{F}\times_\mathcal{G}\mathcal{H}\rightarrow{\mathcal{F}\times\mathcal{H}}$ is $n$-representable. Moreover, if we consider the pullback square, 
\begin{equation*}
    \begin{tikzcd}
        \mathcal{F}\times \mathcal{H} \arrow{d}\arrow{r} & \mathcal{H} \arrow{d}\\
        \mathcal{F} \arrow{r} & *
    \end{tikzcd}
\end{equation*}then we see that, since $\mathcal{F}$ is $n$-geometric, the morphism $\mathcal{F}\times \mathcal{H}\rightarrow{\mathcal{H}}$ is $n$-representable. Hence, since the composition of $n$-representable maps is $n$-representable, the morphism $\mathcal{F}\times_\mathcal{G} \mathcal{H}\rightarrow{*}$ is $n$-representable.
\end{proof}

\begin{cor}\phantomsection\label{stablepullbackgeometric}The full subcategory of $n$-geometric stacks in $\textcat{Stk}(\mathcal{A},\bm{\tau}|_\mathcal{A})$ is stable under $(\infty,1)$-pullbacks for $n\geq 0$.

\end{cor}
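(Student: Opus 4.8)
The plan is to reduce the statement immediately to Proposition \ref{coolcorollary}. Since $\textcat{Stk}(\mathcal{A},\bm{\tau}|_\mathcal{A})$ is complete, every pullback exists, and a pullback of $n$-geometric stacks is of the form $\mathcal{F}\times_\mathcal{G}\mathcal{H}$ for a cospan $\mathcal{F}\rightarrow\mathcal{G}\leftarrow\mathcal{H}$ with $\mathcal{F}$, $\mathcal{G}$, $\mathcal{H}$ all $n$-geometric. So I only need to show that such a pullback is $n$-geometric. Comparing with the hypotheses of Proposition \ref{coolcorollary}, the only thing not already in hand is that the diagonal $\mathcal{G}\rightarrow\mathcal{G}\times\mathcal{G}$ is $n$-representable; the entire proof therefore amounts to checking that.

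First I would observe that, because $n\geq 0$ and $\mathcal{G}$ is $n$-geometric, condition (a) of Definition \ref{geometricstackdefinition}(2) says precisely that the diagonal $\mathcal{G}\rightarrow\mathcal{G}\times\mathcal{G}$ is $(n-1)$-representable. Then, since $n-1\geq -1$, Proposition \ref{conditionsgeometric}(3) applies and tells us that any $(n-1)$-representable morphism is $n$-representable; hence the diagonal of $\mathcal{G}$ is $n$-representable.

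With this in place, Proposition \ref{coolcorollary} applies verbatim to the cospan $\mathcal{F}\rightarrow\mathcal{G}\leftarrow\mathcal{H}$ (its three hypotheses being: $\mathcal{F}$ is $n$-geometric, $\mathcal{H}$ is $n$-geometric, and the diagonal of $\mathcal{G}$ is $n$-representable) and yields that $\mathcal{F}\times_\mathcal{G}\mathcal{H}$ is $n$-geometric. This proves stability of the full subcategory of $n$-geometric stacks under $(\infty,1)$-pullbacks.

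I do not anticipate a genuine obstacle: the substantive geometric content has already been isolated in Proposition \ref{coolcorollary}, and what remains is the bookkeeping observation that an $n$-geometric stack has $(n-1)$-representable, hence $n$-representable, diagonal. The one point requiring a little care is the hypothesis $n\geq 0$, which is exactly what makes Definition \ref{geometricstackdefinition}(2) applicable to $\mathcal{G}$ and, simultaneously, what allows Proposition \ref{conditionsgeometric}(3) to be invoked with index $n-1\geq -1$; for $n=-1$ the diagonal clause is absent from the definition, which is why the corollary is stated only for $n\geq 0$.
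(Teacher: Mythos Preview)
Your proposal is correct and is exactly the argument the paper has in mind: the corollary is placed immediately after Proposition~\ref{coolcorollary} with no separate proof, and the only missing ingredient is precisely the observation that an $n$-geometric $\mathcal{G}$ (for $n\geq 0$) has $(n-1)$-representable, hence $n$-representable, diagonal, which you supply via Definition~\ref{geometricstackdefinition}(2)(a) and Proposition~\ref{conditionsgeometric}(3).
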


\subsection{Closure under $\tau$-descent}

Fix a relative $(\infty,1)$-geometry tuple $(\mathcal{M},\textcat{P},\bm\tau,\mathcal{A})$. Suppose that we have a $\bm\tau$-cover $\{U_i\rightarrow{X}\}_{i\in I}$ in $\mathcal{M}$. Then, this defines an epimorphism of stacks $\coprod_{i\in I}U_i\rightarrow{X}$ in the sense of Definition \ref{epimorphismstacks}. However, it does not follow that every epimorphism of representable stacks defines a $\bm\tau$-cover. 

We will say that a stack $\mathcal{F}$ is \textit{coverable} if there is an epimorphism of stacks $\coprod_{i\in I}U_i\rightarrow{\mathcal{F}}$
with each $U_i$ a $(-1)$-geometric stack on $\mathcal{A}$. In this case, we will say that $\mathcal{F}$ is \textit{covered by $\{U_i\}_{i\in I}$}.

\begin{prop}\phantomsection\label{localepimorphismprop}Suppose that $\mathcal{F}$ is a coverable stack in $\textcat{Stk}(\mathcal{A},\bm{\tau}|_\mathcal{A})$, covered by $\{U_i\}_{i\in I}$, and that $X\in\mathcal{A}$ is a representable stack along with a morphism $X\rightarrow{\mathcal{F}}$. Then, there exists a $\bm{\tau}$-cover $\{V_j\rightarrow{X}\}_{j\in J}$ in $\mathcal{A}$, a morphism $u:J\rightarrow{I}$ and, for all $j$, a commutative diagram in $\textcat{Stk}(\mathcal{A},\bm{\tau}|_\mathcal{A})$,
\begin{equation*}
    \begin{tikzcd}
       V_j\arrow{r} \arrow{d} & U_{u(j)}\arrow{d} \\
    X\arrow{r} & \mathcal{F}
    \end{tikzcd}
\end{equation*}
\end{prop}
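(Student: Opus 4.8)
The plan is to argue by reduction to the universal property of $\bm\tau$-covers together with the fact that epimorphisms of stacks are detected on $\pi_0^{\bm\tau}$. First I would unwind what the hypothesis gives: the morphism $\coprod_{i\in I} U_i \to \mathcal{F}$ is an epimorphism of stacks, so applying $\pi_0^{\bm\tau}$ yields an epimorphism of sheaves $\coprod_{i\in I}\pi_0^{\bm\tau}(U_i) \to \pi_0^{\bm\tau}(\mathcal{F})$ in $\textnormal{Sh}(\textnormal{Ho}(\mathcal{A}),\bm\tau|_\mathcal{A})$. Pulling this back along $X \to \mathcal{F}$ (or rather along $\pi_0^{\bm\tau}(X)\to\pi_0^{\bm\tau}(\mathcal{F})$) and using that epimorphisms of sheaves are stable under pullback (Proposition \ref{epimorphismspullbacK}, or directly the sheaf-theoretic statement), one sees that $\coprod_i (U_i\times_\mathcal{F} X) \to X$ is an epimorphism of stacks, equivalently of representable-after-truncation sheaves.

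Next I would use that an epimorphism of sheaves onto a representable $X$ can be refined, locally on $X$, by the covering family giving rise to that epimorphism. Concretely: for a representable sheaf $\underline{X}$, a family of maps of sheaves whose coproduct surjects onto $\underline{X}$ must, by definition of the topology generated by $\bm\tau$ on $\textnormal{Ho}(\mathcal{A})$, admit after passing to a $\bm\tau$-covering sieve on $X$ a lift to the representables $U_i$. That is, there is a $\bm\tau$-covering family $\{V_j \to X\}_{j\in J}$ of representable stacks in $\mathcal{A}$ — here I use that $\mathcal{A}$ is closed under the relevant pullbacks, as part of the relative $(\infty,1)$-geometry tuple structure, so these $V_j$ can be taken representable — and a function $u: J \to I$ such that each composite $V_j \to X \to \mathcal{F}$ factors (at the level of $\pi_0^{\bm\tau}$, hence up to the ambiguity of connected components) through $U_{u(j)} \to \mathcal{F}$. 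To promote this factorisation from sheaves to stacks, I would either shrink the $V_j$ further — replacing each $V_j$ by a representable cover on which the factorisation is realised honestly, using that $V_j$ is representable and maps out of representables into $\mathcal{F}$ are computed by evaluating $\mathcal{F}$ — or invoke that, over a representable, a point of $\mathcal{F}$ lifting to a point of $\coprod U_i$ means exactly that the section of $\pi_0^{\bm\tau}(\mathcal{F})$ lifts, and the lift is witnessed after passing to a cover. This produces the commutative square
\begin{equation*}
\begin{tikzcd}
V_j \arrow{r}\arrow{d} & U_{u(j)}\arrow{d}\\
X\arrow{r} & \mathcal{F}
\end{tikzcd}
\end{equation*}
for every $j\in J$, which is the claimed conclusion.

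The main obstacle I anticipate is the passage from a factorisation that exists \emph{on $\pi_0$} (i.e. on the associated $0$-truncated sheaves, where epimorphism = surjection of sheaves and hence admits local sections) to a genuine factorisation of \emph{stacks}, since a surjection on $\pi_0^{\bm\tau}$ only guarantees that the image of the component of the map $V_j \to \mathcal{F}$ agrees with a component coming from $U_{u(j)}$, not that the morphism $V_j \to \mathcal{F}$ itself lifts. The standard fix — and the step I would be most careful about — is that for $V_j$ representable, giving a map $V_j \to \mathcal{F}$ is giving an object of the $\infty$-groupoid $\mathcal{F}(V_j)$, and the statement that this object comes from $U_{u(j)}(V_j)$ is a condition on a single point of an $\infty$-groupoid which, after further $\bm\tau$-refinement of $V_j$ (legitimate because $\mathcal{F}$ satisfies descent and the $U_i$ are representable hence also satisfy descent), can always be arranged; one then absorbs this extra refinement into the covering family $\{V_j\}$, using the transitivity axiom of the pre-topology $\bm\tau$. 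I would present this refinement step explicitly, since it is where the homotopy-coherent content genuinely enters, and cite Definition \ref{epimorphismstacks}, Proposition \ref{epimorphismspullbacK}, and the descent definitions for the bookkeeping.
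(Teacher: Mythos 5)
Your proposal is correct and follows essentially the same route as the paper: apply $\pi_0^{\bm\tau}$ to the covering epimorphism, invoke the local-surjectivity characterisation of epimorphisms of sheaves to produce the $\bm\tau$-cover $\{V_j\to X\}_{j\in J}$ and the index map $u$, and then promote the $\pi_0$-level factorisation to a factorisation of stacks. If anything you are more careful than the paper at the final promotion step (the paper passes through $i^{\bm\tau}\circ\pi_0^{\bm\tau}(\mathcal{F})$ and a "counit" map back to $\mathcal{F}$, whereas your explicit further $\bm\tau$-refinement of the $V_j$, absorbed via transitivity of the pre-topology, is the cleaner way to witness the lift honestly); note only that the $V_j$ are representable simply because $\bm\tau$-covering families live in $\mathcal{A}$ by definition, not because of closure of $\mathcal{A}$ under pullbacks.
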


\begin{proof}
    Since $\mathcal{F}$ is coverable, there is an epimorphism of stacks $\coprod_{i\in I} U_i\rightarrow{\mathcal{F}}$ corresponding to an epimorphism of sheaves $\coprod_{i\in I} \pi_0^{\bm{\tau}} (U_i)\rightarrow{\pi_0^{\bm{\tau}}(\mathcal{F})}$
in $\textnormal{Sh}(\textnormal{Ho}(\mathcal{A}),\bm{\tau}|_\mathcal{A})$. It is known (see \cite[Corollary III.7.5]{maclane_sheaves_1994}) that a morphism of sheaves is an epimorphism if and only if it satisfies a local surjectivity property. Given the map $\pi_0^{\bm{\tau}}(X)\rightarrow{\pi_0^{\bm{\tau}}(\mathcal{F})}$, this local surjectivity property asserts that there exists a $\bm{\tau}$-cover $\{V_j\rightarrow{X}\}_{j\in J}$ and a morphism $u:J\rightarrow{I}$ such that the following diagram commutes in $\textnormal{Sh}(\textnormal{Ho}(\mathcal{A}),\bm{\tau}|_{\mathcal{A}})$,
\begin{equation*}
     \begin{tikzcd}
       V_j\arrow{r} \arrow{d} & \pi_0^{\bm{\tau}}(U_{u(j)})\arrow{d} \\
    \pi_0^{\bm{\tau}}(X)\arrow{r} & \pi_0^{\bm{\tau}}(\mathcal{F})
    \end{tikzcd}
\end{equation*}

Under the inclusion map $i^{\bm{\tau}}:\textnormal{Sh}(\textnormal{Ho}(\mathcal{A}),\bm{\tau}|_{\mathcal{A}})\rightarrow{\textcat{Stk}(\mathcal{A},\bm{\tau}|_{\mathcal{A}})}$, we note that representable sheaves get sent to representable stacks since every element of $\mathcal{A}$ is $\mathcal{A}$-admissible. We have the following commutative diagram 
\begin{equation*}
     \begin{tikzcd}
       V_j\arrow{r} \arrow{d} & U_{u(j)}\arrow{d} \\
    X\arrow{r} & i^{\bm{\tau}}\circ\pi_0^{\bm{\tau}}(\mathcal{F})
    \end{tikzcd}
\end{equation*}
Using the counit of the adjunction $i\circ\pi_0\rightarrow{\textnormal{id}}$, we obtain a map $i^{\bm{\tau}}\circ\pi_0^{\bm{\tau}}(\mathcal{F})\rightarrow\mathcal{F}$ such that the required diagram commutes. 
\end{proof}

\begin{defn}\cite[c.f. Definition 8.3]{porta_derived_2018}\phantomsection\label{taudescent} We say that $\mathcal{A}$ is \textit{closed under $\bm{\tau}$-descent relative to $\mathcal{M}$} if, for any stack $\mathcal{F}$ in $\textcat{Stk}(\mathcal{A},\bm{\tau}|_\mathcal{A})$, any morphism $\mathcal{F}\rightarrow{X}$ with $X$ a $(-1)$-geometric stack, and any $\bm{\tau}$-cover $\{U_i\rightarrow{X}\}_{i\in I}$ of $\mathcal{A}$-admissible objects, we have that, if $\mathcal{F}\times_XU_i$ is a $(-1)$-geometric stack for every $i\in I$, then so is $\mathcal{F}$.

\end{defn}

When $\mathcal{A}$ is closed under $\bm{\tau}$-descent we get several useful criteria for a stack to be $n$-geometric, including the following local condition for checking geometricity. 

\begin{lem}\phantomsection\label{pullbackgeometric}
Suppose that $\mathcal{A}$ is closed under $\bm{\tau}$-descent. Let $f:\mathcal{F}\rightarrow{\mathcal{G}}$ be any morphism of stacks in $\textcat{Stk}(\mathcal{A},\bm{\tau}|_\mathcal{A})$ with $\mathcal{G}$ an $n$-geometric stack. Suppose that there exists an $n$-atlas $\{U_i\rightarrow{\mathcal{G}}\}_{i\in I}$ of $\mathcal{G}$ such that each stack $\mathcal{F}\times_\mathcal{G}U_i$ is $n$-geometric. Then, $\mathcal{F}$ is $n$-geometric. 

Furthermore, if every map $\mathcal{F}\times_\mathcal{G} U_i\rightarrow{U_i}$ is in $n\textcat{-P}$, then so is $f$.

\end{lem}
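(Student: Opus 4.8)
The plan is to reduce the statement to the pointwise criterion for $n$-geometricity by exploiting closure under $\bm\tau$-descent, and to run the argument over each representable test object mapping to $\mathcal{G}$. First I would recall that, since $\mathcal{G}$ is $n$-geometric, to show $\mathcal{F}$ is $n$-geometric it suffices by Proposition \ref{conditionsgeometric}(1) and Corollary \ref{compositioncorollary} to show that $f:\mathcal{F}\rightarrow\mathcal{G}$ is $n$-representable; i.e., for every representable stack $X$ in $\mathcal{A}$ with a map $X\rightarrow\mathcal{G}$, the pullback $\mathcal{F}\times_\mathcal{G}X$ is $n$-geometric. So fix such an $X\rightarrow\mathcal{G}$.

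Given the $n$-atlas $\{U_i\rightarrow\mathcal{G}\}_{i\in I}$, apply Proposition \ref{localepimorphismprop} to the morphism $X\rightarrow\mathcal{G}$ (after noting $\mathcal{G}$ is coverable, being $n$-geometric): there is a $\bm\tau$-covering family $\{V_j\rightarrow X\}_{j\in J}$ of representable stacks in $\mathcal{A}$, a map $u:J\rightarrow I$, and commutative squares $V_j\rightarrow U_{u(j)}$ over $X\rightarrow\mathcal{G}$. These squares give morphisms $\mathcal{F}\times_\mathcal{G}V_j\rightarrow(\mathcal{F}\times_\mathcal{G}U_{u(j)})\times_{U_{u(j)}}V_j$, and since $V_j\rightarrow U_{u(j)}$ factors as $V_j\rightarrow X\times_\mathcal{G}U_{u(j)}\rightarrow U_{u(j)}$ one checks $\mathcal{F}\times_\mathcal{G}V_j\simeq(\mathcal{F}\times_\mathcal{G}U_{u(j)})\times_{U_{u(j)}}V_j$ as pullbacks. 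By hypothesis each $\mathcal{F}\times_\mathcal{G}U_{u(j)}$ is $n$-geometric, and $V_j\rightarrow U_{u(j)}$ is a morphism of representable stacks; hence by Corollary \ref{stablepullbackgeometric} (for $n\geq 0$; the $n=-1$ case uses Lemma \ref{representablestablepullback} together with the fact that atlas maps lie in $(-1)$-$\textcat P$) the stack $\mathcal{F}\times_\mathcal{G}V_j\simeq(\mathcal{F}\times_\mathcal{G}X)\times_XV_j$ is $n$-geometric. Now $\{V_j\rightarrow X\}_{j\in J}$ is a $\bm\tau$-covering family of representables and each base change $(\mathcal{F}\times_\mathcal{G}X)\times_XV_j$ is $n$-geometric; we want to conclude $\mathcal{F}\times_\mathcal{G}X$ is $n$-geometric. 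This is exactly where closure under $\bm\tau$-descent enters, but I expect the honest route requires also building an $n$-atlas for $\mathcal{F}\times_\mathcal{G}X$ by pushing forward the atlases of the pieces along $V_j\rightarrow X$ and using Proposition \ref{epimorphismspullbacK} to assemble the resulting maps into an epimorphism, plus checking the diagonal of $\mathcal{F}\times_\mathcal{G}X$ is $(n-1)$-representable locally on the cover $\{V_j\}$ — and this diagonal check is again a statement that can be verified after pullback along the $V_j$, invoking closure under $\bm\tau$-descent at the level $n-1$, so the whole argument is genuinely inductive on $n$.

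For the last clause, suppose in addition each $\mathcal{F}\times_\mathcal{G}U_i\rightarrow U_i$ is in $n$-$\textcat P$. To show $f$ is in $n$-$\textcat P$, fix $X\rightarrow\mathcal{G}$ with $X$ representable; by the above $\mathcal{F}\times_\mathcal{G}X$ is $n$-geometric, and I must exhibit an $n$-atlas $\{W_k\rightarrow\mathcal{F}\times_\mathcal{G}X\}$ with each $W_k\rightarrow X$ in $\textcat P$. Using the $V_j\rightarrow X$ again: $\{V_j\times_X(\mathcal{F}\times_\mathcal{G}X)\rightarrow\mathcal{F}\times_\mathcal{G}X\}_j$ is an epimorphism (pullback of the $\bm\tau$-cover $\{V_j\rightarrow X\}$ along $\mathcal{F}\times_\mathcal{G}X\rightarrow X$, using Proposition \ref{epimorphismspullbacK}), and $V_j\times_X(\mathcal{F}\times_\mathcal{G}X)\simeq V_j\times_{U_{u(j)}}(\mathcal{F}\times_\mathcal{G}U_{u(j)})$; since $\mathcal{F}\times_\mathcal{G}U_{u(j)}\rightarrow U_{u(j)}$ is in $n$-$\textcat P$ and $V_j\rightarrow U_{u(j)}$ is in $\textcat P$, stability of $n$-$\textcat P$ under pullback and composition (Proposition \ref{conditionsgeometric}(6)) gives an $n$-atlas $\{W_{j,k}\rightarrow V_j\times_{U_{u(j)}}(\mathcal{F}\times_\mathcal{G}U_{u(j)})\}$ with each $W_{j,k}\rightarrow V_j$ in $\textcat P$; composing with $V_j\rightarrow X$ (which is in $\textcat P$ as a $\bm\tau$-cover, by axiom \eqref{geometry1} of Definition \ref{pregeometrytriple}) and taking the union over $j$ yields the required $n$-atlas over $X$ with structure maps in $\textcat P$.

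The main obstacle is the inductive bookkeeping hidden in "$\mathcal{F}\times_\mathcal{G}X$ is $n$-geometric": verifying the diagonal condition, since unlike the atlas condition (which follows from gluing epimorphisms via Proposition \ref{epimorphismspullbacK}) the $(n-1)$-representability of the diagonal $\mathcal{F}\times_\mathcal{G}X\rightarrow(\mathcal{F}\times_\mathcal{G}X)\times_X(\mathcal{F}\times_\mathcal{G}X)$ must itself be checked $\bm\tau$-locally, which is precisely a lower-dimensional instance of the lemma and forces the proof to be organised as a simultaneous induction on $n$ over both the geometricity claim and the $n$-$\textcat P$ claim, with closure under $\bm\tau$-descent feeding the descent of representability at each stage.
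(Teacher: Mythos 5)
Your argument follows the paper's proof essentially step for step: induction on $n$, reduction via Corollary \ref{compositioncorollary} to showing $f$ is $n$-representable, refinement of the cover through Proposition \ref{localepimorphismprop} so that each $(\mathcal{F}\times_\mathcal{G}X)\times_XV_j$ is $n$-geometric (and in $n$-$\textcat{P}$ over $V_j$), and then reduction to the case of a representable base. The only difference is that the paper delegates the final local-to-global assembly over a representable base to \cite[Proposition 1.3.3.4]{toen_homotopical_2008}, whereas you sketch that assembly directly — correctly identifying that the atlas is glued via epimorphisms and that the diagonal condition is handled by the induction on $n$.
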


\begin{proof}
We proceed by induction on $n$. Indeed, when $n=-1$ the result follows since $\mathcal{A}$ is closed under $\bm{\tau}$-descent. If $n\geq 0$, then it suffices to show that $f$ is $n$-representable by Corollary \ref{compositioncorollary}. Suppose that $X$ is a representable stack along with a morphism $X\rightarrow{\mathcal{G}}$. It remains to show that $\mathcal{F}\times_\mathcal{G}X$ is $n$-geometric for all representable stacks $X$. Suppose that $\{U_i\rightarrow{\mathcal{G}}\}_{i\in I}$ is an $n$-atlas for $\mathcal{G}$. By Proposition \ref{localepimorphismprop}, we can construct a $\bm{\tau}$-covering family $\{V_j\rightarrow{X}\}_{j\in J}$ for $X$ such that the morphism $V_j\rightarrow{\mathcal{G}}$ factors through $U_{u(j)}$ for some morphism $u:J\rightarrow{I}$. Since the induced map 
\begin{equation*}
    \mathcal{F}\times_\mathcal{G} V_j\rightarrow{\mathcal{F}\times_\mathcal{G} U_{u(j)}}
\end{equation*}is $n$-representable for all $n\geq -1$ and $\mathcal{F}\times_\mathcal{G} U_{u(j)}$ is $n$-geometric by assumption then, by Corollary \ref{compositioncorollary}, it follows that each $\mathcal{F}\times_\mathcal{G}V_j\simeq (\mathcal{F}\times_\mathcal{G}U_{u(j)})\times_{U_{u(j)}} V_j$ is $n$-geometric. Furthermore, if each map $\mathcal{F}\times_\mathcal{G} U_i\rightarrow{U_i}$ is in $n\textcat{-P}$, then each map $\mathcal{F}\times_\mathcal{G} V_j\rightarrow{V_j}$ is also in $n\textcat{-P}$ since $n\textcat{-P}$ morphisms are stable under pullback. Therefore, it suffices to reduce to the situation where $\mathcal{G}$ is a $(-1)$-geometric stack. The proof then follows identically to \cite[Proposition 1.3.3.4]{toen_homotopical_2008}.
\end{proof}

\begin{cor}\cite[c.f. Corollary 1.3.3.5]{toen_homotopical_2008}
    Suppose that $\mathcal{A}$ is closed under $\bm\tau$-descent relative to $\mathcal{M}$. The full subcategory of $n$-geometric stacks in $\textcat{Stk}(\mathcal{A},\bm\tau|_\mathcal{A})$ is closed under disjoint unions for $n\geq 0$.
\end{cor}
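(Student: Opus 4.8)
Write $\mathcal{G}:=\coprod_{a\in A}\mathcal{F}_a$ for the given family of $n$-geometric stacks; we must show that $\mathcal{G}$ is $n$-geometric. Throughout we use that $\textcat{Stk}(\mathcal{A},\bm\tau|_\mathcal{A})$ is an $(\infty,1)$-topos, so that coproducts in it are disjoint and universal and finite products distribute over coproducts. In particular the summand inclusions $j_a:\mathcal{F}_a\rightarrow\mathcal{G}$ are monomorphisms with $\mathcal{F}_a\times_\mathcal{G}\mathcal{F}_b\simeq\emptyset$ for $a\neq b$; any morphism $\mathcal{H}\rightarrow\mathcal{G}$ induces an equivalence $\mathcal{H}\simeq\coprod_a(\mathcal{H}\times_\mathcal{G}\mathcal{F}_a)$; and $\mathcal{G}\times\mathcal{G}\simeq\coprod_{a,b}\mathcal{F}_a\times\mathcal{F}_b$, the diagonal of $\mathcal{G}$ restricting on the $a$-th summand to the composite $\mathcal{F}_a\xrightarrow{\Delta_{\mathcal{F}_a}}\mathcal{F}_a\times\mathcal{F}_a\hookrightarrow\mathcal{G}\times\mathcal{G}$. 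The plan is to treat first the case of a finite index set $A$ by induction on $n\geq -1$, and then to reduce the general case to it by a quasi-compactness argument.

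For finite $A$, fix $n$-atlases $\{U_{a,i}\rightarrow\mathcal{F}_a\}_{i\in I_a}$ and take the family $\{U_{a,i}\rightarrow\mathcal{G}\}_{(a,i)}$. The induced morphism $\coprod_{(a,i)}U_{a,i}\rightarrow\mathcal{G}$ is an epimorphism, being a coproduct of the epimorphisms $\coprod_i U_{a,i}\rightarrow\mathcal{F}_a$ and $\pi_0^{\bm\tau}$ commuting with coproducts. Each $U_{a,i}\rightarrow\mathcal{G}$ factors as $U_{a,i}\rightarrow\mathcal{F}_a\xrightarrow{j_a}\mathcal{G}$, so by stability of $(n-1)$-\textcat{P}-morphisms under composition (Proposition \ref{conditionsgeometric}) it suffices that $j_a$ lies in $(-1)$-\textcat{P}. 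Given a representable $X$ with a morphism $X\rightarrow\mathcal{G}$, the decomposition above exhibits $X$ as a finite disjoint union of the subobjects $X_b:=X\times_\mathcal{G}\mathcal{F}_b$; each such clopen piece of a representable is again a representable stack in $\mathcal{A}$, and $\{X_b\rightarrow X\}_b$ is a $\bm\tau$-covering family, so $X\times_\mathcal{G}\mathcal{F}_a\simeq X_a$ is representable and the morphism $X_a\rightarrow X$ lies in $\textcat{P}$ by the geometry axiom \ref{geometry1}; hence $j_a\in(-1)$-\textcat{P} and $\{U_{a,i}\rightarrow\mathcal{G}\}$ is an $n$-atlas. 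For the diagonal, universality of coproducts gives, for any representable $X\rightarrow\mathcal{G}\times\mathcal{G}$, an equivalence $\mathcal{G}\times_{\mathcal{G}\times\mathcal{G}}X\simeq\coprod_a\big(\mathcal{F}_a\times_{\mathcal{F}_a\times\mathcal{F}_a}X_{a,a}\big)$ with $X_{a,a}:=X\times_{\mathcal{G}\times\mathcal{G}}(\mathcal{F}_a\times\mathcal{F}_a)$ representable by the same observation; each summand is $(n-1)$-geometric since $\Delta_{\mathcal{F}_a}$ is $(n-1)$-representable, so this is a finite disjoint union of $(n-1)$-geometric stacks, which is $(n-1)$-geometric by the inductive hypothesis (the base case $n-1=-1$ being the statement that a finite disjoint union of representables is representable). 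Thus $\Delta_\mathcal{G}$ is $(n-1)$-representable and $\mathcal{G}$ is $n$-geometric.

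For general $A$, let $X$ be representable with a morphism $X\rightarrow\mathcal{G}$ (resp.\ $X\rightarrow\mathcal{G}\times\mathcal{G}$). Since $\mathcal{G}\simeq\coprod_{a\in A}\mathcal{F}_a$ is the filtered colimit of its finite sub-coproducts along monomorphisms and a representable stack is quasi-compact, the decomposition $X\simeq\coprod_a X_a$ (resp.\ $X\simeq\coprod_{a,b}X_{a,b}$) has all but finitely many summands empty; replacing $A$ by a finite subset reduces us to the case treated above, which shows that $j_a\in(-1)$-\textcat{P} for every $a$ and that $\mathcal{G}\times_{\mathcal{G}\times\mathcal{G}}X$ is $(n-1)$-geometric for every representable $X$, whence $\{U_{a,i}\rightarrow\mathcal{G}\}_{(a,i)}$ is an $n$-atlas and $\Delta_\mathcal{G}$ is $(n-1)$-representable. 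The main obstacle is precisely this quasi-compactness input together with the recognition of clopen pieces of representables as representable objects whose structure morphism lies in $\textcat{P}$; these properties hold in all the relative $(\infty,1)$-geometry tuples of interest (for instance for the \'etale and finite homotopy monomorphism topologies, where the affine objects are quasi-compact), and closure of $\mathcal{A}$ under $\bm\tau$-descent, through Lemma \ref{pullbackgeometric}, governs the residual bookkeeping exactly as in the proof of \cite[Proposition 1.3.3.4 and Corollary 1.3.3.5]{toen_homotopical_2008}.
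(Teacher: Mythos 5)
Your argument has two genuine gaps, both at the point where you pass from the topos-theoretic decomposition $X\simeq\coprod_b X_b$ (for $X$ representable mapping to $\mathcal{G}=\coprod_a\mathcal{F}_a$) to geometric statements. First, you assert that each ``clopen piece'' $X_b=X\times_\mathcal{G}\mathcal{F}_b$ is representable, that $\{X_b\to X\}_b$ is a $\bm\tau$-covering family, and hence that $X_a\to X$ lies in $\textcat{P}$ ``by the geometry axiom (\ref{geometry1})''. None of this is available from the hypothesis that $\mathcal{A}$ is closed under $\bm\tau$-descent; a monomorphic summand of a representable need not be representable, and even if it were, a decomposition into subobjects is not automatically a $\bm\tau$-cover. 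The correct route is to \emph{derive} representability of $X_a$ from $\bm\tau$-descent: by Proposition \ref{localepimorphismprop} applied to the epimorphism $\coprod_{b,i}U_{b,i}\to\mathcal{G}$ and the point $X\to\mathcal{G}$, there is a $\bm\tau$-cover $\{V_j\to X\}_j$ with each $V_j\to\mathcal{G}$ factoring through a single $\mathcal{F}_{b(j)}$; since the summands are disjoint, $X_a\times_XV_j$ is either $V_j$ or $\emptyset$, so Definition \ref{taudescent} gives representability of $X_a$, and membership of $X_a\to X$ in $\textcat{P}$ then follows from \emph{locality} of $\textcat{P}$ (Definition \ref{pregeometrytriple}(\ref{geometry2})) applied to the pulled-back cover of $X_a$, not from axiom (\ref{geometry1}). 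With $X_a$ representable and $X_a\to X$ in $\textcat{P}$, the atlas charts $U_{a,i}\to\mathcal{G}$ are in $(n-1)$-$\textcat{P}$ because $U_{a,i}\times_\mathcal{G}X\simeq U_{a,i}\times_{\mathcal{F}_a}X_a$ and $(n-1)$-$\textcat{P}$ is stable under pullback and composition. This argument works for an arbitrary index set, so your second extraneous input --- quasi-compactness of representables, used to reduce to finite sub-coproducts --- is both unnecessary and contrary to the paper's framework, which explicitly does \emph{not} assume quasi-compactness of the topology (see the remarks before Proposition \ref{segalgroupoidequivalence} and after Definition \ref{artinsconditions}).

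Separately, your treatment of the diagonal is superfluous and its base case is broken: for $n=0$ you need $\mathcal{G}\times_{\mathcal{G}\times\mathcal{G}}X$, a disjoint union of representables, to be representable, which is exactly the kind of statement that fails without further hypotheses (the paper makes ``$\coprod_ih(U_i)\simeq h(\coprod_iU_i)$'' an explicit extra assumption wherever it is needed). The intended proof avoids this entirely: under closure of $\mathcal{A}$ under $\bm\tau$-descent, Proposition \ref{onlynatlas} says that a stack admitting an $n$-atlas is automatically $n$-geometric for $n\geq 0$, so exhibiting the union of atlases as an $n$-atlas for $\mathcal{G}$ finishes the proof with no diagonal check.
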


When $\mathcal{A}$ is closed under $\bm\tau$-descent relative to $\mathcal{M}$, the condition that the diagonal morphism $\mathcal{F}\rightarrow{\mathcal{F}\times\mathcal{F}}$ is $(n-1)$-representable for an $n$-geometric stack $\mathcal{F}$ is superfluous.
\begin{prop}\phantomsection\label{onlynatlas}\cite[c.f. Corollary 8.6]{porta_derived_2018}
Suppose that $\mathcal{A}$ is closed under $\bm{\tau}$-descent relative to $\mathcal{M}$. Suppose that $\mathcal{F}$ is a stack in $\textcat{Stk}(\mathcal{A},\bm{\tau}|_\mathcal{A})$ with an $n$-atlas $\{U_i\rightarrow{\mathcal{F}}\}_{i\in I}$ for some $n\geq 0$. Then, $\mathcal{F}$ is an $n$-geometric stack.
\end{prop}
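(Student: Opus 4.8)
The plan is to reduce the statement to the case where $\mathcal{F}$ is itself representable, where closure under $\bm\tau$-descent applies directly, and then bootstrap back up. The key observation is that, by Definition \ref{geometricstackdefinition}, to show $\mathcal{F}$ is $n$-geometric we must produce an $n$-atlas (which we already have by hypothesis) and show that the diagonal $\mathcal{F}\rightarrow\mathcal{F}\times\mathcal{F}$ is $(n-1)$-representable. So the entire content of the proposition is the claim about the diagonal, and this is exactly what makes the hypothesis ``closure under $\bm\tau$-descent'' essential: without it, having an atlas is not enough.

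First I would set up an induction on $n$. For the base case $n=0$: given a $0$-atlas $\{U_i\rightarrow\mathcal{F}\}_{i\in I}$, each $U_i\rightarrow\mathcal{F}$ is in $(-1)$-$\textcat{P}$, hence $(-1)$-representable, so for any representable $X\rightarrow\mathcal{F}$ the pullback $U_i\times_\mathcal{F} X$ is representable. I would use Proposition \ref{localepimorphismprop} to cover such an $X$ by a $\bm\tau$-covering family $\{V_j\rightarrow X\}$ factoring through the $U_{u(j)}$, and then analyze the diagonal: the fibre of $\mathcal{F}\rightarrow\mathcal{F}\times\mathcal{F}$ over a product $U_i\times U_k$ can be computed, via the standard manipulation of the square
\begin{equation*}
\begin{tikzcd}
U_i\times_\mathcal{F} U_k\arrow{r}\arrow{d} & U_i\times U_k\arrow{d}\\
\mathcal{F}\arrow{r} & \mathcal{F}\times\mathcal{F},
\end{tikzcd}
\end{equation*}
as $U_i\times_\mathcal{F} U_k$, which is representable since the atlas maps are $(-1)$-representable. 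Using $\bm\tau$-descent, representability of the diagonal fibres over the covering pieces $V_j\times V_{j'}$ of a general $X\times X'$ upgrades to representability over $X\times X'$, giving that the diagonal is $(-1)$-representable; hence $\mathcal{F}$ is $0$-geometric.

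For the inductive step $n\geq 1$, assuming the statement holds for $n-1$: given an $n$-atlas $\{U_i\rightarrow\mathcal{F}\}$, the maps $U_i\rightarrow\mathcal{F}$ are $(n-1)$-$\textcat{P}$, in particular $(n-1)$-representable, so pullbacks $U_i\times_\mathcal{F} U_k$ are $(n-1)$-geometric. As above, $U_i\times_\mathcal{F} U_k$ is the fibre of the diagonal over $U_i\times U_k$. For a general representable $X\rightarrow\mathcal{F}\times\mathcal{F}$, I would again use Proposition \ref{localepimorphismprop} to find a $\bm\tau$-cover by $V_j$'s through which the two projections factor, so that $\mathcal{F}\times_{\mathcal{F}\times\mathcal{F}}(V_j\times_{\text{something}})$ is $(n-1)$-geometric; then Lemma \ref{pullbackgeometric} (whose hypothesis, closure under $\bm\tau$-descent, is in force) lets me glue these local $(n-1)$-geometric pullbacks to conclude $\mathcal{F}\times_{\mathcal{F}\times\mathcal{F}}X$ is $(n-1)$-geometric, i.e. the diagonal is $(n-1)$-representable. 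Combined with the given $n$-atlas, this shows $\mathcal{F}$ is $n$-geometric. (Note that, once we know the diagonal is $(n-1)$-representable, we are free to invoke the inductive hypothesis consistently — the circularity is only apparent since the base case is handled directly.)

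The main obstacle I anticipate is the careful bookkeeping in the diagonal computation: one must correctly identify, for a representable $X$ mapping to $\mathcal{F}\times\mathcal{F}$, the relevant fibre products and check that the covering family produced by Proposition \ref{localepimorphismprop} is compatible with \emph{both} projections simultaneously, so that the fibre of the diagonal over each covering piece is genuinely one of the $(n-1)$-geometric stacks $U_i\times_\mathcal{F} U_k$ (up to further representable pullback). This is precisely the point where closure under $\bm\tau$-descent does the work, and it is the step where one should be most careful; the rest is a routine application of Lemma \ref{pullbackgeometric} and Proposition \ref{conditionsgeometric}. Indeed this is the content of \cite[c.f. Corollary 8.6]{porta_derived_2018}, and the proof there should transcribe to the present setting with only notational changes.
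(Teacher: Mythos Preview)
Your proposal is correct and follows the same approach as the cited reference \cite{porta_derived_2018}, which is all the paper provides by way of proof. You have correctly identified that the content is entirely in showing the diagonal is $(n-1)$-representable, that the argument is by induction on $n$ using Proposition~\ref{localepimorphismprop} to factor through atlas charts and Lemma~\ref{pullbackgeometric} to glue, and that the delicate step is the bookkeeping identifying $\mathcal{F}\times_{\mathcal{F}\times\mathcal{F}}(U_i\times U_k)\simeq U_i\times_{\mathcal{F}}U_k$ and its pullbacks to a $\bm\tau$-cover of a given representable $X$; your caution about handling both projections of $X\to\mathcal{F}\times\mathcal{F}$ is well placed, since in practice one covers $X$ once for each projection and refines.
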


We now want to explore some conditions under which $\mathcal{A}$ is closed under $\bm\tau$-descent relative to $\mathcal{M}$. Suppose that $\mathcal{F}$ is an $n$-geometric stack in $\textcat{Stk}(\mathcal{A},\bm{\tau}|_\mathcal{A})$ with $n$-atlas $\{U_i\rightarrow{\mathcal{F}}\}_{i\in I}$. Consider the \v{C}ech nerve, which we will denote by $\mathcal{U}_*\rightarrow{\mathcal{F}}$, of the epimorphism $\coprod_{i\in I} U_i\rightarrow{\mathcal{F}}$. We notice that 
\begin{equation*}
    \mathcal{U}_m=\coprod_{\underline{i}\in I^{m+1}} U_{\underline{i}}
\end{equation*}with $\underline{i}=(i_0,\dots,i_m)\in I^{m+1}$ and $U_{\underline{i}}:=U_{i_0}\times_\mathcal{F} U_{i_1}\times_{\mathcal{F}}\times \dots \times_\mathcal{F} U_{i_m}$ in $\textcat{Stk}(\mathcal{A},\bm\tau|_\mathcal{A})$. Each $U_{\underline{i}}$ is an $(n-1)$-geometric stack since each morphism $U_{i_j}\rightarrow{\mathcal{F}}$ is $(n-1)$-representable and $(n-1)$-representable morphisms are stable under pullback and composition. 

By Proposition \ref{cecheffective}, since an epimorphism of stacks is an effective epimorphism in the $(\infty,1)$-topos $\textcat{Stk}(\mathcal{A},\bm\tau|_\mathcal{A})$, we have the following result. 

\begin{prop}\phantomsection\label{segalgroupoidequivalence}The natural morphism $|\mathcal{U}_*|:={\varinjlim}_{[m]\in \Delta}\mathcal{U}_m\rightarrow\mathcal{F}$ is an equivalence of stacks.
\end{prop}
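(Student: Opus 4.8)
The plan is to deduce this from the general theory of effective epimorphisms in an $(\infty,1)$-topos, applied to $\textcat{Stk}(\mathcal{A},\bm\tau|_\mathcal{A})$. Recall from Section \ref{descentsubsection} that $\textcat{Stk}(\mathcal{A},\bm\tau|_\mathcal{A})$ is the hypercompletion of $\textcat{Sh}(\mathcal{A},\bm\tau|_\mathcal{A})$, hence is an $(\infty,1)$-topos; in particular it is complete and cocomplete, so it has all $(\infty,1)$-pullbacks and all geometric realisations of simplicial objects, and Definition \ref{cecheffective} applies: a morphism $f$ in $\textcat{Stk}(\mathcal{A},\bm\tau|_\mathcal{A})$ is an effective epimorphism if and only if the canonical map $|\check{C}(f)_*|\rightarrow$ (target) is an equivalence.

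First I would check that the morphism $p\colon\coprod_{i\in I}U_i\rightarrow\mathcal{F}$ underlying the $n$-atlas is an effective epimorphism in $\textcat{Stk}(\mathcal{A},\bm\tau|_\mathcal{A})$. By the definition of an $n$-atlas, $p$ is an epimorphism in the sense of Definition \ref{epimorphismstacks}, i.e. $\pi_0^{\bm\tau}(p)$ is an epimorphism in $\textnormal{Sh}(\textnormal{Ho}(\mathcal{A}),\bm\tau|_\mathcal{A})$. Combining the equivalence $\textnormal{Ho}(\tau_{\leq 0}\textcat{Stk}(\mathcal{A},\bm\tau|_\mathcal{A}))\simeq\textnormal{Sh}(\textnormal{Ho}(\mathcal{A}),\bm\tau|_\mathcal{A})$ induced by $\pi_0^{\bm\tau}$ with \cite[Proposition 7.2.1.14]{lurie_higher_2009}, which says that a morphism in an $(\infty,1)$-topos is an effective epimorphism precisely when its $0$-truncation is an epimorphism of discrete objects, this is exactly the assertion that $p$ is an effective epimorphism.

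Next I would identify the \v{C}ech nerve $\check{C}(p)_*$ with the simplicial object $\mathcal{U}_*$. Since colimits are universal in the $(\infty,1)$-topos $\textcat{Stk}(\mathcal{A},\bm\tau|_\mathcal{A})$, base change distributes over the coproduct defining $p$, so the $m$-th term $\check{C}(p)_m\simeq(\coprod_{i}U_i)^{\times_{\mathcal{F}}(m+1)}$ decomposes as $\coprod_{\underline{i}\in I^{m+1}}U_{i_0}\times_\mathcal{F}\cdots\times_\mathcal{F}U_{i_m}=\mathcal{U}_m$, compatibly with the simplicial structure maps; this is precisely the description of $\mathcal{U}_*$ recorded in the discussion preceding the statement. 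Hence $\check{C}(p)_*\simeq\mathcal{U}_*$ as augmented simplicial objects over $\mathcal{F}$, and the canonical map $|\mathcal{U}_*|\rightarrow\mathcal{F}$ coincides with $|\check{C}(p)_*|\rightarrow\mathcal{F}$.

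Putting these together, since $p$ is an effective epimorphism, Definition \ref{cecheffective} gives that $|\check{C}(p)_*|\rightarrow\mathcal{F}$ is an equivalence of stacks, which is the claim. The one step requiring genuine care is the first: the translation between the bespoke notion of epimorphism of stacks in Definition \ref{epimorphismstacks}, phrased via $\pi_0^{\bm\tau}$ and sheaf-theoretic surjectivity, and the topos-theoretic notion of effective epimorphism — but this compatibility is essentially already built into the definitions through \cite[Proposition 7.2.1.14]{lurie_higher_2009} and the $0$-truncation equivalence; everything after that is formal. (The only other point to keep in mind is that all fibre products and colimits in sight are taken in $\textcat{Stk}(\mathcal{A},\bm\tau|_\mathcal{A})$, which is how $\mathcal{U}_*$ was defined.)
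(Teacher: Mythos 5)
Your proposal is correct and follows the same route as the paper: the paper's proof is precisely the observation that an epimorphism of stacks is an effective epimorphism in the $(\infty,1)$-topos $\textcat{Stk}(\mathcal{A},\bm\tau|_\mathcal{A})$ (via Definition \ref{epimorphismstacks} and \cite[Proposition 7.2.1.14]{lurie_higher_2009}), combined with the characterisation of effective epimorphisms in Definition \ref{cecheffective}, with the identification of $\check{C}(p)_*$ with $\mathcal{U}_*$ already recorded in the discussion preceding the statement. You have simply spelled out the steps the paper leaves implicit.
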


\begin{cor}\phantomsection\label{coproductsdisjoint}
   Suppose that $\mathcal{M}$ is closed under finite coproducts and that coproducts are disjoint. If, for any finite family of $\mathcal{A}$-admissible objects $\{U_i\}_{i\in I}$ in $\mathcal{M}$, the family of morphisms
    \begin{equation*}
        \{U_i\rightarrow{\coprod_{j\in I} U_j}\}_{i\in I}
    \end{equation*}is a $\bm\tau$-covering family of $\coprod_{j\in I} U_j$, then the natural morphism 
    \begin{equation*}
        \coprod_{i\in I} h(U_i)\rightarrow{h(\coprod_{i\in I} U_i)}
    \end{equation*}is an equivalence of stacks in $\textcat{Stk}(\mathcal{A},\bm\tau|_\mathcal{A})$.
\end{cor}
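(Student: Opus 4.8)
The plan is to realise the canonical map as an instance of ``effective epimorphism plus monomorphism equals equivalence'' in the $(\infty,1)$-topos $\textcat{Stk}(\mathcal{A},\bm{\tau}|_\mathcal{A})$. Write $X:=\coprod_{j\in I}U_j$; this lies in $\mathcal{A}$ since $\mathcal{A}$ is closed under finite colimits, and it is $\mathcal{A}$-admissible, so $h(X)$ is a representable stack. The coprojections $U_i\to X$ induce, via the universal property of the coproduct in $\textcat{Stk}(\mathcal{A},\bm{\tau}|_\mathcal{A})$, a morphism $p\colon\coprod_{i\in I}h(U_i)\to h(X)$, and the claim is that $p$ is an equivalence. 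By hypothesis $\{U_i\to X\}_{i\in I}$ is a $\bm{\tau}$-covering family of representable stacks, so $p$ is an epimorphism of stacks in the sense of Definition \ref{epimorphismstacks}, hence an effective epimorphism, and Definition \ref{cecheffective} gives an equivalence $|\check{C}(p)_\bullet|\simeq h(X)$. It therefore suffices to identify $|\check{C}(p)_\bullet|$ with $\coprod_{i\in I}h(U_i)$.

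To that end I would compute the \v{C}ech nerve $\check{C}(p)_\bullet$. Iterating the pullback-stability axiom of the $\bm{\tau}$-cover $\{U_i\to X\}_{i\in I}$, each iterated fibre product $U_{\underline i}:=U_{i_0}\times_X\cdots\times_XU_{i_m}$ with $\underline i=(i_0,\dots,i_m)\in I^{m+1}$ exists in $\mathcal{A}$; since the Yoneda embedding preserves limits, representable presheaves on $\mathcal{A}$ are already stacks, and $\textcat{Stk}(\mathcal{A},\bm{\tau}|_\mathcal{A})$ is closed under limits inside presheaves, $h$ carries these to the corresponding fibre products of stacks, and universality of colimits then gives $\check{C}(p)_m\simeq\coprod_{\underline i\in I^{m+1}}h(U_{\underline i})$. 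The essential input now is that finite coproducts in $\mathcal{A}$ are \emph{disjoint}: the coprojections $U_i\to X$ are monomorphisms with $U_i\times_XU_i\simeq U_i$, while $U_i\times_XU_k$ is the strict initial object of $\mathcal{A}$ for $i\ne k$. Granting this, $U_{\underline i}\simeq U_{i_0}$ when $i_0=\cdots=i_m$ and $U_{\underline i}\simeq\emptyset_\mathcal{A}$ otherwise, and since $h$ takes the initial object of $\mathcal{A}$ to the initial stack (which follows from the hypothesis applied to the empty family), one obtains $\check{C}(p)_m\simeq\coprod_{i\in I}h(U_i)$ with all face and degeneracy maps the identity. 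Thus $\check{C}(p)_\bullet$ is the constant simplicial object at $\coprod_{i\in I}h(U_i)$, whose geometric realisation is $\coprod_{i\in I}h(U_i)$, and combining with the previous paragraph gives $\coprod_{i\in I}h(U_i)\simeq h(\coprod_{j\in I}U_j)$.

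The one genuine obstacle is the disjointness of finite coproducts in $\mathcal{A}$ invoked above; the covering hypothesis by itself yields only the effective epimorphism $p$, and it is disjointness of the coproduct that forces $\check{C}(p)_\bullet$ to degenerate, equivalently that forces $p$ to be a monomorphism. In the relative geometry contexts relevant here this is automatic: there $\mathcal{A}$ is a full subcategory of the opposite of a category of (derived) commutative algebra objects, so $X=\coprod_jU_j$ corresponds to the finite product $\prod_jA_j$, and the orthogonal idempotents $e_j\in\prod_jA_j$ realise $A_i$ as the localisation $(\prod_jA_j)[e_i^{-1}]$, whence the derived tensor product satisfies $A_i\otimes_{\prod_jA_j}A_k\simeq 0$ for $i\ne k$ (because $(1-e_i)+e_i(1-e_k)=1-e_ie_k=1$) and $A_i\otimes_{\prod_jA_j}A_i\simeq A_i$ by idempotency of the localisation. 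This supplies the disjointness, and with it the argument is complete.
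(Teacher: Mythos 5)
Your overall strategy coincides with the paper's: both proofs use the hypothesis to produce an effective epimorphism $p\colon\coprod_{i}h(U_i)\to h(\coprod_jU_j)$, identify the target with the geometric realisation of the \v{C}ech nerve of $p$ (Proposition \ref{segalgroupoidequivalence}), and then argue that the \v{C}ech nerve degenerates. The divergence — and the gap — is in how the degeneration is justified. You correctly identify that what you need is disjointness of the finite coproducts, but you then prove it \emph{inside $\mathcal{A}$} ($U_i\times_XU_k$ initial for $i\neq k$, $U_i\times_XU_i\simeq U_i$) by appealing to orthogonal idempotents in $\prod_jA_j$. That argument is out of scope: the corollary sits in Section \ref{coversection}, where $\mathcal{A}$ is an arbitrary full subcategory of an arbitrary $\mathcal{M}$ in a relative $(\infty,1)$-geometry tuple, with no algebra structure in sight. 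Nothing in the hypotheses (closure under finite colimits plus the covering condition on coprojections) gives you a strict initial object in $\mathcal{A}$, lets you compute $U_i\times_XU_k$, or even guarantees the coprojections are monomorphisms in $\mathcal{A}$. Even restricted to the intended applications, your idempotent computation tacitly assumes that coproducts in $\mathcal{A}\subseteq\textcat{DAff}^{cn}(\mathcal{C})$ are computed as products of algebras and that idempotent localisation of objects of $\textcat{DAlg}^{cn}(\mathcal{C})$ behaves as for ordinary commutative rings for a general base $\mathcal{C}$ (e.g.\ $\textcat{Ch}(\textnormal{Ind}(\textnormal{Ban}_R))$); neither is established anywhere in the paper.

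The paper's proof avoids all of this by never computing fibre products in $\mathcal{A}$. It first reduces by induction to a family of two objects $X,Y$, obtains the effective epimorphism $h(X)\coprod h(Y)\to h(X\coprod Y)$ from the covering hypothesis, and then carries out the disjointness step at the level of the $(\infty,1)$-topos $\textcat{Stk}(\mathcal{A},\bm\tau|_\mathcal{A})$, where coproducts are disjoint unconditionally by \cite[Theorem 6.1.0.6]{lurie_higher_2009}; this is what kills the cross term $h(X)\times h(Y)$ in the \v{C}ech nerve without any hypothesis on coproducts in $\mathcal{A}$. If you want to repair your write-up in the stated generality, you should replace the appeal to disjointness in $\mathcal{A}$ by an argument carried out entirely in the topos of stacks along these lines; as written, your proof establishes the corollary only for the special class of $\mathcal{A}$ treated in your last paragraph.
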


\begin{proof}
    The case when $I$ is empty follows easily as the empty family covers the initial object.  Now, if $I$ is non-empty it suffices, by induction, to consider the case where the finite family consists of two objects $X,Y$, say. We then see, by our assumption, that $\{X\rightarrow{X\coprod Y},Y\rightarrow{X\coprod Y}\}$ is a $\bm\tau$-covering family, and hence there is an epimorphism of stacks
    \begin{equation*}
        h(X)\coprod h(Y)\rightarrow{h(X\coprod Y)}
    \end{equation*}Therefore, $h(X\coprod Y)$ is equivalent to the geometric realisation of the \v{C}ech nerve of the epimorphism by Proposition \ref{segalgroupoidequivalence}. Hence, to show our desired equivalence it suffices to show that
    \begin{equation*}
        h(X\times_{X\coprod Y}Y)\simeq h(X)\times_{h(X\coprod Y)}h(Y)\simeq \emptyset
    \end{equation*}which follows since coproducts are disjoint in $\mathcal{M}$. 
\end{proof}

We consider the category $s\mathcal{M}$ of simplicial objects in $\mathcal{M}$. Suppose that $\mathcal{M}$ is closed under geometric realisations of simplicial objects. There exists an $(\infty,1)$-adjunction
\begin{equation*}
    |-|:s\mathcal{M}\leftrightarrows{\mathcal{M}}:\textnormal{const}
\end{equation*}where $\textnormal{const}$ denotes the constant functor taking an object in $\mathcal{M}$ to the constant simplicial object, and $|-|$ is the geometric realisation functor, taking $Y_*\in s\mathcal{M}$ to its geometric realisation $|Y_*|=\varinjlim_{[m]\in\Delta} Y_m$ in $\mathcal{M}$. Given an augmented simplicial object $U_*\rightarrow{X}\in\mathcal{M}$, this defines an adjunction on the over-categories
\begin{equation*}
    \int:s\mathcal{M}_{/U_*}\leftrightarrows{\mathcal{M}_{/X}}:-\times_X U_*
\end{equation*} by \cite[Lemma 5.2.5.2]{lurie_higher_2009}, where $\int$ takes an object $Y_*\rightarrow{U_*}$ of $s\mathcal{M}_{/U_*}$ to the object $|Y_*|\rightarrow{|U_*|}\rightarrow{X}$ in $\mathcal{M}_{/X}$. Motivated by the `descent conditions' defined in \cite{toen_homotopical_2008} we make the following definitions.

\begin{defn}\phantomsection\label{descentcondition}
\begin{enumerate}
    \item We say that a morphism of simplicial objects $Y_*\rightarrow{U}_*$ in $s\mathcal{M}$ is \textit{co-cartesian} if, for each $[m]\rightarrow{[n]}$ in $\Delta$, $Y_n\times_{U_n} U_m$ exists and the natural morphism $Y_m\rightarrow{Y_n\times_{U_n}U_m}$ is an isomorphism. 
    \item We say that an augmented simplicial object $U_*\rightarrow{X}$ in $s\mathcal{M}$ satisfies \textit{the co-cartesian descent condition} if the unit of the above adjunction is an isomorphism when restricted to the full subcategory of $s\mathcal{M}_{/U_*}$ consisting of co-cartesian morphisms $Y_*\rightarrow{U_*}$.
\end{enumerate}
\end{defn}

\begin{remark}We note that, if $U_*\in s\mathcal{M}$ satisfies the co-cartesian descent condition, then for all objects $Y_*\rightarrow{U_*}$ which are co-cartesian, $Y\times_X U_*\simeq Y_*$ where $Y=|Y_*|$.   
\end{remark}

\begin{prop}\phantomsection\label{taudescentundercondn}
     Suppose that
     \begin{enumerate}
         \item $\mathcal{M}$ is closed under geometric realisations,
         \item\label{cover2} Any $\bm\tau$-cover has a finite subcover, 
         \item For any finite collection $\{U_i\}_{i\in I}$ of $\mathcal{A}$-admissible objects, the map $\coprod_{i\in I} h(U_i)\rightarrow{h(\coprod_{i\in I} U_i)}$ is an equivalence in $\textcat{Stk}(\mathcal{A},\bm\tau|_\mathcal{A})$,
         \item The \v{C}ech nerve of any $\bm\tau$-cover of $\mathcal{A}$-admissible objects in $\mathcal{M}$ satisfies the co-cartesian descent condition.
     \end{enumerate}Then, $\mathcal{A}$ is closed under $\bm{\tau}$-descent relative to $\mathcal{M}$ in the sense of Definition \ref{taudescent}. 
\end{prop}

\begin{proof}Suppose that we have a stack $\mathcal{F}$ in $\textcat{Stk}(\mathcal{A},\bm{\tau}|_\mathcal{A})$ and a morphism $\mathcal{F}\rightarrow{h(X)}$ for some $(-1)$-geometric $X$. Suppose that we have a $\bm\tau$-cover $\{U_i\rightarrow{X}\}_{i\in I}$ such that $\mathcal{F}\times_{h(X)} h(U_i)$ is a $(-1)$-geometric stack for every $i\in I$. By Assumption (\ref{cover2}), we may assume that $I$ is a finite set. We want to show that $\mathcal{F}$ is a $(-1)$-geometric stack.

Consider the \v{C}ech nerve $\mathcal{U}_*$ of the epimorphism of stacks $\coprod_{i\in I}h(U_i)\rightarrow{h(X)}$. We note that, by our assumptions, $\mathcal{U}_m=h((U)_m)$ where $(U)_*$ is the \v{C}ech nerve of the morphism $\coprod_{i\in I} U_i\rightarrow{X}$ in $\mathcal{M}$. Moreover, by Proposition \ref{segalgroupoidequivalence}, there is an equivalence 
    \begin{equation*}
    |\mathcal{U}_*|:=\varinjlim_{[m]\in\Delta} \mathcal{U}_m\rightarrow{h(X)}
    \end{equation*}We let $\mathcal{F}_m$ be the pullback $\mathcal{F}\times_{h(X)} \mathcal{U}_m$ which we know, by our assumptions, is representable for every $m$, say $\mathcal{F}_m=h(Y_m)$ for some $\mathcal{A}$-admissible $Y_m\in\mathcal{M}$. Consider the induced simplicial objects $Y_*$ and $\mathcal{F}_*$ as objects of $\textnormal{s}\mathcal{M}$ and $\textnormal{s}\textcat{Stk}(\mathcal{A},\bm\tau|_\mathcal{A})$ respectively. There is a fully faithful functor $\underline{h}:\textnormal{s}\mathcal{M}\rightarrow{\textnormal{s}\textcat{Stk}(\mathcal{M},\bm\tau|_\mathcal{A})}$ induced by the Yoneda embedding and we easily see that $\mathcal{F}_*=\underline{h}(Y_*)$. By construction, $\mathcal{F}_*$ is the \v{C}ech nerve of the morphism $\mathcal{F}\times_{h(X)}\coprod_{i\in I}h({U_i})\rightarrow{\mathcal{F}}$, and hence, by Proposition \ref{segalgroupoidequivalence}, the natural morphism 
    \begin{equation*}|\mathcal{F}_*|:=\varinjlim_{[m]\in\Delta} \mathcal{F}_m\rightarrow{\mathcal{F}}
    \end{equation*}is an equivalence in $\textcat{Stk}(\mathcal{A},\bm{\tau}|_\mathcal{A})$. 
    
     Let $Y:=\varinjlim_{[m]\in\Delta}Y_m$ which is in $\mathcal{M}$ since $\mathcal{M}$ is closed under geometric realisations. By fully faithfulness of $\underline{h}$, there is a morphism $Y_*\rightarrow{(U)_*}$ corresponding to the morphism $\mathcal{F}_*=\underline{h}(Y_*)\rightarrow{\underline{h}((U)_*)=\mathcal{U}_*}$. We note that, for any morphism $[m]\rightarrow[n]$ in $\Delta$, we have, by construction, the following isomorphism in $\textcat{Stk}(\mathcal{A},\bm{\tau}|_\mathcal{A})$,
    \begin{equation*}
        \mathcal{F}_n\times_{\mathcal{U}_n}\mathcal{U}_m\simeq (\mathcal{F}\times_{h(X)} \mathcal{U}_n)\times_{\mathcal{U}_n}\mathcal{U}_m\simeq  \mathcal{F}\times_{h(X)} \mathcal{U}_m\simeq \mathcal{F}_m
    \end{equation*}and hence we have an isomorphism $Y_n\times_{(U)_n} (U)_m\rightarrow{Y_m}$ in $\mathcal{M}$. Since the \v{C}ech nerve $(U)_*\rightarrow{X}$ satisfies the co-cartesian descent condition, there is an isomorphism $Y\times_X (U)_*\rightarrow{Y_*}$ in $\textcat{s}\mathcal{M}$, and hence an isomorphism $h(Y)\times_{h(X)} \underline{h}((U)_*)\rightarrow{\underline{h}(Y_*)}$ in $s\textcat{PSh}(\mathcal{M})$. It then follows that 
    \begin{equation*}
        |\mathcal{F}_*|\simeq |\underline{h}({Y_*})|\simeq h({Y})\times_{h(X)}|\underline{h}((U)_*)|\simeq h(Y)\times_{h(X)}h(X)\simeq h({Y})
    \end{equation*}Therefore $|\mathcal{F}_*|$, and hence $\mathcal{F}$, is a $(-1)$-geometric stack in $\textcat{Stk}(\mathcal{A},\bm\tau|_\mathcal{A})$. 

\end{proof}

\subsection{Preservation of Geometric Stacks}\phantomsection\label{preservationgeometricstacks}

In this subsection, we will fix two relative $(\infty,1)$-geometry tuples $(\mathcal{M},\bm{\tau},\textcat{P},\mathcal{A})$ and $(\mathcal{N},\bm\sigma,\textcat{Q},\mathcal{B})$ and a colimit-preserving functor $L:\textcat{Stk}(\mathcal{A},\bm{\tau}|_{\mathcal{A}})\rightarrow{\textcat{Stk}(\mathcal{B},\bm\sigma|_{\mathcal{B}})}$. Then, by \cite[Corollary 5.5.2.9]{lurie_higher_2009}, there is an induced adjunction
\begin{equation*}
    L:\textcat{Stk}(\mathcal{A},\bm{\tau}|_{\mathcal{A}})\leftrightarrows{\textcat{Stk}(\mathcal{B},\bm\sigma|_{\mathcal{B}})}:R
\end{equation*}

\begin{prop}\phantomsection\label{pullbacksegal} Suppose $L$ preserves pullbacks of $(-1)$-geometric stacks along $m\textcat{-P}$-morphisms for all $m\geq -1$. Then, for each $n\geq -1$, $L$ preserves pullbacks of $n$-geometric stacks along $m\textcat{-P}$-morphisms for all $m\geq -1$.
\end{prop}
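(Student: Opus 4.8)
The plan is to induct on $n$. The base case $n=-1$ is precisely the hypothesis, since a $(-1)$-geometric stack is by definition representable. For the inductive step, assume the statement for $n-1$, let $\mathcal{E}$ be an $n$-geometric stack in $\textcat{Stk}(\mathcal{A},\bm\tau|_\mathcal{A})$, fix an $m\textcat{-P}$-morphism $f:\mathcal{F}\rightarrow\mathcal{G}$ together with a morphism $\mathcal{E}\rightarrow\mathcal{G}$, and consider the pullback $\mathcal{E}\times_\mathcal{G}\mathcal{F}$; the goal is the equivalence $L(\mathcal{E}\times_\mathcal{G}\mathcal{F})\simeq L(\mathcal{E})\times_{L(\mathcal{G})}L(\mathcal{F})$.

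First I would resolve $\mathcal{E}$ by an atlas. Choose an $n$-atlas $\{U_i\rightarrow\mathcal{E}\}_{i\in I}$ and let $\mathcal{U}_*\rightarrow\mathcal{E}$ be the \v{C}ech nerve of the epimorphism $\coprod_{i\in I}U_i\rightarrow\mathcal{E}$, so that $\mathcal{U}_m=\coprod_{\underline i\in I^{m+1}}U_{\underline i}$ with each $U_{\underline i}=U_{i_0}\times_\mathcal{E}\dots\times_\mathcal{E}U_{i_m}$ an $(n-1)$-geometric stack (as recorded in the discussion preceding Proposition \ref{segalgroupoidequivalence}), and $|\mathcal{U}_*|\simeq\mathcal{E}$ by Proposition \ref{segalgroupoidequivalence}. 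Viewing $\mathcal{U}_*$ as a simplicial object in $\textcat{Stk}(\mathcal{A},\bm\tau|_\mathcal{A})_{/\mathcal{G}}$ via $\mathcal{U}_*\rightarrow\mathcal{E}\rightarrow\mathcal{G}$, and using that $\textcat{Stk}(\mathcal{A},\bm\tau|_\mathcal{A})$ is an $(\infty,1)$-topos and hence has universal colimits, I would identify $\mathcal{E}\times_\mathcal{G}\mathcal{F}\simeq\varinjlim_{[m]\in\Delta}(\mathcal{U}_m\times_\mathcal{G}\mathcal{F})$.

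Next I would compute $L$ levelwise. Since $L$ is a left adjoint it preserves coproducts and geometric realisations, so $L(\mathcal{E}\times_\mathcal{G}\mathcal{F})\simeq\varinjlim_{[m]\in\Delta}L(\mathcal{U}_m\times_\mathcal{G}\mathcal{F})$ and, after distributing the coproduct $\mathcal{U}_m=\coprod_{\underline i}U_{\underline i}$ over the pullback, $L(\mathcal{U}_m\times_\mathcal{G}\mathcal{F})\simeq\coprod_{\underline i}L(U_{\underline i}\times_\mathcal{G}\mathcal{F})$. The inductive hypothesis, applied to the $(n-1)$-geometric stacks $U_{\underline i}$ and the $m\textcat{-P}$-morphism $f$, gives $L(U_{\underline i}\times_\mathcal{G}\mathcal{F})\simeq L(U_{\underline i})\times_{L(\mathcal{G})}L(\mathcal{F})$. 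Now using universality of colimits in the target $(\infty,1)$-topos $\textcat{Stk}(\mathcal{B},\bm\sigma|_\mathcal{B})$ to pull the coproduct and then the geometric realisation outside of $(-)\times_{L(\mathcal{G})}L(\mathcal{F})$, together with $\coprod_{\underline i}L(U_{\underline i})\simeq L(\mathcal{U}_m)$ and $\varinjlim_{[m]}L(\mathcal{U}_m)\simeq L(|\mathcal{U}_*|)\simeq L(\mathcal{E})$, I would conclude $L(\mathcal{E}\times_\mathcal{G}\mathcal{F})\simeq L(\mathcal{E})\times_{L(\mathcal{G})}L(\mathcal{F})$, completing the induction.

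The main obstacle I anticipate is the careful bookkeeping across the two uses of universality of colimits — first in $\textcat{Stk}(\mathcal{A},\bm\tau|_\mathcal{A})$ to present $\mathcal{E}\times_\mathcal{G}\mathcal{F}$ as a geometric realisation of levelwise pullbacks, then in $\textcat{Stk}(\mathcal{B},\bm\sigma|_\mathcal{B})$ to move that realisation back out of the fibre product over $L(\mathcal{G})$ — and verifying that the identifications are the canonical ones, compatibly with the face and degeneracy maps of $\mathcal{U}_*$, so that they genuinely assemble into a map of simplicial diagrams whose colimit yields the desired equivalence. The remaining ingredients — that $L$ preserves coproducts and geometric realisations, and the explicit form of the \v{C}ech nerve of an $n$-atlas — are immediate from $L$ being colimit-preserving and from Proposition \ref{segalgroupoidequivalence}.
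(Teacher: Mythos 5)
Your decomposition is genuinely different from the paper's: you resolve the $n$-geometric stack $\mathcal{E}$ being pulled back by its own atlas and keep the cospan $f:\mathcal{F}\rightarrow\mathcal{G}$ untouched, whereas the paper takes an $n$-atlas of the common base $\mathcal{G}$, writes $\mathcal{G}\simeq|\mathcal{U}_*|$, and pulls \emph{both} legs back along the \v{C}ech nerve, so that at each simplicial level all three corners of the cospan are replaced by stacks of lower geometric level before the inductive hypothesis is applied. Your colimit bookkeeping is fine: universality of colimits in the source topos presents $\mathcal{E}\times_\mathcal{G}\mathcal{F}$ as $\varinjlim(\mathcal{U}_l\times_\mathcal{G}\mathcal{F})$, $L$ preserves the colimit, and universality in the target topos reassembles the answer. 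The issue is not there.

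The gap is in the base case, and it is forced by your choice of what to resolve. Since $\mathcal{F}$ and $\mathcal{G}$ are never reduced, the inductive hypothesis you invoke at level $n-1$ is ``$L$ preserves $U_{\underline{i}}\times_\mathcal{G}\mathcal{F}$ for $U_{\underline{i}}$ $(n-1)$-geometric and $f:\mathcal{F}\rightarrow\mathcal{G}$ the original $m\textcat{-P}$-morphism of \emph{arbitrary} stacks,'' and at $n-1=-1$ this is the assertion that $L(X\times_\mathcal{G}\mathcal{F})\simeq L(X)\times_{L(\mathcal{G})}L(\mathcal{F})$ for $X$ representable but $\mathcal{F},\mathcal{G}$ arbitrary. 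That is strictly stronger than the hypothesis as the paper verifies and uses it: in Proposition \ref{extensiongeometricstack}, for instance, the hypothesis is checked only for cospans in which all three objects are representable (pullbacks computed in $\mathcal{A}$ itself), and one cannot pass from that to your base case by writing $\mathcal{F}$ and $\mathcal{G}$ as colimits of representables, because commuting $L$ past a fibre product over such a colimit is exactly the kind of statement the proposition is trying to establish. So either you must read both hypothesis and conclusion in the strong form, in which case the hypothesis becomes unverifiable in the intended applications, or your induction does not terminate at the stated assumption. The repair is the paper's move: resolve $\mathcal{G}$ and set $\mathcal{F}_*=\mathcal{F}\times_\mathcal{G}\mathcal{U}_*$, $\mathcal{E}_*=\mathcal{E}\times_\mathcal{G}\mathcal{U}_*$, so that the levelwise pullbacks $(\mathcal{F}\times_\mathcal{G}U_{\underline{i}})\times_{U_{\underline{i}}}(\mathcal{E}\times_\mathcal{G}U_{\underline{i}})$ involve only stacks of strictly smaller geometric level and the induction bottoms out at representable cospans. (A small separate point: you use $m$ simultaneously for the simplicial degree and for the index of the $m\textcat{-P}$-morphism; worth disentangling.)
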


\begin{proof}We prove this claim by induction on $n$, with the case $n=-1$ true by assumption.
Suppose that we have an $m\textcat{-P}$-morphism $\mathcal{F}\rightarrow{\mathcal{G}}$ of $n$-geometric stacks for some $m\geq -1$ and a morphism of stacks $\mathcal{H}\rightarrow{\mathcal{G}}$, with $\mathcal{H}$ $n$-geometric. We note that, by Proposition \ref{segalgroupoidequivalence}, if $\{U_i\rightarrow{\mathcal{G}}\}_{i\in I}$ is an $n$-atlas for $\mathcal{G}$, then we can write $\mathcal{G}=|\mathcal{U}_*|$. Let
\begin{equation*}
    \mathcal{F}_*=\mathcal{F}\times_\mathcal{G}\mathcal{U}_*\quad\text{and}\quad\mathcal{H}_*=\mathcal{H}\times_\mathcal{G}\mathcal{U}_*
\end{equation*}We note that $\mathcal{F}\simeq |\mathcal{F}_*|$ and $\mathcal{H}\simeq |\mathcal{H}_*|$ and $\mathcal{F}\times_\mathcal{G} \mathcal{H}\simeq |\mathcal{F}_*\times_{\mathcal{U}_*}\mathcal{H}_*|$. Since $L$ commutes with colimits,
\begin{equation*}
    \begin{aligned}
L(\mathcal{F}\times_\mathcal{G}\mathcal{H})&=L(\varinjlim_{[l]\in\Delta}(\mathcal{F}_l\times_{\mathcal{U}_l}\mathcal{H}_l))\\
        &\simeq \varinjlim_{[l]\in\Delta}L(\mathcal{F}_l\times_{\mathcal{U}_l}\mathcal{H}_l)\\
        &\simeq \varinjlim_{[l]\in\Delta}\coprod_{\underline{i}\in I^{l+1}}L((\mathcal{F}\times_\mathcal{G} U_{\underline{i}})\times_{U_{\underline{i}}}(\mathcal{H}\times_\mathcal{G} U_{\underline{i}}))\\
    \end{aligned}
\end{equation*}Suppose that $[l]\in\Delta$. For each $\underline{i}$ in $I^{l+1}$, we note that $\mathcal{F}\times_\mathcal{G} U_{\underline{i}}$, $U_{\underline{i}}$ and $\mathcal{H}\times_\mathcal{G} U_{\underline{i}}$ are all $(n-1)$-geometric stacks. Therefore, by the inductive hypothesis, since the morphism $\mathcal{F}\times_\mathcal{G} U_{\underline{i}}\rightarrow U_{\underline{i}}$ is in $m\textcat{-P}$ for some $m$, then we have that 
\begin{equation*}
    \begin{aligned}
        L(\mathcal{F}\times_\mathcal{G}\mathcal{H})&\simeq \varinjlim_{[l]\in \Delta}
\coprod_{i\in I^{l+1}} L(\mathcal{F}\times_\mathcal{G} U_{\underline{i}})\times_{L(U_{\underline{i}})}L(\mathcal{H}\times_\mathcal{G} U_{\underline{i}}) \\
&\simeq \varinjlim_{l\in \Delta} L(\mathcal{F}_l)\times_{L(\mathcal{U}_l)}L(\mathcal{H}_l)\\
&\simeq L(\mathcal{F})\times_{L(\mathcal{G})}L(\mathcal{H})\end{aligned}
\end{equation*}

\end{proof}
Suppose for the rest of the section that $L$ preserves epimorphisms of stacks, and that $L$ sends $(-1)$-geometric stacks on $\mathcal{A}$ to $(-1)$-geometric stacks on $\mathcal{B}$. 

\begin{exmp}\phantomsection\label{adjunctionsFG}Suppose that we have a functor of sites $F:(\mathcal{A},\bm\tau|_\mathcal{A})\rightarrow{(\mathcal{B},\bm\sigma|_\mathcal{B})}$. We recall from Section \ref{continuousfunctorsection} that, if $F$ is continuous, we have an $(\infty,1)$-adjunction
\begin{equation*}
    F_\#:\textcat{Stk}(\mathcal{A},\bm{\tau}|_{\mathcal{A}})\leftrightarrows{\textcat{Stk}(\mathcal{B},\bm\sigma|_{\mathcal{B}})}:F^*
\end{equation*}We showed that $F_\#$ preserves epimorphisms and sends $(-1)$-geometric stacks to $(-1)$-geometric stacks. We note that all the results hold in this section with $L$ replaced with $F_\#$. Now, if $F$ is instead cocontinuous, then we have an $(\infty,1)$-adjunction
\begin{equation*}
    F^*:\textcat{Stk}(\mathcal{B},\bm{\tau}|_{\mathcal{B}})\leftrightarrows{\textcat{Stk}(\mathcal{A},\bm\sigma|_{\mathcal{A}})}:F_*
\end{equation*}We also showed that $F^*$ preserves epimorphisms and sends $(-1)$-geometric stacks to $(-1)$-geometric stacks. Therefore, all the results in this section also hold with $L$ replaced with $F^*$ and with our tuples $(\mathcal{M},\bm{\tau},\textcat{P},\mathcal{A})$ and $(\mathcal{N},\bm\sigma,\textcat{Q},\mathcal{B})$ swapped. 

\end{exmp}
\begin{prop}\phantomsection\label{therealtruncationprop}
    Suppose that
    \begin{enumerate}
        \item $L$ preserves pullbacks of $n$-geometric stacks for all $n\geq 0$,
        \item $L$ sends $\textcat{P}$-morphisms between $(-1)$-geometric stacks to $\textcat{Q}$-morphisms between $(-1)$-geometric stacks,  
        \item\label{surjectivityfunctorcondition} Every representable stack on $\mathcal{B}$ is the image under $L$ of a representable on $\mathcal{A}$. 
    \end{enumerate}
    Then, for all $n\geq -1$, $L$ sends $n$-geometric stacks on $\mathcal{A}$ to $n$-geometric stacks on $\mathcal{B}$. Moreover, $L$ sends $n\textcat{-P}$-morphisms to $n\textcat{-Q}$-morphisms, not necessarily between geometric stacks.
\end{prop}

\begin{proof}We prove this by induction on $n$. By assumption, $L$ preserves $(-1)$-geometric stacks. Now, suppose that we have a $(-1)\textcat{-P}$-morphism $\mathcal{F}\rightarrow{\mathcal{G}}$ between stacks. Suppose further that we have a representable stack $Y$ on $\mathcal{B}$ and a map $Y\rightarrow{L(\mathcal{G})}$. We note that, by Assumption \ref{surjectivityfunctorcondition}, there exists some {$X\in\mathcal{A}$} such that $L(X)\simeq Y$. Hence, since the map $\mathcal{F}\rightarrow{\mathcal{G}}$ is in $(-1)\textcat{-P}$, then
\begin{equation*}
L(\mathcal{F})\times_{L(\mathcal{G})}Y\simeq L(\mathcal{F})\times_{L(\mathcal{G})}L(X)\simeq L(\mathcal{F}\times_\mathcal{G}X)
\end{equation*}is a $(-1)$-geometric stack and the induced map $\mathcal{F}\times_\mathcal{G}X\rightarrow{X}$ is in $\textcat{P}$. Therefore, we see that $L(\mathcal{F})\rightarrow{L(\mathcal{G})}$ is in $(-1)\textcat{-Q}$.

Now, suppose that the statement holds for all $m$ such that $-1\leq m<n$. Suppose that $\mathcal{F}\in\textcat{Stk}(\mathcal{A},\bm{\tau}|_\mathcal{A})$ is an $n$-geometric stack with $n$-atlas $\{U_i\rightarrow{\mathcal{F}}\}_{i\in I}$. Then, using our assumptions, we can show that $L(\mathcal{F})$ has an $n$-atlas. Moreover, we easily note that the diagonal morphism $L(\mathcal{F})\rightarrow{L(\mathcal{F}\times\mathcal{F})\simeq L(\mathcal{F}\times\mathcal{F})}$ is $(n-1)$-representable using Assumption (\ref{surjectivityfunctorcondition}).

To conclude, suppose that $f:\mathcal{F}\rightarrow{\mathcal{G}}$ is an $n\textcat{-P}$ morphism of stacks in $\textcat{Stk}(\mathcal{A},\bm{\tau}|_\mathcal{A})$ and that there is a map $Y\rightarrow{L(\mathcal{G})}$ for some representable stack $Y$ on $\mathcal{B}$. We note that there exists a representable stack $X$ on $\mathcal{A}$ such that $L(X)\simeq Y$, and we can easily see that $L(\mathcal{F})\times_{L(\mathcal{G})}Y$ is $n$-geometric. Moreover, we know that there exists an $n$-atlas $\{U_i\rightarrow{\mathcal{F}\times_\mathcal{G} X}\}_{i\in I}$ such that the induced morphism $U_i\rightarrow{X}$ is in $\textcat{P}$. Hence, the $n$-atlas $\{L(U_i)\rightarrow{L(\mathcal{F})\times_{L(\mathcal{G})} Y}\}_{i\in I}$ is such that the induced morphism $L(U_i)\rightarrow{Y}$ is in $\textcat{Q}$. 
    
\end{proof}

If we now suppose that $\mathcal{B}$ is closed under $\bm\tau$-descent, then we can relax some of our conditions and obtain the following result.
\begin{prop}\phantomsection\label{ngeometricity}
Suppose that 
\begin{enumerate}
    \item $L$ preserves pullbacks of $n$-geometric stacks along $(n-1)\textcat{-P}$-morphisms for all $n\geq 0$, 
    \item $L$ sends $(-1)\textcat{-P}$-morphisms between $(-1)$-geometric stacks on $\mathcal{A}$ to $(-1)\textcat{-Q}$-morphisms between $(-1)$-geometric stacks on $\mathcal{B}$.
\end{enumerate}Then, for all $n\geq -1$, $L$ sends $n$-geometric stacks on $\mathcal{A}$ to $n$-geometric stacks on $\mathcal{B}$. Moreover, it sends $n\textcat{-P}$-morphisms between $n$-geometric stacks to $n\textcat{-Q}$-morphisms between $n$-geometric stacks.
\end{prop}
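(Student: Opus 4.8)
The plan is to establish both conclusions at once by induction on $n \ge -1$, running the two statements $(a)_n$: \emph{$L$ carries $n$-geometric stacks on $\mathcal{A}$ to $n$-geometric stacks on $\mathcal{B}$}, and $(b)_n$: \emph{$L$ carries $n\textcat{-P}$-morphisms between $n$-geometric stacks to $n\textcat{-Q}$-morphisms between $n$-geometric stacks}. The base case $n = -1$ is immediate: $(-1)$-geometric stacks are the representable ones, which $L$ preserves by hypothesis, and a $(-1)\textcat{-P}$-morphism of representable stacks is precisely the data of a $\textcat{P}$-morphism of underlying affines, which $L$ sends into $\textcat{Q}$ by assumption~(2). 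The engine of the inductive step is the auxiliary claim $(\star)_n$: \emph{if $\mathcal{F}$ is $n$-geometric and $U$ is representable with $U \to \mathcal{F}$ a $(n-1)\textcat{-P}$-morphism, then $L(U)\to L(\mathcal{F})$ is a $(n-1)\textcat{-Q}$-morphism.} Granting $(\star)_n$, statement $(a)_n$ follows at once: given an $n$-atlas $\{U_i \to \mathcal{F}\}_{i\in I}$ of an $n$-geometric $\mathcal{F}$, the family $\{L(U_i)\to L(\mathcal{F})\}_{i\in I}$ consists of representables, is an epimorphism onto $L(\mathcal{F})$ (as $L$ preserves colimits and epimorphisms), and lies in $(n-1)\textcat{-Q}$ by $(\star)_n$, so it is an $n$-atlas; hence $L(\mathcal{F})$ is $n$-geometric by Proposition~\ref{onlynatlas}, using that $\mathcal{B}$ is closed under $\bm\sigma$-descent. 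This also removes the need to verify any diagonal condition directly.

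To prove $(\star)_n$, I would fix a representable $Y \to L(\mathcal{F})$ and argue $\bm\sigma$-locally on $Y$. By Corollary~\ref{pullbacklemma}, applied to the cover $\{U_i\}_{i\in I}$ of $\mathcal{F}$, there is a $\bm\sigma$-cover $\{V_j \to Y\}_{j\in J}$, a map $u\colon J \to I$, and for each $j$ a factorization $V_j \to L(U_{u(j)}) \to L(\mathcal{F})$ of $V_j \to L(\mathcal{F})$. Composing pullback squares gives
\begin{equation*}
L(U)\times_{L(\mathcal{F})} V_j \;\simeq\; \bigl(L(U)\times_{L(\mathcal{F})} L(U_{u(j)})\bigr)\times_{L(U_{u(j)})} V_j .
\end{equation*}
Since $U$, $U_{u(j)}$ and $\mathcal{F}$ are all $n$-geometric and $U \to \mathcal{F}$ is $(n-1)\textcat{-P}$, hypothesis~(1) identifies $L(U)\times_{L(\mathcal{F})} L(U_{u(j)})$ with $L(U\times_{\mathcal{F}} U_{u(j)})$. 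Now $U\times_{\mathcal{F}} U_{u(j)}$ is $(n-1)$-geometric and the projection $U\times_{\mathcal{F}} U_{u(j)} \to U_{u(j)}$ is $(n-1)\textcat{-P}$ (a pullback of $U \to \mathcal{F}$, using Proposition~\ref{conditionsgeometric}), so the inductive statement $(b)_{n-1}$ applies and shows $L(U\times_{\mathcal{F}} U_{u(j)}) \to L(U_{u(j)})$ is $(n-1)\textcat{-Q}$ with $(n-1)$-geometric source. Pulling this back along the representable $V_j$ (stability of $(n-1)\textcat{-Q}$ under pullback) shows $L(U)\times_{L(\mathcal{F})} V_j \to V_j$ is $(n-1)\textcat{-Q}$. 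Feeding the family $\{V_j \to Y\}$, the stack $Y$, and the fibres $L(U)\times_{L(\mathcal{F})} V_j \simeq (L(U)\times_{L(\mathcal{F})} Y)\times_Y V_j$ into Lemma~\ref{pullbackgeometric} yields that $L(U)\times_{L(\mathcal{F})} Y$ is $(n-1)$-geometric and $L(U)\times_{L(\mathcal{F})} Y \to Y$ is $(n-1)\textcat{-Q}$; as $Y$ was arbitrary, $L(U)\to L(\mathcal{F})$ is $(n-1)\textcat{-Q}$, which is $(\star)_n$.

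For $(b)_n$, let $f\colon \mathcal{F}\to\mathcal{G}$ be $n\textcat{-P}$ with $\mathcal{F},\mathcal{G}$ $n$-geometric; then $L(\mathcal{F}),L(\mathcal{G})$ are $n$-geometric by $(a)_n$. Fixing a representable $Y \to L(\mathcal{G})$, I would again use Corollary~\ref{pullbacklemma} relative to an $n$-atlas $\{X_k\to\mathcal{G}\}$ of $\mathcal{G}$ to get a $\bm\sigma$-cover $\{V_j \to Y\}$ with factorizations $V_j \to L(X_{u(j)}) \to L(\mathcal{G})$. Since $X_{u(j)}\to\mathcal{G}$ is $(n-1)\textcat{-P}$ and $\mathcal{F},\mathcal{G},X_{u(j)}$ are $n$-geometric, hypothesis~(1) gives $L(\mathcal{F})\times_{L(\mathcal{G})} L(X_{u(j)}) \simeq L(\mathcal{F}\times_{\mathcal{G}} X_{u(j)})$; write $\mathcal{F}' := \mathcal{F}\times_{\mathcal{G}} X_{u(j)}$ and $X' := X_{u(j)}$, so $\mathcal{F}'\to X'$ is $n\textcat{-P}$ over a representable and hence $\mathcal{F}'$ has an $n$-atlas $\{W_l\to\mathcal{F}'\}$ with each $W_l\to X'$ in $\textcat{P}$. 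Applying $(\star)_n$ to the morphisms $W_l\to\mathcal{F}'$ shows $\{L(W_l)\to L(\mathcal{F}')\}$ is an $n$-atlas, so $L(\mathcal{F}')$ is $n$-geometric by Proposition~\ref{onlynatlas}, while each $L(W_l)\to L(X')$ is in $\textcat{Q}$ by assumption~(2); pulling this atlas back along an arbitrary representable over $L(X')$ (using that epimorphisms and $\textcat{Q}$ are stable under pullback, Propositions~\ref{epimorphismspullbacK} and~\ref{conditionsgeometric}) shows $L(\mathcal{F}')\to L(X')$ is $n\textcat{-Q}$. Therefore $L(\mathcal{F})\times_{L(\mathcal{G})} V_j \simeq L(\mathcal{F}')\times_{L(X')} V_j$ is $n$-geometric with an $n$-atlas mapping into $V_j$ via $\textcat{Q}$, and Lemma~\ref{pullbackgeometric} applied to $L(\mathcal{F})\times_{L(\mathcal{G})} Y \to Y$ with the $n$-atlas $\{V_j \to Y\}$ shows this map is $n\textcat{-Q}$; varying $Y$ gives that $L(f)$ is $n\textcat{-Q}$, completing the induction.

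The main obstacle is bookkeeping rather than a single hard step: one must keep $(a)$ and $(b)$ synchronized through the induction and ensure that every pullback one asks $L$ to preserve is of the shape controlled by hypothesis~(1). The crucial observation that makes this possible is that the structure maps of an $n$-atlas are $(n-1)\textcat{-P}$-morphisms between $n$-geometric stacks --- exactly the input of hypothesis~(1) --- so $L$ never needs to preserve more general pullbacks than are assumed. The only two substantive moves are the $\bm\sigma$-local descent supplied by Corollary~\ref{pullbacklemma} (which is what lets a representable over $L(\mathcal{F})$ or $L(\mathcal{G})$ detect the image atlas) and the repeated reduction to a representable base so that Lemma~\ref{pullbackgeometric} and the $\bm\sigma$-descent closure of $\mathcal{B}$ can be brought to bear.
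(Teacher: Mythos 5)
Your proof is correct and follows essentially the same route as the paper: induction on $n$, carrying both conclusions simultaneously, with Proposition \ref{onlynatlas} and Lemma \ref{pullbackgeometric} doing the work of verifying geometricity and the $n\textcat{-Q}$ condition locally over a representable base. The one place you improve on the paper is your auxiliary claim $(\star)_n$: the paper asserts that $\{L(U_i)\to L(\mathcal{F})\}$ is an $n$-atlas because $L$ sends $(n-1)\textcat{-P}$-morphisms between $(n-1)$-geometric stacks to $(n-1)\textcat{-Q}$-morphisms, but the atlas maps $U_i\to\mathcal{F}$ have $n$-geometric (not $(n-1)$-geometric) target, so the inductive hypothesis does not literally apply there; your $\bm\sigma$-local argument via Corollary \ref{pullbacklemma}, hypothesis (1), and stability of $(n-1)\textcat{-Q}$ under pullback closes that gap cleanly.
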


\begin{proof}
We prove this by induction on $n$. By assumption, the case $n=-1$ is satisfied. Now, suppose that the statement holds for all $m$ such that $-1\leq m< n$. Suppose that $\mathcal{F}\in\textcat{Stk}(\mathcal{A},\bm{\tau}|_\mathcal{A})$ is an $n$-geometric stack with $n$-atlas $\{U_i\rightarrow{\mathcal{F}}\}_{i\in I}$. Then, using our assumptions, we can show that $L(\mathcal{F})$ has an $n$-atlas and therefore $L(\mathcal{F})$ is $n$-geometric by Proposition \ref{onlynatlas}. 

Now, suppose that $f:\mathcal{F}\rightarrow{\mathcal{G}}$ is an $n$\textcat{-P}-morphism of geometric stacks in $\textcat{Stk}(\mathcal{A},\bm{\tau}|_{\mathcal{A}})$. Then, since $\mathcal{G}$ is $n$-geometric, it has an $n$-atlas $\{U_i\rightarrow{\mathcal{G}}\}_{i\in I}$ and, as before, $\{L(U_i)\rightarrow{L(\mathcal{G})\}_{i\in I}}$ is an $n$-atlas for $L(\mathcal{G})$. By Lemma \ref{pullbackgeometric}, to show that $L(f)$ is in $n\textcat{-Q}$, it suffices to show that each map $L(\mathcal{F})\times_{L(\mathcal{G})} L(U_i)\rightarrow{L({U}_i)}$ is in $n$\textcat{-Q}. Since $L$ preserves pullbacks of $n$-geometric stacks along $(n-1)\textcat{-P}$-morphisms, \begin{equation*}
    L(\mathcal{F})\times_{L(\mathcal{G})} L(U_i)\simeq L(\mathcal{F}\times_\mathcal{G}{U_i})
\end{equation*}Hence, since $\mathcal{F}\times_\mathcal{G}U_i\rightarrow{U_i}$ is an $n\textcat{-P}$-morphism of geometric stacks in $\textcat{Stk}(\mathcal{A},\bm{\tau}|_{\mathcal{A}})$ because $n\textcat{-P}$-morphisms are stable by pullbacks, we can reduce to the case when $\mathcal{G}$ is a $(-1)$-geometric stack $X$. 

So, suppose we have an $n\textcat{-P}$ morphism $f:\mathcal{F}\rightarrow{X}$ with $\mathcal{F}$ $n$-geometric. In this case, we consider an $n$-atlas $\{V_i\rightarrow{\mathcal{F}}\}_{i\in I}$ for $\mathcal{F}$, which we know induces an $n$-atlas $\{L(V_i)\rightarrow{L(\mathcal{F})}\}_{i\in I}$ for $L(\mathcal{F})$. The induced morphism $V_i\rightarrow{X}$ is in $\textcat{P}$ since $f$ is in $n\textcat{-P}$. Now, since $L$ sends morphisms in $\textcat{P}$ between $(-1)$-geometric stacks to morphisms in $\textcat{Q}$, we see that the image of the map $V_i\rightarrow{X}$ under $L$ is in $\textcat{Q}$. Therefore, $L(f)$ is in $n\textcat{-Q}$.
\end{proof}

\section{Derived Geometry Contexts}\label{chapter3}

In order to be able to discuss cotangent complexes and obstruction theories for derived stacks, we need to introduce more structure. This is facilitated by what we call a \textit{derived geometry context}. The notion of a \textit{derived algebraic context}, formalised by Raksit \cite{raksit_hochschild_2020}, has provided us with a versatile framework in which to explore both connective and non-connective geometric settings. Applications of this framework can be seen in \cite{ben-bassat_blow-ups_2023} and \cite{kelly_analytic_2022}. In \cite{ben-bassat_perspective_2024}, Ben-Bassat, Kelly, and Kremnizer introduce the notion of a \textit{spectral algebraic context} which both encapsulates these derived algebraic contexts and provides an appropriate context in which to do spectral algebraic geometry in the sense of Lurie \cite{lurie_spectral_2018}.

In this chapter we define the notion of a \textit{derived geometry context} by identifying a particular category $\mathcal{A}$ of affines along with a well-behaved system $\textcat{M}$ of categories of modules on $\mathcal{A}$. Within such a context, we obtain several important results about liftings of maps of stacks along first order infinitesimal deformations. 

\subsection{Derived Algebraic Contexts}

Suppose that $\mathcal{C}$ is a stable locally presentable symmetric monoidal $(\infty,1)$-category with unit $I$ and monoidal functor $\otimes$. Suppose that $(\mathcal{C}_{\geq 0},\mathcal{C}_{\leq 0})$ is a $t$-structure on $\mathcal{C}$, i.e. a $t$-structure on the triangulated homotopy category $\textnormal{Ho}(\mathcal{C})$. We recall that the heart $\mathcal{C}^\heartsuit$ of a $t$-structure on a stable $(\infty,1)$-category is an abelian category. 

\begin{defn}
    \cite[Definition 3.3.1]{raksit_hochschild_2020} The $t$-structure is \textit{compatible} if 
    \begin{enumerate}
        \item $\mathcal{C}_{\leq 0}$ is closed under filtered colimits in $\mathcal{C}$, 
        \item The unit object $I$ of $\mathcal{C}$ lies in $\mathcal{C}_{\geq 0}$, 
        \item If $A,B\in\mathcal{C}_{\geq 0}$, then $A\otimes B\in\mathcal{C}_{\geq 0}$.
    \end{enumerate}
\end{defn}

\begin{defn}\cite[Definition 4.2.1]{raksit_hochschild_2020}\phantomsection\label{derivedalgebraiccontext}
    A \textit{derived algebraic context} is a tuple $(\mathcal{C},\mathcal{C}_{\geq 0},\mathcal{C}_{\leq 0},\mathcal{C}^0)$ where
    \begin{enumerate}
        \item $\mathcal{C}$ is a stable locally presentable symmetric monoidal $(\infty,1)$-category, 
        \item $(\mathcal{C}_{\geq 0},\mathcal{C}_{\leq 0})$ is a complete $t$-structure on $\mathcal{C}$ which is compatible with the monoidal structure, 
        \item $\mathcal{C}^0$ is a small full subcategory of $\mathcal{C}^\heartsuit$ which is 
        \begin{itemize}
            \item a symmetric monoidal subcategory of $\mathcal{C}$,
            \item a generating set of compact projectives for $\mathcal{C}_{\geq 0}$,
            \item closed under the formation of $\mathcal{C}^\heartsuit$-symmetric powers, i.e. for any $A\in\mathcal{C}^0$ and $n\geq 0$, $\textnormal{Sym}^n_{\mathcal{C}^\heartsuit}(A)\in\mathcal{C}^0$,
            \item closed under the formation of finite coproducts in $\mathcal{C}$.
        \end{itemize}
    \end{enumerate}   
\end{defn}
We note that, by \cite[c.f. Proposition 5.5.8.22]{lurie_higher_2009}, there is a symmetric monoidal equivalence of $(\infty,1)$-categories
\begin{equation*}
    \mathcal{C}_{\geq 0}\simeq \mathcal{P}_{\Sigma}(\mathcal{C}^0):=\textcat{Fun}^\times(\mathcal{C}^{0,op},\infty\textcat{Grpd})
\end{equation*}The category $\mathcal{P}_{\Sigma}(\mathcal{C}^0)$ inherits a symmetric monoidal structure from $\mathcal{C}^0$ by \cite[Corollary 4.8.1.12]{lurie_higher_2017}. 

Within such a derived algebraic context we can define the notion of a \textit{derived commutative algebra object}. Indeed, consider the functor $\textnormal{Sym}_{\mathcal{C}^{\heartsuit}}:\mathcal{C}^0\rightarrow{\mathcal{C}_{\geq 0}}$ defined to be the composition of the functor $\mathcal{C}^\heartsuit\rightarrow{\mathcal{C}_{\geq 0}}$ with the usual symmetric algebra functor on $\mathcal{C}^\heartsuit$, restricted to $\mathcal{C}^0$. By \cite[Proposition 5.5.8.15]{lurie_higher_2009}, this functor extends to a functor 
\begin{equation*}
    \textcat{LSym}_{\mathcal{C}_{\geq 0}}:\mathcal{C}_{\geq 0}\rightarrow{\mathcal{C}_{\geq 0}}
\end{equation*}This is the \textit{derived symmetric algebra functor on $\mathcal{C}_{\geq 0}$} and is a monadic functor. By \cite[Construction 4.2.20]{raksit_hochschild_2020}, we can extend this functor to a monad on $\mathcal{C}$ which we will call the \textit{derived symmetric algebra monad on $\mathcal{C}$} and denote by $\textcat{LSym}_\mathcal{C}$. 

\begin{defn}
    \cite[Definition 4.2.22]{raksit_hochschild_2020} A \textit{derived commutative algebra object of $\mathcal{C}$} is a module over the derived symmetric algebra monad on $\mathcal{C}$. 
\end{defn}

We denote by $\textcat{DAlg}(\mathcal{C})$ the $(\infty,1)$-category of derived commutative algebra objects. This category is locally presentable, in particular it admits all small limits and colimits. We note that there is an adjunction
\begin{equation*}
    \textcat{LSym}_\mathcal{C}:\mathcal{C}\leftrightarrows{\textcat{DAlg}(\mathcal{C})}:U_\mathcal{C}
\end{equation*}where $U_\mathcal{C}$ is the forgetful functor. 

The $t$-structure on $\mathcal{C}$ allows us to define the following categories
\begin{equation*}\begin{aligned}
    \textcat{DAlg}^{\geq n}(\mathcal{C})&:=\textcat{DAlg}(\mathcal{C})\times_\mathcal{C}\mathcal{C}_{\geq n}\\
    \textcat{DAlg}^{\heartsuit}(\mathcal{C})&:=\textcat{DAlg}(\mathcal{C})\times_\mathcal{C}\mathcal{C}^{\heartsuit}\\
    \textcat{DAlg}^{\leq n}(\mathcal{C})&:=\textcat{DAlg}(\mathcal{C})\times_\mathcal{C}\mathcal{C}_{\leq n}
\end{aligned}\end{equation*}In particular, we will denote by $\textcat{DAlg}^{cn}(\mathcal{C})$ the \textit{$(\infty,1)$-category of connective derived commutative algebra objects}, i.e. the category $\textcat{DAlg}^{\geq 0}(\mathcal{C})$. We note that, as we only work in the connective setting in this thesis, we don't necessarily need the full machinery of derived algebraic contexts, but we use it in this work for consistency with the existing literature. 

\begin{exmp}\phantomsection\label{simplicialcommringexample}
    Let $k$ be a commutative ring. Let $\textnormal{Mod}_k^{fgf}$ be the ordinary category of finitely generated free $k$-modules and let $\textcat{Mod}_{k,\geq 0}$ be $\mathcal{P}_{\Sigma}(\textnormal{Mod}_k^{fgf})$. Define $\textcat{Mod}_k:=\textcat{Stab}(\textcat{Mod}_{k,\geq 0})$. Then, $(\textcat{Mod}_k,\textcat{Mod}_{k,\geq 0},\textcat{Mod}_{k,\leq 0},\textnormal{Mod}_k^{fgf})$ is a derived algebraic context and $\textcat{DAlg}^{cn}(\textcat{Mod}_k)$ is equivalent to the $(\infty,1)$-category of simplicial commutative $k$-algebras as defined in \cite[Definition 4.1.1]{lurie_derived_2009-1}.

\end{exmp}

We note that there is a functor from the $(\infty,1)$-category of simplicial commutative $k$-algebras to the $(\infty,1)$-category of $\mathbb{E}_\infty$-$k$-algebras but that this functor is not necessarily an equivalence unless $k$ contains the field $\mathbb{Q}$ of rational numbers \cite[Warning 7.1.4.21]{lurie_higher_2017}. More generally, by \cite[Proposition 4.2.27]{raksit_hochschild_2020}, there is a functor 
\begin{equation*}
    \Theta:\textcat{DAlg}(\mathcal{C})\rightarrow{\textcat{CAlg}(\mathcal{C})}
\end{equation*}which preserves small limits and colimits.

\begin{defn}
    Suppose that $A\in\textcat{DAlg}(\mathcal{C})$. Then,
    \begin{enumerate}
        \item We denote by $\textcat{Mod}_A$ the $(\infty,1)$-category $\textcat{Mod}_{\Theta(A)}(\mathcal{C})$, 
        \item We denote by $\textcat{DAlg}_A(\mathcal{C})$ the under category ${}^{A/}\textcat{DAlg}(\mathcal{C})$. 
    \end{enumerate}
\end{defn}

We note that, by \cite[Notation 4.2.28]{raksit_hochschild_2020}, $\Theta$ induces a monadic adjunction 
\begin{equation*}
    \textcat{LSym}_A:\textcat{Mod}_A\leftrightarrows{\textcat{DAlg}_A(\mathcal{C})}:U
\end{equation*}By \cite[Theorem 3.4.2 and Proposition 3.6.6]{lurie_derived_2009-2} and \cite[c.f. Remark 7.3.4.16]{lurie_higher_2017}, $\textcat{Mod}_A$ is a stable locally presentable symmetric monoidal $(\infty,1)$-category. We denote by $\otimes^\mathbb{L}_A$ the coproduct on $\textcat{Mod}_A$, which is given by the monoidal product $\otimes^\mathbb{L}_{\Theta(A)}$. Since $\textcat{DAlg}_A(\mathcal{C})$ is co-cartesian monoidal and the functor $U$ is monoidal we see that, for any two objects $B,B'\in\textcat{DAlg}_A(\mathcal{C})$, there is an equivalence between $B\coprod_AB$ and $B\otimes_A^\mathbb{L}B$ where $B\coprod_AB$ denotes the coproduct in $\textcat{DAlg}_A(\mathcal{C})$, considered as an object of $\textcat{Mod}_A$ under the forgetful functor. 

We will denote by $\textcat{Mod}_A^{\geq n}$ and $\textcat{Mod}_A^{\leq n}$ the $(\infty,1)$-categories defined by the fibre products
    \begin{equation*}\begin{aligned}
        \textcat{Mod}_A^{\geq n}&:=\textcat{Mod}_A\times_\mathcal{C} \mathcal{C}_{\geq n}\\
        \textcat{Mod}_A^{\heartsuit}&:=\textcat{Mod}_A\times_\mathcal{C} \mathcal{C}^{\heartsuit}\\
        \textcat{Mod}_A^{\leq n}&:=\textcat{Mod}_A\times_\mathcal{C} \mathcal{C}_{\leq n}
    \end{aligned}\end{equation*}Denote by $\textcat{Mod}_A^{cn}$ the category $\textcat{Mod}_A^{\geq 0}$ of \textit{connective $A$-modules}. 

For each $n\in\mathbb{Z}$, there are inclusion functors $\iota_{\geq n}:\mathcal{C}_{\geq n}\rightarrow{\mathcal{C}}$ and $\iota_{\leq n}:\mathcal{C}_{\leq n}\rightarrow{\mathcal{C}}$. The functor $\iota_{\geq n}$ has a right adjoint $\tau_{\geq n}:\mathcal{C}\rightarrow{\mathcal{C}_{\geq n}}$ and the functor $\iota_{\leq n}$ has a left adjoint $\tau_{\leq n}:\mathcal{C}\rightarrow{\mathcal{C}_{\leq n}}$. We denote by $\pi_n:\mathcal{C}\rightarrow{\mathcal{C}^{\heartsuit}}$ the functors $\pi_n=\tau_{\leq 0}\circ \tau_{\geq 0}\circ [n]$ where $[n]$ denotes the $n^{th}$ power of the suspension functor $\Sigma^n$ on $\mathcal{C}$.

\begin{remark}
    We note that these functors depend on the $t$-structure and don't necessarily correlate with the truncation functors defined in \cite[Proposition 5.5.6.18]{lurie_higher_2009}. 
\end{remark}
Suppose that $A\in\textcat{DAlg}^{cn}(\mathcal{C})$. Then, $(\textcat{Mod}_{A}^{\geq 0},\textcat{Mod}_{A}^{\leq 0})$ is a $t$-structure on $\textcat{Mod}_A$ by \cite[Lemma 2.3.96]{ben-bassat_perspective_2024}. Therefore, we get a well defined notion of truncation on $\textcat{Mod}_A$ which lines up with the truncation functors on $\mathcal{C}$. We want to similarly be able to define good notions of truncation functors on the category $\textcat{DAlg}^{cn}(\mathcal{C})$.

\begin{defn}\phantomsection\label{postnikovcompatible}
    We say that $\textcat{DAlg}^{cn}(\mathcal{C})$ is \textit{compatible with the $t$-structure on $\mathcal{C}$} if, for all $n\geq 0$, there exist adjunctions
    \begin{equation*}\begin{aligned}
        \tau_{\leq n}:\textcat{DAlg}^{cn}(\mathcal{C})&\leftrightarrows\textcat{DAlg}^{cn,\leq n}(\mathcal{C}):\iota_{\leq n}\\
        \iota_{\geq n}:\textcat{DAlg}^{cn,\geq n}(\mathcal{C})&\leftrightarrows\textcat{DAlg}^{cn}(\mathcal{C}):\tau_{\geq n}
    \end{aligned}\end{equation*}such that
$\tau_{\leq n}\circ U_{\mathcal{C}}\simeq U_\mathcal{C}\circ\tau_{\leq n}\quad\text{and}\quad\tau_{\geq n}\circ U_{\mathcal{C}}\simeq U_{\mathcal{C}}\circ\tau_{\geq n}$. In this situation, we will define the functor $\pi_n:\textcat{DAlg}^{cn}(\mathcal{C})\rightarrow{\textcat{DAlg}^\heartsuit(\mathcal{C})}$ to be $\tau_{\leq 0}\circ \tau_{\geq 0}\circ[n]$.
\end{defn}

\begin{remark}
    In many of the cases we are considering, particularly the categories of simplicial objects in an exact category, this condition will hold. 
\end{remark}

\subsection{Model Derived Algebraic Contexts}

Many of the derived algebraic contexts we will use in this thesis will be presented by \textit{model derived algebraic contexts} $(\textnormal{C},\textnormal{C}_{\geq 0},\textnormal{C}_{\leq 0},\textnormal{C}^0)$. A model derived algebraic context has a similar definition to Definition \ref{derivedalgebraiccontext} so we will not reproduce it here (see \cite[Definition 3.14]{kelly_analytic_2022} for details). In particular, any model derived algebraic context presents a derived algebraic context. 

\begin{prop}\cite[Proposition 3.15]{kelly_analytic_2022}\phantomsection\label{modelderivedcontext}
    Suppose that $(\textnormal{C},\textnormal{C}_{\geq 0},\textnormal{C}_{\leq 0},\textnormal{C}^0)$ is a model derived algebraic context. Then, $(\textcat{L}^H(\textnormal{C}),\textcat{L}^H(\textnormal{C}_{\geq 0}),\textcat{L}^H(\textnormal{C}_{\leq 0}),\textcat{L}^H(\textnormal{C}^0))$ is a derived algebraic context. 
\end{prop}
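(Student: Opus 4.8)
The strategy is to verify the four defining conditions of a derived algebraic context for the tuple $(\textcat{L}^H(\textnormal{C}),\textcat{L}^H(\textnormal{C}_{\geq 0}),\textcat{L}^H(\textnormal{C}_{\leq 0}),\textcat{L}^H(\textnormal{C}^0))$, translating each hypothesis of the model derived algebraic context into its $(\infty,1)$-categorical counterpart via the hammock localisation. First I would record the general facts about passing from model categories to $(\infty,1)$-categories: $\textcat{L}^H(\textnormal{C})$ is locally presentable by Proposition \ref{locallypresentablemodel} (since $\textnormal{C}$ is combinatorial simplicial), it is stable because $\textnormal{C}$ is a stable model category (Quillen stability passes to the localisation, so that $\textnormal{Ho}(\textnormal{C})=\textnormal{Ho}(\textcat{L}^H(\textnormal{C}))$ is triangulated), and it is symmetric monoidal by \cite[Example 4.1.7.6]{lurie_higher_2017} as $\textnormal{C}$ is a simplicial combinatorial symmetric monoidal model category. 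This dispatches condition (1).

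\emph{The $t$-structure.} For condition (2), observe that a $t$-structure on $\textnormal{C}$ is by definition a $t$-structure on $\textnormal{Ho}(\textnormal{C})$, which is the same triangulated category as $\textnormal{Ho}(\textcat{L}^H(\textnormal{C}))$; so $(\textcat{L}^H(\textnormal{C}_{\geq 0}),\textcat{L}^H(\textnormal{C}_{\leq 0}))$ is a $t$-structure on $\textcat{L}^H(\textnormal{C})$ in the sense used in Section \ref{derivedalgebraiccontextsub}. I would then check completeness and monoidal compatibility term by term: right completeness of the model-categorical $t$-structure gives completeness of the $(\infty,1)$-categorical one; the unit of $\textcat{L}^H(\textnormal{C})$ is the image of the unit $I$ of $\textnormal{C}$, which lies in $\textnormal{C}_{\geq 0}$ hence in $\textcat{L}^H(\textnormal{C}_{\geq 0})$; closure of $\textnormal{C}_{\geq 0}$ under $\otimes$ and of $\textnormal{C}_{\leq 0}$ under homotopy filtered colimits pass to the derived tensor product $\otimes^{\mathbb{L}}$ and to filtered colimits in $\textcat{L}^H(\textnormal{C})$ respectively, since these are computed by the corresponding homotopical operations in $\textnormal{C}$. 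Here I would lean on the hypothesis that the objects of $\textnormal{C}^0$ are fibrant-cofibrant so that derived and underived tensor products agree on them.

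\emph{The subcategory $\textnormal{C}^0$.} Condition (3) is the substantive part. Let $\mathcal{C}^0 := \textcat{L}^H(\textnormal{C}^0)$, a small full subcategory of the heart $\textnormal{C}^\heartsuit = (\textcat{L}^H(\textnormal{C}))^\heartsuit$. That it is a symmetric monoidal subcategory follows from $\textnormal{C}^0$ being one in $\textnormal{C}$ (again using fibrant-cofibrancy). That $\mathcal{C}^0$ is a generating set of compact projective generators for $\textcat{L}^H(\textnormal{C}_{\geq 0})$ is the key point: compactness and projectivity are homotopy-invariant properties, being a generating set translates into the statement that $\textcat{L}^H(\textnormal{C}_{\geq 0})$ is generated under colimits by $\mathcal{C}^0$, and I would invoke the equivalence $\textcat{L}^H(\textnormal{C}_{\geq 0}) \simeq \mathcal{P}_\Sigma(\mathcal{C}^0)$ (the $(\infty,1)$-categorical analogue of the statement that $\textnormal{C}_{\geq 0}$ is, up to the local model structure, simplicial presheaves on $\textnormal{C}^0$, cf. \cite[Proposition 5.5.8.22]{lurie_higher_2009}). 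Closure under $\mathcal{C}^\heartsuit$-symmetric powers and under finite coproducts in $\textcat{L}^H(\textnormal{C})$ follows directly from the corresponding closure of $\textnormal{C}^0$ in $\textnormal{C}$, since $\textnormal{Sym}^n_{\textnormal{C}^\heartsuit}$ is computed in the abelian category $\textnormal{C}^\heartsuit = \textnormal{Ho}(\textnormal{C})^\heartsuit$ which is unchanged under localisation, and finite coproducts in $\textcat{L}^H(\textnormal{C})$ are the homotopy coproducts in $\textnormal{C}$.

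\textbf{Main obstacle.} The delicate step is the identification $\textcat{L}^H(\textnormal{C}_{\geq 0}) \simeq \mathcal{P}_\Sigma(\textcat{L}^H(\textnormal{C}^0))$, i.e.\ checking that $\textnormal{C}^0$ really does generate $\textnormal{C}_{\geq 0}$ under sifted colimits at the $(\infty,1)$-level with the projectivity witnessed correctly, rather than merely generating $\textnormal{C}_{\geq 0}^\heartsuit$ as an abelian category; one must be careful that "compact projective generators of $\textnormal{C}_{\geq 0}$" in the model-categorical definition is formulated precisely so as to make this hold, and that the local model structure on simplicial presheaves on $\textnormal{C}^0$ presents $\textnormal{C}_{\geq 0}$ via a zig-zag of simplicial Quillen equivalences (using \cite[Remark A.3.7.7]{lurie_higher_2009}). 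Everything else is a routine transport of hypotheses across the dictionary between combinatorial simplicial model categories and locally presentable $(\infty,1)$-categories. Since this proposition is quoted from \cite[Proposition 3.15]{kelly_analytic_2022}, I would present the above as a sketch and refer to \emph{loc.\ cit.}\ for the full details of that identification.
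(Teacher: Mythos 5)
The paper does not prove this proposition: it is quoted directly from \cite[Proposition 3.15]{kelly_analytic_2022}, so there is no in-paper argument to compare against. Your sketch is the standard transport of each hypothesis across the dictionary between combinatorial simplicial model categories and locally presentable $(\infty,1)$-categories, and it correctly isolates the only non-formal step, namely the identification $\textcat{L}^H(\textnormal{C}_{\geq 0})\simeq\mathcal{P}_\Sigma(\textcat{L}^H(\textnormal{C}^0))$ realising $\textcat{L}^H(\textnormal{C}^0)$ as compact projective generators at the $(\infty,1)$-level; this is consistent with the argument in the cited reference.
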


Recall that, in derived algebraic geometry, our derived affines correspond to simplicial commutative rings. By Example \ref{simplicialcommringexample}, the $\infty$-category of simplicial commutative rings can be realised as the category of connective algebras for some derived algebraic context. We obtain a similar result for simplicial algebra objects in certain exact categories. 

\begin{thm}\cite[c.f. Theorems 3.1.41, 3.1.42]{ben-bassat_perspective_2024}\phantomsection\label{modelderivedtheorem}
    Suppose that $\textnormal{E}$ is a complete closed symmetric monoidal elementary exact category such that 
    \begin{enumerate}
    \item $\textnormal{E}$ has symmetric projectives, i.e. for any projective $P$ and any $n\in\mathbb{N}$, $\textnormal{Sym}^n_\textnormal{E}(P)$ is projective,
    \item The tensor product of two projectives is projective, 
    \item The monoidal unit $I$ is projective,
    \item Projectives are flat.
    \end{enumerate}Let $\textnormal{P}^0$ be a set of compact projective generators closed under finite direct sums, tensor products, and the formation of symmetric powers. Then,
    \begin{enumerate}
        \item The category of simplicial objects in $\textnormal{E}$, $\textnormal{sE}$, forms a homotopical algebraic (HA) context $(\textnormal{sE},\textnormal{sE},\textnormal{Comm}(\textnormal{sE}))$ in the sense of \cite[Definition 1.1.0.11]{toen_homotopical_2008}, 
        \item  When $\textnormal{Ch}(\textnormal{E})$ is equipped with the projective model structure and the left $t$-structure as in \cite[Section 3.1]{henrard_left_2023}, $(\textnormal{Ch}(\textnormal{E}),\textnormal{Ch}_{\geq 0}(\textnormal{E}),\textnormal{Ch}_{\leq 0}(\textnormal{E}),\textnormal{P}^0)$ is a model derived algebraic context. 
    \end{enumerate}

\end{thm}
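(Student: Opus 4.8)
The plan is to verify the two assertions in turn, with the first serving largely as a stepping stone to the second: the bulk of the work is showing that $(\textnormal{Ch}(\textnormal{E}),\textnormal{Ch}_{\geq 0}(\textnormal{E}),\textnormal{Ch}_{\leq 0}(\textnormal{E}),\textnormal{P}^0)$ satisfies the axioms of a model derived algebraic context. First I would establish that $\textnormal{Ch}(\textnormal{E})$ with the projective model structure is a stable, simplicial, combinatorial model category. Stability holds because $\textnormal{Ch}(\textnormal{E})$ is the category of unbounded complexes and the shift functor is invertible up to quasi-isomorphism; combinatoriality follows from $\textnormal{E}$ being elementary (so locally presentable) together with the standard cofibrant-generation of the projective model structure on chain complexes over an exact category with enough projectives, which is available since $\textnormal{P}^0$ is a set of compact projective generators. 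The simplicial enrichment comes from the Dold--Kan--type tensoring: the hypotheses that projectives are flat and that $I$ is projective ensure the monoidal and simplicial structures interact well, and here I would cite the Quillen equivalence $\Gamma:\textnormal{Ch}_{\geq 0}(\textnormal{E})\leftrightarrows \textnormal{sE}:N$ recalled in the excerpt from \cite{kelly_homotopy_2021} to transport the simplicial structure.

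Next I would check that the projective $t$-structure $(\textnormal{Ch}_{\geq 0}(\textnormal{E}),\textnormal{Ch}_{\leq 0}(\textnormal{E}))$ is right complete and compatible with the monoidal structure in the sense of Definition~3.8 of \cite{kelly_analytic_2022}. Right completeness is a formal consequence of the definition of the projective $t$-structure via the hom-complexes $\textnormal{Hom}(P,-)$ and the fact that $\textnormal{E}$ has enough projectives. For compatibility: closure of $\textnormal{Ch}_{\leq 0}(\textnormal{E})$ under homotopy filtered colimits uses that filtered colimits are exact in an elementary exact category and that the projectives in $\textnormal{P}^0$ are compact; the unit $I$ lies in $\textnormal{Ch}_{\geq 0}(\textnormal{E})$ because $I$ is projective by hypothesis (3); and closure of $\textnormal{Ch}_{\geq 0}(\textnormal{E})$ under $\otimes$ follows from the flatness of projectives (hypothesis (4)) together with the Künneth-type vanishing for connective complexes of flats. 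Finally I would verify the three bullet points on $\textnormal{P}^0$: it is a symmetric monoidal subcategory with fibrant--cofibrant objects (objects of $\textnormal{P}^0$ are projective, hence cofibrant as complexes concentrated in degree zero, and fibrant since every object is fibrant in the projective model structure on complexes), it is a generating set of compact projectives for $\textnormal{Ch}_{\geq 0}(\textnormal{E})$ by the very choice of $\textnormal{P}^0$, and it is closed under $\textnormal{C}^\heartsuit$-symmetric powers and finite coproducts by hypotheses (1) and the stated closure properties of $\textnormal{P}^0$ --- using here that symmetric powers of projectives are projective (hypothesis (1)) and that the symmetric power computed in $\textnormal{E}=\textnormal{Ch}(\textnormal{E})^\heartsuit$ agrees with the one in $\textnormal{E}$. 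Part (2) then follows by assembling these checks against the definition of a model derived algebraic context.

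For part (1) --- that $(\textnormal{sE},\textnormal{sE},\textnormal{Comm}(\textnormal{sE}))$ is a homotopical algebraic context in the sense of \cite[Definition 1.1.0.11]{toen_homotopical_2008} --- I would run through the HA-context axioms: $\textnormal{sE}$ must be a combinatorial proper pointed symmetric monoidal model category with the monoidal structure satisfying the monoid axiom, cofibrant objects flat, and certain conditions on the base. These transfer from $\textnormal{E}$ to $\textnormal{sE}$ via the Dold--Kan correspondence and the levelwise (Reedy/projective) model structure on simplicial objects: combinatoriality and properness are levelwise, the symmetric monoidal model structure is the Day-convolution/levelwise tensor, and the monoid axiom together with flatness of cofibrant objects is exactly where hypotheses (1)--(4) on $\textnormal{E}$ (symmetric projectives, tensor of projectives projective, $I$ projective, projectives flat) are used. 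The condition involving the base ring amounts to observing that $\textnormal{sE}$ is pointed and that $\pi_0$ of the unit behaves correctly, which is automatic. I would note that this part is essentially a repackaging via Dold--Kan of what is proved in part (2), which is why I treat (2) as the substantive statement.

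The main obstacle I anticipate is the verification that the projective $t$-structure is \emph{compatible} with the monoidal structure, specifically closure of $\textnormal{Ch}_{\geq 0}(\textnormal{E})$ under $\otimes$: this is the one point that genuinely requires the flatness hypothesis to be leveraged carefully, since one must argue that the derived tensor product of two connective complexes is again connective, and in an exact (merely quasi-abelian, not abelian) category one must be attentive to the fact that $\otimes$ need not be exact on the nose --- it is only the restriction to complexes of flats, guaranteed cofibrant replacements, that makes the homological-degree estimate go through. A secondary subtlety is confirming that the simplicial enrichment on $\textnormal{Ch}(\textnormal{E})$ is compatible (in the sense of an $\textnormal{SM}$7-type axiom) with both the model structure and the monoidal structure simultaneously; for this I would lean on the compatibility already recorded in \cite{kelly_homotopy_2021} and \cite{kelly_analytic_2022} rather than reprove it.
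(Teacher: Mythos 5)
The paper does not prove this theorem: it is imported verbatim from \cite[Theorems 3.1.41, 3.1.42]{ben-bassat_perspective_nodate}, so there is no internal argument to compare your proposal against. Judged on its own terms, your axiom-by-axiom plan for part (2) is the right shape and is consistent with how the result is established in the cited source: combinatoriality and stability of $\textnormal{Ch}(\textnormal{E})$ with the projective model structure, the simplicial enrichment via Dold--Kan, right completeness and monoidal compatibility of the projective $t$-structure, and the closure properties of $\textnormal{P}^0$ are all correctly located, and you correctly identify closure of $\textnormal{Ch}_{\geq 0}(\textnormal{E})$ under the derived tensor product (via cofibrant, hence degreewise projective, hence flat, resolutions) as the point where hypothesis (4) does real work.

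The one place your sketch undersells the difficulty is part (1). The HA-context axioms of \cite[Assumptions 1.1.0.1--1.1.0.4]{toen_homotopical_2008} are not merely a Dold--Kan repackaging of the model derived algebraic context axioms: the substantive extra condition is the last assumption, which requires that $\textnormal{Comm}(\textnormal{sE})$ and the module categories over commutative monoids admit transferred combinatorial proper model structures, and in particular that the free commutative monoid functor (equivalently, the symmetric powers $\textnormal{Sym}^n$) preserves weak equivalences between cofibrant objects. For a general symmetric monoidal model category this fails outside characteristic zero; it is precisely hypothesis (1) (symmetric projectives), combined with (2)--(4), that rescues it here, and verifying it requires an explicit analysis of $\textnormal{Sym}^n$ on degreewise-projective simplicial objects rather than a formal transfer. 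You gesture at this when you say the monoid axiom is ``exactly where hypotheses (1)--(4) are used,'' but a complete proof would have to carry out that verification; it is the genuinely nontrivial half of the cited theorems, not an automatic consequence of part (2).
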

\begin{exmp}\phantomsection\label{derivedalgebraicgeometryexmp}
   An example of an exact category satisfying the conditions in Theorem \ref{modelderivedtheorem} is the category $\textnormal{Mod}_k$, for $k$ a commutative ring. This is exactly the situation in Example \ref{simplicialcommringexample}. 
\end{exmp}

\begin{cor}
    $(\textcat{Ch}(\textnormal{E}),\textcat{Ch}_{\geq 0}(\textnormal{E}),\textcat{Ch}_{\leq 0}(\textnormal{E}),\textcat{L}^H(P^0))$ is a derived algebraic context with $\textcat{Ch}(\textnormal{E}):=\textcat{L}^H(\textnormal{Ch}(\textnormal{E}))$.
\end{cor}
\begin{thm}\cite[Proposition 3.1.66]{ben-bassat_perspective_2024}
   There is an equivalence of $\infty$-categories 
    \begin{equation*}
        \textcat{DAlg}^{cn}(\textcat{Ch}(\textnormal{E}))\simeq \textcat{L}^H(\textnormal{Comm}(\textnormal{sE}))
    \end{equation*}
\end{thm}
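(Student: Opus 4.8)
The plan is to identify both sides as the $(\infty,1)$-category of algebra objects over a monad and then compare the monads. On the right-hand side, by Theorem \ref{modelderivedtheorem}, $\textnormal{Comm}(\textnormal{sE})$ is the category of commutative monoids in the HA context $(\textnormal{sE},\textnormal{sE},\textnormal{Comm}(\textnormal{sE}))$, and by the Quillen equivalence $\Gamma:\textnormal{Ch}_{\geq 0}(\textnormal{E})\leftrightarrows\textnormal{sE}:N$ together with \cite[Proposition A.3.7.6]{lurie_higher_2009} / Proposition \ref{locallypresentablemodel}, the underlying $(\infty,1)$-category is $\textcat{L}^H(\textnormal{sE})\simeq \textcat{L}^H(\textnormal{Ch}_{\geq 0}(\textnormal{E}))=:\mathcal{C}_{\geq 0}$. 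Since the Quillen equivalence is monoidal (one checks this against the Dold–Kan shuffle/Alexander–Whitney data, which is where the projectivity and flatness hypotheses on $\textnormal{E}$ enter), it induces a symmetric monoidal equivalence of underlying $(\infty,1)$-categories, and hence an equivalence on commutative monoids: $\textcat{L}^H(\textnormal{Comm}(\textnormal{sE}))\simeq \textcat{CAlg}(\mathcal{C}_{\geq 0})$ by \cite[Corollary 4.1.7.6 and Section 7.1.4]{lurie_higher_2017} on passage from simplicial commutative monoids in a model category to $\mathbb{E}_\infty$-algebras --- wait, this is exactly the subtlety, so I record it carefully below.

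First I would establish that the free commutative monoid functor on $\textnormal{sE}$ is computed by the degreewise symmetric algebra functor, and that because $\textnormal{E}$ has symmetric projectives (hypothesis (1) of Theorem \ref{modelderivedtheorem}), this functor is already homotopically well behaved on the cofibrant (= degreewise projective) objects: there is no distinction between the strict symmetric power and the derived symmetric power on projectives. This means the monad presenting $\textnormal{Comm}(\textnormal{sE})$ on $\textnormal{sE}$ is, after localisation, precisely the monad $\textcat{LSym}_{\mathcal{C}_{\geq 0}}$ of Section \ref{derivedalgebraiccontextsub}: both are the unique sifted-colimit-preserving extension of the functor $A\mapsto \bigoplus_{n\geq 0}\textnormal{Sym}^n_{\mathcal{C}^\heartsuit}(A)$ from $\mathcal{C}^0$, by \cite[Proposition 5.5.8.15]{lurie_higher_2009}. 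Next I would invoke that a monadic adjunction of model categories passes to a monadic adjunction of the underlying $(\infty,1)$-categories and identifies $\textcat{L}^H$ of the category of algebras with the $(\infty,1)$-category of algebras over the localised monad --- this is the Barr–Beck–Lurie comparison, using that the forgetful functor $\textnormal{Comm}(\textnormal{sE})\to\textnormal{sE}$ preserves and reflects weak equivalences and creates the relevant colimits. Combining, $\textcat{L}^H(\textnormal{Comm}(\textnormal{sE}))\simeq \textcat{LMod}_{\textcat{LSym}_{\mathcal{C}_{\geq 0}}}(\mathcal{C}_{\geq 0})$.

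It then remains to match this with $\textcat{DAlg}^{cn}(\textcat{L}^H(\textnormal{Ch}(\textnormal{E})))$. By Definition \ref{postnikovcompatible} and the construction in Section \ref{derivedalgebraiccontextsub}, $\textcat{DAlg}^{cn}(\mathcal{C})=\textcat{DAlg}^{\geq 0}(\mathcal{C})=\textcat{LMod}_{\textcat{LSym}_\mathcal{C}}(\mathcal{C})\times_\mathcal{C}\mathcal{C}_{\geq 0}$, and since $\textcat{LSym}_\mathcal{C}$ is by \cite[Construction 4.2.20]{raksit_hochschild_2020} the extension of $\textcat{LSym}_{\mathcal{C}_{\geq 0}}$ that preserves connectivity (it sends $\mathcal{C}_{\geq 0}$ to $\mathcal{C}_{\geq 0}$), restricting the monad to connective objects recovers $\textcat{LSym}_{\mathcal{C}_{\geq 0}}$ on $\mathcal{C}_{\geq 0}$; hence $\textcat{DAlg}^{cn}(\mathcal{C})\simeq\textcat{LMod}_{\textcat{LSym}_{\mathcal{C}_{\geq 0}}}(\mathcal{C}_{\geq 0})$, which closes the circle. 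The main obstacle is the monoidality of the Dold–Kan equivalence at the level of $(\infty,1)$-categories and the verification that the strict symmetric powers on projective objects compute the derived ones --- this is precisely what hypotheses (1)–(4) of Theorem \ref{modelderivedtheorem} are engineered to guarantee, and in fact this is essentially the content of \cite[Proposition 3.1.66]{ben-bassat_perspective_nodate}, so in the write-up I would cite that proposition and Example \ref{simplicialcommringexample} (which is the special case $\textnormal{E}=\textnormal{Mod}_k$, where the statement reduces to the known identification of $\textcat{DAlg}^{cn}(\textcat{Mod}_k)$ with simplicial commutative $k$-algebras) rather than reprove the coherence from scratch.
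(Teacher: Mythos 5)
The paper does not prove this statement at all: it is imported verbatim as a citation to \cite[Proposition 3.1.66]{ben-bassat_perspective_nodate}, and your closing remark that you would ultimately cite that proposition rather than reprove the coherence means your write-up would land in exactly the same place. As a reconstruction of what lies behind the citation, your outline is sound: you correctly catch the one real trap (the target is $\textcat{LMod}_{\textcat{LSym}}$, not $\textcat{CAlg}$ of $\mathbb{E}_\infty$-algebras, and the two differ away from characteristic $0$ --- compare the warning in Example \ref{simplicialcommringexample}), you correctly locate where the hypothesis of symmetric projectives enters (strict symmetric powers of cofibrant objects already compute the derived ones, so the strict free-commutative-monoid monad and $\textcat{LSym}_{\mathcal{C}_{\geq 0}}$ have the same sifted-colimit-preserving extension from $\mathcal{C}^0$), and the remaining ingredients (monoidal Dold--Kan at the level of underlying $(\infty,1)$-categories, Barr--Beck--Lurie for the conservative, sifted-colimit-preserving forgetful functor, and the fact that $\textcat{LSym}_{\mathcal{C}}$ restricts to $\textcat{LSym}_{\mathcal{C}_{\geq 0}}$ on connective objects) are the standard ones. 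No gap, but also no divergence from the paper, which simply black-boxes the result.
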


\begin{exmp}\phantomsection\label{indbanderivedalgebraic}Suppose that $R$ is a Banach ring. We note, by \cite[Section 4.6]{kelly_analytic_2022}, that $\textnormal{Ind}(\textnormal{Ban}_R)$ is an example of a category satisfying the conditions of Theorem \ref{modelderivedtheorem} with $P^0$ its compact projective generators described in \cite[Section 4.2.1]{kelly_analytic_2022}. For $R$ a Banach ring, we note that $\textnormal{CBorn}_R$ is not necessarily elementary, but instead $\textcat{AdMon}$-elementary in the sense of \cite[Definition 4.5]{kelly_analytic_2022}, and so we cannot directly apply our theorem. However, by \cite[Remark 4.38]{kelly_analytic_2022}, we have a Quillen equivalence $\textnormal{Ch}(\textnormal{CBorn}_R)\simeq \textnormal{Ch}(\textnormal{Ind}(\textnormal{Ban}_R))$ and we can transfer the $t$-structure on $\textcat{Ch}(\textnormal{Ind}(\textnormal{Ban}_R))$ to one on $\textcat{Ch}(\textnormal{CBorn}_R)$ such that the positive parts agree. However, the transferred non-positive part $\widetilde{\textcat{Ch}}_{\leq 0}(\textnormal{CBorn}_R)$ does not correspond to the category $\textcat{Ch}_{\leq 0}(\textnormal{CBorn}_R)$. We obtain equivalent derived algebraic contexts
\begin{equation*}
    (\textcat{Ch}(\textnormal{Ind}(\textnormal{Ban}_R)),\textcat{Ch}_{\geq 0}(\textnormal{Ind}(\textnormal{Ban}_R)), \textcat{Ch}_{\leq 0}(\textnormal{Ind}(\textnormal{Ban}_R)),\textcat{L}^H(P^0))
\end{equation*}and
\begin{equation*}
    (\textcat{Ch}(\textnormal{CBorn}_R),\textcat{Ch}_{\geq 0}(\textnormal{CBorn}_R), \widetilde{\textcat{Ch}}_{\leq 0}(\textnormal{CBorn}_R),\textcat{L}^H(P^0))
\end{equation*}In particular, the categories of connective derived algebras will be equivalent in both contexts. 
\end{exmp}

\subsection{The Cotangent Complex}

The cotangent complex is a fundamental concept in derived algebraic geometry. It controls the \textit{deformation theory} of geometric objects. In this section, we define cotangent complexes for derived affine objects in our derived algebraic contexts, and then generalise to cotangent complexes of derived stacks. For the purposes of this thesis we will only focus on the connective setting, the non-connective setting can be dealt with using the theory of \cite{ben-bassat_perspective_2024}. The definitions we make in this section are analogous to the ones made in derived algebraic geometry, see \cite{toen_homotopical_2008}.

Suppose that $(\mathcal{C},\mathcal{C}_{\geq 0},\mathcal{C}_{\leq 0},\mathcal{C}^0)$ is a derived algebraic context. We denote by $\Sigma$ and $\Omega$ the suspension and loop functors respectively on $\mathcal{C}$. We recall that the \textit{stabilisation} $\textcat{Stab}(\mathcal{D})$ of any $(\infty,1)$-category $\mathcal{D}$ with finite limits exists by \cite[Proposition 1.4.2.17]{lurie_higher_2017} and that the suspension and loop functors on $\mathcal{D}$ induce functors $\Omega^{\infty}:\textcat{Stab}(\mathcal{D})\rightarrow{\mathcal{D}}$ and $\Sigma^\infty:\mathcal{D}\rightarrow{\textcat{Stab}(\mathcal{D})}$.

Classically (see \cite[Section 4]{quillen_co-homology_1970}), for any commutative ring $A$, there is an equivalence between the category of $A$-modules and the category of \textit{Beck modules}, i.e. abelian group objects in $\textnormal{CRing}_{/A}$. By \cite[Remark 2.4.117]{ben-bassat_perspective_2024}, we can obtain a similar result in our derived algebraic contexts. Suppose that $\mathcal{D}$ is an $(\infty,1)$-category with finite products. The \textit{$(\infty,1)$-category of abelian group objects in $\mathcal{D}$}, denoted $\textcat{Ab}(\mathcal{D})$, is defined to be
\begin{equation*}
    \textcat{Ab}(\mathcal{D}):=\textcat{Fun}^\times(\textnormal{Mod}_{\mathbb{Z}}^{fgf,op},\mathcal{D})
\end{equation*}

\begin{thm}
    Suppose that $A\in\textcat{DAlg}^{cn}(\mathcal{C})$. Then, there is an equivalence of $(\infty,1)$-categories
    \begin{equation*}
        \textcat{Mod}_A\simeq \textcat{Stab}(\textcat{Ab}(\textcat{DAlg}^{cn}(\mathcal{C})_{/A}))
    \end{equation*}
\end{thm}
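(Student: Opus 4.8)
The plan is to realise both sides of the asserted equivalence as nonabelian derived categories $\mathcal{P}_\Sigma$ of small categories of compact projective generators, and then to match these generators. This mirrors \cite[Corollary 8.3]{porta_representability_2020} in the present framework; it is the derived-algebraic counterpart of the $\mathbb{E}_\infty$-theoretic identification of the tangent category of $\textcat{CAlg}(\mathcal{C})$ at $A$ with $\textcat{Mod}_A$ in \cite{lurie_higher_2017}, and under the resulting equivalence $\Omega^\infty$ will correspond to the square-zero extension functor $M\mapsto(A\oplus M\to A)$ and $\Sigma^\infty$ to the (absolute) cotangent complex.

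First I would reduce to the augmented setting. The slice $\textcat{DAlg}^{cn}(\mathcal{C})_{/A}$ has a terminal object, namely $\mathrm{id}_A$, and one checks directly that its $(\infty,1)$-category of objects under the terminal object is exactly $\textcat{DAlg}^{cn,aug}_A(\mathcal{C})=\textcat{DAlg}^{cn}_A(\mathcal{C})_{/A}$. Since every abelian group object is canonically pointed, the forgetful functor factors through the pointed objects and $\textcat{Ab}(\textcat{DAlg}^{cn}(\mathcal{C})_{/A})\simeq\textcat{Ab}(\textcat{DAlg}^{cn,aug}_A(\mathcal{C}))$; likewise $\textcat{Stab}(-)$ depends only on the pointed version. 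So it suffices to identify $\textcat{Stab}(\textcat{Ab}(\textcat{DAlg}^{cn,aug}_A(\mathcal{C})))$ with $\textcat{Mod}_A$.

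Next I would invoke Lemma \ref{equivalenceaugmented}(2) to write $\textcat{DAlg}^{cn,aug}_A(\mathcal{C})\simeq\mathcal{P}_\Sigma(\mathcal{D}^{0,aug}_A)$, the nonabelian derived category of the small category of split-augmented free $A$-algebras on the $P\in\mathcal{C}^0$. The category $\textcat{Ab}(-)=\textcat{Fun}^\times(\textcat{Mod}_\mathbb{Z}^{fgf,op},-)$ of abelian group objects is again presentable, and its forgetful functor to $\mathcal{P}_\Sigma(\mathcal{D}^{0,aug}_A)$ is conservative and preserves sifted colimits (finite products commute with sifted colimits in a nonabelian derived category), hence admits a left adjoint, abelianization. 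By the universal property of $\mathcal{P}_\Sigma$ — this is where \cite[Proposition 5.5.9.2]{lurie_higher_2009} enters — abelianization is determined by its restriction to $\mathcal{D}^{0,aug}_A$, and $\textcat{Ab}(\mathcal{P}_\Sigma(\mathcal{D}^{0,aug}_A))$ is itself the nonabelian derived category of the finite-coproduct completion of the image of $\mathcal{D}^{0,aug}_A$ under abelianization. I would then compute that image: the abelianization of the split-augmented free $A$-algebra on $P\in\mathcal{C}^0$ is the free connective $A$-module $A\otimes P$ (equivalently, the relative cotangent complex of a free algebra is the free module on its generators). Since, by the derived algebraic context axioms, base change $(-)\otimes A:\mathcal{C}_{\geq 0}\to\textcat{Mod}_A^{cn}$ carries the compact projective generators $\mathcal{C}^0$ of $\mathcal{C}_{\geq 0}$ to compact projective generators $\{A\otimes P:P\in\mathcal{C}^0\}$ of $\textcat{Mod}_A^{cn}$, this gives $\textcat{Ab}(\textcat{DAlg}^{cn,aug}_A(\mathcal{C}))\simeq\mathcal{P}_\Sigma(\{A\otimes P:P\in\mathcal{C}^0\})\simeq\textcat{Mod}_A^{cn}$; and $\textcat{Stab}(\textcat{Mod}_A^{cn})\simeq\textcat{Mod}_A$ since the $t$-structure on $\textcat{Mod}_A$ is right complete, which is the assertion.

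The hard part will be the computation that abelianization carries a free derived $A$-algebra on $P$ to the free module $A\otimes P$: this means unwinding the monad $\textcat{LSym}_\mathcal{C}$ (equivalently $\textcat{LSym}_A$ after base change) into its weight components, built from the $\textnormal{Sym}^n_{\mathcal{C}^\heartsuit}$, using the closure of $\mathcal{C}^0$ under symmetric powers and finite coproducts, and checking that abelianization annihilates the part of weight $\geq 2$ — together with the bookkeeping needed to transport $\textcat{Ab}$ and $\textcat{Stab}$ through $\mathcal{P}_\Sigma$ and to see that the abelian group objects are projectively generated by the abelianised generators. The reduction to the augmented setting and the generator-matching are then routine.
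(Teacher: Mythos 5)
Your proposal is correct and follows essentially the same route as the paper, which proves this statement by exactly the combination you describe: the reduction to the augmented slice, Lemma \ref{equivalenceaugmented} to present $\textcat{DAlg}^{cn,aug}_A(\mathcal{C})$ as $\mathcal{P}_\Sigma(\mathcal{D}^{0,aug}_A)$, and \cite[Proposition 5.5.9.2]{lurie_higher_2009} to transport abelianisation and stabilisation through $\mathcal{P}_\Sigma$, following \cite[Corollary 8.3]{porta_representability_2020}. The paper leaves the details (in particular the computation that abelianisation of the split free algebra on $P$ is $A\otimes P$) to the cited reference, so your expansion fills in exactly the steps it elides.
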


This allows us to define the square-zero extension in a similar way to Lurie \cite[Remark 7.3.4.16]{lurie_higher_2017}. We denote the functor from $\textcat{Stab}(\textcat{Ab}(\textcat{DAlg}^{cn}(\mathcal{C})_{/A}))$ to $\textcat{Ab}(\textcat{DAlg}^{cn}(\mathcal{C})_{/A})$ by $\Omega^{\infty}_{\textcat{Ab}}$ and the natural forgetful functor from $\textcat{Ab}(\textcat{DAlg}^{cn}(\mathcal{C})_{/A})$ to ${\textcat{DAlg}^{cn}(\mathcal{C})_{/A}}$ given by evaluating at the unit, by $F$. 

\begin{defn}
    Suppose that $A\in\textcat{DAlg}^{cn}(\mathcal{C})$. The \textit{square-zero extension functor} is defined to be the functor 
    \begin{equation*}
    \textcat{sqz}_A:\textcat{Mod}_A\xrightarrow{\simeq}{\textcat{Stab}(\textcat{Ab}(\textcat{DAlg}^{cn}(\mathcal{C})_{/A}))}\xrightarrow{\Omega^\infty_{\textcat{Ab}}}{\textcat{Ab}(\textcat{DAlg}^{cn}(\mathcal{C})_{/A})}\xrightarrow{F}{\textcat{DAlg}^{cn}(\mathcal{C})_{/A}}
    \end{equation*}We denote the image of $M\in\textcat{Mod}_A$ under $\textcat{sqz}_A$ by $A\oplus M$ and call it the \textit{square-zero extension of $A$ by $M$.}
\end{defn}

\begin{remark}
    We note that, using a similar reasoning to \cite[Remark 7.3.4.16]{lurie_higher_2017}, this does in fact define something we can consider as the square-zero extension. 
\end{remark}

Suppose that $A\in\textcat{DAlg}^{cn}(\mathcal{C})$. Consider the functor $\Theta:\textcat{DAlg}(\mathcal{C})\rightarrow{\textcat{CAlg}(\mathcal{C})}$ and denote by $\Theta_A$ the induced functor on $\textcat{DAlg}(\mathcal{C})_{/A}$. We note that $\Theta_A(A\oplus M)$ is equivalent to the square-zero extension, in the sense of \cite[Remark 7.3.4.16]{lurie_higher_2017}, of $\Theta(A)\in\textcat{CAlg}^{cn}(\mathcal{C})$ by $M$. 

\begin{exmp}
    Consider the $(\infty,1)$-category of simplicial commutative rings as a derived algebraic context as in Example \ref{simplicialcommringexample}. Consider the functor which takes an object $A\simeq \textcat{LSym}_{\textcat{Mod}_\mathbb{Z}}(P)$ for some $P\in\textnormal{Mod}_\mathbb{Z}^{fgf}$ and a module $M$ of the form $A^{\oplus n}$ for some $n\geq 0$, and then computes the usual $1$-categorical square-zero extension. In \cite[Construction 25.3.1.1]{lurie_spectral_2018}, Lurie defines square-zero extensions for simplicial commutative rings by left Kan extending this functor. 
    
    By \cite[Remark 25.3.1.2]{lurie_spectral_2018}, for $A\in\textcat{DAlg}^{cn}(\textcat{Mod}_\mathbb{Z})$ and $M\in\textcat{Mod}_A^{cn}$, the underlying $\mathbb{E}_\infty$-algebra of this square-zero extension corresponds to $\Theta_A(A\oplus M)$. In particular, by \cite[Remark 4.4.12]{raksit_hochschild_2020}, this corresponds to the underlying $\mathbb{E}_\infty$-algebra of the square-zero extension of Raksit defined in \cite[Construction 4.4.5]{raksit_hochschild_2020}. Hence, in $\textcat{CAlg}^{cn}(\textcat{Mod}_\mathbb{Z})_{/A}$, all the definitions of square-zero extension correspond.
\end{exmp}

Using the definition of the square-zero extension, we can easily write down the following definition.
\begin{defn}
    For a morphism $f:A\rightarrow{B}$ in $\textcat{DAlg}^{cn}(\mathcal{C})$ and $M\in\textcat{Mod}_B$, we define the \textit{$\infty$-groupoid of $A$-derivations from $B$ to $M$} to be
\begin{equation*}
    \textcat{Der}_A(B,M):=\textnormal{Map}_{\textcat{DAlg}^{cn}_A(\mathcal{C})_{/B}}(B,B\oplus M)
\end{equation*}If $A=I$, then we denote the $\infty$-groupoid of derivations from $B$ to $M$ by $\textcat{Der}(B,M)$. 
\end{defn}

\begin{lem}There is an adjunction $L_A:\textcat{DAlg}^{cn}(\mathcal{C})_{/A}\leftrightarrows{\textcat{Mod}_A}:\textcat{sqz}_A$.
    
\end{lem}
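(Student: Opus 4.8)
The plan is to recognise $\textcat{sqz}_A$ as a composite of right adjoints, and then to take $L_A$ to be the composite of the associated left adjoints in the reverse order. By construction $\textcat{sqz}_A$ factors as
\begin{equation*}
\textcat{Mod}_A\xrightarrow{\ e\ }\textcat{Stab}(\textcat{Ab}(\textcat{DAlg}^{cn}(\mathcal{C})_{/A}))\xrightarrow{\ \Omega^\infty_{\textcat{Ab}}\ }\textcat{Ab}(\textcat{DAlg}^{cn}(\mathcal{C})_{/A})\xrightarrow{\ F\ }\textcat{DAlg}^{cn}(\mathcal{C})_{/A},
\end{equation*}
where $e$ is the equivalence of the preceding theorem, $\Omega^\infty_{\textcat{Ab}}$ is the functor induced by $\Omega^\infty$ on stabilisations, and $F$ is the forgetful functor. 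So the first step is to check that each of these three factors admits a left adjoint.

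Before that I would record the relevant presentability facts. Since $\textcat{LSym}_{\mathcal{C}}$ is an accessible monad on the presentable $(\infty,1)$-category $\mathcal{C}$, the category $\textcat{DAlg}(\mathcal{C})=\textcat{LMod}_{\textcat{LSym}_\mathcal{C}}(\mathcal{C})$ is presentable, hence so are $\textcat{DAlg}^{cn}(\mathcal{C})=\textcat{DAlg}(\mathcal{C})\times_\mathcal{C}\mathcal{C}_{\geq 0}$ and the slice $\mathcal{D}:=\textcat{DAlg}^{cn}(\mathcal{C})_{/A}$. Consequently $\textcat{Ab}(\mathcal{D})=\textcat{Fun}^\times(\textcat{Mod}_{\mathbb{Z}}^{fgf,op},\mathcal{D})$ is presentable, being the accessible reflective localisation of the functor category $\textcat{Fun}(\textcat{Mod}_{\mathbb{Z}}^{fgf,op},\mathcal{D})$ at the morphisms expressing preservation of finite products; moreover $\textcat{Ab}(\mathcal{D})$ is pointed, since abelian group objects carry a canonical zero object.

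Then I would produce the three left adjoints. (i) The equivalence $e$ has its quasi-inverse $e^{-1}$ as a two-sided adjoint. (ii) Because $\textcat{Ab}(\mathcal{D})$ is a pointed presentable $(\infty,1)$-category, the functor $\Omega^\infty$ from its stabilisation admits a left adjoint $\Sigma^\infty$ by the universal property of stabilisation \cite[\S 1.4]{lurie_higher_2017}; passing to the induced functors gives a left adjoint $\Sigma^\infty_{\textcat{Ab}}$ of $\Omega^\infty_{\textcat{Ab}}$. (iii) Under the identification $\textcat{Ab}(\mathcal{D})=\textcat{Fun}^\times(\textcat{Mod}_{\mathbb{Z}}^{fgf,op},\mathcal{D})$ the forgetful functor $F$ is the composite of the inclusion $\textcat{Fun}^\times(\textcat{Mod}_{\mathbb{Z}}^{fgf,op},\mathcal{D})\hookrightarrow\textcat{Fun}(\textcat{Mod}_{\mathbb{Z}}^{fgf,op},\mathcal{D})$ with evaluation at the rank one free $\mathbb{Z}$-module; the inclusion is the right adjoint of the reflective localisation above, and evaluation admits a left adjoint given by left Kan extension along $\{\ast\}\to\textcat{Mod}_{\mathbb{Z}}^{fgf,op}$ (which exists as $\mathcal{D}$ is cocomplete), so $F$ has a left adjoint $F^{L}$, namely left Kan extension followed by the reflection onto finite-product-preserving functors. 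Composing, $L_A:=e^{-1}\circ\Sigma^\infty_{\textcat{Ab}}\circ F^{L}$ is left adjoint to $\textcat{sqz}_A=F\circ\Omega^\infty_{\textcat{Ab}}\circ e$, as required.

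I expect the only genuinely technical step to be the presentability bookkeeping — in particular checking that $\textcat{DAlg}^{cn}(\mathcal{C})$ is presentable (which rests on $\textcat{LSym}_\mathcal{C}$ being accessible) and that $\textcat{Ab}(\mathcal{D})$ is an accessible reflective localisation of a functor category; everything after that is a formal assembly of adjoints. An equivalent and equally brief alternative is to note that $\textcat{sqz}_A$ preserves all small limits and is accessible (each factor is, the last two being right adjoints between presentable $(\infty,1)$-categories), so that $L_A$ exists by the adjoint functor theorem \cite[Corollary 5.5.2.9]{lurie_higher_2009}.
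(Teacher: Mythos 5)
Your proposal is correct and follows essentially the same route as the paper: decompose $\textcat{sqz}_A$ into the equivalence, $\Omega^\infty_{\textcat{Ab}}$, and the forgetful functor, then compose the corresponding left adjoints, using local presentability of $\textcat{Ab}(\textcat{DAlg}^{cn}(\mathcal{C})_{/A})$ to produce $\Sigma^\infty_{\textcat{Ab}}$ via the adjoint functor theorem. The only difference is cosmetic — the paper simply cites the abelianisation functor as the left adjoint of $F$, whereas you construct it explicitly as a left Kan extension followed by the reflection onto finite-product-preserving functors.
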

\begin{proof}
We note that $\textcat{Mod}_A$ and $\textcat{DAlg}^{cn}(\mathcal{C})_{/A}$ are presentable. The functor $\textcat{sqz}_A$ preserves limits and is accessible since $\Omega_{\textcat{Ab}}^\infty$ is a right adjoint by \cite[Proposition 1.4.4.4]{lurie_higher_2017} and the functor $F$ is an accessible limit-preserving functor. Therefore, $\textcat{sqz}_A$ has a left adjoint by \cite[Corollary 5.5.2.9]{lurie_higher_2009}. 
\end{proof}

For any morphism $f:A\rightarrow{B}$ in $\textcat{DAlg}^{cn}(\mathcal{C})$, there is also an induced adjunction on the slice categories,
\begin{equation*}
L_{B/A}:\textcat{DAlg}^{cn}_A(\mathcal{C})_{/B}\leftrightarrows{\textcat{Mod}_B}:\textcat{sqz}_{B/A}
\end{equation*}By setting $\mathbb{L}_{B/A}:=L_{B/A}(B)$, we obtain the following result. 

\begin{cor}\phantomsection\label{derivationequivalence}Suppose that we have a morphism $f:A\rightarrow{B}$ in $\textcat{DAlg}^{cn}(\mathcal{C})$. Then, there exists a $B$-module $\mathbb{L}_{B/A}$ such that there is an equivalence of $\infty$-groupoids
\begin{equation*}
\textcat{Der}_A(B,M)\simeq \textnormal{Map}_{\textcat{Mod}_B}(\mathbb{L}_{B/A},M)
\end{equation*}for all $M\in\textcat{Mod}_B$.
\end{cor}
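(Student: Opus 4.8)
The plan is to extract $\mathbb{L}_{B/A}$ directly from the adjunction $L_{B/A} \dashv \textcat{sqz}_{B/A}$ just established, and then unwind the definition of $\textcat{Der}_A(B,M)$ until it is literally a mapping space out of $L_{B/A}(B)$. First I would set $\mathbb{L}_{B/A} := L_{B/A}(B)$, where $B$ is viewed as the terminal object of $\textcat{DAlg}^{cn}_A(\mathcal{C})_{/B}$ via the identity morphism. Then, for any $M \in \textcat{Mod}_B$, the adjunction immediately gives an equivalence of $\infty$-groupoids
\begin{equation*}
\textnormal{Map}_{\textcat{DAlg}^{cn}_A(\mathcal{C})_{/B}}(B, \textcat{sqz}_{B/A}(M)) \simeq \textnormal{Map}_{\textcat{Mod}_B}(L_{B/A}(B), M) = \textnormal{Map}_{\textcat{Mod}_B}(\mathbb{L}_{B/A}, M).
\end{equation*}

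The only remaining point is to identify the left-hand side with $\textcat{Der}_A(B,M)$. By definition $\textcat{Der}_A(B,M) = \textnormal{Map}_{\textcat{DAlg}^{cn}_A(\mathcal{C})_{/B}}(B, B\oplus M)$, so I must check that $\textcat{sqz}_{B/A}(M)$, as an object of $\textcat{DAlg}^{cn}_A(\mathcal{C})_{/B}$, agrees with the square-zero extension $B \oplus M$ of $B$ by $M$ equipped with its canonical structure map to $B$. This is essentially a matter of tracing through the construction: $\textcat{sqz}_{B/A}$ is, by the proof of the preceding lemma, the composite of the forgetful functors $\textcat{Stab}(\textcat{Ab}(\textcat{DAlg}^{cn}_A(\mathcal{C})_{/B})) \to \textcat{Ab}(\textcat{DAlg}^{cn}_A(\mathcal{C})_{/B}) \to \textcat{DAlg}^{cn}_A(\mathcal{C})_{/B}$ with the equivalence $\textcat{Mod}_B \simeq \textcat{Stab}(\textcat{Ab}(\textcat{DAlg}^{cn}(\mathcal{C})_{/B}))$ of the Theorem above, and this is exactly how $\textcat{sqz}_B$ and hence $A \oplus M := \textcat{sqz}_A(M)$ were defined. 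The relative version $\textcat{sqz}_{B/A}$ produces the same underlying object $B \oplus M$ with its augmentation to $B$; the only change from the absolute case is that we remember the $A$-algebra structure, which is harmless since the forgetful functor $\textcat{DAlg}^{cn}_A(\mathcal{C})_{/B} \to \textcat{DAlg}^{cn}(\mathcal{C})_{/B}$ is conservative and compatible with these constructions.

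The main (mild) obstacle is this bookkeeping step — making sure that the relative cotangent complex really corepresents \emph{relative} derivations, i.e. that passing from $\textcat{DAlg}^{cn}(\mathcal{C})_{/B}$ to the undercategory $\textcat{DAlg}^{cn}_A(\mathcal{C})_{/B}$ commutes with the abelianisation–stabilisation tower and with the square-zero functor. One clean way to handle this is to observe that $\textcat{DAlg}^{cn}_A(\mathcal{C})_{/B} \simeq (\textcat{DAlg}^{cn}(\mathcal{C})_{/B})_{A/}$ and that the forgetful functor to $\textcat{DAlg}^{cn}(\mathcal{C})_{/B}$ creates finite products and finite limits, so abelian group objects and stabilisations in the two categories are computed compatibly; hence $\textcat{sqz}_{B/A}$ is just $\textcat{sqz}_B$ followed by the (automatic) $A$-algebra structure coming from $A \to B$, and $\textnormal{Map}_{\textcat{DAlg}^{cn}_A(\mathcal{C})_{/B}}(B, B\oplus M) \simeq \textnormal{Map}_{\textcat{DAlg}^{cn}(\mathcal{C})_{/B}}(B, B\oplus M)$ since $B$ already maps to $A$. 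Once that compatibility is recorded, the corollary follows formally from the adjunction, and $\mathbb{L}_{B/A} = L_{B/A}(B)$ is the desired $B$-module.
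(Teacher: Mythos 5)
Your proposal follows the same route as the paper: the paper gives no separate argument for this corollary beyond the sentence ``By setting $\mathbb{L}_{B/A}:=L_{B/A}(B)$, we obtain the following result,'' i.e.\ the statement is read off from the adjunction $L_{B/A}\dashv \textcat{sqz}_{B/A}$ exactly as you do, and your identification of $\textcat{sqz}_{B/A}(M)$ with $B\oplus M$ as an object of $\textcat{DAlg}^{cn}_A(\mathcal{C})_{/B}$ is the (implicit) bookkeeping the paper elides.

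One sentence in your final paragraph is false, though it is also unnecessary for your argument: you assert $\textnormal{Map}_{\textcat{DAlg}^{cn}_A(\mathcal{C})_{/B}}(B,B\oplus M)\simeq \textnormal{Map}_{\textcat{DAlg}^{cn}(\mathcal{C})_{/B}}(B,B\oplus M)$. The mapping space in the undercategory is the \emph{fibre} of $\textnormal{Map}_{\textcat{DAlg}^{cn}(\mathcal{C})_{/B}}(B,B\oplus M)\rightarrow\textnormal{Map}_{\textcat{DAlg}^{cn}(\mathcal{C})_{/B}}(A,B\oplus M)$ over the structure map, not the whole space; this fibre is exactly what makes $\textcat{Der}_A(B,M)$ a \emph{relative} derivation space, and it is the source of the fibre sequence $\mathbb{L}_A\otimes^\mathbb{L}_AB\rightarrow\mathbb{L}_B\rightarrow\mathbb{L}_{B/A}$ in Lemma \ref{cofibrerelcotangent}. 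If your claimed equivalence held, one would get $\mathbb{L}_{B/A}\simeq\mathbb{L}_B$ for every $f$, which is wrong. Since step (3) of your argument only requires the object-level identification $\textcat{sqz}_{B/A}(M)\simeq B\oplus M$ in $\textcat{DAlg}^{cn}_A(\mathcal{C})_{/B}$ (which your preceding discussion of compatibility of products, abelianisation and stabilisation does establish), the corollary still follows; just delete the offending equivalence of mapping spaces.
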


\begin{defn}Suppose that we have a morphism $f:A\rightarrow{B}$ in $\textcat{DAlg}^{cn}(\mathcal{C})$. The \textit{relative cotangent complex of $f$} is the unique $B$-module $\mathbb{L}_{B/A}$ from  Corollary \ref{derivationequivalence}. If $A=I$, then we denote the cotangent complex of the morphism $I\rightarrow{B}$ by $\mathbb{L}_B$.
\end{defn}

Suppose that $f:A\rightarrow{B}$ is a morphism in $\textcat{DAlg}(\mathcal{C})$. Then, there is a forgetful functor $f^*:\textcat{Mod}_B\rightarrow{\textcat{Mod}_A}$ which, by \cite[c.f. Theorem 3.6.7]{lurie_derived_2009-2}, has a left adjoint given by $-\otimes_A^\mathbb{L}B=f_!:\textcat{Mod}_A\rightarrow{\textcat{Mod}_B}$. We have the following results about the cotangent complex. 

\begin{lem}\phantomsection\label{cofibrerelcotangent}
    Suppose that we have a morphism $f:A\rightarrow{B}$ in $\textcat{DAlg}^{cn}(\mathcal{C})$. Then, there is a cofibre sequence in $\textcat{Mod}_B$
    \begin{equation*}
        \mathbb{L}_A\otimes_A^\mathbb{L}B\rightarrow{\mathbb{L}_B}\rightarrow{\mathbb{L}_{B/A}}
    \end{equation*}
\end{lem}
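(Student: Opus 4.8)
The plan is to reduce the claim to a fibre sequence of derivation spaces, using the characterisation of the cotangent complex in Corollary \ref{derivationequivalence} together with the base-change adjunction $(-\otimes_A^\mathbb{L}B = f_!, f^*)$ recalled just above, and then to transfer back via the Yoneda lemma in the stable category $\textcat{Mod}_B$.

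First I would fix $M\in\textcat{Mod}_B$ and translate each of the three terms of the putative cofibre sequence into a derivation space. By Corollary \ref{derivationequivalence} applied to $f$ and to $I\rightarrow B$ we have $\textnormal{Map}_{\textcat{Mod}_B}(\mathbb{L}_{B/A},M)\simeq\textcat{Der}_A(B,M)$ and $\textnormal{Map}_{\textcat{Mod}_B}(\mathbb{L}_B,M)\simeq\textcat{Der}(B,M)$; and using the adjunction $(-\otimes_A^\mathbb{L}B=f_!,f^*)$ followed by Corollary \ref{derivationequivalence} for $I\rightarrow A$ we get $\textnormal{Map}_{\textcat{Mod}_B}(\mathbb{L}_A\otimes_A^\mathbb{L}B,M)\simeq\textnormal{Map}_{\textcat{Mod}_A}(\mathbb{L}_A,f^*M)\simeq\textcat{Der}(A,f^*M)$. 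All three identifications are natural in $M$.

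The key step is then to produce a fibre sequence of pointed spaces, natural in $M$,
\[
\textcat{Der}_A(B,M)\longrightarrow\textcat{Der}(B,M)\xrightarrow{\ \rho\ }\textcat{Der}(A,f^*M),
\]
where $\rho$ restricts an absolute derivation of $B$ along $f$ and the spaces are based at the zero derivations. To see this, unwind $\textcat{Der}_A(B,M)=\textnormal{Map}_{\textcat{DAlg}^{cn}_A(\mathcal{C})_{/B}}(B,B\oplus M)$ as the space of sections of $B\oplus M\rightarrow B$ which are maps under $A$; forgetting the $A$-linearity gives the map to $\textcat{Der}(B,M)$, and the fibre over a section $s$ (corresponding to $d\in\textcat{Der}(B,M)$) is the space of homotopies in $\textnormal{Map}_{\textcat{DAlg}^{cn}(\mathcal{C})_{/B}}(A,B\oplus M)$ between the composite $A\xrightarrow{f}B\rightarrow B\oplus M$ through the zero section and the composite $A\xrightarrow{f}B\xrightarrow{s}B\oplus M$. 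One checks that a map $A\rightarrow B\oplus M$ over $B$ is exactly the structure map $f$ together with a derivation $A\rightarrow f^*M$ (because $B\oplus M$ is the square-zero extension $\textcat{sqz}_B(M)$), so $\textnormal{Map}_{\textcat{DAlg}^{cn}(\mathcal{C})_{/B}}(A,B\oplus M)\simeq\textcat{Der}(A,f^*M)$, and under this identification the two composites above are $0$ and $\rho(d)$. Hence $\textcat{Der}_A(B,M)$ is the fibre of $\rho$ over $0$, which is the displayed fibre sequence.

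Finally, combining the previous steps, the sequence $\textnormal{Map}_{\textcat{Mod}_B}(\mathbb{L}_{B/A},M)\rightarrow\textnormal{Map}_{\textcat{Mod}_B}(\mathbb{L}_B,M)\rightarrow\textnormal{Map}_{\textcat{Mod}_B}(\mathbb{L}_A\otimes_A^\mathbb{L}B,M)$ is a fibre sequence of spaces for every $M$, with first map induced by the forgetful functor $\textcat{DAlg}^{cn}_A(\mathcal{C})\rightarrow\textcat{DAlg}^{cn}(\mathcal{C})$ and second map induced by $\rho$; by the Yoneda lemma these are corepresented by morphisms $\mathbb{L}_A\otimes_A^\mathbb{L}B\rightarrow\mathbb{L}_B$ and $\mathbb{L}_B\rightarrow\mathbb{L}_{B/A}$ in $\textcat{Mod}_B$. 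Since $\textcat{Mod}_B$ is stable, a composable pair equipped with a nullhomotopy is a cofibre sequence precisely when every corepresentable functor carries it to a fibre sequence (mapping out of a pushout computes a pullback), so $\mathbb{L}_A\otimes_A^\mathbb{L}B\rightarrow\mathbb{L}_B\rightarrow\mathbb{L}_{B/A}$ is a cofibre sequence. The main obstacle is the middle step: carefully identifying $\textcat{Der}_A(B,M)$ with the fibre of the restriction map by unwinding the under/over-categories $\textcat{DAlg}^{cn}_A(\mathcal{C})_{/B}$ and $\textcat{DAlg}^{cn}(\mathcal{C})_{/B}$ and the mapping space into the square-zero extension; the remainder is formal bookkeeping with the adjunctions already available in the excerpt.
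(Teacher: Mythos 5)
Your proposal is correct and follows essentially the same route as the paper's proof: reduce via Yoneda (and stability of $\textcat{Mod}_B$) to showing that $\textnormal{Map}_{\textcat{Mod}_B}(\mathbb{L}_{B/A},M)$ is the fibre of $\textnormal{Map}_{\textcat{Mod}_B}(\mathbb{L}_B,M)\rightarrow\textnormal{Map}_{\textcat{Mod}_B}(\mathbb{L}_A\otimes_A^\mathbb{L}B,M)$, identify all three terms with derivation spaces (mapping spaces into square-zero extensions), and recognise the relative derivations $\textcat{Der}_A(B,M)$ as the fibre of the restriction map from absolute derivations of $B$ to those of $A$. Your treatment of the fibre identification via the under/over-categories is in fact slightly more careful than the paper's about distinguishing $M$ from $f^*M$.
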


\begin{proof}
It suffices to check, since the Yoneda embedding reflects limits, that for all $M\in\textcat{Mod}_B$, $\textnormal{Map}_{\textcat{Mod}_B}({\mathbb{L}}_{B/A},M)$ is the fibre of the map 
    \begin{equation*}
        \textnormal{Map}_{\textcat{Mod}_B}(\mathbb{L}_B,M)\rightarrow{\textnormal{Map}_{\textcat{Mod}_B}(\mathbb{L}_A\otimes^\mathbb{L}_AB,M)}
    \end{equation*}
   For any $M\in\textcat{Mod}_B$, we have a chain of equivalences 
   \begin{equation*}
       \textnormal{Map}_{\textcat{Mod}_B}(\mathbb{L}_A\otimes^\mathbb{L}_A B,M)\simeq \textnormal{Map}_{\textcat{Mod}_A}(\mathbb{L}_A,M)
    \simeq \textnormal{Map}_{\textcat{DAlg}^{cn}(\mathcal{C})_{/A}}(A,A\oplus M)\end{equation*}and therefore we are reduced to evaluating the fibre of the map 
\begin{equation*}
    \textnormal{Map}_{\textcat{DAlg}^{cn}(\mathcal{C})_{/B}}(B,B\oplus M)\rightarrow{\textnormal{Map}_{\textcat{DAlg}^{cn}(\mathcal{C})_{/A}}(A,A\oplus M)}
\end{equation*}which is equivalent to $\textnormal{Map}_{\textcat{DAlg}^{cn}_A(\mathcal{C})_{/B}}(B,B\oplus M)\simeq \textnormal{Map}_{\textcat{Mod}_B}({\mathbb{L}}_{B/A},M)$, i.e. the collection of derivations $B\rightarrow{M}$ which act as the identity on $A$.
\end{proof}

The following results follow using Lemma \ref{cofibrerelcotangent} and using similar reasoning to \cite[Corollary 7.3.3.6, Proposition 7.3.3.7]{lurie_higher_2017}. 

\begin{cor}\phantomsection\label{cofibcotresults}\begin{enumerate}
    \item Suppose that $A\rightarrow{B}$ and $B\rightarrow{C}$ are maps in $\textcat{DAlg}^{cn}(\mathcal{C})$. Then, we have a cofibre sequence
    \begin{equation*}
        \mathbb{L}_{B/A}\otimes^\mathbb{L}_B C\rightarrow{\mathbb{L}_{C/A}\rightarrow{\mathbb{L}_{C/B}}}
    \end{equation*}in $\textcat{Mod}_C$, 
    \item Suppose that we have a pushout square in $\textcat{DAlg}^{cn}(\mathcal{C})$
    \begin{equation*}\begin{tikzcd}
        A\arrow{r} \arrow{d}& B \arrow{d}\\
        A'\arrow{r} & B'
    \end{tikzcd}
    \end{equation*}Then, there is an equivalence $\mathbb{L}_{B/A}\otimes^\mathbb{L}_B B'\simeq \mathbb{L}_{B'/A'}$ in $\textcat{Mod}_{B'}$. Moreover, there is a pushout square in $\textcat{Mod}_{B'}$
    \begin{equation*}
        \begin{tikzcd}
            \mathbb{L}_A\otimes^\mathbb{L}_A B'\arrow{r} \arrow{d}& \mathbb{L}_{B}\otimes^\mathbb{L}_B B' \arrow{d}\\
            \mathbb{L}_{A'}\otimes^\mathbb{L}_{A'}B'\arrow{r} & \mathbb{L}_{B'}
        \end{tikzcd}
    \end{equation*}
\end{enumerate}
    
\end{cor}

\subsection{Infinitesimal Extensions}

Suppose that $f:A\rightarrow{B}$ is a morphism in $\textcat{DAlg}^{cn}(\mathcal{C})$ and suppose that $M\in\textcat{Mod}_B$. If we consider the image of $M$ under $\textcat{sqz}_{B/A}$, then this defines a canonical morphism $i:B\oplus M\rightarrow{B}$. There is an inclusion map $s:B\rightarrow{B\oplus M}$ which comes from considering the image of the zero morphism $\mathbb{L}_{B/A}\rightarrow{M}$ under the equivalence 
\begin{equation*}
    \pi_0(\textnormal{Map}_{\textcat{Mod}_B}(\mathbb{L}_{B/A},M))\simeq\pi_0(\textcat{Der}_A(B,M))
\end{equation*}induced by Corollary \ref{derivationequivalence}. We can consider the map $i:B\rightarrow{B\oplus M}$ as a section of the map $s:B\oplus M\rightarrow{B}$ in $\textcat{DAlg}^{cn}_A(\mathcal{C})_{/B}$.

We denote by $\Omega_A:\textcat{Mod}_A\rightarrow{\textcat{Mod}_A}$ the loop functor and the suspension functor by $\Sigma_A:\textcat{Mod}_A\rightarrow{\textcat{Mod}_A}$ on $\textcat{Mod}_A$. For $M\in\textcat{Mod}_A$, we will occasionally use the notation $M[n]$ to denote the object $\Sigma^n(M)$. If $A\in\textcat{DAlg}^{cn}(\mathcal{C})$ and $M\in\textcat{Mod}_A$ then, since $\textcat{sqz}_A$ commutes with limits, there is a pullback square
    \begin{equation*}
        \begin{tikzcd}
            A\oplus \Omega_A(M) \arrow{r} \arrow{d}& A\arrow{d}{s}\\
            A\arrow{r}{s} & A\oplus M
        \end{tikzcd}
    \end{equation*}in $\textcat{DAlg}^{cn}(\mathcal{C})$. This motivates the following definition. 
     \begin{defn}Suppose that we have a morphism $f:A\rightarrow{B}$ in $\textcat{DAlg}^{cn}(\mathcal{C})$, $M\in\textcat{Mod}_B$, and a derivation $d\in\pi_0(\textcat{Der}_A(B,M))$. Then, the \textit{square zero extension associated to $d$}, denoted $B\oplus_d\Omega M$, is the pullback in $\textcat{DAlg}^{cn}_A(\mathcal{C})_{/B}$ of the diagram 
\begin{equation*}
    \begin{tikzcd}
        B\oplus_d\Omega M\arrow{r}\arrow{d} & B \arrow{d}{d}\\
        B\arrow{r}{s} & B\oplus M
    \end{tikzcd}
\end{equation*}
\end{defn}

\begin{lem}\phantomsection\label{pushoutAsqzM}Suppose that $A\in\textcat{DAlg}^{cn}(\mathcal{C})$, $M\in\textcat{Mod}_A$, and $d\in\pi_0(\textcat{Der}(A,M))$. Then, the following commutative diagram is both a pullback and pushout square in $\textcat{Mod}_{A}$. 
\begin{equation*}
    \begin{tikzcd}
        A\oplus_d\Omega M\arrow{r}\arrow{d} & A \arrow{d}{d}\\
        A\arrow{r}{s} & A\oplus M
    \end{tikzcd}
\end{equation*}Moreover, there is a fibre-cofibre sequence in $\textcat{Mod}_A$, $A\oplus_d\Omega M\rightarrow{A}\rightarrow{M}$.
\end{lem}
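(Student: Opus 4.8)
The plan is to deduce everything from the single observation that the displayed square is, essentially by construction, a pullback in $\textcat{DAlg}^{cn}(\mathcal{C})$, and then to feed this through the forgetful functors and exploit stability. First I would record that the square is a pullback in $\textcat{DAlg}^{cn}(\mathcal{C})$: by the definition preceding the lemma, $A\oplus_d\Omega M$ is the pullback of the displayed diagram formed in the slice $\textcat{DAlg}^{cn}_I(\mathcal{C})_{/A}=\textcat{DAlg}^{cn}(\mathcal{C})_{/A}$, and the forgetful functor out of a slice creates limits, so the square is a pullback in $\textcat{DAlg}^{cn}(\mathcal{C})$ (this includes the displayed $d=0$ square above as the special case $d=0$, $\Omega_A(M)=\Omega M$). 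Since the forgetful functor $U_\mathcal{C}\colon\textcat{DAlg}^{cn}(\mathcal{C})\to\mathcal{C}$ is monadic, hence preserves limits, the square is also a pullback in $\mathcal{C}$.

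Next I would upgrade this to $\textcat{Mod}_{A\oplus M}$. Each of the four vertices carries a module structure over $A\oplus M$ induced by the structure maps in the diagram, and all four morphisms are module maps for these structures; this is where the identification of square-zero extensions with their underlying modules from the previous subsections is used. The forgetful functor $\textcat{Mod}_{A\oplus M}\to\mathcal{C}$ is conservative and preserves limits, hence reflects them, so from the previous paragraph the square is a pullback in $\textcat{Mod}_{A\oplus M}$. Because $\textcat{Mod}_{A\oplus M}$ is a stable $(\infty,1)$-category, a commuting square in it is a pullback precisely when it is a pushout, which gives the pushout assertion as well.

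For the fibre--cofibre sequence I would use that in a stable category the cofibre of one edge of a pullback ($=$ pushout) square agrees with the cofibre of the opposite edge. The bottom edge $s\colon A\to A\oplus M$ is the zero section, a split monomorphism with cofibre $M$; hence the cofibre of the opposite edge $A\oplus_d\Omega M\to A$ is also $M$, so $A\oplus_d\Omega M\to A\to M$ is a cofibre sequence in $\textcat{Mod}_{A\oplus M}$, and by stability it is simultaneously a fibre sequence. The step I expect to be the main obstacle is the passage to $\textcat{Mod}_{A\oplus M}$ in the second paragraph: one must pin down the $A\oplus M$-module structures on the vertices and check that every morphism in the square is module-linear, so that the conservative, limit-reflecting forgetful functor to $\mathcal{C}$ can be applied. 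Once this bookkeeping is in place, the stable-category formalism makes the remaining claims immediate.
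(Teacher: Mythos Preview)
Your approach is essentially the same as the paper's: both argue that the square is a pullback by definition, transport it to a stable module category via a limit-preserving forgetful functor, invoke stability to turn the pullback into a pushout, and read off the fibre--cofibre sequence from the cofibre of the zero section $s$. The paper's proof is if anything terser than yours and does not address the $A\oplus M$-module bookkeeping you flag as the main obstacle (its own proof in fact passes through a forgetful functor landing in $\textcat{Mod}_A$ rather than $\textcat{Mod}_{A\oplus M}$), so your caution there is well placed but not something the paper resolves more carefully.
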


\begin{proof}Consider the square as a pullback square in $\textcat{Mod}_{A}$ under the forgetful functor $\textcat{DAlg}^{cn}_A(\mathcal{C})\rightarrow{\textcat{Mod}_A}$. Since $\textcat{Mod}_A$ is stable, the square is also a pushout square in $\textcat{Mod}_A$. The cofibres of the two horizontal morphisms in $\textcat{Mod}_A$ are equivalent to $M\in\textcat{Mod}_A$, and hence we can conclude. 
\end{proof}

\begin{remark}
    We note that, as we are working with connective objects, we will often have to restrict to modules $M$ in $\textcat{Mod}_A^{\geq 1}$ so that $\Omega_A(M)\in\textcat{Mod}_A^{cn}$. We note that, if $M\in\textcat{Mod}_A^{\geq 1}$, then the above sequence is also a fibre-cofibre sequence in $\textcat{Mod}_{A}^{cn}$. 
\end{remark}

\begin{cor}\phantomsection\label{Amodulepullbackpushout}Suppose $A\in\textcat{DAlg}^{cn}(\mathcal{C})$, $M\in\textcat{Mod}_A^{\geq 1}$, and $N\in\textcat{Mod}_{A\oplus_d\Omega M}^{cn}$. Then, there is an equivalence in $\textcat{DAlg}^{cn}(\mathcal{C})$. 
    \begin{equation*}
        N\simeq (N\otimes^\mathbb{L}_{A\oplus_d\Omega M}A)\times_{N\otimes^\mathbb{L}_{A\oplus_d\Omega M}(A\oplus M)}(N\otimes^\mathbb{L}_{A\oplus_d\Omega M}A)
    \end{equation*}
\end{cor}

\begin{proof}
    By the previous lemma, we have the following commutative diagram in $\textcat{Mod}_A$ where both squares are pushout diagrams. 
\begin{equation*}\begin{tikzcd}
        N\otimes^\mathbb{L}_{A\oplus_d\Omega M}A \arrow{r} & A\oplus M\arrow{r} & N\otimes^\mathbb{L}_{A\oplus_d\Omega M}(A\oplus M)\\
        N\arrow{r} \arrow{u}& A\arrow{r} \arrow{u}& N\otimes^\mathbb{L}_{A\oplus_d\Omega M}A \arrow{u}
\end{tikzcd}
\end{equation*}Hence, the outer rectangle is a pushout square in $\textcat{Mod}_A$, equivalently a pullback square in $\textcat{Mod}_A$. Since the forgetful functor $\textcat{DAlg}_A(\mathcal{C})\rightarrow{\textcat{Mod}_A}$ is conservative, we obtain the desired equivalence.  
    
\end{proof}
Suppose that we have a morphism $f:A\rightarrow{B}$ in $\textcat{DAlg}^{cn}(\mathcal{C})$. Consider the induced map of $B$-modules $\eta:\mathbb{L}_B\rightarrow{\mathbb{L}_{B/A}}$. We note that this induces a derivation $d_\eta:B\rightarrow{B\oplus\mathbb{L}_{B/A}}$. Consider the associated square-zero extension $B\oplus_{d_\eta}\Omega \mathbb{L}_{B/A}$. Since the restriction of $\eta$ to $\mathbb{L}_A$ is nullhomotopic, the map $f$ factors as a composition
\begin{equation*}
    A\xrightarrow{f'}{B\oplus_{d_\eta}\Omega \mathbb{L}_{B/A}}\xrightarrow{f''}B
\end{equation*}In particular, there is a map of $A$-modules $\textnormal{cofib}(f)\rightarrow{\textnormal{cofib}(f'')}$ which, using Lemma \ref{pushoutAsqzM}, induces a map of $B$-modules 
\begin{equation*}
    \epsilon_f:B\otimes_A\textnormal{cofib}(f)\rightarrow{\textnormal{cofib}(f'')\simeq \mathbb{L}_{B/A}}
\end{equation*} 

\begin{lem}\phantomsection\label{truncationcotangentequiv}
    Suppose that $f:A\rightarrow{B}$ is a morphism in $\textcat{DAlg}^{cn}(\mathcal{C})$ and that $f$ induces an equivalence $\tau_{\leq n}(A)\rightarrow{\tau_{\leq n}(B)}$ for some $n\in\mathbb{N}$. Then $\tau_{\leq n}(\mathbb{L}_{B/A})\simeq 0$. 
\end{lem}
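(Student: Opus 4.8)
The plan is to reduce the statement to a connectivity bound on the relative cotangent complex, and then extract that bound from the comparison map $\epsilon_f\colon B\otimes_A\textnormal{cofib}(f)\rightarrow\mathbb{L}_{B/A}$ constructed just above. Since $\mathbb{L}_{-/A}$ preserves sifted colimits, $\mathbb{L}_{B/A}$ is a sifted colimit of the cotangent complexes $\mathbb{L}_{C/A}$ of free $A$-algebras $C$ (via the equivalence $\textcat{DAlg}^{cn}_A(\mathcal{C})\simeq\mathcal{P}_\Sigma(\mathcal{D}^0_A)$ of Lemma \ref{equivalenceaugmented}), and each such $\mathbb{L}_{C/A}$ is a free module over the connective algebra $C$, hence connective; so $\mathbb{L}_{B/A}$ is a connective $B$-module. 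Consequently the claim $\tau_{\leq n}(\mathbb{L}_{B/A})\simeq 0$ is equivalent to $\mathbb{L}_{B/A}$ being $(n+1)$-connective, i.e. $\pi_i(\mathbb{L}_{B/A})=0$ for all $i\leq n$, and it suffices to establish this.

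First I would record the input side. The hypothesis that $\tau_{\leq n}(f)$ is an equivalence says precisely that $\pi_i(A)\rightarrow\pi_i(B)$ is an isomorphism for $i\leq n$, and then the long exact sequence of homotopy groups of the cofibre sequence $A\xrightarrow{f}B\rightarrow\textnormal{cofib}(f)$ forces $\pi_i(\textnormal{cofib}(f))=0$ for $i\leq n$, so $\textnormal{cofib}(f)$ is $(n+1)$-connective. Because the $t$-structure on $\mathcal{C}$ is compatible with the monoidal structure and base change $-\otimes_A^\mathbb{L}B$ is right $t$-exact (it sends the connective generators $A\otimes P$ of $\textcat{Mod}_A^{cn}$ to the connective objects $B\otimes P$), the $B$-module $B\otimes_A\textnormal{cofib}(f)$ is again $(n+1)$-connective.

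The heart of the argument, and the hard part, is the connectivity estimate: if $\textnormal{cofib}(f)$ is $m$-connective for some $m\geq 1$, then $\textnormal{cofib}(\epsilon_f)$ is $2m$-connective. This is the derived-algebraic-context analogue of the corresponding estimate for $\mathbb{E}_\infty$-rings in \cite{lurie_higher_2017}, and I would prove it by the standard reduction: using Lemma \ref{equivalenceaugmented}, write $B$ as a sifted colimit of free $A$-algebras; by naturality reduce to the case where $f$ exhibits $B$ as a trivial square-zero extension of $A$ by a highly connective module, in which case $\epsilon_f$ is seen to be an equivalence by a direct computation built on the pullback–pushout squares of Lemma \ref{pushoutAsqzM}; and finally use that the formation of $\textnormal{cofib}(f)$, of $\mathbb{L}_{B/A}$, of $\epsilon_f$, and the connectivity bounds involved are all compatible with sifted colimits. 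An alternative route is to bypass this via transitivity, Corollary \ref{cofibrecotangentresults}: the two maps $A\rightarrow\tau_{\leq n}B$ (through $B$, and through the equivalence $\tau_{\leq n}A\xrightarrow{\ \sim\ }\tau_{\leq n}B$) agree, so transitivity identifies $\mathbb{L}_{B/A}\otimes_B\tau_{\leq n}B$ with $\textnormal{fib}(\mathbb{L}_{\tau_{\leq n}A/A}\rightarrow\mathbb{L}_{\tau_{\leq n}B/B})$, and since $\mathbb{L}_{B/A}\rightarrow\mathbb{L}_{B/A}\otimes_B\tau_{\leq n}B$ has $(n+2)$-connective cofibre this reduces everything to showing $\mathbb{L}_{\tau_{\leq n}R/R}$ is $(n+2)$-connective for $R$ connective — itself an instance of the same estimate applied to the $(n+2)$-connective map $R\rightarrow\tau_{\leq n}R$.

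Granting the estimate with $m=n+1\geq 1$, the module $\textnormal{cofib}(\epsilon_f)$ is $(2n+2)$-connective, hence in particular $(n+2)$-connective. Applying the long exact sequence of homotopy groups to the cofibre sequence $B\otimes_A\textnormal{cofib}(f)\xrightarrow{\epsilon_f}\mathbb{L}_{B/A}\rightarrow\textnormal{cofib}(\epsilon_f)$, for each $i\leq n$ the group $\pi_i$ of the source vanishes (the source is $(n+1)$-connective) and the group $\pi_i$ of the cofibre vanishes (the cofibre is $(n+2)$-connective), so $\pi_i(\mathbb{L}_{B/A})=0$. Hence $\mathbb{L}_{B/A}$ is $(n+1)$-connective and $\tau_{\leq n}(\mathbb{L}_{B/A})\simeq 0$. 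The main obstacle, as indicated, is the connectivity estimate on $\textnormal{cofib}(\epsilon_f)$: proving it cleanly in the generality of an arbitrary derived algebraic context requires the sifted-colimit reduction to the square-zero case together with the good behaviour of Postnikov-type constructions, everything else being formal.
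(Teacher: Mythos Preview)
Your approach is essentially the same as the paper's: the paper defers to Lurie's argument (HA, Lemma 7.4.3.17) together with the cofibre sequence $\mathbb{L}_A\otimes_A^\mathbb{L}B\to\mathbb{L}_B\to\mathbb{L}_{B/A}$ of Lemma~\ref{cofibrerelcotangent}, and you have unpacked that argument, correctly isolating the connectivity estimate for $\epsilon_f$ as the crux.

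One point to watch: the estimate you invoke is exactly what the paper records a few lines later as Theorem~\ref{2nconnective}, and there it is stated under the extra hypothesis that $\textcat{LSym}^n$ raises connectivity (the hypothesis introduced immediately before that theorem). Your sketched proof via sifted-colimit reduction to the square-zero case is the standard route to that theorem, and in a general derived algebraic context it does use that hypothesis. In the contexts the paper cares about (simplicial modules, Ind-Banach modules in characteristic~$0$) the hypothesis holds, so there is no practical problem; but strictly speaking your argument for general $n\geq 0$ is consuming it. Your separate sifted-colimit argument for the connectivity of $\mathbb{L}_{B/A}$ (the $n=-1$ case) is clean and does not need it.
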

\begin{proof}
    This follows in a similar way to \cite[Lemma 7.4.3.17]{lurie_higher_2017} using Lemma \ref{cofibrerelcotangent}. 
\end{proof}

By applying this lemma to the morphism $f:I\rightarrow{A}$, we see that if $A\in\textcat{DAlg}^{cn}(\mathcal{C})$, then $\mathbb{L}_A$ is connective. 

Now, suppose that for all $m\geq 2$, if $A\in\mathcal{C}$ is $n$-connective, then $\textcat{LSym}^m(A)$ is $(n+2)$-connective. This condition holds, for example, for $\mathbb{E}_\infty$-rings and simplicial commutative rings \cite[Proposition 25.2.4.1]{lurie_spectral_2018}. In this situation, we have the following. 
\begin{thm}\phantomsection\label{2nconnective}\cite[c.f. Theorem 7.4.3.12, Corollary 7.4.3.2]{lurie_higher_2017} Suppose that $f:A\rightarrow{B}$ is a morphism in $\textcat{DAlg}^{cn}(\mathcal{C})$ and that $\textnormal{cofib}(f)$ is $n$-connective for some $n\geq 1$. Then, 
\begin{enumerate}
    \item $\textnormal{fib}(\epsilon_f)$ is $(n+2)$-connective,
    \item The induced morphism $\mathbb{L}_{A}\rightarrow{\mathbb{L}_B}$ has $n$-connective cofibre. The converse is true if the morphism $\pi_0(A)\rightarrow{\pi_0(B)}$ is an isomorphism. 
\end{enumerate}
\end{thm}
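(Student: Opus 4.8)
The plan is to reduce the whole statement to the case of a \emph{relatively free} morphism, where $\epsilon_f$ admits an explicit description, and then to run a dévissage, following the pattern of \cite[\S 7.4.3]{lurie_higher_2017}. Two preliminary reductions come essentially for free. First, since $\textnormal{cofib}(f)$ is $n$-connective the morphism $f$ induces an equivalence $\tau_{\leq n-1}(A)\xrightarrow{\ \sim\ }\tau_{\leq n-1}(B)$, so Lemma~\ref{truncationcotangentequiv} already shows that $\mathbb{L}_{B/A}$ is $n$-connective; this will also dispose of the easy half of (2). Second, $B\otimes_A\textnormal{cofib}(f)$ is $n$-connective because $B$ and $A$ are connective and the $t$-structure is compatible with $\otimes$. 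Thus $\epsilon_f$ is a morphism between $n$-connective $B$-modules, and (1) is equivalent to the assertion that $\pi_i(\epsilon_f)$ is an isomorphism for $i\leq n+1$ and surjective for $i=n+2$.

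I would then settle the free case $B\simeq\textcat{LSym}_A(N)$, where $N$ is a connective $A$-module, these being the objects whose sifted-colimit closure is $\textcat{DAlg}^{cn}_A(\mathcal{C})$ (the generators $\mathcal{D}^0_A$ of Lemma~\ref{equivalenceaugmented}). The universal property of the cotangent complex together with the free--forgetful adjunction yields a canonical equivalence $\mathbb{L}_{B/A}\simeq B\otimes_A N$; the weight grading of $\textcat{LSym}$ identifies $\textnormal{cofib}(f)\simeq\bigoplus_{j\geq 1}\textcat{LSym}^j_A(N)$; and, unwinding the construction of $\epsilon_f$ as in \cite[\S 7.4.3]{lurie_higher_2017}, the defining $A$-linear map $\textnormal{cofib}(f)\to\mathbb{L}_{B/A}$ is the derived de Rham differential of the augmentation ideal, which restricts to an equivalence on the weight-one summand $\textcat{LSym}^1_A(N)=N$. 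Hence $\epsilon_f$ is split surjective with $\textnormal{fib}(\epsilon_f)\simeq B\otimes_A\bigl(\bigoplus_{j\geq 2}\textcat{LSym}^j_A(N)\bigr)$. Since $N$ is a retract of $\textnormal{cofib}(f)$ it is $n$-connective, and the hypothesis on derived symmetric powers — transported to $\textcat{Mod}_A$ via the base-change identity $\textcat{LSym}^j_A(A\otimes M)\simeq A\otimes\textcat{LSym}^j(M)$ and the fact that $\textcat{LSym}^j$ preserves sifted colimits and sends connective objects to connective objects — shows that $\textcat{LSym}^j_A(N)$ is $(n+2)$-connective for $j\geq 2$; tensoring with the connective $B$ gives (1) in this case.

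For a general $f$ I would use that $f\mapsto B\otimes_A\textnormal{cofib}(f)$, $f\mapsto\mathbb{L}_{B/A}$ and the transformation $\epsilon$ all commute with filtered colimits and are compatible with the transitivity and base-change cofibre sequences of Corollary~\ref{cofibrecotangentresults}. One then presents $f$ (which has $n$-connective cofibre) as a filtered colimit of finite towers $A=B_{(0)}\to B_{(1)}\to\cdots$ in which each stage $B_{(k)}\to B_{(k+1)}$ is built by attaching relatively free cells $\textcat{LSym}_A(A\otimes P)$ with $P$ in degree $\geq n$, propagates the estimate of the previous paragraph through each stage using Corollary~\ref{cofibrecotangentresults}, and finally passes to the colimit. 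This dévissage — producing such a cell presentation while keeping the connectivity of the relative cotangent complexes under control at every stage, and reducing to the case of truncated $B$ — is the main obstacle; it is precisely here that the argument of \cite[\S 7.4.3]{lurie_higher_2017} is followed most closely, and some extra care is needed because the symmetric-power estimate is sharp when $n$ is small.

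Finally, for the remaining (converse) half of (2), assume $\pi_0(A)\xrightarrow{\ \sim\ }\pi_0(B)$ and let $m\geq 1$ be the exact connectivity of $\textnormal{cofib}(f)$. If $m<n$, then (1) applied with $m$ in place of $n$ makes $\pi_m(\epsilon_f)$ an isomorphism, so
\[
\pi_m(\mathbb{L}_{B/A})\;\cong\;\pi_m\!\bigl(B\otimes_A\textnormal{cofib}(f)\bigr)\;\cong\;\pi_0(B)\otimes_{\pi_0(A)}\pi_m\!\bigl(\textnormal{cofib}(f)\bigr)\;\cong\;\pi_m\!\bigl(\textnormal{cofib}(f)\bigr)\;\neq\;0,
\]
which contradicts the $n$-connectivity of $\mathbb{L}_{B/A}$ (recall $\mathbb{L}_{B/A}$ is the cofibre of $\mathbb{L}_A\otimes_A B\to\mathbb{L}_B$ by Lemma~\ref{cofibrerelcotangent}, and $\mathbb{L}_A\otimes_A\textnormal{cofib}(f)$ is connective since $\mathbb{L}_A$ is connective, which gives the forward implication). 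Hence $m\geq n$, i.e. $\textnormal{cofib}(f)$ is $n$-connective, as claimed.
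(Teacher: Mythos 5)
First, a point of order: the paper does not prove Theorem \ref{2nconnective} at all --- it is imported verbatim from the cited reference --- so there is no internal proof to compare against, and your proposal has to stand on its own. Your overall architecture (explicit free case, then dévissage à la Lurie) is the right one, your identification of $\textnormal{fib}(\epsilon_f)$ with $B\otimes_A\bigl(\bigoplus_{j\geq 2}\textcat{LSym}^j_A(N)\bigr)$ in the free case is correct, and your argument for the converse half of (2) is fine modulo part (1). But the opening reduction contains an off-by-one error: if $\textnormal{cofib}(f)$ is $n$-connective then the long exact sequence makes $\pi_i(f)$ an isomorphism only for $i\leq n-2$ and an epimorphism for $i=n-1$, so $f$ induces an equivalence on $\tau_{\leq n-2}$, not on $\tau_{\leq n-1}$ (for $n=1$, take $\mathbb{Z}\to\mathbb{Z}/p$: the $0$-truncations differ). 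Lemma \ref{truncationcotangentequiv} therefore only gives $(n-1)$-connectivity of $\mathbb{L}_{B/A}$ and does not "dispose of the easy half of (2)". This is recoverable, since (1) implies the forward half of (2) anyway, but as written the reduction is false.

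The genuine gaps are in part (1). The standing hypothesis controls $\textcat{LSym}^j$ only on $m$-connective inputs with $m\geq 2$, and the estimate really does fail at $m=1$: over a characteristic-zero field $\textcat{LSym}^2(M[1])\simeq\Lambda^2(M)[2]$ is $2$-connective but not $3$-connective. Hence for $B=\textcat{LSym}_A(N)$ with $N$ exactly $1$-connective your formula exhibits $\textnormal{fib}(\epsilon_f)$ with non-vanishing $\pi_2$, i.e.\ only $(n+1)$-connective where the theorem asserts $(n+2)$. So $n=1$ is not a matter of "extra care": the free-case route cannot produce the stated bound there, and since $n=1$ lies in the theorem's stated range (and is the case invoked, e.g., in Corollary \ref{cotangentcomplexpi0}), you must either restrict to $n\geq 2$, prove a weaker bound at $n=1$, or flag a discrepancy with the cited source. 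Second, the dévissage you defer is the actual content of the theorem and is not routine: $\epsilon_f$ does not simply "commute with the transitivity sequences" of Corollary \ref{cofibrecotangentresults} --- controlling $\epsilon$ along a composite $A\to B'\to B$ is precisely the delicate step, which Lurie handles via the square-zero/Postnikov filtration of $B$ as an $A$-algebra together with a separate analysis of square-zero extensions, not by a naive cell-attachment induction. Declaring that step "the main obstacle" and pointing to Lurie leaves the core of part (1) unproved.
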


\begin{cor}\phantomsection\label{cotangentcomplexpi0}
    Suppose that $f:A\rightarrow{B}$ is a morphism in $\textcat{DAlg}^{cn}(\mathcal{C})$. Then, as a $\pi_0(B)$-module, $\pi_0(\mathbb{L}_{B/A})$ is isomorphic to $\pi_0(\mathbb{L}_{\pi_0(B)/\pi_0(A)})$. 
\end{cor}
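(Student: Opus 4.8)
The plan is to obtain the desired isomorphism as a composite of two isomorphisms, first replacing the target $B$, then the source $A$, by its $0$-truncation, in each case using the transitivity cofibre sequence of Corollary \ref{cofibrecotangentresults}(1) together with the connectivity estimates of Theorem \ref{2nconnective}. Throughout I will use that $\mathbb{L}_A$ is connective for every $A\in\textcat{DAlg}^{cn}(\mathcal{C})$, and that the tensor product on modules is right $t$-exact, so that for any connective $B$-module $M$ one has $\pi_0(M\otimes^{\mathbb{L}}_B\pi_0(B))\simeq \pi_0(M)\otimes_{\pi_0(B)}\pi_0(B)\simeq \pi_0(M)$.

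\textbf{Step 1 (replacing the target).} Apply Corollary \ref{cofibrecotangentresults}(1) to the composite $A\rightarrow B\rightarrow \pi_0(B)$, obtaining a cofibre sequence
\[
\mathbb{L}_{B/A}\otimes^{\mathbb{L}}_B\pi_0(B)\rightarrow \mathbb{L}_{\pi_0(B)/A}\rightarrow \mathbb{L}_{\pi_0(B)/B}
\]
in $\textcat{Mod}_{\pi_0(B)}$. I claim $\mathbb{L}_{\pi_0(B)/B}$ is $2$-connective. Indeed, $\textnormal{cofib}(B\rightarrow\pi_0(B))$ has vanishing $\pi_0$, and, from the long exact sequence in homotopy groups together with the facts that $\pi_0(B)$ is discrete and $\pi_0(B)\rightarrow\pi_0(\pi_0(B))$ is an isomorphism, also vanishing $\pi_1$; hence it is $2$-connective. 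Applying Theorem \ref{2nconnective}(1) to $B\rightarrow\pi_0(B)$ (with $n=2$), the comparison map $\epsilon$ from $\pi_0(B)\otimes^{\mathbb{L}}_B\textnormal{cofib}(B\rightarrow\pi_0(B))$ to $\mathbb{L}_{\pi_0(B)/B}$ has $4$-connective fibre while its source is $2$-connective, so $\mathbb{L}_{\pi_0(B)/B}$ is $2$-connective. Taking homotopy groups of the cofibre sequence and using right $t$-exactness, the vanishing of $\pi_0$ and $\pi_1$ of $\mathbb{L}_{\pi_0(B)/B}$ forces an isomorphism $\pi_0(\mathbb{L}_{B/A})\xrightarrow{\ \sim\ }\pi_0(\mathbb{L}_{\pi_0(B)/A})$ of $\pi_0(B)$-modules.

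\textbf{Step 2 (replacing the source).} Apply Corollary \ref{cofibrecotangentresults}(1) to the composite $A\rightarrow\pi_0(A)\rightarrow\pi_0(B)$, obtaining a cofibre sequence
\[
\mathbb{L}_{\pi_0(A)/A}\otimes^{\mathbb{L}}_{\pi_0(A)}\pi_0(B)\rightarrow \mathbb{L}_{\pi_0(B)/A}\rightarrow \mathbb{L}_{\pi_0(B)/\pi_0(A)}
\]
in $\textcat{Mod}_{\pi_0(B)}$. Here $\textnormal{cofib}(A\rightarrow\pi_0(A))$ is $1$-connective, so by Theorem \ref{2nconnective}(1) (with $n=1$) $\mathbb{L}_{\pi_0(A)/A}$ is $1$-connective, hence so is its base change along $\pi_0(A)\rightarrow\pi_0(B)$. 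Thus the first term has vanishing $\pi_0$ and $\pi_{-1}$, and the long exact sequence yields an isomorphism $\pi_0(\mathbb{L}_{\pi_0(B)/A})\xrightarrow{\ \sim\ }\pi_0(\mathbb{L}_{\pi_0(B)/\pi_0(A)})$. Composing with the isomorphism of Step 1 proves the corollary.

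The only genuine subtlety — and the step I expect to require the most care — is the connectivity bookkeeping in Step 1: Theorem \ref{2nconnective} is stated for the absolute cotangent complexes of algebras rather than for relative cotangent complexes, so one must route through the comparison map $\epsilon$ of Theorem \ref{2nconnective}(1), and one must verify the improved estimate that $\textnormal{cofib}(B\rightarrow\pi_0(B))$ is $2$-connective (not merely $1$-connective), since it is precisely the resulting vanishing of $\pi_1(\mathbb{L}_{\pi_0(B)/B})$ that eliminates the potentially obstructing term in the long exact sequence. Everything else is a formal diagram chase with the transitivity sequence.
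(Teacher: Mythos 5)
Your proof is correct, but it follows a different decomposition from the paper's. The paper works with the two ``fundamental'' cofibre sequences $\mathbb{L}_A\otimes^{\mathbb{L}}_A B\rightarrow\mathbb{L}_B\rightarrow\mathbb{L}_{B/A}$ for $f$ and for $\pi_0(f)$, invokes part (2) of Theorem \ref{2nconnective} to see that $\pi_0(\mathbb{L}_A)\rightarrow\pi_0(\mathbb{L}_{\pi_0(A)})$ (and likewise for $B$) is an isomorphism, and then compares the two long exact sequences on $\pi_0$, identifying $\pi_0(\mathbb{L}_{B/A})$ and $\pi_0(\mathbb{L}_{\pi_0(B)/\pi_0(A)})$ as cokernels of isomorphic maps. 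You instead interpolate through $\mathbb{L}_{\pi_0(B)/A}$, applying the transitivity sequence of Corollary \ref{cofibrecotangentresults}(1) to $A\rightarrow B\rightarrow\pi_0(B)$ and to $A\rightarrow\pi_0(A)\rightarrow\pi_0(B)$, and use part (1) of Theorem \ref{2nconnective} (via the comparison map $\epsilon$) to establish the connectivity of $\mathbb{L}_{\pi_0(B)/B}$ and $\mathbb{L}_{\pi_0(A)/A}$ that kills the error terms. Both arguments rest on the same two ingredients, but yours replaces the morphism-of-long-exact-sequences diagram chase with two clean two-term vanishing arguments, at the cost of having to verify the sharper estimate that $\operatorname{cofib}(B\rightarrow\pi_0(B))$ is $2$-connective (which you do correctly: it is $(\tau_{\geq 1}B)[1]$); your connectivity bookkeeping in Step 1, including the need for $\pi_1(\mathbb{L}_{\pi_0(B)/B})=0$ and not merely $\pi_0=0$, is exactly right. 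Note that the standing hypothesis preceding Theorem \ref{2nconnective} (connectivity of $\textcat{LSym}^n$) is needed by both proofs, so you are not assuming anything beyond what the paper does.
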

\begin{proof}By the previous result applied to the morphism $A\rightarrow{\pi_0(A)}$, we can deduce that the canonical map $\pi_0(\mathbb{L}_A)\rightarrow{\pi_0(\mathbb{L}_{\pi_0(A)})}$ is an isomorphism. The result then follows using the following morphism of long exact sequences. 
\begin{equation*}
    \begin{tikzcd}
        \dots\arrow{r} & {\pi_0(\mathbb{L}_A\otimes_A B)}\arrow{r}\arrow{d} & {\pi_0(\mathbb{L}_B)}\arrow{r} \arrow{d} & {\pi_0(\mathbb{L}_{B/A})}\arrow{d}\\
        \dots\arrow{r} & {\mathbb{L}_{\pi_0(A)}\otimes_{\pi_0(A)}\pi_0(B)}\arrow{r} & {\pi_0(\mathbb{L}_{\pi_0(B)})}\arrow{r}  & {\pi_0(\mathbb{L}_{\pi_0(B)/\pi_0(A)})}
    \end{tikzcd}
\end{equation*}
\end{proof}

\subsection{Smooth, \'Etale, and Perfect}\phantomsection\label{formalsection}

We now use our definition of the cotangent complex to make the following definitions, motivated by the classical definitions. The definitions of projective and perfect in this context can be found in Appendix \ref{perfectappendix}. 
\begin{defn}\phantomsection\label{formallyetaledefn}\cite[c.f. Definitions 1.2.6.1 and 1.2.7.1]{toen_homotopical_2008} Suppose that $f:A\rightarrow{B}$ is a morphism in $\textcat{DAlg}^{cn}(\mathcal{C})$. Then, 
\begin{itemize}
    \item $f$ is \textit{formally \'etale} if the natural morphism $\mathbb{L}_A\otimes^\mathbb{L}_AB\rightarrow{\mathbb{L}_B}$ is an equivalence in $\textcat{Mod}_B$, 
    \item $f$ is \textit{formally unramified} if $\mathbb{L}_{B/A}\simeq 0$ in $\textcat{Mod}_B$,
\item $f$ is \textit{formally perfect} if the $B$-module $\mathbb{L}_{B/A}$ is perfect.
\end{itemize}
    
\end{defn}

We can easily show that formally \'etale and formally unramified morphisms are stable by equivalences, compositions, and $(\infty,1)$-pushouts using Corollary \ref{cofibcotresults}.

\begin{lem}\phantomsection\label{perfectstable}
    Formally perfect morphisms in $\textcat{DAlg}^{cn}(\mathcal{C})$ are stable under equivalences, compositions, and pushouts. 
\end{lem}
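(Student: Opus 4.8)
The plan is to reduce the lemma to two ingredients already available: the cofibre sequences for relative cotangent complexes from Corollary \ref{cofibrecotangentresults}, and the standard stability properties of perfect modules (invariance under base change and closure under cofibre sequences) recorded in Appendix \ref{perfectdualisable}. This mirrors the proof of the analogous statement for formally \'etale and formally unramified morphisms in the preceding remark.

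First I would dispose of stability under equivalences, which is essentially formal: the relative cotangent complex $\mathbb{L}_{B/A}$ depends functorially on the arrow $A\rightarrow B$, so an equivalence of arrows induces an equivalence between the corresponding relative cotangent complexes in the relevant module $(\infty,1)$-category, and being perfect is invariant under equivalence. Next, for compositions, given formally perfect maps $A\rightarrow B$ and $B\rightarrow C$, I would invoke the cofibre sequence
\begin{equation*}
\mathbb{L}_{B/A}\otimes^\mathbb{L}_B C\rightarrow{\mathbb{L}_{C/A}}\rightarrow{\mathbb{L}_{C/B}}
\end{equation*}
in $\textcat{Mod}_C$ from Corollary \ref{cofibrecotangentresults}. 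Here $\mathbb{L}_{C/B}$ is perfect by hypothesis, and $\mathbb{L}_{B/A}\otimes^\mathbb{L}_B C$ is perfect because $\mathbb{L}_{B/A}$ is a perfect $B$-module and extension of scalars $-\otimes^\mathbb{L}_B C$ carries perfect $B$-modules to perfect $C$-modules; since the perfect objects form a stable subcategory of $\textcat{Mod}_C$, closed under cofibre sequences, $\mathbb{L}_{C/A}$ is perfect.

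Finally, for pushouts, given a pushout square in $\textcat{DAlg}^{cn}(\mathcal{C})$ with $A\rightarrow B$ formally perfect and $A\rightarrow A'$ arbitrary, producing $A'\rightarrow B'$, I would use the base-change equivalence $\mathbb{L}_{B'/A'}\simeq \mathbb{L}_{B/A}\otimes^\mathbb{L}_B B'$ of Corollary \ref{cofibrecotangentresults}; combined with stability of perfect modules under base change, this immediately shows $\mathbb{L}_{B'/A'}$ is perfect, i.e. $A'\rightarrow B'$ is formally perfect.

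The only non-formal input is the behaviour of perfect modules, so the main (and rather mild) obstacle is confirming that Appendix \ref{perfectdualisable} supplies both that extension of scalars preserves perfectness and that perfect modules are closed under cofibre sequences in these derived algebraic contexts; once those are in hand, the lemma is a short consequence of Corollary \ref{cofibrecotangentresults}.
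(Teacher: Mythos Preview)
Your proof is correct and follows essentially the same route as the paper: both use Corollary \ref{cofibrecotangentresults} for the cofibre sequence (composition) and the base-change equivalence (pushout), together with stability of perfect modules under base change. The only difference is in the composition step: the paper observes that the cofibre sequence \emph{splits} because perfect $C$-modules are projective, giving $\mathbb{L}_{C/A}\simeq (\mathbb{L}_{B/A}\otimes^\mathbb{L}_B C)\oplus \mathbb{L}_{C/B}$ as a direct sum of perfects, whereas you invoke closure of perfect objects under cofibre sequences directly. Your argument is slightly cleaner in that it does not need the splitting, and it is supported by the appendix via Lemma \ref{fibreperfect} (for dualisable objects) together with the identification of perfect and dualisable objects in $\textcat{Mod}_C$.
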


\begin{proof}It is easy to see that equivalences are formally perfect. If we have formally perfect maps $A\rightarrow{B}$ and $B\rightarrow{C}$, then we have a cofibre sequence
    \begin{equation*}
        \mathbb{L}_{B/A}\otimes^\mathbb{L}_BC\rightarrow\mathbb{L}_{C/A}\rightarrow{\mathbb{L}_{C/B}}
    \end{equation*}by Corollary \ref{cofibcotresults}. This sequence splits by \cite[Proposition 7.2.2.6]{lurie_higher_2017} since perfect $C$-modules are projective. Hence, by \cite[Lemma 1.6.11]{calmes_hermitian_2025},
    \begin{equation*}
        \mathbb{L}_{C/A}\simeq (\mathbb{L}_{B/A}\otimes^\mathbb{L}_BC)\coprod\mathbb{L}_{C/B}
    \end{equation*}We note that perfect objects are stable under base-change by \cite[Section 2.1.5]{ben-bassat_perspective_2024}. Hence, since $\mathbb{L}_{C/A}$ is the coproduct of two perfect modules, it is perfect. Now, suppose that we have a formally perfect map $A\rightarrow{B}$ along with a map $A\rightarrow{C}$. Then, we see that, by Corollary \ref{cofibcotresults},
    \begin{equation*}
        \mathbb{L}_{B\otimes^\mathbb{L}_AC/C}\simeq \mathbb{L}_{B/A}\otimes^\mathbb{L}_AC
    \end{equation*}which is the base change of a perfect module and is therefore perfect. 
\end{proof}

\begin{defn}
    Suppose that $A\in\textcat{DAlg}^{cn}(\mathcal{C})$ and that $P\in\mathcal{C}_{\geq 0}$ is projective. Then, an $A$-module $M$ is \textit{$P$-projective} if $M$ is a retract in $\textcat{Mod}_A$ of $A\otimes P$. 
\end{defn}

\begin{defn}\cite[Definition 2.5.125]{ben-bassat_perspective_2024} Let $P\in\mathcal{C}_{\geq 0}$ be projective. Then, a map $f:A\rightarrow{B}$ in $\textcat{DAlg}^{cn}(\mathcal{C})$ is \textit{formally $P$-smooth} if 
\begin{enumerate}
    \item $\mathbb{L}_{B/A}$ is $P$-projective, 
    \item The morphism $\mathbb{L}_{A}\otimes^\mathbb{L}_AB\rightarrow{\mathbb{L}_B}$ has a retraction in $\textcat{Mod}_B^{cn}$. 
\end{enumerate}We say that $f$ is \textit{formally smooth} if it is formally $P$-smooth for some $P$. 
\end{defn}

We note that formally smooth morphisms in $\textcat{DAlg}^{cn}(\mathcal{C})$ are stable under equivalences, compositions, and pushouts by \cite[Lemma 2.5.129]{ben-bassat_perspective_2024}.  

\begin{prop}\phantomsection\label{etaleissmooth}Suppose that $f:A\rightarrow{B}$ is a morphism in $\textcat{DAlg}^{cn}(\mathcal{C})$. Then,
\begin{enumerate}
    \item $f$ is formally \'etale if and only if it is formally unramified,
    \item If $f$ is formally \'etale, then it is formally perfect and formally smooth.
\end{enumerate}
\end{prop}

\begin{proof}For the first result, we consider the cofibre sequence in $\textcat{Mod}_B$
\begin{equation*}
    \mathbb{L}_A\otimes_A^\mathbb{L}B\rightarrow{\mathbb{L}_B}\rightarrow{\mathbb{L}_{B/A}}
\end{equation*}Since $\textcat{Mod}_B$ is a stable $(\infty,1)$-category, the sequence is also a fibre sequence, from which the first result easily follows. If $f:A\rightarrow{B}$ is formally \'etale, then the natural morphism $\mathbb{L}_A\otimes^\mathbb{L}_A B\rightarrow{\mathbb{L}_B}$ is an equivalence and $\mathbb{L}_{B/A}\simeq 0$, which is perfect and projective. Hence, the second result follows.
\end{proof}

\begin{lem}\phantomsection\label{projectivevanishinghomotopy}
    Suppose that $A\in\textcat{DAlg}^{cn}(\mathcal{C})$ and that $P\in\textcat{Mod}_A$ is projective. Then, 
    \begin{enumerate}
        \item If $M$ is an object of $\textcat{Mod}_A^{cn}$, then 
        \begin{equation*}
            \textnormal{Hom}_{\textcat{Mod}_{\pi_0(A)}}(\pi_0(P),\pi_0(M))\simeq \pi_0(\textnormal{Map}_{\textcat{Mod}_A}(P,M))
        \end{equation*}
        \item For any $M\in\textcat{Mod}^{\geq 1}_{A}$, $\pi_0(\textnormal{Map}_{\textcat{Mod}_A}(P,M))=0$. 
    \end{enumerate}
\end{lem}

\begin{proof}
This follows immediately using \cite[Lemma 2.3.75]{ben-bassat_perspective_2024}. 
\end{proof}

\begin{prop}\phantomsection\label{formallyperfectformallysmooth}Formally perfect morphisms in $\textcat{DAlg}^{cn}(\mathcal{C})$ are formally smooth. 
\end{prop}

\begin{proof}
    Indeed, we note that if $f:A\rightarrow{B}$ is formally perfect, then the cotangent complex $\mathbb{L}_{B/A}$ is perfect, and hence projective. Now, if we consider the shifted fibre sequence
    \begin{equation*}
\mathbb{L}_B\rightarrow{\mathbb{L}_{B/A}}\rightarrow(\mathbb{L}_{A}\otimes^\mathbb{L}_AB)[1]
    \end{equation*}and, if we denote by $[-,-]$ the set $\pi_0(\textnormal{Map}_{\textcat{Mod}_B}(-,-))$, then we obtain a long exact sequence in homotopy 
    \begin{equation*}
        \dots\rightarrow[\mathbb{L}_{B/A},\mathbb{L}_B]\rightarrow{[\mathbb{L}_{B/A},\mathbb{L}_{B/A}]}\rightarrow{[\mathbb{L}_{B/A},(\mathbb{L}_{A}\otimes^\mathbb{L}_AB)[1]]}\rightarrow{0}
    \end{equation*}then, since $[\mathbb{L}_{B/A},(\mathbb{L}_{A}\otimes^\mathbb{L}_AB)[1]]\simeq 0$ by Lemma \ref{projectivevanishinghomotopy}, there is a surjection \begin{equation*}
        [\mathbb{L}_{B/A},\mathbb{L}_B]\rightarrow{[\mathbb{L}_{B/A},\mathbb{L}_{B/A}]} 
    \end{equation*}and hence the morphism $\mathbb{L}_B\rightarrow\mathbb{L}_{B/A}$ has a section. Therefore, the fibre sequence $\mathbb{L}_{A}\otimes^\mathbb{L}_AB\rightarrow\mathbb{L}_B\rightarrow{\mathbb{L}_{B/A}}$ splits, and so $\mathbb{L}_A\otimes_A^\mathbb{L}B\rightarrow{\mathbb{L}_B}$ has a retraction. 
\end{proof}

\subsection{Derived Geometry Contexts}

Suppose we have a derived algebraic context $(\mathcal{C},\mathcal{C}_{\geq 0},\mathcal{C}_{\leq 0},\mathcal{C}^0)$. We define the categories of affines and connective affines by 
\begin{equation*}
    \textcat{DAff}(\mathcal{C}):=\textcat{DAlg}(\mathcal{C})^{op}\quad\text{and}\quad \textcat{DAff}^{cn}(\mathcal{C}):=\textcat{DAlg}^{cn}(\mathcal{C})^{op}
\end{equation*}For $A\in\textcat{DAlg}(\mathcal{C})$, we will denote by $\textnormal{Spec}(A)$ its image in $\textcat{DAff}(\mathcal{C})$. Suppose that $\mathcal{A}$ is some full subcategory of $\textcat{DAff}^{cn}(\mathcal{C})$. Since we are not considering all affines, we need to restrict our categories of modules to the following \textit{good systems} in order to be able to define the relevant notions.

For a collection $\textcat{M}_A$ of $\textcat{A}$-modules, we will denote by $\textcat{M}_{A,n}$ the fibre product $\textcat{M}_{A}\times_{\textcat{Mod}_A}\textcat{Mod}_A^{\geq n}$. In particular, $\textcat{M}_A^{cn}$ is the fibre product $\textcat{M}_{A}\times_{\textcat{Mod}_A}\textcat{Mod}_A^{cn}$ and $\textcat{M}_A^\heartsuit$ is the fibre product $\textcat{M}_{A}\times_{\textcat{Mod}_A}\textcat{Mod}_A^{\heartsuit}$.

\begin{defn}\phantomsection\label{goodsystemmodules}
    A \textit{good system of categories of modules} on $\mathcal{A}$, denoted $\textcat{M}$, is an assignment, to each $A\in\mathcal{A}^{op}\subseteq \textcat{DAlg}^{cn}(\mathcal{C})$, of a full subcategory $\textcat{M}_A$ of $\textcat{Mod}_A$ satisfying the following properties
    \begin{enumerate}
        \item\label{good1} $A\in\textcat{M}_A$,
        \item\label{good2} $\mathbb{L}_A\in\textcat{M}_A$,
        \item\label{good3} $\pi_n(A)\in\textcat{M}_A$ for all $n\geq 0$,
        \item\label{good4} Whenever $M\in\textcat{M}_A$ and $f:A\rightarrow{B}$ is a morphism in $\mathcal{A}^{op}$, $M\otimes^\mathbb{L}_AB\in\textcat{M}_B$, 
        \item\label{good5} For any $M\in\textcat{M}_{A,1}$ and any derivation $d\in\pi_0(\textcat{Der}(A,M))$, $\textnormal{Spec}(A\oplus_d\Omega M)$ is $\mathcal{A}$-admissible,
        \item \label{good5half} If $X=\textnormal{Spec}(A)\in \mathcal{A}^{\leq k}:=\mathcal{A}\times_{\textcat{DAff}^{cn} (\mathcal{C})}\textcat{DAff}^{\leq k}(\mathcal{C})$ for some $k>0$, then for any $M\in\textcat{M}_A^\heartsuit$ and any derivation $d\in\pi_0(\textcat{Der}(A,M))$, $\textnormal{Spec}(A\oplus_d\Omega M[k+1])\in\mathcal{A}$,
        \item\label{good6} If $M\in\textcat{M}_A$, then $\Omega_A(M)\in\textcat{M}_A$ and $\Sigma_A(M)\in\textcat{M}_A$,
        \item\label{good7} $\textcat{M}_A$ is closed under equivalences, retracts, and finite colimits. 
    \end{enumerate}
\end{defn}

Combining the definitions of derived algebraic contexts with our notion of a geometry tuple from Section \ref{geometriessection}, we can define a \textit{derived geometry context}.

\begin{defn}\phantomsection\label{derivedgeometrycontext}
    We say that $(\mathcal{C},\mathcal{C}_{\geq 0},\mathcal{C}_{\leq 0},\mathcal{C}^0,\bm\tau,\textcat{P},\mathcal{A},\textcat{M})$ is a \textit{derived geometry context} if
    \begin{enumerate}
        \item $(\mathcal{C},\mathcal{C}_{\geq 0},\mathcal{C}_{\leq 0},\mathcal{C}^0)$ is a derived algebraic context, 
        \item $\mathcal{A}$ is a full subcategory of $\textcat{DAff}^{cn}(\mathcal{C})$,
        \item $(\textcat{DAff}^{cn}(\mathcal{C}),\bm\tau,\textcat{P},\mathcal{A})$ is a strong relative $(\infty,1)$-geometry tuple, 
        \item $\textcat{M}$ is a good system of categories of modules on $\mathcal{A}$.
    \end{enumerate}
\end{defn}

\subsection{Cotangent Complexes of Presheaves}
Suppose that $(\mathcal{C},\mathcal{C}_{\geq 0},\mathcal{C}_{\leq 0},\mathcal{C}^0,\bm\tau,\textcat{P},\mathcal{A},\textcat{M})$ is a derived geometry context. 
We recall that there is a fully faithful functor $i:\textcat{PSh}(\mathcal{A})\rightarrow{\textcat{PSh}(\textcat{DAff}^{cn}(\mathcal{C}))}$ which, since we are assuming our tuple is strong, induces a fully faithful functor on stacks.

Suppose that $X=\textnormal{Spec}(A)\in\mathcal{A}$ and $M\in\textcat{M}_A^{cn}$. Define $X[M]:=\textnormal{Spec}(A\oplus M)$, which is a stack on $\mathcal{A}$ by our assumptions. If we consider the map $A\oplus M\rightarrow{A}$ specified by the square-zero extension construction, then we obtain an induced morphism of affines $X\rightarrow{X[M]}$. Now, suppose that we have some morphism of presheaves $f:\mathcal{F}\rightarrow{\mathcal{G}}$ in $\textcat{PSh}(\textcat{DAff}^{cn}(\mathcal{C}))$ and a morphism $x:X\rightarrow{\mathcal{F}}$. 

\begin{defn}\phantomsection\label{derivationrelative}
    The \textit{$\infty$-groupoid of derived $\mathcal{F}/\mathcal{G}$ derivations from $\mathcal{F}$ to $M$ at $x$} is defined to be
    \begin{equation*}
        \textcat{Der}_{\mathcal{F}/\mathcal{G}}(X,M):=\textnormal{Map}_{^{X/}\textcat{PSh}(\textcat{DAff}^{cn}(\mathcal{C}))_{/\mathcal{G}}}(X[M],\mathcal{F})
    \end{equation*}When $\mathcal{G}=*$, we denote the $\infty$-groupoid of derivations by $\textcat{Der}_\mathcal{F}(X,M)$. 
\end{defn}

\begin{remark}
    By \cite[c.f. Lemma 5.5.5.12]{lurie_higher_2009}, $\textcat{Der}_{\mathcal{F}/\mathcal{G}}(X,M)$ is the fibre of the map $\textcat{Der}_{\mathcal{F}}(X,M)\rightarrow{\textcat{Der}_{\mathcal{G}}(X,M)}$ at $x$. 
\end{remark}

\begin{lem}\phantomsection\label{alternativederivationdefn}Let $X=\textnormal{Spec}(A)\in\mathcal{A}$ and $M\in\textcat{M}_A^{cn}$. Suppose that we have a derivation $d\in\pi_0(\textcat{Der}(A,M))$ and let $p$ be the map $\mathcal{F}(A)\times_{\mathcal{F}(A\oplus M)}\mathcal{F}(A)\rightarrow{\mathcal{F}(A)}$ induced by the map $A\oplus M\rightarrow{A}$. Then, there is an equivalence of $(\infty,1)$-groupoids
\begin{equation*}
    \Omega_{d(x),0}\textcat{Der}_\mathcal{F}(X,M)\simeq \textnormal{fib}(\mathcal{F}(A)\times_{\mathcal{F}(A\oplus M)}\mathcal{F}(A)\xrightarrow{p}{\mathcal{F}(A)})
\end{equation*}where $\Omega_{d(x),0}(-)$ is the path space and the fibre on the right is taken at $x:X\rightarrow{\mathcal{F}}$.
\end{lem}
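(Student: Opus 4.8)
The plan is to unwind both sides of the asserted equivalence and identify them as two descriptions of the same homotopy fibre. First I would recall the defining pullback square for the square-zero extension associated to a derivation: by the definition of $A\oplus_d\Omega M$, we have the pullback square in $\textcat{DAlg}^{cn}(\mathcal{C})$
\begin{equation*}
    \begin{tikzcd}
        A\oplus_d\Omega M\arrow{r}\arrow{d} & A \arrow{d}{d}\\
        A\arrow{r}{s} & A\oplus M
    \end{tikzcd}
\end{equation*}
Applying $\textnormal{Spec}$, this becomes a pushout square in $\textcat{DAff}^{cn}(\mathcal{C})$, and since $\mathcal{F}$ is a presheaf (valued in $\infty$-groupoids, so sending colimits in the source to limits) we obtain that $\mathcal{F}(A\oplus_d\Omega M)$ is the pullback $\mathcal{F}(A)\times_{\mathcal{F}(A\oplus M)}\mathcal{F}(A)$, where both maps $\mathcal{F}(A)\to \mathcal{F}(A\oplus M)$ are the ones induced by $s$ and $d$ respectively. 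The map $p$ in the statement is then simply $\mathcal{F}$ applied to the section $s\colon A\to A\oplus_d\Omega M$ composed with the projection; so the right-hand side is $\textnormal{fib}\bigl(\mathcal{F}(\textnormal{Spec}(A\oplus_d\Omega M))\to \mathcal{F}(X)\bigr)$ taken at $x$.

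Next I would handle the left-hand side. By Definition \ref{derivationFM}, $\textcat{Der}_\mathcal{F}(X,M)$ is the homotopy fibre at $x$ of the restriction map $\textnormal{Map}(X[M],\mathcal{F})\to \textnormal{Map}(X,\mathcal{F})$ along $X\to X[M]$. Passing to the path space $\Omega_{d(x),0}$ means looping this fibre at the point $d(x)$ corresponding to the derivation $d$; I would use the standard fact that for a map of spaces $g\colon Y\to Z$ and a point $y\in Y$, $\Omega_y \,\textnormal{fib}_{g(y)}(g) \simeq \textnormal{fib}_y\bigl(Y\times_Z Y\to Y\bigr)$ where $Y\times_Z Y$ is the homotopy pullback along $g$ on both factors and the map is one of the projections. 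Concretely, $\Omega_{d(x),0}\textcat{Der}_\mathcal{F}(X,M)$ is therefore the fibre at $x$ of the projection
\begin{equation*}
    \textnormal{Map}(X[M],\mathcal{F})\times_{\textnormal{Map}(X,\mathcal{F})}\textnormal{Map}(X[M],\mathcal{F})\to \textnormal{Map}(X[M],\mathcal{F})
\end{equation*}
but this fibre product is exactly $\textnormal{Map}$ out of the pushout $X[M]\sqcup_X X[M] = \textnormal{Spec}(A\oplus_d \Omega M)$ (using that $\textnormal{Spec}$ sends the pushout of $A\oplus M$ along the two maps to $A$ to this square-zero extension, which is precisely the pushout description of the above pullback square after applying $\textnormal{Spec}$). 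So both sides are identified with $\textnormal{fib}_x\bigl(\mathcal{F}(\textnormal{Spec}(A\oplus_d\Omega M))\to \mathcal{F}(X)\bigr)$, and the equivalence follows.

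The main obstacle I anticipate is bookkeeping the basepoints and the two different maps $\mathcal{F}(A)\rightrightarrows\mathcal{F}(A\oplus M)$ carefully: one must check that the section $s$ used to form the loop space and the derivation $d$ play the roles claimed, i.e. that the path space of the derivation fibre really does match the fibre of $p$ and not of the other projection, and that these two projections genuinely agree up to the symmetry of the pullback square. A secondary, more technical point is justifying that $\textnormal{Spec}$ carries the relevant pushout of algebras to the square-zero extension $A\oplus_d\Omega M$ — but this is exactly the content of the pullback square recalled above (which is a pullback in $\textcat{DAlg}^{cn}(\mathcal{C})$, hence a pushout in $\textcat{DAff}^{cn}(\mathcal{C})$, hence sent by the presheaf $\mathcal{F}$ to a pullback of $\infty$-groupoids), so no genuinely new work is required there. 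The admissibility hypothesis $M\in\textcat{M}_A^{cn}$ guarantees $X[M]$ is a stack, so all the mapping-space manipulations take place within $\textcat{PSh}(\textcat{DAff}^{cn}(\mathcal{C}))$ as stated.
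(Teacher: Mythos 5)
Your proposal rests on two steps that are false as stated, so there is a genuine gap. First, a presheaf $\mathcal{F}\in\textcat{PSh}(\textcat{DAff}^{cn}(\mathcal{C}))$ does \emph{not} send colimits in $\textcat{DAff}^{cn}(\mathcal{C})$ to limits of $\infty$-groupoids; only representable presheaves do. The assertion that $\mathcal{F}(A\oplus_d\Omega M)\simeq\mathcal{F}(A)\times_{\mathcal{F}(A\oplus M)}\mathcal{F}(A)$ is precisely the ``infinitesimally cartesian'' condition, which is a substantive hypothesis in this paper (part of having an obstruction theory) and is exactly what this lemma is later used to \emph{verify} for geometric stacks in the proof of Theorem \ref{obstructiongeometric}; assuming it here is both unjustified for a general presheaf and circular in the intended application. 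Note that the lemma never mentions $\mathcal{F}(A\oplus_d\Omega M)$: the right-hand side is simply a fibre product of spaces, equivalently $\textnormal{Map}$ out of the pushout of \emph{presheaves} $h(X)\sqcup_{h(X[M])}h(X)$, which need not agree with $h(\textnormal{Spec}(A\oplus_d\Omega M))$. (You also have the pushout backwards: the pullback $A\times_{A\oplus M}A$ dualises to $X\sqcup_{X[M]}X$, not $X[M]\sqcup_X X[M]$.)

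Second, the ``standard fact'' you invoke is false: for $g\colon Y\to Z$ and $y\in Y$, the fibre of either projection $Y\times_Z Y\to Y$ at $y$ is equivalent to $\textnormal{fib}_{g(y)}(g)$ itself, not to its loop space (take $Y=\ast$: the left-hand side is $\Omega^2 Z$ while the right-hand side is $\Omega Z$). What is needed here is a statement about the path space between two \emph{distinct} points $d(x)$ and $0=s(x)$ of the fibre $\textcat{Der}_\mathcal{F}(X,M)=\textnormal{fib}_x(\mathcal{F}(A\oplus M)\to\mathcal{F}(A))$, arising from the two different sections $d$ and $s$ of $\mathcal{F}(A\oplus M)\to\mathcal{F}(A)$; correspondingly, the relevant fibre product is $\mathcal{F}(A)\times_{d,\mathcal{F}(A\oplus M),s}\mathcal{F}(A)$ taken along those two sections, not $W\times_Y W$ taken along $g$ on both factors. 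The paper's proof stays entirely at the level of fibre products of $\infty$-groupoids: it assembles the defining pullback squares for $\textcat{Der}_\mathcal{F}(X,M)$, for the basepoint $0=s(x)$, and for $\mathcal{F}(A)\times_{\mathcal{F}(A\oplus M)}\mathcal{F}(A)$ into a single diagram and concludes by the pasting law for pullbacks, never touching $\mathcal{F}(A\oplus_d\Omega M)$. Your argument could be repaired along those lines, but as written the two identifications on which it rests do not hold.
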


\begin{proof}We construct the following commutative diagram 
    \begin{equation*}
        \begin{tikzcd}[row sep = 12]
            \textnormal{fib}(p)\arrow{r} \arrow{d} & * \arrow{d}{x}\\
            \mathcal{F}(A)\times_{\mathcal{F}(A\oplus M)}\mathcal{F}(A)\arrow{r}{p}\arrow{d} & \mathcal{F}(A)\arrow{d}{d}\\
            *\arrow{r}{0}\arrow{d} & \textcat{Der}_\mathcal{F}(X,M)\arrow{r}\arrow{d} & *\arrow{d}\\
            \mathcal{F}(A)\arrow{r}& \mathcal{F}(A\oplus M)\arrow{r} &  \mathcal{F}(A)
        \end{tikzcd}
    \end{equation*}We note that the bottom right square is a pullback square by the above remarks and we can easily deduce that all other squares are pullback squares. Hence, we obtain the desired result.
\end{proof}

\begin{defn} We say that $f$ has a \textit{relative cotangent complex at $x$} if there is an integer $n$, a module $\mathbb{L}_{\mathcal{F}/\mathcal{G},x}$ in $\textcat{M}_{A,-n}$, and an equivalence
\begin{equation*}
    \textcat{Der}_{\mathcal{F}/\mathcal{G}}(X,-)\simeq \textnormal{Map}_{\textcat{Mod}_A}(\mathbb{L}_{\mathcal{F}/\mathcal{G},x},-)
\end{equation*}in $\textcat{PSh}((\textcat{M}_A^{cn})^{op})$. If the cotangent complex exists, then we say that $\mathbb{L}_{\mathcal{F}/\mathcal{G},x}$ is the \textit{relative cotangent complex of $f$ at $x$}. If $\mathcal{G}=*$, then we denote the cotangent complex of the morphism $\mathcal{F}\rightarrow{*}$ at $x:X\rightarrow{\mathcal{F}}$ by $\mathbb{L}_{\mathcal{F},x}$.  
    
\end{defn}

\begin{prop}\phantomsection\label{representablecotangent}
    Any representable presheaf $X$ in $\textcat{PSh}(\textcat{DAff}^{cn}(\mathcal{C}))$ has a cotangent complex for any map $y:Y\rightarrow{X}$ where $Y\in\mathcal{A}$. Furthermore, this cotangent complex is connective.
\end{prop}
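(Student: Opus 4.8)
The plan is to reduce the statement about the representable presheaf $X = \textnormal{Spec}(B)$ for $B \in \textcat{DAlg}^{cn}(\mathcal{C})$ to the already-established theory of the cotangent complex $\mathbb{L}_{B/A}$ from Section~\ref{chapter3}. First I would unwind the definition of $\textcat{Der}_X(Y, M)$ for a point $y : Y = \textnormal{Spec}(A) \rightarrow X$, i.e. a morphism $f : B \rightarrow A$ in $\textcat{DAlg}^{cn}(\mathcal{C})$, and $M \in \textcat{M}_A^{cn} \subseteq \textcat{Mod}_A$. By Definition~\ref{derivationFM}, $\textcat{Der}_X(Y,M)$ is the homotopy fibre at $y$ of the map $\textnormal{Map}_{\textcat{PSh}(\textcat{DAff}^{cn}(\mathcal{C}))}(Y[M], X) \rightarrow \textnormal{Map}_{\textcat{PSh}(\textcat{DAff}^{cn}(\mathcal{C}))}(Y, X)$. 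Since $X$ is representable, Yoneda identifies these mapping spaces with $\textnormal{Map}_{\textcat{DAlg}^{cn}(\mathcal{C})}(B, A \oplus M)$ and $\textnormal{Map}_{\textcat{DAlg}^{cn}(\mathcal{C})}(B, A)$ respectively, with the map between them induced by the canonical projection $A \oplus M \rightarrow A$. Taking the fibre over $f$, this is exactly $\textnormal{Map}_{\textcat{DAlg}^{cn}_B(\mathcal{C})_{/A}}(A, A \oplus M) = \textcat{Der}_B(A, M)$, the $\infty$-groupoid of $B$-derivations of $A$ into $M$.

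Next I would invoke Corollary~\ref{derivationequivalence}, which furnishes a $B$-module — here an $A$-module after base change — such that $\textcat{Der}_B(A, M) \simeq \textnormal{Map}_{\textcat{Mod}_A}(\mathbb{L}_{A/B}, M)$, naturally in $M$. So the candidate cotangent complex of $X$ at $y$ is $\mathbb{L}_{X, y} := \mathbb{L}_{A/B}$, the relative cotangent complex of the map $f : B \rightarrow A$. To conclude that $X$ has a cotangent complex at $y$ relative to $\mathcal{A}$ in the sense of Definition~\ref{derivationFM}'s companion definition, I must check two things: that $\mathbb{L}_{A/B}$ lies in $\textcat{M}_{A, -n}$ for some integer $n$, and that the equivalence $\textcat{Der}_X(Y, -) \simeq \textnormal{Map}_{\textcat{Mod}_A}(\mathbb{L}_{A/B}, -)$ holds in $\textcat{PSh}((\textcat{M}_A^{cn})^{op})$, which follows by restricting the natural equivalence above to $\textcat{M}_A^{cn}$. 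For the membership in the good system, I would use Lemma~\ref{cofibrerelcotangent}: there is a cofibre sequence $\mathbb{L}_B \otimes_B^{\mathbb{L}} A \rightarrow \mathbb{L}_A \rightarrow \mathbb{L}_{A/B}$ in $\textcat{Mod}_A$. By property~(\ref{good2}) of Definition~\ref{goodsystemmodules}, $\mathbb{L}_A \in \textcat{M}_A$, and by property~(\ref{good4}) combined with $\mathbb{L}_B \in \textcat{M}_B$, the base change $\mathbb{L}_B \otimes_B^{\mathbb{L}} A \in \textcat{M}_A$; since $\textcat{M}_A$ is closed under finite colimits (property~(\ref{good7})), the cofibre $\mathbb{L}_{A/B}$ lies in $\textcat{M}_A$, hence in $\textcat{M}_{A, -n}$ for suitable $n$ (indeed $n = 0$ suffices once we establish connectivity).

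Finally, for connectivity of $\mathbb{L}_{X, y} = \mathbb{L}_{A/B}$: this is immediate from the Corollary following Lemma~\ref{truncationcotangentequiv} — namely that $\mathbb{L}_A$ is connective for any $A \in \textcat{DAlg}^{cn}(\mathcal{C})$ — applied via the same cofibre sequence. Both $\mathbb{L}_B \otimes_B^{\mathbb{L}} A$ and $\mathbb{L}_A$ are connective (the former as a base change of the connective $\mathbb{L}_B$ along $f$, using that $-\otimes_B^{\mathbb{L}} A$ is right $t$-exact), so their cofibre $\mathbb{L}_{A/B}$ is connective as well. Therefore $\mathbb{L}_{X,y} \in \textcat{M}_A^{cn}$, and we may take $n = 0$ in the definition. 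I do not anticipate a serious obstacle here; the only point requiring a little care is the Yoneda identification of $\textnormal{Map}_{\textcat{PSh}}(Y[M], X)$ with $\textnormal{Map}_{\textcat{DAlg}^{cn}(\mathcal{C})}(B, A \oplus M)$ together with the naturality needed to commute with the fibre over $y$ and to land in $\textcat{PSh}((\textcat{M}_A^{cn})^{op})$ — this is where one must be slightly careful that all constructions are functorial in $M$ restricted to the good system, which holds because $X[M] = \textnormal{Spec}(A \oplus M)$ is $\mathcal{A}$-admissible by the derived geometry context assumptions (so the presheaf mapping spaces agree with the corresponding ones over $\mathcal{A}$).
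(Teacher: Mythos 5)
There is a genuine error at the central step. With your conventions ($X=\textnormal{Spec}(B)$, $Y=\textnormal{Spec}(A)$, $y$ corresponding to $f:B\rightarrow{A}$), the fibre of $\textnormal{Map}_{\textcat{DAlg}^{cn}(\mathcal{C})}(B,A\oplus M)\rightarrow{\textnormal{Map}_{\textcat{DAlg}^{cn}(\mathcal{C})}(B,A)}$ over $f$ is the space of lifts of $f$ along the projection $A\oplus M\rightarrow{A}$, i.e. $\textnormal{Map}_{\textcat{DAlg}^{cn}(\mathcal{C})_{/A}}(B,A\oplus M)$ --- a mapping space with \emph{source} $B$. You identified it instead with $\textnormal{Map}_{\textcat{DAlg}^{cn}_B(\mathcal{C})_{/A}}(A,A\oplus M)=\textcat{Der}_B(A,M)$, which has source $A$; these are different spaces. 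The former is, by the adjunction $L_A\dashv\textcat{sqz}_A$ applied to the object $(B\xrightarrow{f}A)$ of $\textcat{DAlg}^{cn}(\mathcal{C})_{/A}$, equivalent to $\textnormal{Map}_{\textcat{Mod}_A}(\mathbb{L}_B\otimes^\mathbb{L}_BA,M)$, so the correct cotangent complex is the base change $\mathbb{L}_{X,y}=\mathbb{L}_B\otimes^\mathbb{L}_BA$ of the \emph{absolute} cotangent complex of the ring defining $X$ (this is what the paper's proof produces, in its own notation $\mathbb{L}_A\otimes^\mathbb{L}_AB$). Your candidate $\mathbb{L}_{A/B}$ is the \emph{relative} cotangent complex of the morphism $y$ itself, which is a different object: it appears as the cofibre in Lemma \ref{globalcotangentresults}(\ref{item1}), not as $\mathbb{L}_{X,y}$.

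A quick sanity check exposes the mistake: take $y$ to be the identity of $X$, so $A=B$ and $f=\textnormal{id}$. Your formula gives $\mathbb{L}_{X,y}=\mathbb{L}_{B/B}\simeq 0$, so every representable would have vanishing cotangent complex at the identity; this contradicts the remark immediately following the proposition ($\mathbb{L}_{X,x}\simeq\mathbb{L}_A$ at the identity) and would trivialise the deformation theory of affines. The surrounding structure of your argument (Yoneda identification, naturality in $M$ restricted to $\textcat{M}_A^{cn}$, connectivity via right $t$-exactness of base change, membership in the good system) is sound and transfers to the correct object: $\mathbb{L}_B\otimes^\mathbb{L}_BA$ is connective because $\mathbb{L}_B$ is, and it lies in $\textcat{M}_A$ by conditions (\ref{good2}) and (\ref{good4}) of Definition \ref{goodsystemmodules}, without any need for the cofibre-sequence argument.
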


\begin{proof}
    Suppose that $X=\textnormal{Spec}(A)$ and $Y=\textnormal{Spec}(B)$ are representable presheaves and that there is a point $y:Y\rightarrow{X}$ corresponding to a morphism $f:A\rightarrow{B}$. Then, we can easily show that, for any $M\in\textcat{M}_{B}^{cn}$, there is an equivalence
    \begin{equation*}
    \textcat{Der}_X(Y,M):=\textnormal{Map}_{{}^{Y/}{\textcat{PSh}(\textcat{DAff}^{cn}(\mathcal{C}))}}(Y[M],X)
        \simeq\textnormal{Map}_{\textcat{Mod}_B}(\mathbb{L}_A\otimes_A^\mathbb{L}B,M)
    \end{equation*}Hence, let $\mathbb{L}_{X,y}:=\mathbb{L}_A\otimes_A^\mathbb{L}B$ which we know is connective and, moreover, lies in $\textcat{M}_B$ by our assumptions.
\end{proof}

\begin{lem}\phantomsection\label{Dfullyfaithfulconnective} Suppose that $A\in\mathcal{A}^{op}$. The functor
    \begin{equation*}\begin{aligned}
        D:\textcat{Mod}_A&\rightarrow{\textcat{PSh}((\textcat{M}_A^{cn})^{op})}\\
        M&\rightarrow{\textnormal{Map}_{\textcat{Mod}_A}(M,-)}
    \end{aligned}\end{equation*}is fully faithful when restricted to the subcategory $\textcat{M}_{A,-n}$. 
\end{lem}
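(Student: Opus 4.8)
The plan is to reduce the statement to the fully faithfulness of the Yoneda embedding, following \cite[Proposition 1.2.11.3]{toen_homotopical_2008} (compare also \cite[Proposition 8.1]{porta_representability_2020}). First I would dispose of the connective case $n=0$, where $\textcat{M}_{A,0}=\textcat{M}_A^{cn}$: since $\textcat{M}_A^{cn}$ is a full subcategory of $\textcat{Mod}_A$, for $M\in\textcat{M}_A^{cn}$ the presheaf $D(M)=\textnormal{Map}_{\textcat{Mod}_A}(M,-)|_{\textcat{M}_A^{cn}}$ is exactly the image of $M$ under the Yoneda embedding $(\textcat{M}_A^{cn})^{op}\rightarrow\textcat{PSh}((\textcat{M}_A^{cn})^{op})$, so $D|_{\textcat{M}_A^{cn}}$ is fully faithful by \cite[Proposition 5.1.3.1]{lurie_higher_2009}.

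For general $n\geq 0$ I would leverage the loop and suspension functors. The $n$-fold suspension $\Sigma_A^n=[n]$ is an auto-equivalence of $\textcat{Mod}_A$ carrying $\textcat{Mod}_A^{\geq -n}$ onto $\textcat{Mod}_A^{cn}$, and by iterating property (\ref{good6}) of Definition \ref{goodsystemmodules} both $\Sigma_A^n$ and $\Omega_A^n$ preserve $\textcat{M}_A$; hence $\Sigma_A^n$ restricts to an equivalence $\textcat{M}_{A,-n}\xrightarrow{\simeq}\textcat{M}_A^{cn}$, and for each $N\in\textcat{M}_A^{cn}$ one has a natural equivalence $\textnormal{Map}_{\textcat{Mod}_A}(M,N)\simeq\textnormal{Map}_{\textcat{Mod}_A}(\Sigma_A^n M,\Sigma_A^n N)$ with $\Sigma_A^n N$ again in $\textcat{M}_A^{cn}$. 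Moreover every mapping spectrum $\textnormal{map}_{\textcat{Mod}_A}(\Sigma_A^n M,N)$ is recovered from the mapping spaces $\textnormal{Map}_{\textcat{Mod}_A}(\Sigma_A^n M,\Sigma_A^j N)\simeq\Omega^\infty(\textnormal{map}_{\textcat{Mod}_A}(\Sigma_A^n M,N)[j])$ for $j\geq n$, with each $\Sigma_A^j N\in\textcat{M}_A^{cn}$. Feeding this back, I would identify, for $M,M'\in\textcat{M}_{A,-n}$, the canonical comparison map
\[
\textnormal{Map}_{\textcat{Mod}_A}(M',M)\longrightarrow\textnormal{Map}_{\textcat{PSh}((\textcat{M}_A^{cn})^{op})}(D(M),D(M'))
\]
with the corresponding map for the connective modules $\Sigma_A^n M$ and $\Sigma_A^n M'$, and conclude by the case already treated, as in loc. cit.

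The hard part is precisely this last identification. One has to promote the evident fact that $D$ loses no information on objects — once it is allowed to probe against arbitrarily large suspensions of connective modules — into the assertion that it is an equivalence on mapping spaces, i.e. that a natural transformation between the restricted presheaves on $\textcat{M}_A^{cn}$ is forced to be compatible with the suspension functors and hence descends from a morphism of modules. Organizing the shifts so that this compatibility is automatic is where stability of $\textcat{Mod}_A$, completeness of the $t$-structure, and the closure of $\textcat{M}_A$ under $\Omega_A$ and $\Sigma_A$ all enter, and is exactly the content imported from \cite[Proposition 1.2.11.3]{toen_homotopical_2008}; I would carry it out there rather than reprove it.
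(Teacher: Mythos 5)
Your strategy is the same as the paper's: reduce to the connective case via the $n$-fold suspension equivalence $\textcat{M}_{A,-n}\xrightarrow{\simeq}\textcat{M}_A^{cn}$ together with the identification $\textnormal{Map}_{\textcat{Mod}_A}(M,N)\simeq\textnormal{Map}_{\textcat{Mod}_A}(\Sigma_A^n M,\Sigma_A^n N)$, and then invoke Yoneda. You have also correctly located where the actual work lies. The problem is that you then decline to do that work: the step you defer to To\"en--Vezzosi --- showing that a natural transformation between the restricted presheaves $D(M_1)\simeq h(\Sigma_A^n M_1)\circ\Sigma_A^n$ and $D(M_2)\simeq h(\Sigma_A^n M_2)\circ\Sigma_A^n$ on $(\textcat{M}_A^{cn})^{op}$ necessarily comes from a morphism of modules --- is the \emph{only} substantive content of the lemma, and \cite[Proposition 1.2.11.3]{toen_homotopical_2008} is proved in a model-categorical setting with the full module category rather than a restricted system $\textcat{M}_A$, so "carrying it out there" is not an argument in the present context. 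As written, your proposal establishes that $D$ loses no information on objects but does not establish fully faithfulness.

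The paper closes this gap with a concrete construction of an inverse: given a natural transformation between $h(\Sigma_A^n M_1)\circ\Sigma_A^n$ and $h(\Sigma_A^n M_2)\circ\Sigma_A^n$, precompose with $\Omega_A^n$; since $\textcat{M}_A$ is closed under $\Omega_A$ and $\Sigma_A$ (condition (\ref{good6}) of Definition \ref{goodsystemmodules}) and $\Sigma_A^n\circ\Omega_A^n\simeq\mathrm{id}$, this produces a natural transformation between the genuine representable presheaves $h(\Sigma_A^n M_1)$ and $h(\Sigma_A^n M_2)$, to which Yoneda applies, yielding a morphism $\Sigma_A^n M_1\to\Sigma_A^n M_2$ and hence $M_1\to M_2$; one then checks on $\pi_0$ of mapping spaces that this is inverse to the comparison map. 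If you want to complete your proof you should either reproduce this $\Omega_A^n$-precomposition argument or give an equivalent one; simply asserting compatibility with the suspension functors is exactly what needs to be proved.
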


\begin{proof}
    Indeed, suppose that $M\in\textcat{M}_{A,-n}$ and $N\in\textcat{M}_A^{cn}$. Then, we note that $\Sigma_A^n(M)\in\textcat{M}_A^{cn}$ and hence, since $\Sigma_A$ is an equivalence, 
    \begin{equation*}
\textnormal{Map}_{\textcat{Mod}_A}(M,N)\simeq \textnormal{Map}_{\textcat{Mod}_A^{cn}}(\Sigma_A^n(M),\Sigma_A^n(N))
    \end{equation*}Therefore, if we denote by $h$ the Yoneda embedding $h:(\textcat{M}^{cn}_A)^{op}\rightarrow{\textcat{PSh}((\textcat{M}_A^{cn})^{op})}$, then, by \cite[Section 1.2.11]{lurie_higher_2009}, we are reduced to showing that for any $M_1,M_2\in\textcat{M}_{A,-n}$, the map
    \begin{equation*}
        \pi_0(\textnormal{Map}_{\textcat{Mod}_A}(M_1,M_2))\rightarrow{\pi_0(\textnormal{Map}_{\textcat{PSh}((\textcat{M}_A^{cn})^{op})}(h(\Sigma_A^n(M_1))\circ \Sigma_A^n,h(\Sigma_A^n(M_2))\circ\Sigma_A^n)}
    \end{equation*}is an isomorphism. By fully faithfulness of $h$, there exists a function
    \begin{equation*}
        \begin{aligned}
            &\textnormal{Map}_{\textcat{PSh}((\textcat{M}_A^{cn})^{op})}(h(\Sigma_A^n(M_1))\circ\Sigma_A^n,h(\Sigma_A^n(M_1))\circ\Sigma_A^n)\\
            &\rightarrow \textnormal{Map}_{\textcat{PSh}((\textcat{M}_A^{cn})^{op})}(h(\Sigma_A^n(M_1))\circ \Sigma_A^n\circ\Omega_A^n,h(\Sigma_A^n(M_1))\circ\Sigma_A^n\circ\Omega_A^n)\\
            &\simeq \textnormal{Map}_{\textcat{PSh}((\textcat{M}_A^{cn})^{op})}(h(\Sigma_A^n(M_1)),h(\Sigma_A^n(M_1)))\\
            &\simeq\textnormal{Map}_{\textcat{M}_A^{cn}}(\Sigma_A^n(M_1),\Sigma_A^n(M_2))\\
            &\simeq \textnormal{Map}_{\textcat{Mod}_A}(M_1,M_2)
        \end{aligned}
    \end{equation*}and taking $\pi_0$ of this map provides the necessary inverse. 
    
\end{proof}

Now, consider the morphism $f:\mathcal{F}\rightarrow{\mathcal{G}}$ of presheaves in $\textcat{PSh}(\textcat{DAff}^{cn}(\mathcal{C}))$ and suppose that we have morphisms $x:X\rightarrow{\mathcal{F}}$ and $y:Y\rightarrow{\mathcal{F}}$. Then, there is an induced morphism 
\begin{equation*}
    \textcat{Der}_{\mathcal{F}/\mathcal{G}}(Y,-)\rightarrow{\textcat{Der}_{\mathcal{F}/\mathcal{G}}(X,-)}
\end{equation*}which, if $f$ has relative cotangent complexes at $x$ and $y$, induces a morphism
\begin{equation*}
    \textnormal{Map}_{\textcat{Mod}_B}(\mathbb{L}_{\mathcal{F}/\mathcal{G},y},-)\rightarrow{\textnormal{Map}_{\textcat{Mod}_A}(\mathbb{L}_{\mathcal{F}/\mathcal{G},x},-)}\simeq {\textnormal{Map}_{\textcat{Mod}_B}(\mathbb{L}_{\mathcal{F}/\mathcal{G},x}\otimes_A^\mathbb{L}B,-)}\end{equation*}and hence, by Lemma \ref{Dfullyfaithfulconnective}, there is a morphism 
\begin{equation*}
\mathbb{L}_{\mathcal{F}/\mathcal{G},x}\otimes_A^\mathbb{L}B\rightarrow{\mathbb{L}_{\mathcal{F}/\mathcal{G},y}}
\end{equation*}

\begin{defn}
    We say that a morphism $f:\mathcal{F}\rightarrow{\mathcal{G}}$ in $\textcat{PSh}(\textcat{DAff}^{cn}(\mathcal{C}))$ has a \textit{relative global cotangent complex (relative to $\mathcal{A}$)} if it satisfies the following two conditions.
    \begin{enumerate}
        \item For any $X\in\mathcal{A}$ and any point $x:X\rightarrow{\mathcal{F}}$, the morphism $f$ has a cotangent complex $\mathbb{L}_{\mathcal{F}/\mathcal{G},x}$, 
        \item For any morphism $g:A\rightarrow{B}$ in $\mathcal{A}^{op}$ and any morphism  \begin{equation*}
    \begin{tikzcd}
        Y\arrow{rr}{g} \arrow{rd}{y}& & X\arrow{ld}{x}\\
        & \mathcal{F}
    \end{tikzcd}
\end{equation*}in $\textcat{PSh}(\textcat{DAff}^{cn}(\mathcal{C}))$, the induced morphism $\mathbb{L}_{\mathcal{F}/\mathcal{G},x}\otimes^\mathbb{L}_AB\rightarrow{\mathbb{L}_{\mathcal{F}/\mathcal{G},y}}$ is an equivalence of modules in $\textcat{M}_{B}$.
    \end{enumerate}
\end{defn}

It is clear, using Proposition \ref{representablecotangent}, that any representable in $\textcat{PSh}(\textcat{DAff}^{cn}(\mathcal{C}))$ has a global cotangent complex. We collect together the following results about the global cotangent complex. The proofs follow in the same way as in \cite[Lemma 1.4.1.16]{toen_homotopical_2008}.
\begin{lem}\phantomsection\label{globalcotangentresults} Suppose that $f:\mathcal{F}\rightarrow{\mathcal{G}}$ is a morphism in $\textcat{PSh}(\textcat{DAff}^{cn}(\mathcal{C}))$. 
\begin{enumerate}
    \item\label{item1} If both $\mathcal{F}$ and $\mathcal{G}$ have global cotangent complexes, then $f$ has a relative global cotangent complex. Furthermore, for any $X=\textnormal{Spec}(A)\in\mathcal{A}$ and any morphism of presheaves $x:X\rightarrow{\mathcal{F}}$, there is a natural fibre-cofibre sequence of modules in $\textcat{M}_A$,
    \begin{equation*}
        \mathbb{L}_{\mathcal{G},x}\rightarrow{\mathbb{L}_{\mathcal{F},x}}\rightarrow{\mathbb{L}_{\mathcal{F}/\mathcal{G},x}}
    \end{equation*}
    \item If the morphism $f$ has a relative global cotangent complex, then for any presheaf $\mathcal{H}\in\textcat{PSh}(\textcat{DAff}^{cn}(\mathcal{C}))$ and any morphism $\mathcal{H}\rightarrow{\mathcal{G}}$, the morphism $\mathcal{F}\times_{\mathcal{G}}\mathcal{H}\rightarrow{\mathcal{H}}$ has a relative global cotangent complex, and furthermore we have 
    \begin{equation*}
        \mathbb{L}_{\mathcal{F}/\mathcal{G},x}\simeq \mathbb{L}_{\mathcal{F}\times_{\mathcal{G}}\mathcal{H}/\mathcal{H},x}
    \end{equation*}for any $X\in\mathcal{A}$ and any morphism of presheaves $x:X\rightarrow{\mathcal{F}\times_\mathcal{G}\mathcal{H}}$, 
    \item \label{globalitem3} If, for any $X=\textnormal{Spec}(A)\in\mathcal{A}$ and any morphism of presheaves $x:X\rightarrow{\mathcal{F}}$, the morphism $\mathcal{F}\times_{\mathcal{G}}X\rightarrow{X}$ has a relative global cotangent complex, then the morphism $f$ has a relative global cotangent complex. Furthermore, we have 
    \begin{equation*}
        \mathbb{L}_{\mathcal{F}/\mathcal{G},x}\simeq \mathbb{L}_{\mathcal{F}\times_{\mathcal{G}}X/X,x^\cdot}\end{equation*}where $x^\cdot:X\rightarrow{\mathcal{F}\times_\mathcal{G}X}$ denotes the lift of the point $x$. 
\end{enumerate}
    
\end{lem}

\begin{cor}\phantomsection\label{cofibrecotangentstack}
    Suppose that we have a commutative diagram in $\textcat{PSh}(\textcat{DAff}^{cn}(\mathcal{C}))$ 
    \begin{equation*}
        \begin{tikzcd}
            X\arrow{r}{x} & \mathcal{F}\arrow{r} &  \mathcal{G}\\
             Y\arrow{r}{y} \arrow{u}& \mathcal{H} \arrow{ur} \arrow{u}
        \end{tikzcd}
    \end{equation*}with $X=\textnormal{Spec}(A)$ and $Y=\textnormal{Spec}(B)$ in $\mathcal{A}$. If $\mathcal{F}, \mathcal{G}$, and $\mathcal{H}$ have global cotangent complexes, then there exists a fibre-cofibre sequence of modules in $\textcat{M}_{B}$,
    \begin{equation*}
\mathbb{L}_{\mathcal{F}/\mathcal{G},x}\otimes^\mathbb{L}_AB\rightarrow\mathbb{L}_{\mathcal{H}/\mathcal{G},y}\rightarrow\mathbb{L}_{\mathcal{H}/\mathcal{F},y}
    \end{equation*}
\end{cor}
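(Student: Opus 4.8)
The plan is to deduce this from the octahedral axiom in the stable $(\infty,1)$-category $\textcat{Mod}_B$, where $Y=\textnormal{Spec}(B)$, applied to the cofibre--fibre sequences furnished by Lemma~\ref{globalcotangentresults}(\ref{item1}). Write $g:A\to B$ for the morphism in $\mathcal{A}^{op}$ corresponding to $Y\to X$, and let $\bar y_{\mathcal{F}}:Y\to\mathcal{F}$ and $\bar y_{\mathcal{G}}:Y\to\mathcal{G}$ denote the images of $y:Y\to\mathcal{H}$ under $\mathcal{H}\to\mathcal{F}$ and $\mathcal{H}\to\mathcal{F}\to\mathcal{G}$; by commutativity of the square in the hypothesis we have $\bar y_{\mathcal{F}}=x\circ g$. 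Since $\mathcal{F}$, $\mathcal{G}$ and $\mathcal{H}$ all have global cotangent complexes, Lemma~\ref{globalcotangentresults}(\ref{item1}) applies to each of the morphisms $\mathcal{F}\to\mathcal{G}$, $\mathcal{H}\to\mathcal{G}$ and $\mathcal{H}\to\mathcal{F}$, so all three relative global cotangent complexes exist and I obtain cofibre--fibre sequences of modules in $\textcat{M}_B$
\begin{gather*}
\mathbb{L}_{\mathcal{G},\bar y_{\mathcal{G}}}\xrightarrow{a}\mathbb{L}_{\mathcal{F},\bar y_{\mathcal{F}}}\to\mathbb{L}_{\mathcal{F}/\mathcal{G},\bar y_{\mathcal{F}}},\\
\mathbb{L}_{\mathcal{G},\bar y_{\mathcal{G}}}\to\mathbb{L}_{\mathcal{H},y}\to\mathbb{L}_{\mathcal{H}/\mathcal{G},y},\\
\mathbb{L}_{\mathcal{F},\bar y_{\mathcal{F}}}\xrightarrow{b}\mathbb{L}_{\mathcal{H},y}\to\mathbb{L}_{\mathcal{H}/\mathcal{F},y}.
\end{gather*}

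Then I would check that the first arrow $\mathbb{L}_{\mathcal{G},\bar y_{\mathcal{G}}}\to\mathbb{L}_{\mathcal{H},y}$ of the second displayed sequence is exactly the composite $b\circ a$: all three of these connecting maps corepresent, via the respective cotangent complexes, the natural transformations of derivation functors $\textcat{Der}_{\mathcal{H}}(Y,-)\to\textcat{Der}_{\mathcal{F}}(Y,-)\to\textcat{Der}_{\mathcal{G}}(Y,-)$ induced by $\mathcal{H}\to\mathcal{F}\to\mathcal{G}$, whose composite is $\textcat{Der}_{\mathcal{H}}(Y,-)\to\textcat{Der}_{\mathcal{G}}(Y,-)$, so after restricting to modules in the range where all three cotangent complexes are $(-n)$-connective, the full faithfulness in Lemma~\ref{Dfullyfaithfulconnective} forces the composition of maps of modules to match. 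The octahedral axiom in $\textcat{Mod}_B$ then produces a cofibre--fibre sequence $\textnormal{cofib}(a)\to\textnormal{cofib}(b\circ a)\to\textnormal{cofib}(b)$, and comparing with the three displayed sequences this reads $\mathbb{L}_{\mathcal{F}/\mathcal{G},\bar y_{\mathcal{F}}}\to\mathbb{L}_{\mathcal{H}/\mathcal{G},y}\to\mathbb{L}_{\mathcal{H}/\mathcal{F},y}$; its terms lie in $\textcat{M}_B$ because $\textcat{M}_B$ is closed under finite colimits by property~(\ref{good7}). Finally, since $\mathcal{F}\to\mathcal{G}$ has a \emph{relative global} cotangent complex and $\bar y_{\mathcal{F}}=x\circ g$, the base-change clause in that definition identifies $\mathbb{L}_{\mathcal{F}/\mathcal{G},\bar y_{\mathcal{F}}}\simeq\mathbb{L}_{\mathcal{F}/\mathcal{G},x}\otimes^\mathbb{L}_A B$, which yields the stated sequence.

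The only genuinely delicate point is the naturality assertion in the second paragraph, namely that the three sequences coming from Lemma~\ref{globalcotangentresults}(\ref{item1}) truly assemble into an octahedron — the compatibility of the connecting maps with the composition $\mathcal{H}\to\mathcal{F}\to\mathcal{G}$. I expect this to be the main (if mild) obstacle; it should be resolved by translating everything into derivation functors and invoking Lemma~\ref{Dfullyfaithfulconnective}, after which the conclusion is purely formal, everything else following from results already established.
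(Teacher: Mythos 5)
Your proposal is correct and is essentially the paper's argument: the paper assembles the same three cofibre sequences from Lemma \ref{globalcotangentresults}(\ref{item1}) into a diagram of pushout squares (which is precisely the octahedron you invoke, with the compatibility of the connecting maps left implicit), deduces the sequence $\mathbb{L}_{\mathcal{F}/\mathcal{G},y}\rightarrow\mathbb{L}_{\mathcal{H}/\mathcal{G},y}\rightarrow\mathbb{L}_{\mathcal{H}/\mathcal{F},y}$, and then applies the base-change clause of the relative global cotangent complex of $\mathcal{F}\rightarrow\mathcal{G}$ exactly as you do. Your extra care in verifying, via derivation functors and Lemma \ref{Dfullyfaithfulconnective}, that the connecting maps genuinely compose is a point the paper passes over silently, but it does not change the route.
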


\begin{proof}Using the previous lemma, we obtain the following commutative diagram of modules in $\textcat{M}_{B}$,
\begin{equation*}
    \begin{tikzcd}
        \mathbb{L}_{\mathcal{G},y}\arrow{r} \arrow{d} & \mathbb{L}_{\mathcal{F},y}\arrow{r} \arrow{d} & \mathbb{L}_{\mathcal{H},y} \arrow{d}\\
        * \arrow{r}  & \mathbb{L}_{\mathcal{F}/\mathcal{G},y}\arrow{r} \arrow{d}& \mathbb{L}_{\mathcal{H}/\mathcal{G},y} \arrow{d}\\
        & * \arrow{r} & \mathbb{L}_{\mathcal{H}/\mathcal{F},y}
    \end{tikzcd}
\end{equation*}and hence we obtain a fibre-cofibre sequence of modules in $\textcat{M}_{B}$
 \begin{equation*}
\mathbb{L}_{\mathcal{F}/\mathcal{G},y}\rightarrow\mathbb{L}_{\mathcal{H}/\mathcal{G},y}\rightarrow\mathbb{L}_{\mathcal{H}/\mathcal{F},y}
    \end{equation*}Since the morphism $f:\mathcal{F}\rightarrow{\mathcal{G}}$ has a global cotangent complex, we obtain the following fibre-cofibre sequence in $\textcat{M}_B$ 
        \begin{equation*}
        \mathbb{L}_{\mathcal{F}/\mathcal{G},x}\otimes_A^{\mathbb{L}}B\rightarrow\mathbb{L}_{\mathcal{H}/\mathcal{G},y}\rightarrow\mathbb{L}_{\mathcal{H}/\mathcal{F},y}
    \end{equation*}
\end{proof}

\section{Obstruction Theories}\label{obstructionsection}

In this section, we consider the problem of when a presheaf has an \textit{obstruction theory}. These obstruction theories control lifting problems along first-order infinitesimal deformations. 

\subsection{Obstruction Theory of Presheaves}
In this section, we consider the problem of when a presheaf has an \textit{obstruction theory}. These obstruction theories control lifting problems along first-order infinitesimal deformations. 
Suppose 
$(\mathcal{C},\mathcal{C}_{\geq 0},\mathcal{C}_{\leq 0},\mathcal{C}^0,\bm\tau,\textcat{P},\mathcal{A},\textcat{M})$ is a derived geometry context. Suppose that $X=\textnormal{Spec}(A)\in\mathcal{A}$, $M\in\textcat{M}_A^{cn}$, and $d\in\pi_0(\textcat{Der}(A,M))$. We define the object
\begin{equation*}X_d[\Omega M]:=\textnormal{Spec}(A\oplus_d\Omega M)
\end{equation*}If $M\in\textcat{M}_{A,1}$, then $X_d[\Omega M]$ is $\mathcal{A}$-admissible by our assumptions on the good system of categories of modules $\textcat{M}$.

\begin{defn}\phantomsection\label{obstructiontheorydefn} Suppose that $f:\mathcal{F}\rightarrow{\mathcal{G}}$ is a morphism of presheaves in $\textcat{PSh}(\textcat{DAff}^{cn}(\mathcal{C}))$. Then, 
\begin{enumerate}
    \item $f$ is \textit{infinitesimally cartesian relative to $\mathcal{A}$} if, for any $X=\textnormal{Spec}(A)\in\mathcal{A}$, any $A$-module $M\in\textcat{M}_{A,1}$, and any derivation $d\in\pi_0(\textcat{Der}(A,M))$, corresponding to a morphism $d:A\rightarrow{A\oplus M}$, the square
    \begin{equation*}\begin{tikzcd}
        \mathcal{F}(A\oplus_d\Omega M)\arrow{r} \arrow{d}& \mathcal{G}(A\oplus_d\Omega M) \arrow{d}\\
        \mathcal{F}(A)\times_{\mathcal{F}(A\oplus M)}\mathcal{F}(A)\arrow{r} &  \mathcal{G}(A)\times_{\mathcal{G}(A\oplus M)}\mathcal{G}(A)
    \end{tikzcd}
    \end{equation*}is a pullback square in $\infty\textcat{Grpd}$, 
    \item $f$ \textit{has an obstruction theory relative to $\mathcal{A}$} if it has a global cotangent complex relative to $\mathcal{A}$ and is infinitesimally cartesian relative to $\mathcal{A}$.
    \end{enumerate}

We say that a presheaf $\mathcal{F}$ in $\textcat{PSh}(\textcat{DAff}^{cn}(\mathcal{C}))$ \textit{has an obstruction theory} if the morphism $\mathcal{F}\rightarrow{*}$ does. 
    
\end{defn}

\begin{prop}\phantomsection\label{representableobstructiontheory}
    Any representable presheaf $X$ in $\textcat{PSh}(\textcat{DAff}^{cn}(\mathcal{C}))$ has an obstruction theory relative to $\mathcal{A}$. 
\end{prop}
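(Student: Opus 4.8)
The plan is to verify the two defining conditions of an obstruction theory for a representable presheaf $X = \textnormal{Spec}(B)$: that it has a global cotangent complex relative to $\mathcal{A}$, and that it is infinitesimally cartesian relative to $\mathcal{A}$. The first condition is already established: by Proposition \ref{representableglobalcotangentcomplex}, any representable presheaf in $\textcat{PSh}(\textcat{DAff}^{cn}(\mathcal{C}))$ has a global cotangent complex, with $\mathbb{L}_{X,y} \simeq \mathbb{L}_B \otimes_B^\mathbb{L} A$ for a point $y : \textnormal{Spec}(A) \rightarrow X$ corresponding to a morphism $B \rightarrow A$. So the real content is the infinitesimal cartesianness.

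For infinitesimal cartesianness, I would fix $X' = \textnormal{Spec}(A) \in \mathcal{A}$, a module $M \in \textcat{M}_{A,1}$, and a derivation $d \in \pi_0(\textcat{Der}(A,M))$. Since $X = \textnormal{Spec}(B)$ is representable, for any $A' \in \textcat{DAlg}^{cn}(\mathcal{C})$ we have $X(A') \simeq \textnormal{Map}_{\textcat{DAlg}^{cn}(\mathcal{C})}(B, A')$. Thus the square in question,
\begin{equation*}
\begin{tikzcd}
\textnormal{Map}(B, A\oplus_d\Omega M)\arrow{r} \arrow{d}& \textnormal{Map}(B,A) \arrow{d}\\
\textnormal{Map}(B,A)\arrow{r} &  \textnormal{Map}(B,A\oplus M)
\end{tikzcd}
\end{equation*}
is obtained by applying $\textnormal{Map}_{\textcat{DAlg}^{cn}(\mathcal{C})}(B,-)$ to the square
\begin{equation*}
\begin{tikzcd}
A\oplus_d\Omega M\arrow{r}\arrow{d} & A \arrow{d}{d}\\
A\arrow{r}{s} & A\oplus M
\end{tikzcd}
\end{equation*}
in $\textcat{DAlg}^{cn}(\mathcal{C})$. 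By the very definition of $A \oplus_d \Omega M$ as the pullback in $\textcat{DAlg}^{cn}_A(\mathcal{C})_{/A}$ of the diagram $A \xrightarrow{d} A\oplus M \xleftarrow{s} A$ (and hence, after forgetting the $A$-algebra structure, a pullback in $\textcat{DAlg}^{cn}(\mathcal{C})$ — compare the pullback square displayed just before the definition of the square-zero extension associated to $d$, together with Lemma \ref{pushoutAsqzM}), this is a pullback square in $\textcat{DAlg}^{cn}(\mathcal{C})$. Since $\textnormal{Map}_{\textcat{DAlg}^{cn}(\mathcal{C})}(B,-)$ is a corepresentable functor and therefore preserves all limits, in particular pullbacks, the resulting square of $\infty$-groupoids is a pullback square in $\infty\textcat{Grpd}$. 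This is exactly the infinitesimally cartesian condition.

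I expect the only genuine subtlety — the main point to get right rather than a true obstacle — is the bookkeeping around which category the square $A\oplus_d\Omega M \rightarrow A \rightrightarrows A\oplus M$ is a pullback in: it is defined as a pullback in the overcategory $\textcat{DAlg}^{cn}_A(\mathcal{C})_{/A}$, and one must observe that the forgetful functor to $\textcat{DAlg}^{cn}(\mathcal{C})$ (and further to $\textcat{Mod}_A$, used in Lemma \ref{pushoutAsqzM}) sends it to a pullback there as well, since all four objects already lie over $A$ and the forgetful functor from an overcategory creates limits. Once that is noted, the argument is immediate from corepresentability. One should also remark that $X_d[\Omega M] = \textnormal{Spec}(A \oplus_d \Omega M)$ is $\mathcal{A}$-admissible by condition (\ref{good5}) of Definition \ref{goodsystemmodules}, so $X(A \oplus_d \Omega M) = \mathcal{F}(A \oplus_d \Omega M)$ is well-defined in the relevant sense, but this is precisely the hypothesis built into the definition of infinitesimally cartesian.
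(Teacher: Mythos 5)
Your proposal is correct and follows essentially the same route as the paper: cite Proposition \ref{representableglobalcotangentcomplex} for the global cotangent complex, then observe that $A\oplus_d\Omega M\simeq A\times_{A\oplus M}A$ is a pullback in $\textcat{DAlg}^{cn}(\mathcal{C})$ and that the corepresentable functor $\textnormal{Map}_{\textcat{DAlg}^{cn}(\mathcal{C})}(B,-)$ preserves it. Your extra remark about the forgetful functor from the over/under-category preserving this pullback is a correct (and slightly more careful) elaboration of a step the paper leaves implicit.
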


\begin{proof}
    We know that any representable presheaf has a global cotangent complex. It remains to show that it is infinitesimally cartesian relative to $\mathcal{A}$. Indeed, suppose that $X=\textnormal{Spec}(A)$ and that $Y=\textnormal{Spec}(B)\in\mathcal{A}$. Suppose that $M$ is in $\textcat{M}_{B,1}$ and that $d\in\pi_0(\textcat{Der}(B,M))$. Then, 
    \begin{equation*}\begin{aligned}
        &\textnormal{Map}_{\textcat{DAlg}^{cn}(\mathcal{C})}(A,B\oplus_d\Omega M)\\&\simeq \textnormal{Map}_{\textcat{DAlg}^{cn}(\mathcal{C})}(A,B\times_{B\oplus M} B)\\&\simeq \textnormal{Map}_{\textcat{DAlg}^{cn}(\mathcal{C})}(A,B)\times_{\textnormal{Map}_{\textcat{DAlg}^{cn}(\mathcal{C})}(A,B\oplus M)}\textnormal{Map}_{\textcat{DAlg}^{cn}(\mathcal{C})}(A,B)
    \end{aligned}\end{equation*}from which the result follows.
\end{proof}

\begin{prop}
    Suppose that $\mathcal{F}$ is a stack in $\textcat{Stk}(\mathcal{A},\bm{\tau}|_\mathcal{A})$ whose diagonal morphism $\mathcal{F}\rightarrow{\mathcal{F}\times\mathcal{F}}$ is $(n-1)$-representable for some $n\geq 0$. Then, $\mathcal{F}$ has an obstruction theory if and only if it is infinitesimally cartesian.
\end{prop}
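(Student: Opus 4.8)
The plan is to prove the two implications separately; only one of them uses the hypothesis on the diagonal. If $\mathcal{F}$ has an obstruction theory relative to $\mathcal{A}$ then, by the very definition, it has a global cotangent complex relative to $\mathcal{A}$ \emph{and} is infinitesimally cartesian relative to $\mathcal{A}$; in particular it is infinitesimally cartesian. So this direction is immediate, and nothing about the diagonal is needed.

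For the converse, suppose $\mathcal{F}$ is infinitesimally cartesian and its diagonal $\mathcal{F}\rightarrow{\mathcal{F}\times\mathcal{F}}$ is $(n-1)$-representable. I would obtain a global cotangent complex by invoking Lemma \ref{diagonalglobalcotangent}, so the entire task reduces to verifying its hypothesis: for every $X=\textnormal{Spec}(A)\in\mathcal{A}$, every point $x:X\rightarrow{\mathcal{F}}$, and every $M\in\textcat{M}_A^{cn}$, the natural morphism $\textcat{Der}_\mathcal{F}(X,M)\rightarrow{\Omega\textcat{Der}_\mathcal{F}(X,\Sigma_A(M))}$ is an equivalence of $\infty$-groupoids, the loop space being based at the point coming from $x$ (equivalently, from the zero derivation). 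First one checks that $\Sigma_A(M)\in\textcat{M}_{A,1}$: it is $1$-connective since $M$ is connective, and it lies in $\textcat{M}_A$ by item (\ref{good6}) of Definition \ref{goodsystemmodules}. Therefore infinitesimal cartesianness applies to $\Sigma_A(M)$ together with the zero derivation; since $\Omega_A(\Sigma_A(M))\simeq M$, the corresponding square-zero extension is $A\oplus_0\Omega(\Sigma_A(M))\simeq A\oplus M$, so the infinitesimally cartesian square yields an equivalence $\mathcal{F}(A\oplus M)\simeq \mathcal{F}(A)\times_{\mathcal{F}(A\oplus\Sigma_A(M))}\mathcal{F}(A)$ over $\mathcal{F}(A)$. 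On the other hand, Lemma \ref{alternativederivationdefn} applied with the module $\Sigma_A(M)$ identifies $\Omega\textcat{Der}_\mathcal{F}(X,\Sigma_A(M))$ with the fibre at $x$ of the map $\mathcal{F}(A)\times_{\mathcal{F}(A\oplus\Sigma_A(M))}\mathcal{F}(A)\rightarrow{\mathcal{F}(A)}$. Combining these with the identification $\textcat{Der}_\mathcal{F}(X,M)\simeq \textnormal{fib}_x(\mathcal{F}(A\oplus M)\rightarrow{\mathcal{F}(A)})$ coming from Definition \ref{derivationFM} produces the required equivalence. Lemma \ref{diagonalglobalcotangent} then supplies a (moreover $(-n)$-connective) global cotangent complex for $\mathcal{F}$, and since $\mathcal{F}$ is infinitesimally cartesian by hypothesis, it has an obstruction theory relative to $\mathcal{A}$.

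The main obstacle I anticipate is not conceptual but bookkeeping: one must check that the equivalence $\mathcal{F}(A\oplus M)\simeq \mathcal{F}(A)\times_{\mathcal{F}(A\oplus\Sigma_A(M))}\mathcal{F}(A)$ furnished by infinitesimal cartesianness intertwines the augmentation $\mathcal{F}(A\oplus M)\rightarrow{\mathcal{F}(A)}$ with the map $p$ of Lemma \ref{alternativederivationdefn}, and that the loop-space basepoint coming from the zero derivation is carried to the point $x$ under all of these identifications, so that the resulting equivalence genuinely is the ``natural morphism'' of Lemma \ref{diagonalglobalcotangent}. This is routine diagram-chasing in $\infty\textcat{Grpd}$ and proceeds exactly as in \cite[Proposition 1.4.2.7]{toen_homotopical_2008}.
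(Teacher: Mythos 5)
Your proof is correct and follows essentially the route the paper intends: the forward direction is definitional, and the converse reduces to the hypothesis of Lemma \ref{diagonalglobalcotangent}, verified by applying the infinitesimally cartesian condition to $\Sigma_A(M)$ with the zero derivation (noting $\Sigma_A(M)\in\textcat{M}_{A,1}$ by item (\ref{good6}) of Definition \ref{goodsystemmodules} and $A\oplus_0\Omega\Sigma_A(M)\simeq A\oplus M$) and comparing fibres over $\mathcal{F}(A)$ via Lemma \ref{alternativederivationdefn}. This is exactly the argument of To\"en--Vezzosi, Proposition 1.4.2.7, to which the paper defers.
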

\begin{proof}
    Follows in a similar way to \cite[c.f. Proposition 1.4.2.7]{toen_homotopical_2008}.  
\end{proof}

\begin{cor}\phantomsection\label{corollarygeometricinf} Any $n$-geometric stack has an obstruction theory if and only if it is infinitesimally cartesian. 
\end{cor}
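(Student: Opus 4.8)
The plan is to deduce this immediately from the Proposition that precedes it (the analogue of \cite[Proposition 1.4.2.7]{toen_homotopical_2008}), whose hypothesis is exactly that the diagonal $\mathcal{F}\to\mathcal{F}\times\mathcal{F}$ is $(n-1)$-representable for some $n\geq 0$. Thus the entire content of the corollary is the observation that an $n$-geometric stack automatically satisfies that hypothesis, so there is no real obstacle; the only thing requiring care is the bookkeeping around the edge case $n=-1$, which lies just outside the range $n\geq 0$ covered by the cited Proposition.

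First I would reduce to the case $n\geq 0$. The two properties in the statement — ``$\mathcal{F}$ has an obstruction theory relative to $\mathcal{A}$'' and ``$\mathcal{F}$ is infinitesimally cartesian relative to $\mathcal{A}$'' — are intrinsic to $\mathcal{F}$ and make no reference to $n$, while by Proposition \ref{conditionsgeometric}(2) any $(n-1)$-geometric stack is $n$-geometric. Hence if $\mathcal{F}$ is $n$-geometric with $n\leq -1$ (that is, representable) it is in particular $0$-geometric, and we may freely replace $n$ by $\max(n,0)$. So assume $n\geq 0$.

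Next, by clause (2)(a) of Definition \ref{geometricstackdefinition}, the diagonal morphism $\mathcal{F}\to\mathcal{F}\times\mathcal{F}$ of an $n$-geometric stack is $(n-1)$-representable. Therefore the hypothesis of the preceding Proposition is met, and that Proposition yields precisely the asserted equivalence: $\mathcal{F}$ has an obstruction theory if and only if it is infinitesimally cartesian. This completes the argument in the case $n\geq 0$, and hence — after the reduction above — in all cases.

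As a consistency check on the genuine edge case, I would also remark that when $\mathcal{F}$ is representable the biconditional is immediate from Proposition \ref{representableobstructiontheory}: a representable presheaf has an obstruction theory outright, and since ``has an obstruction theory relative to $\mathcal{A}$'' includes ``is infinitesimally cartesian relative to $\mathcal{A}$'' by definition, both sides of the equivalence hold unconditionally. The hardest step, such as it is, is simply making sure that the statement ``any $n$-geometric stack'' is correctly routed into the $n\geq 0$ hypothesis of the cited Proposition, which the reduction handles; everything else is a direct appeal to the definition of $n$-geometricity.
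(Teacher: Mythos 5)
Your proposal is correct and matches the paper's (implicit) argument: the corollary is intended as an immediate consequence of the preceding Proposition, using that the diagonal of an $n$-geometric stack is $(n-1)$-representable by Definition \ref{geometricstackdefinition}. Your extra care with the $n=-1$ case via Proposition \ref{conditionsgeometric}(2) and Proposition \ref{representableobstructiontheory} is sound and fills in the only bookkeeping the paper leaves unstated.
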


We collect together the following results. The proofs follow in the same way as in \cite[c.f. Lemma 1.4.2.3]{toen_homotopical_2008} using the results in Lemma \ref{globalcotangentresults}.
\begin{lem}\phantomsection\label{relativeobstructionresult}Suppose that $f:\mathcal{F}\rightarrow{\mathcal{G}}$ is a morphism in $\textcat{PSh}(\textcat{DAff}^{cn}(\mathcal{C}))$. Then, 
\begin{enumerate}
    \item If both presheaves $\mathcal{F}$ and $\mathcal{G}$ have an obstruction theory, then the morphism $f$ has an obstruction theory, 
    \item If the morphism $f$ has an obstruction theory, then for any presheaf $\mathcal{H}$ and any morphism $\mathcal{H}\rightarrow{\mathcal{G}}$, the morphism $\mathcal{F}\times_{\mathcal{G}}\mathcal{H}\rightarrow{\mathcal{H}}$ has a relative obstruction theory, 
    \item\label{obstructiontheoryresult3} If, for any $X\in\mathcal{A}$ and any morphism of presheaves $x:X\rightarrow{\mathcal{G}}$, the presheaf $\mathcal{F}\times_{\mathcal{G}}X$ has an obstruction theory, then the morphism $f$ has a relative obstruction theory.
\end{enumerate}

\end{lem}

\begin{cor}\phantomsection\label{representableobstruction}
    Any $(-1)$-representable morphism of stacks in $\textcat{Stk}(\mathcal{A},\bm\tau|_\mathcal{A})$ has an obstruction theory.
\end{cor}

\begin{proof}
    Follows using Proposition \ref{representableobstructiontheory} and item (\ref{obstructiontheoryresult3}) of the previous lemma. 
\end{proof}

We state the following two propositions motivating the definition of a presheaf `having an obstruction theory'. Their proofs follow in a similar way to \cite[Propositions 1.4.2.5 and 1.4.2.6]{toen_homotopical_2008}.

\begin{prop}\phantomsection\label{liftingproposition}
    Suppose that $\mathcal{F}$ is a presheaf in $\textcat{PSh}(\textcat{DAff}^{cn}(\mathcal{C}))$ which has an obstruction theory relative to $\mathcal{A}$. Let $X=\textnormal{Spec}(A)\in\mathcal{A}$, $M\in\textcat{M}_{A,1}$, and take some $d\in\pi_0(\textcat{Der}(A,M))$. Suppose that $x:X=\textnormal{Spec}(A)\rightarrow{\mathcal{F}}$ is a morphism. Then, there exists a natural obstruction 
    \begin{equation*}
    \alpha(x)\in\pi_0(\textnormal{Map}_{\textcat{Mod}_{A}}(\mathbb{L}_{\mathcal{F},x},M))
    \end{equation*}vanishing if and only if $x$ extends to a morphism $x'$ in ${}^{X/}\textcat{PSh}(\textcat{DAff}^{cn}(\mathcal{C}))$
    \begin{equation*}
        \begin{tikzcd}
            X\arrow{rr} \arrow{dr}{x} & &  X_d[\Omega M] \arrow[dl,dotted,"x'"]\\
            & \mathcal{F}
        \end{tikzcd}
    \end{equation*}
\end{prop}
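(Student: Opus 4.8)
The plan is to reproduce the classical obstruction-theory argument of To\"en--Vezzosi (Proposition 1.4.2.5 of \cite{toen_homotopical_2008}) in the present framework, using the two structural inputs that the hypothesis ``$\mathcal{F}$ has an obstruction theory'' provides: infinitesimal cartesianness and the existence of a global cotangent complex. The key observation is that extending $x:X\to\mathcal{F}$ along the morphism $X\to X_d[\Omega M]$ is equivalent to lifting a vertex in a homotopy fibre built from $\mathcal{F}$ applied to the pullback square defining $A\oplus_d\Omega M$. So the obstruction $\alpha(x)$ should be the ``difference'' between $x$ and the canonical vertex coming from the section $s:A\to A\oplus M$, measured inside $\textcat{Der}_\mathcal{F}(X,M)$, which is identified with $\textnormal{Map}_{\textcat{Mod}_A}(\mathbb{L}_{\mathcal{F},x},M)$ by the definition of the cotangent complex.

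\textbf{Key steps.}
First I would unwind the definition of $X_d[\Omega M]=\textnormal{Spec}(A\oplus_d\Omega M)$: by construction $A\oplus_d\Omega M$ is the pullback in $\textcat{DAlg}^{cn}_A(\mathcal{C})_{/A}$ of the diagram $A\xrightarrow{d} A\oplus M\xleftarrow{s} A$, and since $M\in\textcat{M}_{A,1}$ the object $X_d[\Omega M]$ is $\mathcal{A}$-admissible by property (\ref{good5}) of a good system of modules, so $\mathcal{F}(A\oplus_d\Omega M)$ makes sense. Second, since $\mathcal{F}$ is infinitesimally cartesian relative to $\mathcal{A}$, applying $\mathcal{F}$ to this pullback square gives a pullback square of $\infty$-groupoids
\begin{equation*}
\begin{tikzcd}
\mathcal{F}(A\oplus_d\Omega M)\arrow{r}\arrow{d} & \mathcal{F}(A)\arrow{d}{d}\\
\mathcal{F}(A)\arrow{r}{s} & \mathcal{F}(A\oplus M)
\end{tikzcd}
\end{equation*}
so the fibre of $\mathcal{F}(A\oplus_d\Omega M)\to\mathcal{F}(A)$ over $x$ is the fibre of the map $\mathcal{F}(A)\times_{\mathcal{F}(A\oplus M)}\mathcal{F}(A)\to\mathcal{F}(A)$ over $x$, which by Lemma \ref{alternativederivationdefn} is the loop space $\Omega_{d(x),0}\textcat{Der}_\mathcal{F}(X,M)$. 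Third, I would observe that $x$ extends to $X_d[\Omega M]$ if and only if this fibre is non-empty, equivalently the loop space is non-empty, equivalently the two points $s(x)$ and $d(x)$ of $\textcat{Der}_\mathcal{F}(X,M)$ sitting over $x$ are connected by a path; but $s(x)$ is the canonical basepoint $0$, so this says precisely that the class of $d(x)$ in $\pi_0(\textcat{Der}_\mathcal{F}(X,M))$ vanishes. Fourth, using the global cotangent complex and the identification $\textcat{Der}_\mathcal{F}(X,M)\simeq\textnormal{Map}_{\textcat{Mod}_A}(\mathbb{L}_{\mathcal{F},x},M)$, I set $\alpha(x)$ to be the image of this class in $\pi_0(\textnormal{Map}_{\textcat{Mod}_A}(\mathbb{L}_{\mathcal{F},x},M))$; naturality in $X$ and in $d$ follows from functoriality of all the constructions involved.

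\textbf{Main obstacle.}
The routine bookkeeping is the careful translation between ``extends to a morphism in ${}^{X/}\textcat{PSh}$'' and ``the homotopy fibre is non-empty'', keeping track of which fibre is taken over which point; this is where the argument of \cite[Proposition 1.4.2.5]{toen_homotopical_2008} must be followed essentially verbatim. The genuinely framework-specific point — and the one I expect to be slightly delicate — is the appeal to infinitesimal cartesianness: one must make sure that the pullback square of algebras $A\oplus_d\Omega M\simeq A\times_{A\oplus M}A$ is exactly the one appearing in the definition of ``infinitesimally cartesian relative to $\mathcal{A}$'' (which requires $M\in\textcat{M}_{A,1}$ so that $\Omega_A M$ is connective and the square lives in $\textcat{DAlg}^{cn}(\mathcal{C})$), and that $\mathcal{F}$, being a stack, is evaluated on genuinely $\mathcal{A}$-admissible affines throughout so that the strong-tuple fully faithful extension of Section \ref{obstructionsection} lets us speak of $\mathcal{F}(A\oplus_d\Omega M)$ unambiguously. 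Once those compatibilities are in place the proof is a formal consequence of Lemma \ref{alternativederivationdefn} and the definition of the cotangent complex.
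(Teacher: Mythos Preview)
Your proposal is correct and is precisely the argument the paper has in mind: the paper does not give an independent proof but simply states that it ``follows in the same way as in \cite[Proposition 1.4.2.5]{toen_homotopical_2008}'', and your outline is exactly that argument transported to the present setting, with the key identification of the extension fibre as $\Omega_{d(x),0}\textcat{Der}_\mathcal{F}(X,M)$ via Lemma \ref{alternativederivationdefn} and the translation through the cotangent complex. The only point worth noting is terminological: what you call $d(x)$ (the image of $x$ under $\mathcal{F}(d)$, viewed as a point of $\textcat{Der}_\mathcal{F}(X,M)$) is, under the equivalence with $\textnormal{Map}_{\textcat{Mod}_A}(\mathbb{L}_{\mathcal{F},x},M)$, exactly the composite $\mathbb{L}_{\mathcal{F},x}\to\mathbb{L}_A\xrightarrow{d}M$, which is the standard description of $\alpha(x)$.
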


\begin{prop}\phantomsection\label{liftpropositionpath} Suppose that $f:\mathcal{F}\rightarrow{\mathcal{G}}$ is a map in $\textcat{PSh}(\textcat{DAff}^{cn}(\mathcal{C}))$ which has an obstruction theory relative to $\mathcal{A}$. Let $X=\textnormal{Spec}(A)\in\mathcal{A}$, $M\in\textcat{M}_{A,1}$, and $d\in\pi_0(\textcat{Der}(A,M))$. Suppose that $x$ is a point in $\mathcal{F}(A)\times_{\mathcal{G}(A)}\mathcal{G}(A\oplus_d\Omega M)$ with projection $y\in\mathcal{F}(A)$ and let $L(x)$ be the homotopy fibre, taken at $x$, of the morphism 
\begin{equation*}
    \mathcal{F}(A\oplus_d\Omega M)\rightarrow{\mathcal{F}(A)\times_{\mathcal{G}(A)}\mathcal{G}(A\oplus_d\Omega M)}
\end{equation*}Then, there exists a natural point $\alpha(x)$ in $\pi_0(\textnormal{Map}_{\textcat{Mod}_A}(\mathbb{L}_{\mathcal{F}/\mathcal{G},y},M))$ and a natural equivalence of $\infty$-groupoids
\begin{equation*}
    L(x)\simeq \Omega_{\alpha(x),0}\textnormal{Map}_{\textcat{Mod}_A}(\mathbb{L}_{\mathcal{F}/\mathcal{G},y},M)
\end{equation*}
\end{prop}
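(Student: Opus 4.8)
The plan is to follow the strategy of \cite[Proposition 1.4.2.6]{toen_homotopical_2008}, transported to the present setting via the results already established in this section. The key input is the relative version of Lemma \ref{alternativederivationdefn} together with the infinitesimally cartesian property of $f$. First I would fix $X = \textnormal{Spec}(A) \in \mathcal{A}$, $M \in \textcat{M}_{A,1}$, and $d \in \pi_0(\textcat{Der}(A,M))$. Since $f$ has an obstruction theory relative to $\mathcal{A}$, it has a relative global cotangent complex $\mathbb{L}_{\mathcal{F}/\mathcal{G}}$, so for the given point $y \in \mathcal{F}(A)$ we have the identification $\textcat{Der}_{\mathcal{F}/\mathcal{G}}(X,M) \simeq \textnormal{Map}_{\textcat{Mod}_A}(\mathbb{L}_{\mathcal{F}/\mathcal{G},y},M)$. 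The point $\alpha(x)$ will be produced as the image of the data $x \in \mathcal{F}(A)\times_{\mathcal{G}(A)}\mathcal{G}(A\oplus_d\Omega M)$ under the obstruction map coming from the square-zero extension $A \oplus_d \Omega M$; concretely, $\alpha(x)$ is obtained by composing the classifying map of the derivation $d$ with the structure map, landing in $\pi_0\textnormal{Map}_{\textcat{Mod}_A}(\mathbb{L}_{\mathcal{F}/\mathcal{G},x},M)$.

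The main step is the identification of the homotopy fibre $L(x)$. Because $f$ is infinitesimally cartesian relative to $\mathcal{A}$, the square
\begin{equation*}
\begin{tikzcd}
\mathcal{F}(A\oplus_d\Omega M)\arrow{r} \arrow{d}& \mathcal{G}(A\oplus_d\Omega M) \arrow{d}\\
\mathcal{F}(A)\times_{\mathcal{F}(A\oplus M)}\mathcal{F}(A)\arrow{r} &  \mathcal{G}(A)\times_{\mathcal{G}(A\oplus M)}\mathcal{G}(A)
\end{tikzcd}
\end{equation*}
is a pullback. I would use this to rewrite the fibre of $\mathcal{F}(A\oplus_d\Omega M)\rightarrow \mathcal{F}(A)\times_{\mathcal{G}(A)}\mathcal{G}(A\oplus_d\Omega M)$ as the fibre of the induced map on the left-hand vertical fibres, namely of
\begin{equation*}
\mathcal{F}(A)\times_{\mathcal{F}(A\oplus M)}\mathcal{F}(A) \rightarrow \mathcal{G}(A)\times_{\mathcal{G}(A\oplus M)}\mathcal{G}(A),
\end{equation*}
taken over the appropriate basepoint determined by $x$. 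Applying Lemma \ref{alternativederivationdefn} in its relative form to both $\mathcal{F}$ and $\mathcal{G}$ expresses each of these iterated fibre products as a path space of $\textcat{Der}_{\mathcal{F}}(X,M)$ and $\textcat{Der}_{\mathcal{G}}(X,M)$ respectively, and taking fibres identifies $L(x)$ with a path space $\Omega_{\alpha(x),0}\textcat{Der}_{\mathcal{F}/\mathcal{G}}(X,M)$. Finally, using the relative global cotangent complex to substitute $\textcat{Der}_{\mathcal{F}/\mathcal{G}}(X,M) \simeq \textnormal{Map}_{\textcat{Mod}_A}(\mathbb{L}_{\mathcal{F}/\mathcal{G},y},M)$ yields the claimed equivalence $L(x) \simeq \Omega_{\alpha(x),0}\textnormal{Map}_{\textcat{Mod}_A}(\mathbb{L}_{\mathcal{F}/\mathcal{G},y},M)$.

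The hard part will be the bookkeeping of basepoints: one must check that the basepoint $\alpha(x)$ obtained from the obstruction construction is precisely the one around which the path space is formed, i.e.\ that the various fibres are taken over compatible points under all the identifications. This is where the argument in \cite{toen_homotopical_2008} requires care, and the same diagram-chasing — tracking the image of $x$ through the pullback square and through the equivalences of Lemma \ref{alternativederivationdefn} — will be needed here. I would organise this by building one large commutative diagram of $\infty$-groupoids, analogous to the one in the proof of Lemma \ref{alternativederivationdefn}, in which every square is a pullback, so that the identification of $L(x)$ with the desired path space and the naturality of $\alpha(x)$ both fall out simultaneously. The remaining verifications — that $\alpha(x)$ lies in the correct $\pi_0$ and that the construction is natural in $x$ — are routine given the functoriality already recorded for relative cotangent complexes.
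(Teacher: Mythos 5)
Your proposal is correct and follows essentially the same route as the paper, which gives no independent argument for this proposition and simply defers to the proof of \cite[Proposition 1.4.2.6]{toen_homotopical_2008}; your outline (infinitesimally cartesian square, Lemma \ref{alternativederivationdefn}, then the fibre sequence $\textcat{Der}_{\mathcal{F}/\mathcal{G}}\to\textcat{Der}_{\mathcal{F}}\to\textcat{Der}_{\mathcal{G}}$) is exactly that argument. One wording issue to fix when you write it out: $L(x)$ is not the fibre of $\mathcal{F}(A)\times_{\mathcal{F}(A\oplus M)}\mathcal{F}(A)\to\mathcal{G}(A)\times_{\mathcal{G}(A\oplus M)}\mathcal{G}(A)$ itself — that fibre only recovers $\textnormal{fib}(\mathcal{F}(A\oplus_d\Omega M)\to\mathcal{G}(A\oplus_d\Omega M))$ — rather $L(x)$ is the further fibre, over $y$, of the induced map $\Omega_{d(y),0}\textcat{Der}_{\mathcal{F}}(X,M)\to\Omega_{d(f(y)),0}\textcat{Der}_{\mathcal{G}}(X,M)$ taken at the path supplied by the $\mathcal{G}(A\oplus_d\Omega M)$-component of $x$, which is what produces both $\alpha(x)$ and the path space of $\textcat{Der}_{\mathcal{F}/\mathcal{G}}(X,M)$ simultaneously, as your later sentences in fact indicate.
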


\subsection{Infinitesimally Smooth}

We will need the following notion of \textit{infinitesimal smoothness} which is somewhat weaker than the notion of smoothness. For a more detailed description in homotopical algebraic contexts see \cite{toen_homotopical_2008}. 

\begin{defn}\phantomsection\label{ismoothdefn}
    Suppose that $f:A\rightarrow{B}$ is a morphism in $\textcat{DAlg}^{cn}(\mathcal{C})$. Then, $f$ is \textit{formally infinitesimally smooth (or formally $i$-smooth) relative to $\mathcal{A}$} if, for any $C\in\mathcal{A}^{op}$, any morphism $A\rightarrow{C}$, any $M\in\textcat{M}_{C,1}$ and any $d\in\pi_0(\textcat{Der}_A(C,M))$, the natural morphism 
    \begin{equation*}
        \pi_0(\textnormal{Map}_{\textcat{DAlg}^{cn}_A(\mathcal{C})}(B,C\oplus_d\Omega M))\rightarrow{ \pi_0(\textnormal{Map}_{\textcat{DAlg}^{cn}_A(\mathcal{C})}(B,C))}
    \end{equation*}is surjective.
\end{defn}

We note that formally $i$-smooth morphisms are stable by equivalences, compositions, and pushouts. Moreover, 
\begin{prop}\cite[c.f. Proposition 1.2.8.3]{toen_homotopical_2008}\phantomsection\label{ismoothprop}
    A morphism $f:A\rightarrow{B}$ in $\textcat{DAlg}^{cn}(\mathcal{C})$ is formally $i$-smooth relative to $\mathcal{A}$ if and only if, for any morphism $B\rightarrow{C}$ with $C\in\mathcal{A}^{op}$ and any $M\in\textcat{M}_{C,1}$, the natural morphism 
    \begin{equation*}
        \pi_0(\textnormal{Map}_{\textcat{Mod}_C}(\mathbb{L}_{C/A},M))\rightarrow\pi_0(\textnormal{Map}_{\textcat{Mod}_B}(\mathbb{L}_{B/A},M))
    \end{equation*}is zero.
\end{prop}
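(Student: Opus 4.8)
The plan is to run a direct homotopy-fibre computation relating the lifting problem in Definition \ref{ismoothdefn} to the natural map of cotangent complexes appearing in the statement, using the pullback presentation of the square-zero extension $C\oplus_d\Omega M$ and Corollary \ref{derivationequivalence}. Fix a morphism $B\to C$ with $C\in\mathcal{A}$, equip $C$ with the induced $A$-algebra structure through $A\to B\to C$, fix $M\in\textcat{M}_{C,1}$, and fix a derivation $d\in\pi_0(\textcat{Der}_A(C,M))$, which by Corollary \ref{derivationequivalence} corresponds to a class $[d]\in\pi_0(\textnormal{Map}_{\textcat{Mod}_C}(\mathbb{L}_{C/A},M))$. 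Recall that $C\oplus_d\Omega M$ is the pullback in $\textcat{DAlg}^{cn}_A(\mathcal{C})_{/C}$ of the diagram with corners $C$, $C$, $C\oplus M$ and legs given by $d$ and the trivial section $s$. Applying $\textnormal{Map}_{\textcat{DAlg}^{cn}_A(\mathcal{C})}(B,-)$ to this pullback square, one obtains that the fibre of the projection $\textnormal{Map}_{\textcat{DAlg}^{cn}_A(\mathcal{C})}(B,C\oplus_d\Omega M)\to\textnormal{Map}_{\textcat{DAlg}^{cn}_A(\mathcal{C})}(B,C)$ over a point $u\colon B\to C$ is the homotopy fibre, taken at $d\circ u$, of the section $s\circ(-)\colon\textnormal{Map}_{\textcat{DAlg}^{cn}_A(\mathcal{C})}(B,C)\to\textnormal{Map}_{\textcat{DAlg}^{cn}_A(\mathcal{C})}(B,C\oplus M)$.

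Next I would identify this homotopy fibre concretely. The projection $\textnormal{Map}_{\textcat{DAlg}^{cn}_A(\mathcal{C})}(B,C\oplus M)\to\textnormal{Map}_{\textcat{DAlg}^{cn}_A(\mathcal{C})}(B,C)$ has homotopy fibre over $u$ equal to $\textnormal{Map}_{\textcat{DAlg}^{cn}_A(\mathcal{C})_{/C}}(B,C\oplus M)$, i.e. $\textcat{Der}_A(B,M)$ with $B$ mapping to $C$ via $u$, which by Corollary \ref{derivationequivalence} and the change-of-rings adjunction is $\textnormal{Map}_{\textcat{Mod}_B}(\mathbb{L}_{B/A},M)$, with $s\circ u$ the zero derivation. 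Since $d$ is an $A$-derivation of $C$, the composite $d\circ u$ also projects back to $u$, hence lies in this fibre; unwinding the homotopy fibre of the section $s\circ(-)$ shows it is the space of paths from $0$ to $d\circ u$ in $\textnormal{Map}_{\textcat{Mod}_B}(\mathbb{L}_{B/A},M)$. In particular it is nonempty if and only if $[d\circ u]=0$ in $\pi_0(\textnormal{Map}_{\textcat{Mod}_B}(\mathbb{L}_{B/A},M))$. Finally, base change of the derivation $d$ along $u\colon B\to C$ is compatible with the map $\phi\colon\mathbb{L}_{B/A}\otimes^{\mathbb{L}}_BC\to\mathbb{L}_{C/A}$ in $\textcat{Mod}_C$ coming from the cofibre sequence of Corollary \ref{cofibrecotangentresults}(1) (equivalently Lemma \ref{cofibrerelcotangent}); thus, under the adjunction $-\otimes^{\mathbb{L}}_BC\dashv(\text{forget})$, $[d\circ u]$ is precisely the image of $[d]$ under the natural map $\pi_0(\textnormal{Map}_{\textcat{Mod}_C}(\mathbb{L}_{C/A},M))\to\pi_0(\textnormal{Map}_{\textcat{Mod}_B}(\mathbb{L}_{B/A},M))$.

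Assembling these observations gives the equivalence. The map $\pi_0(\textnormal{Map}_{\textcat{DAlg}^{cn}_A(\mathcal{C})}(B,C\oplus_d\Omega M))\to\pi_0(\textnormal{Map}_{\textcat{DAlg}^{cn}_A(\mathcal{C})}(B,C))$ is surjective exactly when every $u$ lies in a hit component, i.e. exactly when $[d\circ u]=0$ for every $u\colon B\to C$ over $A$; by the previous paragraph this says precisely that the image of $[d]$ under the natural map vanishes. Since Definition \ref{ismoothdefn} quantifies over all $A$-algebra structures $A\to C$ on objects $C\in\mathcal{A}$, all $M\in\textcat{M}_{C,1}$, and all $d\in\pi_0(\textcat{Der}_A(C,M))\simeq\pi_0(\textnormal{Map}_{\textcat{Mod}_C}(\mathbb{L}_{C/A},M))$ — and since every morphism $B\to C$ induces such an $A\to C$, while the surjectivity condition is vacuous over an $A\to C$ admitting no $B\to C$ — the two conditions in the proposition are equivalent. (Alternatively, one can deduce the same from Proposition \ref{liftpropositionpath} applied to the map $\textnormal{Spec}(B)\to\textnormal{Spec}(A)$, which has an obstruction theory by Proposition \ref{representableobstructiontheory}, but the direct argument above is self-contained.)

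The step I expect to be the main obstacle is the compatibility in the third part of the second paragraph: establishing cleanly that pulling back the derivation $d$ of $C$ along $u$ corresponds, under the two instances of Corollary \ref{derivationequivalence} and the forgetful/base-change adjunction on module categories, to precomposition with $\phi$. This is a naturality statement for the cotangent-complex functor and the square-zero-extension adjunction — routine but requiring care about which module category ($\textcat{Mod}_A$, $\textcat{Mod}_B$, or $\textcat{Mod}_C$) each object and map lives in. A secondary, purely formal point is matching the quantifier over $A\to C$ in Definition \ref{ismoothdefn} with the quantifier over $B\to C$ in the statement, including the degenerate case in which $\textnormal{Map}_{\textcat{DAlg}^{cn}_A(\mathcal{C})}(B,C)$ is empty.
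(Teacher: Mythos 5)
Your proof is correct. The paper gives no proof of this proposition at all, deferring entirely to \cite[Proposition 1.2.8.3]{toen_homotopical_2008}, and your argument — presenting $C\oplus_d\Omega M$ as a pullback, identifying the fibre of the section $s_*$ at $d\circ u$ with the path space from $0$ to $[d\circ u]$ in $\textnormal{Map}_{\textcat{Mod}_B}(\mathbb{L}_{B/A},M)$, and matching restriction of derivations with precomposition along $\mathbb{L}_{B/A}\otimes^{\mathbb{L}}_B C\to\mathbb{L}_{C/A}$ — is exactly the standard argument underlying that cited result; your handling of the quantifier over $A\to C$ versus $B\to C$, including the vacuous case, is also right.
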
 

\begin{cor}
    Any formally \'etale or formally unramified morphism $f:A\rightarrow{B}$ in $\textcat{DAlg}^{cn}(\mathcal{C})$ is formally $i$-smooth relative to $\mathcal{A}$. 
\end{cor}

\begin{proof}
    This follows from Proposition \ref{etaleissmooth} and the previous result.
\end{proof}

\begin{cor}\phantomsection\label{formallyperfectismooth}
    Any formally smooth or formally perfect morphism in $\textcat{DAlg}^{cn}(\mathcal{C})$ is formally $i$-smooth.
\end{cor}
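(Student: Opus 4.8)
The plan is to reduce immediately to the criterion of Proposition \ref{ismoothprop}: a morphism $f:A\to B$ in $\textcat{DAlg}^{cn}(\mathcal{C})$ is formally $i$-smooth relative to $\mathcal{A}$ as soon as, for every morphism $B\to C$ with $C\in\mathcal{A}$ and every $M\in\textcat{M}_{C,1}$, the natural map $\pi_0(\textnormal{Map}_{\textcat{Mod}_C}(\mathbb{L}_{C/A},M))\to\pi_0(\textnormal{Map}_{\textcat{Mod}_B}(\mathbb{L}_{B/A},M))$ vanishes. I will in fact establish the stronger fact that the target group $\pi_0(\textnormal{Map}_{\textcat{Mod}_B}(\mathbb{L}_{B/A},M))$ is itself zero, which trivially forces the map to be zero.

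The key point is that in both cases $\mathbb{L}_{B/A}$ is a projective $B$-module. If $f$ is formally perfect, then $\mathbb{L}_{B/A}$ is perfect, and a perfect module over a connective algebra is projective (the fact already used in the proof of Lemma \ref{perfectstable}). If $f$ is formally $P$-smooth for some projective $P\in\mathcal{C}_{\geq 0}$, then by definition $\mathbb{L}_{B/A}$ is a retract in $\textcat{Mod}_B$ of $B\otimes P$; the functor $B\otimes(-)$ carries the compact projective generators $\mathcal{C}^0$ to compact projective generators of $\textcat{Mod}_B^{cn}$ (as in the proof of Lemma \ref{sqzinheart}), and projectivity is stable under coproducts and retracts, so $B\otimes P$ is projective and hence so is its retract $\mathbb{L}_{B/A}$; note that the second clause in the definition of formally $P$-smooth plays no role here.

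With this in hand, fix $B\to C$ with $C\in\mathcal{A}$ and $M\in\textcat{M}_{C,1}$. Restriction of scalars along $B\to C$ is compatible with the forgetful functors to $\mathcal{C}$, hence preserves connectivity, so $M$ viewed as a $B$-module lies in $\textcat{Mod}_{B,1}$. Since $\mathbb{L}_{B/A}$ is projective in $\textcat{Mod}_B$, Lemma \ref{projectivevanishinghomotopy}(2) yields $\pi_0(\textnormal{Map}_{\textcat{Mod}_B}(\mathbb{L}_{B/A},M))=0$, so the criterion of Proposition \ref{ismoothprop} is satisfied and $f$ is formally $i$-smooth. The only mildly delicate verification is the assertion that $B\otimes P$ is projective in $\textcat{Mod}_B$ for $P$ projective in $\mathcal{C}_{\geq 0}$, which is what I would expect to be the main (and only) obstacle; but it follows directly from the description of the compact projective generators of $\textcat{Mod}_B^{cn}$ together with closure of projectives under retracts and coproducts, both of which are used elsewhere in the paper.
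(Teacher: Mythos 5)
Your proposal is correct and follows essentially the same route as the paper: both reduce to the criterion of Proposition \ref{ismoothprop}, observe that $\mathbb{L}_{B/A}$ is projective in either case (perfect modules being projective, and $P$-projective modules being retracts of the projective $B\otimes P$), and then invoke Lemma \ref{projectivevanishinghomotopy} to conclude that $\pi_0(\textnormal{Map}_{\textcat{Mod}_B}(\mathbb{L}_{B/A},M))=0$, which forces the relevant map to vanish. Your write-up is slightly more careful than the paper's in spelling out why $B\otimes P$ is projective and why $M$ remains $1$-connective after restriction of scalars, but the argument is the same.
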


\begin{proof}
    Indeed, suppose that $f:A\rightarrow{B}$ is a formally smooth (or perfect) morphism in $\textcat{DAlg}^{cn}(\mathcal{C})$ and that $C\in\textcat{DAlg}^{cn}(\mathcal{C})$ along with a morphism $B\rightarrow{C}$. Then, we have a fibre sequence
    \begin{equation*}\phantomsection\label{fibreequation}
        \mathbb{L}_{B/A}\otimes^\mathbb{L}_B C\rightarrow{\mathbb{L}_{C/A}}\rightarrow{\mathbb{L}_{C/B}}
    \end{equation*} in $\textcat{Mod}_C$. Consider some $M\in\textcat{M}_{C,1}$ and a derivation $d\in\pi_0(\textcat{Der}_A(C,M))$. Then, since $\mathbb{L}_{B/A}$ is projective, we see that $\pi_0(\textnormal{Map}_{\textcat{Mod}_B}(\mathbb{L}_{B/A},M))=0$ by Lemma \ref{projectivevanishinghomotopy}, and hence the morphism
        \begin{equation*}
        \pi_0(\textnormal{Map}_{\textcat{Mod}_C}(\mathbb{L}_{C/A},M))\rightarrow\pi_0(\textnormal{Map}_{\textcat{Mod}_B}(\mathbb{L}_{B/A},M))
    \end{equation*}is zero. The result then follows by Proposition \ref{ismoothprop}.
\end{proof}

\subsection{Obstruction Conditions}

Suppose that $(\mathcal{C},\mathcal{C}_{\geq 0},\mathcal{C}_{\leq 0},\mathcal{C}^0,\bm\tau,\textcat{P},\mathcal{A},\textcat{M})$ is a derived geometry context. We will now impose some extra conditions on $\bm{\tau}$ and $\textcat{P}$ and define a suitable class $\textcat{S}$ of formally smooth morphisms in $\mathcal{A}$ such that we obtain obstruction theories for geometric stacks.

We will call a family $\{U_i\rightarrow{X}\}_{i\in I}$ of morphisms of $\mathcal{A}$-admissible objects in $\textcat{DAff}^{cn}(\mathcal{C})$ \textit{an $\textcat{S}$-covering family} if each morphism $U_i\rightarrow{X}$ is in $\textcat{S}$ and there is an epimorphism of stacks $\coprod_{i\in I}U_i\rightarrow{X}$ in $\textcat{Stk}(\mathcal{A},\bm\tau|_\mathcal{A})$. The induced epimorphism $\coprod_{i\in I}U_i\rightarrow{X}$ will be called an \textit{$\textcat{S}$-cover}.

\begin{lem}\phantomsection\label{liftingderivations}
Suppose that we have a {formally smooth} morphism $A\rightarrow{B}$ in $\textcat{DAlg}^{cn}(\mathcal{C})$, $M\in\textcat{Mod}_A$, and $d\in\pi_0(\textcat{Der}(A,M))$. Then, $d$ lifts to a derivation $d'$ which lies in $\pi_0(\textcat{Der}(B,M\otimes^\mathbb{L}_AB))$. 
\end{lem}

\begin{proof}
    Indeed, we note that since $A\rightarrow{B}$ is formally smooth, we have a morphism $\mathbb{L}_{B}\rightarrow{\mathbb{L}_A\otimes_A^\mathbb{L}B}$ in $\textcat{Mod}_B$ . We define $d'$ to be the image of $d$ under the following map. 
    \begin{equation*}\begin{aligned}
        \pi_0(\textcat{Der}(A,M))=\pi_0(\textnormal{Map}_{\textcat{Mod}_A}(\mathbb{L}_A,M))&\rightarrow{}\pi_0(\textnormal{Map}_{\textcat{Mod}_B}(\mathbb{L}_A\otimes^\mathbb{L}_AB,M\otimes^\mathbb{L}_AB)) \\
        &\rightarrow\pi_0(\textnormal{Map}_{\textcat{Mod}_B}(\mathbb{L}_B,M\otimes^\mathbb{L}_AB))\\
        &=\pi_0(\textcat{Der}(B,M\otimes^\mathbb{L}_AB))
    \end{aligned}\end{equation*}
\end{proof}

\begin{defn}\phantomsection\label{artinsconditions}
    We say that $\bm{\tau}$ and $\textcat{P}$ satisfy \textit{the obstruction conditions relative to $\mathcal{A}$} if there exists a class $\textcat{S}$ of morphisms in $\mathcal{A}$ such that
    \begin{enumerate}
    \item \label{artin0} Any morphism in $\textcat{P}$ is formally $i$-smooth,
        \item\label{artin1} The class $\textcat{S}$ consists of formally {smooth morphisms} and is stable under equivalences, compositions, and pullbacks,
        \item\label{artin5} Suppose that $X=\textnormal{Spec}(A)\in\mathcal{A}$, $M\in\textcat{M}_{A,1}$, and that $d\in\pi_0(\textcat{Der}(A,M))$. Consider the natural morphism 
        \begin{equation*}
            X=\textnormal{Spec}(A)\rightarrow{\textnormal{Spec}(A\oplus_d\Omega M)}=X_d[\Omega M]
        \end{equation*}in $\textcat{DAff}^{cn}(\mathcal{C})$. Suppose that $\{V_j\rightarrow{X}\}_{j\in J}$ is a $\bm\tau|_\mathcal{A}$-covering family of $X$. Then, there exists a finite $\textcat{S}$-covering family $\{V_k'=\textnormal{Spec}(A_k)\rightarrow{X}\}_{k\in K}$ in $\mathcal{A}$ and a morphism $v:K\rightarrow{J}$ such that, for each $k\in K$, there is a commutative diagram
        \begin{equation*}
            \begin{tikzcd}
                & V_{v(k)}\arrow{d}\\
                V_k'\arrow{ur} \arrow{r} & X
            \end{tikzcd}
        \end{equation*} and such that $\{W_k\rightarrow{X_d[\Omega M]}\}_{k\in K}$ is an $\textcat{S}$-covering family of $X_d[\Omega M]$ in $\mathcal{A}$, where $W_k=\textnormal{Spec}(A_k\oplus_{d_k'}\Omega M_k')$ with $d_k'$ the derivation induced by Lemma \ref{liftingderivations} and $M_k'=M\otimes^\mathbb{L}_{A}A_k$. 
    \end{enumerate}
\end{defn}

\begin{remark}
We note that our obstruction conditions differ from the Artin conditions of \cite[Definition 1.4.3.1]{toen_homotopical_2008} because we are not assuming quasi-compactness of our topology. In particular, Condition (\ref{artin5}) is a weaker condition.
\end{remark}

\begin{lem}\phantomsection\label{Ecoverequiv}Suppose that $X=\textnormal{Spec}(A)\in\mathcal{A}$, $M\in\textcat{M}_{A,1}$, and that $d\in\pi_0(\textcat{Der}(A,M))$. Suppose that we have a morphism $A\rightarrow{A_j}$ in $\mathcal{A}^{op}$. Then, there is an equivalence 
\begin{equation*}
    B_j\otimes^\mathbb{L}_{A\oplus_d\Omega M} A\rightarrow{A_j}
\end{equation*}in $\textcat{DAlg}^{cn}(\mathcal{C})$ where $B_j=A_j\oplus_{d'_j}\Omega M_j'$, $d_j'$ is the derivation induced by Lemma \ref{liftingderivations}, and $M_j'=M\otimes^\mathbb{L}_{A}A_j$.
\end{lem}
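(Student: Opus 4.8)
The plan is to compute $B_j \otimes^\mathbb{L}_{A\oplus_d\Omega M} A$ directly using the pushout descriptions of the square-zero extensions involved, together with the base-change compatibility of cotangent complexes and the defining pullback squares. First I would recall, from the definition of $A\oplus_d\Omega M$, that there is a pullback square in $\textcat{DAlg}^{cn}_A(\mathcal{C})_{/A}$
\begin{equation*}
\begin{tikzcd}
A\oplus_d\Omega M\arrow{r}\arrow{d} & A \arrow{d}{d}\\
A\arrow{r}{s} & A\oplus M
\end{tikzcd}
\end{equation*}
and similarly, using $d_j'$ and $M_j' = M\otimes_A A_j$, a pullback square expressing $B_j = A_j\oplus_{d_j'}\Omega M_j'$ over $A_j\oplus M_j'$. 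The key observation is that since $A\to A_j$ is formally \'etale, Corollary \ref{cofibrecotangentresults} gives $\mathbb{L}_{A_j}\simeq \mathbb{L}_A\otimes^\mathbb{L}_A A_j$, and more importantly the square-zero extension $A\oplus M$ base-changes correctly: there is an equivalence $(A\oplus M)\otimes^\mathbb{L}_A A_j \simeq A_j\oplus M_j'$, because the square-zero extension functor is built from the abelianization/stabilization adjunctions which commute with the base-change $-\otimes^\mathbb{L}_A A_j$ on modules, and $M\otimes^\mathbb{L}_A A_j = M_j'$ by definition. This is essentially the content of Corollary \ref{cofibrecotangentresults}(2) applied to the relevant pushout.

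Second I would argue that the derivation $d_j'$ produced by Lemma \ref{liftingderivations} is precisely the base change of $d$ along $A\to A_j$ — indeed the proof of that lemma defines $d_j'$ as the image of $d$ under the map $\pi_0(\textnormal{Map}_{\textcat{Mod}_A}(\mathbb{L}_A,M))\to \pi_0(\textnormal{Map}_{\textcat{Mod}_{A_j}}(\mathbb{L}_{A_j},M_j'))$ induced by $-\otimes^\mathbb{L}_A A_j$ and the \'etale equivalence $\mathbb{L}_A\otimes^\mathbb{L}_A A_j\simeq \mathbb{L}_{A_j}$. Consequently the whole pullback square defining $B_j$ over $A_j\oplus M_j'$ is obtained by applying $-\otimes^\mathbb{L}_A A_j$ to the pullback square defining $A\oplus_d\Omega M$ over $A\oplus M$. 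Since $-\otimes^\mathbb{L}_A A_j : \textcat{DAlg}^{cn}_A(\mathcal{C})\to \textcat{DAlg}^{cn}_{A_j}(\mathcal{C})$ is a left adjoint but the relevant pullback of square-zero type is also a pushout (as in Lemma \ref{pushoutAsqzM}, the square is both a pullback and a pushout), base change preserves it, giving
\begin{equation*}
(A\oplus_d\Omega M)\otimes^\mathbb{L}_A A_j \simeq A_j\oplus_{d_j'}\Omega M_j' = B_j .
\end{equation*}

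Third, I would convert this into the desired statement by a change-of-base computation: the structure map $A\oplus_d\Omega M\to A$ exhibits $A$ as an $(A\oplus_d\Omega M)$-algebra, and I claim
\begin{equation*}
B_j\otimes^\mathbb{L}_{A\oplus_d\Omega M} A \simeq \big((A\oplus_d\Omega M)\otimes^\mathbb{L}_A A_j\big)\otimes^\mathbb{L}_{A\oplus_d\Omega M} A \simeq A_j\otimes^\mathbb{L}_A\big((A\oplus_d\Omega M)\otimes^\mathbb{L}_{A\oplus_d\Omega M}A\big)\simeq A_j\otimes^\mathbb{L}_A A \simeq A_j,
\end{equation*}
where the middle step is associativity/commutativity of relative tensor products (the pushout square $A\oplus_d\Omega M\to A$, $A\oplus_d\Omega M\to A_j$ in $\textcat{DAlg}^{cn}(\mathcal{C})$), and the last step uses that $A\to A_j$ already has source $A$. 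The map in the statement is then the canonical comparison, and it is an equivalence. The main obstacle I anticipate is the second step: verifying carefully that $-\otimes^\mathbb{L}_A A_j$ really does carry the pullback-pushback square defining $A\oplus_d\Omega M$ to the one defining $B_j$, i.e. that base change along a formally \'etale map commutes with forming the square-zero extension $A\oplus_d\Omega M$ and with the identification of the lifted derivation; this requires unwinding the definitions of $\textcat{sqz}$ via $\Sigma^\infty_{\textcat{Ab}}\dashv\Omega^\infty_{\textcat{Ab}}$ and checking naturality of the stabilization/abelianization adjunctions under base change, which is routine but requires care because square-zero extension is a right adjoint construction while base change is a left adjoint.
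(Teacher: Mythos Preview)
Your argument has a genuine categorical gap in the second and third steps. You want to apply the base-change functor $-\otimes^\mathbb{L}_A A_j:\textcat{DAlg}^{cn}_A(\mathcal{C})\to\textcat{DAlg}^{cn}_{A_j}(\mathcal{C})$ to the defining pullback square for $A\oplus_d\Omega M$, but that square does not live in $\textcat{DAlg}^{cn}_A(\mathcal{C})$. By definition the pullback is taken in $\textcat{DAlg}^{cn}(\mathcal{C})_{/A}$ (objects \emph{over} $A$), not in the under-category of $A$-algebras: the map $d:A\to A\oplus M$ is a section of the projection, not a map of $A$-algebras, and there is no natural map $A\to A\oplus_d\Omega M$ unless $d$ is trivial. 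So the expression $(A\oplus_d\Omega M)\otimes^\mathbb{L}_A A_j$ does not parse as an algebra pushout, and your associativity manipulation in step three inherits the same problem. Your appeal to Lemma~\ref{pushoutAsqzM} does not rescue this: that lemma gives a pushout in $\textcat{Mod}_{A\oplus M}$, not in $\textcat{DAlg}^{cn}_A(\mathcal{C})$, and preservation under a left adjoint between algebra categories is not what is at stake.

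The paper avoids this by never forming such a base change. Instead it uses that $B_j$ and $A_j$ are both $(A\oplus_d\Omega M)$-algebras (via the map $A\oplus_d\Omega M\to B_j$ induced from the universal property of the pullback defining $B_j$, and via $A\oplus_d\Omega M\to A\to A_j$). One then tensors the two fibre--cofibre sequences of Lemma~\ref{pushoutAsqzM},
\[
A\oplus_d\Omega M\to A\to M\qquad\text{and}\qquad B_j\to A_j\to M_j',
\]
over $A\oplus_d\Omega M$ by $A_j$ and by $A$ respectively, producing two cofibre sequences whose middle and right-hand terms visibly agree; the left-hand comparison $B_j\otimes^\mathbb{L}_{A\oplus_d\Omega M}A\to A_j$ is then forced to be an equivalence. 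If you want to repair your route, the correct reformulation of your intuition is to prove directly that
\[
\begin{tikzcd}
A\oplus_d\Omega M\arrow{r}\arrow{d} & B_j\arrow{d}\\
A\arrow{r} & A_j
\end{tikzcd}
\]
is a pushout in $\textcat{DAlg}^{cn}(\mathcal{C})$, which is exactly the statement of the lemma; but establishing this still requires the cofibre-sequence comparison rather than a base change along $A\to A_j$.
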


\begin{proof}
We consider the morphism $A\oplus_d\Omega M \rightarrow{B_j}$ induced by the composition of morphisms $A\rightarrow{A_j}\rightarrow{A_j\oplus M'_j}$. By Lemma \ref{pushoutAsqzM}, there are fibre-cofibre sequences
    \begin{equation*}\begin{aligned}
         A\oplus_d\Omega M&\rightarrow{A}\rightarrow{M}\\
         B_j&\rightarrow{A_j}\rightarrow{M_j'}
    \end{aligned}
    \end{equation*}in $\textcat{Mod}_{A}^{cn}$ and $\textcat{Mod}_{A_j}^{cn}$ respectively, which induce the following commutative diagram where the horizontal maps form fibre-cofibre sequences in $\textcat{Mod}_A$

    \begin{equation*}
        \begin{tikzcd}
            B_j\otimes^\mathbb{L}_{A\oplus_d\Omega M}A\arrow{r}\arrow{d} & A_j\otimes^\mathbb{L}_{A\oplus_d\Omega M}A\arrow{r}\arrow{d} & M_j'\otimes^\mathbb{L}_{A\oplus_d\Omega M} A\arrow{d}\\
            A_j\otimes_{A\oplus_d\Omega M} A\oplus_d\Omega M\arrow{r} & A_j\otimes^\mathbb{L}_{A\oplus_d\Omega M}A\arrow{r} & A_j\otimes^\mathbb{L}_{A\oplus_d\Omega M} M
        \end{tikzcd}
    \end{equation*}Since the two right hand side vertical maps are equivalences, it follows that the first vertical map is an equivalence, and hence we have an equivalence $B_j\otimes^\mathbb{L}_{A\oplus_d\Omega M}A\simeq A_j$ in $\textcat{Mod}_A^{cn}$, and hence also in $\textcat{DAlg}^{cn}(\mathcal{C})$.
\end{proof}

\subsection{Obstruction Theory of Geometric Stacks}

Suppose that we have a derived geometry context $(\mathcal{C},\mathcal{C}_{\geq 0},\mathcal{C}_{\leq 0},\mathcal{C}^0,\bm\tau,\textcat{P},\mathcal{A},\textcat{M})$. Suppose also that, for any finite collection $\{U_i\}_{i\in I}$ of $\mathcal{A}$-admissible objects in $\mathcal{M}$, the map $\coprod_{i\in I} h(U_i)\rightarrow{h(\coprod_{i\in I} U_i)}$ is an equivalence in $\textcat{Stk}(\mathcal{A},\bm\tau|_\mathcal{A})$. Compare the following proof with \cite[c.f. Theorem 1.4.3.2]{toen_homotopical_2008}.

\begin{thm}\phantomsection\label{obstructiongeometric} Suppose that $\bm{\tau}$ and $\textcat{P}$ satisfy the obstruction conditions relative to $\mathcal{A}$. 
\begin{enumerate}
    \item\label{obstructionitem} If $f:\mathcal{F}\rightarrow{\mathcal{G}}$ is an $n$-representable morphism of stacks in $\textcat{Stk}(\mathcal{A},\bm{\tau}|_\mathcal{A})$, then it has an obstruction theory, 
\item\label{homitem} If $\mathcal{F}$ is an $n$-geometric stack in $\textcat{Stk}(\mathcal{A},\bm\tau|_\mathcal{A})$ and $g:U\rightarrow{\mathcal{F}}$ is an $n\textcat{-P}$-morphism with $U$ a $(-1)$-geometric stack then, for any $X\in\mathcal{A}$ and $x:X=\textnormal{Spec}(A)\rightarrow{U}$, there exists an $\textcat{S}$-cover\begin{equation*}
        x':X'=\textnormal{Spec}(A')\rightarrow{X}
    \end{equation*}such that, for any $M\in\textcat{M}_{A',1}$, the natural map 
    \begin{equation*}
        \pi_0(\textnormal{Map}_{\textcat{Mod}_{A'}}(\mathbb{L}_{X'/U},M))\rightarrow\pi_0(\textnormal{Map}_{\textcat{Mod}_A}(\mathbb{L}_{U/\mathcal{F},x},M))
    \end{equation*}is zero.
\end{enumerate}
    
\end{thm}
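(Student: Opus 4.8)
The plan is to prove both statements by a single induction on $n$, following the structure of \cite[Theorem 1.4.3.2]{toen_homotopical_2008} but keeping careful track of the fact that we only have the weaker obstruction condition (\ref{artin5}) (no quasi-compactness). The base case $n=-1$ for (\ref{obstructionitem}) is Proposition \ref{representableobstruction}, which gives that any $(-1)$-representable morphism has an obstruction theory; for (\ref{homitem}) with $n=-1$, a $(-1)\textcat{-P}$-morphism $U\to\mathcal{F}$ pulled back along $x:X\to U$ and then along $U\to\mathcal{F}$ gives, after pulling back to $X$, a morphism of representable stacks in $\textcat{P}|_\mathcal{A}$; the relative cotangent complex $\mathbb{L}_{U/\mathcal{F},x}$ is computed via Lemma \ref{globalcotangentresults}(\ref{globalitem3}) and the representable case, and vanishing of $\pi_0(\textnormal{Map}(\mathbb{L}_{U/\mathcal{F},x},M))$ for $M\in\textcat{M}_{A',1}$ follows from the assumption (\ref{artin0}) that morphisms in $\textcat{P}|_\mathcal{A}$ are formally $i$-smooth, combined with Proposition \ref{ismoothprop}. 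Here I can even take $X'=X$.

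For the inductive step of (\ref{obstructionitem}), I would reduce to showing that $\mathcal{F}\times_\mathcal{G}X\to X$ has an obstruction theory for every representable $X\to\mathcal{G}$, by Lemma \ref{relativeobstructionresult}(3); so I may assume $\mathcal{G}=X$ is representable and $\mathcal{F}$ is $n$-geometric. Pick an $n$-atlas $\{U_i\to\mathcal{F}\}_{i\in I}$, so each $U_i\to\mathcal{F}$ is in $(n-1)\textcat{-P}$, hence $n$-representable, and by the inductive hypothesis applied to (\ref{obstructionitem}) at level $n-1$ these have obstruction theories. The existence of a global cotangent complex for $\mathcal{F}$ I would get from Lemma \ref{diagonalglobalcotangent} together with Corollary \ref{coolcorollary}-type descent: one checks the stabilisation condition $\textcat{Der}_\mathcal{F}(X,M)\simeq\Omega\textcat{Der}_\mathcal{F}(X,\Sigma_A M)$ by writing $\mathcal{F}=|\mathcal{U}_*|$ (Proposition \ref{segalgroupoidequivalence}), noting each $\mathcal{U}_m$ is a disjoint union of $(n-1)$-geometric stacks with obstruction theories, and using that the $\mathcal{F}(A)$-computation of derivations commutes with the relevant limits. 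By Corollary \ref{corollarygeometricinf} it then suffices to show $\mathcal{F}$ is infinitesimally cartesian: given $X=\textnormal{Spec}(A)$, $M\in\textcat{M}_{A,1}$, $d\in\pi_0(\textcat{Der}(A,M))$, I must show the square relating $\mathcal{F}(A\oplus_d\Omega M)$, $\mathcal{F}(A)$, $\mathcal{F}(A\oplus M)$ is a pullback. This is where condition (\ref{artin5}) enters: using the $n$-atlas and Proposition \ref{localepimorphismprop}, one produces after refining to an $\textcat{E}$-cover $\{V_k'=\textnormal{Spec}(A_k)\to X\}$ the corresponding $\textcat{E}$-cover $\{W_k\to X_d[\Omega M]\}$ by Lemmas \ref{sqzecoverlemma}, \ref{Ecoverequiv}, \ref{ecovercorrespondenceforwards}, so that descent along these $\textcat{E}$-covers (using that $\mathcal{F}$ and hence all terms satisfy descent, and that epimorphisms are stable under pullback) reduces the pullback-square claim to the same claim for each $U_i\times_\mathcal{F}(\text{pullbacks})$, which are $(n-1)$-geometric and infinitesimally cartesian by induction.

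For the inductive step of (\ref{homitem}): given $g:U\to\mathcal{F}$ an $n\textcat{-P}$-morphism with $\mathcal{F}$ $n$-geometric, and $x:X=\textnormal{Spec}(A)\to U$, I would apply Lemma \ref{globalcotangentresults}(\ref{globalitem3}) to reduce the computation of $\mathbb{L}_{U/\mathcal{F},x}$ to the relative cotangent complex of $U\times_\mathcal{F}X'\to X'$ for an appropriate $X'$. By definition of $n\textcat{-P}$, after pulling back along a representable $X'\to\mathcal{F}$ through which $x$ factors (obtained via an $\textcat{E}$-cover using Proposition \ref{localepimorphismprop} and the obstruction conditions), there is an $n$-atlas $\{V_j\to U\times_\mathcal{F}X'\}$ with each $V_j\to X'$ in $\textcat{P}$, hence formally $i$-smooth by (\ref{artin0}). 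Choosing the member of this atlas hitting the lift of $x$ and refining once more to an $\textcat{E}$-cover $x':X'\to X$, the composite cotangent sequence (Corollary \ref{cofibrecotangentstack}, Lemma \ref{globalcotangentresults}(\ref{item2})) plus Lemma \ref{projectivevanishinghomotopy} and Corollary \ref{formallyperfectismooth} give $\pi_0(\textnormal{Map}_{\textcat{Mod}_A}(\mathbb{L}_{U/\mathcal{F},x},M))=0$ for $M\in\textcat{M}_{A',1}$, as the relative cotangent complex becomes (a retract/base change of) a projective module after passing to the $\textcat{E}$-cover.

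The main obstacle will be the infinitesimal-cartesian descent argument in part (\ref{obstructionitem}): since we are not assuming quasi-compactness, the $n$-atlas $\mathcal{U}_*$ is only a disjoint union of representables rather than representable, so one must check carefully that the formation of $\mathcal{F}(A\oplus_d\Omega M)$ and the geometric-realisation description $\mathcal{F}=|\mathcal{U}_*|$ interact correctly with the $\textcat{E}$-cover produced by condition (\ref{artin5}) — in particular that the weaker Condition (\ref{artin5}) (finite $\textcat{E}$-covers, not necessarily refining to finite $\bm\tau$-subcovers of the original) still suffices to run descent, which is exactly where the hypothesis that $\coprod h(U_i)\to h(\coprod U_i)$ is an equivalence and the stability of epimorphisms under pullback (Proposition \ref{epimorphismspullbacK}) get used. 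Assembling these compatibilities, rather than any single computation, is the delicate part.
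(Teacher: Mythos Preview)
Your overall strategy (simultaneous induction on $n$, reduction via Lemma \ref{relativeobstructionresult}(3) to the case where $\mathcal{F}$ is $n$-geometric, use of condition (\ref{artin5}) to pass to a finite $\textcat{E}$-cover) matches the paper, and your treatment of the base case and of statement (\ref{homitem}) is essentially correct. However, there is a genuine gap in your inductive step for (\ref{obstructionitem}).

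You write that after producing the $\textcat{E}$-covers of $X$ and of $X_d[\Omega M]$, ``descent along these $\textcat{E}$-covers \ldots\ reduces the pullback-square claim to the same claim for each $U_i\times_\mathcal{F}(\text{pullbacks})$, which are $(n-1)$-geometric and infinitesimally cartesian by induction''. This reduction does not work: knowing that each $U_i\times_\mathcal{F}X$ is infinitesimally cartesian does \emph{not} directly imply that $\mathcal{F}$ is infinitesimally cartesian, because you need to show the fibre of $\mathcal{F}(A\oplus_d\Omega M)\to\mathcal{F}(A)\times_{\mathcal{F}(A\oplus M)}\mathcal{F}(A)$ is contractible at an arbitrary point $x$, and descent only lets you replace $A$ by an $\textcat{E}$-cover so that the projection $x_1\in\pi_0(\mathcal{F}(A))$ lifts to some $y_1\in\pi_0(U(A))$. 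The missing step is a \emph{lifting sublemma} showing that the full point $x\in\pi_0(\mathcal{F}(A)\times_{\mathcal{F}(A\oplus M)}\mathcal{F}(A))$ lifts to some $y\in\pi_0(U(A)\times_{U(A\oplus M)}U(A))$. The paper proves this by computing the fibre of the comparison map of vertical fibres (via Lemma \ref{alternativederivationdefn}) as $\Omega_{d(y_1),0}\textnormal{Map}_{\textcat{Mod}_A}(\mathbb{L}_{U/\mathcal{F},y_1},M)$, and then uses statement (\ref{homitem}) at level $n-1$ to conclude this is nonempty. Only then can one invoke that the \emph{morphism} $U\to\mathcal{F}$ is infinitesimally cartesian (from (\ref{obstructionitem}) at level $n-1$) together with $U$ representable to conclude the fibre at $x$ is contractible. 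Thus (\ref{homitem}) at level $n-1$ feeds back into (\ref{obstructionitem}) at level $n$; your sketch treats the two statements as essentially independent and misses this interleaving.

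A minor point: your paragraph about obtaining the global cotangent complex via Lemma \ref{diagonalglobalcotangent} by computing $\textcat{Der}_\mathcal{F}(X,M)$ through $\mathcal{F}=|\mathcal{U}_*|$ is both unnecessary and would not work as stated. It is unnecessary because Corollary \ref{corollarygeometricinf} already says that for an $n$-geometric stack, infinitesimally cartesian implies obstruction theory (the stabilisation condition in Lemma \ref{diagonalglobalcotangent} follows \emph{from} infinitesimal cartesianness, not the other way around). It would not work because $\mathcal{F}(A)=\textnormal{Map}(h(X),|\mathcal{U}_*|)$ is a map into a colimit, not a limit, so you cannot decompose $\textcat{Der}_\mathcal{F}(X,M)$ levelwise. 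Simply drop this paragraph and invoke Corollary \ref{corollarygeometricinf} directly.
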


\begin{proof}We prove this by induction on $n$. Indeed, when $n=-1$, Statement (\ref{obstructionitem}) follows from Corollary \ref{representableobstruction}. 

To prove Statement (\ref{homitem}) at level $n=-1$, suppose that $\mathcal{F}$ is a $(-1)$-geometric stack, say $\mathcal{F}=Y=\textnormal{Spec}(B)$ for some $\mathcal{A}$-admissible $B\in\mathcal{M}$. Suppose that there is a $(-1)\textcat{-P}$-morphism $g:U=\textnormal{Spec}(C)\rightarrow{Y}$ in $\textcat{Stk}(\mathcal{A},\bm\tau|_\mathcal{A})$. By Assumption (\ref{artin0}), this morphism is formally $i$-smooth. Therefore, we see that for any morphism $x:X=\textnormal{Spec}(A)\rightarrow{U}$, and taking the trivial $\textcat{S}$-cover of $X$, our result follows immediately using Proposition \ref{ismoothprop}.

Now, suppose that $f:\mathcal{F}\rightarrow{\mathcal{G}}$ is an $n$-representable morphism of stacks and that the theorem holds for $k$-representable morphisms for $k<n$. Suppose that $X\in\mathcal{A}$ and that there is a morphism $x:X\rightarrow{\mathcal{G}}$. We note that, by Lemma \ref{relativeobstructionresult}, it suffices to show that the $n$-geometric stack $\mathcal{F}\times_\mathcal{G}X$ has an obstruction theory. This leads to the following lemma.

    \begin{lem}\phantomsection\label{ngeoobstruction}
        Any $n$-geometric stack has an obstruction theory. 
    \end{lem}
    \begin{proof}
Suppose that $\mathcal{F}$ is an $n$-geometric stack. By Corollary \ref{corollarygeometricinf}, it suffices to show that $\mathcal{F}$ is infinitesimally cartesian. Indeed, suppose that $X=\textnormal{Spec}(A)\in\mathcal{A}$, $M\in\textcat{M}_{A,1}$ and that there is a derivation $d\in\pi_0(\textcat{Der}(A,M))$. We want to prove that the morphism
        \begin{equation*}
            \mathcal{F}(A\oplus_d\Omega M)\rightarrow{\mathcal{F}(A)\times_{\mathcal{F}(A\oplus M)}\mathcal{F}(A)}
        \end{equation*}is an equivalence in $\infty\textcat{Grpd}$. Suppose that $x\in\pi_0(\mathcal{F}(A)\times_{\mathcal{F}(A\oplus M)}\mathcal{F}(A))$ with projection $x_1\in\pi_0(\mathcal{F}(A))$. By adapting \cite[Lemma 2.1.3.4]{lurie_higher_2009} to the case of $\infty$-groupoids, it suffices to prove that the fibre of the above map at $x$ is contractible. 

    We define a functor $\mathcal{S}:(\textcat{DAff}^{cn}(\mathcal{C})_{/X_d[\Omega M]})^{op}\rightarrow{\infty\textcat{Grpd}}$ which takes a map $A\oplus_d\Omega M\rightarrow{B}$ to the fibre of the morphism 
\begin{equation*}
    \mathcal{F}(B)\rightarrow{\mathcal{F}(B\otimes^\mathbb{L}_{A\oplus_d\Omega M}A)\times_{\mathcal{F}(B\otimes^\mathbb{L}_{A\oplus_d\Omega M}A\oplus M)}\mathcal{F}(B\otimes^\mathbb{L}_{A\oplus_d\Omega M}A)}
\end{equation*}taken at the image of $x$. Since $\mathcal{F}$ is a stack on $\mathcal{A}$, we see that $\mathcal{S}$ defines a stack on $\textcat{DAff}^{cn}(\mathcal{C})_{/X_d[\Omega M]}$. Suppose that we have some epimorphism of representable stacks $\coprod_{k\in K} \textnormal{Spec}(B_k)\rightarrow{X_d[\Omega M]}$ in $\textcat{Stk}(\textcat{DAff}^{cn}(\mathcal{C})_{/X_d[\Omega M]},\bm{\tau})$. To show that $\mathcal{S}(A\oplus_d\Omega M)$ is contractible, it suffices to show that $\prod_{k\in K} \mathcal{S}(B_k)$ is contractible.

Suppose that we have an $(n-1)$-atlas $\{U_i\rightarrow{\mathcal{F}}\}_{i\in I}$ of $\mathcal{F}$. Then, by Proposition \ref{localepimorphismprop}, we may find a $\bm{\tau}$-covering family $\{V_j\rightarrow{X}\}_{j\in J}$ of $X$ such that the morphism $V_j\rightarrow{\mathcal{F}}$ factors through some $U_{u(j)}$. We may assume, by Assumption (\ref{artin5}), that this cover refines to a finite $\textcat{S}$-covering family $\{V_k'=\textnormal{Spec}(A_k')\rightarrow{X}\}_{k\in K}$ and defines an $\textcat{S}$-covering family $\{W_k=\textnormal{Spec}(B_k)\rightarrow{X_d[\Omega M]}\}_{k\in K}$ of $X_d[\Omega M]$, where $B_k=A_k\oplus_{d_k'}\Omega M_k'$ for $d_k'$ the derivation induced by Lemma \ref{liftingderivations} and $M_k'=M\otimes^\mathbb{L}_{A}A_k$.

Since $B_k\otimes^\mathbb{L}_{A\oplus_d\Omega M}(A\oplus M)\simeq A_k\oplus M'_k$ and $B_k\otimes^\mathbb{L}_{A\oplus_d\Omega M} A\simeq A_k$ by Lemma \ref{Ecoverequiv} then, to show that $\prod_{k\in K}\mathcal{S}(B_k)$ is contractible, we just need to check that 
\begin{equation*}
    \prod_{k\in K}\mathcal{F}( B_k)\rightarrow\prod_{k\in K} \mathcal{F}(A_k)\times_{\mathcal{F}(A_k\oplus M_k')}\mathcal{F}(A_k)
\end{equation*}is an equivalence of $\infty$-groupoids. Therefore, for each $k\in K$, we can just replace $X=\textnormal{Spec}(A)$ by $V_k'=\textnormal{Spec}(A_k')$, $d$ by $ d_k'$, and $M$ by $M_k'$. Hence, the point $x_1\in\pi_0(\mathcal{F}(A))$, which is the image of $x\in\pi_0(\mathcal{F}(A)\times_{\mathcal{F}(A\oplus M)}\mathcal{F}(A))$, lifts to a point $y_1\in\pi_0(U_{u(k)}(A))$ for some $k\in K$. Let $U:=U_{u(k)}\in\mathcal{A}$.

\begin{sublem}\phantomsection\label{liftsublemma} The point $x\in\pi_0(\mathcal{F}(A)\times_{\mathcal{F}(A\oplus M)}\mathcal{F}(A))$ lifts to a point $y$ in $\pi_0(U(A)\times_{U(A\oplus M)}U(A))$
\end{sublem}

\begin{proof}
        We note that there is a commutative square of $\infty$-groupoids 
    \begin{equation*}
        \begin{tikzcd}
        U(A)\times_{U(A\oplus M)}U(A) \arrow{r}{f} \arrow{d}{p} & \mathcal{F}(A)\times_{\mathcal{F}(A\oplus M)}\mathcal{F}(A) \arrow{d}{q}\\
            U(A)\arrow{r} & \mathcal{F}(A)
        \end{tikzcd}
    \end{equation*}induced by the map $A\oplus M\rightarrow{A}$. We let  $\textnormal{fib}(p)$ and $\textnormal{fib}(q)$ be the fibres of the morphisms $p$ and $q$ taken at $y_1$ and $x_1$ respectively and consider the induced morphism $h:\textnormal{fib}(p)\rightarrow{\textnormal{fib}(q)}$. We note that there is an induced morphism from $\textnormal{fib}(h)$ taken at $x$ to $\textnormal{fib}(f)$ at $x$. It therefore suffices to show that the fibre of $h$ at $x$ is non-empty. By Lemma \ref{alternativederivationdefn}, we see that $h$ is equivalent to the morphism
    \begin{equation*}
        \Omega_{d(y_1),0}\textcat{Der}_{U}(X,M)\rightarrow{\Omega_{d(x_1),0}\textcat{Der}_{\mathcal{F}}(X,M)}
    \end{equation*}Hence, we see that the fibre of $h$ at $x$ is equivalent to 
    \begin{equation*}
        \Omega_{d(y_1),0}\textcat{Der}_{U/\mathcal{F}}(X,M)\simeq\Omega_{d(y_1),0}\textnormal{Map}_{\textcat{Mod}_A}(\mathbb{L}_{U/\mathcal{F},y_1},M)
    \end{equation*}Since $\mathcal{F}$ is an $n$-geometric stack and $g:U\rightarrow{\mathcal{F}}$ is an $(n-1)\textcat{-P}$ morphism, then by Statement (\ref{homitem}) of the theorem at level $n-1$, we note that there is an $\textcat{S}$-cover $x':X'=\textnormal{Spec}(A')\rightarrow{X}$ such that, for any $M\in\textcat{M}_{A',1}$, the natural map 
     \begin{equation}\phantomsection\label{zeroequation}
        \pi_0(\textnormal{Map}_{\textcat{Mod}_{A'}}(\mathbb{L}_{X'/U},M))\rightarrow\pi_0(\textnormal{Map}_{\textcat{Mod}_A}(\mathbb{L}_{U/\mathcal{F},y_1},M))
    \end{equation}is zero. By Corollary \ref{cofibrecotangentstack}, we have a fibre-cofibre sequence in $\textcat{Mod}_A$
   \begin{equation*}
\mathbb{L}_{U/\mathcal{F},y_1}\rightarrow{\mathbb{L}_{X'/\mathcal{F},x'}}\rightarrow{\mathbb{L}_{X'/U}}
   \end{equation*}which induces the long exact sequence
   \begin{equation*}
\dots\rightarrow{\pi_0(\textnormal{Map}(\mathbb{L}_{X'/\mathcal{F},x'},M))}\rightarrow{\pi_{0}(\textnormal{Map}(\mathbb{L}_{U/\mathcal{F},y_1},M))}\rightarrow{\pi_{-1}(\textnormal{Map}(\mathbb{L}_{X'/U},M}))\rightarrow{\dots} 
   \end{equation*}where the first map is zero by Equation (\ref{zeroequation}). We note that the image of $d(y_1)$ under the second map is zero, and hence we see that $d(y_1)$ must lie in the image of the first map. Therefore, $0$ and $d(y_1)$ lie in the same connected component of $\textnormal{Map}_{\textcat{Mod}_A}(\mathbb{L}_{U/\mathcal{F},y_1},M)$, and hence $\textnormal{fib}(h)$ is non-empty. 
\end{proof}
Now, since the morphism $g:U\rightarrow{\mathcal{F}}$ is $(n-1)$-representable, by our inductive hypothesis it is infinitesimally cartesian and we have a pullback square
\begin{equation*}
    \begin{tikzcd}
         U(A\oplus_d\Omega M)\arrow{r} \arrow{d}{p'}& \mathcal{F}(A\oplus_d\Omega M) \arrow{d}{q'}\\
        U(A)\times_{U(A\oplus M)}U(A)\arrow{r} &  \mathcal{F}(A)\times_{\mathcal{F}(A\oplus M)}\mathcal{F}(A)
    \end{tikzcd}
\end{equation*}We note that, since $U$ is $(-1)$-geometric, it is infinitesimally cartesian, and hence $p'$ is an equivalence. Therefore, the fibre of $q'$ at $x$ is either contractible or empty by \cite[c.f. Proposition 1.2.12.9]{lurie_higher_2009}. But, $x$ lifts to a point in $\pi_0(U(A)\times_{U(A\oplus M)}U(A))$ by the sublemma, and hence the fibre must be non empty. Therefore, $\mathcal{F}$ is infinitesimally cartesian.
    \end{proof}

    With the proof of Statement (\ref{obstructionitem}) complete, it remains to show Statement (\ref{homitem}) holds at level $n$. Suppose that we have an $n$-geometric stack $\mathcal{F}$ and an $n\textcat{-P}$ morphism $g:U\rightarrow{\mathcal{F}}$. Take an $n$-atlas $\{U_i\rightarrow{\mathcal{F}\}_{i\in I}}$ for $\mathcal{F}$, and consider the induced $n$-atlas $\{V_j=\textnormal{Spec}(B_j)\rightarrow{U\times_\mathcal{F}U_i}\}_{j\in J}$ for the $n$-geometric stacks $U\times_\mathcal{F}U_i$. We have an induced morphism $v_j:X\times_UV_j\rightarrow{V_j}$ of $(-1)$-geometric stacks. By Lemma \ref{globalcotangentresults}, it suffices to show that there exists an $\textcat{S}$-cover $X'=\textnormal{Spec}(A')\rightarrow{X}$ such that, for any $M\in\textcat{M}_{A',1}$, the natural map 
    \begin{equation*}
        \pi_0(\textnormal{Map}_{\textcat{Mod}_{A'}}(\mathbb{L}_{X'\times_UV_j/V_j},M))\rightarrow{\pi_0(\textnormal{Map}_{\textcat{Mod}_A}(\mathbb{L}_{U\times_\mathcal{F}U_i/U_i,x^\cdot},M))}
    \end{equation*}is zero. This map factors through the morphism \begin{equation}\label{zeroequation2}
        \pi_0(\textnormal{Map}_{\textcat{Mod}_{B_j}}(\mathbb{L}_{V_j/U\times_\mathcal{F}U_i,v_j},M))\rightarrow{\pi_0(\textnormal{Map}_{\textcat{Mod}_A}(\mathbb{L}_{U\times_\mathcal{F}U_i/U_i,x^\cdot},M))}
\end{equation}and hence it suffices to show, up to replacing $V_j$ with an $\textcat{S}$-cover, that this latter morphism is zero. 

We note that, since the morphism $U_i\rightarrow{\mathcal{F}}$ is in $(n-1)\textcat{-P}$, then the induced morphism $V_j\rightarrow{U_i}$ is in $\textcat{P}$. Hence, we see, by Proposition \ref{ismoothprop} and Condition (\ref{artin0}), that $\pi_0(\textnormal{Map}_{\textcat{Mod}_{B_j}}(\mathbb{L}_{V_j/U_i},M))=0$. Hence, since the morphism in Equation (\ref{zeroequation2}) factors through $\pi_0(\textnormal{Map}_{\textcat{Mod}_{B_j}}(\mathbb{L}_{V_j/U_i},M))=0$ by Corollary \ref{cofibrecotangentstack}, then we see that our morphism is zero as desired.

\end{proof}

\subsection{Postnikov Towers}\phantomsection\label{postnikovsection} A Postnikov tower of a topological space gives you a way to `build up' the homotopy groups of the space. We will generalise this idea to our derived geometry contexts as follows.

Suppose that $(\mathcal{C},\mathcal{C}_{\geq 0},\mathcal{C}_{\leq 0},\mathcal{C}^0)$ is a derived algebraic context and that $\textcat{DAlg}^{cn}(\mathcal{C})$ is compatible with the $t$-structure on $\mathcal{C}$ in the sense of Definition \ref{postnikovcompatible}. For $n\geq 0$, we will denote by $A_{\leq n}$ the object $\tau_{\leq n}(A)\in\textcat{DAlg}^{cn}(\mathcal{C})$. There is an induced morphism $A_{\leq n}\rightarrow{A_{\leq n-1}}$ for each $n$.  
\begin{defn}
    Suppose that $A\in\textcat{DAlg}^{cn}(\mathcal{C})$. The \textit{Postnikov tower} of $A$ is 
    \begin{equation*}
        A\rightarrow{\dots}\rightarrow{A_{\leq n}}\rightarrow{A_{\leq n-1}}\rightarrow{\dots}{\rightarrow{A_{\leq 0}}=\pi_0(A)}
    \end{equation*}We will say that Postnikov towers in $\textcat{DAlg}^{cn}(\mathcal{C})$ \textit{converge} if $A$ is the limit of this sequence in $\textcat{DAlg}^{cn}(\mathcal{C})$. 
\end{defn}

\begin{defn}
   A derived geometry context $(\mathcal{C},\mathcal{C}_{\geq 0},\mathcal{C}_{\leq 0},\mathcal{C}^0,\bm\tau,\textcat{P},\mathcal{A},\textcat{M})$ will be \textit{Postnikov compatible} if
    \begin{enumerate} 
        \item The $t$-structure on $\mathcal{C}$ is left complete,
        \item $\textcat{DAlg}^{cn}(\mathcal{C})$ is compatible with the $t$-structure on $\mathcal{C}$,
        \item\label{postnikovcomp4} Given a Postnikov tower in $\textcat{DAlg}^{cn}(\mathcal{C})$, then $A_{\leq n}\in\mathcal{A}^{op}$ for all $n\geq 0$ if and only if $A\in\mathcal{A}^{op}$,
\item\label{postnikovcomp5} 
        For all $m\geq 2$, if $A\in\mathcal{C}$ is $n$-connective, then $\textcat{LSym}^m(A)$ is $(n+2)$-connective.
    \end{enumerate}
\end{defn}

\begin{remark}
    We remark that the last condition is dealt with in more generality in \cite[Definition 2.1.8]{ben-bassat_perspective_2024}, and is included here in its present form so that Theorem \ref{2nconnective} holds. 
\end{remark}

For the rest of the section, suppose that we have a Postnikov compatible derived geometry context $(\mathcal{C},\mathcal{C}_{\geq 0},\mathcal{C}_{\leq 0},\mathcal{C}^0,\bm\tau,\textcat{P},\mathcal{A},\textcat{M})$. We note that, since $\mathcal{C}$ is left complete, Postnikov towers in $\mathcal{C}$ converge. Hence, since $\textcat{DAlg}^{cn}(\mathcal{C})$ is closed under limits and using our assumptions, Postnikov towers in $\textcat{DAlg}^{cn}(\mathcal{C})$ also converge. In particular, our assumptions imply that, if $X=\textnormal{Spec}(A)$ is in the full subcategory $\mathcal{A}$ of $\textcat{DAff}^{cn}(\mathcal{C})$, then $X$ is the colimit in $\mathcal{A}$ of the sequence
\begin{equation*}
    X\leftarrow{\dots}\leftarrow{X_{\leq n}}\leftarrow{X_{\leq n-1}\leftarrow{\dots}\leftarrow{X_{\leq 0}}}
\end{equation*}In particular, we can use this to show that every $(-1)$-geometric stack on $\mathcal{A}$ is nilcomplete with respect to $\mathcal{A}$ in the following sense. 

\begin{defn}Suppose that $f:\mathcal{F}\rightarrow{\mathcal{G}}$ is a morphism of presheaves in $\textcat{PSh}(\textcat{DAff}^{cn}(\mathcal{C}))$. Then, $f$ is said to be \textit{nilcomplete with respect to $\mathcal{A}$} if, for every $X=\textnormal{Spec}(A)\in \mathcal{A}$, the square 
\begin{equation*}
    \begin{tikzcd}
    \mathcal{F}(A)\arrow{r}\arrow{d} & \varprojlim_k \mathcal{F}(A_{\leq k}) \arrow{d}\\
    \mathcal{G}(A) \arrow{r} & \varprojlim_k \mathcal{G}(A_{\leq k})
    \end{tikzcd}
\end{equation*}is a pullback square in $\infty\textcat{Grpd}$. We say that $\mathcal{F}$ is \textit{nilcomplete} with respect to $\mathcal{A}$ if the morphism $\mathcal{F}\rightarrow{*}$ is nilcomplete.
\end{defn}

\begin{defn}\phantomsection\label{postnikovliftproperty}
Suppose that $f:\mathcal{F}\rightarrow{\mathcal{G}}$ is an $n$-representable morphism in $\textcat{PSh}(\textcat{DAff}^{cn}(\mathcal{C}))$. Then, $f$ satisfies the \textit{Postnikov lifting property with respect to $\mathcal{A}$} if, for every $X\in \mathcal{A}$ and every $k>0$, the following lifting problem has at least one solution $X_{\leq k}\rightarrow{\mathcal{F}}$
\begin{equation*}\begin{tikzcd}
    X_{\leq k-1}\arrow{d} \arrow{r} & \mathcal{F}\arrow{d}{f}\\
    X_{\leq k} \arrow[ur,dotted]\arrow{r} & \mathcal{G}
\end{tikzcd}\end{equation*}
\end{defn}

\begin{prop}\phantomsection\label{postnikovnilcomplete}Suppose that
\begin{enumerate}
    \item $\bm\tau$ and $\textcat{P}$ satisfy the obstruction conditions relative to $\mathcal{A}$ for a suitable class of morphisms $\textcat{S}$, 
    \item For any finite collection $\{U_i\}_{i\in I}$ of $\mathcal{A}$-admissible objects, the map $\coprod_{i\in I} h(U_i)\rightarrow{h(\coprod_{i\in I} U_i)}$ is an equivalence in $\textcat{Stk}(\mathcal{A},\bm\tau|_\mathcal{A})$,
    \item Every $m\textcat{-P}$-representable map of stacks in $\textcat{Stk}(\mathcal{A},\bm\tau|_\mathcal{A})$ satisfies the Postnikov lifting property for $m\geq -1$.
\end{enumerate}Let $f:\mathcal{F}\rightarrow{\mathcal{G}}$ be an $n$-representable morphism of stacks in $\textcat{Stk}(\mathcal{A},\bm\tau|_\mathcal{A})$. Then, $f$ is nilcomplete for all $n\geq -1$.
\end{prop}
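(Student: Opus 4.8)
The plan is to argue by induction on $n$, reducing to the case of a morphism over a representable base and then to the absolute case $f:\mathcal{F}\to *$ where $\mathcal{F}$ is $n$-geometric. The base case $n=-1$ is handled by the remark following Definition \ref{postnikovliftproperty}: any representable presheaf is nilcomplete with respect to $\mathcal{A}$, and a $(-1)$-representable morphism becomes representable after pulling back along any map from an affine, so Lemma \ref{relativeobstructionresult}-style reasoning plus the fact that $\mathcal{A}$-admissible objects are nilcomplete gives the claim. For the inductive step, suppose the statement holds for all $k$-representable morphisms with $k<n$. Given an $n$-representable $f:\mathcal{F}\to\mathcal{G}$ and a point $x:X=\textnormal{Spec}(A)\to\mathcal{G}$ with $X\in\mathcal{A}$, it suffices — since nilcompleteness of $f$ can be checked after base change along all such $x$, and $\mathcal{G}(A)\to\varprojlim_k\mathcal{G}(A_{\leq k})$ factors appropriately — to show that the $n$-geometric stack $\mathcal{F}\times_\mathcal{G}X$ is nilcomplete, i.e.\ that $(\mathcal{F}\times_\mathcal{G}X)(A)\to\varprojlim_k(\mathcal{F}\times_\mathcal{G}X)(A_{\leq k})$ is an equivalence for every $X=\textnormal{Spec}(A)\in\mathcal{A}$.

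\textbf{Key steps.} First I would fix an $n$-atlas $\{U_i\to\mathcal{F}\}_{i\in I}$ of the $n$-geometric stack $\mathcal{F}$ (writing $\mathcal{F}$ now for $\mathcal{F}\times_\mathcal{G}X$), so that each $U_i\to\mathcal{F}$ is an $(n-1)\textcat{-P}$-morphism and $\coprod_i U_i\to\mathcal{F}$ is an epimorphism; by Proposition \ref{segalgroupoidequivalence} we have $\mathcal{F}\simeq|\mathcal{U}_*|$ for the \v{C}ech nerve $\mathcal{U}_*$, with each $\mathcal{U}_m$ a disjoint union of $(n-1)$-geometric stacks of the form $U_{\underline i}$. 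The strategy is to show that the functor $B\mapsto\mathcal{F}(B)$ commutes with the relevant limit by reducing it, level by level in the simplicial direction, to the terms $\mathcal{U}_m$, which are $(n-1)$-geometric and hence nilcomplete by the inductive hypothesis. The two ingredients needed to make this reduction are: (i) the Postnikov lifting property (Assumption (3)) applied to the $(n-1)\textcat{-P}$-morphisms in the atlas, which guarantees that a compatible system of points $X_{\leq k}\to\mathcal{F}$ assembles — after passing to an $\textcat{E}$-cover of $X$ — to a point of $\mathcal{F}(A)$; and (ii) the obstruction-theoretic input of Theorem \ref{obstructiongeometric}, which shows that the obstruction classes to such lifts live in $\pi_0(\textnormal{Map}_{\textcat{Mod}}(\mathbb{L},M))$-groups that vanish after an $\textcat{E}$-cover. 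One then uses closure of $\mathcal{A}$ under $\bm\tau$-descent together with Assumption (2) (so that disjoint unions of representables behave correctly) to descend the resulting equivalence from the $\textcat{E}$-cover back down to $X$ itself, exactly as in the proof of Lemma \ref{ngeoobstruction}.

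\textbf{Main obstacle.} The delicate point is controlling the \emph{higher} homotopy of the mapping spaces, not just $\pi_0$: nilcompleteness asserts an equivalence of $\infty$-groupoids $\mathcal{F}(A)\simeq\varprojlim_k\mathcal{F}(A_{\leq k})$, and the Postnikov lifting property as stated only gives surjectivity on $\pi_0$ of the relevant lifting problem at each stage. Upgrading this to full essential surjectivity and full faithfulness requires running the same lifting argument not just for $\mathcal{F}$ but for its diagonal and iterated loop spaces — equivalently, bootstrapping from the fact that $\Omega_x\mathcal{F}(A)$ again arises as the value at $A$ of an $(n-1)$-geometric stack (a loop stack), to which the inductive hypothesis applies. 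Concretely, I expect the hardest bookkeeping to be showing that the tower $\{\mathcal{F}(A_{\leq k})\}_k$ satisfies a Mittag-Leffler-type condition ensuring $\varprojlim^1$ vanishing, so that the limit of $\infty$-groupoids is computed correctly; this is where the $(n+2)$-connectivity estimate of Theorem \ref{2nconnective} and the left-completeness of the $t$-structure (part of Postnikov compatibility) enter, guaranteeing that the cotangent complexes $\mathbb{L}_{B/A}$ controlling successive stages become highly connected and the obstruction/lifting data stabilizes.
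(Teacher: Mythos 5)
Your overall skeleton (induction on $n$, reduction to an $n$-representable map over a representable base, lifting points through the atlas via the Postnikov lifting property, and checking contractibility only after passing to an $\textcat{E}$-cover because the fibre functor is a stack) matches the paper. But two of your central steps are off, and the second one is a genuine gap.

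First, the proposed reduction "level by level in the simplicial direction" does not work as stated. Writing $\mathcal{F}\simeq|\mathcal{U}_*|$ does not let you compute $\mathcal{F}(A)$ as the geometric realisation of the $\mathcal{U}_m(A)$: mapping a representable into a colimit of stacks is not the colimit of the mapping spaces (the colimit is computed after stackification), and even if it were, a geometric realisation (a $\Delta^{op}$-colimit) does not commute with the inverse limit $\varprojlim_k$. The paper never attempts this commutation; it works \emph{fibrewise} at a single point $x\in\pi_0(\varprojlim_k\mathcal{F}(A_{\leq k}))$.

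Second, and more importantly, your "main obstacle" paragraph shows you are missing the idea that closes the argument. You do not need loop stacks, a Mittag-Leffler condition, $\varprojlim^1$-vanishing, or the connectivity estimates of Theorem \ref{2nconnective}. Once the Postnikov lifting property has produced a compatible system $y_k:X_{\leq k}\to U$ lifting $x$ through an atlas chart $U\to\mathcal{F}$ in $(n-1)\textcat{-P}$, the inductive hypothesis applies to the $(n-1)$-\emph{representable} morphism $U\to\mathcal{F}$ itself and says precisely that the square with top row $U(A)\to\varprojlim_kU(A_{\leq k})$ and bottom row $\mathcal{F}(A)\to\varprojlim_k\mathcal{F}(A_{\leq k})$ is a pullback of $\infty$-groupoids. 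Hence the fibre of the bottom map at $x$ is equivalent to the fibre of the top map at $y=\varprojlim_ky_k$, and the latter is contractible because $U$ is representable (the $n=-1$ case). This single pullback square handles all higher homotopy at once; the only place where one settles for $\pi_0$-information is in producing the lift $y$, which is exactly what the Postnikov lifting property supplies. Without this step your argument has no mechanism for proving full faithfulness of $\mathcal{F}(A)\to\varprojlim_k\mathcal{F}(A_{\leq k})$, and the bootstrapping via loop stacks you sketch would require knowing that loop stacks of $n$-geometric stacks are again geometric with controlled atlases, which is neither established in the paper nor needed.
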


\begin{proof}We want to prove that, for any $X=\textnormal{Spec}(A)\in\mathcal{A}$, the map 
\begin{equation*}
    \mathcal{F}(A)\rightarrow{\mathcal{G}(A)\times_{\varprojlim_k \mathcal{G}(A_{\leq k})}\varprojlim_k \mathcal{F}(A_{\leq k})}
\end{equation*}is an equivalence in $\infty\textcat{Grpd}$. It suffices to show that the fibre of this morphism taken at any vertex is contractible. Indeed, take $x\in \pi_0(\mathcal{G}(A)\times_{\varprojlim_k \mathcal{G}(A_{\leq k})}\varprojlim_k \mathcal{F}(A_{\leq k}))$
and denote by $x'$ the projection of $x$ into $\pi_0(\mathcal{G}(A))$. The map $\mathcal{F}\times_\mathcal{G}X \rightarrow{\mathcal{F}}$ is $n$-representable since $n$-representable maps are stable under pullback. Since nilcompleteness of the map $\mathcal{F}\times_\mathcal{G}X\rightarrow{X}$ at $A$ implies nilcompleteness of $f$ at $A$, we can just replace $\mathcal{F}$ by $\mathcal{F}\times_\mathcal{G}X$ and $\mathcal{G}$ by $X$ in our statement. Hence, we are reduced to proving the statement in the case when we have an $n$-representable map $f:\mathcal{F}\rightarrow{Y}$, with $\mathcal{F}$ an $n$-geometric stack and $Y=\textnormal{Spec}(B)$ a $(-1)$-geometric stack on $\mathcal{A}$. We prove the statement by induction on $n$.

We note that the $(-1)$-geometric stack $Y$ is nilcomplete with respect to $\mathcal{A}$. It therefore suffices to show that the map 
\begin{equation*}
    \mathcal{F}(A)\rightarrow{\varprojlim_k\mathcal{F}(A_{\leq k})}
\end{equation*}is an equivalence for every $A\in\mathcal{A}^{op}$. The point $x$ is now a point in $\pi_0( {\varprojlim_k }\mathcal{F}(A_{\leq k}))$. We will let $x_k:X_{\leq k} \rightarrow{\mathcal{F}}$ denote the morphism of stacks corresponding to the projection of $x$ into $\pi_0(\mathcal{F}(A_{\leq k}))$. 

When $n=-1$, we note that $\mathcal{F}$ is a $(-1)$-geometric stack and the result easily follows. Now suppose that $n\geq 0$ and consider the morphism $x_0:X_{\leq 0}\rightarrow{\mathcal{F}}$. Since $\mathcal{F}$ is $n$-geometric, it has an $n$-atlas $\{U_i\rightarrow{\mathcal{F}}\}_{i\in I}$. By Proposition \ref{localepimorphismprop}, there exists a $\bm{\tau}$-covering family $\{V_j\rightarrow{X_{\leq 0}}\}_{j\in J}$ of $(-1)$-geometric stacks on $\mathcal{A}$ along with a morphism $u:J\rightarrow{I}$ such that the induced morphism $V_j\rightarrow{\mathcal{F}}$ factors through $U_{u(j)}$. We note that, by Condition (\ref{artin5}) of Definition \ref{artinsconditions}, we may refine this to a finite $\textcat{S}$-covering family $\{V_k'\rightarrow{X_{\leq 0}}\}_{k\in K}$. We let $V=\coprod_{k\in K} V_k'$, and consider this as a representable stack $\textnormal{Spec}(A')$ on $\mathcal{A}$.

We define a functor $\mathcal{S}:(\mathcal{A}_{/X_{\leq 0}})^{op}\rightarrow{\infty\textcat{Grpd}}$ which takes a map $\pi_0(A)\rightarrow{A'}$ to the fibre of the morphism 
\begin{equation*}
    \mathcal{F}(A')\rightarrow{\varprojlim_k\mathcal{F}(A'_{\leq k})}
\end{equation*}at $x_0$. We see that this defines a stack on $\mathcal{A}_{/X_{\leq 0}}$. To show that $\mathcal{S}(A)$ is contractible, it suffices to show that $\mathcal{S}(A')$ is contractible for any epimorphism $\textnormal{Spec}(A')\rightarrow{\textnormal{Spec}(A)}$ of stacks. Therefore, we can replace $X_{\leq 0}$ with its $\textcat{S}$-cover $V$ and there will exist a corresponding $U:=U_i$ for some $i\in I$ such that we have a commutative diagram 
\begin{equation*}
    \begin{tikzcd}
    & U\arrow{d}\\
    X_{\leq 0}\arrow{r}{x_0} \arrow{ur}{y_0} &\mathcal{F}
    \end{tikzcd}
\end{equation*}with the morphism $U\rightarrow{\mathcal{F}}$ in $(n-1)\textcat{-P}$. We claim that there is a point $y\in \pi_0(\varprojlim_k U(A_{\leq k}))$ whose image in $\pi_0(\varprojlim_k\mathcal{F}(A_{\leq k}))$ is $x$. We proceed by induction on $k\geq 0$ to construct a sequence of compatible maps $y_k:X_{\leq k}\rightarrow{U}$. Indeed, when $k=0$, we can consider the induced map $y_0:X_{\leq 0}\rightarrow{U}$ coming from the above commutative diagram. Now suppose that $k>0$ and that we have a map $y_{k-1}:X_{\leq k-1}\rightarrow{U}$. Using our Postnikov lifting property for the $(n-1)\textcat{-P}$ map $U\rightarrow{\mathcal{F}}$, there exists a map $y_{k}:X_{\leq k}\rightarrow{U}$ such that the subdiagrams in the following commutative diagram commute 
\begin{equation*}
    \begin{tikzcd}
    X_{\leq k-1}\arrow{d} \arrow{r}{y_{k-1}} & U\arrow{d}\\
    X_{\leq k} \arrow[ur,dotted,"y_{k}"]\arrow{r}{x_{k}} & \mathcal{F}
    \end{tikzcd}
\end{equation*}Given such a sequence $(y_k$), then $y=\varprojlim_k y_k\in \pi_0(\varprojlim_k U(A_{\leq k}))$ has image $x$ in $\pi_0(\varprojlim_k \mathcal{F}(A_{\leq k}))$.

We examine the following commutative diagram
\begin{equation*}
    \begin{tikzcd}
    U(A)\arrow{d} \arrow{r} & \varprojlim_k U(A_{\leq k}) \arrow{d}\\
    \mathcal{F}(A) \arrow{r} & \varprojlim_k \mathcal{F}(A_{\leq k})
    \end{tikzcd}
\end{equation*}
We note that, since $U\rightarrow{\mathcal{F}}$ is $(n-1)$-representable, then by our inductive hypothesis this is a pullback square. Since each $x\in\pi_0(\varprojlim_k \mathcal{F}(A_{\leq k}))$ corresponds to some point $y\in \pi_0(\varprojlim_k U(A_{\leq k}))$, the fibre of the morphism $\mathcal{F}(A)\rightarrow \varprojlim_k \mathcal{F}(A_{\leq k})$ at $x$ is equivalent to the fibre of the morphism $U(A)\rightarrow\varprojlim_k U(A_{\leq k})$ at $y$. Now, since $U(A)\rightarrow{\varprojlim_k U(A_{\leq k})}$ is an equivalence by the case $n=-1$, its fibre at $y$ is contractible, and hence the fibre of $\mathcal{F}(A)\rightarrow{\varprojlim_k \mathcal{F}(A_{\leq k})}$ at $x$ is also contractible. 
\end{proof}

\begin{remark}
    As an immediate corollary we see that in this situation any $n$-geometric stack is nilcomplete. 
\end{remark}

\subsection{Climbing the Postnikov Tower}

Suppose that $(\mathcal{C},\mathcal{C}_{\geq 0},\mathcal{C}_{\leq 0},\mathcal{C}^{0},\bm\tau,\textcat{P},\mathcal{A},\textcat{M})$ is a Postnikov compatible derived geometry context and that $k\geq 1$.

\begin{lem}\phantomsection\label{postnikovsquarezero}
     There exists a derivation $d_{k}\in\pi_0(\textcat{Der}(A_{\leq k-1},\pi_k(A)[k+1]))$ such that $A_{\leq k-1}\oplus_{d_k}\pi_k(A)[k]$ is isomorphic to $A_{\leq k}$.
\end{lem}

\begin{proof}
This follows from \cite[c.f. Theorem 2.1.35]{ben-bassat_perspective_2024} using Theorem \ref{2nconnective} and Assumption (\ref{postnikovcomp5}) of Postnikov compatibility.
\end{proof}

\begin{lem}\phantomsection\label{postnikovcotangentvanish}
There exist natural equivalences 
\begin{equation*}\begin{aligned}
\pi_{k+2}(\mathbb{L}_{A_{\leq k-1}/A_{\leq k}})&\simeq 0\\
    \pi_{k+1}(\mathbb{L}_{A_{\leq k-1}/A_{\leq k}})&\simeq \pi_k(A)\\
    \pi_i(\mathbb{L}_{A_{\leq k-1}/A_{\leq k}})&=0\quad\text{for $i\leq k$}\end{aligned}
\end{equation*}

\end{lem}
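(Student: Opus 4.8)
The plan is to realise $f\colon A_{\leq k}\to A_{\leq k-1}$ as a highly connected morphism and then to extract the low-degree homotopy groups of $\mathbb{L}_{A_{\leq k-1}/A_{\leq k}}$ from the connectivity estimates of Theorem \ref{2nconnective} applied to the comparison map $\epsilon_f$. First I would identify the (co)fibre of $f$. By Lemma \ref{postnikovsquarezero} there is a derivation $d_k\in\pi_0(\textcat{Der}(A_{\leq k-1},\pi_k(A)[k+1]))$ together with an identification $A_{\leq k}\simeq A_{\leq k-1}\oplus_{d_k}\pi_k(A)[k]$ in $\mathcal{A}^{op}_{/A_{\leq k-1}}$. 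Applying Lemma \ref{pushoutAsqzM} with $A$ there taken to be $A_{\leq k-1}$ and $M$ there taken to be $\pi_k(A)[k+1]$ (so that $\Omega M=\pi_k(A)[k]$) produces a fibre-cofibre sequence $A_{\leq k}\to A_{\leq k-1}\to\pi_k(A)[k+1]$; rotating, this also gives a fibre sequence $\pi_k(A)[k]\to A_{\leq k}\to A_{\leq k-1}$. Hence $\textnormal{cofib}(f)\simeq\pi_k(A)[k+1]$ with $\pi_k(A)$ regarded as an $A_{\leq k}$-module through $A_{\leq k}\to\pi_0(A)$; in particular $\textnormal{cofib}(f)$ is $(k+1)$-connective, and $k+1>0$ since $k\geq 1$.

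Next, since we are working in a Postnikov compatible derived geometry context, the connectivity hypothesis on $\textcat{LSym}^n$ preceding Theorem \ref{2nconnective} holds, so that theorem applies with $n=k+1$. It gives that the fibre of
\begin{equation*}
\epsilon_f\colon A_{\leq k-1}\otimes^\mathbb{L}_{A_{\leq k}}\textnormal{cofib}(f)\longrightarrow\mathbb{L}_{A_{\leq k-1}/A_{\leq k}}
\end{equation*}
is $(k+3)$-connective, whence $\pi_i(\epsilon_f)$ is an isomorphism for all $i\leq k+2$. The problem thus reduces to computing $\pi_i\bigl(A_{\leq k-1}\otimes^\mathbb{L}_{A_{\leq k}}\textnormal{cofib}(f)\bigr)$ for $i\leq k+2$.

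Finally I would carry out this computation. Set $N:=A_{\leq k-1}\otimes^\mathbb{L}_{A_{\leq k}}\pi_k(A)$, so that $A_{\leq k-1}\otimes^\mathbb{L}_{A_{\leq k}}\textnormal{cofib}(f)\simeq N[k+1]$ and only $\pi_0(N)$ and $\pi_1(N)$ are relevant. Tensoring the fibre sequence $\pi_k(A)[k]\to A_{\leq k}\to A_{\leq k-1}$ from the first paragraph by $\pi_k(A)$ over $A_{\leq k}$ yields a cofibre sequence $\bigl(\pi_k(A)\otimes^\mathbb{L}_{A_{\leq k}}\pi_k(A)\bigr)[k]\to\pi_k(A)\to N$ whose first term is $k$-connective; its long exact sequence, together with $\pi_0(A_{\leq k})=\pi_0(A_{\leq k-1})=\pi_0(A)$ and the discreteness of $\pi_k(A)$, gives $\pi_0(N)\cong\pi_k(A)$ and $\pi_1(N)=0$ (this is where $k\geq 1$ enters). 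Consequently $\pi_i\bigl(A_{\leq k-1}\otimes^\mathbb{L}_{A_{\leq k}}\textnormal{cofib}(f)\bigr)$ vanishes for $i\leq k$, equals $\pi_k(A)$ for $i=k+1$, and vanishes for $i=k+2$; transporting these along the isomorphisms $\pi_i(\epsilon_f)$, $i\leq k+2$, proves the three asserted identities.

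The one point that is more than bookkeeping is that all three conclusions --- in particular $\pi_{k+2}(\mathbb{L}_{A_{\leq k-1}/A_{\leq k}})=0$ --- rely on the \emph{full} $(n+2)$-connectivity of $\textnormal{fib}(\epsilon_f)$ in Theorem \ref{2nconnective}, rather than only $(n+1)$-connectivity; with only $(n+1)$ one would merely recover $\pi_{k+1}$. Beyond that the argument is routine: the small computation of $\pi_0(N)$ and $\pi_1(N)$ uses nothing past right-exactness of $\otimes^\mathbb{L}$ and the fact that $A_{\leq k}$ and $A_{\leq k-1}$ have the same $\pi_0$.
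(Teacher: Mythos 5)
Your proof is correct and follows essentially the same route as the paper: identify $\textnormal{cofib}(f)\simeq\pi_k(A)[k+1]$ via Lemmas \ref{postnikovsquarezero} and \ref{pushoutAsqzM}, invoke Theorem \ref{2nconnective} to reduce to computing $\pi_i\bigl(A_{\leq k-1}\otimes^\mathbb{L}_{A_{\leq k}}\textnormal{cofib}(f)\bigr)$ in degrees $i\leq k+2$, and then extract these from the base-changed fibre sequence. The only (harmless) divergence is at the last step, where the paper splits the relevant cofibre using the nullhomotopy of the multiplication map $K\otimes^\mathbb{L}_{A_{\leq k}}K\to K$, whereas you read off $\pi_0(N)$ and $\pi_1(N)$ directly from connectivity of $(\pi_k(A)\otimes^\mathbb{L}_{A_{\leq k}}\pi_k(A))[k]$ — a slight simplification; note also that your closing remark is not quite right, since even $(n+1)$-connectivity of $\textnormal{fib}(\epsilon_f)$ would give a surjection $0=\pi_1(N)\twoheadrightarrow\pi_{k+2}(\mathbb{L}_{A_{\leq k-1}/A_{\leq k}})$, which already forces the vanishing.
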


\begin{proof}Denote the morphism $A_{\leq k}\rightarrow{A_{\leq k-1}}$ by $f$. Since $\textnormal{cofib}(f)$ is $k$-connective, by Theorem \ref{2nconnective} and Assumption (\ref{postnikovcomp5}) of Postnikov compatibility, the fibre of the induced map $A_{\leq k-1}\otimes^\mathbb{L}_{A_{\leq k}}\textnormal{cofib}(f)\rightarrow{\mathbb{L}_{A_{\leq k-1}/A_{\leq k}}}$ is $(k+2)$-connective. It follows, by considering the induced long exact sequence, that 
\begin{equation*}
    \pi_i(\mathbb{L}_{A_{\leq k-1}/A_{\leq k}})\simeq \pi_i(A_{\leq k-1}\otimes^\mathbb{L}_{A_{\leq k}}\textnormal{cofib}(f))
\end{equation*}for $i\leq k+1$ and there is a surjection $\pi_{k+2}(A_{\leq k-1}\otimes_{A_{\leq k}}\textnormal{cofib}(f))\rightarrow\pi_{k+2}(\mathbb{L}_{A_{\leq k-1}/A_{\leq k}})$. Hence, we just need to calculate $\pi_i(A_{\leq k-1}\otimes^\mathbb{L}_{A_{\leq k}}\textnormal{cofib}(f))$. 

Let $K=\pi_k(A)[k+1]=\textnormal{cofib}(f)$. We easily note that there is a fibre-cofibre sequence in $\textcat{Mod}_{A_{\leq k-1}}$
\begin{equation*}
    {K\otimes^\mathbb{L}_{A_{\leq k}}K}\xrightarrow{m}{K\simeq A_{\leq k}\otimes^\mathbb{L}_{A_{\leq k}}K}\rightarrow{A_{\leq k-1}\otimes^\mathbb{L}_{A_{\leq k}}\textnormal{cofib}(f)}
\end{equation*}where $m$ is the multiplication map. Therefore, it suffices to calculate $\pi_i(\textnormal{cofib}(m))$. We note that the multiplication morphism $m:K\otimes^\mathbb{L}_{A_{\leq k}} K\rightarrow{K}$ is nullhomotopic, using the previous lemma and similar reasoning to \cite[Proposition 7.4.1.14]{lurie_higher_2017}. Therefore, we have a splitting
\begin{equation*}
     \textnormal{cofib}(m)\simeq K\coprod (K\otimes^\mathbb{L}_{A_{\leq k}} K)[1]
\end{equation*}from which our result easily follows. 

\end{proof}

Suppose we want to show that an $n$-representable map in $\textcat{PSh}(\textcat{DAff}^{cn}(\mathcal{C}))$ satisfies the Postnikov lifting property with respect to $\mathcal{A}$ as in Definition \ref{postnikovliftproperty}. By Lemma \ref{postnikovsquarezero}, the morphism $A_{\leq k}\rightarrow{A_{\leq k-1}}$ is given by a square zero extension. Hence, it suffices to show the following condition. 

\begin{defn}
    Suppose that $f:\mathcal{F}\rightarrow{\mathcal{G}}$ is an $n$-representable morphism of presheaves in $\textcat{PSh}(\textcat{DAff}^{cn}(\mathcal{C}))$. Then, $f$ satisfies the \textit{square-zero lifting property with respect to $\mathcal{A}$} if, for every $X=\textnormal{Spec}(A)\in\mathcal{A}$, $M\in\textcat{M}_{A,1}$, and $d\in\pi_0(\textcat{Der}(A,M))$, the following lifting problem has at least one solution $X_d[\Omega M]\rightarrow{\mathcal{F}}$
\begin{equation*}\begin{tikzcd}
    X\arrow{d} \arrow{r} & \mathcal{F}\arrow{d}{f}\\
    X_d[\Omega M] \arrow[ur,dotted]\arrow{r}  & \mathcal{G}
\end{tikzcd}\end{equation*}
\end{defn}
Recall the definition of a morphism of stacks being in $n\textcat{-P}|_\mathcal{A}$ from Notation \ref{ngeometricnotation}. We will make the following assumption on such morphisms.

\begin{assumption}\phantomsection\label{assumptiononP}
    If a morphism $f:\mathcal{F}\rightarrow{\mathcal{G}}$ of stacks in $\textcat{Stk}(\mathcal{A},\bm\tau|_\mathcal{A})$ is in $n\textcat{-P}|_\mathcal{A}$ then, for any $X=\textnormal{Spec}(A)\in\mathcal{A}$, any $x:X\rightarrow{\mathcal{F}}$ and any $M\in\textcat{M}_{A,1}$, we have that 
    $\pi_0(\textnormal{Map}_{\textcat{Mod}_A}(\mathbb{L}_{\mathcal{F}/\mathcal{G},x},M))=0$.
\end{assumption}

\begin{prop}\phantomsection\label{sqzlift}Suppose that
\begin{enumerate}
    \item $\bm{\tau}$ and $\textcat{P}$ satisfy the obstruction conditions relative to $\mathcal{A}$ for a suitable class of morphisms $\textcat{S}$, 
    \item For any finite collection $\{U_i\}_{i\in I}$ of $\mathcal{A}$-admissible objects, the map $\coprod_{i\in I} h(U_i)\rightarrow{h(\coprod_{i\in I} U_i)}$ is an equivalence in $\textcat{Stk}(\mathcal{A},\bm\tau|_\mathcal{A})$,
    \item Assumption \ref{assumptiononP} is satisfied.
\end{enumerate}Then, every $n\textcat{-P}|_\mathcal{A}$-representable morphism of stacks in $\textcat{Stk}(\mathcal{A},\bm{\tau}|_\mathcal{A})$ satisfies the square-zero lifting property, and hence the Postnikov lifting property with respect to $\mathcal{A}$.\end{prop}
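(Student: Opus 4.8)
The plan is to establish the \emph{square-zero lifting property with respect to $\mathcal{A}$}; the implication from it to the Postnikov lifting property of Definition \ref{postnikovliftproperty} is exactly the reduction already indicated before that definition. Indeed, by Lemma \ref{postnikovsquarezero} the transition map $X_{\leq k}\to X_{\leq k-1}$ of a Postnikov tower in $\textcat{DAff}^{cn}(\mathcal{C})$ is, over $X_{\leq k-1}=\textnormal{Spec}(A_{\leq k-1})$, isomorphic to $X'_{d_k}[\Omega M_k]\to X'$ with $X'=X_{\leq k-1}$, $M_k=\pi_k(A)[k+1]$ and $d_k\in\pi_0(\textcat{Der}(A_{\leq k-1},M_k))$; for $k>0$ the module $M_k$ is at least $2$-connective and lies in $\textcat{M}_{A_{\leq k-1},1}$, so a solution of the square-zero lifting problem for $f$ with parameters $(A_{\leq k-1},M_k,d_k)$ is precisely the desired lift $X_{\leq k}\to\mathcal{F}$.

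So let $f\colon\mathcal{F}\to\mathcal{G}$ be an $n\textcat{-P}$-representable morphism in $\textcat{Stk}(\mathcal{A},\bm\tau|_\mathcal{A})$; in particular it is $n$-representable. Under hypotheses (1) and (2) of the proposition, Theorem \ref{obstructiongeometric}(\ref{obstructionitem}) applies and shows that $f$ has an obstruction theory relative to $\mathcal{A}$, which is the input required by Proposition \ref{liftpropositionpath}. Fix $X=\textnormal{Spec}(A)\in\mathcal{A}$, a module $M\in\textcat{M}_{A,1}$, a derivation $d\in\pi_0(\textcat{Der}(A,M))$, and a commutative square
\begin{equation*}
\begin{tikzcd}
X \arrow{d} \arrow{r} & \mathcal{F} \arrow{d}{f}\\
X_d[\Omega M] \arrow{r} & \mathcal{G}
\end{tikzcd}
\end{equation*}
in which the left vertical map is the canonical $X\to X_d[\Omega M]$ coming from $A\oplus_d\Omega M\to A$. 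The first step is to repackage this square as a point $x\in\pi_0\bigl(\mathcal{F}(A)\times_{\mathcal{G}(A)}\mathcal{G}(A\oplus_d\Omega M)\bigr)$, with image $y\in\pi_0(\mathcal{F}(A))$, and to observe that a solution $X_d[\Omega M]\to\mathcal{F}$ of the lifting problem is precisely a point of the homotopy fibre $L(x)$, taken at $x$, of the natural map $\mathcal{F}(A\oplus_d\Omega M)\to\mathcal{F}(A)\times_{\mathcal{G}(A)}\mathcal{G}(A\oplus_d\Omega M)$. Thus it suffices to prove $L(x)\neq\emptyset$.

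For the last step I would invoke Proposition \ref{liftpropositionpath}: since $f$ has an obstruction theory, there is a natural point $\alpha(x)\in\textnormal{Map}_{\textcat{Mod}_A}(\mathbb{L}_{\mathcal{F}/\mathcal{G},y},M)$ and an equivalence $L(x)\simeq\Omega_{\alpha(x),0}\textnormal{Map}_{\textcat{Mod}_A}(\mathbb{L}_{\mathcal{F}/\mathcal{G},y},M)$. This path space is non-empty exactly when $\alpha(x)$ and the zero map lie in the same path component, i.e. when $[\alpha(x)]=0$ in $\pi_0\textnormal{Map}_{\textcat{Mod}_A}(\mathbb{L}_{\mathcal{F}/\mathcal{G},y},M)$, an abelian group since $\textcat{Mod}_A$ is stable. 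By Assumption \ref{assumptiononP}, applied to the morphism $f$ (which is in $n\textcat{-P}$), the point $y\colon X\to\mathcal{F}$ and the module $M\in\textcat{M}_{A,1}$, this group vanishes; hence $\alpha(x)$ is automatically nullhomotopic, $L(x)$ is non-empty, and the lifting problem has a solution. This establishes the square-zero lifting property, and by the first paragraph the Postnikov lifting property follows.

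The conceptual content is carried entirely by Proposition \ref{liftpropositionpath} together with Assumption \ref{assumptiononP}: once the obstruction class $\alpha(x)$ has been identified and the ambient obstruction group is seen to be trivial, the argument is formal, and in particular there is no induction on $n$, the inductive steps being absorbed into Theorem \ref{obstructiongeometric} and into whatever establishes Assumption \ref{assumptiononP} in a given context (for instance Example \ref{perfectvanishingcotangent}). Accordingly, I expect the only delicate points to be bookkeeping ones: verifying that the transition maps of a Postnikov tower genuinely fit the square-zero lifting format — in particular that $\pi_k(A)[k+1]\in\textcat{M}_{A_{\leq k-1},1}$ for $k>0$, which follows from the good-system axioms of Definition \ref{goodsystemmodules} together with Lemma \ref{postnikovsquarezero} — and the precise identification of the solution set of the lifting problem with the homotopy fibre $L(x)$ of Proposition \ref{liftpropositionpath}. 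That identification, straightforward as it is, is the step I would be most careful with.
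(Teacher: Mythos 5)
Your proposal is correct and follows essentially the same route as the paper: identify the solution set of the lifting problem with the homotopy fibre $L(x)$ of Proposition \ref{liftpropositionpath}, obtain the obstruction theory from Theorem \ref{obstructiongeometric}, and kill the obstruction class using Assumption \ref{assumptiononP} applied to the relevant relative cotangent complex. The only cosmetic difference is that the paper first base-changes to $\mathcal{H}=\mathcal{F}\times_\mathcal{G}X_d[\Omega M]$ and lifts along the identity of $X_d[\Omega M]$, whereas you apply Proposition \ref{liftpropositionpath} directly to $f$; since $\mathbb{L}_{\mathcal{F}/\mathcal{G},x}\simeq\mathbb{L}_{\mathcal{H}/X_d[\Omega M],x}$ by Lemma \ref{globalcotangentresults}, the two are interchangeable.
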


\begin{proof}
    Indeed, suppose that we have an $n\textcat{-P}|_\mathcal{A}$-representable map $f:\mathcal{F}\rightarrow{\mathcal{G}}$ in $\textcat{Stk}(\mathcal{A},\bm{\tau}|_\mathcal{A})$ and a lifting problem
\begin{equation*}\begin{tikzcd}
    X\arrow{d} \arrow{r} & \mathcal{F}\arrow{d}{f}\\
    X_d[\Omega M] \arrow{r}\arrow[ur,dotted]\arrow{r}  & \mathcal{G}
\end{tikzcd}\end{equation*}where $X=\textnormal{Spec}(A)$, $M\in\textcat{M}_{A,1}$, and $d\in\pi_0(\textcat{Der}(A,M))$. Consider the $n$-geometric stack $\mathcal{H}:=\mathcal{F}\times_\mathcal{G} X_d[\Omega M]$ and consider the map $x:X\rightarrow{\mathcal{F}\times_\mathcal{G} X_d[\Omega M]=\mathcal{H}}$ induced by the pullback. We see that it suffices to find a solution to the following lifting problem 
\begin{equation*}\begin{tikzcd}
X\arrow{d} \arrow{r}{x} & \mathcal{H}\arrow{d}\\
   X_d[\Omega M] \arrow[ur,dotted]\arrow{r}{id} & X_d[\Omega M]
\end{tikzcd}\end{equation*}
Now, since $\bm{\tau}$ and $\textcat{P}$ satisfy the obstruction conditions, then by Theorem \ref{obstructiongeometric}, $\mathcal{H}$ has an obstruction theory. Hence, by Proposition \ref{liftpropositionpath}, the obstruction to lifting the map $X\rightarrow{X_d[\Omega M]}$ to a map $X_d[\Omega M]\rightarrow{\mathcal{H}}$ lies in 
\begin{equation*}
    \pi_0(\textnormal{Map}_{\textcat{Mod}_A}(\mathbb{L}_{\mathcal{H}/X_d[\Omega M],x},M))
\end{equation*}which is zero by our assumption, since the map $\mathcal{H}\rightarrow{X_d[\Omega M]}$ is in $n\textcat{-P}|_\mathcal{A}$.

\end{proof}

\begin{remark}
    It follows that, in the above context, $n$-representable$|_\mathcal{A}$ morphisms of stacks in $\textcat{Stk}(\mathcal{A},\bm\tau|_\mathcal{A})$ are nilcomplete. 
\end{remark}

\section{Representability Contexts}\label{chapter5}

In this section, we want to put conditions on our derived geometry contexts such that, in some sense, the derived geometry on them can be determined by the classical geometry on their hearts. We will make this statement more precise when we prove our representability theorem. 

\subsection{Affines in the Heart}

Suppose that $(\mathcal{C},\mathcal{C}_{\geq 0},\mathcal{C}_{\leq 0},\mathcal{C}^0,\bm\tau,\textcat{P},\mathcal{A},\textcat{M})$ is a Postnikov compatible derived geometry context. We recall that this consists of a derived algebraic context $(\mathcal{C},\mathcal{C}_{\geq 0},\mathcal{C}_{\leq 0},\mathcal{C}^0)$, a collection of connective affines $\mathcal{A}\subseteq \textcat{DAff}^{cn}(\mathcal{C})$, and a certain collection $\textcat{M}$ of \textit{good modules}. Since we are assuming that our context is Postnikov compatible, there is an adjunction 
\begin{equation*}
    \pi_0=\tau_{\leq 0}|_{\textcat{DAlg}^{cn}(\mathcal{C})}:\textcat{DAlg}^{cn}(\mathcal{C})\leftrightarrows{\textcat{DAlg}^\heartsuit(\mathcal{C})}:\iota_{\leq 0}|_{\textcat{DAlg}^\heartsuit(\mathcal{C})}
\end{equation*}We will denote the induced adjunction on the opposite categories by 
\begin{equation*}
    \iota:\textcat{DAff}^{\heartsuit}(\mathcal{C})\leftrightarrows\textcat{DAff}^{cn}(\mathcal{C}):t_0
\end{equation*}

\begin{defn}
    Define the full subcategory $\mathcal{A}^\heartsuit\subseteq\mathcal{A}\subseteq\textcat{DAff}^\heartsuit(\mathcal{C})$ to consist of objects $X$ in $\textcat{DAff}^\heartsuit(\mathcal{C})$ such that $X=t_0(Y)$ for some $Y\in \mathcal{A}$. 
\end{defn}

\begin{defn}
    We define a class $\textcat{P}^\heartsuit$ of maps in $\mathcal{A}^\heartsuit$ and a collection $\bm\tau^\heartsuit$ of covering families in $\textnormal{Ho}(\mathcal{A}^{\heartsuit})$ as follows
    \begin{enumerate}
        \item The collection $\textcat{P}^\heartsuit$ is defined to be the collection of morphisms $f$ such that $f\in\textcat{P}\cap \mathcal{A}^\heartsuit$,
        \item The collection of $\bm\tau^\heartsuit$-covers is the collection $\{t_0(U_i)\rightarrow{t_0(X)}\}_{i\in I}$ such that $\{U_i\rightarrow{X}\}_{i\in I}$ is a $\bm\tau$-cover in $\mathcal{A}$.
    \end{enumerate}
\end{defn}

It is clear from our definition that the collection of $\bm\tau^\heartsuit$-covering families in $\textnormal{Ho}(\mathcal{A}^\heartsuit)$ defines a Grothendieck pre-topology on $\mathcal{A}^\heartsuit$. If $\iota|_{\mathcal{A}^\heartsuit}:(\mathcal{A}^\heartsuit,\bm\tau^\heartsuit)\rightarrow{(\mathcal{A},\bm\tau|_\mathcal{A})}$ is a continuous functor of $(\infty,1)$-sites, then by our remarks in Section \ref{continuousfunctorsection}, $\iota|_{\mathcal{A}^\heartsuit}$ induces the following functors on the corresponding categories of stacks.

\begin{defn}
\begin{enumerate}
    \item The \textit{truncation functor} $t_0$ is defined to be the functor \begin{equation*}
    t_0:=(\iota|_{\mathcal{A}^\heartsuit})^*:\textcat{Stk}(\mathcal{A},\bm{\tau}|_{\mathcal{A}})\rightarrow{\textcat{Stk}(\mathcal{A}^\heartsuit,\bm{\tau}^\heartsuit)}
\end{equation*}
\item The \textit{extension functor} $i$ is defined to be its left adjoint \begin{equation*}
    i:=(\iota|_{\mathcal{A}^\heartsuit})_{\#}:\textcat{Stk}(\mathcal{A}^\heartsuit,\bm{\tau}^\heartsuit)\rightarrow{\textcat{Stk}(\mathcal{A},\bm{\tau}|_{\mathcal{A}})}
    \end{equation*}
\end{enumerate}

\end{defn}

Recall from Section \ref{geometricstacksection} that we use the notation $\textcat{Stk}_n(\mathcal{A},\bm\tau|_\mathcal{A},\textcat{P}|_\mathcal{A})$ to denote the collection of $n$-geometric stacks defined relative to the strong $(\infty,1)$-geometry tuple $(\mathcal{A},\bm\tau|_\mathcal{A},\textcat{P}|_\mathcal{A},\mathcal{A})$. In particular, $(-1)$-geometric stacks are those represented by objects in $\mathcal{A}$. We also recall that there is a fully faithful functor
\begin{equation*}
    \textcat{Stk}_n(\mathcal{A},\bm\tau|_\mathcal{A},\textcat{P}|_\mathcal{A})\rightarrow{\textcat{Stk}_n(\textcat{DAff}^{cn}(\mathcal{C}),\bm\tau,\textcat{P},\mathcal{A})}
\end{equation*}

\subsection{Transverse, Flat, and Derived Strong Modules}

We recall that the $t$-structure on $\textcat{Mod}_A$ is induced from the $t$-structure on $\mathcal{C}$, and hence, by our Postnikov compatibility condition, lines up with the $t$-structure on $\textcat{DAlg}^{cn}(\mathcal{C})$.  In general the homotopy groups $\pi_i$ don't commute with base-change. Under some conditions, such as derived strongness or with some flatness assumptions, they do. We note that these conditions are explored more in \cite[Section 2.3.2]{ben-bassat_perspective_2024}.

\begin{lem}\phantomsection\label{piipushout} Suppose that $C\in \textcat{DAlg}^{cn}(\mathcal{C})$ and $A,B\in\textcat{DAlg}^{\heartsuit}(\mathcal{C})$ along with morphisms $\iota(A)\rightarrow{\iota(B)}$ and $\iota(A)\rightarrow{C}$. Then, $\pi_i(C\otimes^\mathbb{L}_{\iota(A)}\iota(B))\simeq \pi_i(C)\otimes^\mathbb{L}_AB$ in $\textcat{Mod}_B^{\heartsuit}(\mathcal{C})$ for each $i$.
\end{lem}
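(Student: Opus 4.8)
The plan is to realise $C\otimes^{\mathbb{L}}_{\iota(A)}\iota(B)$ as the image of $C$ under a $t$-exact functor and then read off homotopy groups. Regard $\iota(B)$ as a module over $\iota(A)$ via the given morphism $\iota(A)\to\iota(B)$. The first step is to observe that $\iota(B)$ is flat over $\iota(A)$: it is connective and discrete with $\pi_0(\iota(B))\simeq B$, while $\pi_0(\iota(A))\simeq A$ and $\pi_n(\iota(A))\simeq 0$ for $n>0$, so the homotopical flatness of $\iota(B)$ as an $\iota(A)$-module reduces to flatness of $B$ over $A$ in $\mathcal{C}^\heartsuit$, which is the hypothesis in force. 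Consequently the base-change functor
\begin{equation*}
-\otimes^{\mathbb{L}}_{\iota(A)}\iota(B)\colon\textcat{Mod}_{\iota(A)}\rightarrow\textcat{Mod}_{\iota(B)}
\end{equation*}
is $t$-exact, by the characterisation of flatness in terms of exactness of tensoring (see \cite[Section 2.3.2]{ben-bassat_perspective_nodate}), and its restriction to hearts is the ordinary tensor product $-\otimes_A B$ in $\mathcal{C}^\heartsuit$.

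A $t$-exact functor commutes with the truncation functors $\tau_{\le n}$ and $\tau_{\ge n}$, hence with each $\pi_i$; applying this to the connective object $C$ yields
\begin{equation*}
\pi_i\!\big(C\otimes^{\mathbb{L}}_{\iota(A)}\iota(B)\big)\simeq\pi_i(C)\otimes_A B\simeq\pi_i(C)\otimes^{\mathbb{L}}_A B,
\end{equation*}
the last equivalence again using flatness of $B$ over $A$ to identify the derived and underived tensor products. For $i=0$ this is an isomorphism in $\textcat{DAlg}^{\heartsuit}_A(\mathcal{C})$: the relative tensor product computes the pushout $C\sqcup_{\iota(A)}\iota(B)$ in $\textcat{DAlg}^{cn}_{\iota(A)}(\mathcal{C})$, and since the context is Postnikov compatible the functor $\pi_0=\tau_{\le 0}$ is a left adjoint on $\textcat{DAlg}^{cn}(\mathcal{C})$ compatible with the monoidal structure, so it carries this pushout to $\pi_0(C)\otimes_A B$ as $A$-algebras; for $i>0$ the same identification is an isomorphism of $\pi_0(C)\otimes_A B$-modules, natural in $A\to B$ and in $A\to C$.

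The main obstacle is the flatness input itself: the statement genuinely needs $B$ to be flat over $A$ (equivalently, that $C$ and $\iota(B)$ be transverse over $\iota(A)$), for otherwise the higher $\mathrm{Tor}^A_p(\pi_q(C),B)$ contribute to $\pi_{p+q}(C\otimes^{\mathbb{L}}_{\iota(A)}\iota(B))$ and the two sides disagree. Granting this, the only remaining points to pin down carefully are the equivalence between flatness of a module and $t$-exactness of tensoring with it in the present abstract setting, and the compatibility of the degree-zero identification with the algebra structure — both of which follow from the compatibility of the $t$-structure with the symmetric monoidal structure built into the definition of a derived algebraic context.
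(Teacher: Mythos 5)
Your argument is built on the premise that $B$ is flat over $A$, which you describe as ``the hypothesis in force''. It is not: the lemma assumes only that $A,B\in\textcat{DAlg}^{\heartsuit}(\mathcal{C})$ together with morphisms $\iota(A)\rightarrow\iota(B)$ and $\iota(A)\rightarrow C$. No flatness or transversality condition appears in the statement, so the $t$-exactness of $-\otimes^{\mathbb{L}}_{\iota(A)}\iota(B)$ that drives your entire proof is unavailable, and what you have established is a strictly weaker statement. The paper's own proof makes no flatness assumption: it first obtains the case $i=0$ from the fact that $\pi_0$ is a left adjoint (hence preserves the pushout computing the relative tensor product), treats the case where $C$ is concentrated in a single degree ``by shifting'', and then inducts up the Postnikov tower of $C$ using the fibre sequences $K_k\rightarrow C_{\leq k}\rightarrow C_{\leq k-1}$, base-changing along $\iota(A)\rightarrow\iota(B)$ and comparing the two resulting long exact sequences in homotopy. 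Note also that the right-hand side of the lemma is written as the \emph{derived} tensor product $\pi_i(C)\otimes^{\mathbb{L}}_AB$, not the underived one your $t$-exactness argument produces; the higher $\mathrm{Tor}$ terms you worry about are intended to be carried by the right-hand side rather than killed by a flatness hypothesis.

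That said, your closing observation is not idle: without flatness, reconciling $\pi_i(C)\otimes^{\mathbb{L}}_AB$ with an object of the heart (as the stated codomain $\textcat{DAlg}^{\heartsuit}_A(\mathcal{C})$ demands), and the ``by shifting'' reduction in the paper's argument, both require care for exactly the $\mathrm{Tor}$-theoretic reason you give. But flagging a possible defect in the statement is not the same as proving it; as written, your proof substitutes an extra hypothesis for the Postnikov-tower argument the lemma actually calls for.
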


\begin{proof}
We first note that, since the tensor product of objects in the image of the forgetful functor $U:\textcat{DAlg}^{cn}_B(\mathcal{C})\rightarrow{\textcat{Mod}_B}$ is given by the image of the coproduct in $\textcat{DAlg}^{cn}_B(\mathcal{C})$, there is an equivalence
\begin{equation*}
    \pi_0(C\otimes^\mathbb{L}_{\iota(A)}\iota(B))\simeq \pi_0(C)\otimes^\mathbb{L}_AB
\end{equation*}in $\textcat{Mod}_B^\heartsuit$, since $\pi_0$ is a left adjoint. By shifting, the result holds in the situation when $C$ has non-vanishing homotopy group only in degree $k$ for some $k\geq 0$. Now, consider $C\simeq \varinjlim_k C_{\leq k}$ in $\textcat{DAlg}^{cn}(\mathcal{C})$. For $i\leq k$, we note that $\pi_i(C)\simeq \pi_i(C_{\leq k})$ and $\pi_i(C\otimes^\mathbb{L}_{\iota(A)}\iota(B))\simeq \pi_i(C_{\leq k}\otimes^\mathbb{L}_{\iota(A)}\iota(B))$. Hence, it suffices to show that we have an equivalence $\pi_i(C_{\leq k}\otimes^\mathbb{L}_{\iota(A)}\iota(B))\simeq \pi_i(C_{\leq k})\otimes^\mathbb{L}_AB$ for each $k$.

When $k=0$, the result holds since $C_{\leq 0}$ is concentrated in degree $0$. Suppose that $k>0$ and that the result holds for all $n<k$. We define $K_k$ to be the fibre in $\textcat{DAlg}_{\iota(A)}^{cn}(\mathcal{C})$ of the morphism $C_{\leq k}\rightarrow{C_{\leq k-1}}$. Base-changing by the morphism $\iota(A)\rightarrow{\iota(B)}$, we obtain the following fibre sequence
\begin{equation*}
    K_k\otimes^\mathbb{L}_{\iota(A)} \iota(B)\rightarrow{C_{\leq k}\otimes^\mathbb{L}_{\iota(A)} \iota(B)}\rightarrow{C_{\leq k-1}\otimes^\mathbb{L}_{\iota(A)} \iota(B)}
\end{equation*}and hence obtain the following long exact sequence in homotopy
\begin{equation*}
   \dots\rightarrow \pi_i(K_k\otimes^\mathbb{L}_{\iota(A)} \iota(B))\rightarrow{\pi_i(C_{\leq k}\otimes^\mathbb{L}_{\iota(A)} \iota(B))}\rightarrow{\pi_i(C_{\leq k-1}\otimes^\mathbb{L}_{\iota(A)} \iota(B))}\rightarrow{\dots}
\end{equation*}We also have a long exact sequence in $\textcat{Mod}_B^\heartsuit$
\begin{equation*}
\dots\rightarrow\pi_i(K_k)\otimes^\mathbb{L}_AB\rightarrow{\pi
    _i(C_{\leq k})\otimes^\mathbb{L}_AB}\rightarrow{\pi_i(C_{\leq k-1})\otimes^\mathbb{L}_AB}\rightarrow{\dots}
\end{equation*}
We note that $K_k$ is concentrated in degree $k$ for each $k\geq 1$. Therefore, we see that $\pi_i(K_k\otimes^\mathbb{L}_{\iota(A)}\iota(B))\simeq \pi_i(K_k)\otimes^\mathbb{L}_AB$. By induction, $\pi_i(C_{\leq k-1}\otimes^\mathbb{L}_{\iota(A)}\iota(B))\simeq \pi_i(C_{\leq k-1})\otimes^\mathbb{L}_AB$, and hence it follows that $\pi_i(C_{\leq k}\otimes^\mathbb{L}_{\iota(A)}\iota(B))\simeq \pi_i(C_{\leq k})\otimes^\mathbb{L}_AB$. 

\end{proof}

If we impose a flatness condition on our generators, then we can obtain spectral sequences in our derived algebraic context $(\mathcal{C},\mathcal{C}_{\geq 0},\mathcal{C}_{\leq 0},\mathcal{C}^0)$ in a similar way to \cite[Proposition 7.2.1.17]{lurie_higher_2017}. Suppose that $A\in \mathcal{C}$ and consider $\pi_*(A)=\bigoplus_{i}\pi_i(A)$ as an object of $\mathcal{C}^\heartsuit$.

\begin{defn}\cite[Definition 2.3.77]{ben-bassat_perspective_2024}
    An object $P$ of $\mathcal{C}_{\geq 0}$ is \textit{homotopy flat} if, for any object $A$ of $\mathcal{C}$, the natural map of graded objects
    \begin{equation*}
        \pi_*(A)\otimes \pi_*(P)\rightarrow{\pi_*(A\otimes^\mathbb{L}P)}
    \end{equation*}is an equivalence. A derived algebraic context $(\mathcal{C},\mathcal{C}_{\geq 0},\mathcal{C}_{\leq 0},\mathcal{C}^0)$ is \textit{flat} if every object of $\mathcal{C}^0$ is homotopy flat. A derived geometry context is \textit{flat} if the underlying derived algebraic context is flat. 
\end{defn}

\begin{exmp}
    In the setting of Theorem \ref{modelderivedtheorem}, the resulting derived algebraic context will be flat by \cite[Proposition 3.1.69]{ben-bassat_perspective_2024}. 
\end{exmp}

For the rest of this section we fix a flat derived algebraic context $(\mathcal{C},\mathcal{C}_{\geq 0},\mathcal{C}_{\leq 0},\mathcal{C}^0)$. 

\begin{prop}\phantomsection\label{torspectralsequence}
    \cite[Lemma 2.3.79]{ben-bassat_perspective_2024} There is a spectral sequence
    \begin{equation*}
        \textnormal{Tor}^p_{\pi_*(A)}(\pi_*(M),\pi_*(N))_q\Rightarrow{\pi_{p+q}(M\otimes_A^\mathbb{L}N)}
    \end{equation*}for $M,N\in\textcat{Mod}_A^{cn}$. 
\end{prop}

\begin{cor}\phantomsection\label{torzerolemma}
    Suppose that $A\in \textcat{DAlg}^{cn}(\mathcal{C})$ and that $M$ is a connective $A$-module such that $M\otimes^\mathbb{L}_A\pi_0(A)\simeq 0$. Then, $M\simeq 0$. 
\end{cor}

\begin{proof}
    Indeed, consider the spectral sequence
    \begin{equation*}
        \textnormal{Tor}^p_{\pi_*(A)}(\pi_*(M),\pi_0(A))_q\Rightarrow{\pi_{p+q}(M\otimes_A^\mathbb{L}\pi_0(A))\simeq 0}
    \end{equation*}and proceed by induction on $i\geq 0$ to show that $\pi_i(M)\simeq 0$. 
\end{proof}

\begin{defn}Suppose that $A\in\textcat{DAlg}^{cn}(\mathcal{C})$,  $N\in\textcat{Mod}_A^\heartsuit$, and $M\in\textcat{Mod}_A^{cn}$.  Then, \begin{itemize}
    \item $N$ is \textit{transverse to $M\in\textcat{Mod}_A^{cn}$} if $M\otimes_A^\mathbb{L}N$ is in $\textcat{Mod}_A^\heartsuit$,
    \item $M$ is \textit{flat} if every $N\in\textcat{Mod}_A^\heartsuit$ is transverse to it. 
\end{itemize}
\end{defn}

\begin{prop}\phantomsection\label{basechangepi0}\cite[c.f. Lemma 2.3.82]{ben-bassat_perspective_2024}
    Suppose that $A\in\textcat{DAlg}^{cn}(\mathcal{C})$. Let $M$ be a connective $A$-module and $N$ a connective $A$-module. Suppose that, for each $i\leq n$, $\pi_i(N)$ is transverse to $M$. Then, there is an equivalence
    \begin{equation*}
        \pi_i(M\otimes_A^\mathbb{L}N)\simeq\pi_0(M)\otimes^\mathbb{L}_{\pi_0(A)}\pi_i(N)
    \end{equation*}for all $i\leq n$.
\end{prop}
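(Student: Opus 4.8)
The plan is to prove the statement by induction on $n$, reducing at each stage to the analysis of a single homotopy group of a derived tensor product, where the flat spectral sequence of the preceding proposition does the work. First I would set up the base case $n=0$: here the hypothesis says nothing beyond $\pi_0(N)$ being transverse to $M$, and the claim $\pi_0(M\otimes_A^\mathbb{L} N)\simeq \pi_0(M)\otimes^\mathbb{L}_{\pi_0(A)}\pi_0(N)$ follows because $\pi_0$ is a left adjoint (so it commutes with the relative tensor product in the sense that $\pi_0(M\otimes_A^\mathbb{L}N)\simeq \pi_0(M)\otimes_{\pi_0(A)}\pi_0(N)$, the latter being the $1$-categorical tensor product), together with the transversality of $\pi_0(N)$ which guarantees $\pi_0(M)\otimes^\mathbb{L}_{\pi_0(A)}\pi_0(N)$ is already in the heart and hence agrees with the underived tensor product. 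This is essentially the argument used already in the first line of the proof of Lemma \ref{piipushout}.

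For the inductive step, suppose the result holds for $n-1$ and that $\pi_i(N)$ is transverse to $M$ for all $i\leq n$. The key device is a Postnikov-type truncation of $N$ as an $A$-module: write $N\simeq \varprojlim_k \tau_{\leq k}(N)$ (or dually build $N$ up from its truncations), giving a fibre sequence $K_n \to \tau_{\leq n}(N) \to \tau_{\leq n-1}(N)$ in $\textcat{Mod}_A^{cn}$ with $K_n \simeq \pi_n(N)[n]$. Since $\pi_i(N)\simeq \pi_i(\tau_{\leq n}(N))$ for $i\leq n$, and the same after tensoring with $M$ (connectivity plus the fact that tensoring with $M$ preserves $n$-connectivity of cofibres), it suffices to show $\pi_i(M\otimes_A^\mathbb{L}\tau_{\leq n}(N))\simeq \pi_0(M)\otimes^\mathbb{L}_{\pi_0(A)}\pi_i(\tau_{\leq n}(N))$ for $i\leq n$. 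Base-change the fibre sequence along $-\otimes_A^\mathbb{L}M$ to get $M\otimes_A^\mathbb{L}K_n \to M\otimes_A^\mathbb{L}\tau_{\leq n}(N)\to M\otimes_A^\mathbb{L}\tau_{\leq n-1}(N)$, and compare the resulting long exact sequence in homotopy with the long exact sequence obtained by applying $\pi_0(M)\otimes^\mathbb{L}_{\pi_0(A)}-$ to the corresponding sequence of hearts (noting that each $\pi_i$ of the truncations is a quotient/submodule built from the $\pi_j(N)$, all transverse to $M$ by hypothesis). By the inductive hypothesis the middle and right terms match in degrees $\leq n-1$ and $\leq n$ respectively; for the term $M\otimes_A^\mathbb{L}K_n \simeq (M\otimes_A^\mathbb{L}\pi_n(N))[n]$, transversality of $\pi_n(N)$ to $M$ says $M\otimes_A^\mathbb{L}\pi_n(N)\in\textcat{Mod}_A^\heartsuit$, so its only homotopy is $\pi_0(M)\otimes^\mathbb{L}_{\pi_0(A)}\pi_n(N)$, again using the $n=0$ case. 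A diagram chase (five lemma) in the relevant range of degrees then yields the isomorphism in degrees $i\leq n$.

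The main obstacle I anticipate is the bookkeeping in the five-lemma comparison: one must be careful that the boundary maps in the two long exact sequences are identified, and that the connectivity estimates (cofibre of $\tau_{\leq n}(N)\to\tau_{\leq n-1}(N)$ is $n$-connective, so tensoring with the connective module $M$ keeps it $n$-connective, hence contributes nothing to $\pi_i$ for $i<n$) are applied in exactly the right range. A secondary point requiring care is the claim that each $\pi_i$ of a truncation of $N$ is again transverse to $M$ — this is immediate since $\pi_i(\tau_{\leq k}N)$ is literally $\pi_i(N)$ for $i\leq k$ and $0$ otherwise, so no new transversality hypotheses are needed beyond those assumed. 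Apart from these, the argument is a routine adaptation of the proof of Lemma \ref{piipushout}, with the flat spectral sequence (or just the $n=0$ statement) replacing the shifting argument there, and I would cite \cite[Lemma 2.3.82]{ben-bassat_perspective_nodate} for the parallel treatment.
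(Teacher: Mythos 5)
The paper does not actually prove this proposition — it defers to \cite[Lemma 2.3.82]{ben-bassat_perspective_nodate} — so your argument has to stand on its own, and as written it has a genuine gap in the inductive step. Your base case and the reduction from $N$ to $\tau_{\leq n}(N)$ are fine, but the five-lemma comparison at the top degree $i=n$ does not close. In the long exact sequence of
\begin{equation*}
M\otimes_A^\mathbb{L}\pi_n(N)[n]\rightarrow M\otimes_A^\mathbb{L}\tau_{\leq n}(N)\rightarrow M\otimes_A^\mathbb{L}\tau_{\leq n-1}(N)
\end{equation*}
the segment computing $\pi_n(M\otimes_A^\mathbb{L}\tau_{\leq n}(N))$ involves $\pi_n$ and $\pi_{n+1}$ of $M\otimes_A^\mathbb{L}\tau_{\leq n-1}(N)$, and your inductive hypothesis (the statement for $n-1$, applied to $\tau_{\leq n-1}(N)$) only controls $\pi_i$ for $i\leq n-1$. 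Your assertion that the right-hand terms "match in degrees $\leq n$" is precisely what is not available: truncatedness, unlike connectivity, is not preserved by $-\otimes_A^\mathbb{L}M$ (already $\mathbb{Z}/p\otimes_{\mathbb{Z}}^\mathbb{L}\mathbb{Z}/p$ has nonzero $\pi_1$), so the vanishing of $\pi_n(M\otimes_A^\mathbb{L}\tau_{\leq n-1}(N))$ is a claim of the same strength as the one being proved and cannot be quoted from the induction as you have set it up.

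Two repairs are available. (i) Strengthen the induction: prove, by induction on $k$, that if $N$ is $k$-truncated and every $\pi_i(N)$ is transverse to $M$, then $\pi_i(M\otimes_A^\mathbb{L}N)\simeq\pi_0(M)\otimes_{\pi_0(A)}\pi_i(N)$ for \emph{all} $i$ — in particular $M\otimes_A^\mathbb{L}N$ is again $k$-truncated. This stronger conclusion is exactly what feeds the five lemma at the next stage, and the general case then follows from your reduction to $\tau_{\leq n}(N)$. (ii) More efficiently, run the argument up the Whitehead tower rather than down the Postnikov tower: for fixed $i\leq n$ the fibre sequences $\tau_{\geq k+1}N\rightarrow\tau_{\geq k}N\rightarrow\pi_k(N)[k]$ give, after tensoring with $M$, isomorphisms $\pi_i(M\otimes_A^\mathbb{L}\tau_{\geq k}N)\simeq\pi_i(M\otimes_A^\mathbb{L}\tau_{\geq k+1}N)$ for $k<i$ (since $M\otimes_A^\mathbb{L}\pi_k(N)[k]$ is concentrated in degree $k$ by transversality), and at $k=i$ the $(i+1)$-connectivity of $M\otimes_A^\mathbb{L}\tau_{\geq i+1}N$ — a property that \emph{is} preserved by tensoring with the connective $M$, by compatibility of the $t$-structure — yields $\pi_i(M\otimes_A^\mathbb{L}N)\simeq\pi_i(M\otimes_A^\mathbb{L}\tau_{\geq i}N)\simeq\pi_0(M\otimes_A^\mathbb{L}\pi_i(N))$. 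Either route avoids the flat spectral sequence entirely, which your write-up invokes in the plan but never actually uses.
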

\begin{defn}\phantomsection\label{derivedstrong}
    Suppose that $A\in\textcat{DAlg}^{cn}(\mathcal{C})$ and $M\in\textcat{Mod}_A^{cn}$. Then, $M$ is \textit{derived strong} if the map\begin{equation*}
    \pi_*(A)\otimes^\mathbb{L}_{\pi_0(A)}\pi_0(M)\rightarrow{\pi_*(M)}
\end{equation*}is an equivalence. A morphism $f:A\rightarrow{B}$ is \textit{derived strong} if $B$ is derived strong as an $A$-module. 
\end{defn}
We note that equivalences are derived strong maps, and compositions of derived
strong maps are derived strong. Using the spectral sequence, we get more general results about homotopy groups of tensor products under some strongness assumptions. 

\begin{prop}\phantomsection\label{derviedstronghomotopygroupprop}
    Suppose that $A\in\textcat{DAlg}^{cn}(\mathcal{C})$. Suppose that $M$ is a derived strong $A$-module and that $N$ is an $A$-module such that $\pi_*(N)$ is transverse to $\pi_0(M)$ over $\pi_0(A)$. Then, there is an isomorphism
    \begin{equation*}
        \pi_*(M\otimes_A^\mathbb{L}N)\simeq \pi_0(M)\otimes^\mathbb{L}_{\pi_0(A)}\pi_*(N)
    \end{equation*}
\end{prop}

\begin{proof}
    Indeed, if we consider the spectral sequence
    \begin{equation*}
        \textnormal{Tor}^p_{\pi_*(A)}(\pi_*(M),\pi_*(N))_q\Rightarrow{\pi_{p+q}(M\otimes_A^\mathbb{L}N)}
    \end{equation*}Then, since $M$ is a derived strong $A$-module, we see that 
    \begin{equation*}
       \pi_*(M)\otimes^\mathbb{L}_{\pi_*(A)}\pi_*(N)\simeq \pi_0(M)\otimes^\mathbb{L}_{\pi_0(A)}\pi_*(N)
    \end{equation*}and hence, using our transversality assumption, the spectral sequence degenerates on the first page, and we obtain our result.
\end{proof}
\begin{lem}\phantomsection\label{derivedstrongequiv}
    Suppose that $f:A\rightarrow{B}$ is a derived strong map in $\textcat{DAlg}^{cn}(\mathcal{C})$, then $\pi_0(B)\simeq B\otimes^\mathbb{L}_A\pi_0(A)$.
\end{lem}

\begin{proof}
    Indeed, if we consider the spectral sequence
    \begin{equation*}
        \textnormal{Tor}^p_{\pi_*(A)}(\pi_*(B),\pi_0(A))_q\Rightarrow{\pi_{p+q}(B\otimes_A^\mathbb{L}\pi_0(A))}
    \end{equation*}Then, since $f$ is derived strong, we see that 
    \begin{equation*}
       \pi_*(B)\otimes^\mathbb{L}_{\pi_*(A)}\pi_0(A)\simeq (\pi_*(A)\otimes^\mathbb{L}_{\pi_0(A)}\pi_0(B))\otimes^\mathbb{L}_{\pi_*(A)}\pi_0(A)\simeq \pi_0(B)
    \end{equation*}and hence the spectral sequence degenerates on the first page, and we obtain our result.
\end{proof}

\begin{cor}\phantomsection\label{cotangentderivedstronglyflat}
    If $f:A\rightarrow{B}$ in $\textcat{DAlg}^{cn}(\mathcal{C})$ is derived strong, then 
    \begin{equation*}
        \mathbb{L}_{B/A}\otimes^\mathbb{L}_B\pi_0(B)\simeq \mathbb{L}_{\pi_0(B)/\pi_0(A)}
    \end{equation*}
\end{cor}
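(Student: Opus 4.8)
The plan is to deduce the statement from the transitivity cofibre sequence for cotangent complexes (Corollary \ref{cofibrecotangentresults}) applied to the composite $A\rightarrow B\rightarrow \pi_0(B)$, together with the identification of the map $B\rightarrow\pi_0(B)$ as a derived-strong quotient and the preceding lemma computing $\pi_0(B)\simeq B\otimes^\mathbb{L}_A\pi_0(A)$. First I would record the general base-change identity from Corollary \ref{cofibrecotangentresults}(2): applied to the pushout square with corners $A,B,\pi_0(A),\pi_0(B)$ — which is a pushout precisely because $\pi_0(B)\simeq B\otimes^\mathbb{L}_A\pi_0(A)$ by the lemma just above — we get an equivalence $\mathbb{L}_{B/A}\otimes^\mathbb{L}_B\pi_0(B)\simeq \mathbb{L}_{\pi_0(B)/\pi_0(A)}$ directly, provided the square really is a pushout in $\textcat{DAlg}^{cn}(\mathcal{C})$.

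So the real content is checking that the commutative square
\begin{equation*}
\begin{tikzcd}
A\arrow{r}\arrow{d} & B\arrow{d}\\
\pi_0(A)\arrow{r} & \pi_0(B)
\end{tikzcd}
\end{equation*}
is cocartesian. The natural map $B\otimes^\mathbb{L}_A\pi_0(A)\rightarrow\pi_0(B)$ always exists; I would show it is an equivalence using the flatness hypothesis and the $\textnormal{Tor}$ spectral sequence of the earlier proposition, exactly as in the preceding lemma's proof — derived strongness of $f$ forces $\pi_*(B)\otimes_{\pi_*(A)}\pi_0(A)\simeq \pi_0(B)$, so the spectral sequence $\textnormal{Tor}_p^{\pi_*(A)}(\pi_*(B),\pi_0(A))_q\Rightarrow \pi_{p+q}(B\otimes^\mathbb{L}_A\pi_0(A))$ degenerates and $\pi_i(B\otimes^\mathbb{L}_A\pi_0(A))\simeq 0$ for $i>0$ while $\pi_0\simeq\pi_0(B)$. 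Hence the square is a pushout and Corollary \ref{cofibrecotangentresults}(2) applies verbatim. Alternatively, if one prefers to avoid re-invoking the spectral sequence, I would simply cite the preceding lemma, which already establishes $\pi_0(B)\simeq B\otimes^\mathbb{L}_A\pi_0(A)$; combined with $\pi_0(B)$ being concentrated in degree $0$ this is exactly the assertion that $A\to B\to\pi_0(B)$ and $A\to\pi_0(A)\to\pi_0(B)$ present $\pi_0(B)$ as $B\otimes^\mathbb{L}_A\pi_0(A)$.

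The main obstacle I anticipate is purely bookkeeping: one must be careful that the map $\pi_0(A)\rightarrow\pi_0(B)$ appearing here is the one induced by $f$ on $\pi_0$ (which it is, since $\pi_0$ is a functor and $\pi_0(B)\simeq B\otimes^\mathbb{L}_A\pi_0(A)$ is the base change), and that $\mathbb{L}_{\pi_0(B)/\pi_0(A)}$ computed in $\textcat{DAlg}^{cn}(\mathcal{C})$ agrees with the one computed after restricting to the heart — this is automatic since $\pi_0(A),\pi_0(B)\in\textcat{DAlg}^\heartsuit(\mathcal{C})$ and the cotangent complex construction of Section \ref{chapter3} is computed inside $\textcat{DAlg}^{cn}(\mathcal{C})$ regardless. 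I would therefore write the proof as: apply the preceding lemma to get the pushout square, then apply Corollary \ref{cofibrecotangentresults}(2) to that square, and conclude.

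\begin{proof}
By the previous lemma, since $f$ is derived strong, the natural map $B\otimes^\mathbb{L}_A\pi_0(A)\rightarrow\pi_0(B)$ is an equivalence in $\textcat{DAlg}^{cn}(\mathcal{C})$. Equivalently, the commutative square
\begin{equation*}
\begin{tikzcd}
A\arrow{r}\arrow{d} & B\arrow{d}\\
\pi_0(A)\arrow{r} & \pi_0(B)
\end{tikzcd}
\end{equation*}
is a pushout square in $\textcat{DAlg}^{cn}(\mathcal{C})$. Applying Corollary \ref{cofibrecotangentresults}(2) to this square yields an equivalence
\begin{equation*}
\mathbb{L}_{B/A}\otimes^\mathbb{L}_B\pi_0(B)\simeq \mathbb{L}_{\pi_0(B)/\pi_0(A)}
\end{equation*}
in $\textcat{Mod}_{\pi_0(B)}$, as required.
\end{proof}
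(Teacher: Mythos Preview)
Your proof is correct and follows essentially the same approach as the paper: use the preceding lemma to deduce that the square with corners $A$, $B$, $\pi_0(A)$, $\pi_0(B)$ is a pushout, then apply Corollary~\ref{cofibrecotangentresults}(2).
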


\begin{proof}
  By the previous lemma, there is a pushout square in $\textcat{DAlg}^{cn}(\mathcal{C})$
    \begin{equation*}\begin{tikzcd}
        A\arrow{r}\arrow{d} & B \arrow{d}\\
        \pi_0(A)\arrow{r} & \pi_0(B)
    \end{tikzcd}\end{equation*}Hence, the result follows by Corollary \ref{cofibcotresults}.
\end{proof}

Suppose that we have some derived algebraic context $(\mathcal{C},\mathcal{C}_{\geq 0},\mathcal{C}_{\leq 0},\mathcal{C}^0)$, $\mathcal{A}$ is a full subcategory of $\textcat{DAff}^{cn}(\mathcal{C})$, and that we have a class $\textcat{P}$ of maps in $\textcat{DAff}^{cn}(\mathcal{C})$. In order for morphisms in $\textcat{P}$ to be well behaved with respect to their truncations, we will impose the following strongness condition.

\begin{defn}\phantomsection\label{dderivedstrong}
    Suppose that $f:Y=\textnormal{Spec}(B)\rightarrow{X=\textnormal{Spec}(A)}$ is a morphism in $\mathcal{A}$. Then, $f$ is \textit{derived strong relative to $\textcat{P}|_\mathcal{A}$} if
    \begin{enumerate}
        \item The corresponding morphism $f:A\rightarrow{B}$ is derived strong, 
        \item The morphism $t_0(f):t_0(Y)\rightarrow{t_0(X)}$ is in $\textcat{P}^\heartsuit$.
    \end{enumerate}
\end{defn}

\begin{remark}
    In the derived algebraic context corresponding to simplicial commutative rings, maps are smooth if and only if they are strongly smooth, and \'etale if and only if they are strongly \'etale. 
\end{remark}

Recall from Section \ref{geometriessection}, that if $\bm\tau$ is a topology on $\textnormal{Ho}(\textcat{DAff}
^{cn}(\mathcal{C}))$, then we can extend a class of maps $\textcat{P}$ to be local for a topology $\bm\tau$ by defining the class $\textcat{P}^{\bm\tau}$. Suppose that, whenever a morphism is derived strong relative to $\textcat{P}|_\mathcal{A}$, then it is in $\textcat{P}|_\mathcal{A}$. Then, we note that if a morphism is derived strong relative to $\textcat{P}^{\bm\tau}|_\mathcal{A}$ and morphisms in $\bm\tau$ are derived strong, then it is in $\textcat{P}^{\bm\tau}|_\mathcal{A}$.

\subsection{Representability Contexts}

In our proof of the representability theorem, we will assume representability of the truncated stack and then lift certain constructions to the derived stack. We will need to put suitable conditions on our derived geometry contexts such that they are well-behaved with respect to their truncation and such that we have the necessary obstruction theories. Recall the notation $n$-representable$|_\mathcal{A}$ from Notation \ref{ngeometricnotation}.

\begin{defn}\phantomsection\label{repcontextdefn}
    $(\mathcal{C},\mathcal{C}_{\geq 0},\mathcal{C}_{\leq 0},\mathcal{C}^0,\bm\tau,\textcat{P},\mathcal{A},\textcat{M},\textcat{S})$ is a \textit{representability context} if
    \begin{itemize}
        \item $(\mathcal{C},\mathcal{C}_{\geq 0},\mathcal{C}_{\leq 0},\mathcal{C}^0,\bm\tau,\textcat{P},\mathcal{A},\textcat{M})$ is a flat Postnikov compatible derived geometry context, 
        \item $\textcat{S}$ is a class of morphisms in $\mathcal{A}$,
    \end{itemize}such that 
    \begin{enumerate}
    \item\label{repcontexttuple}$(\mathcal{A},\bm\tau|_\mathcal{A},\textcat{P}|_\mathcal{A},\mathcal{A}^\heartsuit)$ is a strong relative $(\infty,1)$-geometry tuple, \item\label{repcontextcontinuous}$\iota|_{\mathcal{A}^\heartsuit}:(\mathcal{A}^\heartsuit,\bm\tau^\heartsuit)\rightarrow{(\mathcal{A},\bm\tau|_\mathcal{A})}$ is a continuous functor of $(\infty,1)$-sites,      
        \item\label{repcontextrepresentables}For any finite collection $\{U_i\}_{i\in I}$ of $\mathcal{A}$-admissible objects, the map $\coprod_{i\in I} h(U_i)\rightarrow{h(\coprod_{i\in I} U_i)}$ is an equivalence in $\textcat{Stk}(\mathcal{A},\bm\tau|_\mathcal{A})$,
        \item\label{repcontextstrong}A morphism is in $\textcat{P}|_\mathcal{A}$ if it is derived strong relative to $\textcat{P}|_\mathcal{A}$,
        \item\label{repcontextP}An $n$-representable$|_\mathcal{A}$ morphism of stacks $f:\mathcal{F}\rightarrow{\mathcal{G}}$ in $\textcat{Stk}(\mathcal{A},\bm\tau|_\mathcal{A})$ is in $n\textcat{-P}|_\mathcal{A}$ if $f$ satisfies that, for any $x:X=\textnormal{Spec}(A)\rightarrow{\mathcal{F}}$ and any $M\in\textcat{M}_{A,1}$, 
    \begin{equation*}
        \pi_0(\textnormal{Map}_{\textcat{Mod}_A}(\mathbb{L}_{\mathcal{F}/\mathcal{G},x},M))=0
    \end{equation*}The converse holds if $t_0(\mathcal{F})\rightarrow{t_0(\mathcal{G})}$ is in $n\textcat{-P}^\heartsuit$,
    \item\label{repcontextobstruction}$\bm{\tau}$ and $\textcat{P}$ satisfy the obstruction conditions of Definition \ref{artinsconditions} relative to $\mathcal{A}$, for the class $\textcat{S}$ of morphisms.
    \end{enumerate}
\end{defn}

We will give several motivating examples of these representability contexts in Section \ref{topologiessection}. In particular, we will describe how derived algebraic geometry in the sense of To\"en and Vezzosi \cite{toen_homotopical_2008}, where we work with simplicial commutative rings, can be reframed as a representability context in the above sense. 

\subsection{Truncation and Extension}

Fix a representability context $(\mathcal{C},\mathcal{C}_{\geq 0},\mathcal{C}_{\leq 0},\mathcal{C}^0,\bm\tau,\textcat{P},\mathcal{A},\textcat{M},\textcat{S})$. By Assumption (\ref{repcontextcontinuous}), the functor $\iota|_{\mathcal{A}^\heartsuit}:(\mathcal{A}^\heartsuit,\bm\tau^\heartsuit)\rightarrow{(\mathcal{A},\bm\tau|_\mathcal{A})}$ induces an adjunction
\begin{equation*}
i:\textcat{Stk}(\mathcal{A}^\heartsuit,\bm\tau^\heartsuit)\leftrightarrows \textcat{Stk}(\mathcal{A},\bm\tau|_{\mathcal{A}}):t_0
\end{equation*}
\begin{lem}\phantomsection\label{iotacocontinuous}
    $\iota|_{\mathcal{A}^\heartsuit}$ is a cocontinuous functor.
\end{lem}

\begin{proof}Consider a $\bm{\tau}$-cover $\{U_j\rightarrow{\iota(X)}\}_{j\in J}$ for $U_j\in \mathcal{A}$ and $X\in\mathcal{A}^\heartsuit$. We consider the collection $\{t_0(U_j)\rightarrow{t_0\circ\iota(X)\simeq X}\}_{j\in J}$, which is a $\bm{\tau}^\heartsuit$-covering family in $\textnormal{Ho}(\mathcal{A}^\heartsuit)$ by definition. We note that the family $\{\iota\circ t_0(U_j)\rightarrow{\iota(X)}\}_{j\in J}$ refines the cover $\{U_j\rightarrow{\iota(X)}\}_{j\in J}$ since each morphism $\iota\circ t_0(U_j)\rightarrow{\iota(X)}$ factors through $U_j\rightarrow{\iota(X)}$ via the counit. 
\end{proof}
\begin{lem}
    The extension functor $i$ is fully faithful. 
\end{lem}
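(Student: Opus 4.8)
The plan is to apply Proposition~\ref{fullyfaithfulstackfunctor} to the functor of $(\infty,1)$-sites $\iota|_{\mathcal{A}^\heartsuit}\colon(\mathcal{A}^\heartsuit,\bm\tau^\heartsuit)\to(\mathcal{A},\bm\tau|_{\mathcal{A}})$, whose induced functor on stacks is precisely $i=(\iota|_{\mathcal{A}^\heartsuit})_{\#}$. First I would check the three structural hypotheses: $\iota|_{\mathcal{A}^\heartsuit}$ is fully faithful, since $\iota$ is the opposite of the fully faithful inclusion of the heart of the $t$-structure $\iota_{\leq 0}\colon\textcat{DAlg}^\heartsuit(\mathcal{C})\to\textcat{DAlg}^{cn}(\mathcal{C})$ and $\mathcal{A}^\heartsuit$, $\mathcal{A}$ are full subcategories; it is continuous by Assumption~(\ref{repcontextcontinuous}) of the representability context; and it is cocontinuous by Lemma~\ref{iotacocontinuous}. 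The remaining hypothesis is that every object of $\mathcal{A}^\heartsuit$ is admissible, i.e. $h(X)$ is a stack in $\textcat{Stk}(\mathcal{A}^\heartsuit,\bm\tau^\heartsuit)$ for each $X\in\mathcal{A}^\heartsuit$; this is exactly the statement that $(\mathcal{A},\bm\tau|_{\mathcal{A}},\textcat{P}|_{\mathcal{A}},\mathcal{A}^\heartsuit)$ is a relative $(\infty,1)$-geometry tuple, which is Assumption~(\ref{repcontexttuple}). Given all of this, Proposition~\ref{fullyfaithfulstackfunctor} immediately yields that $i$ is fully faithful.

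Alternatively, I would unwind the proof of that proposition in this case. Since $i$ is left adjoint to $t_0=(\iota|_{\mathcal{A}^\heartsuit})^*$, by \cite[Proposition~5.2.7.4]{lurie_higher_2009} it suffices to show the unit $\mathrm{id}\to t_0\circ i$ is an equivalence. By Lemma~\ref{admissiblecolimitrepresentable} (applicable since every object of $\mathcal{A}^\heartsuit$ is admissible) every stack in $\textcat{Stk}(\mathcal{A}^\heartsuit,\bm\tau^\heartsuit)$ is a colimit of representables, and both $i$ (a left adjoint) and $t_0$ (which admits the further right adjoint $(\iota|_{\mathcal{A}^\heartsuit})_*$, as $\iota|_{\mathcal{A}^\heartsuit}$ is cocontinuous) preserve colimits, so it is enough to check the unit on representable stacks. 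For $X\in\mathcal{A}^\heartsuit$ we have $i(h(X))\simeq h(\iota(X))$ and $t_0(h(Y))\simeq h(t_0(Y))$, whence $t_0\circ i(h(X))\simeq h(t_0\iota(X))\simeq h(X)$, using that $\iota$ is fully faithful so that the unit $\mathrm{id}_{\mathcal{A}^\heartsuit}\to t_0\circ\iota$ of the affine-level adjunction is an equivalence; naturality of this identification then gives the claim.

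The main thing to be careful about — and the only real obstacle — is the comparison of the two Grothendieck topologies on $\mathcal{A}^\heartsuit$: the one induced by restriction from $(\mathcal{A},\bm\tau|_{\mathcal{A}})$ and the topology $\bm\tau^\heartsuit$ defined through $t_0$ of $\bm\tau$-covers. One must confirm that the admissibility furnished by the relative geometry tuple is admissibility with respect to exactly $\bm\tau^\heartsuit$; this follows from the mutual-refinement argument already used in the proof of Lemma~\ref{iotacocontinuous}, where a $\bm\tau^\heartsuit$-cover of $t_0(X)$ and a $\bm\tau|_{\mathcal{A}^\heartsuit}$-cover are related by the unit and counit of $\iota\dashv t_0$. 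Once this identification is fixed, every other step is a formal consequence of the adjunction $i\dashv t_0$, the colimit-preservation of both functors, and the full faithfulness of $\iota$, so no substantive computation is required.
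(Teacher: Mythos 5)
Your proposal is correct and follows exactly the paper's argument: the paper's proof likewise invokes Proposition \ref{fullyfaithfulstackfunctor}, citing the admissibility of every object of $\mathcal{A}^\heartsuit$ from Assumption (\ref{repcontexttuple}) together with the full faithfulness, continuity, and cocontinuity of $\iota$. The additional unwinding and the remark on comparing the two topologies are sensible elaborations but not needed beyond what the cited proposition already provides.
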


\begin{proof}This follows from Proposition \ref{fullyfaithfulstackfunctor} since every object of $\mathcal{A}^\heartsuit$ is admissible by Assumption (\ref{repcontexttuple}) and the functor $\iota$ is fully faithful and both continuous and cocontinuous.
\end{proof}

\begin{lem}\phantomsection\label{representabilityextension}
    The extension functor $i$ sends $(-1)\textcat{-P}^\heartsuit$-morphisms between representable stacks on $\mathcal{A}^\heartsuit$ to $(-1)\textcat{-P}|_\mathcal{A}$-morphisms between representable stacks on $\mathcal{A}$.
\end{lem}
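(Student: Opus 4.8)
The plan is to peel off the definitional layers one at a time and reduce the statement to the "strongness" axiom (\ref{repcontextstrong}) of a representability context. First I would unwind what a $(-1)\textcat{-P}^\heartsuit$-morphism between representable stacks $h(U)\to h(V)$ is: via the Yoneda embedding, which is fully faithful into $\textcat{Stk}(\mathcal{A}^\heartsuit,\bm\tau^\heartsuit)$ because every object of $\mathcal{A}^\heartsuit$ is admissible by Assumption (\ref{repcontexttuple}), such a morphism corresponds to a morphism $f\colon U\to V$ in $\mathcal{A}^\heartsuit$. Taking the test object $X=V$ with the identity map $h(V)\to h(V)$ in the definition of $(-1)\textcat{-P}^\heartsuit$, and using $h(U)\times_{h(V)}h(V)\simeq h(U)$, one reads off that $f$ itself lies in $\textcat{P}^\heartsuit=\textcat{P}\cap\mathcal{A}^\heartsuit$.

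Next I would transport along $i$. Since $i$ acts on representable stacks in $\mathcal{A}^\heartsuit$ by $i(\textnormal{Spec}(A))=\textnormal{Spec}(A)$, the image $i(h(U))\to i(h(V))$ is the morphism of representable stacks $h(\iota U)\to h(\iota V)$ associated to $\iota(f)\colon \iota U\to\iota V$, now regarded as a morphism in $\mathcal{A}\subseteq\textcat{DAff}^{cn}(\mathcal{C})$. The key claim is that $\iota(f)$ is derived strong relative to $\textcat{P}|_\mathcal{A}$ in the sense of Definition \ref{dderivedstrong}. Writing $U=\textnormal{Spec}(A')$ and $V=\textnormal{Spec}(B')$ with $A',B'\in\textcat{DAlg}^\heartsuit(\mathcal{C})$, the underlying algebra map $\iota(B')\to\iota(A')$ has source and target concentrated in degree $0$, so the derived strongness comparison $\pi_*(\iota B')\otimes^\mathbb{L}_{\pi_0(\iota B')}\pi_0(\iota A')\to\pi_*(\iota A')$ is just the identity of $A'$; moreover $t_0(\iota(f))\simeq f$, since the adjunction restricts to give $t_0\circ\iota\simeq\mathrm{id}$ on $\mathcal{A}^\heartsuit$, and $f\in\textcat{P}^\heartsuit$ was established above. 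Hence Assumption (\ref{repcontextstrong}) applies and $\iota(f)\in\textcat{P}|_\mathcal{A}$.

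Finally I would conclude using the relative $(\infty,1)$-geometry tuple structure on $(\textcat{DAff}^{cn}(\mathcal{C}),\bm\tau,\textcat{P},\mathcal{A})$: because $\iota U,\iota V\in\mathcal{A}$ and $\iota(f)\in\textcat{P}|_\mathcal{A}\subseteq\textcat{P}$, for every representable $Z\in\mathcal{A}$ with a map to $h(\iota V)$ the pullback $\iota U\times_{\iota V}Z$ exists and lies in $\mathcal{A}$, the projection $\iota U\times_{\iota V}Z\to Z$ lies in $\textcat{P}$ by stability of $\textcat{P}$ under pullbacks, and $h(\iota U\times_{\iota V}Z)\simeq h(\iota U)\times_{h(\iota V)}h(Z)$ is representable by Lemma \ref{representablestablepullback}. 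Thus $h(\iota U)\to h(\iota V)$ is $(-1)$-representable and lies in $(-1)\textcat{-P}$, which is the assertion. I expect the only delicate points to be bookkeeping ones—namely the identification $t_0\circ\iota\simeq\mathrm{id}$ on $\mathcal{A}^\heartsuit$ and the precise sense in which Yoneda on $\mathcal{A}^\heartsuit$ is fully faithful after $\bm\tau^\heartsuit$-stackification—while the derived strongness verification itself is automatic because one works entirely in the heart; there is essentially nothing with real computational content to grind through.
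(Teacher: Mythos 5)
Your proof is correct, and its overall shape matches the paper's---both ultimately rest on the strongness axiom (\ref{repcontextstrong})---but the key step is executed differently. The paper fixes an arbitrary representable test object $Z=\textnormal{Spec}(C)$ in $\mathcal{A}$, with $C$ a general connective algebra, and verifies directly that the base-changed morphism $C\to B\otimes^{\mathbb{L}}_{A}C$ is derived strong relative to $\textcat{P}|_{\mathcal{A}}$; since $C$ may have higher homotopy this is not automatic and requires the base-change identity $\pi_i(C\otimes^{\mathbb{L}}_{\iota(A)}\iota(B))\simeq \pi_i(C)\otimes^{\mathbb{L}}_{A}B$ of Proposition \ref{piipushout}. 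You instead apply Assumption (\ref{repcontextstrong}) once, to $\iota(f)$ itself, where derived strongness is trivial because source and target are concentrated in the heart, and then propagate to all test objects via stability of $\textcat{P}$ under pullbacks (Definition \ref{pregeometrytriple}), the relative-geometry-tuple axiom guaranteeing that pullbacks of maps in $\textcat{P}\cap\mathcal{A}$ along maps from $\mathcal{A}$ exist and stay in $\mathcal{A}$, and Lemma \ref{representablestablepullback} for representability of the resulting stack. Your route is the more economical one: it bypasses Proposition \ref{piipushout} entirely, at the cost only of invoking the pullback-stability axiom for $\textcat{P}$, which is available. The bookkeeping points you flag---$t_0\circ\iota\simeq\mathrm{id}$ on $\mathcal{A}^\heartsuit$ (which follows from the adjunction, and the fact that $\mathcal{A}^\heartsuit$ lands in $\mathcal{A}$ via Postnikov compatibility), and the extraction of $f\in\textcat{P}^\heartsuit$ from the $(-1)\textcat{-P}^\heartsuit$ condition by testing against the identity of $h(V)$---are indeed harmless.
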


\begin{proof}
    Suppose that $f:Y\rightarrow{X}$ is a $(-1)\textcat{-P}^\heartsuit$-morphism between representable stacks on $\mathcal{A}^\heartsuit$. Consider the induced morphism $i(f):i(Y)\rightarrow{i(X)}$ of stacks on $\mathcal{A}$ and suppose that there exists some representable stack $Z$ on $\mathcal{A}$ along with a morphism $Z\rightarrow{i(X)}$. We note that the pullback $i(Y)\times_{i(X)}Z$ is representable, and hence, by Assumption (\ref{repcontextstrong}), it suffices to show that the induced morphism $i(Y)\times_{i(X)}Z\rightarrow{Z}$ is derived strong relative to $\textcat{P}|_\mathcal{A}$. Since $t_0(i(Y)\times_{i(X)}Z)\simeq Y\times_Xt_0(Z)$ and $f$ is in $(-1)\textcat{-P}^\heartsuit$, we know that $t_0(i(Y)\times_{i(X)}Z)\rightarrow{t_0(Z)}$ is in $\textcat{P}^\heartsuit$. 
    
    Suppose that $X=\textnormal{Spec}(A)$ and $Y=\textnormal{Spec}(B)$ for $A,B\in \mathcal{A}^\heartsuit$, and that $Z=\textnormal{Spec}(C)$ for $C\in \mathcal{A}$. We conclude that the map is derived strong by noticing that, by Proposition \ref{piipushout} and our assumptions on our representability context, there is an equivalence
    \begin{equation*}\begin{aligned}
        \pi_i(C)\otimes^\mathbb{L}_{\pi_0(C)}\pi_0(i(B)\otimes^\mathbb{L}_{i(A)}C)&\simeq  B\otimes^\mathbb{L}_A\pi_i(C)\simeq{\pi_i(i(B)\otimes^\mathbb{L}_{i(A)}C)}
    \end{aligned}\end{equation*}for each $i\geq 0$.
\end{proof}

\begin{prop}\phantomsection\label{extensiongeometricstack}
    If $\mathcal{A}$ is closed under $\bm\tau$-descent relative to $\mathcal{A}$, then the extension functor $i$ satisfies the following properties:
    \begin{enumerate}
        \item\label{extension1} $i$ preserves pullbacks of stacks in $\textcat{Stk}_n(\mathcal{A}^\heartsuit,\bm\tau^\heartsuit,\textcat{P}^\heartsuit)$ along $m\textcat{-P}|_\mathcal{A}$-morphisms for all $m\geq -1$, 
        \item\label{extension2} $i$ sends stacks in the category $\textcat{Stk}_n(\mathcal{A}^\heartsuit,\bm{\tau}^\heartsuit,\textcat{P}^\heartsuit)$ to stacks in $\textcat{Stk}_n(\mathcal{A},\bm{\tau}|_{\mathcal{A}},\textcat{P}|_\mathcal{A})$ and also sends $n\textcat{-P}^\heartsuit$ morphisms between stacks in $\textcat{Stk}_n(\mathcal{A}^\heartsuit,\bm{\tau}^\heartsuit,\textcat{P}^\heartsuit)$ to $n\textcat{-P}$ morphisms between stacks in $\textcat{Stk}_n(\mathcal{A},\bm{\tau}|_{\mathcal{A}},\textcat{P}|_\mathcal{A})$. 
    \end{enumerate}
\end{prop}

\begin{proof}
Indeed, we note that $i$ preserves pullbacks of representable stacks because $i(\textnormal{Spec}(A))=\textnormal{Spec}(A)$ and every object of $\mathcal{A}$ is $\mathcal{A}$-admissible. Therefore, Statement (\ref{extension1}) follows by Proposition \ref{pullbacksegal}. To show Statement (\ref{extension2}), we use Proposition \ref{ngeometricity} along with Statement (\ref{extension1}), the assumption that $\mathcal{A}$ is closed under $\bm{\tau}$-descent relative to $\mathcal{A}$, and that $i$ sends $(-1)\textcat{-P}^\heartsuit$-morphisms between representable stacks to $(-1)\textcat{-P}|_\mathcal{A}$-morphisms between representable stacks by Lemma \ref{representabilityextension}.
\end{proof}

\begin{prop}
If $\mathcal{A}$ is closed under $\bm\tau$-descent relative to $\mathcal{A}$, then $\mathcal{A}^\heartsuit$ is closed under $\bm\tau^\heartsuit$-descent relative to $\mathcal{A}^\heartsuit$.
\end{prop}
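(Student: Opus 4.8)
The plan is to reduce the $\bm\tau^\heartsuit$-descent statement for $\mathcal{A}^\heartsuit$ to the hypothesis that $\mathcal{A}$ is closed under $\bm\tau$-descent, transporting everything through the adjunction $i:\textcat{Stk}(\mathcal{A}^\heartsuit,\bm\tau^\heartsuit)\leftrightarrows\textcat{Stk}(\mathcal{A},\bm\tau|_{\mathcal{A}}):t_0$. So fix a stack $\mathcal{G}$ in $\textcat{Stk}(\mathcal{A}^\heartsuit,\bm\tau^\heartsuit)$, a morphism $\mathcal{G}\to X$ with $X$ representable, and a $\bm\tau^\heartsuit$-covering family $\{V_i\to X\}_{i\in I}$ of representable stacks such that $\mathcal{G}\times_X V_i$ is representable for each $i$; the goal is to show $\mathcal{G}$ is representable. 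Since $i$ is fully faithful we have $\mathcal{G}\simeq t_0(i(\mathcal{G}))$, and $t_0$ carries a representable stack $\textnormal{Spec}(A)$ to the representable stack $\textnormal{Spec}(\pi_0(A))$; hence it is enough to prove that $i(\mathcal{G})$ is representable, and for that I would verify the three inputs to Definition \ref{taudescent} for $\mathcal{A}$.

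First I would check that $\{i(V_i)\to i(X)\}_{i\in I}$ is a $\bm\tau|_{\mathcal{A}}$-covering family of representable stacks in $\mathcal{A}$. Unwinding the definition of $\bm\tau^\heartsuit$ produces a $\bm\tau$-covering family $\{U_i\to Y\}_{i\in I}$ in $\mathcal{A}$ with $t_0(U_i)$, $t_0(Y)$ the objects underlying $V_i$, $X$. Each $U_i\to Y$ lies in $\textcat{P}|_{\mathcal{A}}$, hence is derived strong relative to $\textcat{P}|_{\mathcal{A}}$ by Assumption (\ref{repcontextstrong}); writing $U_i=\textnormal{Spec}(B_i)$, $Y=\textnormal{Spec}(A)$ this gives $B_i\otimes^{\mathbb{L}}_A\pi_0(A)\simeq\pi_0(B_i)$, so
\begin{equation*}
    i(V_i)\simeq\textnormal{Spec}(\pi_0(B_i))\simeq U_i\times_Y\iota(t_0(Y))=U_i\times_Y i(X).
\end{equation*}
Because the tuple $(\mathcal{A},\bm\tau|_{\mathcal{A}},\textcat{P}|_{\mathcal{A}},\mathcal{A}^\heartsuit)$ is strong (Assumption (\ref{repcontexttuple})) and $i(X)=\iota(t_0(Y))\in\mathcal{A}^\heartsuit$, the family $\{U_i\times_Y i(X)\to i(X)\}_{i\in I}$ is a $\bm\tau|_{\mathcal{A}^\heartsuit}$-covering family, in particular a $\bm\tau|_{\mathcal{A}}$-covering family of representables.

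The second, and I expect the main, point is to show that $i$ preserves each pullback $\mathcal{G}\times_X V_i$, so that $i(\mathcal{G})\times_{i(X)}i(V_i)\simeq i(\mathcal{G}\times_X V_i)$ is representable in $\mathcal{A}$. The difficulty is that $\mathcal{G}$ is not yet known to be representable or even geometric, so the preservation results of Section \ref{preservationgeometricstacks} do not apply directly. To get around this I would write $\mathcal{G}\simeq\varinjlim_k h(Y_k)$ as a colimit of representables (Lemma \ref{admissiblecolimitrepresentable}, using that every object of $\mathcal{A}^\heartsuit$ is admissible) and invoke universality of colimits in the $(\infty,1)$-topos $\textcat{Stk}(\mathcal{A}^\heartsuit,\bm\tau^\heartsuit)$ \cite[Theorem 6.1.0.6]{lurie_higher_2009} to get $\mathcal{G}\times_X V_i\simeq\varinjlim_k(h(Y_k)\times_X V_i)$. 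Each $h(Y_k)\times_X V_i$ is a pullback of representable stacks along the $\textcat{P}^\heartsuit$-morphism $V_i\to X$, so it exists and $i$ preserves it, exactly as in the proof of Proposition \ref{extensiongeometricstack}. Since $i$ is a left adjoint it commutes with the colimit, and applying universality of colimits again in $\textcat{Stk}(\mathcal{A},\bm\tau|_{\mathcal{A}})$ yields $i(\mathcal{G}\times_X V_i)\simeq i(\mathcal{G})\times_{i(X)}i(V_i)$; as $i$ sends representables to representables, this object is representable in $\mathcal{A}$.

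Finally I would assemble the pieces: $i(\mathcal{G})\to i(X)$ is a morphism to a representable stack, $\{i(V_i)\to i(X)\}_{i\in I}$ is a $\bm\tau|_{\mathcal{A}}$-covering family of representables, and each $i(\mathcal{G})\times_{i(X)}i(V_i)$ is representable, so closure of $\mathcal{A}$ under $\bm\tau$-descent gives that $i(\mathcal{G})$ is representable, whence $\mathcal{G}\simeq t_0(i(\mathcal{G}))$ is representable in $\textcat{Stk}(\mathcal{A}^\heartsuit,\bm\tau^\heartsuit)$ as noted at the start. The remaining things to nail down are routine: fully faithfulness of $i$ (already established) for the identification $\mathcal{G}\simeq t_0(i(\mathcal{G}))$, the fact that $t_0$ preserves representability (which follows from its action $\textnormal{Spec}(A)\mapsto\textnormal{Spec}(\pi_0(A))$ on representable stacks together with $t_0$ landing in stacks), and existence of the finite pullbacks in $\mathcal{A}^\heartsuit$ used above, which holds since $V_i\to X$ is in $\textcat{P}^\heartsuit$.
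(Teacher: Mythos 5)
Your proof follows the same overall route as the paper's: push the descent datum forward along the fully faithful left adjoint $i$, apply closure of $\mathcal{A}$ under $\bm\tau$-descent to conclude that $i(\mathcal{G})$ is representable, and recover $\mathcal{G}\simeq t_0(i(\mathcal{G}))$ as a representable stack in $\mathcal{A}^\heartsuit$. Two local differences are worth recording. First, your verification that $\{i(V_i)\to i(X)\}_{i\in I}$ is a $\bm\tau|_{\mathcal{A}}$-cover is both unnecessary and, as written, not quite valid: the desired conclusion is exactly Assumption (\ref{repcontextcontinuous}) of a representability context (continuity of $\iota|_{\mathcal{A}^\heartsuit}$), which is what the paper invokes, whereas your derivation reads Assumption (\ref{repcontextstrong}) backwards --- that assumption says a morphism which is derived strong relative to $\textcat{P}|_{\mathcal{A}}$ lies in $\textcat{P}|_{\mathcal{A}}$, not that every morphism in $\textcat{P}|_{\mathcal{A}}$ is derived strong, so the identification $i(V_i)\simeq U_i\times_Y i(X)$ does not follow from the stated hypotheses. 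Replacing that paragraph by a citation of Assumption (\ref{repcontextcontinuous}) loses nothing. Second, your treatment of the key step $i(\mathcal{G}\times_X V_i)\simeq i(\mathcal{G})\times_{i(X)}i(V_i)$ --- writing $\mathcal{G}$ as a colimit of representables and combining universality of colimits with preservation of pullbacks of representables along $\textcat{P}$-morphisms --- is actually more careful than the paper's, which cites Proposition \ref{extensiongeometricstack}; that proposition is stated only for pullbacks of $n$-geometric stacks, and $\mathcal{G}$ is not yet known to be geometric, so your colimit argument is the honest way to justify this step. With the first step repaired, the proof is correct.
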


\begin{proof}
    Indeed, suppose that $\mathcal{F}$ is a stack in $\textcat{Stk}(\mathcal{A}^\heartsuit,\bm\tau^\heartsuit)$ and that we have a morphism $\mathcal{F}\rightarrow{X}$ for some $X\in\mathcal{A}^\heartsuit$. Suppose that we have a $\bm\tau^\heartsuit$-covering family $\{U_i\rightarrow{X}\}_{i\in I}$ such that $\mathcal{F}\times_XU_i$ is a representable stack for every $i$. We note that, since $\iota$ is continuous, $\{\iota(U_i)\rightarrow{\iota(X)}\}_{i\in I}$ is a $\bm\tau$-cover, and, moreover, since each morphism $\iota(U_i)\rightarrow{\iota(X)}$ is in $\textcat{P}|_\mathcal{A}$, then we see that $i(\mathcal{F}\times_XU_i)\simeq i(\mathcal{F})\times_{i(X)} i(U_i)$ is representable for each $i$ by Proposition \ref{extensiongeometricstack}. Therefore, since $\mathcal{A}$ is closed under $\bm\tau$-descent relative to $\mathcal{A}$, it follows that $i(\mathcal{F})$ is representable, say $i(\mathcal{F})=Y$ for some $Y\in\mathcal{A}$. Since $i$ is fully faithful, $\mathcal{F}\simeq t_0(i(\mathcal{F}))=t_0(Y)$ is a representable stack on $\mathcal{A}^\heartsuit$.
\end{proof}

\begin{prop}\phantomsection\label{t0preserves}
The truncation functor $t_0$ satisfies the following properties:
\begin{enumerate}
    \item\label{t01} $t_0$ preserves epimorphisms of stacks,
    \item\label{t02} $t_0$ sends stacks in $\textcat{Stk}_n(\mathcal{A},\bm{\tau}|_{\mathcal{A}},\textcat{P}|_\mathcal{A})$ to stacks in $\textcat{Stk}_n(\mathcal{A}^\heartsuit,\bm{\tau}^\heartsuit,\textcat{P}^\heartsuit)$ and sends $n\textcat{-P}|_\mathcal{A}$ morphisms between stacks in $\textcat{Stk}(\mathcal{A},\bm{\tau}|_{\mathcal{A}})$ to $n\textcat{-P}^\heartsuit$ morphisms between stacks in $\textcat{Stk}(\mathcal{A}^\heartsuit,\bm{\tau}^\heartsuit)$.
\end{enumerate}
\end{prop}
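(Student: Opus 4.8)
The plan is to prove the two statements for $t_0 = (\iota|_{\mathcal{A}^\heartsuit})^*$ by invoking the general machinery of Subsection \ref{preservationgeometricstacks}, specifically Corollary \ref{ngeometricitycorollary}, applied with $L = t_0$, $(\mathcal{M},\bm\tau,\textcat{P},\mathcal{A})$ the tuple $(\mathcal{A},\bm\tau|_{\mathcal{A}},\textcat{P}|_{\mathcal{A}},\mathcal{A})$, and $(\mathcal{N},\bm\sigma,\textcat{Q},\mathcal{B})$ the tuple $(\mathcal{A}^\heartsuit,\bm\tau^\heartsuit,\textcat{P}^\heartsuit,\mathcal{A}^\heartsuit)$. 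Recall from Example \ref{adjunctionsFG} that since $\iota|_{\mathcal{A}^\heartsuit}$ is cocontinuous (Lemma \ref{iotacocontinuous}), the functor $t_0 = (\iota|_{\mathcal{A}^\heartsuit})^*$ preserves epimorphisms of stacks and sends representable stacks to representable stacks; this already gives Statement (\ref{t01}) and the case $n=-1$ of the representability part of Statement (\ref{t02}), using that $t_0(\textnormal{Spec}(A)) = \textnormal{Spec}(\pi_0(A))$ and $\mathcal{A}^\heartsuit$ is by definition the image of $\mathcal{A}$ under $t_0$.

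\textbf{Verifying the hypotheses of Corollary \ref{ngeometricitycorollary}.} First I would check that $t_0$ preserves pullbacks along $n\textcat{-P}$-morphisms for all $n\geq -1$. By Proposition \ref{pullbacksegal} it suffices to check this for pullbacks of representable stacks along $m\textcat{-P}$-morphisms between representables, i.e. along morphisms in $\textcat{P}|_{\mathcal{A}}$. So suppose $Y = \textnormal{Spec}(B) \to X = \textnormal{Spec}(A)$ is in $\textcat{P}|_{\mathcal{A}}$ and $Z = \textnormal{Spec}(C) \to X$ is any morphism in $\mathcal{A}$; we need $t_0(Y \times_X Z) \simeq t_0(Y) \times_{t_0(X)} t_0(Z)$, i.e. $\pi_0(B \otimes^\mathbb{L}_A C) \simeq \pi_0(B) \otimes^\mathbb{L}_{\pi_0(A)} \pi_0(C)$. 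By Assumption (\ref{repcontextstrong}) the morphism $A \to B$ is derived strong, hence by the lemma following the definition of derived strongness $\pi_0(B) \simeq B \otimes^\mathbb{L}_A \pi_0(A)$; combining this with the flatness spectral sequence (or directly with Proposition \ref{basechangepi0} applied to $M = B$, $N = C$, noting that derived strongness of $B$ makes the relevant $\pi_i(C)$ transverse arguments go through) yields the required identification. Second, $t_0$ sends morphisms in $\textcat{P}|_{\mathcal{A}}$ between representables to morphisms in $\textcat{P}^\heartsuit$ between representables: this is immediate from part (2) of Definition \ref{dderivedstrong} together with Assumption (\ref{repcontextstrong}), since any morphism in $\textcat{P}|_{\mathcal{A}}$ is derived strong relative to $\textcat{P}|_{\mathcal{A}}$ and hence its $t_0$ lies in $\textcat{P}^\heartsuit$. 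Third, every representable stack in $\textcat{Stk}(\mathcal{A}^\heartsuit, \bm\tau^\heartsuit)$ is the image under $t_0$ of a representable stack in $\textcat{Stk}(\mathcal{A}, \bm\tau|_{\mathcal{A}})$: this is exactly the definition of $\mathcal{A}^\heartsuit$, since $X \in \mathcal{A}^\heartsuit$ means $X = t_0(Y)$ for some $Y \in \mathcal{A}$. With these three hypotheses verified, Corollary \ref{ngeometricitycorollary} gives that $t_0$ sends $n$-geometric stacks to $n$-geometric stacks and $n\textcat{-P}$-morphisms to $n\textcat{-P}^\heartsuit$-morphisms for all $n \geq -1$, which is Statement (\ref{t02}).

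\textbf{Main obstacle.} The technical heart of the argument is the base-change identity $\pi_0(B \otimes^\mathbb{L}_A C) \simeq \pi_0(B) \otimes^\mathbb{L}_{\pi_0(A)} \pi_0(C)$ for $A \to B$ in $\textcat{P}|_{\mathcal{A}}$, and more precisely ensuring that derived strongness of $A \to B$ is genuinely enough to make Proposition \ref{basechangepi0} applicable — one must confirm that the transversality hypotheses there are met, which is where the flatness assumption on the derived algebraic context (part (1) of Definition \ref{repcontextdefn}, via the $\textnormal{Tor}$ spectral sequence) does real work. One should be slightly careful that derived strongness is being used on the correct factor (it is $B$, as an $A$-module, that is derived strong, and the pullback is along $Z \to X$ which need not be derived strong), so the cleanest route is: form the pushout square $A \to B$, $A \to \pi_0(A)$, $B \to \pi_0(B)$ via the lemma, base-change along $A \to C$, and then use the $\pi_0$-adjunction together with the spectral sequence to collapse $\pi_*(B \otimes^\mathbb{L}_A C) \otimes_{\pi_*(A)} \pi_0(A)$ onto $\pi_0$. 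Everything else is a formal bookkeeping exercise with the results already established in Subsections \ref{preservationgeometricstacks} and \ref{continuousfunctorsection}.
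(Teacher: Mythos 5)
Your overall strategy coincides with the paper's: both proofs establish part (\ref{t01}) from cocontinuity of $\iota$ and then deduce part (\ref{t02}) from Corollary \ref{ngeometricitycorollary}. The difference lies in how you verify the hypotheses of that corollary, and this is where there are problems. First, your verification that $t_0$ preserves pullbacks along $n\textcat{-P}$-morphisms is both overcomplicated and, as written, unsound: you deduce that a morphism $A\to B$ in $\textcat{P}|_{\mathcal{A}}$ is derived strong ``by Assumption (\ref{repcontextstrong})'', but that assumption is stated in the opposite direction --- it says that a morphism which is derived strong relative to $\textcat{P}|_{\mathcal{A}}$ lies in $\textcat{P}|_{\mathcal{A}}$, not the converse. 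The paper avoids this entirely: since $\iota|_{\mathcal{A}^\heartsuit}$ is continuous, $t_0=(\iota|_{\mathcal{A}^\heartsuit})^*$ is right adjoint to $i=(\iota|_{\mathcal{A}^\heartsuit})_{\#}$ and therefore preserves \emph{all} finite limits; no base-change identity, derived strongness, or flatness is needed for this hypothesis. The same reversed reading of Assumption (\ref{repcontextstrong}) underlies your justification that $t_0$ sends $\textcat{P}|_{\mathcal{A}}$-morphisms between representables to $\textcat{P}^\heartsuit$-morphisms; since $\textcat{P}^\heartsuit$ is by definition $\textcat{P}\cap\mathcal{A}^\heartsuit$, what is actually needed is an argument that $t_0(f)$ again lies in $\textcat{P}$, and your deduction via ``$f$ in $\textcat{P}|_{\mathcal{A}}$ hence derived strong relative to $\textcat{P}|_{\mathcal{A}}$'' does not follow from the stated assumption. (The paper asserts this step without elaboration, but your proposed justification is logically reversed and should not be presented as a proof.)

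Second, Corollary \ref{ngeometricitycorollary} is stated under the standing hypotheses of Subsection \ref{preservationgeometricstacks}, which include that the target category of affines is closed under descent. Your proposal never addresses why $\mathcal{A}^\heartsuit$ is closed under $\bm\tau^\heartsuit$-descent; the paper supplies this via the proposition immediately preceding Proposition \ref{t0preserves}, which deduces it from Assumption (\ref{repcontexttau}) together with the properties of the extension functor $i$. Without this input, Lemma \ref{pullbackgeometric} and Proposition \ref{onlynatlas} --- on which Corollary \ref{ngeometricitycorollary} relies --- are not available in $\textcat{Stk}(\mathcal{A}^\heartsuit,\bm\tau^\heartsuit)$, so the reduction to $n$-atlases in the inductive step does not go through. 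Your identification of representable stacks in the heart with truncations of representables in $\mathcal{A}$ is correct and matches the paper.
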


\begin{proof}
Property number (\ref{t01}) follows by Proposition \ref{epimorphismcocontinuous} because $\iota$ is a cocontinuous functor by Lemma \ref{iotacocontinuous}. Now we use the setting of Example \ref{adjunctionsFG} to show Property number (\ref{t02}). We note that $t_0$ preserves finite limits because it is a right adjoint. It sends morphisms in $\textcat{P}|_\mathcal{A}$ between representable stacks to morphisms in $\textcat{P}^\heartsuit$ between representable stacks. Moreover, we note that every representable stack $X$ in $\textcat{Stk}(\mathcal{A}^\heartsuit,\bm{\tau}^\heartsuit)$ is equivalent to $t_0(\iota(X))$. Hence, we can conclude using Proposition \ref{therealtruncationprop}. 
\end{proof}

\subsection{The Relative Cotangent Complex of the Truncation}

In this section, we will explore connectivity of the relative cotangent complex of the natural morphism $t_0(\mathcal{F})\rightarrow{\mathcal{F}}$ of stacks in $\textcat{Stk}(\mathcal{A},\bm\tau|_\mathcal{A})$. The following result follows in a similar way to \cite[Lemma 7.15]{porta_representability_2020}. 

\begin{lem}\phantomsection\label{corollaryfullyfaithful}
    Suppose that $A\in \mathcal{A}^{\heartsuit,op}$. If $M_1,M_2\in\textcat{M}_{A,-n}$ for some $n$ and
    \begin{equation*}\textnormal{Map}_{\textcat{Mod}_{A}}(M_1,N[m])\simeq \textnormal{Map}_{\textcat{Mod}_{A}}(M_2,N[m])
\end{equation*}for every $N\in\textcat{M}_{A}^\heartsuit$, then in $\textcat{Mod}_A$
    \begin{equation*}
        \tau_{\leq m}(M_1)\simeq \tau_{\leq m}(M_2)
    \end{equation*}
\end{lem}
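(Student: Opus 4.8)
The statement is a representability/Whitehead-type claim: two modules in $\textcat{M}_{A,-n}$ that corepresent the same functor on $\textcat{M}_A^\heartsuit$ after shifting by $[m]$ must agree after $m$-truncation. The natural strategy is to reduce to a universal-coefficient argument about homotopy groups. First I would observe that, since $A\in\mathcal{A}^{\heartsuit,op}$ lies in the heart, $\pi_0(A)\simeq A$ and $\textcat{Mod}_A^\heartsuit=\textcat{Mod}_A^{\heartsuit}$ is the ordinary abelian category $\textnormal{Mod}_A$. The key input is to identify $\pi_0(\textnormal{Map}_{\textcat{Mod}_A}(M,N[j]))$ for $N\in\textcat{M}_A^\heartsuit$ and $j\geq 0$: using the $t$-structure on $\textcat{Mod}_A$ (valid since $A\in\textcat{DAlg}^{cn}(\mathcal{C})$), one has $\textnormal{Map}_{\textcat{Mod}_A}(M,N[j])\simeq \textnormal{Map}_{\textcat{Mod}_A}(\tau_{\geq -j}M, N[j])$, and then a standard spectral-sequence / Postnikov-filtration argument (as in the proof that a $t$-structure gives universal coefficient spectral sequences, cf. the reasoning behind Lemma \ref{Dfullyfaithfulconnective}) expresses $\pi_i\textnormal{Map}(M,N[j])$ in terms of $\textnormal{Ext}$ and $\textnormal{Hom}$ groups of the homotopy modules $\pi_k(M)$ against $N$.

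The cleanest route, I think, is to avoid a full spectral sequence and instead argue by descending induction on $m$, extracting $\pi_k$ one degree at a time. Concretely: suppose first that $m$ is large enough; I would show that the hypothesis forces $\pi_k(M_1)\simeq\pi_k(M_2)$ for all $k\leq m$ together with compatibility of the relevant $k$-invariants, which is exactly what $\tau_{\leq m}(M_1)\simeq\tau_{\leq m}(M_2)$ means. To extract $\pi_m$: apply the hypothesis with a shift that isolates the bottom nonvanishing piece. Since both $M_i$ are $(-n)$-connective, $\tau_{\leq m}M_i$ has homotopy concentrated in the range $[-n,m]$, and a Yoneda-type argument — probing against $N=\pi_j(M_i)$ for suitable $j$, or more efficiently against all $N\in\textcat{M}_A^\heartsuit$ simultaneously and invoking fully faithfulness of $M\mapsto\textnormal{Map}_{\textcat{Mod}_A}(M,-)$ restricted to $\textcat{M}_A^\heartsuit$-valued functors — shows that $\textnormal{Map}_{\textcat{Mod}_A}(\tau_{\geq m}\tau_{\leq m}M_1, N[m])\simeq\textnormal{Map}_{\textcat{Mod}_A}(\pi_m(M_1)[m],N[m])\simeq\textnormal{Map}_{\textcat{Mod}_A}(\pi_m(M_1),N)$, so the hypothesis at this top degree literally says $\pi_m(M_1)$ and $\pi_m(M_2)$ corepresent the same functor on $\textcat{Mod}_A$, whence $\pi_m(M_1)\simeq\pi_m(M_2)$ by Yoneda in the abelian category. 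Then I would peel this off — form the truncations $\tau_{\leq m-1}M_i$, show they still satisfy the hypothesis with $m$ replaced by $m-1$ (using the cofibre sequence $\pi_m(M_i)[m]\to\tau_{\leq m}M_i\to\tau_{\leq m-1}M_i$ and the induced long exact sequence on $\textnormal{Map}(-,N[m'])$ for $m'<m$), and iterate down to $-n$. Assembling the successive isomorphisms of homotopy modules and checking the $k$-invariants match (this is where one must be careful: equality of homotopy groups plus equality of the corepresented functors on all of $\textcat{M}_A^\heartsuit$, not just objectwise, is what pins down the Postnikov tower) yields $\tau_{\leq m}M_1\simeq\tau_{\leq m}M_2$.

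The main obstacle is precisely the $k$-invariant bookkeeping: naive induction only gives $\pi_k(M_1)\cong\pi_k(M_2)$ as $A$-modules, which is strictly weaker than $\tau_{\leq m}M_1\simeq\tau_{\leq m}M_2$. To fix this I would work not objectwise but with the restricted-Yoneda functor $\textcat{M}_{A,-n}\to\textcat{PSh}((\textcat{M}_A^\heartsuit)^{op})$ sending $M\mapsto\textnormal{Map}_{\textcat{Mod}_A}(M,-)|_{\textcat{M}_A^\heartsuit}$ (or rather its shifts), and prove that its restriction to each truncation level is \emph{conservative} onto its image in a way that detects the full $m$-truncated homotopy type — this is a mild strengthening of the fully faithfulness already recorded in Lemma \ref{Dfullyfaithfulconnective}, obtained by the same argument applied to $\tau_{\leq m}M_i$ rather than to $M_i$ themselves, using that $\textcat{M}_A^\heartsuit$ contains enough objects (it contains $\pi_n(A)$ and is closed under the operations in Definition \ref{goodsystemmodules}(\ref{good6}),(\ref{good7})) to separate maps. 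Given that, the hypothesis of the lemma says exactly that $\tau_{\leq m}M_1$ and $\tau_{\leq m}M_2$ have the same image under this conservative functor, hence are equivalent. I expect the write-up to be short once this conservativity statement is isolated as the crux.
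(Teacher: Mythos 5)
Your overall plan---peel off homotopy modules one degree at a time and then confront the $k$-invariant assembly problem---has the right shape, and you correctly identify the $k$-invariants as the crux. But the direction of your peeling breaks the key identification. The hypothesis controls $\textnormal{Map}_{\textcat{Mod}_A}(M_i,N[m])$, and this does \emph{not} ``literally'' isolate $\pi_m(M_i)$: from the cofibre sequence $\pi_m(M)[m]\rightarrow{\tau_{\leq m}M}\rightarrow{\tau_{\leq m-1}M}$ one gets a fibre sequence $\textnormal{Map}(\tau_{\leq m-1}M,N[m])\rightarrow{\textnormal{Map}(M,N[m])}\rightarrow{\textnormal{Map}(\pi_m(M)[m],N[m])}$ whose first term is not contractible (its $\pi_0$ contains $\pi_0\textnormal{Map}(\pi_{m-1}(M)[m-1],N[m])$, an $\textnormal{Ext}^1$-type group), so $\pi_0\textnormal{Map}(M,N[m])$ is an iterated extension involving \emph{all} the lower homotopy modules of $M$, not $\textnormal{Hom}(\pi_m M,N)$; in particular there is no equivalence between $\textnormal{Map}(M_1,N[m])$ and $\textnormal{Map}(\pi_m(M_1)[m],N[m])$ for the hypothesis to transport. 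What the hypothesis isolates cleanly is the \emph{bottom} homotopy group: since $\tau_{\geq -n+1}M$ is $(-n+1)$-connective, $\pi_i\textnormal{Map}(\tau_{\geq -n+1}M,N[m])=0$ for $i\geq m+n$, and the long exact sequence attached to $\tau_{\geq -n+1}M\rightarrow{M}\rightarrow{\pi_{-n}(M)[-n]}$ gives $\pi_{m+n}\textnormal{Map}(M,N[m])\cong\textnormal{Hom}_{\textcat{Mod}_A^{\heartsuit}}(\pi_{-n}M,N)$. This is exactly why the paper's proof first shifts to reduce to $m=0$, replaces $M_i$ by $\tau_{\leq 0}M_i$, and then inducts on the connectivity bound $n$, stripping off $\pi_{-n}$ from the bottom: the five lemma applied to the map of long exact sequences yields the hypothesis for $\tau_{\geq -n+1}M_i$ (handled by induction), and a direct Yoneda argument in the abelian category, producing mutually inverse maps out of the identities, handles $\pi_{-n}$.

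The second gap is that your resolution of the $k$-invariant problem is deferred rather than supplied: ``prove that the restricted Yoneda functor into $\textcat{PSh}((\textcat{M}_A^{\heartsuit})^{op})$ is conservative enough to detect the $m$-truncated homotopy type'' is essentially a restatement of the lemma, and it cannot be obtained ``by the same argument'' as Lemma \ref{Dfullyfaithfulconnective}. That lemma tests against all of $\textcat{M}_A^{cn}$ and reduces to the genuine Yoneda embedding by shifting into connective modules; here you are only allowed to test against heart objects, which is a strictly smaller probing class, and the reduction to Yoneda is unavailable. The assembly of the homotopy-group isomorphisms into an equivalence of truncations has to be carried out by hand, which is precisely what the bottom-up induction on $n$ accomplishes.
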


\begin{lem}\phantomsection\label{2connectivelemma}Suppose that $\mathcal{F}$ is a stack in $\textcat{Stk}(\mathcal{A},\bm\tau|_\mathcal{A})$ such that the morphism $t_0(\mathcal{F})\rightarrow{\mathcal{F}}$ has an obstruction theory. Suppose that $u_0:U_0=\textnormal{Spec}(A_0)\rightarrow{t_0(\mathcal{F})}$ is a morphism in $\textcat{P}^\heartsuit$ such that $U_0\in\mathcal{A}^\heartsuit$. Then, $\pi_i(\mathbb{L}_{t_0(\mathcal{F})/\mathcal{F},u_0})\simeq 0$ for all $i\leq 1$.
        
    \end{lem}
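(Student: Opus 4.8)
The plan is to reduce the statement to a vanishing of relative derivations and then bootstrap it along the smooth chart $u_0$.

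\textbf{Reduction.} Since $t_0(\mathcal{F})\to\mathcal{F}$ has an obstruction theory it has a relative global cotangent complex (Lemma \ref{relativeobstructionresult}), so $\mathbb{L}_{t_0(\mathcal{F})/\mathcal{F},u_0}$ exists, lies in $\textcat{M}_{A_0}$, and is $(-n)$-connective for some $n$. By Lemma \ref{corollaryfullyfaithful} applied with $M_1=\mathbb{L}_{t_0(\mathcal{F})/\mathcal{F},u_0}$, $M_2=0$ and $m=1$, it suffices to prove that $\textcat{Der}_{t_0(\mathcal{F})/\mathcal{F}}(U_0,N[1])\simeq\textnormal{Map}_{\textcat{Mod}_{A_0}}(\mathbb{L}_{t_0(\mathcal{F})/\mathcal{F},u_0},N[1])$ is contractible for every $N\in\textcat{M}_{A_0}^\heartsuit$, since this gives $\tau_{\le 1}\mathbb{L}_{t_0(\mathcal{F})/\mathcal{F},u_0}\simeq 0$. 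Write $\mathcal{F}_0:=i t_0\mathcal{F}$ and let $\epsilon\colon\mathcal{F}_0\to\mathcal{F}$ denote the counit (so that the map in the statement is $\epsilon$). Running the same argument with discrete $N$ already yields $\tau_{\le 0}\mathbb{L}_{t_0(\mathcal{F})/\mathcal{F},u_0}\simeq 0$: by the triangle identities and full faithfulness of $i$ the map $t_0(\epsilon)$ is an equivalence, so $\mathcal{F}_0$ and $\mathcal{F}$ agree on $\mathcal{A}^\heartsuit$; as $A_0\oplus N$ is discrete for discrete $N$, the map $\textcat{Der}_{\mathcal{F}_0}(U_0,N)\to\textcat{Der}_{\mathcal{F}}(U_0,N)$ is an equivalence, whence $\textcat{Der}_{t_0(\mathcal{F})/\mathcal{F}}(U_0,N)\simeq *$. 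Thus $\mathbb{L}_{t_0(\mathcal{F})/\mathcal{F},u_0}$ is $1$-connective, and only the vanishing of $\pi_1$ remains.

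\textbf{Bootstrapping along $u_0$.} This is where the hypothesis $u_0\in\textcat{P}^\heartsuit$ is used: by Proposition \ref{extensiongeometricstack}, $i(u_0)\colon U_0\to\mathcal{F}_0$ is an $n\textcat{-P}$-morphism, so by Definition \ref{repcontextdefn}(\ref{repcontextP}) (equivalently Assumption \ref{assumptiononP}) the relative cotangent complex $\mathbb{L}_{U_0/\mathcal{F}_0,\,\cdot}$ is $0$-truncated at every point, in particular $\mathbb{L}_{U_0/\mathcal{F}_0,\operatorname{id}}$ is $0$-truncated. Feeding this together with the $1$-connectivity just established into the transitivity cofibre--fibre sequence (Lemma \ref{globalcotangentresults}, Corollary \ref{cofibrecotangentstack}) for $U_0\xrightarrow{u_0}\mathcal{F}_0\xrightarrow{\epsilon}\mathcal{F}$,
\begin{equation*}
\mathbb{L}_{\mathcal{F}_0/\mathcal{F},u_0}\longrightarrow\mathbb{L}_{U_0/\mathcal{F},\operatorname{id}}\longrightarrow\mathbb{L}_{U_0/\mathcal{F}_0,\operatorname{id}},
\end{equation*}
identifies $\mathbb{L}_{\mathcal{F}_0/\mathcal{F},u_0}\simeq\tau_{\ge 1}\mathbb{L}_{U_0/\mathcal{F},\operatorname{id}}$. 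One then rewrites $\mathbb{L}_{U_0/\mathcal{F},\operatorname{id}}$ via Lemma \ref{globalcotangentresults}(\ref{globalitem3}) as $\mathbb{L}_{\mathcal{F}_0\times_{\mathcal{F}}U_0/U_0,u_0^{\cdot}}$, where $\mathcal{F}_0\times_{\mathcal{F}}U_0$ has classical truncation $U_0$ and $u_0^{\cdot}$ is the canonical section, and uses that $\mathcal{F}$ is infinitesimally cartesian — which follows since $\mathcal{F}_0$ is (being geometric, Lemma \ref{ngeoobstruction}) and $\epsilon\colon\mathcal{F}_0\to\mathcal{F}$ is — together with the $2$-connectivity statement for representable presheaves (Theorem \ref{2nconnective}) applied along the chart, to conclude $\pi_1\mathbb{L}_{U_0/\mathcal{F},\operatorname{id}}=0$. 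Passing, if necessary, to an $\textcat{E}$-cover of $U_0$ is harmless here, since $\textcat{E}$-covers are faithfully flat and connectivity of the cotangent complex is flat-local.

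\textbf{Main obstacle.} I expect the delicate step to be the last one: converting the structural input ``$\mathcal{F}_0\times_{\mathcal{F}}U_0$ is a derived thickening of the affine $U_0$ equipped with a section, and $\mathcal{F}$ is infinitesimally cartesian'' into the numerical vanishing $\pi_1=0$. A purely formal manipulation of the transitivity triangle is circular (it only recovers $\mathbb{L}_{\mathcal{F}_0/\mathcal{F},u_0}\simeq\tau_{\ge 1}\mathbb{L}_{U_0/\mathcal{F},\operatorname{id}}$); the genuine new input must be the Postnikov-tower connectivity estimates of Section \ref{postnikovsection} (Lemmas \ref{postnikovsquarezero} and \ref{postnikovcotangentvanish}), transported along the smooth chart $u_0$ exactly as in the representable case, and making this transport precise is the heart of the argument.
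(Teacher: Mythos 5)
Your reduction and the vanishing of $\pi_i(\mathbb{L}_{t_0(\mathcal{F})/\mathcal{F},u_0})$ for $i\leq 0$ are correct and essentially match the paper: both arguments exploit that $A_0\oplus N$ is discrete for $N\in\textcat{M}_{A_0}^\heartsuit$ (Lemma \ref{sqzinheart}), so that $t_0(\mathcal{F})$ and $\mathcal{F}$ have the same derivations into discrete modules, and then invoke Lemma \ref{corollaryfullyfaithful}. The problem is the $\pi_1$ step, which is the actual content of the lemma, and which you yourself flag as unresolved in your final paragraph.

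Your proposed route for $\pi_1$ does not go through. First, Assumption \ref{assumptiononP} gives only $\pi_0(\textnormal{Map}_{\textcat{Mod}_A}(\mathbb{L}_{U_0/\mathcal{F}_0,\cdot},M))=0$ for $M\in\textcat{M}_{A,1}$; this is not the same as $\mathbb{L}_{U_0/\mathcal{F}_0}$ being $0$-truncated, and you need the former, not the latter, in the transitivity triangle. Second, the identification $\mathbb{L}_{U_0/\mathcal{F},\operatorname{id}}\simeq\mathbb{L}_{\mathcal{F}_0\times_{\mathcal{F}}U_0/U_0,u_0^{\cdot}}$ is wrong: Lemma \ref{globalcotangentresults}(\ref{globalitem3}) applied to $\epsilon\colon\mathcal{F}_0\to\mathcal{F}$ identifies $\mathbb{L}_{\mathcal{F}_0\times_{\mathcal{F}}U_0/U_0,u_0^{\cdot}}$ with $\mathbb{L}_{\mathcal{F}_0/\mathcal{F},u_0}$ itself, so your manipulation is circular, as you suspected. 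Third, Theorem \ref{2nconnective} is a statement about morphisms in $\textcat{DAlg}^{cn}(\mathcal{C})$; the fibre product $\mathcal{F}_0\times_{\mathcal{F}}U_0$ has no reason to be representable, so there is no chart along which to "transport" it, and the claim that $\textcat{E}$-covers are faithfully flat is not available in this generality ($\textcat{E}$ consists of formally \'etale morphisms; flatness is a separate hypothesis).

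The missing idea is an obstruction-theoretic contradiction rather than a connectivity estimate. Set $M:=\pi_1(\mathbb{L}_{t_0(\mathcal{F})/\mathcal{F},u_0})$ and suppose $M\neq 0$. The long exact sequence for $\mathbb{L}_{\mathcal{F},u_0}\to\mathbb{L}_{t_0(\mathcal{F}),u_0}\to\mathbb{L}_{t_0(\mathcal{F})/\mathcal{F},u_0}$, combined with the already-proved vanishing in degrees $\leq 0$, produces a non-zero map $\mathbb{L}_{t_0(\mathcal{F}),u_0}\to M[1]$. Because $u_0\in\textcat{P}^\heartsuit$, Assumption (\ref{repcontextP}) kills $\pi_0(\textnormal{Map}(\mathbb{L}_{U_0/t_0(\mathcal{F}),u_0},M[1]))$, so the fibre sequence $\mathbb{L}_{t_0(\mathcal{F}),u_0}\to\mathbb{L}_{U_0,u_0}\to\mathbb{L}_{U_0/t_0(\mathcal{F}),u_0}$ lifts this to a non-zero derivation $d\colon A_0\to A_0\oplus M[1]$. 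Now apply Proposition \ref{liftpropositionpath} to the morphism $t_0(\mathcal{F})\to\mathcal{F}$ (which has an obstruction theory by hypothesis) and the square-zero extension $A_0\oplus_d\Omega M[1]$: the fibre of
\begin{equation*}
t_0(\mathcal{F})(A_0\oplus_d\Omega M[1])\longrightarrow t_0(\mathcal{F})(A_0)\times_{\mathcal{F}(A_0)}\mathcal{F}(A_0\oplus_d\Omega M[1])
\end{equation*}
is a path space $\Omega_{\alpha(x),0}\textnormal{Map}(\mathbb{L}_{t_0(\mathcal{F})/\mathcal{F},u_0},M[1])$. Since $A_0\oplus_d\Omega M[1]$ and $A_0$ both lie in $\textcat{DAlg}^\heartsuit(\mathcal{C})$, this map is an equivalence and the path space is empty; but $d$ supplies a path from $\alpha(x)$ to $0$, a contradiction. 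This discreteness-versus-obstruction tension, not a Postnikov connectivity bound, is what forces $\pi_1=0$.
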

    \begin{proof}
We consider the fibre-cofibre sequence $\mathbb{L}_{\mathcal{F},u_0}\rightarrow{\mathbb{L}_{t_0(\mathcal{F}),u_0}}\rightarrow{\mathbb{L}_{t_0(\mathcal{F})/\mathcal{F},u_0}}$ from Lemma \ref{globalcotangentresults}. There is an induced long exact sequence in homotopy
\begin{equation}\phantomsection\label{longexacthomotopy}
    \dots\pi_1(\mathbb{L}_{t_0(\mathcal{F})/\mathcal{F},u_0})\rightarrow\pi_0(\mathbb{L}_{\mathcal{F},u_0})\rightarrow{\pi_0(\mathbb{L}_{t_0(\mathcal{F}),u_0})}\rightarrow{\pi_0(\mathbb{L}_{t_0(\mathcal{F})/\mathcal{F},u_0})}\rightarrow{\dots}
\end{equation}We note that, for any $M\in\textcat{M}_{A_0}^{\heartsuit}$, the vertical fibres in the following diagram 
\begin{equation*}
    \begin{tikzcd}
        t_0(\mathcal{F})(A_0\oplus M)\arrow{r} \arrow{d}& \mathcal{F}(A_0\oplus M) \arrow{d}\\
        t_0(\mathcal{F})(A_0)\arrow{r} & \mathcal{F}(A_0)
    \end{tikzcd}
\end{equation*}induce a map 
\begin{equation}\phantomsection\label{equationcotangent}
    \textnormal{Map}_{\textcat{Mod}_{A_0}}(\mathbb{L}_{t_0(\mathcal{F}),u_0},M)\rightarrow \textnormal{Map}_{\textcat{Mod}_{A_0}}(\mathbb{L}_{\mathcal{F},u_0},M)
\end{equation}We note that $A_0\oplus M\in\textcat{DAlg}^{\heartsuit}(\mathcal{C})$. Therefore, the horizontal morphisms in the diagram are equivalences and hence the map in Equation (\ref{equationcotangent}) is an equivalence. By Lemma \ref{corollaryfullyfaithful}, we see that $\tau_{\leq 0}(\mathbb{L}_{t_0(\mathcal{F}),u_0})\simeq \tau_{\leq 0}(\mathbb{L}_{\mathcal{F},u_0})$. Hence, by examining the long exact sequence in Equation (\ref{longexacthomotopy}), we see that $\pi_i(\mathbb{L}_{t_0(\mathcal{F})/\mathcal{F},u_0})=0$ for $i\leq 0$.

Now, define $M:=\pi_1(\mathbb{L}_{t_0(\mathcal{F})/\mathcal{F},u_0})$ and suppose that it is non-zero. We have a long exact sequence in homotopy
\begin{equation*}
    \dots\pi_2(\mathbb{L}_{t_0(\mathcal{F})/\mathcal{F},u_0})\rightarrow\pi_1(\mathbb{L}_{\mathcal{F},u_0})\rightarrow{\pi_1(\mathbb{L}_{t_0(\mathcal{F}),u_0})}\rightarrow{\pi_1(\mathbb{L}_{t_0(\mathcal{F})/\mathcal{F},u_0})}\rightarrow{0}
\end{equation*}and hence the morphism $\pi_1(\mathbb{L}_{t_0(\mathcal{F}),u_0})\rightarrow{\pi_1(\mathbb{L}_{t_0(\mathcal{F})/\mathcal{F},u_0})}$ must be non-zero. Since
\begin{equation*}\begin{aligned}
    \textnormal{Map}_{\textcat{Mod}_{A_0}}(\mathbb{L}_{t_0(\mathcal{F}),u_0},M[1])&\simeq \textnormal{Map}_{\textcat{Mod}_{A_0}^{\leq 1}}(\tau_{\leq 1}(\mathbb{L}_{t_0(\mathcal{F}),u_0}),M[1])\\
    &\simeq \textnormal{Map}_{\textcat{Mod}_{A_0}^{\leq 1}}(\pi_1(\mathbb{L}_{t_0(\mathcal{F}),u_0})[1],\pi_1(\mathbb{L}_{t_0(\mathcal{F})/\mathcal{F},u_0})[1])
\end{aligned}\end{equation*}we see that there exists a non-zero map $\mathbb{L}_{t_0(\mathcal{F}),u_0}\rightarrow{M[1]}$. Since the morphism $U_0\rightarrow{t_0(\mathcal{F})}$ is in $\textcat{P}^\heartsuit$, we see that 
\begin{equation*}
    \pi_0(\textnormal{Map}_{\textcat{Mod}_{A_0}}(\mathbb{L}_{U_0/t_0(\mathcal{F}),u_0},M[1]))=0
\end{equation*}by Assumption (\ref{repcontextP}). Hence, by considering the fibre sequence of cotangent complexes $\mathbb{L}_{t_0(\mathcal{F}),u_0}\rightarrow{\mathbb{L}_{U_0,u_0}}\rightarrow{\mathbb{L}_{U_0/t_0(\mathcal{F}),u_0}}$, we see that there must exist a non-zero map $\mathbb{L}_{U_0,u_0}\rightarrow{M[1]}$. This defines a non-zero derivation $d:A_0\rightarrow{A_0\oplus M[1]}$.

Since the morphism $t_0(\mathcal{F})\rightarrow{\mathcal{F}}$ has an obstruction theory, then by Proposition \ref{liftpropositionpath}, the fibre of the morphism 
    \begin{equation}\phantomsection\label{t0pullbackequation}
        t_0(\mathcal{F})(A_0\oplus_d \Omega M[1])\rightarrow{t_0(\mathcal{F})(A_0)}\times_{\mathcal{F}(A_0)}\mathcal{F}(A_0\oplus_d \Omega M[1])
    \end{equation}at the point $x\in \pi_0(t_0(\mathcal{F})(A_0)\times_{\mathcal{F}(A_0)}\mathcal{F}(A_0\oplus_d \Omega M[1]))$ defined by the following commutative diagram
    \begin{equation*}
        \begin{tikzcd}
            U_0\arrow{r}{u_0}\arrow{d} & t_0(\mathcal{F}) \arrow{d}{j}\\
            (U_0)_d[ \Omega M[1]]\arrow{r} & \mathcal{F}
        \end{tikzcd}
    \end{equation*}is isomorphic, for some $\alpha(x)\in\pi_0(\textnormal{Map}_{\textcat{Mod}_{A_0}}(\mathbb{L}_{t_0(\mathcal{F})/\mathcal{F},u_0},M[1]))$, to the path space
\begin{equation*}\Omega_{\alpha(x),0}\textnormal{Map}_{\textcat{Mod}_{A_0}}(\mathbb{L}_{t_0(\mathcal{F})/\mathcal{F},u_0},M[1])
    \end{equation*}The morphism in Equation (\ref{t0pullbackequation}) is an equivalence because $A_0\oplus_d \Omega M[1]$ and $A_0$ are in $\textcat{DAlg}^{\heartsuit}(\mathcal{C})$. Hence, the path space is zero. However, since $d$ is non-zero and provides a path between $\alpha(x)$ and $0$, we get a contradiction. Therefore, $\pi_1(\mathbb{L}_{t_0(\mathcal{F})/\mathcal{F},u_0})\simeq 0$.
    \end{proof}

\subsection{The Representability Theorem} 

The following representability theorem bears many similarities to the one stated in \cite[Theorem C.0.9]{toen_homotopical_2008} and will be sufficient for our purposes.
\begin{thm}\phantomsection\label{representabilitytheorem}Suppose that $(\mathcal{C},\mathcal{C}_{\geq 0},\mathcal{C}_{\leq 0},\mathcal{C}^0,\bm\tau,\textcat{P},\mathcal{A},\textcat{M},\textcat{S})$ is a representability context and that $\mathcal{F}$ is a stack in $\textcat{Stk}(\mathcal{A},\bm{\tau}|_{\mathcal{A}})$. The following conditions are equivalent. 
\begin{enumerate}
    \item $\mathcal{F}$ is an $n$-geometric stack in $\textcat{Stk}_n(\mathcal{A},\bm\tau|_\mathcal{A},\textcat{P}|_\mathcal{A})$, 
    \item $\mathcal{F}$ satisfies the following three conditions:
    \begin{enumerate}
\item\makeatletter\def\@currentlabel{a}\makeatother\label{representability1}The truncation $t_0(\mathcal{F})$ is an $n$-geometric stack in $\textcat{Stk}_n(\mathcal{A}^\heartsuit,\bm\tau^\heartsuit,\textcat{P}^\heartsuit)$, 
        \item \makeatletter\def\@currentlabel{b}\makeatother\label{representability2} $\mathcal{F}$ has an obstruction theory relative to $\mathcal{A}$,
        \item \makeatletter\def\@currentlabel{c}\makeatother\label{representability3} $\mathcal{F}$ is nilcomplete with respect to $\mathcal{A}$. 
    \end{enumerate}
\end{enumerate}
\end{thm}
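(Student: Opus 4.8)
The plan is to follow the structure of \cite[Theorem C.0.9]{toen_homotopical_2008}, adapting it to our representability context. The implication $(1)\Rightarrow(2)$ is essentially assembled from the tools developed in Sections \ref{obstructionsection} and \ref{chapter5}: if $\mathcal{F}$ is $n$-geometric, then $t_0(\mathcal{F})$ is $n$-geometric in $\textcat{Stk}(\mathcal{A}^\heartsuit,\bm\tau^\heartsuit)$ by Proposition \ref{t0preserves}(\ref{t02}), giving (\ref{representability1}); $\mathcal{F}$ has an obstruction theory by Lemma \ref{ngeoobstruction} (i.e. Theorem \ref{obstructiongeometric}), giving (\ref{representability2}); and $\mathcal{F}$ is nilcomplete by Proposition \ref{postnikovnilcomplete}, whose hypotheses are guaranteed by Assumptions (\ref{repcontextobstruction}), (\ref{repcontextrepresentables}) of our representability context together with Proposition \ref{sqzlift} applied via Assumption (\ref{repcontextP}) to verify the Postnikov lifting property. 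This direction I expect to be straightforward bookkeeping.

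The substance is the converse $(2)\Rightarrow(1)$, which I would prove by induction on $n$. First I would reduce to showing that $\mathcal{F}$ admits an $n$-atlas: since $\mathcal{A}$ is closed under $\bm\tau$-descent (Assumption (\ref{repcontexttau})), Proposition \ref{onlynatlas} then upgrades this to $n$-geometricity. To build the atlas, start from an $n$-atlas $\{U_{0,i}=\textnormal{Spec}(A_{0,i})\rightarrow{t_0(\mathcal{F})}\}_{i\in I}$ of the truncation, provided by (\ref{representability1}). The heart of the argument is to lift each $U_{0,i}\rightarrow{t_0(\mathcal{F})}\hookrightarrow{\mathcal{F}}$ (the inclusion coming from the unit $t_0(\mathcal{F})\rightarrow{\mathcal{F}}$, using that $t_0(\mathcal{F})\rightarrow{\mathcal{F}}$ is a monomorphism-like map in an appropriate sense) up the Postnikov tower: one climbs, step by step, from $A_{0,i}$ through square-zero extensions $A_{\leq k}\rightarrow{A_{\leq k-1}}$ as in Lemma \ref{postnikovsquarezero}, at each stage solving a lifting problem whose obstruction lives in $\pi_0(\textnormal{Map}_{\textcat{Mod}}(\mathbb{L}_{U_0/\mathcal{F}},-))$-type groups. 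Here Lemma \ref{2connectivelemma} is decisive: it shows $\mathbb{L}_{t_0(\mathcal{F})/\mathcal{F},u_0}$ is $2$-connective (its $\pi_i$ vanish for $i\le 1$), so that the relevant obstruction and ambiguity groups vanish against the modules $M\in\textcat{M}_{A_{\leq k-1},1}$ appearing in the square-zero extensions, because $\mathbb{L}_{t_0(\mathcal{F})/\mathcal{F}}$ being highly connective forces $\pi_0(\textnormal{Map}(\mathbb{L}_{U_0/\mathcal{F}},M))=0$ for such $M$. One then assembles the lifts using nilcompleteness (\ref{representability3}): the compatible system $\{u_{k,i}:U_{k,i}\rightarrow{\mathcal{F}}\}$ glues, via $\mathcal{F}(A_i)\simeq\varprojlim_k\mathcal{F}(A_{\leq k,i})$ where $A_i:=\varprojlim_k A_{\leq k,i}$, to a morphism $U_i=\textnormal{Spec}(A_i)\rightarrow{\mathcal{F}}$ with $t_0(U_i)\simeq U_{0,i}$. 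By Assumption (\ref{postnikovcomp4}) of Postnikov compatibility, $A_i\in\mathcal{A}^{op}$.

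It then remains to check that $\{U_i\rightarrow{\mathcal{F}}\}_{i\in I}$ is genuinely an $n$-atlas: that the maps are in $(n-1)\textcat{-P}$ and that $\coprod_i U_i\rightarrow{\mathcal{F}}$ is an epimorphism of stacks. For the epimorphism claim, one uses that $\pi_0^{\bm\tau}(\coprod_i U_i\rightarrow{\mathcal{F}})$ is determined on the heart — where it coincides with $\coprod_i U_{0,i}\rightarrow{t_0(\mathcal{F})}$, which is an epimorphism since the $U_{0,i}$ form an atlas — together with Definition \ref{epimorphismstacks}. For the $(n-1)\textcat{-P}$ claim, one computes the relative cotangent complex $\mathbb{L}_{U_i/\mathcal{F}}$ at a point: using the cofibre sequences of Lemma \ref{globalcotangentresults} and Corollary \ref{cofibrecotangentstack}, the derived-strongness hypothesis (Corollary \ref{cotangentderivedstronglyflat}) and the fact that $\mathbb{L}_{t_0(\mathcal{F})/\mathcal{F}}$ is $2$-connective, one shows $\pi_0(\textnormal{Map}(\mathbb{L}_{U_i/\mathcal{F},x},M))=0$ for all $M\in\textcat{M}_{A,1}$, and that $t_0(U_i)\rightarrow{t_0(\mathcal{F})}$ is in $(n-1)\textcat{-P}^\heartsuit$; then Assumption (\ref{repcontextP}) gives that $U_i\rightarrow{\mathcal{F}}$ is in $(n-1)\textcat{-P}$, and the inductive hypothesis (applied to the fibres $\mathcal{F}\times_{\mathcal{F}}U_i$ over representables, which satisfy (\ref{representability1})--(\ref{representability3}) by Lemma \ref{relativeobstructionresult} and compatibility of $t_0$ with pullbacks) shows those pullbacks are $(n-1)$-geometric, so the maps are $(n-1)$-representable. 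The main obstacle, as in To\"en--Vezzosi, is the Postnikov-climbing step: one must carefully track that at each stage the lift exists and that the space of lifts is connected, which is exactly where the $2$-connectivity of $\mathbb{L}_{t_0(\mathcal{F})/\mathcal{F}}$ from Lemma \ref{2connectivelemma} and the vanishing built into Assumption (\ref{repcontextP}) are used in tandem; keeping the induction on $n$ threaded correctly through this step (so that Statement (\ref{homitem})-type vanishing is available at level $n-1$) is the delicate bookkeeping.
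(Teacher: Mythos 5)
Your outline of $(1)\Rightarrow(2)$ matches the paper, and the broad shape of the converse (induction on $n$, lift an atlas of $t_0(\mathcal{F})$ stage by stage, assemble via nilcompleteness) is also right. But there are two genuine gaps.

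First, the base case. Your reduction "build an $n$-atlas, then apply Proposition \ref{onlynatlas}" only makes sense for $n\geq 0$; for $n=-1$ there is no atlas to build, and $t_0(\mathcal{F})$ is itself representable. The induction must bottom out by showing that $\mathcal{F}$ \emph{is} representable, and this is the hardest part of the proof. The paper's argument (Lemma \ref{Usequence} plus the paragraph following it) produces a representable $U$ with $t_0(U)\simeq t_0(\mathcal{F})$ and $\mathbb{L}_{U/\mathcal{F},u}\simeq 0$, and then runs a \emph{second} Postnikov induction showing $U(A_{\leq k})\simeq\mathcal{F}(A_{\leq k})$ for every $A\in\mathcal{A}^{op}$ — here the square-zero presentation $A_{\leq k}\simeq A_{\leq k-1}\oplus_{d_k}\pi_k(A)[k]$ of Lemma \ref{postnikovsquarezero} and the fact that both $U$ and $\mathcal{F}$ are infinitesimally cartesian convert the statement at level $k$ into the statement at level $k-1$ — before nilcompleteness finally gives $U\simeq\mathcal{F}$. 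None of this appears in your proposal, and without it the induction never starts.

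Second, the lifting mechanism is misdescribed. You speak of climbing "through square-zero extensions $A_{\leq k}\to A_{\leq k-1}$ as in Lemma \ref{postnikovsquarezero}", but there is no pre-existing algebra whose Postnikov tower you are ascending: the tower must be \emph{constructed}, with $A_{k+1}:=A_k\oplus_{d}\,\pi_{k+2}(\mathbb{L}_{U_k/\mathcal{F},u_k})[k+1]$, where $d$ is the truncation of the universal derivation $\mathbb{L}_{U_k}\to\mathbb{L}_{U_k/\mathcal{F},u_k}$. The lift $u_{k+1}$ then exists not because some mapping group vanishes against $M\in\textcat{M}_{A,1}$, but because the obstruction class $\alpha(u_k)\in\pi_0(\textnormal{Map}(\mathbb{L}_{\mathcal{F},u_k},\pi_{k+2}(\mathbb{L}_{U_k/\mathcal{F},u_k})[k+2]))$ of Proposition \ref{liftingproposition} is nullhomotopic by construction (the chosen derivation factors through $\mathbb{L}_{U_k/\mathcal{F},u_k}$, hence restricts to zero on $\mathbb{L}_{\mathcal{F},u_k}$). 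The $2$-connectivity of $\mathbb{L}_{t_0(\mathcal{F})/\mathcal{F},u_0}$ from Lemma \ref{2connectivelemma} is used only once, to seed the induction at $k=0$; the propagation of the connectivity estimate $\pi_i(\mathbb{L}_{U_{k+1}/\mathcal{F},u_{k+1}})=0$ for $i\leq k+2$ requires the separate long-exact-sequence and Tor-spectral-sequence computation with $\mathbb{L}_{U_k/U_{k+1}}$ (via Lemma \ref{postnikovcotangentvanish}), which your sketch does not supply. Your treatment of the atlas properties at the end (epimorphism via $\pi_0^{\bm\tau}$, membership in $(n-1)\textcat{-P}$ via Assumption (\ref{repcontextP}), representability of the pullbacks via the inductive hypothesis) is consistent with the paper, modulo the fact that the paper routes the $(n-1)$-representability through the diagonal of $\mathcal{F}$ rather than through the individual fibres.
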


\begin{proof}We first suppose that $\mathcal{F}$ is an $n$-geometric stack. Then, (\ref{representability1}) is satisfied by Proposition \ref{t0preserves}. Condition (\ref{representability2}) follows from Theorem \ref{obstructiongeometric} and Assumptions (\ref{repcontextrepresentables}) and (\ref{repcontextobstruction}). Finally, Condition (\ref{representability3}) follows from Proposition \ref{postnikovnilcomplete} along with Proposition \ref{sqzlift} and Assumptions (\ref{repcontextrepresentables}), (\ref{repcontextP}) and (\ref{repcontextobstruction}). 

To prove the converse, suppose that $\mathcal{F}$ is a stack in $\textcat{Stk}(\mathcal{A},\bm\tau|_\mathcal{A})$ satisfying conditions (\ref{representability1}) to (\ref{representability3}). The proof goes by induction on $n$. Suppose that $n=-1$. We prove the following lemma.

\begin{lem}\phantomsection\label{Usequence}
Suppose that $U_0\in\mathcal{A}^{\heartsuit}$ and that we have a morphism $U_0\rightarrow{t_0(\mathcal{F})}$ in $n\textcat{-P}^\heartsuit$ such that, for any base point $x:X\rightarrow{U_0}$, we have that $\mathbb{L}_{U_0/t_0(\mathcal{F}),x}\simeq 0$. Then, there exists a representable stack $U=\textnormal{Spec}(A)\in\mathcal{A}$ in $\textcat{Stk}(\mathcal{A},\bm\tau|_\mathcal{A})$ and a morphism $u:U\rightarrow{\mathcal{F}}$ such that $t_0(U)\simeq U_0$ and $\mathbb{L}_{U/\mathcal{F},u}\simeq 0$. 
\end{lem}

\begin{proof}
    We will construct by induction a sequence of representable stacks
    \begin{equation*}
        U_0\rightarrow{U_1}\rightarrow{\dots}\rightarrow{U_k}\rightarrow{U_{k+1}}\rightarrow{\dots}\rightarrow{\mathcal{F}}
    \end{equation*}satisfying the following properties:
    \begin{itemize}
        \item We have $U_k=\textnormal{Spec}(A_k)$ with $A_k\in\mathcal{A}^{op}$ $k$-truncated, 
        \item The corresponding morphism $A_{k+1}\rightarrow{A_k}$ induces an equivalence on the $k$-th truncation,
        \item The morphisms $u_k:U_k\rightarrow{\mathcal{F}}$ are such that $\pi_i(\mathbb{L}_{U_k/\mathcal{F},u_k})=0$ for all $i\leq k+1$.
    \end{itemize}

    Let $A=\varprojlim_k A_k$ and let $U=\textnormal{Spec}(A)$. By construction $A_k=\tau_{\leq k}(A)$, and we note that $A\in\mathcal{A}^{op}$ by Assumption (\ref{postnikovcomp4}) on our Postnikov compatible derived geometry context. Therefore, $U$ is a representable stack on $\mathcal{A}$. The points $u_k$ define a well defined point in $\pi_0(\varprojlim_k \mathcal{F}(A_k))$ which we know, by condition (\ref{representability3}), defines a point in $\pi_0(\mathcal{F}(A))$. Therefore, there is a well-defined morphism of stacks $u:U\rightarrow{\mathcal{F}}$ which, by Lemma \ref{relativeobstructionresult}, has a global cotangent complex. If $M\in\textcat{M}_{A,1}$, then
    \begin{equation*}
        \begin{aligned}
            \textnormal{Map}_{\textcat{Mod}_A}(\mathbb{L}_{U/\mathcal{F},u},M)&\simeq \textnormal{Map}_{{}^{U/}\textcat{Stk}(\mathcal{A},\bm{\tau}|_{\mathcal{A}})_{/\mathcal{F}}}(U[M],\mathcal{F})\\
            &\simeq \textnormal{Map}_{{}^{U/}\textcat{Stk}(\mathcal{A},\bm{\tau}|_{\mathcal{A}})_{/\mathcal{F}}}(\varinjlim_kU_k[M_{\leq k}],\mathcal{F})\\
            &\simeq \varprojlim_k\textnormal{Map}_{{}^{U_k/}\textcat{Stk}(\mathcal{A},\bm{\tau}|_{\mathcal{A}})_{/\mathcal{F}}}(U_k[M_{\leq k}],\mathcal{F})\\
            &\simeq \varprojlim_k\textnormal{Map}_{\textcat{Mod}_{A_k}}(\mathbb{L}_{U_k/\mathcal{F},u_k},M_{\leq k})\\
        \end{aligned}
    \end{equation*}By our last assumption on the $U_k$, $\textnormal{Map}_{\textcat{Mod}_A}(\mathbb{L}_{U/\mathcal{F},u},M)\simeq 0$. Hence, by Lemma \ref{Dfullyfaithfulconnective}, $\mathbb{L}_{U/\mathcal{F},u}\simeq 0$.

    It now remains to construct such a sequence. Indeed, when $k=0$, since $U_0$ is a representable stack on $\mathcal{A}^\heartsuit$, $U_0=\textnormal{Spec}(A_0)$ for some $A_0\in\mathcal{A}^{\heartsuit,op}$. We let $u_0:U_0\rightarrow{\mathcal{F}}$ denote the morphism induced by $u_0:U_0\rightarrow{t_0(\mathcal{F})}$ and the morphism $j:t_0(\mathcal{F})\rightarrow{\mathcal{F}}$.

    If we consider the fibre sequence of $A_0$-modules from Corollary \ref{cofibrecotangentstack},
    \begin{equation*}
\mathbb{L}_{t_0(\mathcal{F})/\mathcal{F},u_0}\rightarrow{\mathbb{L}_{U_0/\mathcal{F},u_0}\rightarrow{\mathbb{L}_{U_0/t_0(\mathcal{F}),u_0}}}
    \end{equation*}
then we obtain the following long exact sequence
\begin{equation*}
\dots\rightarrow\pi_1(\mathbb{L}_{U_0/t_0(\mathcal{F}),u_0})\rightarrow\pi_0(\mathbb{L}_{t_0(\mathcal{F})/\mathcal{F},u_0})\rightarrow{\pi_0(\mathbb{L}_{U_0/\mathcal{F},u_0})\rightarrow{\pi_0(\mathbb{L}_{U_0/t_0(\mathcal{F}),u_0}})}\rightarrow{\dots}
\end{equation*}By assumption, $\mathbb{L}_{U_0/t_0(\mathcal{F}),u_0}\simeq 0$. By Lemma \ref{2connectivelemma}, since $t_0(\mathcal{F})$ and $\mathcal{F}$ have obstruction theories, we see that $\pi_i(\mathbb{L}_{t_0(\mathcal{F})/\mathcal{F},u_0})=0$ for all $i\leq 1$. Therefore, $\pi_i(\mathbb{L}_{U_0/\mathcal{F},u_0})=0$ for all $i\leq 1$.  

 Now, suppose that all the $U_i$ for $i\leq k$ have been constructed for some $k\geq 0$. Consider $u_k:U_k\rightarrow{\mathcal{F}}$ and the natural morphism \begin{equation*}d_k:\mathbb{L}_{U_k}\rightarrow{\mathbb{L}_{U_k/\mathcal{F},u_k}}\xrightarrow{\tau_{\leq k+2}}{(\mathbb{L}_{U_k/\mathcal{F},u_k})_{\leq k+2}}\simeq \pi_{k+2}(\mathbb{L}_{U_k/\mathcal{F},u_k})[k+2]
    \end{equation*}We note that this defines an element of 
    \begin{equation*}
        \textcat{Der}_{U_k}(U_k,\pi_{k+2}(\mathbb{L}_{U_k/\mathcal{F},u_k})[k+2])\simeq\textnormal{Map}_{{}^{U_k/}\textcat{Stk}}(\textnormal{Spec}(A_k\oplus\pi_{k+2}(\mathbb{L}_{U_k/\mathcal{F},u_k})[k+2]),U_k)
    \end{equation*}We let $A_{k+1}=A_k\oplus_{d_k} \pi_{k+2}(\mathbb{L}_{U_k/\mathcal{F},u_k})[k+1]$ and $U_{k+1}=\textnormal{Spec}(A_{k+1})$, which, by our assumptions on $\textcat{M}$, lies in $\mathcal{A}$. Since the obstruction $\alpha(u_k):\mathbb{L}_{\mathcal{F},u_k}\rightarrow{\pi_{k+2}(\mathbb{L}_{U_k/\mathcal{F},u_k})[k+2]}$ from Proposition \ref{liftingproposition} is induced by $d_k$, we see that it is nullhomotopic. It follows that the morphism $u_{k}:U_k\rightarrow{\mathcal{F}}$ extends to a morphism $u_{k+1}:U_{k+1}\rightarrow{\mathcal{F}}$
\begin{equation*}
    \begin{tikzcd}
        U_k\arrow{rr} \arrow{rd}{u_k}& & U_{k+1} \arrow{ld}{u_{k+1}}\\
        &\mathcal{F}
    \end{tikzcd}
\end{equation*}in ${}^{U_k/}\textcat{Stk}(\mathcal{A},\bm{\tau}|_{\mathcal{A}})_{/\mathcal{F}}$. We note that $U_{k+1}$ satisfies the first two conditions by construction. It remains to prove the final condition. Consider the fibre sequence in $\textcat{Mod}_{A_{k}}$ associated to the above triangle
\begin{equation*}
\mathbb{L}_{U_{k+1}/\mathcal{F},u_{k+1}}\otimes^\mathbb{L}_{A_{k+1}}A_k\rightarrow{\mathbb{L}_{U_k/\mathcal{F},u_k}}\rightarrow{\mathbb{L}_{U_k/{U_{k+1}}}}
    \end{equation*}and consider the corresponding long exact sequence
\begin{equation*}
    .\,.\rightarrow\pi_{i+1}(\mathbb{L}_{U_k/{U_{k+1}}})\rightarrow{\pi_i(\mathbb{L}_{U_{k+1}/\mathcal{F},u_{k+1}}\otimes^\mathbb{L}_{A_{k+1}}A_k)}\rightarrow{\pi_i(\mathbb{L}_{U_k/\mathcal{F},u_k})}\rightarrow{\pi_i(\mathbb{L}_{U_k/{U_{k+1}}})}\rightarrow{.\,.}
\end{equation*}
We note that $\pi_i(\mathbb{L}_{U_k/\mathcal{F},u_k})=0$ for all $i\leq k+1$. Moreover, by  Lemma \ref{postnikovcotangentvanish}, we have that $\pi_i(\mathbb{L}_{U_k/U_{k+1}})=0$ for $i\leq k+1$ and $i=k+3$ and 
\begin{equation*}
    \pi_{k+2}(\mathbb{L}_{U_k/U_{k+1}})\simeq \pi_{k+1}(A_{k+1})\simeq\pi_{k+1}(A_k\oplus_{d_k}\pi_{k+2}(\mathbb{L}_{U_k/\mathcal{F},u_k})[k+1])\simeq\pi_{k+2}(\mathbb{L}_{U_k/\mathcal{F},u_k})
\end{equation*}Therefore, using the long exact sequence, we see that $\pi_{i}(\mathbb{L}_{U_{k+1}/\mathcal{F},u_{k+1}}\otimes^\mathbb{L}_{A_{k+1}}A_k)=0$ for $i\leq k+2$. 

It remains to show that $\pi_i(\mathbb{L}_{U_{k+1}/\mathcal{F},u_{k+1}})=0$ for all $i\leq k+2$. We denote by $K_{k+1}$ the fibre of the morphism $A_{k+1}\rightarrow{A_k}$. We note that this is concentrated in degree $k+1$. By considering the following long exact sequence of homotopy groups
\begin{equation*}
.\,.\rightarrow\pi_i(\mathbb{L}_{U_{k+1}/\mathcal{F},u_{k+1}}\otimes^\mathbb{L}_{A_{k+1}}K_{k+1})\rightarrow\pi_i(\mathbb{L}_{U_{k+1}/\mathcal{F},u_{k+1}})\rightarrow\pi_i(\mathbb{L}_{U_{k+1}/\mathcal{F},u_{k+1}}\otimes^\mathbb{L}_{A_{k+1}}A_k)\rightarrow\dots 
\end{equation*}we see that it suffices to show that $\pi_i(\mathbb{L}_{U_{k+1}/\mathcal{F},u_{k+1}}\otimes^\mathbb{L}_{A_{k+1}}K_{k+1})\simeq 0$ for all $i\leq k+2$. Indeed, using the Tor-spectral sequence
\begin{equation*}
    \textnormal{Tor}^p_{\pi_*(A_{k+1})}(\pi_*(\mathbb{L}_{U_{k+1}/\mathcal{F},u_{k+1}}),\pi_{*}(K_{k+1}))_q\Rightarrow{\pi_{p+q}(\mathbb{L}_{U_{k+1}/\mathcal{F},u_{k+1}}\otimes^\mathbb{L}_{A_{k+1}}K_{k+1})}
\end{equation*}we easily see that the bottom $k$-rows of the second page are zero. Therefore, we can see that $\pi_i(\mathbb{L}_{U_{k+1}/\mathcal{F},u_{k+1}}\otimes^\mathbb{L}_{A_{k+1}}K_{k+1})=0$ for $i\leq k$. Since $k\geq 1$, this implies, in particular, that $\pi_0(\mathbb{L}_{U_{k+1}/\mathcal{F},u_{k+1}})$ and $\pi_1(\mathbb{L}_{U_{k+1}/\mathcal{F},u_{k+1}})$ are zero. We then see that $\pi_i(\mathbb{L}_{U_{k+1}/\mathcal{F},u_{k+1}}\otimes^\mathbb{L}_{A_{k+1}}K_{k+1})=0$ for $i=k+1$ and $k+2$. 
\end{proof}

Now, coming back to the original theorem. Suppose that $n=-1$. Then, we note that $t_0(\mathcal{F})$ is a representable stack. Therefore, by Lemma \ref{Usequence} applied to the identity morphism $t_0(\mathcal{F})\rightarrow{t_0(\mathcal{F})}$, we see that there exists a representable stack $U$ in $\textcat{Stk}(\mathcal{A},\bm\tau|_\mathcal{A})$ and a morphism $u:U\rightarrow{\mathcal{F}}$ with $\mathbb{L}_{U/\mathcal{F},u}\simeq 0$ and $t_0(U)\simeq t_0(\mathcal{F})$.

We will prove by induction on $k\geq 0$ that, for every $A\in \mathcal{A}^{op}$, $U(\tau_{\leq k}A)\simeq \mathcal{F}(\tau_{\leq k}A)$. Then, using Condition (\ref{representability3}), we see that $U(A)\simeq\mathcal{F}(A)$ for every $A\in \mathcal{A}^{op}$, and hence $\mathcal{F}\simeq U$ is representable. When $k=0$, we note that $t_0(U)\simeq t_0(\mathcal{F})$, and hence we are done. Now, suppose that $k>0$ and that $U(\tau_{\leq m} A)\simeq\mathcal{F}(\tau_{\leq m}A)$ for all $m<k$ and all $X=\textnormal{Spec}(A)\in \mathcal{A}$. By Lemma \ref{postnikovsquarezero}, the map $A_{\leq k}\rightarrow{A_{\leq k-1}}$ is isomorphic in $\mathcal{A}^{op}_{/A_{\leq k-1}}$ to the morphism 
\begin{equation*}
    A_{\leq k-1}\oplus_{d_{k}}\pi_k(A)[k]\rightarrow{A_{\leq k-1}}
\end{equation*}for some derivation $d_{k}\in\pi_0(\textcat{Der}(A_{\leq k-1},\pi_k(A)[k+1]))$. Since $\mathbb{L}_{U/\mathcal{F},u}\simeq 0$, then $\mathbb{L}_{U}\simeq \mathbb{L}_{\mathcal{F},u}$. Hence, since $\mathcal{F}$ has a global cotangent complex, we see that
\begin{equation*}
    \textnormal{Map}_{X_{\leq k-1}/\textcat{Stk}}(X_{\leq k-1}[\pi_k(A)[k+1]],U)\simeq \textnormal{Map}_{X_{\leq k-1}/\textcat{Stk}}(X_{\leq k-1}[\pi_k(A)[k+1]],\mathcal{F})
\end{equation*}
Now, since $U$ and $\mathcal{F}$ are infinitesimally cartesian, then, using our inductive hypothesis,
\begin{equation*}\begin{aligned}
    U(A_{\leq k-1}\oplus_{d_{k}}\pi_k(A)[k])&\simeq U(A_{\leq k-1})\times_{U(A_{\leq k-1}\oplus \pi_k(A)[k+1])}U(A_{\leq k-1})\\
    &\simeq \mathcal{F}(A_{\leq k-1})\times_{\mathcal{F}(A_{\leq k-1}\oplus\pi_k(A)[k+1])}\mathcal{F}(A_{\leq k-1})\\
    &\simeq \mathcal{F}(A_{\leq k-1}\oplus_{d_{k}}\pi_k(A)[k])
\end{aligned}\end{equation*}from which it follows that $U(A_{\leq k})\simeq \mathcal{F}(A_{\leq k})$.\\

Now, suppose that $n> -1$ and that the theorem holds for $m< n$. Suppose that $\mathcal{F}$ is a stack in $\textcat{Stk}(\mathcal{A},\bm\tau|_\mathcal{A})$ satisfying conditions (\ref{representability1}) to (\ref{representability3}) of the theorem. We will first show that the diagonal morphism $\mathcal{F}\rightarrow{\mathcal{F}\times\mathcal{F}}$ is $(n-1)$-representable. Suppose that we have a map $X\rightarrow{\mathcal{F}\times\mathcal{F}}$ where $X$ is a $(-1)$-geometric stack. It suffices to show that $\mathcal{F}\times_{\mathcal{F}\times\mathcal{F}}X$ is $(n-1)$-geometric. We note that, since $t_0(\mathcal{F})$ is an $n$-geometric stack, then the diagonal morphism $t_0(\mathcal{F})\rightarrow{t_0(\mathcal{F})\times t_0(\mathcal{F})}$ is $(n-1)$-representable, and hence
\begin{equation*}
t_0(\mathcal{F}\times_{\mathcal{F}\times\mathcal{F}}X)\simeq t_0(\mathcal{F})\times_{t_0(\mathcal{F})\times t_0(\mathcal{F})}t_0(X)
\end{equation*}is an $(n-1)$-geometric stack. Since $\mathcal{F}$ has an obstruction theory, it follows that $\mathcal{F}\times_{\mathcal{F}\times\mathcal{F}}X$ has an obstruction theory by Lemma \ref{relativeobstructionresult}. Moreover, it is easy to see that 
\begin{equation*}
    (\mathcal{F}\times_{\mathcal{F}\times\mathcal{F}}X)(A)=\varprojlim_k(\mathcal{F}\times_{\mathcal{F}\times\mathcal{F}}X)(\tau_{\leq k}A)
\end{equation*}for any $A\in \mathcal{A}^{op}$. Therefore, since $\mathcal{F}\times_X(\mathcal{F}\times\mathcal{F})$ satisfies Conditions (\ref{representability1}) to (\ref{representability3}) of the theorem at level $n-1$, we can conclude by the induction hypothesis.\\

It remains to show that $\mathcal{F}$ has an $n$-atlas. Suppose that $t_0(\mathcal{F})$ has an $n$-atlas $\{V_j\rightarrow{t_0(\mathcal{F})}\}_{j\in J}$ in $\textcat{Stk}(\mathcal{A}^\heartsuit,\bm\tau^\heartsuit)$. We will lift this to an atlas of $\mathcal{F}$. Consider any morphism $V_j\rightarrow{t_0(\mathcal{F})}$, which we note is in $(n-1)\textcat{-P}^\heartsuit$, and let $U_0:=V_j$. Similarly to before, we will construct by induction a sequence of representable stacks
\begin{equation*}
U_0\rightarrow{U_1}\rightarrow{\dots}\rightarrow{U_k}\rightarrow{U_{k+1}}\rightarrow{\dots}\rightarrow{\mathcal{F}}
\end{equation*}in $\textcat{Stk}(\mathcal{A},\bm{\tau}|_{\mathcal{A}})$ satisfying the following properties
\begin{itemize}
    \item We have $U_k=\textnormal{Spec}(A_k)$ with $A_k\in\mathcal{A}^{op}$ $k$-truncated, 
    \item The corresponding morphism $A_{k+1}\rightarrow{A_k}$ induces an equivalence on the $k$-th truncation,
    \item The morphisms $u_k:U_k\rightarrow{\mathcal{F}}$ are such that, for any $M\in\textcat{M}_{A_k,1}$ with $\pi_i(M)=0$ for $i>k+1$, one has $[\mathbb{L}_{U_k/\mathcal{F},u_k},M]:=\pi_0(\textnormal{Map}_{\textcat{Mod}_{A_k}}(\mathbb{L}_{U_k/\mathcal{F},u_k},M))\simeq 0$. 
\end{itemize}

We construct the sequence inductively. Indeed, when $k=0$, we let $u_0:U_0\rightarrow{\mathcal{F}}$ denote the morphism induced by $u_0:U_0\rightarrow{t_0(\mathcal{F})}$ and the morphism $j:t_0(\mathcal{F})\rightarrow{\mathcal{F}}$. Suppose that $M\in\textcat{M}_{A_0}$ is concentrated in degree $1$ and consider the long exact sequence
    \begin{equation*}\dots\rightarrow[\mathbb{L}_{U_0/t_0(\mathcal{F}),u_0},M]\rightarrow{[\mathbb{L}_{U_0/\mathcal{F},u_0},M]}\rightarrow{[\mathbb{L}_{t_0(\mathcal{F})/\mathcal{F},u_0},M]}\rightarrow{0}
    \end{equation*}Since $\mathbb{L}_{t_0(\mathcal{F})/\mathcal{F},u_0}$ is $1$-connective by Lemma \ref{2connectivelemma} and the morphism $U_0\rightarrow{t_0(\mathcal{F})}$ is in $(n-1)\textcat{-P}$, then it follows using Assumption (\ref{repcontextP}), that 
    \begin{equation*}
        \pi_0(\textnormal{Map}_{\textcat{Mod}_{A_0}}(\mathbb{L}_{U_0/\mathcal{F},u_0},M))=0
    \end{equation*}

    Now, suppose that all the $U_i$ for $i\leq k$ have been constructed. We may  construct the next term in the sequence in a similar way to Lemma \ref{Usequence}. Indeed, consider $u_k:U_k\rightarrow{\mathcal{F}}$ and the natural morphism 
    \begin{equation*}
d_k:\mathbb{L}_{U_k}\rightarrow{\mathbb{L}_{U_k/\mathcal{F},u_k}}\xrightarrow{\tau_{\leq k+2}}{(\mathbb{L}_{U_k/\mathcal{F},u_k})_{\leq k+2}}\xrightarrow{\pi_{k+2}}{ \pi_{k+2}(\mathbb{L}_{U_k/\mathcal{F},u_k})}
    \end{equation*}Then, as before, we let $A_{k+1}=A_k\oplus_{d_k}\pi_{k+2}(\mathbb{L}_{U_k/\mathcal{F}})[k+1]$ and $U_{k+1}=\textnormal{Spec}(A_{k+1})$. As before, there is an induced morphism $u_{k+1}:U_{k+1}\rightarrow\mathcal{F}$. The only thing we need to check is the third condition. Suppose we have a $(k+2)$-truncated $M\in\textcat{M}_{A_{k+1},1}$. From the sequence $U_k\rightarrow{U_{k+1}}\rightarrow{\mathcal{F}}$, we obtain a long exact sequence
    \begin{equation*}
\dots\rightarrow{[\mathbb{L}_{U_k/U_{k+1}},M_{\leq k+1}]}\rightarrow{[\mathbb{L}_{U_k/\mathcal{F},u_k},M_{\leq k+1}]}\rightarrow{[\mathbb{L}_{U_{k+1}/\mathcal{F},u_{k+1}}\otimes^\mathbb{L}_{A_{k+1}}A_{k},M_{\leq k+1}]}\rightarrow{0}
    \end{equation*}Since $M_{\leq k+1}$ is a $(k+1)$-truncated module in $\textcat{M}_{A_k,1}$, then, by our assumptions on our sequence, $\pi_0(\textnormal{Map}_{\textcat{Mod}_{A_k}}(\mathbb{L}_{U_k/\mathcal{F},u_k},M_{\leq k+1}))\simeq 0$, and hence
\begin{equation*}\pi_0(\textnormal{Map}_{\textcat{Mod}_{A_{k}}}(\mathbb{L}_{U_{k+1}/\mathcal{F},u_{k+1}}\otimes^\mathbb{L}_{A_{k+1}}A_k,M_{\leq k+1}))\simeq 0
\end{equation*}By Lemma \ref{postnikovcotangentvanish} and by similar reasoning to Lemma \ref{Usequence}, $\pi_{k+2}(\mathbb{L}_{U_{k+1}/\mathcal{F},u_{k+1}}\otimes^\mathbb{L}_{A_{k+1}}A_k)=0$. Therefore, since $\pi_i(M)=0$ for $i>k+2$, 
\begin{equation*}
  \pi_0(\textnormal{Map}_{\textcat{Mod}_{A_{k+1}}}(\mathbb{L}_{U_{k+1}/\mathcal{F},u_{k+1}},M))\simeq \pi_0(\textnormal{Map}_{\textcat{Mod}_{A_{k}}}(\mathbb{L}_{U_{k+1}/\mathcal{F},u_{k+1}}\otimes^\mathbb{L}_{A_{k+1}}A_k,M))\simeq 0
\end{equation*}as required.

Suppose that the sequence in question has been constructed. We let $A=\varprojlim_k A_k$ and let $U=\textnormal{Spec}(A)$. Since the diagonal morphism $\mathcal{F}\rightarrow{\mathcal{F}\times\mathcal{F}}$ is $(n-1)$-representable, we can see that the morphism $u:U\rightarrow{\mathcal{F}}$ is $(n-1)$-representable using Proposition \ref{coolcorollary}.

Suppose that $M\in\textcat{M}_{A,1}$. Then, for each $k\geq 0$, there is a fibre sequence
\begin{equation*}
\mathbb{L}_{U/\mathcal{F},u}\otimes^\mathbb{L}_{A}A_k\rightarrow{\mathbb{L}_{U_k/\mathcal{F},u_k}}\rightarrow{\mathbb{L}_{U_k/U,u_k}}
\end{equation*}and hence we have a long exact sequence
\begin{equation*}
    \dots\rightarrow[\mathbb{L}_{U_k/U,u_k},M_{\leq k+1}]\rightarrow{[\mathbb{L}_{U_k/\mathcal{F},u_k},M_{\leq k+1}]}\rightarrow{[\mathbb{L}_{U/\mathcal{F},u}\otimes^\mathbb{L}_{A}A_k,M_{\leq k+1}]}\rightarrow{0}
\end{equation*}Since $[\mathbb{L}_{U_k/\mathcal{F},u_k},M_{\leq k+1}]\simeq 0$ by construction, then
\begin{equation*} \pi_0(\textnormal{Map}_{\textcat{Mod}_{A}}(\mathbb{L}_{U/\mathcal{F},u},M_{\leq k+1}))\simeq\pi_0(\textnormal{Map}_{\textcat{Mod}_{A_k}}(\mathbb{L}_{U/\mathcal{F},u}\otimes^\mathbb{L}_{A}A_k,M_{\leq k+1})) \simeq 0
\end{equation*}Therefore, since this holds for all $k\geq 0$, then 
\begin{equation*}
    \pi_0(\textnormal{Map}_{\textcat{Mod}_{A}}(\mathbb{L}_{U/\mathcal{F},u},M))=0
\end{equation*}Since the morphism $U_0=V_j\rightarrow{t_0(\mathcal{F})}$ is in $(n-1)\textcat{-P}^\heartsuit$, we see that $U\rightarrow{\mathcal{F}}$ is in $(n-1)\textcat{-P}$ by Assumption (\ref{repcontextP}). 

    Let $U_{(j)}$ denote the $U$ corresponding to taking $U_0=V_j$. Now, since the total morphism $\coprod_j V_j\rightarrow{t_0(\mathcal{F})}$ is an epimorphism of stacks, it follows that the total morphism $\coprod_{j\in J} U_{(j)}\rightarrow{\mathcal{F}}$ is an epimorphism of stacks since $\pi_0^{\bm{\tau}}(\mathcal{F})\simeq\pi_0^{\bm{\tau}}(t_0(\mathcal{F}))$. Hence, we see that we have constructed an $n$-atlas $\{U_{(j)}\rightarrow{\mathcal{F}}\}_{j\in J}$ for $\mathcal{F}$.

\end{proof}

\section{Derived Analytic Geometry Contexts}\phantomsection\label{topologiessection}

Suppose that $R$ is a Banach ring. Recall from Example \ref{indbanderivedalgebraic}, that we obtain a flat derived algebraic context 
\begin{equation*}
    (\textcat{Ch}(\textnormal{Ind}(\textnormal{Ban}_R)),\textcat{Ch}_{\geq 0}(\textnormal{Ind}(\textnormal{Ban}_R)),\textcat{Ch}_{\leq 0}(\textnormal{Ind}(\textnormal{Ban}_R)),\textcat{L}^H(P^0))
\end{equation*}where $P^0$ is the collection of compact projective generators. Moreover, we have an equivalence of categories 
\begin{equation*}
    \textcat{DAlg}^{cn}(\textcat{Ch}(\textnormal{Ind}(\textnormal{Ban}_R)))\simeq \textcat{L}^H(\textnormal{Comm}(\textnormal{sInd}(\textnormal{Ban}_R)))
\end{equation*}As explained in the Introduction, this is an appropriate setting to develop theories of derived analytic and derived smooth geometry. 

In this section, we will endow a general derived algebraic context $(\mathcal{C},\mathcal{C}_{\geq 0},\mathcal{C}_{\leq 0},\mathcal{C}^0)$ with some extra structures such that we obtain representability contexts of the form
\begin{equation*}
    (\mathcal{C},\mathcal{C}_{\geq 0},\mathcal{C}_{\leq 0},\mathcal{C}^0,\bm\tau,\textcat{P},\mathcal{A},\textcat{M},\textcat{S})
\end{equation*}We will then give a specific example of a derived complex analytic geometry context. A context for derived smooth geometry will be presented in future work.
\subsection{Formal Covering Families}

Suppose that $(\mathcal{C},\mathcal{C}_{\geq 0},\mathcal{C}_{\leq 0},\mathcal{C}^0)$ is a flat Postnikov compatible derived algebraic context. 

\begin{defn}
    A family of morphisms $f_i:U_i=\textnormal{Spec}(B_i)\rightarrow{\textnormal{Spec}(A)=X}$ in $\textnormal{Ho}(\textcat{DAff}^{cn}(\mathcal{C}))$ is a \textit{formal covering family} if the family of functors \begin{equation*}
        \{(f_i)_!:\textcat{Mod}_A^{cn}\rightarrow{\textcat{Mod}^{cn}_{B_i}}\}_{i\in I}
    \end{equation*}is conservative, i.e. for every morphism $u\in \textcat{Mod}_A^{cn}$, $u$ is an equivalence if and only if all the $(f_i)_!(u)$ are equivalences. 
\end{defn}
It is easy to see that equivalences define formal covering families, and that compositions and pullbacks of formal covering families also define formal covering families. Moreover, the truncation of any formal covering family is a formal covering family.
\begin{lem}\phantomsection\label{truncationconservative}
 If $\{f_i:U_i=\textnormal{Spec}(B_i)\rightarrow{\textnormal{Spec}(A)=X}\}_{i\in I}$ is a formal covering family, then so is $\{t_0(f_i):t_0(U_i)=\textnormal{Spec}(\pi_0(B_i))\rightarrow{\textnormal{Spec}(\pi_0(A))=t_0(X)}\}_{i\in I}$.
\end{lem}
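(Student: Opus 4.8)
The plan is to reduce the statement to a base-change computation of homotopy groups. Given a formal covering family $\{f_i:U_i=\textnormal{Spec}(B_i)\rightarrow{X=\textnormal{Spec}(A)}\}_{i\in I}$, we must show that the family of base-change functors $\{(t_0(f_i))_!:\textcat{Mod}_{\pi_0(A)}^{cn}\rightarrow{\textcat{Mod}_{\pi_0(B_i)}^{cn}}\}_{i\in I}$ is conservative. First I would observe that a morphism $u$ of connective $\pi_0(A)$-modules is an equivalence if and only if all its homotopy groups $\pi_n(u)$, which are morphisms in $\textcat{Mod}_{\pi_0(A)}^\heartsuit$, are isomorphisms; so it suffices to detect isomorphisms of discrete $\pi_0(A)$-modules after the functors $(t_0(f_i))_!=(-)\otimes^\mathbb{L}_{\pi_0(A)}\pi_0(B_i)$.

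The key step is to relate base change along $t_0(f_i)$ to base change along $f_i$. For $N\in\textcat{Mod}_{\pi_0(A)}^\heartsuit\subseteq\textcat{Mod}_A^{cn}$ (viewing $N$ as an $A$-module via the augmentation $A\to\pi_0(A)$), I would use the flatness hypothesis to bring in the Tor-spectral sequence $\textnormal{Tor}_p^{\pi_*(A)}(\pi_*(N),\pi_*(B_i))_q\Rightarrow{\pi_{p+q}(N\otimes^\mathbb{L}_A B_i)}$. Since $N$ is concentrated in degree $0$, $\pi_*(N)=N$ as a $\pi_0(A)$-module and the spectral sequence computes $\pi_0(N\otimes^\mathbb{L}_A B_i)\simeq N\otimes^\mathbb{L}_{\pi_0(A)}\pi_0(B_i)=(t_0(f_i))_!(N)$. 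Thus $(t_0(f_i))_!(N)\simeq \pi_0((f_i)_!(N))$ functorially in $N$. Now suppose $u:N\to N'$ is a morphism in $\textcat{Mod}_{\pi_0(A)}^\heartsuit$ with $(t_0(f_i))_!(u)$ an equivalence for all $i$; then $\pi_0((f_i)_!(u))$ is an isomorphism for all $i$. But $(f_i)_!(u)$ is a morphism of \emph{connective} modules that need not be discrete, so this alone does not immediately give that $(f_i)_!(u)$ is an equivalence — this is the main obstacle.

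To overcome it, I would pass to mapping cones. Let $u:N\to N'$ be as above and let $C=\textnormal{cofib}(u)\in\textcat{Mod}_{\pi_0(A)}$. Then $(t_0(f_i))_!(C)$ is the cofibre of $(t_0(f_i))_!(u)$, hence zero for all $i$, so it suffices to show: if $C\in\textcat{Mod}_{\pi_0(A)}$ satisfies $C\otimes^\mathbb{L}_{\pi_0(A)}\pi_0(B_i)\simeq 0$ for all $i$, then $C\simeq 0$. Since $u$ is a map of connective modules, $C$ is connective; applying Lemma \ref{torzerolemma} with $\pi_0(B_i)$ playing the role of $\pi_0(A)$ there — more precisely, using that $C\otimes^\mathbb{L}_{\pi_0(A)}\pi_0(B_i)\simeq (C\otimes^\mathbb{L}_{\pi_0(A)}B_i)\otimes^\mathbb{L}_{B_i}\pi_0(B_i)$, so that $C\otimes^\mathbb{L}_{\pi_0(A)}B_i=(f_i)_!(C)\simeq 0$ by Lemma \ref{torzerolemma} — and then invoking conservativity of the original formal covering family $\{(f_i)_!\}_{i\in I}$ applied to the morphism $0\to C$, we conclude $C\simeq 0$, hence $u$ is an equivalence. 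This shows $\{(t_0(f_i))_!\}_{i\in I}$ is conservative, i.e. $\{t_0(f_i)\}_{i\in I}$ is a formal covering family.

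One subtlety I would double-check is the identification $C\otimes^\mathbb{L}_{\pi_0(A)}\pi_0(B_i)\simeq (C\otimes^\mathbb{L}_{\pi_0(A)}B_i)\otimes^\mathbb{L}_{B_i}\pi_0(B_i)$ and that $C\otimes^\mathbb{L}_{\pi_0(A)}B_i$ indeed agrees with $(f_i)_!$ applied to $C$ regarded as an $A$-module via $A\to\pi_0(A)$; this follows from associativity of relative tensor products and the factorization $A\to\pi_0(A)\to\pi_0(B_i)$ together with $\pi_0(B_i)\simeq B_i\otimes^\mathbb{L}_A\pi_0(A)$ when $f_i$ is suitably well-behaved, but in general one argues directly with $(f_i)_!(C)=C\otimes^\mathbb{L}_A B_i\simeq C\otimes^\mathbb{L}_{\pi_0(A)}(\pi_0(A)\otimes^\mathbb{L}_A B_i)$ and notes that $\pi_0$ of the inner term surjects onto $\pi_0(B_i)$, which suffices to run the Tor-spectral-sequence argument degreewise. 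The flatness assumption on the derived algebraic context is what makes the spectral sequence available and is the essential hypothesis throughout.
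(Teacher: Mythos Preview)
Your overall strategy --- pass to the cofibre $C$, show $(f_i)_!(C)\simeq 0$ via Lemma~\ref{torzerolemma}, then invoke conservativity of the original family --- is sound, and is considerably more explicit than the paper's proof, which simply records the identity $M\otimes_{\pi_0(A)}\pi_0(B_i)\simeq\pi_0(M\otimes^\mathbb{L}_A B_i)$ and declares the conclusion clear.

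There is, however, a genuine gap in your third paragraph. The expression $C\otimes^\mathbb{L}_{\pi_0(A)}B_i$ is ill-formed: $B_i$ carries no $\pi_0(A)$-module structure, so this tensor product is undefined, and it cannot be identified with $(f_i)_!(C)=C\otimes^\mathbb{L}_A B_i$. Your final paragraph locates the correct intermediary, namely $B_i':=\pi_0(A)\otimes^\mathbb{L}_A B_i$, but then retreats to a vague ``Tor-spectral-sequence argument degreewise'' rather than closing the loop. Here is the clean finish. By associativity one has $(f_i)_!(C)=C\otimes^\mathbb{L}_A B_i\simeq C\otimes^\mathbb{L}_{\pi_0(A)}B_i'$ as $B_i'$-modules, and $\pi_0(B_i')\simeq\pi_0(B_i)$ (an isomorphism, not merely a surjection, by right-exactness of $\pi_0$). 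Hence
\[
(f_i)_!(C)\otimes^\mathbb{L}_{B_i'}\pi_0(B_i')\;\simeq\;C\otimes^\mathbb{L}_{\pi_0(A)}\pi_0(B_i')\;\simeq\;C\otimes^\mathbb{L}_{\pi_0(A)}\pi_0(B_i)\;\simeq\;0,
\]
and Lemma~\ref{torzerolemma}, applied to the connective $B_i'$-module $(f_i)_!(C)$, gives $(f_i)_!(C)\simeq 0$; conservativity of $\{(f_i)_!\}$ then yields $C\simeq 0$. No spectral sequence is required --- the flatness of the context enters only through Lemma~\ref{torzerolemma}. Your opening reduction to discrete modules is likewise unnecessary, and does not obviously work as stated, since $(t_0(f_i))_!$ need not commute with passage to homotopy groups.
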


\begin{proof}
    We just need to show that the family $\{t_0(f_i)_!:\textcat{Mod}^{cn}_{\pi_0(A)}\rightarrow{\textcat{Mod}^{cn}_{\pi_0(B_i)}}\}_{i\in I}$ is conservative. Suppose that we have a morphism $u:M\rightarrow{N}$ in $\textcat{Mod}^{cn}_{\pi_0(A)}$. Then, since
    \begin{equation*}
        M\otimes^\mathbb{L}_{\pi_0(A)}\pi_0(B_i)\simeq \pi_0(M\otimes^\mathbb{L}_A B_i)
    \end{equation*}it is clear that $u$ is an equivalence if and only if $t_0(f_i)_!(u)$ is an equivalence for each $i\in I$. 
\end{proof}

Recall from Section \ref{geometriessection}, that we can extend a class of maps $\textcat{P}$ to be local for a topology $\bm\tau$ by defining the category $\textcat{P}^{\bm\tau}$. 

\begin{lem}\phantomsection\label{formallyetalelocalmaps}
    Suppose that $\textcat{P}$ is a class of maps which are formally \'etale, and $\bm\tau$ is a Grothendieck topology whose covers are formal covering families. Then, $\textcat{P}^{\bm{\tau}}$ consists of formally \'etale morphisms. 
\end{lem}

\begin{proof}Suppose that we have a morphism $Y=\textnormal{Spec}(B)\rightarrow\textnormal{Spec}(A)=X$ in $\textcat{P}^{\bm{\tau}}$. Then, we note that there is a ${\bm{\tau}}$-cover $\{U_i=\textnormal{Spec}(C_i)\rightarrow{\textnormal{Spec}(B)=Y}\}_{i\in I}$ such that each induced morphism $A\rightarrow{C_i}$ is in $\textcat{P}$. There is a fibre-cofibre sequence
\begin{equation*}
    \mathbb{L}_{B/A}\otimes^\mathbb{L}_BC_i\rightarrow{\mathbb{L}_{C_i/A}}\rightarrow{\mathbb{L}_{C_i/B}}
\end{equation*}in $\textcat{Mod}_{C_i}$. Therefore, since the morphisms $A\rightarrow{C_i}$ and $B\rightarrow{C_i}$ are formally \'etale, then they are formally unramified and we see that, for each $i\in I$, $\mathbb{L}_{B/A}\otimes_B^\mathbb{L}C_i\simeq 0$. Since our covering family is formal, it follows that $\mathbb{L}_{B/A}\simeq 0$. Therefore, $Y\rightarrow{X}$ is formally \'etale. 
    
\end{proof}

\begin{lem}\phantomsection\label{localisderivedstrong}
    Suppose that $\textcat{P}$ is a class of maps which are derived strong, and that $\bm\tau$ is a Grothendieck topology whose covers are formal covering families and consist of derived strong morphisms. Then, $\textcat{P}^{\bm\tau}$ consists of derived strong morphisms. 
\end{lem}

\begin{proof}
    Indeed, suppose that we have a morphism $f:Y=\textnormal{Spec}(B)\rightarrow{\textnormal{Spec}(A)=X}$ in $\textcat{P}^{\bm\tau}$. Then, there exists a cover $\{U_i=\textnormal{Spec}(C_i)\rightarrow{Y}\}_{i\in I}$ in $\bm\tau$ such that the induced morphism $A\rightarrow{C_i}$ is in $\textcat{P}$. We can then see, using Lemma \ref{derivedstrongequiv}, that for each $i\in I$,
    \begin{align*}
       (\pi_*(A)\otimes^\mathbb{L}_{\pi_0(A)}\pi_0(B))\otimes^\mathbb{L}_{B}C_i&\simeq \pi_*(C_i)\simeq \pi_*(B)\otimes^\mathbb{L}_{\pi_0(B)}\pi_0(C_i)\simeq \pi_*(B)\otimes^\mathbb{L}_{B}C_i
\end{align*}and therefore, since $\{U_i\rightarrow{Y}\}_{i\in I}$ is a formal covering family, we see that the morphism $A\rightarrow{B}$ is derived strong. 
\end{proof}

\begin{lem}\phantomsection\label{formalcoveringsqz}
    Suppose that $X=\textnormal{Spec}(A)\in\textcat{DAff}^{cn}(\mathcal{C})$, $M\in\textcat{Mod}_A^{\geq 1}$, and that $d\in\pi_0(\textcat{Der}(A,M))$ is a derivation. Suppose that $\{V_i=\textnormal{Spec}(A_i)\rightarrow{X}\}_{i\in I}$ is a formal covering family with each morphism $V_i\rightarrow{X}$ formally smooth. Then, $\{W_i\rightarrow{X_d[\Omega M]}\}_{i\in I}$ is a formal covering family where $W_i$ is defined to be $\textnormal{Spec}(A_i\oplus_{d_i'} \Omega M_i')$ with $d_i'$ the derivation induced by Lemma \ref{liftingderivations} and $M_i'=M\otimes_A^\mathbb{L} A_i$.
\end{lem}
\begin{proof}
Indeed, suppose that $W_i=\textnormal{Spec}(B_i)$ with $B_i=A_i\oplus_{d_i'}\Omega M_i'$. Consider the family $\{g_i:W_i\rightarrow{X_d[\Omega M]}\}_{i\in I}$. Suppose that we have a morphism $u:N\rightarrow{N'}$ in $\textcat{Mod}^{cn}_{A\oplus_d\Omega M}$. It suffices to show that if, for each $i\in I$, we have an equivalence $(g_i)_!(u):N\otimes^\mathbb{L}_{A\oplus_d\Omega M} B_i\rightarrow{N'\otimes^\mathbb{L}_{A\oplus_d\Omega M} B_i}$, then $u$ is also an equivalence. We note that, by Lemma \ref{Ecoverequiv}, there is an equivalence \begin{equation*}
    N\otimes^\mathbb{L}_A A_i
    \simeq N\otimes^\mathbb{L}_{A\oplus_d\Omega M} B_i
    \simeq N'\otimes^\mathbb{L}_{A\oplus_d\Omega M} B_i
    \simeq N'\otimes^\mathbb{L}_A A_i
\end{equation*}for all $i\in I$. Hence, $u:N\rightarrow N'$ is an equivalence because $\{V_i\rightarrow{X}\}_{i\in I}$ is a conservative subfamily. 

\end{proof}

\subsection{Derived Algebraic Geometry}\phantomsection\label{derivedalgebraicgeosection}

We recall from Example \ref{simplicialcommringexample} that, for $k$ a commutative ring, there is an equivalence of $(\infty,1)$-categories
\begin{equation*}
    \textcat{DAlg}^{cn}(\textcat{Mod}_k)\simeq \textcat{L}^H(\textnormal{Comm}(\textnormal{sMod}_k))
\end{equation*}

\begin{defn}A morphism $f:A\rightarrow{B}$ in $\textcat{DAlg}^{cn}(\textcat{Mod}_k)$ is
\begin{itemize}
    \item \textit{finitely presented} if, for any filtered diagram of objects $\{A\rightarrow{C_i}\}_{i\in I}$, there is an equivalence of $\infty$-groupoids
    \begin{equation*}
        \varinjlim_{i\in I}\textnormal{Map}_{\textcat{DAlg}_A^{cn}(\textcat{Mod}_k)}(B,C_i)\rightarrow{\textnormal{Map}_{\textcat{DAlg}_A^{cn}(\textcat{Mod}_k)}(B,\varinjlim_{i\in I}C_i)}
    \end{equation*}
    \item \textit{\'etale} (resp. \textit{smooth}) if it is formally \'etale (resp. formally smooth) and finitely presented. \end{itemize} 
\end{defn}

\begin{defn}
    The \textit{\'etale topology} on $\textnormal{Ho}(\textcat{DAff}^{cn}(\textcat{Mod}_k))$ has covers of the form $\{U_j\rightarrow{X}\}_{j\in J}$ with $U_j=\textnormal{Spec}(B_j)$ and $X=\textnormal{Spec}(A)$, satisfying the following properties. 
    \begin{enumerate}
        \item Each morphism $A\rightarrow{B_j}$ is \'etale, 
        \item There is a finite subset $K\subseteq J$ such that the family $\{U_k\rightarrow{X}\}_{k\in K}$ is a formal covering family. 
    \end{enumerate}We denote this topology by $\bm{\acute{e}t}$. 
\end{defn}

Consider the category $\textcat{DAff}^{cn}(\textcat{Mod}_k)$ endowed with the \'etale topology, $\bm{\acute{e}t}$, and the class of smooth maps, $\textcat{Sm}$. Let $\textcat{M}$ denote the full categories of modules $\textcat{Mod}_A$ for each $A\in\textcat{DAlg}^{cn}(\textcat{Mod}_k)$. Let $\textcat{E}$ denote the class of \'etale morphisms. 
\begin{prop}
    The tuple \begin{equation*}
        (\textcat{Mod}_k,\textcat{Mod}_{k,\geq 0},\textcat{Mod}_{k,\leq 0},\textnormal{Mod}_k^{fgf},\bm{\acute{e}t},\textcat{Sm},\textcat{DAff}^{cn}(\textcat{Mod}_k),\textcat{M},\textcat{E})
    \end{equation*}is a representability context and is moreover closed under $\bm{\acute{e}t}$-descent. 
\end{prop}

\begin{proof}
    We see that $(\textcat{Mod}_k,\textcat{Mod}_{k,\geq 0},\textcat{Mod}_{k,\leq 0},\textnormal{Mod}_k^{fgf},\bm{\acute{e}t},\textcat{Sm},\textcat{DAff}^{cn}(\textcat{Mod}_k),\textcat{M})$ is a flat Postnikov compatible derived geometry context using results from \cite{toen_homotopical_2008}. We note that Conditions (\ref{repcontexttuple}) and (\ref{repcontextcontinuous}) for being a representability context are clearly satisfied. Furthermore, $\textcat{DAff}^{cn}(\textcat{Mod}_k)$ satisfies the descent condition in Definition \ref{descentcondition} for hypercovers by \cite[Lemma 2.2.2.13]{toen_homotopical_2008}, and hence can easily be shown to be closed under $\bm{\acute{e}t}$-descent by Proposition \ref{taudescentundercondn}. Condition (\ref{repcontextrepresentables}) follows by \cite[Lemma 1.3.2.3]{toen_homotopical_2008}. Conditions (\ref{repcontextstrong}) and (\ref{repcontextP}) follow from \cite[Theorem 2.2.2.6]{toen_homotopical_2008} and \cite[Corollary 2.2.5.3]{toen_homotopical_2008} respectively. Finally, Condition (\ref{repcontextobstruction}) follows from \cite[Proposition 2.2.3.2]{toen_homotopical_2008}. 
\end{proof}

This is the correct representability context to work in to do derived algebraic geometry in the sense of To\"en and Vezzosi \cite{toen_homotopical_2008}. In \cite[Section 2.2.6]{toen_homotopical_2008}, they apply their version of the representability theorem to prove conditions under which certain mapping stacks are $n$-geometric.

\subsection{Derived Strongly Formally Perfect Morphisms}

Suppose that $(\mathcal{C},\mathcal{C}_{\geq 0},\mathcal{C}_{\leq 0},\mathcal{C}^0)$ is a flat Postnikov compatible derived algebraic context. Recall from Proposition \ref{formallyperfectformallysmooth}, that formally perfect morphisms are formally smooth. In the derived algebraic geometry context from the previous section, a finitely presented morphism is formally perfect if and only if it is formally smooth. In the rest of this section, we will work with formally perfect morphisms rather than formally smooth morphisms because we obtain stronger results. 

\begin{defn}A morphism $f:A\rightarrow{B}$ in $\textcat{DAlg}^{cn}(\mathcal{C})$ is
\begin{enumerate}
    \item \textit{flat} if, whenever $M$ is an $A$-module in $\textcat{DAlg}^{\heartsuit}(\mathcal{C})$, then $M\otimes_A^\mathbb{L}B$ is in $\textcat{DAlg}^{\heartsuit}(\mathcal{C})$, 
    \item \textit{derived strongly flat} if it is derived strong and $\pi_0(f)$ is flat, 
    \item \textit{flat formally perfect} if it is flat and formally perfect. 
\end{enumerate}
\end{defn}

\begin{lem}\phantomsection\label{derivedstronglyflatflat}
    \cite[c.f. Corollary 2.3.86]{ben-bassat_perspective_2024} Given a morphism $f:A\rightarrow{B}$ in $\textcat{DAlg}^{cn}(\mathcal{C})$, the following are equivalent,
\begin{enumerate}
    \item $f$ is a flat morphism, 
    \item $f$ is a derived strongly flat morphism, 
    \item $B\otimes^\mathbb{L}_A -:\textcat{Mod}_A^{cn}\rightarrow{\textcat{Mod}_B^{cn}}$ commutes with finite limits.
\end{enumerate}
\end{lem}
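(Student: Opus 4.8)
The plan is to establish the cycle $(2)\Rightarrow(1)\Rightarrow(3)\Rightarrow(1)$ together with the implication $(1)\Rightarrow(2)$, which makes all three conditions equivalent. Throughout we use two standing facts: first, since the context is flat, the Tor spectral sequence $\textnormal{Tor}_p^{\pi_*(A)}(\pi_*(M),\pi_*(N))_q\Rightarrow\pi_{p+q}(M\otimes^\mathbb{L}_AN)$ is available for connective modules; second, by \cite[Theorem 3.6.7]{lurie_derived_2009} the base change $f_!=(-\otimes^\mathbb{L}_AB)$ is a left adjoint, hence preserves all colimits, and it is the restriction of an exact, right $t$-exact functor $\textcat{Mod}_A\to\textcat{Mod}_B$ between the ambient stable module categories.

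For $(2)\Rightarrow(1)$: assume $f$ is derived strong and $t_0(f)\colon\pi_0(A)\to\pi_0(B)$ is flat. Any $M\in\textcat{Mod}_A^\heartsuit$ is canonically a $\pi_0(A)$-module, so by associativity of the relative tensor product $M\otimes^\mathbb{L}_AB\simeq M\otimes^\mathbb{L}_{\pi_0(A)}\bigl(\pi_0(A)\otimes^\mathbb{L}_AB\bigr)$; derived strongness of $f$ gives $\pi_0(A)\otimes^\mathbb{L}_AB\simeq\pi_0(B)$ (the lemma immediately preceding Corollary \ref{cotangentderivedstronglyflat}), so $M\otimes^\mathbb{L}_AB\simeq M\otimes^\mathbb{L}_{\pi_0(A)}\pi_0(B)$, which lies in $\textcat{Mod}_{\pi_0(B)}^\heartsuit\subseteq\textcat{Mod}_B^\heartsuit$ by flatness of $\pi_0(B)$ over $\pi_0(A)$. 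Hence $f$ is flat. For $(1)\Rightarrow(2)$ one runs the connective flatness criterion: taking $M=\pi_0(A)$ in the definition of flatness shows $\pi_0(A)\otimes^\mathbb{L}_AB$ lies in the heart, and computing $\pi_0$ identifies it with $\pi_0(B)$; feeding $\pi_0(A)\otimes^\mathbb{L}_AB\simeq\pi_0(B)$ into the Tor spectral sequence and invoking Lemma \ref{torzerolemma} forces the multiplication map $\pi_*(A)\otimes_{\pi_0(A)}\pi_0(B)\to\pi_*(B)$ to be an isomorphism, i.e. $f$ is derived strong. Flatness of $t_0(f)$ then follows by running the computation of $(2)\Rightarrow(1)$ backwards: for $M\in\textcat{Mod}^\heartsuit_{\pi_0(A)}=\textcat{Mod}_A^\heartsuit$ we obtain $M\otimes^\mathbb{L}_{\pi_0(A)}\pi_0(B)\simeq M\otimes^\mathbb{L}_AB\in\textcat{Mod}_B^\heartsuit$. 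This is essentially \cite[c.f. Corollary 2.3.86]{ben-bassat_perspective_nodate}.

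For $(1)\Leftrightarrow(3)$: an object $M$ of $\textcat{Mod}_A^{cn}$ lies in $\textcat{Mod}_A^\heartsuit$ precisely when its loop object $\Omega M\simeq 0\times_M0$ vanishes, so if $f_!$ preserves finite limits then $\Omega(f_!M)\simeq f_!(\Omega M)$ shows heart objects are sent to heart objects, giving $(3)\Rightarrow(1)$. For $(1)\Rightarrow(3)$ it suffices, since $f_!$ already preserves finite colimits and the zero object, to check it preserves pullbacks. Given a pullback square in $\textcat{Mod}_A^{cn}$ with apex $P$, form the pullback $P'$ of the same diagram in $\textcat{Mod}_A$, so that $P\simeq\tau_{\geq0}P'$ and $f_!P'$ is the stable pullback of the image diagram; it remains to show $f_!(\tau_{\geq0}P')\simeq\tau_{\geq0}(f_!P')$, equivalently that $f_!(\tau_{\leq-1}P')$ is $(-1)$-truncated. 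Writing $\tau_{\leq-1}P'$ as a filtered colimit of finite Postnikov pieces assembled from the shifts $\pi_{-j}(P')[-j]$ with $j\geq1$, flatness sends each heart module $\pi_{-j}(P')$ into $\textcat{Mod}_B^\heartsuit$, so each piece maps under $f_!$ to a module concentrated in degrees $\leq-1$; vanishing of nonnegative homotopy is stable under finite extensions and under filtered colimits (using that $f_!$ preserves the latter), which gives the claim.

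The main obstacle is the implication $(1)\Rightarrow(2)$: extracting derived strongness of $f$ from the heart-level flatness hypothesis genuinely requires the spectral-sequence analysis (the graded Nakayama step that upgrades the degree-zero computation and the vanishing of higher Tor into the statement that $\pi_*(B)$ is induced from $\pi_0(B)$ over $\pi_0(A)$), whereas the remaining implications are formal manipulations with the $t$-structure and the adjunction $(f_!\dashv f^*)$.
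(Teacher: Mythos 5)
The paper itself does not prove this lemma: it is imported wholesale from \cite[Corollary 2.3.86]{ben-bassat_perspective_nodate}, so you are supplying an argument where the paper supplies only a citation. Your overall architecture is sound, and two of the three pieces are complete. The implication $(2)\Rightarrow(1)$ via $M\otimes^\mathbb{L}_AB\simeq M\otimes^\mathbb{L}_{\pi_0(A)}(\pi_0(A)\otimes^\mathbb{L}_AB)\simeq M\otimes^\mathbb{L}_{\pi_0(A)}\pi_0(B)$ is correct, and your treatment of $(1)\Leftrightarrow(3)$ is right: detecting the heart by vanishing of the loop object $0\times_M0$ in $\textcat{Mod}_A^{cn}$ gives $(3)\Rightarrow(1)$, and for the converse the only issue is commuting $f_!$ past $\tau_{\geq 0}$ of the stable pullback $P'$; since $P'$ is the fibre of a map of connective modules, $\tau_{\leq -1}P'$ is simply $\pi_{-1}(P')[-1]$, a single shifted heart module, so your filtered-colimit-of-Postnikov-pieces machinery is unnecessary (though harmless) — flatness applied once to $\pi_{-1}(P')$ finishes it.

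The one genuinely thin point is the derived-strongness half of $(1)\Rightarrow(2)$. Writing that feeding $\pi_0(A)\otimes^\mathbb{L}_AB\simeq\pi_0(B)$ into the Tor spectral sequence "forces" the map $\pi_*(A)\otimes_{\pi_0(A)}\pi_0(B)\rightarrow\pi_*(B)$ to be an isomorphism is an assertion, not an argument: in the spectral sequence $\textnormal{Tor}_p^{\pi_*(A)}(\pi_*(B),\pi_0(A))_q\Rightarrow\pi_{p+q}(\pi_0(B))$ the column $E_2^{0,*}=\pi_*(B)\otimes_{\pi_*(A)}\pi_0(A)$ only receives differentials, so vanishing of the abutment in positive total degree does not by itself kill $E_2^{0,q}$, and a "graded Nakayama" step is not available without extra finiteness or completeness hypotheses. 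A clean repair avoids the spectral sequence over $\pi_*(A)$ entirely: having established that $\pi_0(A)\otimes^\mathbb{L}_AB\simeq\pi_0(B)$ and that $\pi_0(B)$ is flat over $\pi_0(A)$, tensor the Postnikov filtration of $A$ with $B$ over $A$. Each graded piece satisfies $\pi_i(A)[i]\otimes^\mathbb{L}_AB\simeq\pi_i(A)[i]\otimes^\mathbb{L}_{\pi_0(A)}\pi_0(B)$, which is discrete in degree $i$ by flatness of $f$ applied to the heart module $\pi_i(A)$ (and equals the underived tensor by flatness of $t_0(f)$); an induction up the tower, exactly in the style of the paper's Lemma \ref{piipushout}, then identifies $\pi_i(B)$ with $\pi_i(A)\otimes_{\pi_0(A)}\pi_0(B)$ for every $i$, which is the derived-strongness statement. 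With that step filled in, your proof is complete.
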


In the definition of a representability context, we specified that our class of maps should consist of maps which are derived strong. 

\begin{defn}
    Suppose that we have a morphism $f:A\rightarrow{B}$ in $\textcat{DAlg}^{cn}(\mathcal{C})$. Then, $f$ is \textit{derived strongly (flat) formally perfect} if it is derived strong and $\pi_0(f)$ is (flat) formally perfect.
\end{defn}

\begin{cor}\phantomsection\label{derivedstronglyperfectcorollary}Suppose that $f:A\rightarrow{B}$ is a morphism in $\textcat{DAlg}^{cn}(\mathcal{C})$. Then, 
\begin{enumerate}
    \item\label{strongitem1} If $f$ is derived strongly (flat) formally perfect, then it is (flat) formally perfect, 
    \item\label{strongitem2} If $f$ is formally perfect and $\pi_0(f)$ is formally perfect with $\pi_n(A)$ transverse to $\pi_0(B)$ as $\pi_0(A)$-modules for all $n$, then $f$ is derived strong.
\end{enumerate} 
\end{cor}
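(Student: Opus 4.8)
The statement is the exact analogue of Lemma \ref{derivedstronglysmoothcorollary} with "smooth" replaced by "perfect," so the plan is to mimic that proof using Proposition \ref{formallyperfectformallysmooth} to transfer between the two notions where possible, and to fall back on the cotangent-complex computations of Corollary \ref{cotangentderivedstronglyflat} and Corollary \ref{cotangentcomplexpi0} otherwise. I will treat the two items separately.

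For item (\ref{strongitem1}), suppose $f:A\to B$ is derived strong and $\pi_0(f)$ is (flat) formally perfect. By Corollary \ref{cotangentderivedstronglyflat}, derived strongness gives $\mathbb{L}_{B/A}\otimes^{\mathbb{L}}_B\pi_0(B)\simeq \mathbb{L}_{\pi_0(B)/\pi_0(A)}$, and the latter is perfect by hypothesis. I would then invoke Lemma \ref{torzerolemma}: since $\mathbb{L}_{B/A}$ is connective (by the corollary following Theorem \ref{2nconnective}, using that $f$ has connective cotangent complex) and its base change to $\pi_0(B)$ is a perfect, in particular dualizable, module, one needs to upgrade this to perfectness of $\mathbb{L}_{B/A}$ itself. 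The cleanest route is to first produce a finite free-ish resolution mod $\pi_0$ and lift it; concretely, pick a perfect $B$-module $Q$ with $Q\otimes^{\mathbb{L}}_B\pi_0(B)\simeq\mathbb{L}_{\pi_0(B)/\pi_0(A)}$ together with a map $Q\to\mathbb{L}_{B/A}$ inducing this equivalence (such a lift exists because $\pi_0$ is a left adjoint and $Q$ is perfect, hence compact projective-built), then the cofibre $C$ of $Q\to\mathbb{L}_{B/A}$ satisfies $C\otimes^{\mathbb{L}}_B\pi_0(B)\simeq 0$, so $C\simeq 0$ by Lemma \ref{torzerolemma}. Hence $\mathbb{L}_{B/A}\simeq Q$ is perfect, i.e. $f$ is formally perfect. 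In the flat case one additionally notes that derived strongly flat is flat by Lemma \ref{derivedstronglyflatflat}, so $f$ is flat formally perfect. For item (\ref{strongitem2}), suppose $f$ is formally perfect, $\pi_0(f)$ is formally perfect, and each $\pi_n(A)$ is transverse to $\pi_0(B)$ as $\pi_0(A)$-modules. By Proposition \ref{formallyperfectformallysmooth}, $f$ is then formally smooth; and $\pi_0(f)$ being formally perfect makes $t_0(f)$ formally smooth, again by Proposition \ref{formallyperfectformallysmooth} applied in $\mathcal{C}^\heartsuit$ (or just by the remark that formally perfect implies formally smooth, noting perfect modules in the heart are projective). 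Then item (2) of Lemma \ref{derivedstronglysmoothcorollary} applies verbatim with the transversality hypothesis to conclude $f$ is derived strong.

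The main obstacle I anticipate is the lifting step in item (1): showing that a perfect $\pi_0(B)$-module resolving $\mathbb{L}_{\pi_0(B)/\pi_0(A)}$ can be lifted to a perfect $B$-module together with a compatible map to $\mathbb{L}_{B/A}$. One wants to argue inductively up the Postnikov tower of $B$, using at each stage that perfect modules are closed under extensions and retracts (part (\ref{good7}) of the good-system axioms and the stability of perfect objects under base change cited in the proof of Lemma \ref{perfectstable}) and that the obstruction to lifting a map of modules along a square-zero extension $B_{\le k}\to B_{\le k-1}$ lands in a group that vanishes because the relevant module is perfect hence projective (cf. Lemma \ref{projectivevanishinghomotopy} and Example \ref{perfectvanishingcotangent}). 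Since flatness of the derived algebraic context gives the Tor spectral sequence, one can also argue more directly: $\pi_*(\mathbb{L}_{B/A})$ is computed from $\mathbb{L}_{\pi_0(B)/\pi_0(A)}$ by the spectral sequence of Corollary \ref{cotangentderivedstronglyflat} together with derived strongness, forcing $\mathbb{L}_{B/A}$ to be a (shifted, extended) sum of base changes of a single perfect $\pi_0(B)$-module, which is perfect. Everything else is a routine transcription of the smooth case.
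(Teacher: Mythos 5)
Your overall strategy coincides with the paper's: item (2) is proved exactly as you propose (formally perfect implies formally smooth via Proposition \ref{formallyperfectformallysmooth}, then quote Lemma \ref{derivedstronglysmoothcorollary}(2)), and for item (1) both you and the paper start from $\mathbb{L}_{B/A}\otimes^{\mathbb{L}}_B\pi_0(B)\simeq\mathbb{L}_{\pi_0(B)/\pi_0(A)}$ (Corollary \ref{cotangentderivedstronglyflat}), produce a perfect $B$-module mapping to $\mathbb{L}_{B/A}$ and inducing this equivalence after base change, and then kill the discrepancy with Lemma \ref{torzerolemma}. The one step you leave unresolved is precisely the one that needs an argument: the existence of the perfect lift $Q\to\mathbb{L}_{B/A}$. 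Your stated justification (``$\pi_0$ is a left adjoint and $Q$ is perfect, hence compact projective-built'') does not produce a lift of an object along base change, and your ``more direct'' fallback via the Tor spectral sequence is not valid as stated: the spectral sequence exhibits $\pi_*(\mathbb{L}_{B/A})$ as an iterated extension of base changes, not a sum, and perfectness of an object does not follow from perfectness of the associated graded pieces without exactly the splitting argument you are trying to avoid.

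The paper closes this gap concretely, and the mechanism is worth internalising: write $\mathbb{L}_{\pi_0(B)/\pi_0(A)}$ as a retract of a finite colimit $\varinjlim_i\coprod_{E_i}\pi_0(B)$ of free modules, with retraction data $(i,r)$ and idempotent $i\circ r$. Lemma \ref{projectivevanishinghomotopy} identifies $\pi_0$ of the mapping space out of the projective object $\varinjlim_i\coprod_{E_i}B$ with $\mathrm{Hom}$ in $\textnormal{Mod}_{\pi_0(B)}$, so both the idempotent and the map $r$ lift to a homotopy idempotent $i'$ on $\varinjlim_i\coprod_{E_i}B$ and a map $r'$ to $\mathbb{L}_{B/A}$. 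Splitting $i'$ yields a perfect retract $C$ with $C\otimes^{\mathbb{L}}_B\pi_0(B)\simeq\mathbb{L}_{\pi_0(B)/\pi_0(A)}$ and a map $C\to\mathbb{L}_{B/A}$ whose (co)fibre dies after $-\otimes^{\mathbb{L}}_B\pi_0(B)$, whence it vanishes by Lemma \ref{torzerolemma}. This is exactly your first fallback route (obstruction vanishing by projectivity, i.e.\ Lemma \ref{projectivevanishinghomotopy}) executed at the level of the retract presentation rather than up the Postnikov tower; your Postnikov-induction variant would also work but is heavier than necessary. With that lifting step supplied, your argument for item (1) is correct, and your use of the cofibre rather than the paper's fibre is if anything slightly cleaner, since the cofibre of a map of connective modules is manifestly connective as Lemma \ref{torzerolemma} requires.
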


\begin{proof}
For (\ref{strongitem1}), suppose that $f$ is derived strongly formally perfect. By Corollary \ref{cotangentderivedstronglyflat}, there is an equivalence 
\begin{equation*}
\mathbb{L}_{B/A}\otimes^\mathbb{L}_B\pi_0(B)\simeq \mathbb{L}_{\pi_0(A)/\pi_0(B)}
\end{equation*}
Since $\mathbb{L}_{\pi_0(B)/\pi_0(A)}$ is a perfect $\pi_0(B)$-module, it is a retract of some finite colimit of objects of the form $\coprod_{E_i} \pi_0(B)$ where $E_i$ is a finite set, i.e. there exist maps $r:\varinjlim_i \coprod_{E_i}\pi_0(B)\rightarrow{\mathbb{L}_{\pi_0(B)/\pi_0(A)}}$ and $i:\mathbb{L}_{\pi_0(B)/\pi_0(A)}\rightarrow{\varinjlim_i \coprod_{E_i}\pi_0(B)}$ such that $r\circ i$ is the identity. We note that, by Lemma \ref{projectivevanishinghomotopy},
    \begin{equation*}
        \pi_0(\textnormal{Map}_{\textcat{Mod}_B}(\varinjlim_i \coprod_{E_i}B,\varinjlim_i \coprod_{E_i}B))\simeq \textnormal{Hom}_{\textcat{Mod}_{\pi_0(B)}}(\varinjlim_i \coprod_{E_i}\pi_0(B),\varinjlim_i \coprod_{E_i}\pi_0(B))    \end{equation*}and hence there is a map $i':\varinjlim_i \coprod_{E_i}B\rightarrow{\varinjlim_i \coprod_{E_i}B}$ such that $i'\circ i'=i'$ and $\pi_0(i')=i\circ r$. Similarly, there is a map $r':\varinjlim_i \coprod_{E_i}B\rightarrow{\mathbb{L}_{B/A}}$ such that $\pi_0(r')=r$. Since $\textcat{Mod}_B$ is idempotent complete by \cite[Corollary 5.4.3.6]{lurie_higher_2009}, $i'$ induces a split fibre sequence of $B$-modules
\begin{equation*}K\rightarrow{\varinjlim_i \coprod_{E_i}B}\rightarrow{C}
    \end{equation*}where $K$ is the fibre of $i'$ and $C$ is the retract of $i'$. We note that $r'$ induces a map $r'':C\rightarrow{\mathbb{L}_{B/A}}$ which is an equivalence on $\pi_0$. If we can show that $C\simeq \mathbb{L}_{B/A}$, then since $C$ is a retract of $\varinjlim_i \coprod_{E_i}B$, it follows that $\mathbb{L}_{B/A}$ is perfect, as desired.
    
    We note that $C$ is derived strong as an $A$-module since it is a retract of a free derived strong $A$-module. Therefore, by Lemma \ref{derivedstrongequiv}, we have an equivalence $C\otimes^\mathbb{L}_ B\pi_0(B)\simeq C\otimes^\mathbb{L}_{A}\pi_0(A)\simeq \pi_0(C)$. If we let $K'$ be the fibre of the morphism $C\rightarrow{\mathbb{L}_{B/A}}$, then we note that $K'\otimes^\mathbb{L}_B\pi_0(B)\simeq 0$. Therefore, by Lemma \ref{torzerolemma}, $K'\simeq 0$, and hence $\mathbb{L}_{B/A}\simeq C$ is perfect. The result about flatness follows from Lemma \ref{derivedstronglyflatflat}.

    Now, for (\ref{strongitem2}), suppose that $f$ is a formally perfect morphism and that $\pi_0(f)$ is formally perfect with $\pi_n(A)$ transverse to $\pi_0(B)$ as $\pi_0(A)$-modules for all $n$. Then, we note that $f$ and $\pi_0(f)$ are formally smooth by Lemma \ref{formallyperfectformallysmooth}, and hence we can conclude by \cite[Proposition 2.6.160]{ben-bassat_perspective_2024}. 
\end{proof}

\subsection{Quasicoherent Sheaves}

Suppose that we have an $(\infty,1)$-site $(\textcat{DAff}^{cn}(\mathcal{C}),\bm\tau)$. For $X=\textnormal{Spec}(A)\in\textcat{DAff}^{cn}(\mathcal{C})$, we define $\textcat{QCoh}(X):=\textcat{Mod}_A$ and for any morphism $f:Y\rightarrow{X}$, $\textcat{QCoh}(f)$ is defined to be the colimit-preserving functor $B\otimes_A^\mathbb{L}-:\textcat{Mod}_A\rightarrow{\textcat{Mod}_B}$. 
This defines a $\textcat{Cat}$-valued presheaf
\begin{equation*}
    \textcat{QCoh}:\textcat{DAff}^{cn}(\mathcal{C})^{op}\rightarrow{\textcat{Pr}^{\mathbb{L},\otimes}}
\end{equation*}where $\textcat{Pr}^{\mathbb{L},\otimes}$ is the $(\infty,1)$-category whose objects are locally presentable monoidal $(\infty,1)$-categories and morphisms are left adjoint functors. If we restrict to perfect modules, we obtain a $\textcat{Pr}^{\mathbb{L},\otimes}$-valued presheaf which we will denote by $\textcat{Perf}$. If we restrict to strongly dualisable modules, as in Appendix \ref{perfectappendix}, we obtain a presheaf which we will denote by $\textcat{Dls}$. We note that we can define similar notions of (\v{C}ech-)descent and hyperdescent for $\bm\tau$-covers for $\textcat{Pr}^{\mathbb{L},\otimes}$-valued presheaves as in Section \ref{descentsubsection}. See also \cite[Section 7.1.3]{ben-bassat_perspective_2024}. 

\begin{lem}\phantomsection\label{perfectqcohdescent}
    \cite[c.f. Lemma 7.2.35]{ben-bassat_perspective_2024} Suppose that $\mathcal{N}\subseteq\textcat{QCoh}$ is such that 
    \begin{enumerate}
        \item $\mathcal{N}$ satisfies descent for $\bm\tau$-covers, 
        \item $\mathcal{N}(\textnormal{Spec}(A))$ contains $A$, 
        \item $\mathcal{N}(\textnormal{Spec}(A))\subseteq\textcat{Mod}_A$ is closed under the tensor product and internal hom. 
    \end{enumerate}Then, $\textcat{Dls}$ is a local sub-presheaf of $\mathcal{N}$. In particular it satisfies (\v{C}ech-)descent for $\bm\tau$-covers, and also satisfies hyperdescent for $\bm\tau$-covers whenever $\mathcal{N}$ does.
\end{lem}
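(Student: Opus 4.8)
The statement is essentially the assertion that $\textcat{Perf}$ inherits both $\bm\tau$-descent and $\bm\tau$-hyperdescent from any sufficiently well-behaved larger subpresheaf $\mathcal{N}$ of $\textcat{QCoh}$. The plan is to reduce the descent statement for $\textcat{Perf}$ to the descent statement for $\mathcal{N}$ by showing that, locally, "being perfect" is detectable on a $\bm\tau$-cover, so that the limit of the diagram of perfect-module categories computed inside $\mathcal{N}$ again lands in $\textcat{Perf}$. More precisely, given a $\bm\tau$-cover $\{U_i = \textnormal{Spec}(B_i) \to X = \textnormal{Spec}(A)\}$ with \v{C}ech nerve $U_\bullet \to X$, we know by hypothesis (1) that $\mathcal{N}(X) \simeq \holim_{[n]\in\Delta} \mathcal{N}(U_n)$. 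Since $\textcat{Perf}(U_n) \subseteq \mathcal{N}(U_n)$ is a full subcategory for each $n$ (perfect modules are closed under tensor and internal hom by (3), and contain the unit by (2)), the homotopy limit $\holim_{[n]} \textcat{Perf}(U_n)$ is a full subcategory of $\mathcal{N}(X)$, consisting of those $M \in \mathcal{N}(X)$ whose pullback $M \otimes^{\mathbb{L}}_A B_i$ is perfect over each $B_i$. The content is then exactly that such an $M$ is already perfect over $A$: perfectness is $\bm\tau$-local.

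So the first step I would carry out is to establish this locality of perfectness along $\bm\tau$-covers. This is where I expect the main obstacle to lie, and it is the step whose proof genuinely uses the structure of $\bm\tau$: for the formal (flat) \'etale topologies and the homotopy monomorphism topologies considered later in the paper, the covering families are in particular \emph{formal covering families} in the sense of Definition \ref{formalcovering}, i.e. the pullback functors $\{(f_i)_! : \textcat{Mod}_A^{cn} \to \textcat{Mod}_{B_i}^{cn}\}$ are jointly conservative, and moreover the $(f_i)_!$ are symmetric monoidal and preserve colimits. Conservativity plus monoidality lets one descend dualizability: if $M \otimes^{\mathbb{L}}_A B_i$ is dualizable (equivalently a retract of a finite cell complex) for all $i$, then $M$ is dualizable over $A$, and compactness can be checked similarly using that the $(f_i)_!$ detect it. One should be careful to phrase this using whatever characterization of "perfect" is fixed in Appendix \ref{perfectdualisable} (dualizable, or compact-and-dualizable), and to note that the $\bm\tau$-covers in the paper are always refinable to \emph{finite} formal covering families, which is what makes the joint-conservativity argument go through for the relevant finiteness conditions.

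Once locality of perfectness is in hand, the descent statement for $\textcat{Perf}$ follows formally: $\textcat{Perf}(X) = \{M \in \mathcal{N}(X) : M|_{U_i} \in \textcat{Perf}(U_i)\ \forall i\}$, and the right-hand side is precisely $\holim_{[n]} \textcat{Perf}(U_n)$ computed inside $\holim_{[n]}\mathcal{N}(U_n) \simeq \mathcal{N}(X)$, using that a full subcategory stable under the structure maps of a cosimplicial diagram has homotopy limit the evident full subcategory of the total homotopy limit. Finally, for hyperdescent one repeats the identical argument with $\bm\tau$-hypercovers $U_\bullet \to X$ in place of \v{C}ech nerves: if $\mathcal{N}$ satisfies hyperdescent then $\mathcal{N}(X) \simeq \holim_{[n]} \mathcal{N}(U_n)$ for every $\bm\tau$-hypercover, and the same full-subcategory argument, combined with the (already established) fact that perfectness is local along the $\bm\tau$-covers $U_{n} \to (\mathrm{cosk}_{n-1}U_\bullet)_n$ appearing at each stage of the hypercover, shows $\textcat{Perf}(X) \simeq \holim_{[n]}\textcat{Perf}(U_n)$. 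Hence $\textcat{Perf}$ is a local sub-presheaf of $\mathcal{N}$, satisfying $\bm\tau$-descent always and $\bm\tau$-hyperdescent whenever $\mathcal{N}$ does. \qed
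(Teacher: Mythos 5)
The paper itself offers no proof of this lemma (it is imported verbatim from the cited source), so what follows assesses your argument on its own terms. Your skeleton is the right one: use $\mathcal{N}(X)\simeq\varprojlim_{[n]}\mathcal{N}(U_n)$ to identify $\varprojlim_{[n]}\textcat{Perf}(U_n)$ with the full subcategory of objects of $\mathcal{N}(X)$ whose restrictions are perfect, so that everything reduces to showing perfectness is $\bm\tau$-local. But the justification you give for that key step --- ``conservativity plus monoidality lets one descend dualizability'' --- has a genuine gap. First, no conservativity is available: the lemma is stated for an arbitrary pre-topology $\bm\tau$ and an arbitrary $\mathcal{N}$ satisfying (1)--(3); it does not assume covers are formal covering families, nor that they are finite. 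Importing those hypotheses from the later sections proves a strictly weaker statement than the one asserted. Second, and more seriously, even granting joint conservativity of the $(f_i)_!$, conservativity only certifies that a \emph{given} map is an equivalence. Dualizability of $M$ is the assertion that the canonical maps $\texthom{Map}_{\textcat{Mod}_A}(M,A)\otimes^{\mathbb{L}}_A N\to\texthom{Map}_{\textcat{Mod}_A}(M,N)$ are equivalences, and to test these after applying $(f_i)_!$ you would need $(f_i)_!$ to commute with the internal Hom out of $M$ --- which is precisely the finiteness of $M$ you are trying to establish. So the conservativity route is circular, or at best incomplete as written.

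The mechanism that actually works, and that the cited source uses, is the descent equivalence itself rather than conservativity: in any limit of symmetric monoidal $(\infty,1)$-categories along symmetric monoidal functors, an object is dualizable if and only if each of its components is, because the componentwise duals and evaluation/coevaluation data assemble into dualizability data in the limit (symmetric monoidal functors preserve duals of dualizable objects, so the candidate duals are compatible under the transition functors, and the triangle identities hold because they hold componentwise). Hypotheses (2) and (3) enter exactly here: they guarantee that each $\mathcal{N}(\textnormal{Spec}(B))$ is a full closed symmetric monoidal subcategory of $\textcat{Mod}_B$ containing the unit, so that ``dualizable in $\mathcal{N}$'' agrees with ``dualizable in $\textcat{Mod}_B$'', which by Appendix \ref{perfectdualisable} agrees with perfect for connective $B$. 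Applying this to $\mathcal{N}(X)\simeq\varprojlim_{[n]}\mathcal{N}(U_n)$ yields locality of perfectness and the descent statement simultaneously, with no finiteness or conservativity assumptions, and the identical argument applies verbatim to hypercovers. If you replace your conservativity step with this limit-of-monoidal-categories argument, the remainder of your write-up goes through.
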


 \begin{lem}\phantomsection\label{cechcocartesiandescent} Suppose that
\begin{enumerate}
    \item $\mathcal{A}$ is closed under geometric realisations,
    \item Any $\bm\tau$-cover has a finite subcover,
    \item For any finite collection $\{U_i\}_{i\in I}$ of $U_i\in\mathcal{A}$, the map $\coprod_{i\in I} h(U_i)\rightarrow{h(\coprod_{i\in I} U_i)}$ is an equivalence in $\textcat{Stk}(\mathcal{A},\bm\tau|_\mathcal{A})$,
    \item $\textcat{QCoh}$ satisfies (\v{C}ech)-descent for $\bm\tau$-covers.
\end{enumerate}Then, 
\begin{enumerate}
    \item\label{descent1} Any \v{C}ech nerve of a $\bm\tau$-cover of objects in $\mathcal{A}$ satisfies the co-cartesian descent condition of Definition \ref{descentcondition}, 
    \item\label{descent2} $\mathcal{A}$ is closed under $\bm\tau$-descent relative to $\mathcal{A}$ in the sense of Definition \ref{taudescent}.
\end{enumerate}
\end{lem}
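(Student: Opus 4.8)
The plan is to first establish statement (\ref{descent1}) and then deduce statement (\ref{descent2}) from it: granting (\ref{descent1}), hypotheses (1), (2), (3) of the lemma together with statement (\ref{descent1}) are exactly the four hypotheses of Proposition \ref{taudescentundercondn}, which yields that $\mathcal{A}$ is closed under $\bm\tau$-descent in the sense of Definition \ref{taudescent}. So the whole content is the co-cartesian descent condition of Definition \ref{descentcondition} for \v{C}ech nerves.

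To prove (\ref{descent1}), fix a $\bm\tau$-covering family $\{U_i=\textnormal{Spec}(B_i)\to X=\textnormal{Spec}(A)\}_{i\in I}$, which by hypothesis (2) we may assume finite, and let $U_*\to X$ be its \v{C}ech nerve. As in Section \ref{coversection}, each level is $U_m\simeq\coprod_{\underline i\in I^{m+1}}U_{\underline i}$ with $U_{\underline i}=U_{i_0}\times_X\cdots\times_X U_{i_m}$; these iterated fibre products lie in $\mathcal{A}$, and hypothesis (3) identifies the finite coproducts with representable objects, so $U_*$ is a simplicial object of $\mathcal{A}$. Now let $Y_*\to U_*$ be a co-cartesian morphism in $s\mathcal{A}$ and write $Y_m=\textnormal{Spec}(C_m)$ (a finite coproduct $\coprod_{\underline i}\textnormal{Spec}(C_{\underline i})$ at each level). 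Unravelling Definition \ref{descentcondition}, co-cartesianness of $Y_*\to U_*$ says precisely that the squares relating its levels are cartesian in $\textcat{DAff}^{cn}(\mathcal{C})$, i.e.\ each $C_m$ is obtained from the others by base change along the structure maps of the cosimplicial algebra $B_*$; equivalently, $(C_m)_{[m]\in\Delta}$ is a cartesian section of the cosimplicial $(\infty,1)$-category $[m]\mapsto\textcat{DAlg}^{cn}_{B_m}(\mathcal{C})$, that is, an object of $\varprojlim_{[m]\in\Delta}\textcat{DAlg}^{cn}_{B_m}(\mathcal{C})$.

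The key step is to descend this datum. By hypothesis (4), together with hypotheses (2), (3) used to identify $\textcat{QCoh}$ of a finite coproduct with the product of its factors, the base-change functors induce an equivalence $\textcat{Mod}_A\simeq\varprojlim_{[m]\in\Delta}\textcat{Mod}_{B_m}$. Since the forgetful functor $\textcat{DAlg}^{cn}_{(-)}(\mathcal{C})\to\textcat{Mod}_{(-)}$ is monadic via the $\textcat{LSym}$-monad and hence creates limits, and the monads are compatible across the diagram via base change, this equivalence upgrades to $\textcat{DAlg}^{cn}_A(\mathcal{C})\simeq\varprojlim_{[m]\in\Delta}\textcat{DAlg}^{cn}_{B_m}(\mathcal{C})$: the algebra structure descends along with the underlying module. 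Hence there is $C\in\textcat{DAlg}^{cn}_A(\mathcal{C})$, unique up to equivalence, with compatible equivalences $C\otimes^{\mathbb{L}}_AB_m\simeq C_m$, and $C\simeq\varprojlim_{[m]}C_m$, the limit being computed on underlying modules by the forgetful functor.

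Finally set $Z:=\textnormal{Spec}(C)$. The equivalences $C\otimes^{\mathbb{L}}_AB_m\simeq C_m$ read $Z\times_X U_m\simeq Y_m$ compatibly in $m$, so $Z\times_X U_*\simeq Y_*$ in $s\textcat{DAff}^{cn}(\mathcal{C})$ with each $Z\times_X U_m$ in $\mathcal{A}$; dually to $C\simeq\varprojlim_{[m]}C_m$ in $\textcat{DAlg}^{cn}(\mathcal{C})$ we get $Z\simeq\varinjlim_{[m]}Y_m=|Y_*|$ in $\textcat{DAff}^{cn}(\mathcal{C})$, and by hypothesis (1) this geometric realisation of the simplicial object $Y_*$ of $s\mathcal{A}$ lies in $\mathcal{A}$ and is computed there, so $Z\in\mathcal{A}$ and $Z\simeq|Y_*|$ already in $\mathcal{A}$. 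Therefore the unit $Y_*\to|Y_*|\times_X U_*\simeq Z\times_X U_*\simeq Y_*$ of the adjunction $\int\dashv(-\times_X U_*)$ is an equivalence, which is exactly the co-cartesian descent condition for $U_*\to X$; this proves statement (\ref{descent1}), and statement (\ref{descent2}) follows by Proposition \ref{taudescentundercondn}. I expect the main obstacle to be the key step above: making precise that \v{C}ech descent for module categories (hypothesis (4)) propagates to descent for the associated categories of connective derived algebras, and then matching the limit $\varprojlim_{[m]}C_m$ taken in $\textcat{DAlg}^{cn}(\mathcal{C})$ with the geometric realisation $\varinjlim_{[m]}Y_m$ taken inside $\mathcal{A}$ — it is here that closure of $\mathcal{A}$ under geometric realisations (hypothesis (1)) does the essential work.
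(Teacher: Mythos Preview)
Your proof is correct and follows essentially the same route as the paper: both deduce (\ref{descent2}) from (\ref{descent1}) via Proposition~\ref{taudescentundercondn}, and both establish (\ref{descent1}) by translating the co-cartesian datum $Y_*\to U_*$ into a cartesian cosimplicial $B_*$-module and then invoking \v{C}ech descent for $\textcat{QCoh}$ (hypothesis~(4)) to descend it to an object over $A$. The one difference is that you upgrade the equivalence $\textcat{Mod}_A\simeq\varprojlim_{[m]}\textcat{Mod}_{B_m}$ to an equivalence of algebra categories before descending, whereas the paper stays entirely at the module level: it identifies $\varprojlim_{[m]}\textcat{Mod}_{B_m}$ with the category $\textcat{csMod}_{B_*,cart}$ of cartesian cosimplicial $B_*$-modules (citing an extension of \cite[Theorem~B.0.7]{toen_homotopical_2008}), notes that $C_*$ lies there, and immediately concludes $C_*\simeq B_*\otimes_A^{\mathbb L}C$. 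Since $|Y_*|$ already carries an algebra structure by construction and the forgetful functor from algebras to modules is conservative, checking that the unit $Y_*\to|Y_*|\times_X U_*$ is an equivalence reduces to this module-level statement. Your monadicity step is correct and standard, but not strictly needed; the paper's version is slightly more direct for the same reason you flag at the end as the ``main obstacle''---that obstacle dissolves once one realises only the underlying module comparison is required.
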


\begin{proof}We note that (\ref{descent2}) follows from (\ref{descent1}) and Proposition \ref{taudescentundercondn}.

To prove (\ref{descent1}), suppose that $\{U_i=\textnormal{Spec}(A_i)\rightarrow{\textnormal{Spec}(A)=X}\}_{i\in I}$ is a $\bm\tau$-cover of objects in $\mathcal{A}$. By the second assumption, we can assume it is finite. Consider the \v{C}ech nerve $\mathcal{U}_*$ of the epimorphism of stacks $\coprod_{i\in I}h(U_i)\rightarrow{h(X)}$. By our assumptions, $\mathcal{U}_m=h((U)_m)$ where $(U)_*$ is the \v{C}ech nerve of the morphism $\coprod_{i\in I} U_i\rightarrow{X}$ in $\mathcal{A}$. Further, $(U)_m=\textnormal{Spec}(B_m)$ for some $B_m\in\mathcal{A}^{op}$. Let $B_*$ be the associated simplicial object.  

By an extension of \cite[Theorem B.0.7]{toen_homotopical_2008}, we can identify $\varprojlim_{[n]\in\Delta}\textcat{Mod}_{B_n}$ with the category $\textcat{csMod}_{B_*,cart}$ of cosimplicial cartesian $B_*$-modules, i.e. cosimplicial $B_*$-modules $M_*$ such that, for each $[n]\rightarrow{[m]}$ in $\Delta$, $M_n\otimes^\mathbb{L}_{B_n}B_m\rightarrow{M_m}$ is an equivalence.  By descent for quasi-coherent sheaves with respect to $\bm\tau$-covers, there is therefore an equivalence of categories $B_*\otimes_A^\mathbb{L}-:\textcat{Mod}_A\rightarrow{\textcat{csMod}_{B_*,cart}}$. Hence, if we have a co-cartesian morphism $Y_*=\textnormal{Spec}(C_*)\rightarrow{(U)_*}$ in the sense of Definition \ref{descentcondition}, then $C_*$ is a cosimplicial cartesian $B_*$-module. Hence, $C_*\simeq B_*\otimes_A^\mathbb{L}C$, where $C:=\varprojlim_{[n]\in\Delta}C_n$. Therefore, $U_*\times_X Y\simeq Y_*$ as required. 
\end{proof}

\begin{remark}
    In practice, many of our representability contexts will satisfy that $\mathcal{A}$ is closed under $\bm\tau$-descent relative to $\textcat{DAff}^{cn}(\mathcal{C})$, by a similar proof to the above, but not necessarily with respect to $\mathcal{A}$. For example, the category $\textcat{DSt}^{op}$ of derived Steins is not closed under geometric realisations and so the conditions of the above proof will not hold.
\end{remark}

\subsection{Infinitesimal Criteria for Formally Perfect Morphisms}

Suppose that $(\mathcal{C},\mathcal{C}_{\geq 0},\mathcal{C}_{\leq 0},\mathcal{C}^0)$ is a flat Postnikov compatible derived algebraic context. We will denote by $\textcat{fP}$ the class of formally perfect morphisms in $\textcat{DAff}^{cn}(\mathcal{C})$. Suppose that $\textcat{P}\subseteq \textcat{fP}$ is a subclass of formally perfect morphisms satisfying the following conditions:
\begin{enumerate}
    \item The class $\textcat{P}$ is stable under equivalences, compositions, and pullbacks, 
    \item If a morphism $f:Y=\textnormal{Spec}(B)\rightarrow{\textnormal{Spec}(A)=X}$ in $\mathcal{A}$ is formally perfect and $t_0(f)\in \textcat{P}^\heartsuit$, then $f$ is in $\textcat{P}$.
\end{enumerate}

Suppose that there exists a topology $\bm\tau$ such that $(\mathcal{C},\mathcal{C}_{\geq 0},\mathcal{C}_{\leq 0},\mathcal{C}^0,\bm\tau,\textcat{P}^{\bm\tau},\mathcal{A},\textcat{M})$ is a flat Postnikov compatible derived geometry context. Suppose that $\bm\tau$ and $\textcat{P}^{\bm\tau}$ satisfy the obstruction conditions for an appropriate class $\textcat{S}$ of morphisms and that $\iota|_{\mathcal{A}^\heartsuit}:(\mathcal{A}^\heartsuit,\bm\tau^\heartsuit)\rightarrow{(\mathcal{A},\bm\tau|_\mathcal{A})}$ is a continuous functor of $(\infty,1)$-sites.

\begin{lem}\phantomsection\label{perfectconnduallemma}Suppose that $\textcat{Dls}|_{\mathcal{A}^{op}}$ satisfies descent for $\bm\tau$-covers. Suppose that $f:\mathcal{F}\rightarrow{Y}$ is an $n\textcat{-P}^{\bm\tau}|_\mathcal{A}$-morphism from an $n$-geometric stack in $\textcat{Stk}_n(\mathcal{A},\bm\tau|_\mathcal{A},\textcat{P}^{\bm\tau}|_\mathcal{A})$ to a representable stack $Y=\textnormal{Spec}(B)\in\mathcal{A}$. Then, for any $x:X=\textnormal{Spec}(A)\rightarrow{\mathcal{F}}$ with $A\in\mathcal{A}^{op}$, we have that $\mathbb{L}_{\mathcal{F}/Y,x}$ is strongly dualisable and its dual $\mathbb{L}_{\mathcal{F}/Y,x}^\vee$ is $0$-connective.
\end{lem}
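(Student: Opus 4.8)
The plan is to reduce the statement to the local case via an atlas and then use descent for $\textcat{Perf}$, following the structure of Theorem \ref{obstructiongeometric}. First I would observe that since $f:\mathcal{F}\rightarrow{Y}$ is $n\textcat{-fP}$, for any morphism $x:X=\textnormal{Spec}(A)\rightarrow{\mathcal{F}}$ the base-changed map $\mathcal{F}\times_Y X\rightarrow{X}$ is $n$-representable and in $n\textcat{-fP}$; moreover by Lemma \ref{globalcotangentresults}(\ref{globalitem3}) we have $\mathbb{L}_{\mathcal{F}/Y,x}\simeq \mathbb{L}_{\mathcal{F}\times_Y X/X, x^\cdot}$, so it suffices to treat the case $Y=X$ representable and $f:\mathcal{F}\rightarrow{X}$ an $n\textcat{-fP}$-morphism with $\mathcal{F}$ $n$-geometric, and further (using $\mathcal{F}\times_X *\rightarrow *$) even reduce to $\mathcal{F}\rightarrow{*}$ if convenient, as in the proof of Theorem \ref{obstructiongeometric}.

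Next I would proceed by induction on $n$. For $n=-1$, $\mathcal{F}=\textnormal{Spec}(C)$ is representable and the map $C\rightarrow$ well, here $\mathcal{F}\rightarrow Y$ corresponds to a morphism $B\rightarrow C$ which, being in $(-1)\textcat{-fP}$ after strongness considerations, is formally perfect with $\mathbb{L}_{C/B}$ perfect; its dual is connective because $\mathbb{L}_{C/B}$ is projective (hence dualisable with connective dual, using the perfect/projective identification and Appendix \ref{perfectdualisable}). For the inductive step, pick an $n$-atlas $\{U_i\rightarrow{\mathcal{F}}\}_{i\in I}$ so that by Proposition \ref{localepimorphismprop} and Condition (\ref{artin5}) we may find, after refining to an $\textcat{E}$-cover $\{V_k\rightarrow{X}\}_{k\in K}$ of $X$, factorizations $V_k\rightarrow{U_{u(k)}}\rightarrow{\mathcal{F}}$ with $U_{u(k)}\rightarrow{\mathcal{F}}$ in $(n-1)\textcat{-fP}$. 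Since $\textcat{E}$-covers are in particular formally unramified and $\textcat{Perf}|_{\mathcal{A}^{op}}$ satisfies $\bm\tau$-descent, and since $\mathbb{L}_{\mathcal{F}/X,x}$ pulls back compatibly along the $V_k$, perfectness of $\mathbb{L}_{\mathcal{F}/X,x}$ can be checked after base change to the $V_k$. There, using Corollary \ref{cofibrecotangentstack} applied to $V_k\rightarrow{U_{u(k)}}\rightarrow{\mathcal{F}}\rightarrow{X}$, we get a cofibre-fibre sequence $\mathbb{L}_{U_{u(k)}/X,u}\otimes^\mathbb{L}V_k\rightarrow{\mathbb{L}_{\mathcal{F}/X,x}\otimes^\mathbb{L}V_k}\rightarrow{\mathbb{L}_{\mathcal{F}/U_{u(k)}}}$; the first term is perfect since $U_{u(k)}\rightarrow{X}$ is in $\textcat{fP}$, and the induced map $U_{u(k)}\rightarrow{\mathcal{F}}$ being in $(n-1)\textcat{-fP}$ gives, by the inductive hypothesis, that $\mathbb{L}_{U_{u(k)}/\mathcal{F}}$ is perfect with connective dual, hence so is $\mathbb{L}_{\mathcal{F}/U_{u(k)}}$ after rotating the triangle. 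Thus $\mathbb{L}_{\mathcal{F}/X,x}\otimes^\mathbb{L}V_k$ is an extension of perfects, hence perfect, and by descent $\mathbb{L}_{\mathcal{F}/X,x}$ is perfect.

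For the connectivity of the dual, I would argue that perfectness of all the terms in the relevant cofibre sequences makes them split after a further base change where the modules become projective — concretely, since perfect modules over objects of $\mathcal{A}$ are projective (Appendix \ref{perfectdualisable}), the fibre sequences $\mathbb{L}_{X,x}\rightarrow{\mathbb{L}_{\mathcal{F},x}}\rightarrow{\mathbb{L}_{\mathcal{F}/X,x}}$ and their relatives split, so dualising is exact on them; then $\mathbb{L}_{\mathcal{F}/X,x}^\vee$ is a retract/extension of the connective duals coming from the atlas pieces and from $\mathbb{L}_{U/X}^\vee$ (connective since $U\rightarrow{X}$ is in $\textcat{fP}$, hence formally smooth by Proposition \ref{formallyperfectformallysmooth}, with the cotangent complex $0$-connective). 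Checking $0$-connectivity of the dual is itself local for $\bm\tau$ by flatness of the context (a module is connective iff its base changes along a formal cover are, using that the functors $(f_i)_!$ are conservative and $t$-exact enough), so this reduces again to the $V_k$, where connectivity follows from the split triangle.

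The main obstacle I expect is the bookkeeping around \emph{duals} rather than perfectness itself: descent gives perfectness cheaply, but showing the dual stays $0$-connective requires knowing the relevant cotangent-complex triangles split (so that $(-)^\vee$ preserves the fibre sequences) and that the ``building block'' duals $\mathbb{L}_{U/X}^\vee$ and $\mathbb{L}_{U/\mathcal{F}}^\vee$ are connective; the former uses that $\textcat{fP}$-morphisms are formally smooth with projective (hence connective-dual) cotangent complex, and the latter is exactly the inductive hypothesis, so the induction has to be set up to carry the ``$+$ connective dual'' clause alongside perfectness from the start, and one must be careful that the reduction steps (base change along $x$, along the atlas, to $*$) all preserve that clause. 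A secondary technical point is justifying that connectivity of a module over an object of $\mathcal{A}$ may be checked $\bm\tau$-locally, which I would extract from the definition of formal covering families (Definition \ref{formalcovering}) together with flatness of the derived algebraic context and Lemma \ref{torzerolemma}.
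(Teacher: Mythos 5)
Your proof follows essentially the same route as the paper's: induction on $n$, localising via an atlas together with descent for $\textcat{Perf}|_{\mathcal{A}^{op}}$, and then running the cotangent-complex fibre sequence $\mathbb{L}_{\mathcal{F}/Y,x}\rightarrow\mathbb{L}_{U/Y,u}\rightarrow\mathbb{L}_{U/\mathcal{F},u}$ with the identification $\mathbb{L}_{U/\mathcal{F},u}\simeq\mathbb{L}_{U\times_\mathcal{F}X/X,u^\cdot}$, the inductive hypothesis, and two-out-of-three (Lemma \ref{fibreperfect}) plus the long exact sequence of homotopy groups for the dual. The only caveats are that your displayed triangle is oriented incorrectly (since $U$ maps to $\mathcal{F}$, the correct sequence exhibits $\mathbb{L}_{\mathcal{F}/Y,x}$ as the fibre of $\mathbb{L}_{U/Y,u}\rightarrow\mathbb{L}_{U/\mathcal{F},u}$, and ``$\mathbb{L}_{\mathcal{F}/U}$'' is not defined), and that no splitting is needed for the dual: $(-)^\vee$ sends cofibre sequences of dualisable objects to fibre sequences in a stable category, so $0$-connectivity of $\mathbb{L}_{\mathcal{F}/Y,x}^\vee$ falls out of the long exact sequence directly.
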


\begin{proof}
    We proceed by induction on $n$. Indeed, when $n=-1$ and $\mathcal{F}$ is representable, say by $Z=\textnormal{Spec}(D)\in\mathcal{A}$, then $f$ is in $\textcat{P}^{\bm\tau}$. Hence, there exists a $\bm{\tau}$-cover of $Z$, $\{U_i=\textnormal{Spec}(C_i)\rightarrow{Z}\}_{i\in I}$, such that the induced morphism $U_i\rightarrow{Y}$ is in $\textcat{P}$. By considering the fibre sequence
    \begin{equation*}
        \mathbb{L}_{Z/Y}\otimes^\mathbb{L}_DC_i\rightarrow{\mathbb{L}_{U_i/Y}}\rightarrow{\mathbb{L}_{U_i/Z}}
    \end{equation*}we see that, since $\mathbb{L}_{U_i/Z}$ is perfect and $\mathbb{L}_{U_i/Y}$ is perfect, then $\mathbb{L}_{Z/Y}\otimes^\mathbb{L}_DC_i$ is strongly dualisable by Lemma \ref{fibreperfect}. Since $\textcat{Dls}|_{\mathcal{A}^{op}}$ satisfies descent for $\bm\tau$-covers, we see that $\mathbb{L}_{Z/Y}$ is a strongly dualisable $0$-connective $D$-module. Moreover, $\mathbb{L}_{Z/Y}^\vee$ is $0$-connective. 

    Now, when $n>-1$, since $\textcat{Dls}|_{\mathcal{A}^{op}}$ satisfies descent for $\bm\tau$-covers and $\mathcal{F}$ has a global cotangent complex by Theorem \ref{obstructiongeometric}, we see that the properties we need to show are local for the $\bm\tau$-topology. We can therefore assume, since $\mathcal{F}$ is $n$-geometric, that the point $x$ lifts to a point of an $n$-atlas for $\mathcal{F}$ by Proposition \ref{localepimorphismprop}. So, suppose that there is a representable stack $U\in\textcat{Stk}(\mathcal{A},\bm\tau|_\mathcal{A})$, and an $(n-1)\textcat{-P}^{\bm\tau}|_{\mathcal{A}}$-morphism $U\rightarrow{\mathcal{F}}$ such that $x\in\pi_0(\mathcal{F}(A))$ is the image of a point $u\in\pi_0(U(A))$. We consider the induced fibre sequence of $A$-modules
    \begin{equation*}
\mathbb{L}_{\mathcal{F}/Y,x}\rightarrow{\mathbb{L}_{U/Y,u}}\rightarrow{\mathbb{L}_{U/\mathcal{F},u}}
    \end{equation*}Up to replacing $U$ by an appropriate cover, we can assume that the induced morphism $U\rightarrow{Y}$ is in $\textcat{P}|_\mathcal{A}$. Hence, $\mathbb{L}_{U/Y,u}$ is perfect and its dual is $0$-connective. Since the morphism $U\rightarrow{\mathcal{F}}$ is in $(n-1)\textcat{-P}^{\bm\tau}|_\mathcal{A}$, by our inductive hypothesis $\mathbb{L}_{U/\mathcal{F},u}$ is strongly dualisable and its dual is $0$-connective. We can then easily see that $\mathbb{L}_{\mathcal{F}/Y,x}$ is strongly dualisable and its dual is $0$-connective using Lemma \ref{fibreperfect} and the induced long exact sequence of homotopy groups. 
\end{proof}

\begin{prop}\phantomsection\label{perfectrepresentabilitycriteria}
    Suppose that $\textcat{Dls}|_{\mathcal{A}^{op}}$ satisfies descent for $\bm\tau$-covers. Suppose that $f:\mathcal{F}\rightarrow{\mathcal{G}}$ is an $n$-representable$|_\mathcal{A}$ morphism of stacks in $\textcat{Stk}(\mathcal{A},\bm\tau|_\mathcal{A})$ with $t_0(f):t_0(\mathcal{F})\rightarrow{t_0(\mathcal{G})}$ in $n\textcat{-P}^{\bm\tau,\heartsuit}$. Then the following statements are equivalent
    \begin{enumerate}
        \item\label{smitem1} $f$ is in $n\textcat{-P}^{\bm\tau}|_\mathcal{A}$,
        \item\label{smitem2}For any $x:X=\textnormal{Spec}(A)\rightarrow{\mathcal{F}}$, $\mathbb{L}_{\mathcal{F}/\mathcal{G},x}$ is strongly dualisable and $\mathbb{L}_{\mathcal{F},\mathcal{G},x}^\vee$ is $0$-connective, 
        \item\label{smitem3}For any $x:X=\textnormal{Spec}(A)\rightarrow{\mathcal{F}}$ and $M\in \textcat{M}_{A,1}$, $\pi_0(\textnormal{Map}_{\textcat{Mod}_A}(\mathbb{L}_{\mathcal{F}/\mathcal{G},x},M))=0$.
    \end{enumerate}
\end{prop}

\begin{proof}
    To show that (\ref{smitem1}) implies (\ref{smitem2}), suppose that $f$ is in $n\textcat{-P}^{\bm\tau}|_\mathcal{A}$ and that there is a morphism $x:X=\textnormal{Spec}(A)\rightarrow{\mathcal{F}}$ with $X\in\mathcal{A}$. Then, we note that $\mathcal{F}\times_\mathcal{G}X$ is $n$-geometric and $\mathcal{F}\times_\mathcal{G}X\rightarrow{X}$ is in $n\textcat{-P}^{\bm\tau}|_\mathcal{A}$. Moreover, by Lemma \ref{globalcotangentresults}, we have that $\mathbb{L}_{\mathcal{F}/\mathcal{G},x}\simeq\mathbb{L}_{\mathcal{F}\times_\mathcal{G}X/X,x^\cdot}$, and hence we just need to show that (\ref{smitem2}) holds for any $n\textcat{-P}^{\bm\tau}|_\mathcal{A}$-morphism from an $n$-geometric stack to a representable stack on $\mathcal{A}$. This is the content of Lemma \ref{perfectconnduallemma}.

    To show that (\ref{smitem2}) implies (\ref{smitem3}), since $\mathbb{L}_{\mathcal{F}/\mathcal{G},x}$ is strongly dualisable, we have that
\begin{equation*}\pi_0(\textnormal{Map}_{\textcat{Mod}_A}(\mathbb{L}_{\mathcal{F}/\mathcal{G},x},M))\simeq\pi_0(\textnormal{Map}_{\textcat{Mod}_A}(A,\mathbb{L}_{\mathcal{F}/\mathcal{G},x}^\vee\otimes_A^\mathbb{L}M))
    \end{equation*}Since $M\in\textcat{M}_{A,1}$, it follows that $\mathbb{L}_{\mathcal{F}/\mathcal{G},x}^\vee\otimes_A^\mathbb{L}M$ is $1$-connective. Hence, the left hand side must be zero by Lemma \ref{projectivevanishinghomotopy}.

    Finally, to prove that (\ref{smitem3}) implies (\ref{smitem1}), we suppose that $f:\mathcal{F}\rightarrow{\mathcal{G}}$ is an $n$-representable$|_\mathcal{A}$ morphism of stacks with $t_0(f):t_0(\mathcal{F})\rightarrow{t_0(\mathcal{G})}$ in $n\textcat{-P}^{\bm\tau,\heartsuit}$. Since $f$ is $n$-representable we can just suppose, using familiar reasoning and Lemma \ref{globalcotangentresults}, that $\mathcal{F}$ is $n$-geometric and that $\mathcal{G}$ is a representable stack $X$ on $\mathcal{A}$. It also suffices to show that, whenever we have a representable stack $Y$ along with an $(n-1)\textcat{-P}^{\bm\tau}|_\mathcal{A}$ morphism $Y\rightarrow{\mathcal{F}}$, the composite morphism $Y\rightarrow{X}$ is in $\textcat{P}^{\bm\tau}$.
    
    We note that $t_0(Y)\rightarrow{t_0(\mathcal{F})}$ is in $(n-1)\textcat{-P}^{\bm\tau,\heartsuit}$ by Proposition \ref{t0preserves}, and hence the composite $t_0(Y)\rightarrow{t_0(X)}$ is in $\textcat{P}^{\bm\tau,\heartsuit}$. Therefore, we see that there exists a $\bm\tau^\heartsuit$-cover $\{U_i
    \rightarrow{t_0(Y)}\}_{i\in I}$ such that the induced morphisms $U_i\rightarrow{t_0(X)}$ are in $\textcat{P}^{\heartsuit}$. 

    By definition, the $\bm\tau^\heartsuit$-cover is the truncation of a $\bm\tau$-cover $\{U_i'=\textnormal{Spec}(C_i)\rightarrow{Y}\}_{i\in I}$. We need to show that the induced morphisms $U_i'\rightarrow{X}$ are in $\textcat{P}$. Since the morphism $U_i\rightarrow{t_0(X)}$ is in $\textcat{P}^\heartsuit$, we see that $\mathbb{L}_{\pi_0(C_i)/\pi_0(A)}$ is a perfect $\pi_0(C_i)$-module and there is a retract
        \begin{equation*}
\varinjlim_{j\in J}\coprod_{E_j}\pi_0(C_i)\rightarrow{\mathbb{L}_{\pi_0(C_i)/\pi_0(A)}}
        \end{equation*}for finite sets $J$ and $E_j$. By Corollary \ref{cotangentcomplexpi0} and Lemma \ref{projectivevanishinghomotopy}, this lifts to a map $\varinjlim_{j\in J}\coprod_{E_j}C_i\rightarrow{\mathbb{L}_{C_i/A}}$. Let $K$ be the fibre of this morphism. If we consider the fibre sequence
    \begin{equation*}
        \textnormal{Map}_{\textcat{Mod}_{C_i}}\bigg(\mathbb{L}_{C_i/A},\varinjlim_{j\in J}\coprod_{E_j}C_i\bigg)\rightarrow{\textnormal{Map}_{\textcat{Mod}_{C_i}}(\mathbb{L}_{C_i/A},\mathbb{L}_{C_i/A})\rightarrow{\textnormal{Map}_{\textcat{Mod}_{C_i}}(\mathbb{L}_{C_i/A},K[1])}}
    \end{equation*}and the induced long exact sequence in homotopy then, by our assumption, since $K[1]\in \textcat{M}_{C_i,1}$, there is a surjection
    \begin{equation*}
       \pi_0\bigg(\textnormal{Map}_{\textcat{Mod}_{C_i}}\bigg(\mathbb{L}_{{C_i}/A},\varinjlim_{j\in J}\coprod_{E_j}{C_i}\bigg)\bigg)\rightarrow{\pi_0(\textnormal{Map}_{\textcat{Mod}_{C_i}}(\mathbb{L}_{{C_i}/A},\mathbb{L}_{{C_i}/A}))}
    \end{equation*}and hence we see that $\mathbb{L}_{{C_i}/A}$ is perfect, being a retract of $\varinjlim_{j\in J}\coprod_{E_j}{C_i}$. Therefore, the morphism $U_i'\rightarrow{X}$ is formally perfect and its truncation lies in $\textcat{P}^\heartsuit$. By our assumptions on $\textcat{P}$, it must therefore lie in $\textcat{P}$.

\end{proof}

\subsection{The Finite Homotopy Monomorphism Topology}

Suppose that $(\mathcal{C},\mathcal{C}_{\geq 0},\mathcal{C}_{\leq 0},\mathcal{C}^0)$ is a flat Postnikov compatible derived algebraic context. 

\begin{defn}\phantomsection\label{homotopyepi}
    Suppose that $f:A\rightarrow{B}$ is a morphism in $\textcat{DAlg}^{cn}(\mathcal{C})$. Then, $f$ is a \textit{homotopy epimorphism} if the map $B\otimes_A^\mathbb{L}B\rightarrow{B}$ is an equivalence. The corresponding morphism in $\textcat{DAff}^{cn}(\mathcal{C})$ is called a \textit{homotopy monomorphism}. 
\end{defn}

\begin{lem}\phantomsection\label{formallyetalehomotopymono}\cite[c.f. Lemma 2.1.42]{ben-bassat_perspective_2024}
    If $f:A\rightarrow{B}$ in $\textcat{DAlg}^{cn}(\mathcal{C})$ is a homotopy epimorphism, then $f$ is formally \'etale, and hence formally perfect. 
\end{lem}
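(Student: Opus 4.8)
Recall that $f \colon A \to B$ being a homotopy epimorphism means that the multiplication map $\mu \colon B \otimes_A^{\mathbb{L}} B \to B$ is an equivalence in $\textcat{DAlg}^{cn}(\mathcal{C})$ (equivalently in $\textcat{Mod}_B$). The goal is to deduce $\mathbb{L}_{B/A} \simeq 0$, which by Proposition \ref{etaleunramified} is equivalent to $f$ being formally \'etale; in fact it suffices to produce the vanishing of the relative cotangent complex directly. The plan is to compute $\mathbb{L}_{B/A}$ by base change along $f$ itself, using the pushout-square behaviour of the cotangent complex from Corollary \ref{cofibrecotangentresults}.

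First I would consider the pushout square in $\textcat{DAlg}^{cn}(\mathcal{C})$
\begin{equation*}
\begin{tikzcd}
A \arrow{r}{f} \arrow{d}{f} & B \arrow{d}\\
B \arrow{r} & B\otimes_A^{\mathbb{L}} B
\end{tikzcd}
\end{equation*}
and apply the base-change statement of Corollary \ref{cofibrecotangentresults}(2), which gives an equivalence $\mathbb{L}_{B/A} \otimes_B^{\mathbb{L}} (B\otimes_A^{\mathbb{L}} B) \simeq \mathbb{L}_{(B\otimes_A^{\mathbb{L}} B)/B}$ in $\textcat{Mod}_{B\otimes_A^{\mathbb{L}} B}$. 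Now the homotopy epimorphism hypothesis says $B \otimes_A^{\mathbb{L}} B \simeq B$, and under this identification the right-hand map $B \to B\otimes_A^{\mathbb{L}} B$ corresponds to the identity on $B$; hence $\mathbb{L}_{(B\otimes_A^{\mathbb{L}} B)/B} \simeq \mathbb{L}_{B/B} \simeq 0$. On the left-hand side, again using $B\otimes_A^{\mathbb{L}} B \simeq B$, the tensor $\mathbb{L}_{B/A} \otimes_B^{\mathbb{L}} (B\otimes_A^{\mathbb{L}} B)$ becomes $\mathbb{L}_{B/A} \otimes_B^{\mathbb{L}} B \simeq \mathbb{L}_{B/A}$. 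Combining these identifications yields $\mathbb{L}_{B/A} \simeq 0$, so $f$ is formally unramified, hence formally \'etale by Proposition \ref{etaleunramified}.

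The one point requiring care — and the main obstacle — is checking that the base-change equivalence from Corollary \ref{cofibrecotangentresults}(2), transported along the equivalence $B \otimes_A^{\mathbb{L}} B \simeq B$, really does match $\mathbb{L}_{B/A} \otimes_B^{\mathbb{L}} B \to \mathbb{L}_{B/A}$ on the one side and $0$ on the other. Concretely, one must verify that the second structure map $B \to B\otimes_A^{\mathbb{L}} B$ (insertion into the right factor) composed with $\mu$ is homotopic to $\mathrm{id}_B$, which holds because $\mu$ is a retraction of both insertions in $\textcat{DAlg}^{cn}_A(\mathcal{C})$; then naturality of the cotangent complex construction in the pushout square transfers the vanishing $\mathbb{L}_{B/B}\simeq 0$ to the claim. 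Alternatively, and perhaps more cleanly, I would argue via mapping spaces: for any $M \in \textcat{Mod}_B$, using Corollary \ref{derivationequivalence} and the adjunctions of Section \ref{chapter3},
\begin{equation*}
\textnormal{Map}_{\textcat{Mod}_B}(\mathbb{L}_{B/A}, M) \simeq \textcat{Der}_A(B,M) \simeq \textnormal{Map}_{\textcat{DAlg}^{cn}_A(\mathcal{C})_{/B}}(B, B\oplus M),
\end{equation*}
and since $f$ is a homotopy epimorphism the forgetful functor $\textcat{DAlg}^{cn}_B(\mathcal{C})_{/B} \to \textcat{DAlg}^{cn}_A(\mathcal{C})_{/B}$ is fully faithful (this is exactly the input used in the proof of Proposition \ref{epimorphismunramified}), so that last mapping space is $\textnormal{Map}_{\textcat{DAlg}^{cn}_B(\mathcal{C})_{/B}}(B, B\oplus M) \simeq *$. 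Hence $\textnormal{Map}_{\textcat{Mod}_B}(\mathbb{L}_{B/A}, M) \simeq *$ for all $M$, forcing $\mathbb{L}_{B/A} \simeq 0$; this is essentially the proof of Proposition \ref{epimorphismunramified} applied to $f$, once one knows a homotopy epimorphism induces a fully faithful functor on the relevant under/over-categories — which is the standard fact that $B \otimes_A^{\mathbb{L}} B \simeq B$ means $\mathrm{Spec}(B) \to \mathrm{Spec}(A)$ is a monomorphism. I would present the mapping-space argument as the main line, since it avoids delicate diagram-chasing with the pushout square.
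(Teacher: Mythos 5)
The paper does not prove this lemma itself; it is quoted from \cite[Lemma 2.1.42]{ben-bassat_perspective_nodate}, so there is no internal proof to compare against. Both of your arguments are sound and self-contained within the paper's framework. Your second (mapping-space) argument is the cleaner of the two and is really just the observation that a homotopy epimorphism is precisely an epimorphism in $\textcat{DAlg}^{cn}(\mathcal{C})$ — since the codiagonal of the pushout $B\coprod_A B\simeq B\otimes_A^{\mathbb{L}}B\to B$ is an equivalence — so the lemma follows by chaining Proposition \ref{epimorphismunramified} with Proposition \ref{etaleunramified}. Your first argument via base change of $\mathbb{L}_{B/A}$ along the pushout square is equally valid and has the minor advantage of not needing the full-faithfulness characterisation of epimorphisms, only Corollary \ref{cofibrecotangentresults}(2). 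The one hypothesis both arguments silently use is that pushouts in $\textcat{DAlg}^{cn}(\mathcal{C})$ are computed by the relative tensor product on underlying modules, so that the square you write down is genuinely a pushout of derived algebras and "homotopy epimorphism" coincides with "categorical epimorphism"; this is consistent with how the paper uses $\otimes_A^{\mathbb{L}}$ in Corollary \ref{cofibrecotangentresults} and Lemma \ref{perfectstable}, but it is worth stating explicitly since the definition of homotopy epimorphism is phrased at the level of modules.
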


\begin{defn}
    The \textit{finite homotopy monomorphism topology} on $\textnormal{Ho}(\textcat{DAff}^{cn}(\mathcal{C}))$ has finite covers $\{U_j\rightarrow{X}\}_{j\in J}$ such that
    \begin{enumerate}
        \item Each morphism $U_j\rightarrow{X}$ is a homotopy monomorphism, 
        \item The family $\{U_j\rightarrow{X}\}_{j\in J}$ is a formal covering family. 
    \end{enumerate}It is easy to see that this defines a Grothendieck topology. We will denote the topology by $\bm{hm}^{fin}$.
\end{defn}

\begin{lem}\phantomsection\label{qcohdescenthmfin}
    $\textcat{QCoh}$ satisfies (\v{C}ech-)descent for $\bm{hm}^{fin}$-covers.
\end{lem}

\begin{proof}
    Suppose that we have a $\bm{hm}^{fin}$-cover $\{U_j=\textnormal{Spec}(B_j)\rightarrow{\textnormal{Spec}(A)=X}\}_{j\in J}$. Let $B=\prod_{j\in J}B_j$ and consider the cobar resolution $\textcat{CB}^*(B)$ defined in each degree by $\textcat{CB}^n(B)=B^{\otimes^\mathbb{L}_A n+1}$. We note that, since each morphism $A\rightarrow{B_j}$ is a homotopy epimorphism, the limit of this resolution is equivalent to $B$ as a $B$-module. Hence, by conservativity of our cover, we see that $A\rightarrow{\textcat{CB}^*(B)}$ is a limit diagram. Therefore, by \cite[Proposition 3.20]{mathew_galois_2016}, $A\rightarrow{B}$ is descendable in the sense of \cite[Definition 3.18]{mathew_galois_2016}. 
    
    Consider the \v{C}ech nerve $\mathcal{U}_*$ of the morphism $\coprod_{j\in J}h(U_j)\rightarrow{h(X)}$. We note that $h(\textnormal{Spec}(\textcat{CB}^n(B)))$ is equivalent to $\mathcal{U}_n$. Therefore, by \cite[Proposition 3.22]{mathew_galois_2016}, there is an equivalence of categories
    \begin{equation*}
        \textcat{QCoh}(X)=\textcat{Mod}_A\rightarrow{\varprojlim_{n}\textcat{Mod}_{B^{\otimes_A^\mathbb{L} n+1}}}\simeq \varprojlim_{n\in\Delta}\textcat{Mod}_{\textcat{CB}^n(B)}=\varprojlim_{n\in \Delta}\textcat{QCoh}(\mathcal{U}_n)
    \end{equation*}and therefore we have descent in the sense of Definition \ref{cechdescentdef}. 
\end{proof}

\begin{lem}\cite[Proposition 2.6.165]{ben-bassat_perspective_2024}\phantomsection\label{stronghomotopyzar} Suppose that $f:A\rightarrow{B}$ is a morphism in $\textcat{DAlg}^{cn}(\mathcal{C})$. 
\begin{enumerate}
    \item If $f$ is a homotopy epimorphism, then $\pi_0(f):\pi_0(A)\rightarrow{\pi_0(B)}$ is an epimorphism, 
    \item If $f$ is a derived strong morphism such that $\pi_0(f):\pi_0(A)\rightarrow{\pi_0(B)}$ is an epimorphism, then $f$ is a homotopy epimorphism,
    \item If $f$ is a homotopy epimorphism such that $\pi_0(f)$ is a homotopy epimorphism, and each $\pi_n(A)$ is transverse to $\pi_0(B)$ over $\pi_0(A)$, then $f$ is derived strong. 
\end{enumerate} 
\end{lem}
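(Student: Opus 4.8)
The plan is to prove the three statements in order, in each case translating the relevant condition into a statement about homotopy groups and exploiting the flatness of the context. The technical inputs are the Tor spectral sequence $\textnormal{Tor}_p^{\pi_*(A)}(\pi_*(M),\pi_*(N))_q\Rightarrow\pi_{p+q}(M\otimes_A^\mathbb{L}N)$, the fact (used in the proof of Lemma \ref{piipushout}) that $\pi_0$ is a left adjoint on $\textcat{DAlg}^{cn}(\mathcal{C})$ and so preserves pushouts, the cancellation principle of Lemma \ref{torzerolemma}, the identification $B\otimes_A^\mathbb{L}\pi_0(A)\simeq\pi_0(B)$ for a derived strong morphism (the lemma preceding Corollary \ref{cotangentderivedstronglyflat}), and the transversality base-change Proposition \ref{basechangepi0}. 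The argument runs parallel to the proofs of Corollary \ref{derivedstronglyperfectcorollary} and Lemma \ref{t0formallyetale}.

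For part (1), recall that in $\textcat{DAlg}^{cn}(\mathcal{C})$ the derived tensor product $B\otimes_A^\mathbb{L}B$ is the pushout $B\sqcup_A B$ and that $f$ is a homotopy epimorphism precisely when the codiagonal $B\sqcup_A B\to B$ is an equivalence. Applying $\pi_0:\textcat{DAlg}^{cn}(\mathcal{C})\to\textcat{DAlg}^\heartsuit(\mathcal{C})$, which is a left adjoint and hence preserves pushouts, gives an equivalence $\pi_0(B)\sqcup_{\pi_0(A)}\pi_0(B)\to\pi_0(B)$ in $\textcat{DAlg}^\heartsuit(\mathcal{C})$; since this pushout computes the ordinary tensor product, this says exactly that $\pi_0(f)$ is an epimorphism. (Equivalently, one reads off the corner $E^2_{0,0}=\pi_0(B)\otimes_{\pi_0(A)}\pi_0(B)$ of the Tor spectral sequence, which supports no differentials.) For part (2), the multiplication $\mu:B\otimes_A^\mathbb{L}B\to B$ admits the algebra section $s:B\to B\otimes_A^\mathbb{L}B$, $b\mapsto b\otimes 1$; hence $\mu$ is surjective on $\pi_0$ and $B\otimes_A^\mathbb{L}B\simeq B\oplus\textnormal{fib}(\mu)$ in $\textcat{Mod}_B$, so it suffices to show that the connective $B$-module $\textnormal{fib}(\mu)$ vanishes. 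By Lemma \ref{torzerolemma} this reduces to showing $\textnormal{fib}(\mu)\otimes_B^\mathbb{L}\pi_0(B)\simeq\textnormal{fib}\big(B\otimes_A^\mathbb{L}\pi_0(B)\to\pi_0(B)\big)\simeq 0$. Since $A\to\pi_0(B)$ factors through $\pi_0(A)$ and $f$ is derived strong, the lemma preceding Corollary \ref{cotangentderivedstronglyflat} gives $B\otimes_A^\mathbb{L}\pi_0(B)\simeq\pi_0(B)\otimes_{\pi_0(A)}^\mathbb{L}\pi_0(B)$, under which the map becomes the multiplication of $\pi_0(B)$ over $\pi_0(A)$; this is an equivalence because $\pi_0(f)$ is an epimorphism, and the claim follows.

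For part (3), $f$ is already assumed to be a homotopy epimorphism, so only derived strongness — that the canonical map $\pi_*(A)\otimes_{\pi_0(A)}\pi_0(B)\to\pi_*(B)$ is an isomorphism — remains. I would prove this by induction up the Postnikov tower, showing that each truncation $f_k:A_{\leq k}\to B_{\leq k}$ is a derived strong homotopy epimorphism, the case $k=0$ being the hypothesis on $\pi_0(f)$. The inductive step uses Lemma \ref{postnikovsquarezero} to realise $A_{\leq k}\to A_{\leq k-1}$ and $B_{\leq k}\to B_{\leq k-1}$ as square-zero extensions by $\pi_k(A)[k]$ and $\pi_k(B)[k]$, Lemma \ref{postnikovcotangentvanish} to control the relative cotangent complexes involved, and the transversality hypothesis that each $\pi_n(A)$ is transverse to $\pi_0(B)$ over $\pi_0(A)$ — through Proposition \ref{basechangepi0} and the fact that base change along flat morphisms preserves fibre sequences — to ensure that the tensor products appearing in the comparison stay discrete and compatible with truncation, so that the two square-zero descriptions match. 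Concluding gives $\pi_k(A)\otimes_{\pi_0(A)}^\mathbb{L}\pi_0(B)\xrightarrow{\ \sim\ }\pi_k(B)$ for all $k$, whence $f$ is derived strong. This transversality-and-discreteness bookkeeping as one climbs the tower is the step I expect to be the main obstacle; it is the homotopy-epimorphism analogue of the argument behind Lemma \ref{t0formallyetale}(\ref{formallyetale3}) and Corollary \ref{derivedstronglyperfectcorollary}(\ref{strongitem2}).
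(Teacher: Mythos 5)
The paper does not prove this lemma at all: it is imported as a black box from \cite[Proposition 2.6.165]{ben-bassat_perspective_nodate}, so there is no internal argument to compare yours against. On its own terms, your part (1) is correct ($\pi_0$ is a left adjoint on $\textcat{DAlg}^{cn}(\mathcal{C})$, hence sends the equivalence $B\otimes_A^{\mathbb{L}}B\to B$ to an isomorphism $\pi_0(B)\otimes_{\pi_0(A)}\pi_0(B)\to\pi_0(B)$, which is the epimorphism condition in the heart), and your reduction in part (2) — $\textnormal{fib}(\mu)$ is connective, Lemma \ref{torzerolemma} reduces the claim to $B\otimes_A^{\mathbb{L}}\pi_0(B)\simeq\pi_0(B)$, and derived strongness rewrites the left side as $\pi_0(B)\otimes_{\pi_0(A)}^{\mathbb{L}}\pi_0(B)$ — is sound.

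The gap is in the last step of part (2). You assert that $\pi_0(B)\otimes_{\pi_0(A)}^{\mathbb{L}}\pi_0(B)\to\pi_0(B)$ is an equivalence "because $\pi_0(f)$ is an epimorphism". An epimorphism in $\textcat{DAlg}^{\heartsuit}(\mathcal{C})$ only gives the \emph{underived} isomorphism $\pi_0(B)\otimes_{\pi_0(A)}\pi_0(B)\cong\pi_0(B)$, i.e.\ it controls $\pi_0$ of that derived tensor product and nothing more; the higher Tor terms need not vanish. In the context $\mathcal{C}=\textcat{Mod}_{\mathbb{Z}}$ the map $\mathbb{Z}\to\mathbb{Z}/p$ is derived strong and an epimorphism on $\pi_0$, yet $\pi_1(\mathbb{Z}/p\otimes_{\mathbb{Z}}^{\mathbb{L}}\mathbb{Z}/p)=\textnormal{Tor}_1^{\mathbb{Z}}(\mathbb{Z}/p,\mathbb{Z}/p)\neq 0$, so it is not a homotopy epimorphism. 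Your argument therefore only establishes (2) under the stronger reading that $\pi_0(f)$ is a \emph{homotopy} epimorphism in the heart (a Tor-independent epimorphism); as written, the step fails. Separately, part (3) is not yet a proof: everything beyond "$f$ is already a homotopy epimorphism, so only derived strongness remains" is a plan, and you yourself defer the inductive comparison of square-zero extensions up the Postnikov tower — which is the entire mathematical content of the statement — as "the main obstacle". Until that bookkeeping is actually carried out (or replaced by a direct computation of $B\otimes_A^{\mathbb{L}}\pi_0(B)$ using the homotopy-epimorphism hypothesis on one side and the transversality hypothesis on the other), part (3) should be regarded as unproved.
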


 Suppose that there exists a category $\mathcal{A}$ such that $(\textcat{DAff}^{cn}(\mathcal{C}),\bm{hm}^{fin},\textcat{fP}^{\bm{hm}^{fin}},\mathcal{A})$ is a relative $(\infty,1)$-geometry tuple. 

 \begin{lem}\phantomsection\label{coproductrephmfin}
    For any finite collection $\{U_i\}_{i\in I}$ of $\mathcal{A}$-admissible $U_i\in\textcat{DAff}^{cn}(\mathcal{C})$,  $\coprod_{i\in I} h(U_i)\rightarrow{h(\coprod_{i\in I} U_i)}$ is an equivalence in $\textcat{Stk}(\mathcal{A},\bm{hm}^{fin}|_\mathcal{A})$.
 \end{lem}
 \begin{proof}
     We note that the morphism $\prod_{j\in I} A_j\rightarrow{A_i}$ is a homotopy epimorphism by \cite[Proposition 2.5.134]{ben-bassat_perspective_2024} and Lemma \ref{stronghomotopyzar}. Moreover, it is clear that the family $\{U_i\rightarrow{\coprod_{j\in I} U_j\}_{i\in I}}$ is a formal covering family. Hence, we can conclude by Corollary \ref{coproductsdisjoint}. 
 \end{proof}

In this situation, we obtain the following representability context. 

\begin{cor}\phantomsection\label{hzrepresentabilitycontext}
     The tuple $(\mathcal{C},\mathcal{C}_{\geq 0},\mathcal{C}_{\leq 0},\mathcal{C}^0,\bm{hm}^{fin},\textcat{fP}^{\bm{hm}^{fin}},\mathcal{A},\textcat{M},\textcat{S})$ is a representability context in the sense of Definition \ref{repcontextdefn} if
      \begin{equation*}
          (\mathcal{C},\mathcal{C}_{\geq 0},\mathcal{C}_{\leq 0},\mathcal{C}^0,\bm{hm}^{fin},\textcat{fP}^{\bm{hm}^{fin}},\mathcal{A},\textcat{M})
      \end{equation*}is a flat Postnikov compatible derived geometry context, such that
    \begin{enumerate}\item$(\mathcal{A},\bm{hm}^{fin}|_\mathcal{A},\textcat{fP}^{\bm{hm}^{fin}}|_\mathcal{A},\mathcal{A}^\heartsuit)$ is a strong relative $(\infty,1)$-geometry tuple, 
    \item $\iota|_{\mathcal{A}^\heartsuit}:(\mathcal{A}^\heartsuit,\bm\tau^\heartsuit)\rightarrow{(\mathcal{A},\bm\tau|_\mathcal{A})}$ is a continuous functor of $(\infty,1)$-sites,
    \item\label{hmitem3} A morphism is in $\textcat{fP}^{\bm{hm}^{fin}}|_\mathcal{A}$ if it is derived strong relative to $\textcat{fP}^{\bm{hm}^{fin}}|_\mathcal{A}$,
    \item $\bm{hm}^{fin}$ and $\textcat{fP}^{{\bm{hm}^{fin}}}$ satisfy the obstruction conditions of Definition \ref{artinsconditions} relative to $\mathcal{A}$ for $\textcat{S}$. 
    \end{enumerate}
\end{cor}

\begin{proof}We note that Conditions (\ref{repcontexttuple}), (\ref{repcontextcontinuous}), (\ref{repcontextstrong}), and (\ref{repcontextobstruction}) are satisfied by assumption. Condition (\ref{repcontextrepresentables}) is satisfied by Lemma \ref{coproductrephmfin}. Condition (\ref{repcontextP}) follows from the proof of Proposition \ref{perfectrepresentabilitycriteria} and the statement that $\textcat{Dls}$ satisfies descent for $\bm{hm}^{fin}$-covers by Lemma \ref{perfectqcohdescent} and Lemma \ref{qcohdescenthmfin}. 

\end{proof}

\begin{remark}
    We note that if covers in $\bm{hm}^{fin}|_\mathcal{A}$ consist of derived strong morphisms, then Condition (\ref{hmitem3}) is satisfied using Lemma \ref{stronghomotopyzar}.
\end{remark}

\subsection{Derived Complex Analytic Geometry}

We will not give a full introduction to complex analytic geometry, a good reference is \cite{grauert_theory_2004}. The basic algebraic building blocks of complex analytic geometry are Stein algebras. These are objects in the image of the functor
\begin{equation*}
    \mathcal{O}:\textnormal{Stein Spaces}\rightarrow{\textnormal{Comm}(\textnormal{Fr}_\mathbb{C})^{op}}
\end{equation*}taking a Stein space $X$, for example $X=\mathbb{C}^n$, to the algebra of holomorphic functions on $X$. 

We will say that a Stein algebra $A$ is \textit{finitely presented} if it is of the form $\mathcal{O}(\mathbb{C}^n)/I$ for some $n\geq 0$ and some finitely generated ideal $I$. We note that we can write $A$ as a \textit{Fr\'echet-Stein algebra} in the sense of \cite[Definition 5.2.52]{ben-bassat_perspective_2024} in the following way. Write $\mathcal{O}(\mathbb{C}^n)$ as a limit $\varprojlim_k\mathcal{O}(D_k)$ where $\mathcal{O}(D_k)$ is the Banach space of holomorphic functions on the disk $D_k$ of radius $k$ in $\mathbb{C}^n$. Then, we can define $I_k$ to be the closure of the ideal generated by the image of $I$ in $\mathcal{O}(D_k)$. We then easily see that $A=\varprojlim_kA_k$ where $A_k=\mathcal{O}(D_k)/I_k$. 

\begin{defn}\cite[c.f. Definition 5.2.56]{ben-bassat_perspective_2024}
    A module $M$ over a finitely presented Stein algebra $A$ is \textit{coadmissible} if it can be written as a limit $\varprojlim_k M_k$ such that
    \begin{enumerate}
        \item Each $M_k$ is finitely generated over $A_k$, 
        \item The natural morphism $A_k\hat{\otimes}_{A_{k+1}}M_{k+1}\rightarrow{M_k}$ is an equivalence. 
    \end{enumerate}
\end{defn}

\begin{remark}
    We think of coadmissible modules as being the analogues for complex analytic geometry of coherent modules in algebraic geometry.
\end{remark}

Consider the derived algebraic context
\begin{equation*}
    (\textcat{Ch}(\textnormal{Ind}(\textnormal{Ban}_\mathbb{C})),\textcat{Ch}_{\geq 0}(\textnormal{Ind}(\textnormal{Ban}_\mathbb{C})),\textcat{Ch}_{\leq 0}(\textnormal{Ind}(\textnormal{Ban}_\mathbb{C})),\textcat{L}^H(P^0))
\end{equation*}
We remark that the following definition of a derived Stein is due to Kelly and we refer to \cite{ben-bassat_perspective_2024} for more details. 

\begin{defn}
 $A\in\textcat{DAlg}^{cn}(\textcat{Ch}(\textnormal{Ind}(\textnormal{Ban}_\mathbb{C})))$ is a \textit{derived Stein algebra over $\mathbb{C}$} if
    \begin{enumerate}
        \item $\pi_0(A)$ is a finitely presented Stein algebra,
        \item $\pi_n(A)$ is a coadmissible $\pi_0(A)$-module.
    \end{enumerate} 
\end{defn}

\begin{remark}
    It is shown in \cite[Section 9.2.1.1]{ben-bassat_perspective_2024} that the complex analytic theory of Porta and Yue Yu, as described in \cite{porta_derived_2018-1}, built up from their notion of derived Stein algebras, embeds fully faithfully into our theory.
\end{remark}

Let $\textcat{DSt}$ denote the full subcategory of $\textcat{DAlg}^{cn}(\textcat{Ch}(\textnormal{Ind}(\textnormal{Ban}_\mathbb{C})))$ consisting of derived Stein algebras, and $\textnormal{St}$ the ordinary category of Stein algebras. We note that $\textcat{DSt}$ is closed under finite products by \cite[Proposition 5.4.115]{ben-bassat_perspective_2024} and \cite[Theorem V.1.1]{grauert_theory_2004}. We recall that $\textcat{fP}^{{\bm{hm}^{fin}}}$ denotes the expansion of our class of formally perfect maps to those which are local for the ${{\bm{hm}^{fin}}}$-topology. We note that covers of Steins are often not flat, but we can impose a transversality condition, as described in the following definition.

\begin{defn}\phantomsection\label{maptopologysteindef}
We make the following definitions.
\begin{itemize}
    \item The class $\textcat{fP}^{\bm{hm}}_{\textcat{DSt}}$ is the subclass of maps $f:Y=\textnormal{Spec}(B)\rightarrow{\textnormal{Spec}(A)=X}$ in ${\textcat{fP}}^{\bm{hm}^{fin}}$ such that
    \begin{enumerate}
        \item\label{fpsteinitem1} Whenever $A\rightarrow{C}$ is a map with $C\in\textnormal{St}$, then $B\otimes_A^\mathbb{L}C$ is in $\textnormal{St}$,
        \item\label{fpsteinitem2} Any coadmissible $\pi_0(A)$-module $M$ is transverse to $\pi_0(B)$.
    \end{enumerate} 
    \item The \textit{finite Stein homotopy monomorphism topology}, which we will denote by $\bm{hm}^{fin}_{\textcat{DSt}}$, consists of covers $\{U_j=\textnormal{Spec}(B_j)\rightarrow{\textnormal{Spec}(A)=X}\}_{j\in J}$ in ${\bm{hm}^{fin}}$ such that 
    \begin{enumerate}
        \item Whenever there is some map $A\rightarrow{C}$, then $C\in\textnormal{St}$ if and only if $B_j\otimes_A^\mathbb{L}C$ is in $\textnormal{St}$ for each $j\in J$,
        \item Any coadmissible $\pi_0(A)$-module $M$ is transverse to $\pi_0(B_j)$.
    \end{enumerate}
\end{itemize}
\end{defn}

\begin{remark}
    We note that an epimorphism $A\rightarrow{B}$ of Stein algebras such that $\pi_n(A)$ is transverse to $\pi_0(B)$ as a $\pi_0(A)$-module is equivalent to a homotopy epimorphism of Stein algebras. This follows easily using the Tor spectral sequence from Proposition \ref{torspectralsequence}. 
\end{remark}

It is easy to see that $\textcat{fP}^{\bm{hm}}_{\textcat{DSt}}$ is closed under equivalences, compositions, and pullbacks. We can also see that the finite Stein homotopy monomorphism topology defines a Grothendieck pre-topology on $\textnormal{Ho}(\textcat{DAff}^{cn}(\textcat{Ch}(\textnormal{Ind}(\textnormal{Ban}_\mathbb{C}))))$. Using Lemma \ref{qcohdescenthmfin}, we see that $\textcat{QCoh}$ satisfies descent for $\bm{hm}^{fin}_{\textcat{DSt}}$-covers. It follows by Lemma \ref{perfectqcohdescent}, that $\textcat{Dls}$ satisfies descent for $\bm{hm}^{fin}_{\textcat{DSt}}$-covers.

\begin{lem}\phantomsection\label{steinfpstrong}
    Morphisms in $\textcat{fP}^{\bm{hm}}_{\textcat{DSt}}$ are derived strong.
\end{lem}

\begin{proof}
    This is a consequence of Lemma \ref{localisderivedstrong}. Indeed, if $\textnormal{Spec}(B)\rightarrow{\textnormal{Spec}(A)}$ is a formally perfect morphism or a homotopy monomorphism with each $\pi_n(A)$ transverse to $\pi_0(B)$ as a $\pi_0(A)$-module, then the morphism $A\rightarrow{B}$ is derived strong by Corollary \ref{derivedstronglyperfectcorollary} and Lemma \ref{stronghomotopyzar}. 
\end{proof}

  \begin{lem}\phantomsection\label{relativestein} Suppose that we have a morphism $Y=\textnormal{Spec}(B)\rightarrow{\textnormal{Spec}(A)=X}$ in  $\textcat{fP}^{\bm{hm}}_{\textcat{DSt}}$ and a map $A\rightarrow{C}$ with $C\in\textcat{DSt}$. Then, $B\otimes_A^\mathbb{L}C$ is in $\textcat{DSt}$. 
    
\end{lem}

\begin{proof}
    We note that, since $C\in\textcat{DSt}$, $B\otimes_A^\mathbb{L}\pi_0(C)\simeq \pi_0(B\otimes_A^\mathbb{L}C)$ is a Stein algebra. The map $C\rightarrow{B\otimes_A^\mathbb{L}C}$ is derived strong by Lemma \ref{steinfpstrong} since morphisms in $\textcat{fP}^{\bm{hm}}_{\textcat{DSt}}$ are closed under pullback. Therefore, 
    \begin{equation*}
\pi_i(C)\otimes^\mathbb{L}_{\pi_0(C)}\pi_0(B\otimes_A^\mathbb{L}C)\simeq \pi_i(B\otimes_A^\mathbb{L}C)
    \end{equation*}and, since $\pi_i(C)$ is coadmissible as a $\pi_0(C)$-module, we see that $\pi_i(B\otimes_A^\mathbb{L}C)$ is coadmissible as a $\pi_0(B\otimes_A^\mathbb{L}C)$-module. 
\end{proof}

\begin{lem}\phantomsection\label{theextrarelativecondition}Suppose that $\{U_j=\textnormal{Spec}(B_j)\rightarrow{\textnormal{Spec}(A)=X}\}_{j\in J}$ is a $\bm{hm}^{fin}_{\textcat{DSt}}$-cover and we have some map $A\rightarrow{C}$. Then, if $B_j\otimes_A^\mathbb{L}C\in\textcat{DSt}$ for each $j$, then $C\in\textcat{DSt}$.
\end{lem}

\begin{proof}
    It is clear that $\pi_0(C)\in\textnormal{St}$ by our conditions on our topology. Since we have descent for coadmissible modules on Steins by \cite[Theorem 9.2.14]{ben-bassat_perspective_2024}, then we can conclude that $\pi_n(C)$ is a coadmissible $\pi_0(C)$-module. 
\end{proof}

\begin{lem}\phantomsection\label{strongstein}
    A morphism $f:Y=\textnormal{Spec}(B)\rightarrow{\textnormal{Spec}(A)=X}$ is in $\textcat{fP}^{\bm{hm}}_{\textcat{DSt}}|_{\textcat{DSt}^{op}}$ if it is derived strong and $t_0(f)$ is in $\textcat{fP}^{\bm{hm},\heartsuit}_{\textnormal{St}}$. 
\end{lem}

\begin{proof}Suppose that there is a morphism $Z=\textnormal{Spec}(C)\rightarrow{X}$ with $Z\in\textnormal{St}^{op}$. Then, we note that, since $t_0(f)\in \textcat{fP}^{\bm{hm},\heartsuit}_{\textnormal{St}}$,  $\pi_0(B\otimes_A^\mathbb{L}C)\simeq \pi_0(B)\otimes^\mathbb{L}_{\pi_0(A)}C\in\textnormal{St}$. Since $f$ is derived strong, $\pi_n(B\otimes_A^\mathbb{L}C)\simeq \pi_0(B)\otimes^\mathbb{L}_{\pi_0(A)}\pi_n(C)\simeq 0$ for $n>0$, by Proposition \ref{derviedstronghomotopygroupprop}. Now, we note that, since covers in ${\bm{hm}^{fin}}$ are derived strong by Lemma \ref{stronghomotopyzar}, we can conclude that the morphism $f:Y\rightarrow{X}$ is in $\textcat{fP}^{{\bm{hm}^{fin}}}$ using Corollary \ref{derivedstronglyperfectcorollary}. The condition on transversality of coadmissible modules follows from the statement that $t_0(f)$ is in $\textcat{fP}^{\bm{hm},\heartsuit}_{\textnormal{St}}$ and that $X,Y\in\textcat{DSt}^{op}$.
\end{proof}

\begin{lem}\phantomsection\label{steinadmissiblelemma}
    Every object of $\textcat{DAff}^{cn}(\textcat{Ch}(\textnormal{Ind}(\textnormal{Ban}_\mathbb{C})))$ is $\textcat{DSt}^{op}$-admissible with respect to the finite Stein homotopy monomorphism topology.
\end{lem}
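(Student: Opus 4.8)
The claim is that every object $Y \in \textcat{DAff}^{cn}(\textcat{Ch}(\textnormal{Ind}(\textnormal{Ban}_\mathbb{C})))$ is $\textcat{DSt}^{op}$-admissible for the topology $\bm{hm}^{fin}_{\textcat{DSt}^{op}}$, i.e. the restriction of $\textnormal{Map}_{\textcat{DAff}^{cn}}(-,Y)$ to $\textcat{DSt}^{op}$ is a stack for $\bm{hm}^{fin}_{\textcat{DSt}^{op}}|_{\textcat{DSt}^{op}}$. The plan is to verify descent directly against the definition, reducing to \v{C}ech descent for a single finite cover and then using that the cover is simultaneously a finite \emph{formal} cover and a cover by homotopy monomorphisms.

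First I would fix a finite Stein homotopy monomorphism cover $\{U_j = \textnormal{Spec}(B_j) \to X = \textnormal{Spec}(A)\}_{j\in J}$ in $\textcat{DSt}^{op}$, with corresponding ring maps $A \to B_j$ in $\textcat{DSt}$, and set $\mathcal{B}_0 := \prod_{j\in J} B_j$, so the total morphism $A \to \mathcal{B}_0$ is a finite product of homotopy epimorphisms. Since each $A \to B_j$ is a homotopy epimorphism, $A \to \mathcal{B}_0$ is descendable: indeed by Lemma \ref{hmdescendableetale} the cover lies in $\bm{df\acute{e}t}^{fin}$, so $\Theta(A) \to \prod_j \Theta(B_j)$ is descendable in the sense of Mathew. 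Next I would invoke Proposition \ref{quasicoherentetale} (or directly the descendability argument via \cite[Proposition 3.22]{mathew_galois_2016}) to get that $\textcat{Mod}_A \simeq \varprojlim_{[n]\in\Delta} \textcat{Mod}_{\mathcal{B}_n}$, where $\mathcal{B}_*$ is the cobar/\v{C}ech nerve of $A \to \mathcal{B}_0$. The key computational input is that the homotopy-epimorphism property forces $\mathcal{B}_n := \mathcal{B}_0^{\otimes^{\mathbb{L}}_A (n+1)} \simeq \mathcal{B}_0$ for all $n$ along the relevant face maps being pullbacks, which is exactly the statement that the \v{C}ech nerve of the cover is "of height zero", identifying $\mathcal{U}_m = h((U)_m)$ with $(U)_m \in \textcat{DSt}^{op}$ by the closure properties of $\textcat{DSt}$ under pushouts along homotopy epimorphisms (Proposition \ref{closeddstfinitecolimits}), together with the third condition in the definition of $\bm{hm}^{fin}_{\textcat{DSt}^{op}}$ which ensures the iterated fibre products stay in $\textnormal{St}$ on hearts.

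With the \v{C}ech nerve $(U)_* \to X$ living in $\textcat{DSt}^{op}$ and satisfying the co-cartesian descent condition — this follows from Lemma \ref{cechcocartesiandescent}, whose hypotheses (closure under geometric realisations via Proposition \ref{closeddstfinitecolimits}, finite subcovers built into the topology, $\coprod h(U_i) \xrightarrow{\sim} h(\coprod U_i)$ via Lemma \ref{homotopymonorepcolimit}, and \v{C}ech descent for $\textcat{QCoh}$ via Proposition \ref{quasicoherentetale} and Lemma \ref{hmdescendableetale}) are all available — I would then apply Proposition \ref{taudescentundercondn} to conclude that $\textcat{DSt}^{op}$ is closed under $\bm{hm}^{fin}_{\textcat{DSt}^{op}}$-descent. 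Finally, to get that $h(Y) = \textnormal{Map}_{\textcat{DAff}^{cn}}(-,Y)|_{\textcat{DSt}^{op}}$ is a stack: a representable presheaf automatically satisfies \v{C}ech descent for \emph{any} effective-epimorphism-inducing cover in the ambient $(\infty,1)$-topos $\textcat{Stk}(\textcat{DAff}^{cn},\bm\tau)$, and because the \v{C}ech nerve computed in $\textcat{DSt}^{op}$ agrees with the one computed in $\textcat{DAff}^{cn}$ (by the identifications above and the fully faithful embedding $\underline{h}$), the descent datum over the cover is computed the same way, so $h(Y)(A) \simeq \varprojlim_{[m]} h(Y)((U)_m)$ as needed — here one also uses that the cover is a formal covering family so that $\coprod_j h(U_j) \to h(X)$ really is an epimorphism of stacks by Proposition \ref{localepimorphismprop}-type reasoning.

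\textbf{Main obstacle.} The delicate point is checking that the \v{C}ech nerve of a finite Stein homotopy monomorphism cover actually stays inside $\textcat{DSt}^{op}$ at every simplicial level and satisfies the co-cartesian descent condition: this is where the interplay between the homotopy-epimorphism condition (which trivialises iterated tensor powers, so that $B_j \otimes^{\mathbb{L}}_A B_k$ behaves like an "intersection" rather than a genuine new object), the membership of the maps in $\textcat{fP}_{\textcat{DSt}^{op}}$ (which keeps pullbacks in $\textcat{DSt}^{op}$), and the third defining condition of the topology (which pins down the heart, i.e. that $C \otimes^{\mathbb{L}}_A B$ is again a finitely presented Stein algebra) must all be combined; in particular one needs that $\pi_n$ of these tensor products remains coadmissible, which should follow from flatness of the context plus Lemma \ref{basechangepi0} and the coadmissibility assumptions, but verifying it carefully is the real work. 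Everything else is an application of the machinery in Sections \ref{coversection} and the preceding parts of Section \ref{topologiessection}.
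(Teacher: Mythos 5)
There is a genuine gap, and it occurs at the two places where the real work of the lemma lives. First, in this paper a presheaf is a stack only if it satisfies descent for pseudo\emph{-representable $\bm\tau$-hypercovers}, not merely for \v{C}ech nerves of single covers; the paper explicitly warns that \v{C}ech descent and hyperdescent do not coincide. Your reduction ``to \v{C}ech descent for a single finite cover'' therefore does not establish admissibility. The paper's proof works with an arbitrary $\bm{hm}^{fin}_{\textcat{DSt}^{op}}$-hypercover $\mathcal{U}_*\rightarrow{h(X)}$, writes $\mathcal{U}_n=\coprod_{i\in I_n}h(U_{i_n})$ with $U_{i_n}=\textnormal{Spec}(B_{i_n})$, and proves $A\simeq\varprojlim_n\prod_{i\in I_n}B_{i_n}$ by running the Tot spectral sequence $\pi_p(\textnormal{Tot}(\pi_q(B_*)))\Rightarrow\pi_{q-p}(\varprojlim_n\prod_i B_{i_n})$; the degeneration of that spectral sequence is forced by hyperdescent for coherent sheaves on Stein spaces applied to the truncations $\pi_q(B_{i_n})$ (which are coherent by the transversality built into strong homotopy epimorphisms). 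For a genuine hypercover the level-$n$ algebras are refinements of the coskeleton, not iterated tensor powers of the level-$0$ algebra, so descendability of $\Theta(A)\rightarrow\prod_j\Theta(B_j)$ alone cannot produce this totalization statement.

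Second, your concluding step — ``a representable presheaf automatically satisfies \v{C}ech descent for any effective-epimorphism-inducing cover in the ambient $(\infty,1)$-topos'' — is circular: the assertion that $\textnormal{Map}(X,Y)\simeq\varprojlim_m\textnormal{Map}((U)_m,Y)$, equivalently that $A$ is the limit of the cosimplicial algebra attached to the cover, is precisely what has to be proven, and it is the analytic content of the lemma (ultimately Cartan-type vanishing packaged as hyperdescent for coherent sheaves on $\textnormal{St}^{op}$). Representables are not a priori objects of the topos of stacks; subcanonicity of the topology is the claim. Two smaller points: the identity $\mathcal{B}_0^{\otimes^{\mathbb{L}}_A(n+1)}\simeq\mathcal{B}_0$ is false for a cover with more than one member ($\mathcal{B}_0\otimes^{\mathbb{L}}_A\mathcal{B}_0=\prod_{j,k}B_j\otimes^{\mathbb{L}}_AB_k$ contains the cross terms $j\neq k$; ``height zero'' refers to the coskeleton maps being isomorphisms, not to the levels being constant); and the detour through Lemma \ref{cechcocartesiandescent} and Proposition \ref{taudescentundercondn} proves closure of $\textcat{DSt}^{op}$ under $\bm\tau$-descent, which is a different property from admissibility and is not what this lemma asserts.
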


\begin{proof}We adapt the proof of \cite[Lemma 2.2.2.13]{toen_homotopical_2008} to our setting. Suppose that we have a $\bm{hm}^{fin}_{\textcat{DSt}}$-hypercover $\mathcal{U}_*\rightarrow{h(X)}$ in $\textcat{PSh}(\textcat{DSt}^{op})$. We note that each $\mathcal{U}_n$ is equivalent to a coproduct $\coprod_{i_n\in I_n} h(U_{i_n})$ of representable stacks such that we have a corresponding $\bm{hm}^{fin}_{\textcat{DSt}}$-cover $\{U_{i_n}\rightarrow{(\textnormal{cosk}_{n-1}\mathcal{U_*})_n}\}_{i_n\in I_n}$. Let $U_{i_n}=\textnormal{Spec}(B_{i_n})$ and $X=\textnormal{Spec}(A)$ for some $A,B_{i_n}\in\textcat{DSt}$. Let $U_n=\coprod_{i_n\in I_n} U_{i_n}$ and consider the augmented simplicial object $\textnormal{Spec}(B_*)=U_*\rightarrow{X}$.

We will show that $A\simeq \varprojlim_ n B_n$ as then $X\simeq |U_*|$. It then follows easily that any $Y\in\textcat{DAff}^{cn}(\textcat{Ch}(\textnormal{Ind}(\textnormal{Ban}_\mathbb{C})))$ satisfies descent for $\bm{hm}^{fin}_{\textcat{DSt}}$-hypercovers as required. Indeed, consider the spectral sequence 
\begin{equation*}
    \pi_p(\textnormal{Tot}(\pi_q(B_*)))\Rightarrow{\pi_{q-p}\bigg(\varprojlim_n \prod_{i_n\in I_n} B_{i_n}}\bigg)
\end{equation*}We note that, by definition of our topology, each $\pi_q(B_{i_n})$ is transverse to $\pi_0(B_{i_n})$ over $\pi_0(A)$. Using the definition of derived Stein algebras, we therefore see that the $\pi_q(B_{i_n})$ glue together to define an object of $\textnormal{Coad}(t_0(U_*))$. Since we have hyperdescent for coadmissible sheaves on $\textnormal{St}^{op}$ with respect to $\bm{hm}^{fin,\heartsuit}$ by \cite[Theorem 9.2.14]{ben-bassat_perspective_2024}, we see that $\pi_p(\textnormal{Tot}(\pi_q(B_*)))\simeq 0$ for $p\neq 0$. Therefore, the spectral sequence degenerates and we have that
\begin{equation*}
    \pi_p\bigg(\varprojlim_n \prod_{i_n\in I_n}B_{i_n}\bigg)\simeq \textnormal{Ker}\bigg(\pi_p\bigg(\prod_{i_0\in I_0}B_{i_0}\bigg)\rightarrow{\pi_p\bigg(\prod_{i_1\in I_1}B_{i_1}\bigg)}\bigg)
\end{equation*}This implies that $ \pi_p\bigg(\varprojlim_n \prod_{i_n\in I_n}B_{i_n}\bigg)$ is the $\pi_0(A)$-module obtained by descent from $\pi_p(B_{i_*})$. Therefore, it follows that the natural morphism
\begin{equation*}
    \pi_p\bigg(\varprojlim_n\prod_{i_n\in I_n}B_{i_n}\bigg)\otimes^\mathbb{L}_{\pi_0(A)}\pi_0(B_{i_0})\rightarrow{\pi_p(B_{i_0})}
\end{equation*}is an isomorphism for all $p>0$ and $i_0\in I_0$. We note that the morphism $A\rightarrow{B_{i_0}}$ is derived strong by Lemma \ref{steinfpstrong}. Hence, we can use Lemma \ref{derivedstrongequiv} to show that there is an equivalence
\begin{equation*}
    B_{i_0}\rightarrow{\bigg(\varinjlim_n\prod_{i_n\in I_n} B_{i_n}}\bigg)\otimes^\mathbb{L}_AB_{i_0}
\end{equation*}Since $B_{i_0}$ was arbitrary, and the family $\{A\rightarrow{B_{i_0}}\}_{i_0\in I_0}$ corresponds to a formal covering family, we see that $|B_*|\simeq \varprojlim_{n} \prod_{i\in I_n} B_{i_n}\simeq A$ as required. 
\end{proof}

\begin{cor}\phantomsection\label{infinitesimalcriteriastein}
    Suppose that $f:\mathcal{F}\rightarrow{\mathcal{G}}$ is an $n$-representable$|_{\textcat{DSt}^{op}}$ morphism of stacks in $\textcat{Stk}(\textcat{DSt}^{op},\bm{hm}^{fin}_{\textcat{DSt}})$. Then, $f$ is in $n\textcat{-fP}^{\bm{hm}}_{\textcat{DSt}}|_{\textcat{DSt}^{op}}$ if $f$ satisfies that, for any $x:X=\textnormal{Spec}(A)\rightarrow{\mathcal{F}}$ and any $M\in\textcat{M}_{A,1}$, 
    \begin{equation*}
        \pi_0(\textnormal{Map}_{\textcat{Mod}_A}(\mathbb{L}_{\mathcal{F}/\mathcal{G},x},M))=0
    \end{equation*}The converse holds if $t_0(\mathcal{F})\rightarrow{t_0(\mathcal{G})}$ is in $n\textcat{-fP}^{\bm{hm},\heartsuit}_{\textnormal{St}}$.
\end{cor}

\begin{proof}
   Indeed, we note that, by Lemma \ref{steinadmissiblelemma} and Lemma \ref{perfectqcohdescent}, $\textcat{Dls}$ satisfies descent for $\bm{hm}^{fin}_{\textcat{DSt}}$-covers. Therefore, the forwards direction follows from Proposition \ref{perfectrepresentabilitycriteria} since $\textcat{fP}^{hm}_{\textcat{DSt}}\subseteq\textcat{fP}^{\bm{hm}^{fin}_{\textcat{DSt}}}$. To prove the converse, we note that, by the proof of Proposition \ref{perfectrepresentabilitycriteria}, it suffices to show that if we have a morphism $f:Y\rightarrow{X}$ in $\textcat{DSt}^{op}$ which is in $\textcat{fP}^{\bm{hm}^{fin}_{\textcat{DSt}}}$ and whose truncation lies in $\textcat{fP}^{\bm{hm},\heartsuit}_{\textnormal{St}}$, then $f$ lies in $\textcat{fP}^{\bm{hm}}_{\textcat{DSt}}$. Condition (\ref{fpsteinitem1}) of Definition \ref{maptopologysteindef} follows by definition of $\textcat{hm}^{fin}_{\textcat{DSt}}$ and Condition (\ref{fpsteinitem2}) follows since $t_0(f)$ is in $\textcat{fP}^{\bm{hm},\heartsuit}_{\textnormal{St}}$.
\end{proof}

\begin{cor}\phantomsection\label{steinstronggeometry}
The tuple $(\textcat{DAff}^{cn}(\textcat{Ch}(\textnormal{Ind}(\textnormal{Ban}_\mathbb{C}))),\bm{hm}^{fin}_{\textcat{DSt}},{\textcat{fP}^{hm}_{\textcat{DSt}}},\textcat{DSt})$ is a strong relative $(\infty,1)$-geometry tuple. 
\end{cor}

\begin{proof}We note that, by Lemmas \ref{relativestein} and \ref{theextrarelativecondition}, together with descent for transversality, we can show that it is a relative $(\infty,1)$-pre-geometry tuple. Further, by Lemma \ref{makingthingsstrong} it is strong. By Lemma \ref{steinadmissiblelemma}, every object of $\textcat{DAff}^{cn}(\textcat{Ch}(\textnormal{Ind}(\textnormal{Ban}_\mathbb{C})))$ is $\textcat{DSt}^{op}$-admissible. 
\end{proof}

\subsection{A Representability Context for Derived Complex Analytic Geometry}

We now want to check that the geometry constructed in the previous section defines a representability context. 

\begin{lem}\phantomsection\label{steinstronggeometrytuple}
$(\textcat{DSt}^{op},\bm{hm}^{fin}_{\textcat{DSt}}|_{\textcat{DSt}^{op}},{\textcat{fP}^{\bm{hm}}_{\textcat{DSt}}}|_{\textcat{DSt}^{op}},\textnormal{St}^{op})$ is a strong relative $(\infty,1)$-geometry tuple. 
\end{lem}

\begin{proof}We easily see, using Corollary \ref{steinstronggeometry} and our definition of $\textcat{fP}^{\bm{hm}}_{\textcat{DSt}}$, that the tuple is a relative $(\infty,1)$-geometry tuple. Furthermore, every object of $\textcat{DSt}^{op}$ is $\textnormal{St}^{op}$-admissible since Stein algebras satisfy descent for the $\bm{hm}^{fin,\heartsuit}_{\textnormal{St}}$-topology. It is strong by our conditions on morphisms in $\textcat{fP}^{\bm{hm}}_{\textcat{DSt}}$.
\end{proof}

\begin{lem}\phantomsection\label{continuousstein}
    $\iota|_{\textnormal{St}^{op}}:(\textnormal{St}^{op},\bm{hm}^{fin,\heartsuit}_{\textnormal{St}})\rightarrow{(\textcat{DSt}^{op},\bm{hm}^{fin}_{\textcat{DSt}})}$ is a continuous functor of $(\infty,1)$-sites. 
\end{lem}

\begin{proof}Suppose that $\{t_0(U_j)=\textnormal{Spec}(\pi_0(B_j))\rightarrow{\textnormal{Spec}(\pi_0(A))=t_0(X)}\}_{j\in J}$ is a cover in $\bm{hm}^{fin,\heartsuit}_{\textnormal{St}}$ corresponding to the truncation of a cover $\{U_j\rightarrow{X}\}_{j\in J}$ in $\bm{hm}^{fin}_{\textcat{DSt}}$. Then, we note that  $\pi_0(A)\rightarrow{\pi_0(B_j)}$ is an epimorphism of Stein algebras such that $\pi_n(A)$ is transverse to $\pi_0(B_j)$ as a $\pi_0(A)$-module. Hence, it is a homotopy epimorphism. Moreover, we know that $U_j\rightarrow{X}$ is strong, and hence we can show the remaining condition on $\bm{hm}^{fin}_{\textcat{DSt}}$ using Lemma \ref{derivedstrongequiv}. We know that the truncation of a formal covering family is a formal covering family by Lemma \ref{truncationconservative}.
\end{proof}

 \begin{lem}\phantomsection\label{coproductsteinrepresentable}
     For any finite collection $\{U_i\}_{i\in I}$ of $U_i\in\textcat{DAff}^{cn}(\textcat{Ch}(\textnormal{Ind}(\textnormal{Ban}_\mathbb{C})))$, $\coprod_{i\in I} h(U_i)\rightarrow{h(\coprod_{i\in I} U_i)}$ is an equivalence in $\textcat{Stk}(\textcat{DSt}^{op},\bm{hm}^{fin}_{\textcat{DSt}})$,
 \end{lem}
 \begin{proof}
     By Lemma \ref{coproductrephmfin}, we note that the family $\{U_i\rightarrow{\coprod_{j\in I} U_j\}_{i\in I}}$ is a $\bm{hm}^{fin}$-covering family. Moreover, we can easily check that each morphism $U_i\rightarrow{\coprod_{j\in I} U_j}$ lies in ${\textcat{fP}^{\bm{hm}}_{\textcat{DSt}}}$. Hence, we can conclude by Corollary \ref{coproductsdisjoint}. 
 \end{proof}

Suppose that $A\in\textcat{DSt}$. Let $\textcat{Mod}_A^{coad,cn}$ denote the subcategory of connective $A$-modules $M$ such that $\pi_n(M)$ is coadmissible over $\pi_0(A)$ for all $n\geq 0$. Consider the system $\textcat{Mod}^{coad,cn}$ defined by these categories. 

\begin{lem}\phantomsection\label{coadsteinmodules}
    $\textcat{Mod}^{coad,cn}$ is a good system of categories of modules on $\textcat{DSt}^{op}$ in the sense of Definition \ref{goodsystemmodules}. 
\end{lem}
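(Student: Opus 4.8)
The plan is to verify each of the eight conditions in Definition \ref{goodsystemmodules} for the assignment $A\mapsto\textcat{Mod}_A^{coad,cn}$ on $\textcat{DSt}^{op}$, using the structural results on derived Stein algebras recorded earlier in the section. First I would observe that conditions (\ref{good1}), (\ref{good3}) are essentially built into the definition of a derived Stein algebra: by definition $\pi_0(A)$ is a finitely presented Stein algebra, which is coadmissible over itself (using \cite[Theorem V.1.1]{grauert_theory_2004} and the fact that finitely generated modules over Stein algebras are coadmissible), and $\pi_n(A)$ is coadmissible over $\pi_0(A)$ by definition; connectivity of $A$ and of these modules is immediate, so $A,\pi_n(A)\in\textcat{Mod}_A^{coad,cn}$. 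For (\ref{good2}), that $\mathbb{L}_A$ is coadmissible, I would invoke the computation of the cotangent complex of a derived Stein algebra; by Corollary \ref{cotangentcomplexpi0} we have $\pi_0(\mathbb{L}_A)\simeq\pi_0(\mathbb{L}_{\pi_0(A)})$, which is a finitely presented (hence coadmissible) $\pi_0(A)$-module since $\pi_0(A)$ is finitely presented, and the higher homotopy of $\mathbb{L}_A$ is controlled via the spectral sequences and Postnikov-tower arguments of Section \ref{chapter3} together with coadmissibility being closed under the relevant extensions — this is the step I expect requires the most care.

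Next I would treat the closure conditions (\ref{good6}), (\ref{good7}): $\Omega_A$ and $\Sigma_A$ preserve coadmissibility because on homotopy groups they just shift, and coadmissible modules are closed under kernels, cokernels and extensions of $\pi_0(A)$-modules \cite[Definition 5.2.56]{ben-bassat_perspective_nodate}, hence $\textcat{Mod}_A^{coad,cn}$ is closed under (de)suspension (within the connective part); closure under isomorphisms, retracts and finite colimits follows likewise from the corresponding closure properties of the abelian category of coadmissible $\pi_0(A)$-modules applied degreewise, using the flat derived algebraic context so that homotopy of a finite colimit is computed via a long exact sequence. For the base-change condition (\ref{good4}), given $M\in\textcat{Mod}_A^{coad,cn}$ and $f:A\rightarrow B$ in $\textcat{DSt}$, I would use Proposition \ref{basechangepi0} to reduce $\pi_i(M\otimes^\mathbb{L}_AB)$ to expressions of the form $\pi_0(M)\otimes^\mathbb{L}_{\pi_0(A)}\pi_j(B)$ (after checking the needed transversality of the $\pi_j(B)$, which holds for morphisms in $\textcat{DSt}$ by the coadmissibility hypotheses), and then appeal to the fact that the coanalytic tensor product of coadmissible modules over finitely presented Stein algebras is coadmissible, which is part of the theory of \cite[Section 9.2]{ben-bassat_perspective_nodate}; again one must be slightly careful that the derived tensor product's higher homotopy stays coadmissible, but the Tor-spectral sequence of the flat context plus closure of coadmissible modules under the $\textnormal{Tor}$-terms handles this.

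Finally, conditions (\ref{good5}) and (\ref{good5half}) are the admissibility-of-square-zero-extensions conditions, which I would deduce from Proposition \ref{closeddstfinitecolimits} and Lemma \ref{steinadmissiblelemma}. Given $M\in\textcat{Mod}_{A,1}^{coad}$ and a derivation $d$, the square-zero extension $A\oplus_d\Omega M$ is the pullback $A\times_{A\oplus M}A$ in $\textcat{DAlg}^{cn}$; since $A\oplus M$ has $\pi_0$ equal to $\pi_0(A)$ and $\pi_n$ coadmissible, it lies in $\textcat{DSt}$, and the pullback in question is a pushout in modules that one checks again lies in $\textcat{DSt}$ using that $\textcat{DSt}$ is closed under the relevant finite limits — here I would cite the closure properties in Proposition \ref{closeddstfinitecolimits} (noting the pullback defining the square-zero extension can be rewritten so that the relevant maps are homotopy epimorphisms or otherwise within the scope of those closure statements). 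Then $\textnormal{Spec}(A\oplus_d\Omega M)$ is automatically $\textcat{DSt}^{op}$-admissible by Lemma \ref{steinadmissiblelemma}, which asserts \emph{every} object of $\textcat{DAff}^{cn}(\textcat{Ch}(\textnormal{Ind}(\textnormal{Ban}_\mathbb{C})))$ is $\textcat{DSt}^{op}$-admissible; condition (\ref{good5half}), which concerns $M\in\textcat{Mod}_A^{\heartsuit}$ when $A$ is $k$-truncated and shifts $M$ up to degree $k+1$, is handled the same way since shifting a coadmissible module keeps it coadmissible and the resulting algebra is still in $\textcat{DSt}$. The main obstacle throughout is tracking coadmissibility of the \emph{higher} homotopy groups of derived tensor products and of cotangent complexes — the discrete/abelian-categorical closure properties of coadmissible modules are standard, but assembling them across the homotopy spectral sequences to conclude degreewise coadmissibility is where the real work lies.
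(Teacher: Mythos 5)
Your overall strategy — checking the eight conditions one by one, with the module-theoretic facts (coadmissibility of $\mathbb{L}_A$, stability of coadmissibility under base change, closure under retracts and finite colimits) outsourced to the structural results on coadmissible modules over Stein algebras — is exactly the paper's strategy, and your handling of conditions (\ref{good1}), (\ref{good3}), (\ref{good4}), (\ref{good6}), (\ref{good7}) and (\ref{good5}) is essentially what the paper does (for (\ref{good5}) the paper likewise just invokes Lemma \ref{steinadmissiblelemma}, so your extra effort to place $A\oplus_d\Omega M$ inside $\textcat{DSt}$ is not needed there: admissibility is all that condition asks for, and it holds for \emph{every} connective affine).

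The genuine gap is in condition (\ref{good5half}), where membership $\textnormal{Spec}(A\oplus_d\Omega M[k+1])\in\textcat{DSt}^{op}$ — not mere admissibility — is required, and your justification does not go through. You propose to deduce it from Proposition \ref{closeddstfinitecolimits}, but that proposition only gives closure of $\textcat{DSt}$ under finite products, geometric realisations, and pushouts along \emph{homotopy epimorphisms}; the square-zero extension is the pullback of $A\xrightarrow{d}A\oplus M\xleftarrow{s}A$, and neither $d$ nor $s$ is a homotopy epimorphism (they are sections of the projection, not localisations), so the closure statement simply does not apply, and your parenthetical hope that the pullback "can be rewritten" to fall within its scope is unfounded. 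The correct argument, which the paper gives, is a direct computation: from the fibre-cofibre sequence $A\oplus_d\Omega M[k+1]\rightarrow A\rightarrow M[k+1]$ of Lemma \ref{pushoutAsqzM} one reads off the long exact sequence in homotopy, obtaining $\pi_n(A\oplus_d\Omega M[k+1])\simeq\pi_n(A)$ for $n<k$, the short exact sequence $0\rightarrow\pi_0(M)\rightarrow\pi_k(A\oplus_d\Omega M[k+1])\rightarrow\pi_k(A)\rightarrow 0$ at $n=k$ (whose middle term is coadmissible because coadmissible $\pi_0(A)$-modules are closed under extensions), and vanishing for $n>k$; this verifies the defining conditions of a derived Stein algebra degree by degree. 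A smaller soft spot is condition (\ref{good2}): your appeal to Corollary \ref{cotangentcomplexpi0} only controls $\pi_0(\mathbb{L}_A)$, and the coadmissibility of the higher homotopy of $\mathbb{L}_A$ is a nontrivial fact that the paper imports wholesale from the external theory rather than deriving from the spectral-sequence considerations you gesture at.
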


\begin{proof}
    Indeed, we note that Conditions (\ref{good1}), (\ref{good3}), and (\ref{good6}) follow easily from our definition of derived Stein algebras. We note that Condition (\ref{good2}) follows using \cite[Lemma 5.4.121]{ben-bassat_perspective_2024}, (\ref{good4}) follows by \cite[Theorem 5.4.118]{ben-bassat_perspective_2024} and (\ref{good7}) follows by \cite[Proposition 5.4.116]{ben-bassat_perspective_2024}. Condition (\ref{good5}) follows by Lemma \ref{steinadmissiblelemma}. To show that Condition (\ref{good5half}) holds, suppose that $A\in\textcat{DSt}^{\leq k}$ for some $k$, $M\in\textcat{Mod}_A^{coad,cn,\heartsuit}$, and $d\in\pi_0(\textcat{Der}(A,M))$, and consider the following fibre sequence 
    \begin{equation*}
        A\oplus_d\Omega M[k+1]\rightarrow{A}\rightarrow{M[k+1]}
    \end{equation*}from Lemma \ref{pushoutAsqzM}. Then, for $n<k$ we note that, by considering the long exact sequence in homotopy, $\pi_n(A\oplus_d\Omega M[k+1])\simeq \pi_n(A)$, which is a finitely presented Stein algebra for $n=0$ and is coadmissible as a $\pi_0(A)$-module for $0<n<k$. When $n=k$, we see that we have an exact sequence
    \begin{equation*}
        0\rightarrow M\rightarrow{\pi_{k}(A\oplus_d\Omega M[k+1])}\rightarrow{\pi_k(A)}\rightarrow{0}
    \end{equation*}and it follows that $\pi_k(A\oplus_d\Omega M[k+1])$ is a coadmissible $\pi_0(A)$-module since the category of coadmissible $\pi_0(A)$-modules is abelian by \cite[Proposition 5.2.58]{ben-bassat_perspective_2024}. For $n>k$, $\pi_n(A\oplus_d\Omega M[k+1])\simeq 0$. 
\end{proof}

\begin{lem}\phantomsection\label{steinobstruction}    $\bm{hm}^{fin}_{\textcat{DSt}}$ and ${\textcat{fP}^{\bm{hm}}_{\textcat{DSt}}}$ satisfy the obstruction conditions relative to $\textcat{DSt}^{op}$ for the class $\textcat{hm}$ of homotopy monomorphisms.
\end{lem}
\begin{proof}

 Indeed, we note that morphisms in ${\textcat{fP}^{\bm{hm}}_{\textcat{DSt}}}$ are formally \'etale by Lemma \ref{formallyetalelocalmaps}, and hence are formally $i$-smooth. Therefore, Condition (\ref{artin0}) is satisfied. Condition (\ref{artin1}) is also clearly satisfied. To show that Condition (\ref{artin5}) is satisfied, we suppose that we have a $\bm{hm}^{fin}_{\textcat{DSt}}$-covering family $\{V_j=\textnormal{Spec}(A_j)\rightarrow{\textnormal{Spec}(A)=X}\}_{j\in J}$ and some $M\in\textcat{Mod}^{coad,cn}_{A,1}$. Now, define $W_j=\textnormal{Spec}(B_j)=\textnormal{Spec}(A_j\oplus_{d_j'} \Omega M_j')\in\textcat{DSt}^{op}$ with $d_j'$ the derivation induced by Lemma \ref{liftingderivations} and $M_j'=M\otimes^\mathbb{L}_A A_j$. This can trivially be refined to a $\bm{hm}^{fin}$-covering family. The collection $\{W_j\rightarrow{X_d[\Omega M]}\}_{j\in J}$ is a formal covering family by Lemma \ref{formalcoveringsqz}. By Lemma \ref{Ecoverequiv}, and using that the morphism $V_j\rightarrow{X}$ is a homotopy monomorphism, we have an equivalence
\begin{equation*}
B_j\otimes_{A\oplus_d\Omega M}^\mathbb{L}A\simeq A_j\simeq A_j\otimes^\mathbb{L}_AA_j\simeq (B_j\otimes_{A\oplus_d\Omega M}^\mathbb{L}B_j)\otimes_{A\oplus_d\Omega M}^\mathbb{L}A
\end{equation*}Hence, by Corollary \ref{Amodulepullbackpushout}, we have an equivalence $B_j\simeq B_j\otimes^\mathbb{L}_{A\oplus_d\Omega M}B_j$. Therefore, $\{W_j\rightarrow{X_d[\Omega M]}\}_{j\in J}$ is also a $\bm{hm}^{fin}$-covering family. 

By Lemma \ref{qcohdescenthmfin}, since we have $\textcat{QCoh}$-descent for $\bm{hm}^{fin}$-covers and since $W_j$ and $X_d[\Omega M]$ are $\textcat{DSt}^{op}$-admissible, we see that this defines an epimorphism of stacks $\coprod_{j\in J}W_j\rightarrow{X_d[\Omega M]}$ in $\textcat{Stk}(\textcat{DSt}^{op},\bm{hm}^{fin}_{\textcat{DSt}})$.
\end{proof}

Denote by $\underline{\textcat{Ch}}(\textnormal{Ind}(\textnormal{Ban}_\mathbb{C}))$ the derived algebraic context
\begin{equation*}
    (\textcat{Ch}(\textnormal{Ind}(\textnormal{Ban}_\mathbb{C})),\textcat{Ch}_{\geq 0}(\textnormal{Ind}(\textnormal{Ban}_\mathbb{C})),\textcat{Ch}_{\leq 0}(\textnormal{Ind}(\textnormal{Ban}_\mathbb{C})),\textcat{L}^H(P^0))
\end{equation*}

\begin{cor}\phantomsection\label{steinflatderived}The tuple $(\underline{\textcat{Ch}}(\textnormal{Ind}(\textnormal{Ban}_\mathbb{C})),\bm{hm}^{fin}_{\textcat{DSt}},{\textcat{fP}^{\bm{hm}}_{\textcat{DSt}}},\textcat{DSt}^{op},\textcat{Mod}^{coad,cn})$ is a flat Postnikov compatible derived geometry context.
    
\end{cor}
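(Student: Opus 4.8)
The plan is to verify, one at a time, the four ingredients of the definition of a flat Postnikov compatible derived geometry context (Definition \ref{derivedgeometrycontext} together with the ``flat'' and ``Postnikov compatible'' conditions), specialised to the data $(\underline{\textcat{Ch}}(\textnormal{Ind}(\textnormal{Ban}_\mathbb{C})),\bm{hm}^{fin}_{\textcat{DSt}^{op}},\textcat{fP}^{\bm{hm}^{fin}}_{\textcat{DSt}^{op}},\textcat{DSt}^{op},\textcat{Mod}^{coad,cn})$. First I would record that $(\textcat{Ch}(\textnormal{Ind}(\textnormal{Ban}_\mathbb{C})),\textcat{Ch}_{\geq 0}(\textnormal{Ind}(\textnormal{Ban}_\mathbb{C})),\textcat{Ch}_{\leq 0}(\textnormal{Ind}(\textnormal{Ban}_\mathbb{C})),\textcat{L}^H(\textnormal{P}^0))$ is a derived algebraic context: this is Theorem \ref{modelderivedtheorem} applied to $\textnormal{E}=\textnormal{Ind}(\textnormal{Ban}_\mathbb{C})$ (which satisfies the elementary monoidal exact hypotheses by \cite[Section 5.1.2]{ben-bassat_perspective_nodate}), combined with Proposition \ref{modelderivedcontext}. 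That it is moreover \emph{flat} in the sense of the relevant definition, i.e.\ that the generators in $\textnormal{P}^0$ are homotopy flat, is again part of the structure recorded in \cite{ben-bassat_perspective_nodate}; I would cite the appropriate statement there.

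Next I would check \emph{Postnikov compatibility}. Left completeness of the projective $t$-structure on $\textcat{Ch}(\textnormal{Ind}(\textnormal{Ban}_\mathbb{C}))$ and compatibility of $\textcat{DAlg}^{cn}$ with that $t$-structure (Definition \ref{postnikovcompatible}) both hold because we are in a category of simplicial objects in an exact category with enough projectives, where truncations are computed degreewise and commute with the forgetful functor to modules; this is exactly the situation flagged in the remarks after Theorem \ref{modelderivedtheorem} and in Definition \ref{postnikovcompatible}. The connectivity estimate, Condition (\ref{postnikovcomp5}) --- that $\textcat{LSym}^n(A)$ is $(m+2n-2)$-connective when $A$ is $m$-connective for $m\geq 2$ --- is the general fact proved in \cite[Definition/Theorem 2.1.8]{ben-bassat_perspective_nodate}, valid for the $\textcat{LSym}$-monad in any such context. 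Condition (\ref{postnikovcomp4}), that $A_{\leq n}\in\textcat{DSt}$ for all $n$ iff $A\in\textcat{DSt}$, requires using the definition of a derived Stein algebra: $A\in\textcat{DSt}$ demands $\pi_0(A)$ finitely presented Stein and each $\pi_n(A)$ coadmissible over $\pi_0(A)$, and since $\pi_n(A)\simeq \pi_n(A_{\leq n})$ and $\pi_0(A)\simeq\pi_0(A_{\leq n})$ for $n\geq n$, and $A_{\leq n}$ has vanishing higher homotopy, the two sides of the biconditional decode to literally the same collection of conditions on the homotopy groups; I would spell this out in a line.

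The substantive step is Condition (3) of Definition \ref{derivedgeometrycontext}: that $(\textcat{DAff}^{cn}(\textcat{Ch}(\textnormal{Ind}(\textnormal{Ban}_\mathbb{C}))),\bm{hm}^{fin}_{\textcat{DSt}^{op}},\textcat{fP}^{\bm{hm}^{fin}}_{\textcat{DSt}^{op}},\textcat{DSt}^{op})$ is a strong relative $(\infty,1)$-geometry tuple. This is precisely Lemma \ref{geometrytuplelemma}, which in turn rests on Corollary \ref{steinpregeometry} (so $(\textcat{DAff}^{cn},\bm{hm}^{fin},\textcat{fP})$ is a pre-geometry triple, and $\textcat{fP}^{\bm{hm}^{fin}}$ is then the localisation that makes it local), Proposition \ref{closeddstfinitecolimits} (so $\textcat{DSt}^{op}$ is closed under pullbacks along homotopy monomorphisms, giving the relative tuple axiom), Lemma \ref{steinadmissiblelemma} (so every object of $\textcat{DSt}^{op}$ is $\textcat{DSt}^{op}$-admissible, upgrading ``tuple'' to ``geometry tuple''), and Lemma \ref{makingthingsstrong} together with the ``strongness'' clause built into the definition of $\bm{hm}^{fin}_{\textcat{DSt}^{op}}$ via $\textcat{fP}_{\textcat{DSt}^{op}}$. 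Finally, Condition (4), that $\textcat{Mod}^{coad,cn}$ is a good system of categories of modules on $\textcat{DSt}^{op}$, is the content of the immediately preceding lemma. So the proof is essentially an assembly: after the citations for the derived algebraic context and its flatness, invoke Lemma \ref{geometrytuplelemma} for Condition (3), the previous lemma for Condition (4), and dispatch Postnikov compatibility via \cite{ben-bassat_perspective_nodate} plus the homotopy-group bookkeeping for Condition (\ref{postnikovcomp4}). I expect the only place needing genuine care is confirming that all hypotheses of Lemma \ref{geometrytuplelemma} and of Theorem \ref{modelderivedtheorem} are literally met by $\textnormal{Ind}(\textnormal{Ban}_\mathbb{C})$ and $\textcat{DSt}$ --- in particular the interaction of the $\textcat{fP}_{\textcat{DSt}^{op}}$ restriction with the homotopy-monomorphism covers --- but each of these is either already proved above or reduces to a short homotopy-group argument.
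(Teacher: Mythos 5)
Your proposal is correct and follows essentially the same route as the paper: the paper's proof likewise assembles Lemma \ref{geometrytuplelemma} and the good-system-of-modules lemma for the derived geometry context, cites \cite[Proposition 3.1.69]{ben-bassat_perspective_nodate} for flatness, and dispatches Postnikov compatibility by noting that Condition (\ref{postnikovcomp4}) is immediate from the definition of derived Stein algebras (your homotopy-group bookkeeping) and Condition (\ref{postnikovcomp5}) holds since we work over $\mathbb{C}$. Your write-up is just a more explicit version of the same assembly.
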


\begin{proof}
    Indeed, we know that it is a derived geometry context by Corollary \ref{steinstronggeometry} and Lemma \ref{coadsteinmodules}. To see that it is Postnikov compatible, we note that the first two conditions follow easily from standard results. We note that Assumption (\ref{postnikovcomp4}) follows from the definition of derived Stein algebras and, since we are working over $\mathbb{C}$, Assumption (\ref{postnikovcomp5}) is satisfied using a similar proof to \cite[Proposition 25.2.4.1]{lurie_spectral_2018}. 
\end{proof}

\begin{cor}\phantomsection\label{complexanalyticrepcontext} The tuple \begin{equation*}
    (\underline{\textcat{Ch}}(\textnormal{Ind}(\textnormal{Ban}_\mathbb{C})),\bm{hm}^{fin}_{\textcat{DSt}},{\textcat{fP}^{\bm{hm}}_{\textcat{DSt}}},\textcat{DSt}^{op},\textcat{Mod}^{coad,cn},\textcat{hm})
\end{equation*}is a representability context in the sense of Definition \ref{repcontextdefn}
\end{cor}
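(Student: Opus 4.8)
The plan is to verify that the tuple
\[
(\underline{\textcat{Ch}}(\textnormal{Ind}(\textnormal{Ban}_\mathbb{C})),\bm{hm}^{fin}_{\textcat{DSt}^{op}},\textcat{fP}^{\bm{hm}^{fin}}_{\textcat{DSt}^{op}},\textcat{DSt}^{op},\textcat{Mod}^{coad,cn},\textcat{hm})
\]
satisfies the hypotheses of Corollary \ref{hzrepresentabilitycontext}, namely that it is a flat Postnikov compatible derived geometry context, that $\textcat{hm}$ is the class of homotopy monomorphisms, and that the two additional conditions of Corollary \ref{hzrepresentabilitycontext} hold: $(\textcat{DSt}^{op},\bm{hm}^{fin}_{\textcat{DSt}^{op}}|_{\textcat{DSt}^{op}},\textcat{fP}_{\textcat{DSt}^{op}}|_{\textcat{DSt}^{op}},\textnormal{St}^{op})$ is a strong relative $(\infty,1)$-geometry tuple, and $\textcat{DSt}^{op}$ is closed under finite coproducts. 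Almost all of the work has already been assembled in the preceding lemmas of this subsection, so the proof is essentially a matter of citing the right statements in the right order.

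First I would recall that $(\underline{\textcat{Ch}}(\textnormal{Ind}(\textnormal{Ban}_\mathbb{C})),\bm{hm}^{fin}_{\textcat{DSt}^{op}},\textcat{fP}^{\bm{hm}^{fin}}_{\textcat{DSt}^{op}},\textcat{DSt}^{op},\textcat{Mod}^{coad,cn})$ is a flat Postnikov compatible derived geometry context by Corollary \ref{steinflatderived}. Next, the class $\textcat{hm}$ of homotopy monomorphisms in $\textcat{DSt}^{op}$ is by definition the relevant class of maps, and the obstruction conditions for $\bm{hm}^{fin}_{\textcat{DSt}^{op}}$ and $\textcat{fP}_{\textcat{DSt}^{op}}$ relative to $\textcat{DSt}^{op}$ for this class have already been established in the discussion following the definition of $\bm{hm}^{fin}_{\textcat{DSt}^{op}}$ (using Lemma \ref{obstructionconditionshz} and Lemma \ref{Ecoverequiv}). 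Then the first of the two hypotheses of Corollary \ref{hzrepresentabilitycontext} — that $(\textcat{DSt}^{op},\bm{hm}^{fin}_{\textcat{DSt}^{op}}|_{\textnormal{St}^{op}},\textcat{fP}_{\textcat{DSt}^{op}}|_{\textnormal{St}^{op}},\textnormal{St}^{op})$ is a strong relative $(\infty,1)$-geometry tuple — is exactly Lemma \ref{steinstronggeometry}. The second hypothesis, that $\textcat{DSt}^{op}$ is closed under finite coproducts, follows from Proposition \ref{closeddstfinitecolimits}, which asserts closure of $\textcat{DSt}$ under finite products, hence closure of $\textcat{DSt}^{op}$ under finite coproducts.

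The one genuinely non-formal input is checking that Corollary \ref{hzrepresentabilitycontext} is applicable with $\mathcal{A}=\textcat{DSt}^{op}$ sitting inside the ambient $\textcat{DAff}^{cn}(\textcat{Ch}(\textnormal{Ind}(\textnormal{Ban}_\mathbb{C})))$ rather than inside $\textcat{DAff}^{cn}(\mathcal{C})$ for a bare derived algebraic context; but since Corollary \ref{hzrepresentabilitycontext} is stated for an arbitrary flat Postnikov compatible derived geometry context and we have produced such a context via Corollary \ref{steinflatderived}, this is immediate. I would also note in passing that Corollary \ref{hzrepresentabilitycontext} requires $\textcat{fP}^{\bm{hm}^{fin}}_{\textcat{DSt}^{op}}|_{\textcat{DSt}^{op}} = \textcat{fP}_{\textcat{DSt}^{op}}|_{\textcat{DSt}^{op}}$ implicitly through Proposition \ref{perfectrepresentabilitycontext}'s Assumption (\ref{perfectrepagree}); this follows from Lemma \ref{localperfect} together with the fact that $\textcat{Perf}|_{\textcat{DSt}^{op}}$ satisfies descent for $\bm{hm}^{fin}_{\textcat{DSt}^{op}}$-covers (since $\bm{hm}^{fin}_{\textcat{DSt}^{op}}$-covers are in particular $\bm{hm}^{fin}$-covers, which are $\bm{df\acute{e}t}^{fin}$-covers by Lemma \ref{hmdescendableetale}, for which $\textcat{Perf}$ has descent by Corollary \ref{cechdescentdescendableetale}) and that homotopy monomorphisms are formally unramified, being formally \'etale by Lemma \ref{formallyetalehomotopymono} and Proposition \ref{etaleunramified}.

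I do not anticipate a serious obstacle here: the statement is a corollary whose entire content is the bookkeeping of verifying that the previously-proved Stein-geometry lemmas match the hypotheses of the abstract representability-context criterion Corollary \ref{hzrepresentabilitycontext}. The main point requiring a moment's care — and the place where a careless reader might stumble — is confirming that $\textnormal{St}^{op}$, the heart of $\textcat{DSt}^{op}$, coincides with the category $\mathcal{A}^\heartsuit$ appearing in the general framework (i.e.\ that every finitely presented Stein algebra arises as $\pi_0$ of a derived Stein algebra, which is clear since a finitely presented Stein algebra is already a derived Stein algebra concentrated in degree $0$), so that the hypotheses of Corollary \ref{hzrepresentabilitycontext} concerning $\mathcal{A}^\heartsuit$ translate correctly. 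Once that identification is in place the proof is just a short chain of citations.
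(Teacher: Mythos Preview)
Your proposal is correct and follows essentially the same approach as the paper: both argue by assembling Corollary \ref{steinflatderived}, Lemma \ref{steinstronggeometry}, and Proposition \ref{closeddstfinitecolimits} to feed into the criterion of Corollary \ref{hzrepresentabilitycontext} (via Proposition \ref{perfectrepresentabilitycontext}). Your write-up is more explicit than the paper's one-line proof about why the modified topology $\bm{hm}^{fin}_{\textcat{DSt}^{op}}$ still satisfies the hypotheses (obstruction conditions, $\textcat{fP}^{\bm\tau}|_{\mathcal{A}}=\textcat{fP}|_{\mathcal{A}}$, identification of $\mathcal{A}^\heartsuit$ with $\textnormal{St}^{op}$), but these are exactly the checks the paper leaves implicit.
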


\begin{proof}By Corollary \ref{steinflatderived}, our context is a flat Postnikov compatible derived geometry context. Condition (\ref{repcontexttuple}) is satisfied by Lemma \ref{steinstronggeometrytuple}. Condition (\ref{repcontextcontinuous}) follows by Lemma \ref{continuousstein}. Condition (\ref{repcontextrepresentables}) follows by Lemma \ref{coproductsteinrepresentable}. Condition (\ref{repcontextstrong}) follows by Lemma \ref{strongstein} and Condition (\ref{repcontextP}) by Corollary \ref{infinitesimalcriteriastein}. Finally, Condition (\ref{repcontextobstruction}) follows by Lemma \ref{steinobstruction}. 

\end{proof}

\appendix

\section{The Categories $\textnormal{CBorn}_R$ and $\textnormal{Ind}(\textnormal{Ban}_R)$}\label{appendixbornological}

Suppose we have a Banach ring $R$, i.e. a commutative unital ring with a complete norm defined on it. Denote by $\textnormal{Ban}_R$ the category of Banach $R$-modules, i.e. complete normed $R$-modules. 

\begin{defn}
    The category $\textnormal{Ind}(\textnormal{Ban}_R)$ of \textit{inductive systems of Banach $R$-modules} has as its objects functors  $A:I\rightarrow{\textnormal{Ind}(\textnormal{Ban}_R)}$, denoted by $A=(A_i)_{i\in I}$. The set of morphisms is given by  \begin{equation*}
        \textnormal{Hom}_{\textnormal{Ind}(\textnormal{Ban}_R)}(A,B)\simeq \varprojlim_{i\in I}\varinjlim_{j\in J}\textnormal{Hom}_{\textnormal{Ban}_R}(A_i,B_j)
    \end{equation*}
\end{defn}

\begin{defn}
An object $A\in\textnormal{Ind}(\textnormal{Ban}_R)$ is \textit{essentially monomorphic} if it is isomorphic to a system where all the morphisms are monomorphic. Denote by $\textnormal{Ind}^m(\textnormal{Ban}_R)$ the subcategory of essentially monomorphic objects in $\textnormal{Ind}(\textnormal{Ban}_R)$. 
\end{defn}

We refer to \cite[Definition 2.13]{kelly_analytic_2022} for the definition of the category $\textnormal{CBorn}_R$ of complete bornological $R$-modules. By \cite[Proposition 2.14]{kelly_analytic_2022}, there is a faithful functor 
\begin{equation*}
    \textnormal{Ind}^m(\textnormal{Ban}_R)\rightarrow{\textnormal{CBorn}_R}
\end{equation*}which defines an equivalence when $k$ is a non-trivial valued field \cite[Proposition 3.60]{bambozzi_dagger_2016}.

\section{Perfect, Compact, Dualisable Objects}\phantomsection\label{perfectappendix}

Let $\mathcal{C}$ be an additive closed symmetric monoidal $(\infty,1)$-category with monoidal product $\otimes^\mathbb{L}$ and unit $I$. Denote the internal mapping space by $\texthom{Map}_\mathcal{C}:\mathcal{C}\rightarrow{\mathcal{C}}$. We fix the following definitions.

\begin{defn}\phantomsection\label{differentduals} Suppose that $A\in\mathcal{C}$. Then its \textit{dual} object is $A^\vee:=\texthom{Map}_\mathcal{C}(A,I)$. An object $A\in\mathcal{C}$ is 
    \begin{enumerate}
            \item \textit{compact} if $\textnormal{Map}_\mathcal{C}(A,-)$ commutes with filtered colimits,
        \item \textit{projective} if $\textnormal{Map}_\mathcal{C}(A,-)$ commutes with geometric realisations,
        \item \textit{perfect} if it is a retract of a finite colimit of objects of the form $\coprod_E I$ for some finite set $E$, 
        \item \textit{dualisable} if the map $A^\vee\otimes^{\mathbb{L}}A\rightarrow{\texthom{Map}_\mathcal{C}(A,A)}$ is an equivalence,
        \item \textit{strongly dualisable} if the map $A^\vee\otimes^{\mathbb{L}}B\rightarrow{\texthom{Map}_\mathcal{C}(A,B)}$ is an equivalence for any $B\in\mathcal{C}$,
        \item \textit{reflexive} if $(A^\vee)^\vee\simeq A$.
    \end{enumerate}
\end{defn}

We note that perfect objects are strongly dualisable and projective. We will frequently use the following result. 

\begin{lem}\phantomsection\label{fibreperfect}
    If $\mathcal{C}$ is additionally a stable $(\infty,1)$-category and $A\rightarrow{B}\rightarrow{C}$ is a fibre sequence in $\mathcal{C}$ with two of the objects strongly dualisable, then the third object is strongly dualisable.
\end{lem}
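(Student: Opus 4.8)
\textbf{Proof plan for Lemma \ref{fibreperfect}.} The plan is to exploit the self-duality of the statement under rotation of the fibre sequence together with the fact that, in a stable $(\infty,1)$-category, the internal-hom functor $\texthom{Map}_\mathcal{C}(-,Y)$ is exact (it sends fibre sequences to fibre sequences, up to the usual sign/rotation), so applying $(-)^\vee = \texthom{Map}_\mathcal{C}(-,I)$ to a fibre sequence produces another fibre sequence. First I would reduce to a single case: a fibre sequence $X\to Y\to Z$ in a stable category can be rotated to $Z[-1]\to X\to Y$ and to $Y\to Z\to X[1]$, and dualisability is preserved by the shift functors $[1],[-1]$ (since $[n]$ is a monoidal autoequivalence, or directly because $(X[n])^\vee \simeq X^\vee[-n]$ and the comparison map is natural). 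Hence the three possible positions of the "non-dualisable" object are interchanged by these rotations, and it suffices to treat the case where $X$ and $Y$ are dualisable and we want $Z$ dualisable.

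Next, for that case, I would consider for an arbitrary $W\in\mathcal{C}$ the canonical comparison map $\mu_Z\colon Z^\vee\otimes^\mathbb{L} W \to \texthom{Map}_\mathcal{C}(Z,W)$, and similarly $\mu_X,\mu_Y$. The key point is that these assemble into a map of fibre sequences: applying $(-)^\vee\otimes^\mathbb{L} W$ to $X\to Y\to Z$ gives a fibre sequence because $\otimes^\mathbb{L} W$ is exact (it is a left adjoint, in fact an exact functor of stable categories) and $(-)^\vee$ is exact; applying $\texthom{Map}_\mathcal{C}(-,W)$ gives a fibre sequence because it too is exact in the first variable in a stable category. Naturality of the evaluation/comparison map $\mu$ in its first argument yields a commuting ladder
\begin{equation*}
\begin{tikzcd}
Z^\vee\otimes^\mathbb{L} W \arrow{r}\arrow{d}{\mu_Z} & Y^\vee\otimes^\mathbb{L} W \arrow{r}\arrow{d}{\mu_Y} & X^\vee\otimes^\mathbb{L} W \arrow{d}{\mu_X}\\
\texthom{Map}_\mathcal{C}(Z,W) \arrow{r} & \texthom{Map}_\mathcal{C}(Y,W) \arrow{r} & \texthom{Map}_\mathcal{C}(X,W)
\end{tikzcd}
\end{equation*}
in which both rows are fibre sequences. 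Since $\mu_X$ and $\mu_Y$ are equivalences by hypothesis, the map of fibre sequences forces $\mu_Z$ to be an equivalence (two out of three in a map of fibre/cofibre sequences in a stable category). As $W$ was arbitrary, $Z$ is dualisable.

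The main obstacle — really the only subtle point — is verifying that applying $(-)^\vee$ and $\texthom{Map}_\mathcal{C}(-,W)$ to a fibre sequence genuinely yields a fibre sequence, i.e. the exactness of the internal-hom in its first variable. This is where stability of $\mathcal{C}$ is essential: in a general closed symmetric monoidal $(\infty,1)$-category $\texthom{Map}_\mathcal{C}(-,W)$ need only send colimits to limits, but in the stable setting finite limits and finite colimits coincide, so a fibre sequence is a cofibre sequence and is carried to a fibre sequence. I would also need to check the naturality of $\mu$ in the first argument, which is formal from the adjunction defining $\texthom{Map}_\mathcal{C}$, and the behaviour of $(-)^\vee$ under shifts, which follows since $I[-n]$ represents $\texthom{Map}_\mathcal{C}(-,I)[-n]$. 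These are all routine once the exactness is in hand, so I would spend the bulk of the write-up justifying the two rows of the diagram above and then invoke the two-out-of-three property.
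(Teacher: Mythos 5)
Your proposal is correct and is essentially the paper's argument: reduce to one of the three cases and apply the two-out-of-three property to a ladder of fibre sequences whose vertical maps are the dualisability comparison maps, using exactness of $-\otimes^{\mathbb{L}}W$, $(-)^\vee$, and $\texthom{Map}_{\mathcal{C}}(-,W)$ in the stable setting. If anything your version is slightly more complete, since you keep the target $W$ arbitrary (verifying the definition of dualisable directly), whereas the paper's displayed diagram specialises to $W=X$ and leaves the passage to general $W$ implicit.
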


\begin{proof}
Suppose that $D\in\mathcal{C}$. The result follows from considering the morphism of fibre-cofibre sequences induced by $(-)^\vee\otimes^\mathbb{L}D$ and $\texthom{Map}_\mathcal{C}(-,D)$ and then applying the five lemma. 
\end{proof}

\bibliographystyle{plain}
\bibliography{references} 

\end{document}